\title[Internal waves in aquariums with characteristic corners]{Internal waves in aquariums\\ with characteristic corners}
\author{Zhenhao Li}
\address{Department of Mathematics, Massachusetts Institute of Technology, Cambridge, MA 02139}
\begin{document}

\begin{abstract}
    We give a precise microlocal description of the singular profile that forms in the long-time propagation of internal waves in an effectively two-dimensional aquarium. We allow domains with corners, such as polygons appearing in the experimental set ups of Maas et al.~\cite{maas_observation_97}. This extends the previous work of Dyatlov--Wang--Zworski~\cite{DWZ}, which considered domains with smooth boundary. We show that in addition to singularities that correspond to attractors in the underlying classical dynamics, milder singularities propagate out of the corners as well. 
\end{abstract}

\maketitle

\section{Introduction}
Let $\Omega \subset \R^2 = \{x = (x_1, x_2) \mid x_j \in \R\}$ be an open, bounded, and simply-connected domain with Lipschitz and piecewise-smooth boundary. We consider a model of internal waves in the domain $\Omega$ that describes the evolution of the stream function $u$ of a stable-stratified fluid under time-periodic forcing with profile $f \in \CIc(\Omega;\mathbb R)$. It is given by the following Poincar\'e problem:
\begin{equation}\label{eq:internal_waves}
    (\partial_t^2 \Delta + \partial_{x_2}^2) u = f(x) \cos \lambda t, \quad u|_{t = 0} = \partial_t u|_{t = 0} = 0, \quad u|_{\partial \Omega } = 0,
\end{equation}
where $\lambda \in (0, 1)$ and $\Delta := \partial_{x_1}^2 + \partial_{x_2}^2$. The equation models small perturbations of a stable-stratified, invicid, hydrostatic, and nonrotating two-dimensional Boussinesq fluid. Because the fluid is nonrotating, there exists a stream function $u$ which is related to fluid velocity by $\mathbf v = (\partial_{x_2} u, -\partial_{x_1} u)$. See for instance Dauxois et al. \cite{Dauxois_18} for a recent review of the equation in the physics literature.

In this paper, we study the limiting profile of the solution $u$ to the internal waves equation~\eqref{eq:internal_waves}. This expands upon the work of Dyatlov--Wang--Zworski in \cite{DWZ} by allowing corners in the domain $\Omega$. Mathematically, the corners introduce new singularities and change the microlocal structure of the singularities in the limiting profile. Moreover, the physical experiments conducted with fluids in a tank are always done in domains with corners. The results of this paper explain how the mathematical model exhibits strong singularities along the periodic orbit and mild singularities that stem from the corners as observed in the experiments. See Figure~\ref{fig:experiment}.
\begin{figure}
    \centering
    \includegraphics[scale = 4]{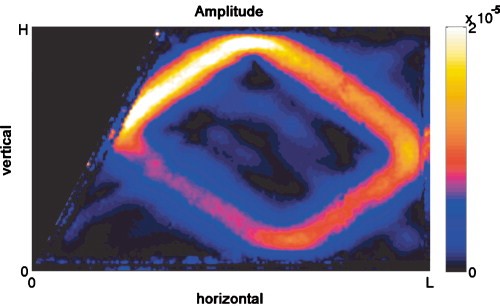}
    \caption{The experiment of Hazewinkel et al. \cite{Hazewinkel_2010} in which the long-time behavior of a stable-stratified fluid in an oscillating trapezoidal tank is observed. We see the strongest singularities along a parallelogram (which is the attractor of the dynamics)
    but also milder singularities stemming from the corners, which could be explained by the results of this paper.}
    \label{fig:experiment}
\end{figure}

To understand the evolution of the solution to ~\eqref{eq:internal_waves}, it is helpful to rearrange the fourth-order equation to the form of an evolution problem. If $\Omega$ has Lipschitz boundary, then the Dirichlet Laplacian has an inverse $\Delta_\Omega^{-1} : H^{-1}(\Omega) \to H^1_0(\Omega)$. Therefore, we can rewrite~\eqref{eq:internal_waves} as
\begin{equation}\label{eq:evolution_problem}
    (\partial_t^2 + P) w = f \cos \lambda t, \quad w|_{t = 0} = \partial_t w|_{t = 0} = 0, \quad f \in \CIc(\Omega; \R), \quad u = \Delta_\Omega^{-1} w,
\end{equation}
where the operator $P$ is given by 
\begin{equation}\label{eq:P_def}
    P := \partial_{x_2}^2 \Delta_\Omega^{-1}: H^{-1}(\Omega) \to H^{-1} (\Omega).
\end{equation}
Since $\Omega$ has Lipschitz and piecewise-smooth boundary, $P$ is non-negative, bounded, self-adjoint, and $\mathrm{Spec}(P) = [0, 1]$ on the Hilbert space $H^{-1}(\Omega)$ with the inner product 
\[\langle u, w \rangle_{H^{-1}(\Omega)} := \langle \nabla \Delta_\Omega^{-1} u, \nabla \Delta_\Omega^{-1} w \rangle_{L^2(\Omega)}.\]
See Ralston \cite[\S3]{Ralston_73} for details. Therefore, the evolution problem~\eqref{eq:evolution_problem} can be solved using the functional calculus of $P$ by 
\begin{equation}\label{eq:functional_solution}
    \begin{gathered}
        w(t) = \Re(e^{i \lambda t} \mathbf W_{t, \lambda}(P) f) \quad \text{where} \\
        \mathbf W_{t, \lambda}(z) = \int_0^t \frac{\sin(s \sqrt{z})}{\sqrt{z}} e^{-i \lambda s} \, ds = \sum_{\pm} \frac{1 - e^{-i t}(\lambda \pm \sqrt{z})}{2 \sqrt{z}(\sqrt z \pm \lambda)}.
    \end{gathered}
\end{equation}
In fact, it is easy to see that we have the distributional limit
\[\mathbf W_{t, \lambda} (z) \to (z - \lambda^2 + i0)^{-1} \quad \text{as} \quad t \to \infty \quad \in \quad \mathcal D'_z((0, \infty)).\]
Therefore, the focus of this paper will be study the spectrum of $P$ near $\lambda^2$. In particular, we will establish a limiting absorption principle for $P$ near $\lambda^2$ in Theorem~\ref{thm:spectral}, and use this to deduce the following main result using Stone's formula. Under reasonable geometric and dynamical assumptions, we can understand the long-time behavior of the solution $u$ to the internal waves equation~\eqref{eq:internal_waves} via the following decomposition.
\begin{theorem}\label{thm:evolution}
    Assume that $\Omega \subset \R^2$ has straight characteristic corners with respect to $\lambda \in (0, 1)$ and satisfies the Morse--Smale condition (see Definitions~\ref{def:MS}) . Then the solution to the internal waves equation~\eqref{eq:internal_waves} can be decomposed as 
    \begin{equation}
        u(t) = \Re\big(e^{i \lambda t} u_+ \big) + r(t) + e(t)
    \end{equation}
    where
    \begin{equation}
        u_+ := \Delta_\Omega^{-1} (P - \lambda^2 + i0)^{-1} f,
    \end{equation}
    with $(P-\lambda^2+i0)^{-1}$ defined in Theorem~\ref{thm:spectral} below, and 
    \[\|r(t)\|_{H^1_0(\Omega)} \le C \quad \text{for all} \quad t \ge 0, \qquad e(t) \to 0 \quad \text{in} \quad \bar H^{\ha - \beta}(\Omega) \]
    for any $\beta > 0$. 
\end{theorem}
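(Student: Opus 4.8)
The plan is to run the spectral formula \eqref{eq:functional_solution} through Stone's formula and the limiting absorption principle of Theorem~\ref{thm:spectral}. First I would simplify $w(t)$: using the partial-fraction identity $\sum_{\pm}\frac{1}{2\sqrt z\,(\sqrt z\pm\lambda)}=\frac{1}{z-\lambda^2}$ one finds $e^{i\lambda t}\mathbf W_{t,\lambda}(z)=\frac{e^{i\lambda t}-\cos(t\sqrt z)}{z-\lambda^2}-\frac{i\lambda\sin(t\sqrt z)}{\sqrt z\,(z-\lambda^2)}$, whose real part on $\operatorname{Spec}(P)=[0,1]$ is $\frac{\cos(\lambda t)-\cos(t\sqrt z)}{z-\lambda^2}$ (the second term being purely imaginary there), so
\[ w(t)=\Re\!\big(e^{i\lambda t}\mathbf W_{t,\lambda}(P)f\big)=\Big(\tfrac{\cos(\lambda t)-\cos(t\sqrt P)}{P-\lambda^2}\Big)f,\qquad u(t)=\Delta_\Omega^{-1}w(t), \]
where the right-hand side is the ordinary (bounded) functional calculus of $P$, the pole at $z=\lambda^2$ being removable; in particular this kernel is uniformly bounded in $t$ on any subset of $[0,1]$ staying away from $z=\lambda^2$, so the thresholds $\{0,1\}$ play no role. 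Since $u_+=\Delta_\Omega^{-1}(P-\lambda^2+i0)^{-1}f$ and $\Delta_\Omega^{-1}$ is real-linear, bounded $H^{-1}(\Omega)\to H^1_0(\Omega)$, and (by elliptic regularity up to the piecewise-smooth boundary) bounded $\bar H^{s-2}(\Omega)\to\bar H^{s}(\Omega)$ in the range of $s$ at hand, the theorem reduces to producing a decomposition
\[ \Big(\tfrac{\cos(\lambda t)-\cos(t\sqrt P)}{P-\lambda^2}\Big)f=\Re\!\big(e^{i\lambda t}(P-\lambda^2+i0)^{-1}f\big)+\rho(t)+\varepsilon(t) \]
with $\rho(t)$ bounded in $H^{-1}(\Omega)$ and $\varepsilon(t)\to 0$ in $\bar H^{-\frac32-\beta}(\Omega)$ for every $\beta>0$; applying $\Delta_\Omega^{-1}$ then yields $r(t)=\Delta_\Omega^{-1}\rho(t)$ and $e(t)=\Delta_\Omega^{-1}\varepsilon(t)$ with the asserted bounds.

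To get this decomposition I would split the spectral parameter with a cutoff $\chi\in\CIc(\R)$, $\chi\equiv 1$ near $\lambda^2$ and supported in the region to which Theorem~\ref{thm:spectral} applies. Away from $\lambda^2$ the function $(z-\lambda^2)^{-1}$ is smooth, so $(1-\chi(P))(P-\lambda^2+i0)^{-1}f=(1-\chi(P))(P-\lambda^2)^{-1}f$ is a fixed element of $H^{-1}(\Omega)$ and
\[ (1-\chi(P))w(t)-\Re\!\big(e^{i\lambda t}(1-\chi(P))(P-\lambda^2+i0)^{-1}f\big)=-(1-\chi(P))\tfrac{\cos(t\sqrt P)}{P-\lambda^2}f=:\rho(t), \]
which is uniformly bounded in $H^{-1}(\Omega)$ because the spectral multiplier $(1-\chi(z))(z-\lambda^2)^{-1}\cos(t\sqrt z)$ is; this is the origin of the bounded-but-not-necessarily-decaying term, and one neither controls nor needs to control the spectral measure of $P$ away from $\lambda^2$. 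For the localized piece, Stone's formula gives $\chi(P)w(t)=\int\chi(z)\,\tfrac{\cos(\lambda t)-\cos(t\sqrt z)}{z-\lambda^2}\,g(z)\,dz$ with density $g(z):=\tfrac1\pi\Im\!\big((P-z-i0)^{-1}f\big)$, which by Theorem~\ref{thm:spectral} is Hölder continuous in $z$ near $\lambda^2$ with values in $\bar H^{-\frac32-\beta}(\Omega)$, for every $\beta>0$. Using $\cos(t\sqrt z)=\cos(\lambda t)\cos\!\big(t(\sqrt z-\lambda)\big)-\sin(\lambda t)\sin\!\big(t(\sqrt z-\lambda)\big)$ and writing $g(z)=g(\lambda^2)+\big(g(z)-g(\lambda^2)\big)$: the constant term $g(\lambda^2)$ reproduces $\Re\!\big(e^{i\lambda t}\chi(P)(P-\lambda^2+i0)^{-1}f\big)$ in the limit — the $\sin(\lambda t)$-channel via $\tfrac{\sin t(\sqrt z-\lambda)}{z-\lambda^2}\to\pi\delta(z-\lambda^2)$ and the $\cos(\lambda t)$-channel via $\tfrac{1-\cos t(\sqrt z-\lambda)}{z-\lambda^2}\to\mathrm{p.v.}\,\tfrac1{z-\lambda^2}$ — while the Hölder remainder $\big(g(z)-g(\lambda^2)\big)/(z-\lambda^2)$ is absolutely integrable with values in $\bar H^{-\frac32-\beta}(\Omega)$, so its oscillatory integrals against $\cos(t(\sqrt z-\lambda))$ and $\sin(t(\sqrt z-\lambda))$ tend to $0$ there by a vector-valued Riemann--Lebesgue estimate. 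Hence $\chi(P)w(t)=\Re\!\big(e^{i\lambda t}\chi(P)(P-\lambda^2+i0)^{-1}f\big)+\varepsilon(t)$ with $\varepsilon(t)\to 0$ in $\bar H^{-\frac32-\beta}(\Omega)$; adding the two pieces and applying $\Delta_\Omega^{-1}$ completes the argument.

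The step I expect to be the main obstacle is invoking Theorem~\ref{thm:spectral} in the correct \emph{quantitative} form near $\lambda^2$: not merely that $(P-\lambda^2+i0)^{-1}f$ exists, but that $z\mapsto(P-z+i0)^{-1}f$ is Hölder continuous on a neighborhood of $\lambda^2$ with values in $\bar H^{-\frac32-\beta}(\Omega)$ for each $\beta>0$. It is precisely this Hölder modulus — which degrades as $\beta\downarrow 0$ — that governs the decay of $\varepsilon(t)$ and forces the loss to $\bar H^{\frac12-\beta}(\Omega)$ after $\Delta_\Omega^{-1}$. The remaining ingredients are routine: justifying the Stone/functional-calculus manipulations for $\tfrac{\cos(\lambda t)-\cos(t\sqrt P)}{P-\lambda^2}$, carrying out the substitution $s=\sqrt z-\lambda$ and the resulting scalar and vector-valued oscillatory-integral limits, and verifying the mapping properties of $\Delta_\Omega^{-1}$ on negative-order $\bar H$-spaces over a domain with piecewise-smooth boundary. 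The hypotheses of straight characteristic corners and the Morse--Smale condition enter only through Theorem~\ref{thm:spectral}.
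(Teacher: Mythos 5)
Your overall strategy — spectral localization near $\lambda^2$, the away-from-$\lambda^2$ piece bounded into $r(t)$, Stone's formula plus a Riemann--Lebesgue argument for the localized piece — is precisely the paper's route (\S\ref{sec:evolution}, Lemmas~\ref{lem:spectral_bound} and~\ref{lem:w_1_decomp}), and the two decompositions $w_1/r_1$ versus $\chi(P)/(1-\chi(P))$ are the same thing. However, there is one genuine gap, and you half-see it yourself but do not resolve it: you assert that the spectral density $g(z)=\tfrac1\pi\Im\big((P-z-i0)^{-1}f\big)$ ``is H\"older continuous in $z$ near $\lambda^2$ with values in $\bar H^{-3/2-\beta}(\Omega)$ \emph{by Theorem~\ref{thm:spectral}}.'' Theorem~\ref{thm:spectral} does not say this. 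It asserts absolute continuity of the spectrum on $\mathcal J^2$ and existence, regularity, and wavefront set of the pointwise-in-$\lambda$ limits $(P-\lambda^2\pm i0)^{-1}f$; it says nothing about the modulus of continuity of $\lambda\mapsto(P-\lambda^2\pm i0)^{-1}f$. That H\"older continuity is a separate, substantial result — Proposition~\ref{prop:holder} and Lemma~\ref{lem:SM_holder} in the paper — and proving it is not a matter of quoting the limiting absorption principle. One has to run the global semi-Fredholm estimate (Proposition~\ref{prop:global_semifredholm}) and the uniqueness of the limiting boundary problem (Lemma~\ref{lem:uniqueness}) a second time, now applied to the H\"older difference quotient $(v_{\omega}-v_{\omega'})/|\lambda-\lambda'|^\eta$ of the boundary data, and one must be careful precisely because of the corners: the singularities propagating out of the corners prevent $\partial_\omega v_\omega$ from lying in $H^{1/2+}$ away from the periodic set, which is why the paper works with H\"older quotients rather than differentiating the boundary equation as in the smooth case. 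Your Riemann--Lebesgue step collapses without this input, since it is the H\"older modulus that makes $(g(z)-g(\lambda^2))/(z-\lambda^2)$ absolutely integrable.

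A smaller point: you write the away-from-$\lambda^2$ term as $\rho(t)=-(1-\chi(P))\tfrac{\cos(t\sqrt P)}{P-\lambda^2}f$ and claim uniform $H^{-1}$ boundedness from the boundedness of the multiplier $(1-\chi(z))(z-\lambda^2)^{-1}\cos(t\sqrt z)$ on $[0,1]$. This is correct and is essentially Lemma~\ref{lem:spectral_bound} with the $(I-\varphi(P))(P-\lambda^2+i0)^{-1}f$ piece absorbed; no objection there. So the structure of your argument is right; what is missing is the actual proof of H\"older regularity of the spectral measure, which is the bulk of the new analysis the corners force on \S\ref{sec:evolution}.
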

While this decomposition looks similar to \cite[Theorem 1]{DWZ}, the structure of $u_+$ is very different due to the corners. The precise microlocal description of $u_+$ will be given in Theorem~\ref{thm:spectral}. Physically, Theorem~\ref{thm:evolution} means that up to a small error and a term uniformly bounded in energy space $H^1_0$ for all times, the limiting solution to the internal waves equation is given by $\Re(e^{i \lambda t} u_+)$. 

If the characteristic ratio defined in Definition \ref{def:corner_type} satisfies $|\log \alpha(\lambda, \kappa)| \le \sqrt{3} \pi$ for every corner $\kappa$, then $u_+$ is bounded in energy space away from the attractor of the underlying classical dynamics. Thus the singularities caused by the corners are much milder than the singularities caused by the underlying dynamics. In all experiments, the characteristic ratio condition is satisfied, which explains why we primarily see singularities forming along the attractor. If the characteristic ratio condition is not satisfied, we still have a characterization of the regularity of $u_+$, but it no longer lies in energy space away from the attractors (see Theorem~\ref{thm:spectral}). For a heuristic justification of why the corner leads to additional milder singularities, see the remark after Theorem~\ref{thm:spectral}. 

\subsection{Assumptions on the domain}
Throughout the paper, $\Omega \subset \R^2_{x_1, x_2}$ will always be an open, bounded, and simply-connected domain with Lipschitz and piecewise-smooth boundary. The rest of the assumptions are formulated in terms of the relevant underlying classical dynamics of the system. In view of~\eqref{eq:evolution_problem}, we wish to study operators of the form $P - \lambda^2$, which is the same as studying the second order differential operator
\begin{equation}\label{eq:P(lambda)def}
    P(\lambda):= (P - \lambda^2) \Delta_\Omega = (1 - \lambda^2) \partial_{x_2}^2 - \lambda^2 \partial_{x_1}^2, \quad \lambda \in (0, 1).
\end{equation}
For $\lambda \in (0, 1)$, observe that $P(\lambda)$ is the $1+1$ wave operator with the ``speed of light'' given by $c = \lambda/\sqrt{1 - \lambda^2}$. Furthermore, the light rays associated with $P(\lambda)$ travel along the level sets of the functions 
\begin{equation}\label{eq:dual_factor}
    \ell^\pm(x, \lambda) := \pm \frac{x_1}{\lambda} + \frac{x_2}{\sqrt{1 - \lambda^2}}.
\end{equation}
\begin{figure}
    \centering
    \includegraphics{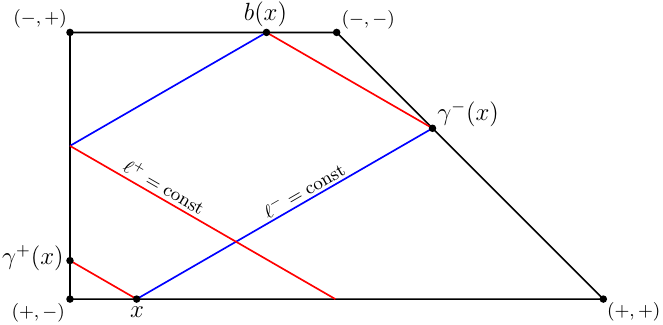}
    \caption{An example depicting a $\lambda$-simple domain with straight characteristic corners for some $\lambda \in (0, 1)$ corresponding to the level sets $\ell^\pm$. The corner types defined in Definition~\ref{def:corner_type} are labeled, as well as the involutions $\gamma^\pm$ and the chess billiard map $b$.}
    \label{fig:defs}
\end{figure}
\begin{definition}\label{def:lambda_simple}
    Let $\lambda \in (0, 1)$. We say that $\Omega$ is \textup{$\lambda$-simple with straight characteristic corners} (or just \textup{$\lambda$-simple} for short in the context of this paper) if:
    \begin{enumerate}
        \item each of the functions $\ell^\pm(\bullet, \lambda): \partial \Omega \to \R$ has a unique maximum and minimum, which are achieved at
    \begin{equation}\label{eq:characteristic_set}
        \begin{aligned}
            x_{\max}^\pm &:= \mathrm{arg\, max}_{x \in \partial \Omega} \, \ell^\pm(x, \lambda), \\
            x_{\min}^\pm &:= \mathrm{arg\, min}_{x \in \partial \Omega} \, \ell^\pm(x, \lambda).
        \end{aligned}
    \end{equation}
    The four points must be distinct and $\ell^\pm(x^\mp_{\max}) \neq \ell^\pm(x^\mp_{\min})$. 
    \item $\partial \Omega$ is smooth except possibly at $x^\pm_{\max}$ and $x^\pm_{\min}$, and $\ell^\pm(x, \lambda): \partial \Omega \to \R$ has no critical points except possibly at $x^\pm_{\max}$ and $x^\pm_{\min}$.
    \item In the case that $\partial \Omega$ is smooth near $x^\pm_{\max}$ or $x^\pm_{\min}$, they must be nondegenerate critical points of $\ell^\pm(\bullet, \lambda): \partial \Omega \to \R$.

    \item In the case that $\partial \Omega$ is not smooth near $\kappa \in \{x^\pm_{\max}, x^\pm_{\min}\}$, $\partial \Omega$ admits a local parameterization in the form
    \[\theta \mapsto \begin{cases} \kappa + \theta \mathbf v_1 & \theta > 0, \\ \kappa - \theta \mathbf v_2 & \theta < 0\end{cases}\]
    for sufficiently small $|\theta|$ and linearly independent $\mathbf v_1, \mathbf v_2 \in \R^2$. Such points are called \textup{corners} and will be denoted by the set $\corner$. 
    \end{enumerate} 
\end{definition}
We believe that the results of this paper should still hold without (4). That is, the boundary need not be ``straight'' near the corners. However, assuming straight corners significantly simplifies the computations of the Schwartz kernel of the restricted boundary reduced operator in \S\ref{sec:kernel_computation}. With curved boundaries near the corner, these computations become bulkier and we need to consider lower order terms in the b-calculus, which the machinery in this paper can handle, but does not add mathematical insight. Experiments have also been conducted in domains with straight corners. 

The behavior of the internal waves equation near corners is very different from the behavior near other extrema of $\ell^\pm$ on $\partial \Omega$. It will be useful to further categorize the corners as follows. 
\begin{definition}\label{def:corner_type}
    A characteristic corner $\kappa \subset \partial \Omega$ is a \textup{type-$(\mu, \nu)$ corner} for $\mu, \nu \in \{+, -\}$ if and only if
    \[\mu\cdot \ell^{-\nu}(x - \kappa, \lambda) \ge 0\]
    for all $x \in \Omega$. 

    In particular, near a type-$(\mu, \nu)$ corner $\kappa$, there exists a neighborhood of $U$ of $\kappa$ and $\alpha_\pm(\lambda, \kappa) > 0$ such that
    \begin{multline}
        U \cap \partial \Omega = \{x \in \R^2: \ell^{\nu}(x, \lambda) = - \alpha_-(\lambda, \kappa) \ell^{-\nu}(x, \lambda),\ \mu \cdot \ell^{\nu}(x, \lambda) \in (-\delta, 0] \} \\
        \cup \{x \in \R^2: \ell^{\nu} = \alpha_+(\lambda, \kappa)\ell^{-\nu}(x, \lambda), \ \mu \cdot \ell^{\nu} (x, \lambda) \in [0, \delta) \}
    \end{multline}
    for some small $\delta > 0$. Furthermore, we define
    \[\alpha(\lambda, \kappa) = \alpha_\kappa(\lambda) := \alpha_+(\lambda, \kappa)/\alpha_-(\lambda, \kappa)\]
    as the \textup{characteristic ratio} of the corner $\kappa$. 
\end{definition}
See Figure~\ref{fig:defs} for a clear diagram of the corner types, since the signs in Definition~\ref{def:corner_type} above can be confusing. We will often omit the dependence on $\lambda$ and/or $\kappa$ in the notation for $\alpha(\lambda, \kappa)$ and $\alpha_\pm(\lambda, \kappa)$ when the context is clear.

\begin{figure}
    \centering
    \includegraphics[scale = 0.75]{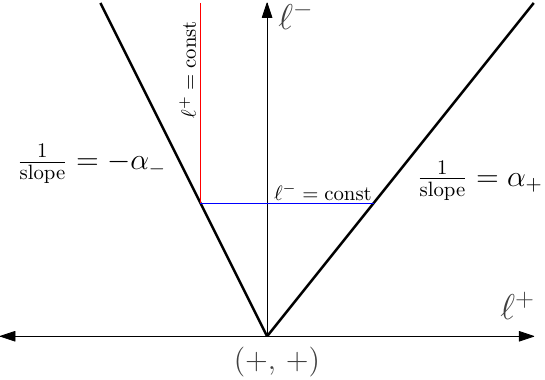}
    \caption{A diagram of a neighborhood of a type $(+, +)$ corner in $(\ell^+, \ell^-)$ coordinates. In this picture, the level sets of $\ell^+$ are vertical and the level sets of $\ell^-$ are horizontal. The corner is fixed by $\gamma^-$.}
    \label{fig:alpha_def}
\end{figure}

\begin{Remarks}
    1. The upshot of this definition is so that we can take advantage of some reflection symmetries. Let $\mathrm{Ref}_1(x_1, x_2) = (-x_1, x_2)$ and $\mathrm{Ref}_2(x_1, x_2) = (x_1, -x_2)$. Then
    \begin{itemize}
        \item if $\kappa$ is a type-$(+, -)$ corner of $\Omega$, then $\mathrm{Ref}_1(x)$ is a type-$(+, +)$ corner of $\mathrm{Ref}_1(\Omega)$.
        \item if $\kappa$ is a type-$(-, -)$ corner of $\Omega$, then $\mathrm{Ref}_2(x)$ is a type-$(+, +)$ corner of $\mathrm{Ref}_2(\Omega)$.
        \item if $\kappa$ is a type-$(-, +)$ corner of $\Omega$, then $\mathrm{Ref}_1 \circ \mathrm{Ref}_2(x)$ is a type-$(+, +)$ corner of $\mathrm{Ref}_1 \circ \mathrm{Ref}_2(\Omega)$.
    \end{itemize}
    Furthermore, the characteristic ratio $\alpha(\lambda)$ as well as the $\alpha_\pm(\lambda)$ parameters remain invariant under $\mathrm{Ref}_j$. Therefore, it generally suffices to study the local behavior near a type-$(+, +)$ corner. 

    \noindent
    2. In a neighborhood of a type-$(\mu, \nu)$ corner $\kappa$, we also have the convenient choice of ``blown-up'' coordinates given by 
    \begin{equation}\label{eq:good_corner_coords}
        r = \mu \cdot \ell^\nu(\bullet - \kappa, \lambda), \quad \tau = \frac{\ell^{-\nu}(\bullet - \kappa, \lambda)}{\ell^\nu(\bullet - \kappa, \lambda)}, \quad (r, \tau) \in [0, \delta)_r \times [-\alpha_-, \alpha_+]_\tau.
    \end{equation}
    These are basically polar coordinates centered at a corner that resolves the singularity at the corner in some sense. 
\end{Remarks}

Assume that $\Omega$ is $\lambda$-simple. Then there exist unique, continuous, and orientation-reversing involutions $\gamma^\pm(\bullet, \lambda) = \gamma^\pm_\lambda = \gamma^\pm: \partial \Omega \to \partial \Omega$ that satisfy
\begin{equation}\label{eq:gamma_def}
    \ell^\pm(x) = \ell^\pm(\gamma^\pm(x)).
\end{equation}
Observe that $\gamma^\pm$ fixes $x^\pm_{\mathrm{min}}$ and $x^\pm_{\mathrm{max}}$ and exchanges the two points at which level curves of $\ell^\pm(\bullet, \lambda)$ intersects the boundary. Now we can define the \textit{chess billiard map} $b(\bullet, \lambda) = b_\lambda = b: \partial \Omega \to \partial \Omega$ by the composition
\begin{equation}\label{eq:b_def}
    b:= \gamma^+ \circ \gamma^-.
\end{equation}
$b$ is a continuous orientation preserving homeomorphism. See Figure~\ref{fig:defs} for an example of $\gamma^\pm$ and $b$. 

Denote the set of periodic points of $b$ by
\begin{equation}
    \Sigma_\lambda := \{x \in \partial \Omega: b^n(x) = x\ \text{for some $n \ge 1$}\}.
\end{equation}
\begin{definition}\label{def:MS}
    Let $\lambda \in (0, 1)$. Then $\Omega$ is \textup{Morse--Smale with straight characteristic corners with respect to $\lambda$} (or just Morse--Smale in the context of this paper) if
    \begin{enumerate}
        \item $\Omega$ is $\lambda$-simple (see Definition~\ref{def:lambda_simple}),
        \item $\Sigma_\lambda \neq \emptyset$ and $\Sigma_\lambda \cap \corner = \varnothing$, which implies that $b$ is smooth in a neighborhood of $\Sigma_\lambda$,
        \item $\Sigma_\lambda$ consists of hyperbolic periodic points, that is $ \partial_x b^{n}(x, \lambda) \neq 1$ for all $x \in \Sigma_\lambda$ where $n$ is the minimal period. 
    \end{enumerate}
    Furthermore, we denote the attracting and repulsive periodic points by 
    \begin{equation}\label{eq:periodic_points}
        \Sigma_\lambda^\pm = \{x \in \Sigma_\lambda: \pm \log (\partial_x b^n(x, \lambda)) < 0\}
    \end{equation}
    respectively. 
\end{definition}
We note that the above definition is a slight abuse of language, because what we really mean that the associated chess-billiard map $b_\lambda$ is Morse--Smale.

\subsection{Spectral result}
Now we describe $u_+$ from Theorem~\ref{thm:evolution}. To describe the singularities, we need to keep track of the trajectories coming out of the corners. Define
\begin{equation}
    \begin{aligned}
        \mathscr O_\lambda^+ :=& \{x \in \partial \Omega: x = b_\lambda^k(\gamma^+_\lambda(\kappa)), \ k \in \N_0, \ \text{$\kappa$ is type-$(\pm, +)$}\} \\
        &\quad \cup \{x \in \partial \Omega: x = b_\lambda^{k}(\kappa), \ k \in \N, \ \text{$\kappa$ is type-$(\pm, -)$}\}, \\
        \mathscr O_\lambda^- :=& \{x \in \partial \Omega: x = b_\lambda^{-k}(\kappa), \ k \in \N, \ \text{$\kappa$ is type-$(\pm, +)$}\} \\
        &\quad \cup \{x \in \partial \Omega: x = b_\lambda^{-k}(\gamma^-_\lambda\kappa), \ k \in \N_0, \ \text{$\kappa$ is type-$(\pm, -)$}\}.
    \end{aligned}
\end{equation}
Note that the limit points of $\mathscr O^\pm_\lambda$ are precisely $\Sigma^\pm_\lambda$, and we emphasize that this means that for all sufficiently small $\delta > 0$,
\begin{equation}
    \mathscr O^\pm_\lambda \cap \Sigma^\mp_\lambda(\delta) = \emptyset.
\end{equation}
Furthermore, $\mathscr O^\pm_\lambda \setminus \Sigma^\pm_\lambda(\delta)$ is a finite set for all $\delta > 0$. 
\begin{figure}
    \centering
    \includegraphics{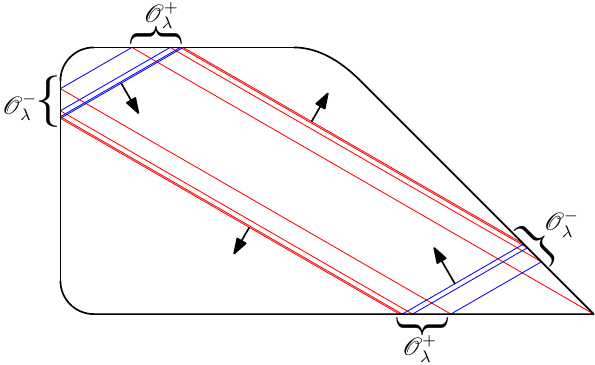}
    \caption{Diagram of the trajectories coming out of the corner $\mathscr O^\pm_\lambda$. The arrows indicate the conormal directions contained in $\Lambda^-$.}
    \label{fig:pm_orbits}
\end{figure}

We will describe the singularities of the solution to the internal waves equation~\eqref{eq:internal_waves} via their wavefront sets (see \eqref{eq:wavefront_def}), which is a subset of the cotangent bundle $T^* \Omega$. Roughly speaking, the singularities will lie on the conormal bundle of the trajectories coming out of the corners and on the conormal bundle of the periodic cycle.

For $x \in \partial \Omega$, define
\begin{equation}
    \Gamma^\pm_\lambda(x) := \{u \in \Omega: \ell^\pm(y, \lambda) = \ell^\pm(x, \lambda)\}. 
\end{equation}
The periodic trajectory is given by 
\begin{equation}
    \Gamma_\lambda := \bigcup_{x \in \Sigma^+_\lambda} (\Gamma^+_\lambda(x) \cup \Gamma^-_\lambda(x))
\end{equation}
The conormal bundle of $\Gamma^\pm_\lambda(y)$ can be split into positive and negative frequencies using the orientation on $\partial \Omega$. In particular, fix a positively-oriented b-parameterization $\mathbf x: \mathbb S^1 \to \partial \Omega$. We define for $\mu \in \{+, -\}$ the positive and negative conormal bundles
\begin{equation}\label{eq:N^*_pm}
    N^*_\mu \Gamma^\pm_{\lambda}(x) := \{(y, \mu \tau \partial_\theta \ell^\pm(\mathbf x(\theta))|_{\theta = \mathbf x^{-1}(x)} d\ell^\pm(y, \lambda)) : y \in \Gamma^\pm_\lambda(x),\ \tau > 0\}. 
\end{equation}
Note that 
\[N^*_\pm \Gamma^\pm_\lambda(y) = N_\mp^* \Gamma^\pm_\lambda( \gamma_{\lambda}^\pm(y)).\]
If $\mathcal K \neq \emptyset$, define
\begin{equation}\label{eq:lagrangians}
    \Lambda^\pm(\lambda) := \overline{\bigcup_{x \in \mathscr O^+_\lambda} N^*_\pm \Gamma^-_\lambda(x)  \cup \bigcup_{x \in \mathscr O^-_\lambda} N^*_\mp \Gamma^+_\lambda(x)}.
\end{equation}
On the other hand, if $\mathcal K = \emptyset$, put 
\[\Lambda^\pm(\lambda) = \bigcup_{x \in \Sigma^+_\lambda} N^*_\pm \Gamma^+_\lambda(x)  \cup \bigcup_{x \in \Sigma^-_\lambda} N^*_\mp \Gamma^-_\lambda(x),\]
in which case the notation is the same as in \cite{DWZ}. Observe that $\Lambda^\pm(\lambda)$ does not intersect the conormal bundle of $\Gamma^\pm_\lambda(\kappa)$ for any $\kappa \in \mathcal K$. We separately define these special rays and denote them by
\begin{equation}\label{eq:sr_def}
    \sr_\lambda = \sr := \bigcup_{\substack{\kappa \in \mathcal K, \\ \nu \in \{+, -\}}} \Gamma^\nu_\lambda(\kappa).
\end{equation}
The corresponding conormal bundle is then denoted by $N^*(\sr)$. Microlocally, the singularity of the limiting singular profile in the limiting absorption principle lives on the conormal bundle of $\sr$ as well as $\Lambda^\pm$ depending on if the spectrum is approached from the upper or lower half complex plane. 

\begin{theorem}\label{thm:spectral}
    Let $\mathcal J \subset (0, 1)$ be such that $\Omega$ is Morse--Smale with respect to every $\lambda \in \mathcal J$. Then the spectrum of $P$ is absolutely continuous in $\mathcal J^2 = \{\lambda^2: \lambda \in \mathcal J\}$. The limits
    \begin{equation}
        (P - \lambda^2 \pm i0)^{-1} f  = \lim_{\epsilon \to 0+} (P - (\lambda \mp i\epsilon)^2) f  \quad \text{in} \quad \mathcal D'(\Omega)
    \end{equation}
    exist and 
    \begin{equation}\label{eq:lim_regularity}
        (P - \lambda^2 \pm i0)^{-1} f \in \bar H^{-\frac{3}{2} - }(\Omega).
    \end{equation}
    Furthermore, 
    \begin{equation}\label{eq:wavefront}
        \WF((P - \lambda^2 \pm i0)^{-1} f) \subset \Lambda^\pm(\lambda) \cup N^*(\sr).
    \end{equation}
    For every $\chi \in C^\infty(\overline \Omega)$ such that $\supp \chi \cap \Gamma_\lambda = \emptyset$, 
    \begin{equation}\label{eq:limit_conorm_reg}
        \chi \cdot (P - \lambda^2 \pm i0)^{-1} f \in \mathcal A^{[s]}(\widetilde \Omega; \Lambda^\pm(\lambda), \sr, \ff)
    \end{equation}
    where $s < 0$ and 
    \begin{equation}
        s < \min_{\kappa \in \mathcal K} \Re\Big(\mathfrak l_{\lambda, \kappa} - \frac{3}{2}\Big), \qquad \mathfrak l_{\lambda, \kappa} := \frac{2 \pi i}{i \pi - \log \alpha(\lambda, \kappa)}.
    \end{equation}
    Here, $\widetilde \Omega$ is the blown-up domain defined in~\eqref{eq:Omega_tilde_set}, and $\mathcal A^{[s]}$ are conormal spaces defined in~\eqref{eq:conormal_def}.
\end{theorem}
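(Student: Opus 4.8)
The plan is to adapt the strategy of~\cite{DWZ} --- complexify $\lambda$, reduce the resolvent to a one-dimensional operator on $\partial\Omega$ built from the involutions $\gamma^\pm_\lambda$ (and hence encoding the chess billiard $b_\lambda$), and prove invertibility of that operator uniformly as the spectral parameter tends to the real axis --- but to replace the boundary pseudodifferential calculus by Melrose's b-calculus on the blown-up domain $\widetilde\Omega$ in order to accommodate the corners; when $\mathcal K=\varnothing$ this reduces to~\cite{DWZ}, so throughout I take $\mathcal K\neq\varnothing$. For $\lambda'=\lambda\mp i\epsilon$ with $\epsilon>0$ the symbol of $P(\lambda')$ is nonvanishing on $T^*\Omega\setminus0$, so $P(\lambda')$ is elliptic, the Dirichlet problem $P(\lambda')u=f$, $u|_{\partial\Omega}=0$, has a unique solution $u=u(\lambda')$ depending holomorphically on $\lambda'$, and $(P-(\lambda')^2)^{-1}f=\Delta_\Omega u(\lambda')$. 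Pulling back by the null coordinates $\ell^\pm(\,\cdot\,,\lambda')$ turns the equation into $\partial_{\ell^+}\partial_{\ell^-}u=\widetilde f$ with Dirichlet data; writing $u$ as a fixed particular solution plus $g_+(\ell^+)+g_-(\ell^-)$ and restricting to $\partial\Omega$, the relations $\ell^\pm=\ell^\pm\circ\gamma^\pm_{\lambda'}$ convert the boundary condition into an equation $(\operatorname{Id}-\mathcal C_{\lambda'})h=\mathcal R_{\lambda'}f$ on $\partial\Omega$, where $\mathcal C_{\lambda'}$ encodes the weighted transfer operator of $b_{\lambda'}$ and the resolvent is reconstructed from $h$; this part is essentially as in~\cite{DWZ}.

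The new analysis takes place near the corners, where $b_{\lambda'}$ fails to be smooth in an arclength parameterization, so $\mathcal C_{\lambda'}$ is not pseudodifferential. I pass to $\widetilde\Omega$, obtained by blowing up each $\kappa\in\mathcal K$, and work in the projective coordinates $(r,\tau)$ of~\eqref{eq:good_corner_coords}, in which $P(\lambda')$ lifts to a b-differential operator with $r^2P(\lambda')=c(\lambda')\,(r\partial_r-1-\tau\partial_\tau)\partial_\tau$ for a nonzero constant $c(\lambda')$, so its indicial operator at the front face $\ff_\kappa=\{r=0\}$ is $I_\kappa(\mathfrak l)=(\mathfrak l-1-\tau\partial_\tau)\partial_\tau$ on $\tau\in[-\alpha_-,\alpha_+]$; since $P(\lambda)$ is a wave operator, $I_\kappa$ is never invertible, reflecting the one-dimensional wave dynamics on $\ff_\kappa$. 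Instead, one solves the model wedge problem --- $\partial_{\ell^+}\partial_{\ell^-}u=0$ in the characteristic sector at $\kappa$, Dirichlet on the two edges, and the $\pm i0$-outgoing condition along the interior characteristic ray $\{\tau=0\}=\Gamma^\nu_\lambda(\kappa)\subset\sr$. Writing $u=g_+(\ell^+)+g_-(\ell^-)$ homogeneous of degree $\mathfrak l$ and imposing the two Dirichlet conditions yields a functional equation that, after using the $\pm i0$ prescription to fix the branch of the profile crossing the ray, becomes $\alpha(\lambda,\kappa)^{\mathfrak l}=e^{i\pi\mathfrak l}$, hence $\mathfrak l\in\frac{2\pi i}{i\pi-\log\alpha(\lambda,\kappa)}\,\mathbb Z$, and $\mathfrak l_{\lambda,\kappa}$ is the root of smallest positive real part. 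The model singular solution is then $\sim r^{\mathfrak l_{\lambda,\kappa}}$, conormal along $\{\tau=0\}$; since $\Re\mathfrak l_{\lambda,\kappa}=2\pi^2/(\pi^2+(\log\alpha(\lambda,\kappa))^2)$, one has $\Re\mathfrak l_{\lambda,\kappa}\geq\tfrac12$ exactly when $|\log\alpha(\lambda,\kappa)|\leq\sqrt3\,\pi$, which, after applying $\Delta_\Omega^{-1}$, is exactly the energy-space threshold and explains the characteristic ratio condition.

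It remains to invert $\operatorname{Id}-\mathcal C_{\lambda'}$ on appropriate b-Sobolev/conormal spaces, uniformly as $\epsilon\to0$, assuming $\Omega$ is Morse--Smale. Away from $\mathcal K$ and from the periodic set $\Sigma_\lambda$ this follows from~\cite{DWZ}: the billiard contracts off its periodic points, so a Neumann-type series converges. Near $\Sigma_\lambda$ one invokes the hyperbolic analysis of~\cite{DWZ} --- radial-type estimates at the attracting and repelling orbits, a resonance-free strip, and escape of wavefront set from $\Sigma^-_\lambda$ toward $\Sigma^+_\lambda$. Near each $\kappa$ one builds a b-parametrix modulo $\ff_\kappa$ via the Mellin transform in $r$, which reduces the construction to inverting $I_\kappa(\mathfrak l)$ with the model conditions above along a contour $\{\Re\mathfrak l=\sigma\}$ avoiding the indicial set $\frac{2\pi i}{i\pi-\log\alpha(\lambda,\kappa)}\,\mathbb Z$; this forces the conormal order $s<\Re\mathfrak l_{\lambda,\kappa}-\tfrac32$, and uniformity as $\epsilon\to0$ holds because the indicial roots vary continuously with $\lambda'$ and stay off the contour. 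Gluing the three regimes by a partition of unity and absorbing the errors produces $(\operatorname{Id}-\mathcal C_{\lambda\mp i0})^{-1}$, hence the limits $(P-\lambda^2\pm i0)^{-1}f$; the $\bar H^{-3/2-}$ bound comes from combining the corner estimates with the near-attractor estimates of~\cite{DWZ}, and absolute continuity of $\operatorname{Spec}(P)$ on $\mathcal J^2$ follows from the boundedness of these boundary values via Stone's formula. For the singularity structure, the only singularities of the particular solution and of $\mathcal R_{\lambda'}f$ originate at the corners, and propagation of singularities for $\partial_{\ell^+}\partial_{\ell^-}$ together with the reflections $\gamma^\pm$ carries the leading corner singularity $N^*\Gamma^\nu_\lambda(\kappa)$ along the orbits $\mathscr O^\pm_\lambda$; since these accumulate precisely on $\Sigma^\pm_\lambda$, passing to closures gives $\WF\big((P-\lambda^2\pm i0)^{-1}f\big)\subset\Lambda^\pm(\lambda)\cup N^*(\sr)$, and iterating the b- and standard conormal regularity along the same chain yields~\eqref{eq:limit_conorm_reg}.

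I expect the main obstacle to be the corner analysis in the last step: identifying the precise b-operator $\mathcal C_{\lambda'}$ on $\widetilde\Omega$ (the straight-corner hypothesis in Definition~\ref{def:lambda_simple} is used exactly here, cf.~\S\ref{sec:kernel_computation}), proving invertibility of its indicial family on the relevant contour uniformly up to the spectrum, and --- most delicately --- matching this local corner parametrix with the global hyperbolic dynamics so that the accumulation of $\mathscr O^\pm_\lambda$ onto $\Sigma^\pm_\lambda$ is compatible both with the conormal bounds and with the closure defining $\Lambda^\pm(\lambda)$.
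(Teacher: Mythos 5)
Your overall strategy is the right one and agrees with the paper's at the level of architecture --- reduce to the boundary, treat the corners with the b-calculus on the blow-up $\widetilde\Omega$, identify the indicial roots $\tfrac{2\pi i}{i\pi-\log\alpha}\mathbb Z$, control the hyperbolic dynamics via radial-type estimates, and pass to the limit --- and your derivation of $\mathfrak l_{\lambda,\kappa}$ from the wedge model with the $\pm i0$ condition matches Proposition~\ref{prop:polyhom_EU}.  However there are two genuine gaps, both concentrated in the step you correctly flag as the hard one.

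First, the indicial data you use are the wrong ones.  You compute the indicial family of the \emph{interior} operator $r^2P(\lambda')$ on $\widetilde\Omega$ and propose to choose the Mellin contour to avoid $\mathfrak l_{\lambda,\kappa}\mathbb Z$.  But the parametrix has to be built for the \emph{boundary} operator (in the paper, the $2\times 2$ matrix $Q_\omega=\mathfrak X\circ d\mathcal C_\omega\circ\mathfrak G$ of Lemma~\ref{lem:transform_to_b_operator}), whose normal family $N(Q_\omega,\sigma)$ has a strictly larger zero set: besides the ``interior'' roots $\tfrac{2\pi i}{i\pi-\log\alpha}\mathbb Z$ it also vanishes on a second family $\tfrac{2\pi i}{3i\pi+\log\alpha}\mathbb Z$, because the single-layer reduction does not distinguish Neumann data of interior from exterior solutions (see the Remark following the normal-family lemma and the set $\mathcal Z_{\lambda,\kappa}$ in~\eqref{eq:zeros}).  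Since $\Re\tfrac{2\pi i}{3i\pi+\log\alpha}=\tfrac{6\pi^2}{9\pi^2+(\log\alpha)^2}$ is generically \emph{smaller} than $\Re\mathfrak l_{\lambda,\kappa}$ when $|\log\alpha|<\sqrt3\,\pi$, a contour chosen only to avoid $\mathfrak l_{\lambda,\kappa}\mathbb Z$ can pass directly through an exterior root, and the b-parametrix you describe would fail there.  The constraint $\mathcal Z_\lambda\cap(\mathbb R-ia)=\emptyset$ in Propositions~\ref{prop:corner_POS}, \ref{prop:global_semifredholm} and Lemma~\ref{lem:bd_LAP} is about the full zero set, not just the interior indicial roots.

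Second, you have no uniqueness argument, and without one the construction does not close.  ``Gluing the three regimes by a partition of unity and absorbing the errors'' only yields a semi-Fredholm estimate with a compact remainder (the paper's Proposition~\ref{prop:global_semifredholm}), and even that requires a delicate extra regime --- the corner reflections $\gamma^\pm_\lambda(\kappa)$, where a Heaviside cutoff meets the pulled-back pseudodifferential operator (Proposition~\ref{prop:ref_corner_POS}); these points are not covered by your ``away from $\mathcal K$ and $\Sigma_\lambda$'' case nor by the corner model, and the statement ``the billiard contracts off its periodic points, so a Neumann series converges'' is not accurate (the estimates are radial/microlocal, not norm contractions).  From the semi-Fredholm estimate you still must rule out a nontrivial kernel in the limiting problem, uniformly as $\epsilon\to0^+$; the paper does this by a flux argument for the dilation-invariant model (Lemma~\ref{lem:local_const}, via the Cauchy--Pompeiu formula on an annular contour), promoted to the global chess-billiard system in Lemma~\ref{lem:global_const} and applied to the single-layer data in Lemma~\ref{lem:uniqueness}.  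That argument is an essential, independent ingredient --- it is what makes the compactness argument in Lemma~\ref{lem:bd_LAP} converge to a unique limit --- and nothing in your sketch substitutes for it.  With those two items supplied, the rest of your outline (boundedness of the boundary data, transfer of conormal regularity through $S_{\lambda+i0}$, wavefront set propagation to $\Lambda^\pm\cup N^*(\sr)$, and the Sobolev numerology $s<\Re\mathfrak l_{\lambda,\kappa}-\tfrac32$) matches the paper's route.
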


\begin{Remarks}
1. To see why additional singularities are expected to appear as a result of the corners, consider the equation 
\[P(\lambda) u = f, \qquad f \in \CIc(\Omega)\]
where $P(\lambda)$ is defined in~\eqref{eq:P(lambda)def}. The relevant solutions are the limit of $H^1_0(\Omega)$ solutions to $P(\omega)u = f$ as $\omega \to \lambda$ from the upper half plane, and these must take the form 
\begin{equation}\label{eq:solution_near_corner}
    u(r, \tau) \sim \sum_{k = 1}^\infty c_k r^{\mathfrak l_{\lambda, \kappa} k} \Big[(\tau + i0)^{\mathfrak l_{\lambda, \kappa} k} - \alpha_+^{\mathfrak l_{\lambda, \kappa} k} \Big]
\end{equation}
where $(r, \tau)$ are the blown-up coordinates defined in~\eqref{eq:good_corner_coords}, and the exponentials are defined using the branch of log on $\C \setminus i(-\infty, 0)$. We will provide rigorous justification for this expansion in Proposition~\ref{prop:polyhom_EU}, but it is easy to see that these solutions are essentially found by separation of variables near the corner, and one can directly check that $w$ satisfy the boundary condition $w(r, - \alpha_-) = w(r, \alpha_+) = 0$. Furthermore, $u$ has a singularity on the special ray $sr = \{\tau = 0\}$. Heuristically, we see that $u_+$ from Theorem~\ref{thm:evolution} should look like~\eqref{eq:solution_near_corner} near the corner, and the limits $(P - \lambda^2 - i0)^{-1}$ should look like $\Delta u_+$ near the corner. This provides some justification for the numerology appearing in Theorem~\ref{thm:spectral}.

\noindent
2. We see that if the characteristic ratio satisfies $(\log \alpha_\kappa)^2 < 3 \pi^2$ for every $\kappa \in \mathcal K$, then
\[\chi \cdot (P - \lambda^2 \pm i0)^{-1} f \in H^{-1}(\Omega)\]
for every $\chi \in C^\infty(\overline \Omega)$ such that $\supp \chi \cap \Gamma_\lambda = \emptyset$. Numerically, this is roughly $\frac{1}{230.75} < \alpha_\kappa(\lambda) < 230.76$. In experimental and numerical setups, the characteristic ratio is well within that range, so away from the periodic trajectories, $u_+ = \Delta_\Omega^{-1}(P - \lambda^2 \pm i0)^{-1} f$ lies in the energy space $H^1(\Omega)$. Therefore, it is difficult to visually observe these singularities compared to the much stronger singularities near the attractor.
\end{Remarks}

\subsection{Related works}
The formation of singularities along the periodic trajectory of the chess-billiard map was first predicted in the physics literature by Maas--Lam \cite{maas_lam_95}. This has since been experimentally observed by by Maas et al. \cite{maas_observation_97}, Hazewinkel et al. \cite{Hazewinkel_2010}, Brouzet \cite{brouzet_16}, and many more. The same equation~\eqref{eq:internal_waves} also has other physical applications. In particular, it also describes inertial waves in a rotating fluid stratified in angular momentum, see Maas \cite{Maas2001}.  
Predating the experimental physics work, the  Poincar\'e problem and has been studied in the math literature by John \cite{John_41}, Aleksandryan \cite{Aleksandryan_60}, and Ralston \cite{Ralston_73}. More recently, Colin~de~Verdi\`ere--Saint-Raymond \cite{Verdiere_Raymond_20} and Dyatlov--Zworski \cite{Dyatlov_Zworski_19} considered a model of the internal waves equation on surfaces without boundary by considering certain 0-th order pseudodifferential operators on surfaces. They proved, using different methods, that singularities form along certain dynamical attractors associated with the system. The viscosity limits of these operators on surfaces were then studied by Galkowski--Zworski \cite{Galkowski_Zworksi_22} and Wang \cite{Wang_22}, and spectral properties of 0-th order pseudodifferential operators were studied by Zhongkao Tao \cite{Tao_19}. The result for planar domains with smooth boundary was later proved in \cite{DWZ}. Furthermore, in the absence of hyperbolic attractors in the underlying dynamics, Colin~de~Verdi\`ere--Li proved in \cite{ZLC} that the solutions remain bounded in energy space if the underlying dynamics is strongly ergodic.

\subsection{Organization of the paper}
We summarize the necessary properties of Morse--Smale dynamics in \S\ref{sec:dynamical_setup} and give examples of domains that satisfy the hypothesis of our paper. In \S\ref{sec:microlocal}, we outline the necessary microlocal tools from the classical calculus and the b-calculus. The main result of \S\ref{sec:microlocal} is Proposition \ref{prop:full_b-est}, which is a b-elliptic estimate for pseudodifferential operators in the full calculus. This will allow us to propagate singularities through the corner. 

In \S\ref{sec:reduction_to_boundary}, we reduce the spectral problem of Theorem~\ref{thm:spectral} to a problem on the boundary. This is given by the restricted boundary reduced operator $d\mathcal C_\omega$, and we explicitly compute the Schwartz kernel of this operator. In \S\ref{sec:propagation_estimates}, we describe how singularities are propagated by $d\mathcal C_\omega$. In particular, this describes how the corner creates singularities and how they propagate. Ultimately, the propagation estimates gives us a global semi-Fredholm estimate for $d \mathcal C_\omega$, which is needed in establishing the limiting absorption principle in \S\ref{sec:LAP}. The other ingredient needed for the limiting absorption principle is uniqueness of a limiting problem, which is given in the beginning of \S\ref{sec:LAP}. Finally, in \S\ref{sec:evolution}, we deduce the main evolution result of Theorem~\ref{thm:evolution} using Stone's formula.

\section{Dynamical setup}\label{sec:dynamical_setup}
Since $\Omega$ is a domain with corners, $\partial \Omega$ is only piecewise-smooth and Lipschitz. There two useful perspectives that we will take when considering the boundary. First, $\partial \Omega$ is homeomorphic to $\mathbb S^1$, and smoothly so away from the corners. Second, we can disconnect $\partial \Omega$ at the corners and view $\partial \Omega$ as the union of embedded (or immersed if there is only one corner) one-dimensional manifolds with boundary, the smooth structure of which are induced from the embedding $\partial \Omega \to \R^2$. It is then sensible to use parameterizations of $\partial \Omega$ that preserves both perspectives.
\begin{definition}
    A \textup{b-parameterization} of $\partial \Omega$ is a bi-Lipschitz positively-oriented homeomorphism
    \[\theta \in \mathbb S^1 = \R/\Z \quad \mapsto \quad \mathbf x(\theta) \in \partial \Omega \subset \R^2\]
    such that $\mathbf x$ is smooth on $\mathbb S^1 \setminus \mathbf x^{-1}(\mathcal K)$. 
\end{definition}
Note that such parameterizations preserve the smooth structure near the endpoints of the one-dimensional manifolds. For example, the arclength parameterization of $\partial \Omega$ is a b-parameterization with the appropriate normalization. 

\subsection{Morse--Smale dynamics}
In the following lemma, we show that under the Morse--Smale condition, we can always find a b-parameterization so that the chess billiard map near periodic points is locally affine linear. This is possible since the periodic points are hyperbolic. 
\begin{lemma}\label{lem:linear_coords}
    Let $\Omega$ be Morse--Smale with respect to $\lambda \in (0, 1)$ and let $\Sigma^\pm_\lambda$ be the attracting/repulsive periodic points. There exists a b-parameterization $\mathbf x$ such that $\partial_\theta \gamma^\pm(\mathbf x(\theta))$ is locally constant in a small neighborhood of $\mathbf x^{-1}(\Sigma_{\lambda})$, and $\pm \log( \partial_\theta b(\mathbf x(\theta), \lambda)) < 0$ in a small neighborhood of $\mathbf x^{-1}(\Sigma_{\lambda}^\pm)$ respectively. 
\end{lemma}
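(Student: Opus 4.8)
The plan is to build the desired b-parameterization by working separately near each periodic orbit of the attracting set $\Sigma^+_\lambda$ and the repelling set $\Sigma^-_\lambda$, then patching with a partition of unity away from $\Sigma_\lambda$. Fix a periodic point $x_0 \in \Sigma_\lambda$ of minimal period $n$, so that $b^n$ fixes $x_0$ with $\partial_\theta b^n(x_0) = \rho \ne 1$; hyperbolicity gives $\rho > 0$ (since $b$ is orientation preserving) and $\rho \ne 1$, and the sign of $\log\rho$ records whether $x_0$ is attracting or repelling. By item (2) of Definition~\ref{def:MS}, $b$ is smooth near $\Sigma_\lambda$, so $b^n$ is a smooth local diffeomorphism fixing $x_0$. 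By the classical Sternberg (or even just $C^1$ Hartman--Grobman, but here we want smooth) linearization of a one-dimensional hyperbolic fixed point, there is a smooth local coordinate $\theta$ centered at $x_0$ in which $b^n(\theta) = \rho\,\theta$. Equivalently, one can construct this coordinate explicitly by the standard averaging/telescoping formula: set $\phi(\theta) = \lim_{m\to\infty} \rho^{-m} (b^n)^{\circ m}(\theta)$ in the attracting case (and use the inverse branch in the repelling case), which converges in $C^\infty$ on a small interval and conjugates $b^n$ to multiplication by $\rho$.

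Next I would upgrade this linearization of $b^n$ at the single point $x_0$ to a linearization of the involutions $\gamma^\pm$ along the whole orbit. The key structural fact is that $b = \gamma^+\circ\gamma^-$ with $\gamma^\pm$ orientation-reversing involutions, so near a periodic point the orbit $\{x_0, b(x_0),\dots,b^{n-1}(x_0)\}$ together with its $\gamma^-$-images is a finite $b$-invariant set, and $\gamma^\pm$ permutes these points. Starting from the linearizing coordinate $\theta$ at $x_0$, I define the coordinate at the other orbit points by pushing forward via $b$, $\gamma^+$, $\gamma^-$ as dictated by the group structure generated by these maps; because $b^n$ is already linear at $x_0$, the resulting assignment is consistent (the only potential obstruction — a holonomy around the cycle — is exactly the condition $\partial_\theta b^n(x_0) = \rho$ being realized, which holds by construction). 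In these coordinates $b$ is locally affine linear near each orbit point with multiplier the appropriate root/power of $\rho$, and since $\gamma^\pm$ are involutions exchanging pairs of these points, $\partial_\theta\gamma^\pm$ is locally constant near $\mathbf x^{-1}(\Sigma_\lambda)$, and $\pm\log\partial_\theta b < 0$ near $\mathbf x^{-1}(\Sigma^\pm_\lambda)$ by the definition of attracting/repelling in~\eqref{eq:periodic_points}. Since $\Sigma_\lambda$ is finite (finitely many hyperbolic orbits — this follows from $\Sigma_\lambda$ being closed, disjoint from $\corner$, and consisting of isolated hyperbolic points) and disjoint from $\corner$ by item (2), all these local coordinate patches are disjoint for small enough neighborhoods, and we extend to a global positively-oriented bi-Lipschitz homeomorphism $\mathbf x:\mathbb S^1\to\partial\Omega$, smooth off $\mathbf x^{-1}(\corner)$, by interpolating with any fixed b-parameterization using a partition of unity supported away from $\Sigma_\lambda\cup\corner$; the interpolation can be done smoothly because on the overlap region both parameterizations are smooth orientation-preserving, and monotone interpolation of the transition map keeps $\mathbf x$ a homeomorphism.

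The main obstacle is the second step: promoting the pointwise linearization of the return map $b^n$ to a simultaneous statement about $\gamma^\pm$ along the entire periodic cycle while keeping the coordinate a genuine global b-parameterization (bi-Lipschitz, positively oriented, smooth away from the corners). One has to check that the local coordinates defined at different points of a single $b$-orbit — and at the $\gamma^\pm$-partners of those points — glue without contradiction, which amounts to verifying that the cocycle of derivatives of $\gamma^\pm$ around the orbit multiplies out to $\partial_\theta b^n(x_0)$; this is automatic from $b=\gamma^+\gamma^-$ but needs to be stated carefully. The linearization itself (first step) is standard and the global patching (last step) is routine once the neighborhoods are chosen small enough to be pairwise disjoint and disjoint from $\corner$.
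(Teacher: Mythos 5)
Your proposal is correct and follows essentially the same route as the paper: linearize $b^n$ smoothly near a fixed point of each periodic orbit (the paper nominally cites Hartman--Grobman, but as you note the one-dimensional Koenigs/Sternberg linearization is the tool that actually yields the needed smoothness), push this coordinate forward along the orbit using $b^k$ and $\gamma^-$, observe that the cocycle of derivatives closes up precisely because $\partial_\theta b^n = \rho$ by construction, and finally patch together over the finitely many orbits and extend away from $\Sigma_\lambda$. No substantive differences.
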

\begin{proof}
    We drop $\lambda$ from the notation. Let $x_0 \in \Sigma^+_\lambda$ with period $n$. It suffices to construct $\mathbf x$ near the orbit of $x_0$ and $\gamma^-(x_0)$. It follows from the local linearization lemma in \cite[Lemma 6.2]{DWZ} that there exists a b-parameterization $\tilde{\mathbf x}$ such that that $\tilde{\mathbf x}(0) = x_0$ and $b^n(\tilde{\mathbf x}(\theta)) =: c\theta$ for some $0 < c < 1$, and $\theta \in (-\delta, \delta)$, $\delta > 0$ sufficiently small.  

    Let $\theta_k^+ = \tilde{\mathbf x}^{-1}(b^k(x_0))$, $k = 0, \dots, n - 1$ denote the $k$-th point in the orbit of $0$ lifted to $\mathbb S^1$. Let $\theta_k^- = \gamma^-(\theta)$
    \begin{align*}
        &\mathbf x(\theta) := b^k(\widetilde{\mathbf x}(c^{-\frac{1}{n}}(\theta - \theta_k^+))) \quad \text{for} \quad \theta \in (\theta_k^+ - \delta, \theta_k^+ + \delta), \\
        &\mathbf x(\theta) := \gamma^{-}(\mathbf x(\theta_k^- - \theta + \theta_k^+)) \quad \text{for} \quad \theta \in (\theta_k^- - \delta, \theta_k^- + \delta)
    \end{align*}
    Upon possibly $\delta > 0$, this is well defined in $\delta$-neighborhoods of $\theta_k^\pm$ and extends to all of $\mathbb S^1$ to a b-parameterization. If there are multiple periodic orbits in $\Sigma^+$, this construction simply be repeated since it is completely local. Such $\mathbf x$ fits the description of the lemma.  
\end{proof}

The most important property of the Morse--Smale condition is that it is an open condition in $\lambda$. This is one of the key components to showing that the spectral measure of $P$ is absolutely continuous (in fact H\"older continuous) near $\lambda^2$. Since the periodic points depend smoothly on $\lambda$ and are away from the corners, the same result from \cite{DWZ} still holds with a similar proof. 
\begin{lemma}
    The set of $\lambda \in (0, 1)$ for which $\Omega$ is Morse--Smale with respect to $\lambda$ is open, and the periodic set $\Sigma_\lambda$ depends smoothly on $\lambda$ as long as $\Omega$ is Morse--Smale with respect to $\lambda$. 
\end{lemma}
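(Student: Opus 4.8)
The plan is to verify that each of the finitely many conditions in Definitions~\ref{def:lambda_simple}, \ref{def:corner_type} and~\ref{def:MS} is stable under a small perturbation of $\lambda$, and that the perturbed periodic points trace out smooth curves. The conditions not involving corners are handled exactly as in the smooth-boundary case of \cite[\S2]{DWZ}; the one new feature is that the corners of $\Omega$ form a \emph{fixed} finite set $\mathcal K\subset\partial\Omega$, independent of $\lambda$, so one must separately check that these fixed points remain characteristic corners of unchanged type. Everything rests on the observation that $\ell^\pm(x,\lambda)$ from~\eqref{eq:dual_factor} is jointly smooth in $(x,\lambda)\in\overline\Omega\times(0,1)$. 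Fix $\lambda_0\in(0,1)$ with $\Omega$ Morse--Smale with respect to $\lambda_0$.

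First I would show that $\lambda$-simplicity (Definition~\ref{def:lambda_simple}) is open. At a smooth nondegenerate extremum of $\ell^\pm(\bullet,\lambda)|_{\partial\Omega}$, the implicit function theorem applied to $d\big(\ell^\pm(\bullet,\lambda)|_{\partial\Omega}\big)=0$ gives a smooth branch $\lambda\mapsto x^\pm_{\max/\min}(\lambda)$ along which nondegeneracy persists; at a corner the extremum is just the corresponding point of $\mathcal K$. That no additional critical points of $\ell^\pm(\bullet,\lambda)|_{\partial\Omega}$ appear off small neighborhoods of these four points, that the four points stay distinct, and that $\ell^\pm(x^\mp_{\max})\neq\ell^\pm(x^\mp_{\min})$, are open by a routine compactness argument on $\partial\Omega$. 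To see that a corner $\kappa\in\mathcal K$ with straight edges $\theta\mapsto\kappa+\theta\mathbf v_1$, $\theta\mapsto\kappa-\theta\mathbf v_2$ stays a characteristic corner of the same type with $\alpha_\pm(\lambda,\kappa)>0$: whether $\kappa$ is a local extremum of $\ell^{-\nu}(\bullet,\lambda)|_{\partial\Omega}$ and whether the level curves of $\ell^{\nu}(\bullet,\lambda)$ cross $\kappa$ transversally are both decided by the signs of the four one-sided directional derivatives $\langle d\ell^{\pm}(\bullet,\lambda),\mathbf v_j\rangle$, $j=1,2$; these are smooth in $\lambda$ and nonvanishing at $\lambda_0$ because $\kappa$ is a characteristic corner with respect to $\lambda_0$, so their signs — hence the corner type — are locally constant, while $\alpha_\pm(\lambda,\kappa)$, being absolute values of ratios of these derivatives, vary smoothly and stay positive. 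Straightness of $\partial\Omega$ near $\mathcal K$ is independent of $\lambda$.

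Next I would address conditions (2)--(3) of Definition~\ref{def:MS} and the smooth dependence of $\Sigma_\lambda$ simultaneously. As in \cite{DWZ}, the involutions $\gamma^\pm_\lambda$ from~\eqref{eq:gamma_def} and hence $b_\lambda=\gamma^+_\lambda\circ\gamma^-_\lambda$ depend continuously on $\lambda$ in $C^0(\partial\Omega;\partial\Omega)$ and smoothly in $C^\infty$ on compact subsets away from $\{x^\pm_{\max},x^\pm_{\min}\}$; near a corner, by straightness and the condition $\ell^\pm(x^\mp_{\max})\neq\ell^\pm(x^\mp_{\min})$, the involution fixing the corner is, in the fixed b-parameterization, a linear reflection $\theta\mapsto -c(\lambda)\theta$ between the two edges with $c(\lambda)>0$ smooth, while the other involution carries the corner diffeomorphically onto a neighborhood of a smooth boundary point, so both $\gamma^\pm_\lambda$ remain smooth in $(\theta,\lambda)$ there. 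Now let $n$ be a common multiple of the (finitely many) minimal periods of points of $\Sigma_{\lambda_0}$, so $\mathrm{Fix}(b_{\lambda_0}^{n})=\Sigma_{\lambda_0}$; each such fixed point is nondegenerate, $\partial_x b_{\lambda_0}^{n}\neq1$ there, being a positive power of the hyperbolic multiplier, and $\Sigma_{\lambda_0}\cap\mathcal K=\varnothing$. Pick disjoint neighborhoods $U_i$ of the $x_i\in\Sigma_{\lambda_0}$ with $\overline{U_i}\cap\mathcal K=\varnothing$. On the compact complement of $\bigcup_iU_i$ we have $|b_{\lambda_0}^{n}(x)-x|\ge c>0$, so $|b_\lambda^{n}(x)-x|\ge c/2$ there for $|\lambda-\lambda_0|$ small by uniform $C^0$ dependence; hence every periodic point of $b_\lambda$ lies in $\bigcup_iU_i$, where $b_\lambda$ is smooth. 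The implicit function theorem at $(x_i,\lambda_0)$ applied to $b_\lambda^{n}(x)-x$ then produces, for $\lambda$ near $\lambda_0$, a unique smooth branch $\lambda\mapsto x_i(\lambda)\in U_i$ of fixed points of $b_\lambda^{n}$; continuity of the iterates shows $x_i(\lambda)$ keeps the minimal period $n_i$ of $x_i$, and $\partial_x b_\lambda^{n_i}(x_i(\lambda))\to\partial_x b_{\lambda_0}^{n_i}(x_i)\neq1$ keeps it hyperbolic and of unchanged stability type. Thus $\Omega$ is Morse--Smale for all $\lambda$ near $\lambda_0$ (openness), the number of periodic points is locally constant, and $\Sigma_\lambda=\{x_i(\lambda)\}_i$ together with its splitting $\Sigma^\pm_\lambda$ is a finite union of graphs of smooth functions of $\lambda$ (smooth dependence).

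The one genuinely delicate ingredient is the uniform-in-$\lambda$ continuity of $\gamma^\pm_\lambda$ across all of $\partial\Omega$ near the extremal points $x^\pm_{\max/\min}$, where the characteristic foliation has a turning point; but this is precisely the analysis already done in the smooth-boundary case of \cite{DWZ}, and the corners add nothing new since the straightness hypothesis makes $\gamma^\pm_\lambda$ linear in the b-parameter there. All the rest is a routine mix of the implicit function theorem and compactness of $\partial\Omega$. (Alternatively, the step ``no new periodic orbits appear'' could be deduced from continuity of the rotation number of $b_\lambda$ together with the mode-locking forced by the coexistence of a hyperbolic attracting and a hyperbolic repelling orbit at $\lambda_0$.)
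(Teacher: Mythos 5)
Your argument is correct and rests on the same central device as the paper's proof: applying the implicit function theorem to $b^n(\theta,\lambda)-\theta$ at the hyperbolic periodic points, using the joint smoothness of $\ell^\pm$ (hence of $b^n$) in $(\theta,\lambda)$ near $\Sigma_{\lambda_0}$. You spell out more of the routine details than the paper does --- openness of $\lambda$-simplicity itself, persistence of corner types and of the ratios $\alpha_\pm$, and the compactness step ruling out new periodic orbits --- all of which the paper's one-paragraph proof leaves implicit.
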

\begin{proof}
    Fix $\lambda_0 \in (0, 1)$ such that $\Omega$ is Morse--Smale with respect to $\lambda_0$. We can identify $\partial \Omega$ with $\mathbb S^1$ via a b-parameterization. Since $\ell^\pm(x, \lambda)$ depends smoothly on $\lambda$, we see that $b^n(\theta, \lambda): \mathbb S^1_\theta \to \mathbb S^1$ depends smoothly on $\lambda$ near $\lambda_0$, and is piece-wise smooth and Lipschitz in $\theta$. Furthermore, $b^n(\theta, \lambda)$ is smooth in both $\lambda$ near $\lambda_0$ and $\theta$ near the periodic points $\Sigma_\lambda$. The lemma then follows by the implicit function theorem since $\partial_\theta b^n(\theta, \lambda_0) \neq 1$ for $\theta \in \Sigma_\lambda$. 
\end{proof}

We refer the interested reader to de Melo--van Strien \cite[Chapter 1]{Melo_Strien} for a more general discussion of one parameter families of circle homeomorphisms.

\subsection{Examples and nonexamples}
We give two examples of domains that fits into the scope of this paper, as well as an interesting and physically relevant corner case that does not fit into the context of this paper. 
\begin{figure}
    \centering
    \includegraphics{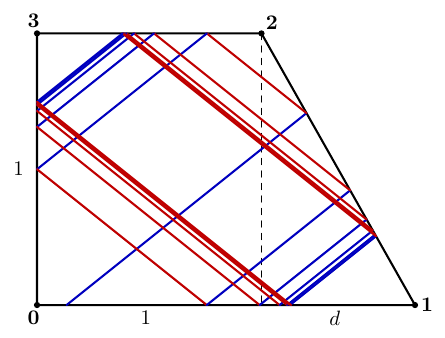}
    \hspace{5mm}
    \includegraphics{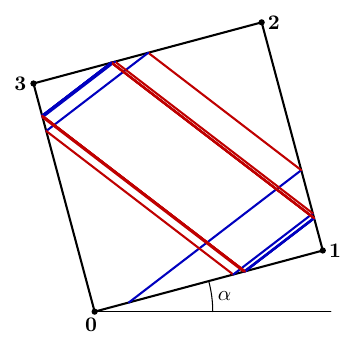}
    \caption{Examples of domains with Morse--Smale dynamics. In the left, we have the trapezoid $\mathcal T_{1, d}$ and on the right we have the tilted square $Q_\alpha$. Graphics courtesy of \cite{DWZ}.}
    \label{fig:domains}
\end{figure}

\subsubsection{Trapezoids}
The most important example that is covered by this paper is the trapazoidal domain. These are the domains often found in physics experiments such as \cite{maas_observation_97}, \cite{Hazewinkel_2010}, and \cite{brouzet_16}. Let $\mathcal T_{a, b} \subset \R^2$ be the open trapezoid with vertices $(0, 0)$, $(a + b, 0)$, $(a, 1)$, $(0, 1)$, $a, b > 0$. See Figure~\ref{fig:domains}. It is easy to see that $\mathcal T_d$ is $\lambda$-simple when 
\begin{equation}
    \lambda \in I_b := \left(\frac{b}{\sqrt{ 1 + b^2}}, 1 \right)
\end{equation}
It was shown by Lenci--Bonanno--Cristador \cite{Lenci_22} that $\Omega$ is Morse--Smale with respect to a full measure set of $\lambda \in I_b$. In other words, trapezoidal domains satisfy the hypothesis of Theorem~\ref{thm:evolution} and \ref{thm:spectral} for a generic choice of $\lambda$.

\subsubsection{Tilted squares}
For $\alpha \in (0, \pi/2)$, define $Q_\alpha \subset \mathcal \R$ as the square with vertices $(0, 0)$, $(\cos \alpha, \sin \alpha)$, $\sqrt{2}( \cos(\alpha + \frac{\pi}{4}), \sin(\alpha + \frac{\pi}{4}))$, $\cos(\alpha + \frac{\pi}{2}, \sin (\alpha + \frac{\pi}{2}))$. See Figure~\ref{fig:domains}. It is shown in \cite[\S 2.5]{DWZ} by direct computation that if 
\[\frac{\sqrt{1 - \lambda^2}}{\lambda} = \tan \beta, \quad \text{for} \quad 0 < \alpha < \pi/8, \quad \frac{\pi}{4} - \alpha < \beta < \frac{\pi}{4} + \alpha,\]
then $Q_{\alpha}$ is Morse--Smale with respect to $\lambda$. 

We mention that when $\alpha = 0$, that is for the untilted square, $\Omega$ is \textit{not} Morse--Smale for any $\lambda \in (0, 1)$. In this case, the chess-billiard map is generically ergodic, and it can be shown via Fourier series that the solution to~\eqref{eq:internal_waves} remains bounded in energy space $H^1$ for all times for a generic choice of $\lambda$. See \cite[\S 4.1]{ZLC} for details.

\subsubsection{Exotic corners}
There are several corner cases that do not fit into the context of this paper but are still interesting. We mention here a case that significantly affects the dynamics. In a domain with corner, it is possible for a corner to be an extremum of both $\ell^+_\lambda$ and $\ell^-_\lambda$ at the same time. See Figure~\ref{fig:exotic_corner}. Note that this is clearly not possible if the boundary is smooth. In this case, the chess-billiard map $b$ is still well-defined, but has a corner as a fixed point. 
\begin{figure}
    \centering
    \includegraphics[scale = 0.8]{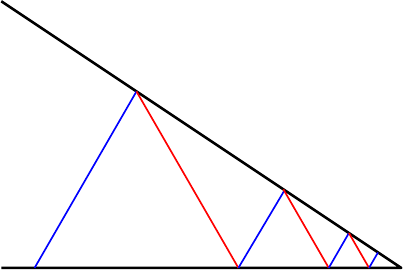}
    \caption{A domain with corner where the corner is a fixed point of the chess billiard map. Such corners do not fit into the scope of this paper.}
    \label{fig:exotic_corner}
\end{figure}
In particular, all triangular domains have a vertex that is a fixed point of the chess-billiard map. In all trapezoidal domains, such corners also appear for all $\lambda \in (0, 1)$ sufficiently small. These types of corners are referred to as \textit{subcritical corners} in the physics literature. See Wunsch \cite{wunsch}.

\section{Microlocal and b-calculus preliminaries}\label{sec:microlocal}
In this section, we give an overview of some standard tools from microlocal analysis needed in the proof. The basics of the Kohn--Nirenberg calculus on $\mathbb S^1$ are established in \S\ref{sec:S1microlocal}, which will be needed to analyze the boundary reduced equation away from corners. We will need the b-calculus for analysis near the corners, and the basics are estabished in \S\ref{sec:small_b_calc}-\ref{sec:boundary_terms}. In particular, we prove a version of the b-elliptic estimates for operators in the full calculus (see \S\ref{sec:boundary_terms}). Generally in the literature, elliptic estimates are proved for b-differential operators, which belong to the small calculus since their Schwartz kernels are supported on the diagonal and vanish near the boundary. However, the operators that appear in this paper have Schwartz kernels that do not vanish near the boundaries, so we will need to consider boundary terms more carefully. 

\subsection{Microlocal analysis on \texorpdfstring{$\mathbb S^1$}{S1}}\label{sec:S1microlocal}
Via a b-parameterization, $\partial \Omega$ can be identified with $\mathbb S^1 = \R/\Z$. We emphasize here that $\mathbb S^1$ and $\partial \Omega$ clearly do not have the same smooth structure. Nevertheless, it is very helpful to consider pseudodifferential operators on $\mathbb S^1$ since the operators we will encounter are often of the form of a pseudodifferential operator composed with a Heaviside cutoff. For a thorough introduction to the Kohn--Nirenberg calculus of pseudodifferential operators, we refer the reader to \cite[Chapter 18.1]{H3} for details. We give the exposition needed for this paper below. Define set of 1-periodic Kohn--Nirenberg symbols $S^m(T^* \mathbb S^1)$ as the set of $a \in C^\infty(\R_x \times \R_\xi)$ that satisfies
\begin{equation}\label{eq:KN_seminorm}
    a(x + 1, \xi) = a(x, \xi), \qquad |\partial_x^\alpha \partial_\xi^\beta a(x, \xi)| \le C_{\alpha, \beta} \langle \xi \rangle^{m - \beta},
\end{equation}
where $\langle \xi \rangle := \sqrt{1 + |\xi|^2}$. Since we only consider symbols on $T^* \mathbb S^1$, there is no ambiguity when we denote $S^m = S^m(T^* \mathbb S^1)$. Define the residual class of symbols
\begin{equation*}
    S^{-\infty} = \bigcap_{m \in \R} S^m
\end{equation*}
Pseudodifferential operators are obtained by quantizing symbols. We use the standard quantization procedure and define for $a \in S^m$ the operator $\Op(a): C^\infty(\mathbb S^1) \to C^\infty(\mathbb S^1)$ by 
\begin{equation}\label{eq:std_quant}
    \Op(a) u(x) = \frac{1}{2 \pi} \int_{\R^2} e^{i(x - y) \xi} a(x, \xi) u(y)\, dy d \xi.
\end{equation}
In the integral, $u \in C^\infty (\mathbb S^1)$ is understood as a 1-periodic function, and the integral is understood in the sense of oscillatory integrals (see \cite[\S1]{GS_94}). $\Op(a)$ also extends to an operator from $\mathcal D'(\mathbb S^1) \to \mathcal D'(\mathbb S^1)$. One can verify that this quantization can be equivalently given in Fourier series by
\begin{equation}\label{eq:op_fourier_series}
    \begin{gathered}
        \Op(a) u(x) = \sum_{k, n \in \Z} e^{2 \pi i n x} a_{n - k} (k) \hat u(k), \\
        a_\ell(k) := \int_0^1 a(x, 2 \pi k) e^{-2 \pi i \ell x}\, dx, \quad \hat u(k) := \int_0^1 u(x) e^{2 \pi i kx} \, dx.
    \end{gathered}
\end{equation}
From this representation, it is clear that the symbol of an operator is not unique, and that when $a \in S^m$ function of $\xi$ only, $\Op(a)$ is a Fourier multiplier:
\[\Op(a)u(x) = \sum_{k \in \Z} e^{2 \pi i nx} a(2 \pi k) \hat u(k).\]

The pseudodifferential operators of order $m$ on $\mathbb S^1$ are then defined as 
\begin{equation}\label{eq:KN_class}
    \Psi^m(\mathbb S^1) := \{\Op(a): a \in S^m\}
\end{equation}
for $m \in \R$ and $m = \infty$. Note that $\Psi^{-\infty}(\mathbb S^1) = \bigcap_{m \in \R} \Psi^s(\mathbb S^1)$, and it follows from~\eqref{eq:op_fourier_series} that
\begin{equation}\label{eq:smoothing}
    \Psi^{-\infty}(\mathbb S^1) = \{R: C^\infty(\mathbb S^1) \to \mathcal D'(\mathbb S^1) :\text{the Schwartz kernel lies in $C^\infty(\mathbb S^1 \times \mathbb S^1)$}\}.
\end{equation}
Therefore, $R \in \Psi^{-\infty}(\mathbb S^1)$ extends to an operator from $\mathcal D'(\mathbb S^1) \to C^\infty( \mathbb S^1)$.

For a (possibly $\omega$-dependent) operator $A = \Op(a) \in \Psi^m(\mathbb S^1)$ for some $a \in S^m$, we define the principal symbol as
\begin{equation}
    \sigma_m(A) = [a] \in S^m/S^{m - 1}.
\end{equation}
It is clear from \eqref{eq:op_fourier_series} that the full symbol $a$ is not unique. On the other hand, the principal symbol is canonically defined. In fact, we have a short exact sequence
\[0 \rightarrow \Psi^{m - 1} \rightarrow \Psi^{m} \xrightarrow{\sigma_m} S^m/S^{m - 1} \to 0. \]
In our notation, we will often drop the brackets around the principal symbol, and it will be implicitely understood as an element of the quotient class. In particular, we write $\sigma_m(A) = b$ for any $b$ that satisfies $a - b \in S^m - 1$ uniformly in $\omega$. 

The operators we encounter will almost always be $\omega$-dependent families of operators, and we write $A = A_\omega \in \Psi^m(\mathbb S^1)$ when the symbols of these operators are given by $a = a_\omega \in S^m(T^* \mathbb S^1)$ and satisfy \ref{eq:KN_seminorm} uniformly in the parameter $\omega$. Then $\omega$ dependence will often be suppressed in the notation. 

Next we characterize the composition of pseudodifferential operators: let $a \in S^m$ and $b \in S^\ell$ then
\begin{equation}\label{eq:classical_comp}
    \begin{gathered}
        \Op(a) \Op(b) \in \Psi^{m + \ell}(T^* \mathbb S^1), \quad \Op(a) \Op(b) = \Op(a \# b), \\
        \text{where} \quad a \# b = e^{i D_\xi D_y} a(x, \xi) b(y, \eta) \big|_{\substack{y = x \\ \eta = \xi}} \in S^{m + \ell}.
    \end{gathered}
\end{equation}
Here, we use the H\"ormander convention of $D_x := -i\partial_x$. In other words, we have the asymptotic summation formula
\begin{equation}\label{eq:comp_expansion}
    a \# b \sim \sum_{k = 0}^\infty \frac{i^k}{k!} D_\xi^k a(x, \xi) D_x^k b(x, \xi),
\end{equation}
which is understood as
\[a\# b - \sum_{k = 0}^{N - 1} \frac{i^k}{k!} D_\xi^k a(x, \xi) D_x^k b(x, \xi) \in S^{m + \ell - N}.\]

Next, the definition of pseudodifferential operators do not depend on the choice of coordinates on $\mathbb S^1$. We have the following change of variables property. 
\begin{lemma}\label{lem:CoV}
    Let $a \in S^m$ and let $\psi: \mathbb S^1 \to \mathbb S^1$ be a diffeomorphism. Then there exists $\tilde a \in S^m$ such that 
    \begin{equation}
        \psi^* \Op(a) (\psi^{-1})^* = \Op (\tilde a).
    \end{equation}
    Furthermore, $\tilde a$ has asymptotic expansion 
    \begin{equation}\label{eq:change_expansion}
        \tilde a \sim \sum_{k = 0}^\infty L_k(a \circ \tilde \psi),
    \end{equation}
    where $L_k$ are differential operators of order $2k$ on $T^* \mathbb S^1$ and map $S^m \to S^{m - k}$, and $\widetilde \psi$ is the lifted symplectomorphism of $\psi$, i.e. 
    \[\widetilde \psi: T^* \mathbb S^1 \to T^* \mathbb S^1, \quad \widetilde \psi(x, \xi) := (\psi(x), (d \psi(x))^{-1} \xi).\]
    Futhermore, $L_0 = 1$, and $L_k = L'_k D_\xi$ for some differential operator $L'_k$ for every $k \ge 1$. 
\end{lemma}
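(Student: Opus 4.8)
The plan is to run the standard argument for the coordinate invariance of the Kohn--Nirenberg calculus (cf.~\cite[\S18.1]{H3}), working in charts of $\mathbb S^1$ and keeping careful track of the three structural assertions $L_0 = 1$, $\mathrm{ord}(L_k) = 2k$, and $L_k = L_k' D_\xi$. First I would localize near the diagonal: the Schwartz kernel of $\Op(a)$ is smooth off the diagonal of $\mathbb S^1 \times \mathbb S^1$ (integrate by parts in $\xi$ in \eqref{eq:std_quant}), and conjugation by the diffeomorphism $\psi$ preserves $C^\infty(\mathbb S^1\times\mathbb S^1)$, so the part of $\Op(a)$ whose kernel is supported away from the diagonal contributes only an element of $\Psi^{-\infty}(\mathbb S^1)$; this is harmless since $\tilde a \sim \sum_k L_k(a\circ\widetilde\psi)$ is asserted only modulo $S^{-\infty}$. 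Using a partition of unity subordinate to coordinate charts I may therefore assume $a(x,\xi)$ is supported in $\{|x - z| < \epsilon\}$ for small $\epsilon$, with $x - z$ computed in a fixed chart.

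Next I compute the conjugated operator near the diagonal. Writing $[\psi^*\Op(a)(\psi^{-1})^* v](x) = [\Op(a)(v\circ\psi^{-1})](\psi(x))$ and changing variables $y = \psi(z)$ in \eqref{eq:std_quant} produces an oscillatory integral with phase $(\psi(x) - \psi(z))\xi$. On the near-diagonal support I factor $\psi(x) - \psi(z) = (x - z)\Phi(x,z)$ with $\Phi(x,z) = \int_0^1 \psi'(z + t(x - z))\,dt$ smooth and nonvanishing (taking $\psi$ orientation-preserving; the other case is identical), and substitute $\eta = \Phi(x,z)\xi$. This exhibits $\psi^*\Op(a)(\psi^{-1})^*$, modulo $\Psi^{-\infty}$, as the operator with amplitude
\[
b(x,z,\eta) \;=\; a\!\left(\psi(x), \tfrac{\eta}{\Phi(x,z)}\right)\frac{\psi'(z)}{\Phi(x,z)}\,\chi(x - z) \in S^m,
\]
for a cutoff $\chi$ equal to $1$ near $0$. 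The standard reduction of an amplitude to a left symbol (again a chart-level stationary-phase computation, parallel to \eqref{eq:comp_expansion}) then gives $\Op(b) = \Op(\tilde a)$ with $\tilde a \in S^m$ and $\tilde a(x,\eta) \sim \sum_{k\ge0} \frac{1}{k!}\,\partial_\eta^k D_z^k\, b(x,z,\eta)\big|_{z=x}$ in the convention $D_z = -i\partial_z$.

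Finally I would read off the structure. Since $\Phi(x,x) = \psi'(x)$, the $k=0$ term is $a(\psi(x),\eta/\psi'(x)) = (a\circ\widetilde\psi)(x,\eta)$, giving $L_0 = 1$. For $k\ge1$, expand $D_z^k$ by Leibniz: derivatives landing on $\psi'(z)/\Phi$ produce $\eta$-independent smooth coefficients, while each $\partial_z$ landing on $a(\psi(x),\eta/\Phi)$ hits only the second slot; using Fa\`a di Bruno together with $(\partial_\xi^i a)(\psi(x),\eta/\Phi) = \Phi^i\,\partial_\eta^i[a(\psi(x),\eta/\Phi)]$, each such $\partial_z$ is traded for a $\partial_\eta$ at the cost of a coefficient that is polynomial of degree one in $\eta$. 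Carrying out the outer $\partial_\eta^k$ and setting $z = x$, the $k$-th term becomes a differential operator $L_k$ in $\eta$ alone (no base derivatives appear, since $\psi(x)$ is never differentiated) of order $\le 2k$ applied to $a\circ\widetilde\psi$. The order count $L_k : S^m \to S^{m-k}$ is clear: each $\partial_z$ on the $a$-slot trades a $\partial_\xi$ (order $-1$) for a degree-one monomial in $\eta$ (order $+1$), net $0$, while the outer $\partial_\eta^k$ gains $k$ orders of decay; and since for a nonvanishing contribution the outer derivatives can strip off at most as many $\eta$-powers as were created, every surviving monomial carries $\ge k\ge 1$ derivatives $\partial_\eta$ on $a\circ\widetilde\psi$, yielding $L_k = L_k' D_\xi$.

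I expect the main obstacle to be exactly this last bookkeeping: organizing the Leibniz/Fa\`a di Bruno expansion carefully enough to confirm that the $L_k$ genuinely act on $a\circ\widetilde\psi$ (not on $a$ precomposed with some other lift), that their order is exactly $2k$, and that the factored $D_\xi$ is present — together with the routine but tedious verification that all amplitude and symbol seminorms are controlled and that everything is uniform in the parameter $\omega$ when $a = a_\omega$. The off-diagonal localization and the amplitude-to-symbol reduction are standard and I would simply quote them.
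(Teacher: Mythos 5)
Your proposal is correct and follows the standard change-of-variables argument for the Kohn--Nirenberg calculus: localize near the diagonal, apply the Kuranishi factorization $\psi(x)-\psi(z)=(x-z)\Phi(x,z)$ with $\Phi(x,x)=\psi'(x)$, substitute $\eta=\Phi\xi$, and reduce the resulting amplitude to a left symbol, after which the three structural claims ($L_0=1$, $\mathrm{ord}\, L_k \le 2k$, $L_k = L_k'D_\xi$) fall out of the Leibniz/Fa\`a di Bruno bookkeeping exactly as you describe. The paper itself gives no proof of this lemma --- it simply cites \cite[Chapter 18.1]{H3} --- so there is no in-paper argument to compare against; your write-up is a self-contained version of that standard reference, and the order count, the verification that the resulting operators act on $a\circ\widetilde\psi$ rather than on $a$, and the factorization through $D_\xi$ for $k\ge 1$ are all handled correctly. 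Two minor remarks: the statement ``order $2k$'' in the lemma should be read as ``order at most $2k$,'' which is what your argument produces; and since the lemma will be applied to the orientation-reversing involutions $\gamma^\pm_\lambda$, it is worth noting explicitly (as your parenthetical gestures at) that when $\psi'<0$ the substitution $\eta=\Phi\xi$ flips the sign of the fiber variable, which is precisely why $\widetilde\psi$ must be the lifted symplectomorphism $(\psi(x),(\psi'(x))^{-1}\xi)$ rather than, say, $(\psi(x),|\psi'(x)|^{-1}\xi)$, and why Lemma~\ref{lem:Heaviside_identities} gives $\psi^*\Pi^\pm(\psi^{-1})^* = \Pi^\mp$ modulo smoothing.
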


We will need to characterize how projections onto positive (or negative) frequencies behave under commutation and change of variable. Define the Fourier projections
\begin{equation}\label{eq:pm_projections}
    \Pi^\pm = \Op(\chi^\pm)
\end{equation}
where $\chi_\pm \in C^\infty(\T^* \mathbb S^1)$ are such that 
\begin{gather*}
    \supp \chi^+ \subset \{\xi > 0\}, \quad \chi^+ = 1 \quad \text{on} \quad \{\xi \ge 1\} \\
    \supp \chi^- \subset \{\xi < 1\}, \quad \chi^+ = 1 \quad \text{on} \quad \{\xi \le 0\}.
\end{gather*}
These are chosen so that $\Pi^+ + \Pi^- = \id$. Then the following lemma is an immediate consequence of~\eqref{eq:comp_expansion} and~\eqref{eq:change_expansion}
\begin{lemma}\label{lem:Heaviside_identities}
    For any $\omega$-dependent families of $A \in \Psi^{m}$ and orientation-reversing diffeomorphism $\psi:\mathbb S^1 \to \mathbb S^1$, we have
    \[[\Pi^\pm, A] \in \Psi^{-\infty} \quad \text{and} \quad \psi^* \Pi^\pm (\psi^{-1})^* = \Pi^\mp + \Psi^{-\infty}.\] 
\end{lemma}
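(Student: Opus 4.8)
Both identities reduce to the symbol calculus formulas \eqref{eq:comp_expansion} and \eqref{eq:change_expansion}, together with the elementary fact that $\chi^\pm$ depends only on the fiber variable $\xi$ and is \emph{constant} outside the compact interval $\xi \in [0,1]$, so that $\partial_\xi^k \chi^\pm$ is compactly supported in $\xi$ for every $k \ge 1$ and hence lies in $S^{-\infty}$. Throughout one keeps track of uniformity in $\omega$, which is automatic since $\psi$ is $\omega$-independent and $a = a_\omega$ is assumed to lie in $S^m$ uniformly.

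For the commutator, write $A = \Op(a)$ with $a \in S^m$. Since $\chi^\pm = \chi^\pm(\xi)$ has no $x$-dependence, the asymptotic expansion \eqref{eq:comp_expansion} for $a \# \chi^\pm$ terminates: $a\#\chi^\pm = a\,\chi^\pm$ modulo $S^{-\infty}$, because $D_x^k \chi^\pm = 0$ for $k \ge 1$. On the other hand $\chi^\pm \# a \sim \sum_{k \ge 0} \tfrac{i^k}{k!}(D_\xi^k \chi^\pm)(D_x^k a)$, so the full symbol of $[\Pi^\pm, A]$ is asymptotic to $\sum_{k \ge 1} \tfrac{i^k}{k!}(D_\xi^k \chi^\pm)(D_x^k a)$. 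Each summand is supported in $\{|\xi| \le 1\}$, hence in $S^{-\infty}$ uniformly in $\omega$; moreover for $|\xi|$ large the full symbol coincides with the $N$-th remainder of its expansion, which is $O(\langle \xi\rangle^{m-N})$ with all derivatives for every $N$. Therefore the symbol of $[\Pi^\pm, A]$ lies in $S^{-\infty}$, i.e. $[\Pi^\pm, A] \in \Psi^{-\infty}$.

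For the conjugation identity, apply Lemma~\ref{lem:CoV} to $\Pi^\pm = \Op(\chi^\pm)$: $\psi^* \Pi^\pm (\psi^{-1})^* = \Op(\tilde a)$ with $\tilde a \sim \sum_k L_k(\chi^\pm \circ \widetilde\psi)$, where $L_0 = 1$ and $L_k = L'_k D_\xi$ for $k \ge 1$. Since $\widetilde\psi(x,\xi) = (\psi(x), (d\psi(x))^{-1}\xi)$ and $\chi^\pm$ depends only on the fiber variable, $\chi^\pm \circ \widetilde\psi(x,\xi) = \chi^\pm\big(\xi/ d\psi(x)\big)$. Because $\psi$ is orientation-reversing on $\mathbb S^1$, compactness gives $d\psi \in [-C,-c]$ for constants $C \ge c > 0$, so $\xi \mapsto \chi^\pm(\xi/d\psi(x))$ is supported in $\{\mp\xi > 0\}$ and equals $1$ for $\pm \xi$ sufficiently negative; it therefore agrees with $\chi^\mp$ outside the compact set $\{|\xi| \le C\}$, i.e. $\chi^\pm \circ \widetilde\psi = \chi^\mp$ modulo $C_c^\infty \subset S^{-\infty}$. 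The $L_0$-term thus contributes $\chi^\mp + S^{-\infty}$; and for $k \ge 1$, $L_k(\chi^\pm\circ\widetilde\psi) = L'_k\big(D_\xi(\chi^\pm\circ\widetilde\psi)\big)$, where $D_\xi(\chi^\pm\circ\widetilde\psi)$ has compact $\xi$-support (it involves $(\chi^\pm)'$), hence lies in $S^{-\infty}$, and the differential operators $L'_k$ preserve $S^{-\infty}$. The same remainder argument as above shows $\tilde a - \chi^\mp \in S^{-\infty}$, so $\psi^* \Pi^\pm (\psi^{-1})^* = \Pi^\mp + \Psi^{-\infty}$.

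The only point requiring any care—and the mild obstacle to a one-line proof—is that "each term of the asymptotic expansion is smoothing" does not by itself force the resummed symbol to be smoothing; one genuinely uses that the relevant terms have uniformly compact $\xi$-support together with the remainder estimate of the expansion. Everything else is a direct consequence of \eqref{eq:comp_expansion}, \eqref{eq:change_expansion}, and the structure of $L_k$ recorded in Lemma~\ref{lem:CoV}.
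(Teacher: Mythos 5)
Your proof is correct and takes exactly the approach the paper intends: the paper records no argument, stating only that the lemma is an immediate consequence of the composition expansion \eqref{eq:comp_expansion} and the change-of-variable expansion \eqref{eq:change_expansion}, and your write-up supplies precisely those details, with the key observation that for $k\ge 1$ the symbols $D_\xi^k\chi^\pm$ and $D_\xi(\chi^\pm\circ\widetilde\psi)$ have compact $\xi$-support. Your closing remark about why term-by-term smoothing plus the remainder estimate (rather than term-by-term alone) is what is actually needed is a fair point and the only step in the paper's ``immediate'' that merits a moment's thought.
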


Despite the nonuniqueness of the full symbol, it still determines a notion of essential support for pseudodifferential operators. Define the wavefront set by
\begin{equation}\label{eq:wavefront_def}
    \WF(A) := \{(x, \xi) : \xi \neq 0,\ \exists \rho > 0: a(y, \eta) = \mathcal O( \langle \eta \rangle^{-\infty}) \ \text{when} \ |x - y| < \rho, \tfrac{\eta}{\xi} > 0\}^C.
\end{equation}
The principal symbol determines where an operator is elliptic. Define the elliptic set by 
\begin{equation*}
    \Ell(A) := \{(x, \xi) : \xi \neq 0, \exists c, \rho > 0: \sigma_m(A)(y, \eta) \ge c \langle \eta \rangle^m \ \text{when}\ |x - y| < \rho, \tfrac{\eta}{\xi} > 0\}.
\end{equation*}
We give a version of elliptic regularity for pseudodifferential operators here. 
\begin{lemma}\label{lem:elliptic}
    Let $P \in \Psi^m(\mathbb S^1)$ and $A, B \in \Psi^0(\mathbb S^1)$ be such that 
    \[\WF(A) \subset \Ell(P) \cap \WF(B).\] 
    Then there exists $Q  \in \Psi^{-m}(\mathbb S^1)$ and $R \in \Psi^{-\infty}(\mathbb S^1)$ such that 
    \begin{equation}
        A = QBP + R, \qquad \sigma_{-m}(Q) = \frac{\sigma_0(A)}{\sigma_0(B) \sigma_m(P)} 
    \end{equation}
    In particular, we have the estimate
    \[\|Au\|_{H^s} \le C_{s, N}\big(\|BPu\|_{H^{s - m}} + \|u\|_{H^{-N}} \big).\]
    for all $s, N \in \R$. 
\end{lemma}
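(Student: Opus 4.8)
The plan is to prove Lemma~\ref{lem:elliptic} in the standard microlocal fashion: construct a parametrix for $P$ on the elliptic set, then commute cutoffs through it. I would first reduce to the case $B = \mathrm{Id}$ up to a smoothing error by noting that $\WF(A) \subset \WF(B)$ and that, since $\Pi^\pm$ are not involved here, one can find $\widetilde B \in \Psi^0$ with $\WF(A) \subset \Ell(\widetilde B) \cap \WF(B)$ and $\WF(\widetilde B) \subset \Ell(P)$; then $\widetilde B B = \widetilde B + (\text{lower order})$ microlocally near $\WF(A)$. Alternatively, and more directly, I would construct $Q$ so that $QBP$ recovers $A$ in one step.

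The heart of the argument is the iterative parametrix construction via the composition formula~\eqref{eq:classical_comp}--\eqref{eq:comp_expansion}. Choose a symbol $q_0 \in S^{-m}$ supported in a conic neighborhood $V$ of $\WF(A)$ contained in $\Ell(P) \cap \WF(B)$, with $q_0 = \dfrac{\sigma_0(A)}{\sigma_0(B)\,\sigma_m(P)}$ there (this is well-defined since $\sigma_0(B)$ and $\sigma_m(P)$ are both elliptic on $V$, using also that we may microlocally invert $\sigma_0(B)$ on $\WF(A)$). Then $\mathrm{Op}(q_0) B P = A + E_1$ with $E_1 \in \Psi^{-1}$ and $\WF(E_1) \subset V$, because the principal symbols match on $\WF(A)$ and the full symbol of $q_0 B P$ is supported in $V$ up to $S^{-\infty}$. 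Inductively, having arranged $(\sum_{j<k}\mathrm{Op}(q_j)) B P = A + E_k$ with $E_k \in \Psi^{-k}$ supported in $V$, set $q_k$ to be (a conic cutoff of) $-\sigma_{-k}(E_k)/(\sigma_0(B)\sigma_m(P)) \in S^{-m-k}$, so that $\mathrm{Op}(q_k) B P = -E_k + E_{k+1}$ with $E_{k+1} \in \Psi^{-k-1}$. Taking an asymptotic sum $q \sim \sum_k q_k \in S^{-m}$ (Borel summation, as in \cite[\S18.1]{H3}) and setting $Q = \mathrm{Op}(q)$ gives $QBP = A + R$ with $R \in \Psi^{-\infty}$, and by construction $\sigma_{-m}(Q) = q_0 = \sigma_0(A)/(\sigma_0(B)\sigma_m(P))$.

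For the estimate, apply $Q B P$ to $u$: $\|Au\|_{H^s} \le \|QBPu\|_{H^s} + \|Ru\|_{H^s}$. Since $Q \in \Psi^{-m}$ is bounded $H^{s-m} \to H^s$ (the standard $L^2$-boundedness of $\Psi^0$ plus the mapping properties of $\Psi^{-m}$), we get $\|QBPu\|_{H^s} \le C\|BPu\|_{H^{s-m}}$; since $R \in \Psi^{-\infty}$ maps any $H^{-N}$ into $H^s$, we get $\|Ru\|_{H^s} \le C_{s,N}\|u\|_{H^{-N}}$. Combining yields the claimed inequality for all $s, N \in \R$.

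The main obstacle is bookkeeping rather than conceptual: one must ensure at every stage of the parametrix construction that the symbols $q_k$ are cut off to a fixed conic neighborhood $V$ of $\WF(A)$ lying inside $\Ell(P) \cap \WF(B)$, so that division by $\sigma_0(B)$ and $\sigma_m(P)$ is legitimate and the error terms $E_k$ stay supported where we can iterate. A minor additional subtlety is handling the $\omega$-dependence: all symbol estimates and the asymptotic summation must be uniform in $\omega$, which follows because the seminorm bounds~\eqref{eq:KN_seminorm} and the ellipticity constants on $V$ are assumed uniform, but it is worth flagging that the Borel summation can be performed with $\omega$-independent cutoff radii.
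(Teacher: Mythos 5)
The paper does not give its own proof of this lemma; it points to \cite[Chapter 4]{GS_94}. Your argument is the standard iterative parametrix construction, which is exactly what that reference does, so the route is the same and the core of the proof is correct.

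Two remarks. First, the preliminary reduction you float in the opening paragraph contains a false step: ``$\widetilde B B = \widetilde B + (\text{lower order})$ microlocally near $\WF(A)$'' does not hold in general (it would require $\sigma_0(B) \equiv 1$ near $\WF(\widetilde B)$, not merely $\WF(A) \subset \WF(B)$). You abandon this in favor of the direct construction, so no harm is done, but that claim should not appear in a final write-up.

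Second, and more substantively: both your parametrix construction and the stated formula $\sigma_{-m}(Q) = \sigma_0(A)/(\sigma_0(B)\sigma_m(P))$ require that $\sigma_0(B)$ be \emph{elliptic} on a conic neighborhood of $\WF(A)$, not merely that $\WF(A) \subset \WF(B)$ (essential support). The wavefront set $\WF(B)$ as defined in~\eqref{eq:wavefront_def} records where the full symbol fails to decay rapidly; it says nothing about invertibility of the principal symbol, so $\sigma_0(B)$ could vanish at a point of $\WF(A) \cap \WF(B)$, in which case the division is illegal and $QBP$ could never match a nonvanishing $\sigma_0(A)$ there modulo smoothing. You quietly assert ``$\sigma_0(B)$ and $\sigma_m(P)$ are both elliptic on $V$,'' but the hypothesis only gives this for $P$. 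The honest hypothesis is $\WF(A) \subset \Ell(P) \cap \Ell(B)$, and that is what your proof actually uses. This is really an imprecision in the lemma statement rather than a gap in your method: in every application in the paper $B$ is either the identity or one of the Fourier projections $\Pi^\pm$, for which $\WF(B)$ and $\Ell(B)$ agree, so the distinction is invisible downstream. Still, since you are supplying the proof, you should flag and fix the hypothesis explicitly rather than paper over it.
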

Finally, we will need the fact that the norm bound on pseudodifferential operators is determined by the sup-norm of the principal symbol up to low frequency effects. 
\begin{lemma}\label{lem:symbol_bound}
    Let $A \in \Psi^0(\mathbb S^1)$ and let 
    \[M := \sup_{x \in \mathbb S^1} \limsup_{\xi \to \infty} \sigma_0(A)(x, \xi).\]
    Then for every $N, \delta > 0$, there exists $C > 0$ such that
    \[\|A u\|_{L^2} \le (M + \delta) \|u\|_{L^2} + C\|u\|_{H^{-N}}.\]
\end{lemma}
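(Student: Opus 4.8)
The plan is to prove Lemma~\ref{lem:symbol_bound} by the classical G\aa{}rding-type argument: subtract off a constant multiple of the identity so that the principal symbol becomes $\le \delta/2$ in the limit, extract a genuinely negative (sharp-G\aa{}rding) part, and control the remainder by a smoothing operator plus a low-order error. First I would fix $N, \delta > 0$ and set $c := M + \delta/2$. The symbol $a := \sigma_0(A)$ satisfies $\limsup_{\xi \to \infty} a(x,\xi) \le M$ uniformly in $x \in \mathbb S^1$ by compactness of $\mathbb S^1$, so there is $R_0 > 0$ with $a(x,\xi) \le c$ for all $x$ and all $|\xi| \ge R_0$. Choose a cutoff $\psi \in C^\infty(\mathbb R_\xi)$, $\psi = 1$ on $|\xi| \le R_0$, supported in $|\xi| \le R_0 + 1$; then $\psi(\xi)$ is a symbol in $S^{-\infty}$, so $\Op(\psi) \in \Psi^{-\infty}(\mathbb S^1)$ and hence bounded $H^{-N} \to L^2$ (indeed $H^{-N}\to H^N$), contributing a term $C\|u\|_{H^{-N}}$. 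Writing $b(x,\xi) := (1-\psi(\xi))(c - a(x,\xi))$, we have $b \in S^0$ and $b \ge 0$ everywhere.

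Next I would apply the sharp G\aa{}rding inequality on $\mathbb S^1$ (valid for the Kohn--Nirenberg calculus, e.g.\ \cite[Ch.~18]{H3}): since $b \in S^0$ and $b \ge 0$, there is a constant $C_1$ with $\Re\langle \Op(b) u, u\rangle \ge -C_1 \|u\|_{H^{-1/2}}^2$. Equivalently, with $\widetilde b := c - a$, we get $\Re\langle \Op(\widetilde b) u, u \rangle \ge -C_1\|u\|_{H^{-1/2}}^2 - |\langle \Op(\psi(\cdot)(c-a)) u, u\rangle| \ge -C_1\|u\|_{H^{-1/2}}^2 - C_2\|u\|_{H^{-N}}^2$, the last term again because $\psi(\xi)(c - a(x,\xi)) \in S^{-\infty}$. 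But $\Op(\widetilde b) = cI - \Op(a)$ up to $\Psi^{-1}$, and $\Op(a)$ differs from $A$ by an element of $\Psi^{-1}$; both corrections are bounded operators $L^2\to L^2$ whose quadratic forms are $O(\|u\|_{H^{-1/2}}^2)$, which can be absorbed into the $C_1\|u\|_{H^{-1/2}}^2$ term (or, by interpolation $\|u\|_{H^{-1/2}}^2 \le \eta\|u\|_{L^2}^2 + C_\eta\|u\|_{H^{-N}}^2$, into the $L^2$ and $H^{-N}$ terms). Thus $\Re\langle A u, u\rangle \le c\|u\|_{L^2}^2 + \tfrac{\delta}{2}\|u\|_{L^2}^2 + C\|u\|_{H^{-N}}^2 = (M+\delta)\|u\|_{L^2}^2 + C\|u\|_{H^{-N}}^2$ after choosing $\eta$ small.

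Finally, to pass from the quadratic-form bound to the operator-norm bound $\|Au\|_{L^2} \le (M+\delta)\|u\|_{L^2} + C\|u\|_{H^{-N}}$, I would run the same argument with $A$ replaced by $A^*A$ (or apply it to $\Re\langle A^* A u, u\rangle = \|Au\|_{L^2}^2$). The principal symbol of $A^*A$ is $|a|^2$, whose $\limsup$ as $\xi\to\infty$ is $\le M^2$ uniformly; applying the first part with $\delta$ replaced by a suitable $\delta'$ gives $\|Au\|_{L^2}^2 \le (M^2 + \delta')\|u\|_{L^2}^2 + C'\|u\|_{H^{-N}}^2$, and then $\sqrt{M^2+\delta'} \le M + \delta$ for $\delta'$ small, together with $\sqrt{x+y}\le\sqrt x + \sqrt y$, yields the claim. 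The main obstacle — really the only nontrivial input — is having the sharp G\aa{}rding inequality available in exactly this periodic Kohn--Nirenberg setting with the stated $H^{-1/2}$ loss; this is standard (it follows from the $\mathbb R^n$ version by the periodization/localization already implicit in \eqref{eq:op_fourier_series}, or can be cited directly), but one must be slightly careful that all the error operators introduced by symbol non-uniqueness and by the composition expansion \eqref{eq:comp_expansion} are of order $\le -1$ and hence harmless after interpolation. Everything is uniform in the parameter $\omega$ provided the symbol seminorms \eqref{eq:KN_seminorm} are, which is the standing convention.
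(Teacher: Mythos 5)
The paper gives no proof of its own here; it cites Grigis--Sj\"ostrand, Chapter 4, where the strategy is precisely the one you describe: quantify $L^2$ boundedness of a zeroth--order operator up to smoothing errors by an approximate square root / sharp G\aa{}rding argument for $c^2 I - A^*A$. Your proposal is therefore correct in substance. One wrinkle worth tidying: the definition of $M$ must be read as
\[
M = \sup_{x \in \mathbb S^1}\ \limsup_{\xi \to \infty}\ |\sigma_0(A)(x,\xi)|
\]
(the symbols to which this lemma is applied, e.g.\ in the proof of Lemma~\ref{lem:radial_prep}, are genuinely complex and the constant produced there is $2\pi|c_\omega|$, so an absolute value is intended). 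Consequently your first two paragraphs --- which treat $a = \sigma_0(A)$ as real-valued so that $b = (1-\psi)(c-a)\ge 0$ makes sense --- should not be applied to $A$ directly; the quadratic-form/sharp--G\aa{}rding step ought to be run from the outset on $(M+\delta/2)^2I - A^*A$, whose principal symbol $(M+\delta/2)^2 - |a|^2$ \emph{is} real and nonnegative for $|\xi|$ large. That is exactly the structure of your final paragraph, so once the argument is organized that way everything closes: the low-frequency cutoff $\psi$ contributes the $C\|u\|_{H^{-N}}$ term since $\Op(\psi(\cdot)q)\in\Psi^{-\infty}$ for any $q\in S^0$; the $\Psi^{-1}$ errors coming from symbol non-uniqueness and the composition formula~\eqref{eq:comp_expansion} are absorbed by $\|u\|_{H^{-1/2}}^2 \le \eta\|u\|_{L^2}^2 + C_\eta\|u\|_{H^{-N}}^2$; and the passage from $\|Au\|_{L^2}^2 \le (M^2+\delta')\|u\|_{L^2}^2 + C'\|u\|_{H^{-N}}^2$ to the stated inequality follows from $\sqrt{x+y}\le\sqrt{x}+\sqrt{y}$ and $\sqrt{M^2+\delta'}\le M+\delta$. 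Your uniformity claim for the choice of $R_0$ is also correct, and it is the only place compactness of $\mathbb S^1$ is actually used: since $a\in S^0$ has $\partial_x a$ uniformly bounded, the pointwise $\limsup$ in $\xi$ is attained uniformly in $x$.
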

Observe that this lemma also gives us explicit control over the constant in the elliptic estimates in Lemma~\ref{lem:elliptic}. We refer the reader to \cite[Chapter 4]{GS_94} for proofs of Lemmas~\ref{lem:elliptic} and~\ref{lem:symbol_bound}. We also remark that the conclusions of both lemmas hold uniformly in $\omega$ for $\omega$-dependent operators that satisfy the hypothesis of the lemmas uniformly.

\subsection{Small b-calculus}\label{sec:small_b_calc}
We develop the small b-calculus on the positive real line $\R_+ := (0, \infty)$. First, we define two important spaces of distributions on $\R_+$. Let $F \subseteq \mathcal D'(\R)$ be a subspace of distributions on the real line. The \textit{supported distributions} is the subspace space $\dot F(\R_+) \subseteq F$ consisting of distributions supported on $\overline \R_+$. The \textit{extendible distributions} is defined to be $\overline F(\R_+) = F/\dot F(\R_-)$. For instance, $\overline C_{\mathrm c}^\infty(\R_+)$ consists of compactly supported functions smooth up to $0$ and indeed equal to the space $C_\mathrm{c}^\infty(\overline \R_+)$. On the other hand, $\dot C_\mathrm{c}^\infty(\R_+)$ consists of smooth and compactly supported functions that vanishes to infinite order at $0$. Furthermore, one sees that $\dot{\mathcal D}'(\R_+)$ is dual to $\overline C_{\mathrm c}^\infty(\R_+)$, and $\overline{\mathcal D}'(\R_+)$ is dual to $\dot C_\mathrm{c}^\infty(\R_+)$. See \cite[B.2]{H3} for a thorough treatment of these spaces.

We wish to define b-operators in such a way that extends differential operators on $\R_+$ of the form 
\[P = \sum_{k = 0}^m c_k(x) (x D_x)^k, \qquad c_k \in \overline C^\infty(\R_+).\]
In particular, we are interested in the behavior of such operators near $0$, so we might as well assume that the coefficients $c_k$ vanishes to infinite order at infinity. Now for their pseudodifferential counterpart, define $S_\bo^m = S_\bo^m(\R_+)$ as the set of $p \in C^\infty(\overline \R_+ \times \R)$ such that for all $\alpha, \beta, N \in \N$, 
\begin{gather}
    |\partial_\xi^\alpha \partial_x^\beta p(x, \xi)| \le C_{\alpha, \beta, N} \langle \xi \rangle^{m - \alpha} \langle x \rangle^{-N} \label{eq:b-symbol_est}\\
    \text{and} \quad \int_\R e^{i t \xi} p(x, \xi) \, d \xi = 0 \quad \text{if $t \ge 1$ and $x \ge 0$} \label{eq:lacunary}
\end{gather}
where $\langle \xi \rangle = \sqrt{1 - |\xi|^2}$. Also put $S^{-\infty}_\bo := \bigcap_{m \in \R} S^{-\infty}_\bo$. For $p \in S^m_\bo$, define the operator $\Op_\bo(p): \dot{\mathscr S}^\infty(\R_+) \to \dot{\mathscr S}^\infty(\R_+)$ by 
\begin{equation}\label{eq:b-quantization}
    \Op_\bo(p) u(x) = \frac{1}{2 \pi} \int_\R \int_{\R_+} e^{i(x - y) \xi}p(x, x\xi) u(y)\, dy d \xi.
\end{equation}
The small b-pseudodifferential operators of order $m$ are denoted by 
\begin{equation}\label{eq:b-calc}
    \Psi^m_\bo(\R_+) := \{\Op_\bo(p): p \in S^m_+\}, \qquad m \in \R \quad \text{or} \quad -\infty.
\end{equation}
The Schwartz kernel $\Op_\bo(p)$ are extendible distributions on the quarter space $\R_+^2$ and is given by 
\begin{equation}\label{eq:b-kernel}
    K(x, y) = \frac{1}{2 \pi} \int_\R e^{i(x - y) \xi} p(x, x \xi)\, d \xi = \frac{1}{2 \pi} x^{-1} \int_{\R} e^{i(1 - \frac{y}{x}) \xi} p(x, \xi)\, d \xi.
\end{equation}
By~\eqref{eq:b-symbol_est}, we see that $K$ vanishes to infinite order as $\frac{y}{x} \to \infty$. Due to~\eqref{eq:lacunary}, $K$ also vanishes to infinite order as $\frac{y}{x} \to 0$. In order to obtain a better description of the Schwartz kernel, it makes sense to blow up the corner of the quarter space $\overline \R_+^2$. We blowup this quarter space at the origin to the space
\begin{equation}\label{eq:R2_blowup}
    \widetilde \R_+^2 := [0, \infty)_r \times [-1, 1]_\tau
\end{equation}
together with a blow-down map $\beta:\widetilde \R_+^2 \to \overline \R_+^2$ by 
\begin{equation}\label{eq:R2_blowdown}
    \beta(r, \tau) = \left( \frac{r(1 + \tau)}{2}, \frac{r(1 - \tau)}{2} \right). 
\end{equation}
See Figure~\ref{fig:quarter_blowup}. In other words, we are essentially introducing ``polar'' coordinates on $\overline \R_+^2$ by 
\begin{equation}\label{eq:quarter_bcoords}
    \tau = \frac{x - y}{x + y}, \quad r = x + y.
\end{equation}
This blow up construction will also serve as the local model for blowing up planar domains with corners later. For a more canonical and general construction of blow ups of double spaces, see \cite[\S4.2]{Melrose93} for details, but the coordinate description suffices for now. In our coordinates, the boundary hypersurface $[-1, 1] \times \{0\} \subset \widetilde \R_+^2$ is called the \textit{front face} of the blow up, or $\ff$ for short. $\{-1\} \times [0, \infty)$ is called the left boundary (lb) and the $\{1\} \times [0, \infty)$ is called the right boundary (rb). See Figure~\ref{fig:quarter_blowup}.

\begin{figure}
    \centering
    \includegraphics[scale = 0.7]{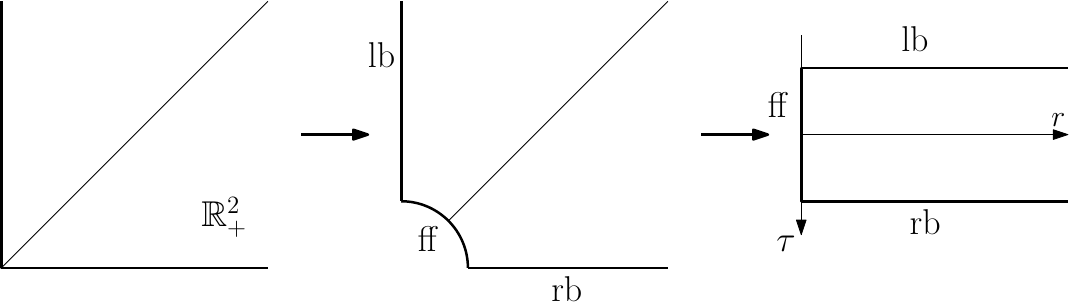}
    \caption{The second and third pictures are diagrams of $\widetilde \R^2_+$ with lb, rb, and ff labeled. The $(r, \tau)$ coordinates are labeled in the third. The diagonal is depicted in each picture.}
    \label{fig:quarter_blowup}
\end{figure}

From~\eqref{eq:b-kernel}, we immediately see that if $K$ is the Schwartz kernel of an element of $\Psi^m_\bo(\R_+)$, then $r(1 - \tau)K(r, \tau)$ is smooth in a neighborhood of lb and rb, and vanishes to infinite order on lb and rb. This is the reason such operators are called ``small'' b-pseudodifferential operators. On the symbol side, vanishing on lb is given by the symbol estimates~\eqref{eq:b-symbol_est} and vanishing on rb is given by~\eqref{eq:lacunary}, which is sometimes called the Lacunary condition (see \cite[\S 18.3]{H3}). As such, we have the following characterization of $\Psi^{-\infty}_\bo(\R_+)$. 

\begin{proposition}
    $K \in L^1_{\mathrm{loc}}(\R^2)$ with $\supp K \subset \overline \R_+^2$ is the Schwartz kernel of $\Op_\bo(p)$ for some $p \in S^{-\infty}_\bo$ if and only if 
    \[F(r, \tau) = \frac{1}{2} r(1 - \tau) K(r, \tau)\]
    is in $C^\infty([\overline \R_+^2; \{0\}])$, vanishes to infinite order on $\lb$ and $\rb$, and for all $\alpha, \beta, N > 0$, we have 
    \begin{equation}
        |\partial_\tau^\alpha \partial_r^\beta F(r, \tau)| \le C_{\alpha, \beta, N} \langle r \rangle^{-N}. 
    \end{equation}
\end{proposition}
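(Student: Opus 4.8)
The plan is to reduce everything to the explicit Fourier representation of the Schwartz kernel in \eqref{eq:b-kernel} together with elementary Fourier-transform bookkeeping. Introducing the projective coordinates \eqref{eq:quarter_bcoords}, writing $x=\tfrac12 r(1+\tau)$, $y=\tfrac12 r(1-\tau)$ and $s:=y/x=\tfrac{1-\tau}{1+\tau}$, a change of variables in \eqref{eq:b-kernel} gives
\[
F(r,\tau)\;=\;\tfrac12 r(1-\tau)\,K(r,\tau)\;=\;y\,K\;=\;s\,\widehat p\bigl(x,\,1-s\bigr),\qquad \widehat p(x,t):=\frac1{2\pi}\int_{\R}e^{it\xi}\,p(x,\xi)\,d\xi.
\]
Under this transform the two defining properties of $S^{-\infty}_\bo$ become transparent: the symbol estimates \eqref{eq:b-symbol_est} with $m=-\infty$ say exactly that $\widehat p(x,\cdot)$ is Schwartz in $t$ with every $x$-derivative rapidly decaying in $x$, and the lacunary condition \eqref{eq:lacunary} says exactly that $\widehat p(x,t)=0$ for $t\ge1$. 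So the proposition follows once one checks that $p\mapsto\widehat p\mapsto F$ is a bijection onto the stated class of $F$'s, with an explicit inverse.

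For the forward implication, take $p\in S^{-\infty}_\bo$ and verify the three properties of $F=s\,\widehat p(x,1-s)$. Smoothness on $\widetilde\R_+^2$ is clear in the interior and up to $\ff$ and $\rb$, where $s$ and $x$ are smooth functions of the blown-up coordinates; up to $\lb$ one uses the defining function $\rho=x/y=1/s$, in which $F=\rho^{-1}\widehat p(\rho y,\,1-\rho^{-1})$, and the rapid decay of $\widehat p$ in its second slot forces $F$ and all its $(\rho,y)$-derivatives to vanish to infinite order as $\rho\to0$ — which is also the infinite-order vanishing on $\lb$. Infinite-order vanishing on $\rb=\{s=0\}$ is the lacunary condition: since $\widehat p(x,\cdot)$ is smooth and vanishes on $[1,\infty)$, all its $t$-derivatives vanish at $t=1$, so $\widehat p(x,1-s)=O(s^\infty)$ and hence so does $F$. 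For the bound $|\partial_\tau^\alpha\partial_r^\beta F|\le C_{\alpha,\beta,N}\langle r\rangle^{-N}$ one splits according to whether $\tau$ lies in a compact subset of $(-1,1)$ (then $x\simeq r$ and the $\langle x\rangle^{-N}$-decay of $\widehat p$ suffices), is near $\rb$ (then $\widehat p(x,1-s)$ is bounded and $x\simeq r$), or is near $\lb$ (then $s=r/x-1\simeq r/x$, and one trades the $\langle 1-s\rangle^{-M}$-decay of $\widehat p$ in $t$ against its $\langle x\rangle^{-N}$-decay in $x$, taking $M$ large in terms of $N$, to recover $\langle r\rangle^{-N}$).

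For the converse, given $K$ with the stated properties set $F:=\tfrac12 r(1-\tau)K$, pass to coordinates $(x,t)$ via $r=x(2-t)$, $\tau=t/(2-t)$ (so that $1-\tau\simeq1-t$ near $\rb$), and define
\[
\widehat p(x,t):=\frac{F\bigl(x(2-t),\,t/(2-t)\bigr)}{1-t}\ \ (t<1),\qquad \widehat p(x,t):=0\ \ (t\ge1),\qquad p(x,\xi):=\int_{\R}e^{-it\xi}\widehat p(x,t)\,dt.
\]
The extension across $t=1$ is $C^\infty$ because $F$ vanishes to infinite order on $\rb$; smoothness up to $x=0$ comes from smoothness of $F$ up to $\ff$; and $\widehat p(x,\cdot)$ is Schwartz in $t$, uniformly together with its $x$-derivatives and with $\langle x\rangle^{-N}$-decay in $x$, because the infinite-order vanishing on $\lb$ combined with the $\langle r\rangle^{-N}$-bound upgrades — via Taylor's formula with remainder at $\rho=x/y=0$ — to the product estimate $|F|\le C_{M,N}\rho^M\langle x\rangle^{-N}$. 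Hence $p\in C^\infty(\overline\R_+\times\R)$ satisfies \eqref{eq:b-symbol_est} with $m=-\infty$, and it satisfies \eqref{eq:lacunary} since Fourier inversion gives $\tfrac1{2\pi}\int e^{it\xi}p(x,\xi)\,d\xi=\widehat p(x,t)=0$ for $t\ge1$; thus $p\in S^{-\infty}_\bo$, and unwinding \eqref{eq:b-quantization} and \eqref{eq:b-kernel} shows $\Op_\bo(p)$ has Schwartz kernel $K$ (and $K\in L^1_{\mathrm{loc}}(\R^2)$ automatically, since $1/r$ is integrable against $dx\,dy=\tfrac12 r\,dr\,d\tau$ and the vanishing of $F$ on $\rb$ cancels the factor $1/(1-\tau)$).

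The only genuinely delicate point, in both directions, is this dictionary near the boundary hypersurfaces — matching ``infinite-order vanishing on $\lb$ together with $\langle r\rangle^{-N}$-decay of the kernel'' against ``Schwartz in $\xi$ with rapidly $x$-decaying derivatives of the symbol'', keeping track of the fact that $x$ may be small while $r$ is large. Everything else is a smooth change of coordinates on the blown-up quarter-space and the standard correspondence between symbol estimates and the inverse Fourier transform.
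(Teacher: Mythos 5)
The paper does not provide a proof of this proposition; it simply cites \cite[Theorem 18.3.6]{H3}. Your argument is a correct and self-contained unwinding of the same Fourier-transform mechanism that underlies Hörmander's proof: the dictionary $F(r,\tau)=s\,\widehat p(x,1-s)$ with $s=y/x=(1-\tau)/(1+\tau)$ is exactly right, the lacunary condition \eqref{eq:lacunary} translates to $\widehat p(x,t)=0$ for $t\ge 1$ (hence infinite-order vanishing of $F$ at $\rb$), the symbol estimates \eqref{eq:b-symbol_est} with $m=-\infty$ translate to Schwartz decay of $\widehat p$ in $t$ with $\langle x\rangle^{-N}$ decay (hence infinite-order vanishing at $\lb$ and the $\langle r\rangle^{-N}$ bound), and your inversion formula $\widehat p(x,t)=F(x(2-t),t/(2-t))/(1-t)$ for $t<1$, extended by zero, is the correct inverse.

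The one place that deserves a word of caution is the uniformity of the estimate near the corner $\lb\cap\ff$ (and, symmetrically, near $\lb$ at large $r$): you phrase it as ``Taylor's formula with remainder at $\rho=x/y=0$ upgrades to $|F|\le C_{M,N}\rho^M\langle x\rangle^{-N}$.'' This is correct, but note that $\partial_\rho=\tfrac{2}{(1+\rho)^2}\partial_\tau+y\,\partial_r$ in the coordinates $(\rho,y)$, so the $M$-th $\rho$-derivative picks up factors of $y^M$; these are killed by taking $N$ in the $\langle r\rangle^{-N}$ hypothesis large relative to $M$, using $\langle r\rangle\simeq\langle y\rangle$ near $\lb$. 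A similar bookkeeping (chain rule converting $\partial_x$ into $(2-t)\partial_r$, absorbed by the rapid $t$-decay) is needed to verify the full family of symbol estimates for $\partial_x^\beta\partial_\xi^\alpha p$ in the converse direction, not just boundedness of $p$ itself. You flag both of these as ``the only genuinely delicate point,'' which is accurate; spelling them out would complete the proof, but the structure you give is sound and matches the standard reference.
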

See \cite[Theorem 18.3.6]{H3} for a detailed proof. The same product formula from the standard microlocal calculus still holds as well.
\begin{lemma}\label{lem:b-composition}
    If $p_j \in S^{m_j}_\bo$, $j = 1, 2$, then $\Op_\bo(p_1) \Op_\bo(p_2) = \Op(q)$ where $q \in S^{m_1 + m_2}_\bo$ is given by 
    \begin{equation}
        q(x, \xi) = e^{i D_y D_\eta} p_1(x, \eta) p_2(xy, \xi y) \big|_{y = 1, \eta = \xi}.
    \end{equation}
    In particular, $\Op_\bo(p_1 p_2) - \Op_\bo(p_1) \Op_\bo(p_2) \in \Psi^{m_1 + m_2 - 1}_\bo(\R_+)$. 
\end{lemma}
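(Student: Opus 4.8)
The plan is to read off $q$ and the remainder by computing the Schwartz kernel of $\Op_\bo(p_1)\Op_\bo(p_2)$ and performing a short chain of substitutions. Writing $K_j(x,y)=\tfrac{1}{2\pi}\int e^{i(x-y)\xi}p_j(x,x\xi)\,d\xi$ for the kernels from~\eqref{eq:b-kernel}, the composition has kernel
\[
K(x,z)=\frac{1}{(2\pi)^2}\iiint e^{i(x-y)\xi+i(y-z)\eta}\,p_1(x,x\xi)\,p_2(y,y\eta)\,d\xi\,d\eta\,dy,
\]
understood as an iterated oscillatory integral. I would rewrite the phase as $(x-z)\eta+(x-y)(\xi-\eta)$, substitute $\xi\mapsto\xi+\eta$ and then $y=x(1-u)$ (so $x-y=xu$), and finally rescale $\xi\mapsto\xi/x$; the inner $(u,\xi)$ integral then has the standard phase $u\xi$, and (after two more elementary substitutions and the oscillatory-integral representation of $e^{iD_yD_\eta}$) one identifies $K$ as the kernel of $\Op_\bo(q)$ with
\[
q(x,\xi)=\frac{1}{2\pi}\iint e^{-i(y-1)(\eta-\xi)}\,p_1(x,\eta)\,p_2(xy,\xi y)\,d\eta\,dy=e^{iD_yD_\eta}\big[p_1(x,\eta)\,p_2(xy,\xi y)\big]\big|_{y=1,\ \eta=\xi}.
\]
This is the asserted formula, so it remains to check that $q$ is a bona fide symbol in $S^{m_1+m_2}_\bo$ and to isolate its leading term.

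For $q\in S^{m_1+m_2}_\bo$ there are two things to verify. The symbol estimates~\eqref{eq:b-symbol_est} follow from the standard analysis of symbols defined by oscillatory integrals, exactly as in the proof of~\eqref{eq:classical_comp}: differentiate under the integral sign, integrate by parts in $(y,\eta)$ against the nonstationary phase, and reduce to the bounds on $p_1$ and $p_2$, noting that the weight $\langle x\rangle^{-N}$ is inherited because $xy$ and $x$ are comparable on the effective support near $y=1$. The Lacunary condition~\eqref{eq:lacunary} for $q$ is clearest on the kernel side: for any $\Op_\bo(p)\in\Psi^m_\bo$ it is equivalent to the Schwartz kernel being smooth and vanishing to infinite order at the right boundary of the blow-up~\eqref{eq:R2_blowup}, and this is manifestly preserved by the fiber integration $K(x,z)=\int_0^\infty K_1(x,y)K_2(y,z)\,dy$ that defines the composition. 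The one cosmetic point is that $p_1p_2$ itself need not satisfy~\eqref{eq:lacunary}; but one checks that the quantization formula applied to any order-$m$ symbol agrees modulo $\Psi^{-\infty}_\bo(\R_+)$ with the quantization of a Lacunary representative, so writing $\Op_\bo(p_1p_2)$ is legitimate.

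The ``in particular'' statement comes from Taylor expanding the exponential, $e^{iD_yD_\eta}=\sum_{k\ge0}\tfrac{i^k}{k!}D_y^kD_\eta^k$. The $k=0$ term evaluated at $y=1$, $\eta=\xi$ is $p_1(x,\xi)p_2(x,\xi)$, and for $k\ge1$ the $k$-th term is a constant multiple of $\partial_\xi^k p_1(x,\xi)$ times $\partial_y^k[p_2(xy,\xi y)]\big|_{y=1}$; the first factor is $O(\langle\xi\rangle^{m_1-k})$, while $\partial_y^k[p_2(xy,\xi y)]\big|_{y=1}$ is a finite linear combination of $x^a\xi^b(\partial_x^a\partial_\xi^b p_2)(x,\xi)$ with $a+b\le k$, each of which is $O(\langle\xi\rangle^{m_2})$ since $\xi^b\partial_\xi^b p_2\in S^{m_2}_\bo$ and $x^a\partial_x^a p_2$ is still rapidly decaying in $x$. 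Hence the $k$-th term lies in $S^{m_1+m_2-k}_\bo$, and Taylor's formula with integral remainder applied to the oscillatory integral defining $q$ shows $q-p_1p_2-(\text{the }k{=}1\text{ term})\in S^{m_1+m_2-2}_\bo$; in particular $q-p_1p_2\in S^{m_1+m_2-1}_\bo$, so $\Op_\bo(p_1p_2)-\Op_\bo(p_1)\Op_\bo(p_2)\in\Psi^{m_1+m_2-1}_\bo(\R_+)$.

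I expect the main obstacle to be making the substitution $y=x(1-u)$ and the associated estimates rigorous uniformly down to $x=0$: this change of variables turns the $y$-integral over $\R_+$ into an integral over $(-\infty,1)$ rather than all of $\R$, so the integrations by parts and the reduction to the symbol estimates on $p_1,p_2$ must be handled carefully near the right boundary, where there is no $\langle x\rangle$-decay to exploit and the Lacunary condition is doing all the work. A cleaner alternative that bypasses much of this bookkeeping is to conjugate by the logarithmic substitution $x=e^s$: since $xD_x\mapsto D_s$, each $\Op_\bo(p_j)$ becomes an ordinary Kohn--Nirenberg operator on $\R_s$ with symbol $p_j(e^s,\cdot)$ (uniformly in the class, using the decay $\langle x\rangle^{-N}$ as $s\to+\infty$ and smoothness up to $x=0$ as $s\to-\infty$), the Lacunary condition becomes a half-line support condition on the kernels that is stable under composition, and one transports the standard composition formula~\eqref{eq:classical_comp} back; this is, up to the change of quantization convention $p(x,\xi)\leftrightarrow p(x,x\xi)$, the content of \cite[\S18.3]{H3}.
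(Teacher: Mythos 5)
Correct, and essentially the same route as the paper takes — the paper does not prove this lemma but cites \cite[Theorem 18.3.11]{H3}, and your direct kernel computation (as well as the logarithmic-conjugation alternative you mention at the end) is precisely Hörmander's argument for the composition of totally characteristic operators; I checked the formula on $(xD_x)^2$ and $xD_x\circ f(x)$ and it comes out right. One small clarification on your reading of where the lacunary condition enters: in the oscillatory integral for $q$ the $\eta$-integration produces $\hat p_1(x,y-1)$, and the lacunary condition on $p_1$ is what makes this factor vanish to infinite order as $y\to 0^+$ (equivalently, extends the $y$-integral harmlessly to all of $\R$ since $\hat p_1(x,s)=0$ for $s\le -1$); that is what eliminates boundary terms in the integrations by parts near $y=0$, while the $\langle x\rangle^{-N}$ decay of $q$ actually comes already from the symbol bounds on $\hat p_1(x,\cdot)$ rather than from lacunarity. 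The lacunarity of $q$ itself then follows from that of $p_2$ alone, via the kernel-support argument you give.
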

See \cite[Theorem 18.3.11]{H3} for the proof. The upshot of this is that in the small b-calculus, the symbolic elliptic parametrix construction still works. We define uniform ellipticity in the same way. $P = \Op_\bo(p) \in \Psi^{m}_\bo(\R_+)$ is uniformly elliptic in a relatively open set $U \subset \overline \R_+$ if there exists $c> 0$ such that 
\[|a(x, \xi)| \ge c |\xi|^m \quad \text{for} \quad x \in U, \ |\xi| > c^{-1}.\]
\begin{proposition}\label{prop:small_parametrix}
    Let $P \in \Psi^{m}_\bo(\R_+)$ be uniformly elliptic in a relatively open set $U \subset \overline \R_+$, and let $\chi \in \CIc(\overline \R_+)$ be such that $\supp \chi \subset U$. Then there exists $Q \in \Psi^{-m}_\bo(\R_+)$ such that 
    \[P \circ Q - \chi \in \Psi^{-\infty}_\bo(\R_+) \quad \text{and} \quad Q \circ P - \chi \in \Psi^{-\infty}_\bo(\R_+).\]
\end{proposition}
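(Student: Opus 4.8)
The plan is to run the standard symbolic parametrix construction of the Kohn--Nirenberg calculus, now adapted to the b-calculus, using the composition formula of Lemma~\ref{lem:b-composition} as the replacement for~\eqref{eq:classical_comp}. Write $P = \Op_\bo(p)$ with $p \in S^m_\bo$ uniformly elliptic on $U$, meaning $|p(x,\xi)| \ge c|\xi|^m$ for $x \in U$ and $|\xi| > c^{-1}$. Choose $\tilde\chi \in \CIc(\overline\R_+)$ with $\tilde\chi = 1$ on $\supp\chi$ and $\supp\tilde\chi \subset U$, and fix a cutoff $\phi \in C^\infty(\R)$ with $\phi(\xi) = 0$ for $|\xi| \le c^{-1}$ and $\phi(\xi) = 1$ for $|\xi| \ge 2c^{-1}$. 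The leading term of the symbol of the parametrix is then $q_0(x,\xi) := \tilde\chi(x)\,\phi(\xi)/p(x,\xi)$. One checks directly from the symbol estimates~\eqref{eq:b-symbol_est} for $p$, the lower bound, and the fact that $\tilde\chi$ and all its derivatives are bounded with rapid decay in $x$, that $q_0 \in S^{-m}_\bo$; the Lacunary condition~\eqref{eq:lacunary} for $q_0$ can be arranged by further modifying $q_0$ by a term in $S^{-\infty}_\bo$ (equivalently, one works modulo $\Psi^{-\infty}_\bo$ throughout, which is harmless since that is exactly the error we are allowed).

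Next I would iterate. By Lemma~\ref{lem:b-composition}, $\Op_\bo(q_0)\circ P = \Op_\bo(q_0 \,\#_\bo\, p)$, and the composition symbol has an asymptotic expansion whose leading term is $q_0 p = \tilde\chi \phi = \chi + (\tilde\chi\phi - \chi)$, where $\tilde\chi \phi - \chi$ is supported in $\{|\xi|\lesssim 1\} \cup (\supp\tilde\chi\setminus\supp\chi)$; the first piece is in $S^{-\infty}_\bo$ and the second is an honest symbol but one we fold into the next stage. So $\Op_\bo(q_0)P = \chi + \Op_\bo(e_1)$ with $e_1 \in S^{-1}_\bo$ (after absorbing the low-frequency junk into $\Psi^{-\infty}_\bo$). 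Now set $q_1 = -q_0 e_1 \in S^{-m-1}_\bo$ and repeat: having chosen $q_0,\dots,q_{N-1}$ with $q_j \in S^{-m-j}_\bo$ such that $(\sum_{j<N}\Op_\bo(q_j))P = \chi + \Op_\bo(e_N)$, $e_N \in S^{-N}_\bo$, put $q_N = -q_0 e_N \in S^{-m-N}_\bo$; then $(\sum_{j\le N}\Op_\bo(q_j))P = \chi + \Op_\bo(e_{N+1})$ with $e_{N+1} \in S^{-N-1}_\bo$, again using Lemma~\ref{lem:b-composition} to see the correction is one order lower. Finally, by asymptotic summation (Borel summation on symbols, which works identically in $S^\bullet_\bo$ because the estimates~\eqref{eq:b-symbol_est} are the usual ones with an extra inert parameter $x$), there is $q \sim \sum_j q_j \in S^{-m}_\bo$ with $\Op_\bo(q)P - \chi \in \bigcap_N \Psi^{-N}_\bo = \Psi^{-\infty}_\bo$. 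This produces a left parametrix $Q = \Op_\bo(q)$; the right parametrix is constructed symmetrically (or by the usual argument that a two-sided parametrix exists once both one-sided ones do, and they agree modulo $\Psi^{-\infty}_\bo$), giving $P\circ Q - \chi, Q \circ P - \chi \in \Psi^{-\infty}_\bo(\R_+)$.

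The only genuinely non-routine point — and the step I would be most careful about — is bookkeeping the Lacunary condition~\eqref{eq:lacunary} and the rapid decay in $x$ through the iteration, i.e. verifying that each $q_j$ genuinely lies in $S^{-m-j}_\bo$ and not merely in a class of symbols satisfying~\eqref{eq:b-symbol_est} without~\eqref{eq:lacunary}. This is where the b-calculus differs from the scalar case: the composition in Lemma~\ref{lem:b-composition} involves the rescaling $p_2(xy,\xi y)|_{y=1}$, and one must confirm that this operation and the asymptotic summation both preserve~\eqref{eq:lacunary}; equivalently, on the kernel side, that composing two operators whose kernels are supported in the quarter-space and vanish to infinite order at $\lb$ and $\rb$ yields another such kernel. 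This is exactly the content ensuring $\Psi^\bullet_\bo(\R_+)$ is an algebra, and it follows from Lemma~\ref{lem:b-composition} together with the kernel characterization of $\Psi^{-\infty}_\bo$ recorded above; the remaining estimates (the symbol bounds~\eqref{eq:b-symbol_est} and the $\langle x\rangle^{-N}$ decay, which is automatic because every $q_j$ carries a factor of $\tilde\chi(x)$ or its derivatives) are a direct but tedious induction that I would not spell out in full.
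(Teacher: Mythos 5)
Your proposal takes the same route the paper has in mind (the paper gives no proof, treating this as the standard symbolic parametrix construction carried over to the b-calculus via Lemma~\ref{lem:b-composition}), and it correctly identifies the Lacunary condition as the one nontrivial bookkeeping issue. Two small technical slips, though, should be repaired.

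First, building $q_0 = \tilde\chi\phi/p$ from the \emph{larger} cutoff $\tilde\chi$ does not produce a parametrix for $\chi$: the leading term of $q_0 \# p$ is $\tilde\chi\phi$, and the discrepancy $\tilde\chi\phi - \chi$ contains the order-zero piece $(\tilde\chi - \chi)\phi$ supported on $\supp\tilde\chi\setminus\supp\chi$, which the iteration cannot lower in order (your $q_1 = -q_0 e_1$ would then stay of order $-m$, not $-m-1$, and the Neumann-type iteration stalls). The fix is to take $q_0 = \chi\phi/p$ directly — the whole chain $q_j$ is then automatically supported in $\supp\chi \subset U$, where $p$ is elliptic, so nothing is lost — or else to construct $Q'$ with $Q'P \equiv \tilde\chi \pmod{\Psi^{-\infty}_\bo}$ and set $Q := \chi Q'$, using $\chi\tilde\chi = \chi$.

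Second, the claim that the Lacunary condition for $q_0$ ``can be arranged by modifying by a term in $S^{-\infty}_\bo$'' is not quite right as stated: elements of $S^{-\infty}_\bo = \bigcap_m S^m_\bo$ satisfy~\eqref{eq:lacunary} by definition, so adding one of them to a non-Lacunary symbol leaves it non-Lacunary. What is true, and what the construction actually requires, is that the symbol $q_0$ (which satisfies the estimates~\eqref{eq:b-symbol_est} but need not satisfy~\eqref{eq:lacunary}) admits a Lacunary replacement $q_0^L \in S^{-m}_\bo$: cut off $\hat q_0(x,t) := \int e^{-it\xi}q_0(x,\xi)\,d\xi$ by a function of $t$ equal to $1$ near $t=0$ and vanishing for $t\ge 1$. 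The difference $q_0 - q_0^L$ then decays rapidly in $\xi$ but is \emph{not} Lacunary, so $\Op_\bo(q_0 - q_0^L) \notin \Psi^{-\infty}_\bo(\R_+)$; nevertheless this is harmless because one works only with $q_0^L$ from the outset, and Lemma~\ref{lem:b-composition} guarantees that every subsequent composition symbol is genuinely in $S^\bullet_\bo$. This is the content of H\"ormander \cite[Lemma~18.3.3 and Theorem~18.3.9]{H3}, which is worth citing at this step since it is exactly the point where the b-calculus departs from the ordinary Kohn--Nirenberg calculus.

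With those two corrections, the iteration and Borel summation go through exactly as you describe, and the two-sided statement follows by the usual ``left and right parametrices agree mod $\Psi^{-\infty}_\bo$'' argument.
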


Again, because the small b-calculus is symbolic, the proof of $L^2$ boundedness of 0th-order operators due to H\"ormander still holds. In particular, if $P \in \Psi^0_\bo(\R_+)$, one can use the symbolic calculus to construct an approximate square root to $M - P^*P$ for a sufficiently large $M > 0$, from which one deduces $L^2$-boundedness. See \cite[Theorem 18.3.12]{H3} for details, and we state it below for convenience. 
\begin{lemma}\label{lem:b-L2}
    If $P \in \Psi^0_\bo(\R_+)$, then $P: L^2(\R_+) \to L^2(\R_+)$ is bounded. 
\end{lemma}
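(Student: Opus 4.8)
The plan is to run the square-root argument of H\"ormander, which goes through essentially as on a closed manifold because the small b-calculus is symbolic: it has the asymptotic composition formula of Lemma~\ref{lem:b-composition}, it is closed under formal $L^2(\R_+)$-adjoints with $\sigma_0(P^*) = \overline{\sigma_0(P)}$ (proved exactly as on $\mathbb S^1$, by manipulating the oscillatory integral for the Schwartz kernel and checking that the Lacunary condition~\eqref{eq:lacunary} and the rapid decay in $x$ are preserved), and it admits Borel summation within $S^m_\bo$. Fix $P = \Op_\bo(p) \in \Psi^0_\bo(\R_+)$. Since $p \in S^0_\bo$ satisfies $|p(x,\xi)| \le C \langle x\rangle^{-N}$, the symbol $|p|^2$ is bounded, so we may choose $M > 0$ with $M - |p(x,\xi)|^2 \ge 1$ for all $(x,\xi)$. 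It then suffices to produce $B \in \Psi^0_\bo(\R_+)$ and $R \in \Psi^{-\infty}_\bo(\R_+)$ with $P^*P = M - B^*B + R$, because then, granting that smoothing b-operators are bounded on $L^2(\R_+)$,
\[
    \|Pu\|_{L^2}^2 = \langle P^*Pu,\, u\rangle = M\|u\|_{L^2}^2 - \|Bu\|_{L^2}^2 + \langle Ru,\, u\rangle \le \big(M + \|R\|_{L^2\to L^2}\big)\|u\|_{L^2}^2 .
\]

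To construct $B$, note first that adjoint- and composition-closedness give $P^*P \in \Psi^0_\bo(\R_+)$ with principal symbol $|p|^2$. Since the constant symbol $\sqrt M$ does not decay in $x$, write $B = \sqrt M\,\id + \Op_\bo(\tilde b)$ with $\tilde b \sim \sum_{j\ge 0}\tilde b_j$, $\tilde b_j \in S^{-j}_\bo$, and impose $B^*B = M - P^*P + R$. Matching full symbols through Lemma~\ref{lem:b-composition}, the leading-order equation is $(\sqrt M + \tilde b_0)^2 = M - |p|^2$, solved by $\tilde b_0 = \sqrt{M - |p|^2} - \sqrt M \in S^0_\bo$ (smooth, and rapidly decaying in $x$ because $|p|$ is); at order $-j-1$ the composition formula expresses $2\sqrt{M - |p|^2}\,\Re\, \tilde b_{j+1}$ in terms of $\tilde b_0,\dots,\tilde b_j$, which is solvable since $\sqrt{M - |p|^2} \ge 1$. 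Borel summation then yields $\tilde b \in S^0_\bo$, and $R := B^*B - M + P^*P$ lies in $\bigcap_m \Psi^m_\bo(\R_+) = \Psi^{-\infty}_\bo(\R_+)$.

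Finally, every $R \in \Psi^{-\infty}_\bo(\R_+)$ is $L^2(\R_+)$-bounded. By the characterization of $\Psi^{-\infty}_\bo(\R_+)$ recalled above, its Schwartz kernel $K$ is such that $F := \tfrac12 r(1-\tau) K$ is smooth on $\widetilde\R_+^2$, decays rapidly in $r = x + y$, and vanishes to infinite order on $\lb = \{\tau = -1\}$ and $\rb = \{\tau = 1\}$; since $\tfrac12 r(1-\tau) = y$, this reads $K(x,y) = y^{-1} F\big(x+y,\, \tfrac{x-y}{x+y}\big)$ with $F$ vanishing to infinite order where $x = 0$ or $y = 0$ and decaying rapidly in $x + y$. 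These properties make $\sup_x \int_0^\infty |K(x,y)|\, dy$ and $\sup_y \int_0^\infty |K(x,y)|\, dx$ both finite, so Schur's test gives $\|R\|_{L^2(\R_+) \to L^2(\R_+)} < \infty$, and the estimate above completes the proof. The argument is entirely standard --- it is \cite[Theorem~18.3.12]{H3} --- and the only genuinely b-specific points, which I expect to be where the (mild) care is needed, are the boundary bookkeeping in the symbol calculus (keeping adjoints, compositions, and Borel sums inside $S^0_\bo$ with the $\langle x\rangle^{-N}$ decay and the Lacunary condition) and this last $L^2$-boundedness of smoothing b-operators.
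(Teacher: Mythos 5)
Your proposal follows exactly the route the paper takes: the paper's entire proof of this lemma is a reference to H\"ormander's square-root trick (``one can use the symbolic calculus to construct an approximate square root to $M - P^*P$ for a sufficiently large $M > 0$''), citing \cite[Theorem 18.3.12]{H3}, which is precisely the argument you carry out. Your write-up simply fills in the details --- including the correct observation that $\sqrt{M}\,\id$ must be split off since the identity is not in $\Psi^0_\bo(\R_+)$, and the Schur-test bound for the $\Psi^{-\infty}_\bo$ remainder using the kernel characterization on the blown-up quarter space --- so there is no substantive difference of approach.
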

We emphasize here that $L^2(\R_+) = L^2(\R_+; dx)$ where $dx$ is the Lebesgue measure on $\R_+$. Also note that there's no need to specify the extendible and supported $L^2$ spaces since they are identical. 

\subsection{Mellin transform and b-Sobolev spaces}
The standard $L^2$ based Sobolev spaces can be defined using the Fourier transform, and thus behave well with respect the translation-invariant differential operators $D_x^k$. However, b-operators on $\R_+$ are generated by differential operators of the form $(xD_x)^k$, which are dilation-invariant. We will define b-Sobolev spaces via the Mellin transform, which is a dilation invariant version of the Fourier transform. For $u \in \dot C_{\mathrm{c}}^\infty (\R_+)$ the Mellin transform is given by 
\begin{equation}\label{eq:mellin}
    \mathcal M u(\sigma) := \int_0^\infty x^{-i \sigma} u(x) \frac{dx}{x}.
\end{equation}
Observe that the Mellin transform reduces to the Fourier transform in logarithmic coordinates $t = \log x$ by
\begin{equation}
    \mathcal M u(\sigma) = \int_{-\infty}^\infty e^{-it\sigma} u(e^t) \, dt.
\end{equation}
It is easy to see from the Paley--Weiner theorem that $\mathcal M u(\sigma)$ is entire for $u \in \dot C^\infty_{\mathrm{c}}(\R_+)$. Furthermore, if $u \in \overline{\mathcal E}'(\R_+)$, then $\mathcal M u(\sigma)$ is holomorphic in $\{\Im \sigma > M\}$ for some sufficiently large $M$. Some basic properties follow immediately:
\begin{gather}
    \mathcal M (xD_x u)(\sigma) = \sigma \mathcal M u(\sigma), \label{eq:Mprop1}\\
    \mathcal M(x^a u)(\sigma) = \mathcal M u(\sigma + ia), \label{eq:Mprop2}\\
    \mathcal M(\Lambda^*_ru)(\sigma) = r^{i\sigma} \mathcal M u(\sigma) \label{eq:Mprop3}
\end{gather}
where $\Lambda_r(x) = rx$ for some $r > 0$. 

Now we define b-Sobolev spaces. Fix a cutoff function
\begin{equation}\label{eq:zero_cutoff}
    \chi_0 \in \CIc(\overline \R_+; [0, 1]), \qquad \text{$\chi_0 \equiv 1$ near $0$}.
\end{equation}
Then
\begin{equation}\label{eq:b-sob}
    H^s_\bo(\R_+) := \{u \in \overline{\mathcal D}'(\R_+): \langle \sigma \rangle^s \mathcal M(\chi_0 u) \in L^2(\R), \ (1 - \chi_0)u \in H^s(\R)\}.
\end{equation}
Clearly this $H^s_\bo(\R_+)$ is independent of the choice of cutoff $\chi$. We will also need the weighted spaces
\begin{equation}\label{eq:weighted_b}
    H^{s, a}_\bo(\R_+) := \rho^a H^s(\R_+), \qquad \rho:= x \chi_0 + (1 - \chi_0)
\end{equation}
It is easy to see that
\begin{equation}
    \chi_0 u \in H^{s, a}_\bo(\R_+) \iff \int_{\{\Im\sigma = -a\}} \langle \sigma \rangle^{2s} |\mathcal M (\chi_0 u)(\sigma)|^2\, d\sigma < \infty
\end{equation}

In particular, these spaces are the same as standard Sobolev spaces away from $0$, and near $0$, observe that for $k \in \N$, 
\begin{equation}
    \chi_0 u \in H^k_\bo(\R_+) \iff \sum_{j = 0}^k \int |(xD_x)^j (\chi_0 u)(x)|^2\, \frac{dx}{x} < \infty,
\end{equation}
The b-Sobolev spaces are the spaces on which b-operators naturally act, since they measure regularity with respect to $x D_x$ near $0$. To address the weights in the b-Sobolev spaces, we need the following lemma. 
\begin{lemma}\label{lem:b-conj}
    Let $\rho$ be as in~\eqref{eq:weighted_b} and $z \in \C$. Then 
    \begin{equation}
        \Psi^m_\bo(\R_+) \ni P \mapsto \rho^{-z} P \rho^z \in \Psi^m_\bo(\R_+)
    \end{equation}
    is an isomorphism. 
\end{lemma}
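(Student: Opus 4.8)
The plan is to deduce the statement from an explicit symbolic computation that runs exactly parallel to the proof of the composition formula, Lemma~\ref{lem:b-composition}. First, observe that the map is automatically a bijection: running the same construction with $-z$ in place of $z$ produces a two-sided inverse at the level of operators. Consequently it suffices to prove the one-sided containment $\rho^{-z} P \rho^{z} \in \Psi^m_\bo(\R_+)$ for every $P \in \Psi^m_\bo(\R_+)$ and every $z \in \C$; the reverse inclusion then follows by applying this to $-z$.

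Next I would pass to Schwartz kernels. If $K$ is the Schwartz kernel of $P = \Op_\bo(p)$, then the Schwartz kernel of $\rho^{-z} P \rho^{z}$ is $\bigl(\rho(y)/\rho(x)\bigr)^{z} K(x,y)$, which by~\eqref{eq:b-kernel} equals $\tfrac{1}{2\pi}\int_\R e^{i(x-y)\xi}\bigl(\rho(y)/\rho(x)\bigr)^{z} p(x,x\xi)\,d\xi$. The heart of the matter is a Taylor expansion of the prefactor $\bigl(\rho(y)/\rho(x)\bigr)^{z} = \exp\!\bigl(z(\log\rho(y)-\log\rho(x))\bigr)$ about the diagonal $y=x$, carried out in powers of the b-variable $(x-y)/x$ rather than $(x-y)$. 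The structural input that makes this work is that $\log\rho$ is b-admissible: from $\rho = x\chi_0 + (1-\chi_0)$ one has $\log\rho = \log x$ near $0$ and $\log\rho \equiv 0$ for large $x$, and a direct computation shows $(xD_x)^k\log\rho$ is bounded on $\R_+$ for every $k$, hence $x^k(\log\rho)^{(k)}(x)$ is bounded for all $k$. Taylor's theorem then gives $\log\rho(y)-\log\rho(x) = \sum_{j=1}^{N} c_j(x)\bigl((x-y)/x\bigr)^j + \mathcal R_N$ with $c_j \in C^\infty_{\mathrm c}(\overline \R_+)$ (equal to $-1/j$ near $0$ and vanishing for large $x$) and $\mathcal R_N = \mathcal O\bigl((x-y)^{N+1}/x^{N+1}\bigr)$; exponentiating and re-expanding yields $\bigl(\rho(y)/\rho(x)\bigr)^{z} = \sum_{j=0}^{M} d_j(x)\bigl((x-y)/x\bigr)^j + \mathcal E_M$ with $d_0 = 1$ and $d_j \in C^\infty_{\mathrm c}(\overline \R_+)$ for $j \ge 1$.

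The payoff of expanding in $(x-y)/x$ is a crucial cancellation. Substituting the expansion into the oscillatory integral and writing $(x-y)^j e^{i(x-y)\xi} = D_\xi^j e^{i(x-y)\xi}$, I would integrate by parts in $\xi$; since $D_\xi^j\bigl[p(x,x\xi)\bigr] = x^j (D_\eta^j p)(x,x\xi)$, the factor $x^j$ exactly cancels the $x^{-j}$ coming from $\bigl((x-y)/x\bigr)^j$. This identifies $\rho^{-z} P \rho^{z}$, modulo a remainder produced by $\mathcal E_M$, with $\Op_\bo(q_M)$ where $q_M(x,\eta) = \sum_{j=0}^{M}(-1)^j d_j(x)(D_\eta^j p)(x,\eta)$. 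Each summand lies in $S^{m-j}_\bo$, because differentiation in $\eta$ lowers the order while preserving both the symbol estimates~\eqref{eq:b-symbol_est} and the lacunary condition~\eqref{eq:lacunary}, and multiplication by the compactly supported $d_j$ preserves rapid decay in $x$. By asymptotic summation there is $q \in S^m_\bo$ with $q \sim \sum_j (-1)^j d_j\, D_\eta^j p$ and $\rho^{-z} P \rho^{z} - \Op_\bo(q) \in \Psi^{-\infty}_\bo(\R_+)$; absorbing the residual term shows $\rho^{-z} P \rho^{z} \in \Psi^m_\bo(\R_+)$, which completes the proof.

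The step I expect to be the genuine obstacle is making the last two paragraphs rigorous rather than formal: the manipulation ``replace $(x-y)^j$ by $D_\xi^j$ and integrate by parts'' must be justified as an identity of operators (the $\xi$-integrals are only oscillatory integrals), and the Taylor remainders $\mathcal R_N, \mathcal E_M$ involve $\log\rho$ and its derivatives evaluated at intermediate points $x+\theta(y-x)$, so they are not of the clean product form $c(x)\,(x-y)^{N+1}$ and need to be estimated by hand to confirm they define b-operators of order $m-N-1$. This is, however, precisely the bookkeeping performed in the proof of the composition formula Lemma~\ref{lem:b-composition} (compare \cite[\S18.3]{H3}), so I would adapt that argument rather than redo it from scratch.
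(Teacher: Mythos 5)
Your proof is correct in outline and — granting the remainder estimates you yourself flagged as the real work — would assemble into a complete argument, but it takes a genuinely different route from the paper's. The paper avoids asymptotic expansion entirely: it first localizes near $0$ by a second cutoff $\widetilde\chi_0$ subordinate to $\chi_0$, so that $\rho(x)=x$ exactly on the relevant set, and then passes to the Fourier transform $\hat a(x,t)=\int e^{-i\xi t}a(x,\xi)\,d\xi$ of the localized symbol, which up to a factor of $x^{-1}$ is the Schwartz kernel read in the blow-up coordinate $t=y/x-1$. In that picture the conjugation is exact multiplication of $\hat a(x,t)$ by a power of $(1+t)$; the lacunary condition~\eqref{eq:lacunary} forces $\hat a$ to vanish to infinite order at $t=-1$ while~\eqref{eq:b-symbol_est} gives rapid decay at $t=+\infty$, so the product is again the co-Fourier transform of an element of $S^m_\bo$, producing a single closed-form symbol $b$ with nothing left over. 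Your route — Taylor-expand $(\rho(y)/\rho(x))^z$ about the diagonal in powers of $(x-y)/x$, integrate by parts to exploit the cancellation $x^{j}\cdot x^{-j}$, then asymptotic-sum — is closer in spirit to the coordinate-invariance proof (Lemma~\ref{lem:CoV}) and has the genuine merit of isolating the only structural input on the weight, namely boundedness of $(xD_x)^k\log\rho$, so it would prove the lemma for \emph{any} such weight, not just the specific $\rho$ of~\eqref{eq:weighted_b}. The cost is the remainder bookkeeping you acknowledged; one extra subtlety worth naming is that the b-kernel is not supported near the diagonal in blow-up coordinates (it lives on all of $t\in(-1,\infty)$ with only rapid decay at both ends), so the Taylor remainder — which degrades as $|x-y|/x$ grows — has to be paired with the rapid decay of $\hat a(x,t)$ at $t\to\infty$ to stay inside $S^{m-N-1}_\bo$. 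This is the same style of estimate as in Lemma~\ref{lem:b-composition}, so your reference there is apt, but it has real content, and the paper's Fourier-side computation sidesteps all of it.
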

\begin{proof}
    It suffices to show one direction since the isomorphism follows by replacing $z$ by $-z$. Let $\chi_0$ be as in~\eqref{eq:zero_cutoff} and fix a cutoff subordinate to $\chi_0$:
    \[\widetilde \chi_0 \in \CIc(\overline \R_+; [0, 1]), \quad \widetilde \chi_0 \equiv 1 \ \text{near $0$}, \quad \chi_0 \equiv 1 \ \text{on $\supp \widetilde \chi_0$}.\]
    Observe that
    \[\rho^{-z} P \rho^z - \widetilde \chi_0 \rho^{-z} P \rho^z \widetilde \chi_0 \in \Psi^{m}_\bo(\R_+).\]
    We now show that $\widetilde \chi_0 \rho^{-z} P \rho^z \widetilde \chi_0 \in \Psi^m_\bo(\R_+)$ by constructing the symbol. By Lemma~\eqref{lem:b-composition}, $\widetilde \chi_0 P \widetilde \chi_0 \in \Psi^m_\bo(\R_+)$, and let $a(x, \xi)$ be its symbol. Let 
    \begin{equation}
        b(x, \xi) = \frac{1}{2 \pi} \int_\R e^{i \xi t}(t + 1)^{-z} \hat a(x, t)\, dt, \quad \text{where}\quad \hat a(x, t) = \int_\R e^{-i \xi t} a(x, \xi)\, d \xi
    \end{equation}
    By~\eqref{eq:b-symbol_est} and~\eqref{eq:lacunary}, $\hat a(x, t)$ vanishes to infinite order as $t \to \infty$ and $t \to {-1}$ uniformly and smoothly in $x$ up to $x = 0$, so indeed $b$ is well-defined and $b \in S^m_+(\R_+)$. Note that $\widetilde \chi_0(x) \rho^z(x) = \widetilde \chi_0(x) x^z$, so it follows that
    \begin{align*}
        \widetilde \chi_0 \rho^{-z} P \rho^z \widetilde \chi_0 u(x) &= \frac{1}{2 \pi} \int_\R \int_{\R_+} (x/y)^{-z} e^{i(x - y) \xi} a(x, x \xi) u(y)\, dy d \xi\\
        &=\frac{1}{2 \pi} \int_\R \int_{\R_+} e^{i(x - y)\xi} b(x, x\xi) u(y)\, d\xi dy
    \end{align*}
    as desired.
\end{proof}
It is immediately clear from~\eqref{eq:weighted_b} that $L^2(\R_+) = H^{0, -\ha}_\bo(\R_+)$, so the mapping properties of b-operators on b-Sobolev spaces follow from the $L^2$ bound in Lemma~\ref{lem:b-L2} and the invariance under conjugation in Lemma~\ref{lem:b-conj}. 
\begin{lemma}\label{lem:b-map_prop}
    Let $P \in \Psi^m_\bo(\R_+)$. Then $P$ extends to a continuous operator $P: H^{\ell, a}_\bo(\R_+) \to H^{\ell - m, a}_\bo(\R_+)$ for all $\ell, a \in \R$. 
\end{lemma}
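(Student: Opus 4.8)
The plan is to reduce everything to the already-established $L^2$-boundedness in Lemma~\ref{lem:b-L2} by conjugating away the weight and lowering the order by composition with an elliptic power of a suitable b-operator. First I would handle the weight: using Lemma~\ref{lem:b-conj}, the map $P \mapsto \rho^{-a} P \rho^a$ takes $\Psi^m_\bo(\R_+)$ to itself, and since multiplication by $\rho^a$ is by definition an isometry $H^{\ell,a}_\bo \to H^{\ell,0}_\bo = H^\ell_\bo$ (and likewise for $\ell - m$), continuity of $P: H^{\ell,a}_\bo \to H^{\ell-m,a}_\bo$ is equivalent to continuity of $\rho^{-a}P\rho^a : H^\ell_\bo \to H^{\ell-m}_\bo$. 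So it suffices to treat the case $a = 0$.

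Next I would reduce to order zero. Fix a first-order b-elliptic operator; the natural choice is $\Lambda := \Op_\bo(\langle\xi\rangle) \in \Psi^1_\bo(\R_+)$ — more precisely one takes a symbol equal to $\langle \xi\rangle$ for large $|\xi|$ and adjusted near $\xi = 0$ so that the Lacunary condition \eqref{eq:lacunary} holds, which is possible since modifying a b-symbol in a compact $\xi$-set and re-imposing lacunarity costs only a smoothing error. By Proposition~\ref{prop:small_parametrix} (applied with $U = \overline{\R}_+$, which is legitimate since $\Lambda$ is uniformly elliptic everywhere), there is $\Lambda^{-1} \in \Psi^{-1}_\bo(\R_+)$ with $\Lambda \Lambda^{-1} - \Id, \Lambda^{-1}\Lambda - \Id \in \Psi^{-\infty}_\bo(\R_+)$; iterating gives for each $s \in \R$ an operator $\Lambda_s \in \Psi^s_\bo(\R_+)$ that is invertible modulo $\Psi^{-\infty}_\bo$. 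I would then define $H^s_\bo(\R_+)$ concretely via $\Lambda_s$: by tracking through the Mellin-transform definition \eqref{eq:b-sob} one checks that $u \in H^s_\bo \iff \Lambda_s u \in L^2$ (up to the harmless smoothing remainder, which maps into $C^\infty$ near $0$ and into $H^\infty$ away from $0$, hence into every $H^s_\bo$ — here one uses that $\Psi^{-\infty}_\bo$ kernels, after multiplication by $r(1-\tau)$, are Schwartz, so such operators are bounded $\overline{\mathcal D}'_{\mathrm{c}} \to H^\infty_\bo$). Granting this, given $P \in \Psi^m_\bo$ write $\Lambda_{\ell-m}\, P\, \Lambda_\ell^{-1} \in \Psi^0_\bo(\R_+)$ by the composition Lemma~\ref{lem:b-composition}; Lemma~\ref{lem:b-L2} makes it $L^2 \to L^2$ bounded. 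For $u \in H^\ell_\bo$, $\Lambda_\ell u \in L^2$, so $\Lambda_{\ell-m}P u = (\Lambda_{\ell-m}P\Lambda_\ell^{-1})(\Lambda_\ell u) + (\text{smoothing})u \in L^2$, i.e. $Pu \in H^{\ell-m}_\bo$, with the norm estimate following from composing the operator norms.

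The main obstacle I expect is the bookkeeping around the boundary behavior rather than any deep point: making sure the order-raising operators $\Lambda_s$ genuinely live in the small b-calculus (the Lacunary condition must survive the construction of the elliptic parametrix and of fractional/integer powers), and verifying that $H^s_\bo(\R_+)$ as defined by the Mellin transform in \eqref{eq:b-sob}, with the $\chi_0/(1-\chi_0)$ splitting, really does coincide with $\{u : \Lambda_s u \in L^2\}$ — this requires knowing that near $0$ the b-operator $\Lambda_s$ acts like $\mathrm{Op}$ of $\langle\sigma\rangle^s$ on the Mellin side, which one sees from \eqref{eq:Mprop1} and the fact that $\Op_\bo$-quantization in $\log x$ coordinates is ordinary quantization in $t = \log x$. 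Once these identifications are in hand the lemma is immediate; it is essentially the statement that the small b-calculus is a filtered $*$-algebra acting on its natural scale of Sobolev spaces, exactly parallel to the closed-manifold case.
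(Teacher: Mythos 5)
Your proposal is correct in substance and tracks the paper's (one-sentence) argument: the paper also proves the lemma by citing the identification $L^2(\R_+)=H^{0,-\ha}_\bo(\R_+)$, the $L^2$ bound of Lemma~\ref{lem:b-L2}, and the conjugation invariance of Lemma~\ref{lem:b-conj}, and leaves the order reduction implicit; you are filling that gap in the expected way with order-reducing operators $\Lambda_s$. Three bookkeeping points should be tightened. First, because $L^2(\R_+;dx)=H^{0,-\ha}_\bo$ and not $H^0_\bo=H^{0,0}_\bo$, the equivalence ``$u\in H^\ell_\bo\iff\Lambda_\ell u\in L^2$'' as written is off by a weight of $\ha$: if you normalize the weight to $a=0$ as you do, the conclusion of the reduction is $\Lambda_\ell u\in H^0_\bo$ (Mellin-$L^2$, i.e.\ $L^2(dx/x)$ near $0$), not $L^2(dx)$; the fix is to conjugate to $a=-\ha$ instead, or apply Lemma~\ref{lem:b-conj} one more time to pass between $H^0_\bo$ and $L^2$. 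Second, the Lacunary condition~\eqref{eq:lacunary} is not a constraint near $\xi=0$ but a global support condition on $\widehat{p}(x,\cdot)$; the correct way to enforce it while preserving ellipticity is to convolve $\langle\xi\rangle^s$ in $\xi$ with $\widehat\psi$ for $\psi\in\CIc((-1,1))$ with $\int\widehat\psi=1$ (the same device the paper uses in the proof of Lemma~\ref{lem:transform_to_b_operator}), which changes the symbol by an order $s-1$ term rather than ``a smoothing error.'' Third, since symbols in $S^m_\bo$ decay rapidly in $x$ by~\eqref{eq:b-symbol_est}, $\Lambda_s$ cannot be uniformly elliptic on all of $\overline{\R}_+$, so Proposition~\ref{prop:small_parametrix} cannot be invoked with $U=\overline{\R}_+$; take $U$ a bounded neighborhood of $0$ and treat the region away from $0$ by the ordinary Kohn--Nirenberg calculus, where $H^{s,a}_\bo$ reduces to the standard Sobolev space via~\eqref{eq:b-sob}. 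None of these affects the strategy.
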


Next we characterize the relationship between b-Sobolev spaces and standard Sobolev spaces. In particular, $H^s(\R_+)$ for $s \in (-\ha, \ha)$ has b-Sobolev characterizations. First, we note that for such $s$, the supported and extendible Sobolev spaces are the same, so there is no ambiguity when we write $H^s(\R_+)$ for $s \in (-\ha, \ha)$. 
\begin{lemma}\label{lem:cutting}
    Let $H$ denote the Heaviside function. Then the multiplication map $u(x) \mapsto H(x) u(x)$ is continuous on $H^s(\R)$ for $s \in (-\ha, \ha)$
\end{lemma}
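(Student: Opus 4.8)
## Proof Proposal

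The plan is to reduce the claim to a standard fact about the Fourier transform of the Heaviside function and then use the characterization of $H^s(\mathbb{R})$ via the Fourier transform, keeping careful track of the operator $u \mapsto H u$ at the level of multiplication by a symbol-like object. First I would recall that $H = \frac{1}{2} + \frac{1}{2}\operatorname{sgn}$, so multiplication by $H$ differs from the trivial operator $\frac{1}{2}\operatorname{Id}$ by $\frac{1}{2}$ times multiplication by $\operatorname{sgn}(x)$; since $\operatorname{Id}$ is obviously bounded on every $H^s$, it suffices to prove that $u \mapsto \operatorname{sgn}(x)\, u(x)$ is bounded on $H^s(\mathbb{R})$ for $s \in (-\tfrac12, \tfrac12)$. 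On the Fourier side, multiplication by $\operatorname{sgn}(x)$ becomes (up to a constant) convolution with $\operatorname{p.v.}\tfrac{1}{\xi}$, i.e. the Hilbert transform composed with a constant; equivalently, $\widehat{\operatorname{sgn}\cdot u}(\xi) = c\, (\operatorname{p.v.}\tfrac1\xi) * \hat u(\xi)$.

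The key step is therefore to show that the Hilbert transform $\mathcal{H}$ is bounded on the weighted $L^2$ space $\langle\xi\rangle^s L^2(\mathbb{R}_\xi)$ for $|s| < \tfrac12$, since $H^s(\mathbb{R})$ is exactly $\{u : \langle\xi\rangle^s \hat u \in L^2\}$. This is a weighted $L^2$ bound for a Calderón--Zygmund operator, and the relevant criterion is that the weight $\langle\xi\rangle^{2s}$ lies in the Muckenhoupt class $A_2(\mathbb{R})$; a power weight $|\xi|^a$ (and likewise $\langle\xi\rangle^a$, which behaves the same at infinity and is bounded near the origin) is in $A_2$ precisely when $-1 < a < 1$, i.e. when $-\tfrac12 < s < \tfrac12$. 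Thus I would invoke the Hunt--Muckenhoupt--Wheeden theorem, or alternatively give the elementary direct estimate: split the convolution kernel into a near-diagonal part $|\xi - \eta| \le \tfrac12\langle\xi\rangle$ and a far part, estimate the near-diagonal part by the ordinary $L^2$-boundedness of the truncated Hilbert transform (the weight is essentially constant there), and estimate the far part by a Schur test using $\int \langle\xi\rangle^s \langle\eta\rangle^{-s} |\xi-\eta|^{-1}\, \mathbf{1}_{|\xi-\eta| > \tfrac12\langle\xi\rangle}\, d\eta \lesssim 1$ uniformly, which is exactly where the restriction $|s| < \tfrac12$ is used.

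The main obstacle is precisely this weighted boundedness of the Hilbert transform and the sharpness of the exponent range: one must be careful that the endpoint $s = \pm\tfrac12$ genuinely fails (consistent with the fact that $H^{1/2}$ functions need not remain in $H^{1/2}$ after multiplication by $H$, e.g. because the jump at $0$ destroys the logarithmic Sobolev borderline), and that the Schur-test integral is only finite in the open range. Once this is in hand, the lemma follows immediately: writing $Hu = \tfrac12 u + \tfrac12 \operatorname{sgn}\cdot u$ and using $\|\operatorname{sgn}\cdot u\|_{H^s} = c\|\mathcal{H}(\hat u)\|_{\langle\xi\rangle^s L^2} \le C\|\hat u\|_{\langle\xi\rangle^s L^2} = C\|u\|_{H^s}$ gives the desired continuity, with the constant depending only on $s$ for $s$ in a compact subinterval of $(-\tfrac12,\tfrac12)$.
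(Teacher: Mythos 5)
Your argument is correct, but it goes by a genuinely different route than the paper. The paper simply observes that the case $s=0$ is trivial, reduces $s\in(-\ha,0)$ to $s\in(0,\ha)$ by duality (multiplication by $H$ being formally self-adjoint with respect to the $L^2$ pairing identifying $H^{-s}$ as the dual of $H^s$), and cites the positive case to Taylor. You instead give a self-contained Fourier-side argument: writing $H=\tfrac12+\tfrac12\sgn$ reduces the claim to boundedness of multiplication by $\sgn$, which on the Fourier side is the Hilbert transform $\mathcal H$ acting on $\hat u$, and $\|u\|_{H^s}$ is the norm of $\hat u$ in the weighted space $L^2(\langle\xi\rangle^{2s}d\xi)$. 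Since $\langle\xi\rangle^{2s}\in A_2(\R)$ precisely for $|s|<\ha$, the Hunt--Muckenhoupt--Wheeden theorem gives the bound, and the failure at $s=\pm\ha$ falls out of the same criterion. This is a clean reduction, and it makes the sharpness of the exponent range transparent, which the citation-plus-duality argument does not. One small caution on your alternative ``elementary'' argument: the one-sided Schur bound $\sup_\xi\int_{|\xi-\eta|>\langle\xi\rangle/2}\langle\xi\rangle^s\langle\eta\rangle^{-s}|\xi-\eta|^{-1}\,d\eta\lesssim 1$ only holds for $s>0$ (the $\eta\to\infty$ tail diverges for $s\le 0$), so you would need either the companion column estimate (which holds for $s<0$), a weighted Schur test, or the observation that the kernel on the far region is essentially symmetric under $(\xi,\eta,s)\mapsto(\eta,\xi,-s)$; the $A_2$ route avoids this bookkeeping entirely.
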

\begin{proof}
    The lemma is trivial for $s = 0$, and by duality, it suffices to establish the lemma for $s \in (0, \ha)$, which is given in \cite[Chapter 4, Lemma 5.4]{Taylor}
\end{proof}
Note that there are no distributions in $H^s(\R)$, $s \in (-\ha, \ha)$, that is supported on $\{0\}$. Therefore, it follows from Lemma~\ref{eq:cutting} that there is no reason to distinguish between $\dot H^s(\R_+)$ and $\bar H^s(\R_+)$ for $s \in (-\ha, \ha)$, and we will sometimes denote such spaces by just $H^s(\R_+)$. The following lemma relating b-Sobolev spaces and Sobolev spaces is well-known in the community. We give a self-contained proof here for convenience.
\begin{lemma}\label{lem:b_sob_to_sob}
    For $s \in (-\ha, \ha)$, we have an isomorphism $H^s(\R_+) \simeq H_\bo^{s,s - \ha}(\R_+)$ via the identity map.
\end{lemma}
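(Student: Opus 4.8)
The statement is the identification $H^s(\R_+) \simeq H_\bo^{s,s-\frac12}(\R_+)$ for $s \in (-\frac12,\frac12)$, via the identity map. The strategy is to reduce everything to the model statement at $s = 0$, where $L^2(\R_+) = H_\bo^{0,-\frac12}(\R_+)$ holds by inspection of the definition~\eqref{eq:weighted_b}, and then to interpolate and dualize. Since the claim is local near $0$ (away from $0$ both scales coincide with ordinary $H^s(\R)$), I would fix the cutoff $\chi_0$ from~\eqref{eq:zero_cutoff} and work with $\chi_0 u$.

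First I would treat nonnegative $s \in [0,\frac12)$. The cleanest route is to exhibit, for each such $s$, an elliptic b-pseudodifferential operator realizing the isomorphism: one wants an operator that on the Mellin side acts (near $0$) as multiplication by $\langle\sigma\rangle^s$ after the weight shift, i.e. something like $\Lambda_s := \rho^{-(s-1/2)}\operatorname{Op}_\bo(\langle\xi\rangle^s)\rho^{s-1/2}$, which by Lemma~\ref{lem:b-conj} lies in $\Psi^s_\bo(\R_+)$ and is elliptic, hence invertible modulo smoothing by Proposition~\ref{prop:small_parametrix}; composed with Lemma~\ref{lem:b-map_prop} and the $s=0$ identity this would give $\chi_0 u \in H^{s,s-1/2}_\bo \iff \Lambda_s(\chi_0 u) \in L^2$. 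The nontrivial point is to match this b-Sobolev condition with the \emph{ordinary} Sobolev condition $\chi_0 u \in H^s(\R_+)$. Here I would use Lemma~\ref{lem:cutting}: for $s\in(-\frac12,\frac12)$ multiplication by the Heaviside function $H$ is bounded on $H^s(\R)$, so $H^s(\R_+)$ (supported = extendible, no distributions at $0$) can be described by testing against $\langle\xi\rangle^s$ on the Fourier side of the $H$-extended function. The comparison between the Fourier-multiplier description (weight $\langle\xi\rangle^s$, Lebesgue measure $dx$) and the Mellin-multiplier description (weight $\langle\sigma\rangle^s$, measure $dx/x$, on the line $\Im\sigma = -(s-\frac12)$) is the computational heart: one changes variables $t = \log x$, notes $dx/x = dt$ and $x^{s-1/2}$ becomes $e^{(s-1/2)t}$, and checks that conjugating the Fourier multiplier $\langle D\rangle^s$ on $\R_x$ by this exponential and by the change of variables produces an operator whose symbol agrees with $\langle\sigma\rangle^s$ up to a symbol of order $s-1$, which is absorbed by ellipticity — this is exactly the kind of statement Lemmas~\ref{lem:b-composition} and~\ref{lem:b-conj} are designed to make routine. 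A cleaner variant: just show directly that for a compactly supported $u$ smooth away from $0$, $u \in H^s(\R_+)$ iff $x^{-(s-1/2)}u$, transplanted to the line by $t=\log x$, lies in $H^s(\R)$, by comparing the two Plancherel formulas.

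Next, negative $s \in (-\frac12,0)$ follows by duality. Both $H^s(\R_+)$ and $H^{s,s-1/2}_\bo(\R_+)$ have the spaces at $-s \in (0,\frac12)$ as duals with respect to the $L^2(\R_+;dx)$ pairing — for the ordinary scale this is standard since $s\in(-\frac12,\frac12)$ avoids the exceptional indices, and for the b-scale one checks from the Mellin–Plancherel identity that the $L^2(dx)$ pairing puts $H^{s,s-1/2}_\bo$ in duality with $H^{-s,-s-1/2}_\bo = H^{-s,(-s)-1/2}_\bo$. Since the identity map is the isomorphism at level $-s$, its adjoint (again the identity) is the isomorphism at level $s$. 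Alternatively one interpolates the $s=0$ and $s\to\frac12$ endpoints, but duality is shorter and avoids worrying about interpolation of the extendible/supported distinction.

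**Main obstacle.** The genuinely delicate step is the weight/index bookkeeping in the comparison — verifying that the specific weight $s-\frac12$ (and not some other affine function of $s$) is the one for which the Lebesgue-Fourier and $dx/x$-Mellin characterizations coincide, and that the discrepancy between $\langle\xi\rangle^s$ and $\langle\sigma\rangle^s$ after the logarithmic change of variables is a lower-order symbolic error rather than something that shifts the space. The constraint $s\in(-\frac12,\frac12)$ enters in two essential and independent places: it is exactly the range where Lemma~\ref{lem:cutting} (boundedness of cutting by $H$) holds, so that $H^s(\R_+)$ is unambiguous and stable under the cutoff $\chi_0$; and it is the range where the weight shift by $s-\frac12$ keeps us in a regime where supported and extendible b-Sobolev spaces need no separate treatment. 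Getting both of these to line up cleanly, rather than the $L^2$-boundedness estimates or the parametrix construction, is where care is needed.
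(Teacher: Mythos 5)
Your reduction to $s \in [0,\tfrac12)$ and the duality step for negative $s$ match the paper, and the identification at $s=0$ is indeed trivial. But the central step has a genuine gap, and I don't think either of your two proposed routes closes it.

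The heart of the lemma is a \emph{cross-calculus} statement: it equates a Fourier-side condition on the ordinary scale (roughly, $\langle D_x\rangle^s u \in L^2(\R_+;dx)$) with a Mellin-side condition on the b-scale ($\mathcal M u$ weighted by $\langle\sigma\rangle^{2s}$ on the line $\Im\sigma = -(s-\tfrac12)$). Your operator $\Lambda_s = \rho^{-(s-1/2)}\Op_\bo(\langle\xi\rangle^s)\rho^{s-1/2}$ lives entirely inside the b-calculus, and the parametrix and mapping lemmas you invoke only relate b-Sobolev spaces to one another. In particular, $\Op_\bo(\langle\xi\rangle^s)$ has Schwartz kernel $\frac{1}{2\pi}\int e^{i(x-y)\xi}\langle x\xi\rangle^s\,d\xi$ — this is $\langle xD_x\rangle^s$ to leading order, not $\langle D_x\rangle^s$ — so ellipticity in the b-calculus does not absorb the discrepancy between the two; the discrepancy is not a lower-order b-symbol, it is a different calculus. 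Lemmas~\ref{lem:b-composition} and~\ref{lem:b-conj} are internal to $\Psi^m_\bo$ and give you nothing about $\langle D_x\rangle^s$. Your ``cleaner variant'' also does not bypass the issue: after the substitution $x=e^t$ one gets $\hat u(\xi) = \int e^{-i e^t\xi}u(e^t)e^t\,dt$, which is \emph{not} a Fourier transform in $t$ — the map $t\mapsto e^t$ is non-affine and degenerate as $t\to -\infty$, so there is no Plancherel identity to compare. Showing that $u\in H^s(\R_+)$ iff $e^{-(s-1/2)t}u(e^t)\in H^s(\R_t)$ is precisely the content of the lemma, not a shortcut to it.

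The paper avoids the cross-calculus comparison entirely by working with a physical-space characterization of $H^s$: for $s\in(0,\tfrac12)$,
\[
\|u\|_{H^s}^2 \simeq \int |u|^2\,dx + \iint \frac{|u(x)-u(y)|^2}{|x-y|^{2s+1}}\,dx\,dy,
\]
and — using that $u$ vanishes for $x<0$ — the double integral over $\R_+^2$ is reduced, by the dilation substitution $y=rx$ and Mellin--Plancherel in $x$, to an integral of $|\mathcal M u(\sigma - i(s-\tfrac12))|^2$ against a weight in $(r,\sigma)$ that is then estimated from above and below by $\langle\sigma\rangle^{2s}$. The scale invariance of the Slobodeckij kernel $|x-y|^{-2s-1}\,dx\,dy$ under simultaneous dilation of $x$ and $y$ is what makes the Mellin transform appear naturally, with no need to compare pseudodifferential operators across calculi. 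If you want to keep an operator-theoretic flavor, the honest way to set it up would be via Lemma~\ref{lem:cutting} plus an explicit computation of the Mellin transform of $H(x)\langle D_x\rangle^s(H\cdot)$ along a shifted line — but that computation is of essentially the same difficulty as the Slobodeckij one.
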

\begin{proof}
    It suffices to prove the lemma for $0 < s < \ha$ since the case $s = 0$ is clear and the range $-\ha < s< 0$ follows by duality. 

    \noindent
    1. First consider $u \in H^{s, s - \ha}_\bo (\R_+)$ for $s \in (0, \ha)$, and we may further assume that $u$ is supported near $0$. We wish to show that the $H^s(\R_+)$ norm is bounded. For $s \in (0, \ha)$, 
    \begin{equation}\label{eq:physical_sobolev}
        \|u\|_{H^s} \simeq \int |u(x)|^2 \, dx + \int_{\R^2} \frac{|u(x) - u(y)|^2}{|x - y|^{2s + 1}}\, dx dy.
    \end{equation}
    where $\simeq$ denotes equivalence in norms. See \cite[\S 7.9]{H1} for a proof of~\eqref{eq:physical_sobolev}. Since $u(x) = 0$ for $x < 0$, the second integral in~\eqref{eq:physical_sobolev} can be separated into
    \begin{equation}
    \begin{gathered}
        \int_{\R^2} \frac{|u(x) - u(y)|^2}{|x - y|^{2s + 1}}\, dx dy = I(u) + 2\int_{\R} |x^{-s + \ha} u(x)|^2\, \frac{dx}{x} \\
        \text{where} \quad I(u) := \int_0^\infty \int_0^\infty \frac{|u(x) - u(y)|^2}{|x - y|^{2s + 1}}\, dx dy
    \end{gathered}
    \end{equation}
    Therefore, it suffices to show that $I(u) < \infty$. 

    \noindent
    2. Substituting $y = rx$, we find 
    \begin{align}
        I(u) &= \int_0^\infty \int_0^\infty \frac{|u(x) - u(rx)|^2}{|x - xr|^{2s + 1}}x \, dx dr \nonumber \\
        &= \int_0^\infty \frac{1}{|1 - r|^{2s + 1}} \int_0^\infty \left|\frac{u(x) - u(rx)}{x^{s -\ha}} \right|^2 \, \frac{dx}{x} dr \nonumber \\
        &= \frac{1}{2 \pi} \int_0^\infty\int_{-\infty}^\infty \frac{|1 - r^{i\sigma + (s -\ha)}|^2}{|1 - r|^{2s + 1}} |\mathcal Mu(\sigma - i(s - \tfrac{1}{2}))|^2\, d\sigma dr, \label{eq:I(u)}
    \end{align}
    where the last equality follows from Parceval's identity together with~\eqref{eq:Mprop2} and~\eqref{eq:Mprop3}. We first integrate~\eqref{eq:I(u)} in $r$. Making the substitution $r = e^q$, we find that for $\sigma > 1$, 
    \begin{align}
        \int_0^\infty \frac{|1 - r^{i\sigma + s -\ha}|^2}{|1 - r|^{2s + 1}}\, dr &\le M + \int_{e^{-1}}^{e} \frac{|1 - r^{i\sigma + s -\ha}|^2}{|1 - r|^{2s + 1}}\, dr \nonumber \\
        &\le M + C\int_{-1}^1 \frac{|1 - \exp(q(i\sigma + s - \ha))|^2}{|q|^{2s + 1}} \, dq \nonumber \\
        &\le M + C\int_{\{|q| < (2\sigma)^{-1}\}} |q|^{-2s + 1} \sigma^{2}\, dq \nonumber \\
        &\qquad \qquad+ C\int_{\{(2\sigma)^{-1} \le |q| \le 1\}} |q|^{-2s - 1}\, dq \nonumber \\
        &\le M + C \sigma^{2s}, \label{eq:big_s}
    \end{align}
    where $M > 0$ is some constant independent of $\sigma$ and $C$ may change from line to line, but remains independent of $\sigma$. For $0 \le \sigma \le 1$, 
    \begin{equation}
        \int_0^\infty \frac{|1 - r^{i\sigma + s -\ha}|^2}{|1 - r|^{2s + 1}}\, dr \lesssim \int_0^{\ha} r^{2s - 1} \, dr  + \int_{\ha}^2 |1 - r|^{-2s + 1} \, dr + \int_2^\infty |r|^{-2} < \infty, \label{eq:small_s}
    \end{equation}
    where the hidden constant is independent of $0 \le \sigma \le 1$. Substituting the bounds~\eqref{eq:big_s} and~\eqref{eq:small_s} into~\eqref{eq:I(u)}, we find that 
    \[I(u) \lesssim \int_{-\infty}^\infty \langle \sigma \rangle^{2s} |\mathcal Mu(\sigma - i(s - \tfrac{1}{2}))|^2\, d\sigma = \|u\|_{H^{s, s - \ha}_\bo}\]
    as desired. 

    \noindent
    3. Now assume that $u \in H^s(\R_+)$. Then $I(u) < \infty$. Observe that for $\sigma > 1$ and again making the substitution $r = e^q$, we have
    \begin{align}
        \int_0^\infty \frac{|1 - r^{i\sigma + s -\ha}|^2}{|1 - r|^{2s + 1}}\, dr &\ge c \int_{-1}^1 \frac{|1 - \exp(q(i\sigma + s - \ha))|^2}{|q|^{2s + 1}} \, dq \\
        &\ge c \int_{\{|q| < (2\sigma)^{-1}\}} |q|^{-2s + 1} \sigma^{2}\, dq \\
        &\ge c \sigma^{2s}
    \end{align}
    for some small $c > 0$. For $0 \le \sigma \le 1$, 
    \[\int_0^\infty \frac{|1 - r^{i\sigma + s -\ha}|^2}{|1 - r|^{2s + 1}}\, dr \ge c \int_0^\ha r^{2s - 1}\, dr \ge c > 0\]
    for some sufficiently small $c > 0$. Again using~\eqref{eq:I(u)}, we find that
    \[\|u\|_{H^{s, s - \ha}_\bo} \lesssim I(u) \le \|u\|_{H^s},\]
    so indeed the two spaces are isomorphic. 
\end{proof}
The above lemma allow us to pass between b-Sobolev spaces and standard Sobolev spaces. In our specific setting, we may consider $\partial \Omega$ as a manifold with boundary at the corners where the smooth structure near the corner is determined by some b-parameterization. In particular, we have the usual supported and extendible Sobolev spaces denoted $\dot H^s(\partial \Omega)$ and $\bar H^s(\partial \Omega)$ respectively. Similarly, b-Sobolev spaces on $\partial \Omega$ as a manifold with boundary will be denoted $H^{s, a}_\bo(\partial \Omega)$. It follows from Lemma~\ref{lem:b_sob_to_sob} that
\[\dot H^s(\partial \Omega) = \bar H^s(\partial \Omega) = H^{s, s - \ha}_\bo (\partial \Omega) \quad \text{for} \quad s \in (-\ha, \ha).\] 
Often times, for distributions supported away the corners, we will just write $H^s(\partial \Omega)$ instead of $\dot H^s(\partial \Omega)$ when there is no ambiguity. 

% For $s > -\ha$, we can also define Sobolev spaces by ``ignoring" the corners. Fix a b-parameterization $\mathbf x: \mathbb S^1 \to \partial \Omega$. Define 
% \begin{equation}
%     H^{s}(\partial \Omega) := \{u \in \dot{\mathcal D}'(\partial \Omega) : \mathbf x^* u \in H^s(\mathbb S^1).
% \end{equation}
% Clearly, this definition strongly depends on the choice of $\mathbf x$. In particular, if $\mathbf x^* u$ is continuously differentiable, it does not imply that $\tilde {\mathbf x}^* u$ is continuously differentiable with respect to another b-parameterization $\tilde{\mathbf x}: \mathbb S^1 \to \partial \Omega$. However, if we do not ask for too much regularity, this definition actually makes sense. Clearly, for $s \in (-\ha, \ha)$, $H^s(\partial \Omega)$ is independent of the choice of $\mathbf x$. We can push for a bit more regularity via the following lemma.

A slightly subtle point is that the chess billiard map takes corners to non-corners, but continuous functions on $\partial \Omega$ will still get pulled-back to continuous functions by the chess billiard map. When interpreting $\partial \Omega$ as the disjoint union of manifolds with boundary, the fact that the continuity of a function is preserved by the pullback is unclear because we are ignoring the $C^0$ structure at the corners of the boundary. Therefore, it is also useful to interpret $\partial \Omega$ as $\mathbb S^1$ and consider the chess billiard map as a bi-Lipschitz piecewise-smooth homeomorphism on $\mathbb S^1$. We have the following lemma to address these issues. 
\begin{lemma}\label{lem:pullback_continuity}
    Let $f :\mathbb S^1 \to \mathbb S^1$ be a bi-Lipschitz piecewise-smooth homeomorphism. Then the pullback $f^*: C^\infty(\mathbb S^1) \to \mathcal D'(\mathbb S^1)$ extends to an isomorphism 
    \[f^* : H^s(\mathbb S^1) \to H^s(\mathbb S^1)\]
    for $s \in (\ha, \frac{3}{2})$.
\end{lemma}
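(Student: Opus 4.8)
The plan is to split the interval $(1/2,3/2)$ into the two overlapping regimes $s\in(1/2,1]$ and $s\in[1,3/2)$, and in each to reduce the statement to facts already established in this section. Write $\{p_1,\dots,p_N\}\subset\mathbb S^1$ for the finite set off which $f$ is smooth. Since $f$ is bi-Lipschitz, $f'$ exists a.e.\ and satisfies $c\le|f'|\le C$ for some $c,C>0$, so by the inverse function theorem $f^{-1}$ is again a bi-Lipschitz piecewise-smooth homeomorphism (smooth off $\{f(p_1),\dots,f(p_N)\}$). It therefore suffices to prove that $f^*\colon H^s(\mathbb S^1)\to H^s(\mathbb S^1)$ is \emph{bounded}; applying this bound to $f^{-1}$ as well then shows that $f^*$ is an isomorphism with inverse $(f^{-1})^*$.

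For $s\in(0,1)$ I would invoke the Gagliardo characterization \eqref{eq:physical_sobolev} (in its form on $\mathbb S^1$): $\|v\|_{H^s}^2\simeq\|v\|_{L^2}^2+\iint |v(x)-v(y)|^2\,\mathrm{dist}(x,y)^{-2s-1}\,dx\,dy$. Taking $v=u\circ f$ and substituting $x=f^{-1}(\xi)$, $y=f^{-1}(\eta)$ in the double integral, the bi-Lipschitz comparison $\mathrm{dist}(f^{-1}(\xi),f^{-1}(\eta))\simeq\mathrm{dist}(\xi,\eta)$ together with the two-sided bounds on the Jacobian factors $|\det Df^{-1}|$ shows that the Gagliardo seminorm of $u\circ f$ is $\lesssim$ that of $u$, while the $L^2$ terms are comparable by the same change of variables. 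Hence $f^*$ is bounded on $H^s(\mathbb S^1)$ for every $s\in(0,1)$, which already covers $s\in(1/2,1)$; I will reuse below the fact that composition with a bi-Lipschitz homeomorphism is bounded on $H^\sigma$ for $\sigma\in(0,1)$. For $s=1$: for $u\in C^\infty(\mathbb S^1)$ the a.e.\ chain rule gives $(u\circ f)'=(u'\circ f)\,f'$, and the bi-Lipschitz change of variables together with $f'\in L^\infty$ yields $\|u\circ f\|_{H^1}\le C\|u\|_{H^1}$; since $C^\infty(\mathbb S^1)$ is dense in $H^1(\mathbb S^1)$, $f^*$ extends boundedly to $H^1(\mathbb S^1)$.

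For $s\in(1,3/2)$, set $\sigma:=s-1\in(0,1/2)$ and use the elementary equivalence $\|v\|_{H^s}^2\simeq\|v\|_{L^2}^2+\|v'\|_{H^\sigma}^2$ on $\mathbb S^1$. Given $u\in H^s(\mathbb S^1)$, the $s=1$ step gives $u\circ f\in H^1$, with $(u\circ f)'=(u'\circ f)\,f'$; and $u'\circ f\in H^\sigma$ by the $(0,1)$-composition fact above. It remains to check that multiplication by $f'$ is bounded on $H^\sigma(\mathbb S^1)$: localizing by a partition of unity subordinate to a cover adapted to $\{p_j\}$, on a chart avoiding all $p_j$ the factor $f'$ is smooth and multiplication by it is bounded on every $H^\sigma$, while near a break point $p_j$ we may write $f'=g_-+(g_+-g_-)\,H(\,\cdot-p_j)$ with $g_\pm$ the one-sided smooth extensions of $f'$; multiplication by $g_\pm$ is harmless and multiplication by the translated Heaviside is bounded on $H^\sigma$ for $|\sigma|<1/2$ by Lemma~\ref{lem:cutting}. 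Thus $(u\circ f)'\in H^\sigma$, so $u\circ f\in H^s$ with $\|u\circ f\|_{H^s}\le C\|u\|_{H^s}$, completing the proof.

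The main obstacle is exactly this last step: the chain-rule factor $f'$ is only piecewise smooth, so forming $(u'\circ f)\,f'$ is the multiplication of an $H^{s-1}$ function by a function with jump discontinuities, and such multiplication is bounded on $H^{s-1}$ only for $s-1<1/2$. This is what pins down the upper endpoint $3/2$. The lower endpoint $1/2$ is not forced by the argument — the proof in fact gives boundedness for all $s\in(0,3/2)$ — it is simply the range needed in the applications, where $f^*$ is applied to spaces of continuous functions.
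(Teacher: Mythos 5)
Your proof is correct and at its core it is the same argument as the paper's: reduce to showing $(u\circ f)'=(u'\circ f)\,f'\in H^{s-1}$, where composition with $f$ preserves $H^{s-1}$ and multiplication by the jump-discontinuous factor $f'$ is handled via Lemma~\ref{lem:cutting} since $s-1\in(-\tfrac12,\tfrac12)$. The paper packages this via the fundamental theorem of calculus identity $f^*u(\theta)=u(0)+\int_0^\theta f'(t)(f^*u')(t)\,dt$ and treats the whole range $s\in(\tfrac12,\tfrac32)$ at once; you split into $s\in(\tfrac12,1]$ and $s\in[1,\tfrac32)$ and supply the ingredient the paper leaves implicit, namely that composition with a bi-Lipschitz homeomorphism is bounded on $H^\sigma$ for $\sigma\in(0,1)$, which you establish cleanly from the Gagliardo characterization~\eqref{eq:physical_sobolev}. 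This is a genuinely useful clarification since the paper's appeal to Lemma~\ref{lem:cutting} alone controls the multiplication by $f'$ but not the composition $f^*u'$. Your closing remark that the argument in fact gives boundedness on all of $(0,\tfrac32)$, with the lower endpoint $\tfrac12$ needed only so $u(0)$ (equivalently, the $H^s\hookrightarrow C^0$ embedding the paper uses) makes sense, is also accurate.
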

\begin{proof}
    We may assume that $f(0) = 0$. For $u \in C^\infty (\mathbb S^1)$, 
    \begin{equation}
        f^* u(\theta) = u(0) + \int_0^\theta f'(t) (f^* u')(t)\, dt
    \end{equation}
    This extends to $u \in H^s(\mathbb S^1)$ for $s \in (\ha, \frac{3}{2})$. Indeed, for such $s$, $H^s \hookrightarrow C^0(\mathbb S^1)$. Furthermore, it follows from Lemma~\ref{eq:cutting} that $f' \cdot (f^* u') \in H^{s - 1}(\mathbb S^1)$. 
\end{proof}

Another consequence of Lemma~\ref{lem:b_sob_to_sob} is that multiplication by the Heaviside function has the following mapping property. 
\begin{lemma}\label{lem:b-cutting}
    The map $u(x) \mapsto H(x) u(x)$ is continuous from $H^{s}(\R) \to H^{s, a}_\bo(\R_+)$ for $a \in (-1, 0)$ and $s \ge a + \ha$. 
\end{lemma}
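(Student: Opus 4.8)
The plan is to reduce the claim to Lemma~\ref{lem:b_sob_to_sob} by interpolating and by a clever choice of weight. First I would dispose of the behavior away from $0$: if $\chi_0$ is the cutoff from~\eqref{eq:zero_cutoff}, then $(1-\chi_0)H(x)u(x)$ is just multiplication by a smooth cutoff times the Heaviside function on $H^s(\R)$, and away from $0$ the b-Sobolev norm $H^{s,a}_\bo$ agrees with the ordinary $H^s$ norm, so this piece is harmless for any $s \in (-\ha,\ha)$; for $s \ge \ha$ there is no cutting issue at all since $H(x)u(x)$ with $u$ smooth near $0$ is already as regular as $u$ on the scale of b-Sobolev spaces once we allow the weight. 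So it suffices to treat $\chi_0 H(x) u(x)$, i.e. to show $u \mapsto H(x)u(x)$ is bounded $H^s(\R) \to H^{s,a}_\bo(\R_+)$ for $u$ supported near $0$, $a \in (-1,0)$, $s \ge a + \ha$.

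The key case is the endpoint $s = a + \ha$. Since $a \in (-1,0)$ exactly corresponds to $s = a + \ha \in (-\ha,\ha)$, Lemma~\ref{lem:cutting} tells us that $u \mapsto H(x)u(x)$ is continuous on $H^s(\R)$ for this range of $s$, and Lemma~\ref{lem:b_sob_to_sob} identifies $H^s(\R_+)$ (for $s \in (-\ha,\ha)$) with $H^{s,s-\ha}_\bo(\R_+) = H^{s,a}_\bo(\R_+)$ via the identity map. Composing these two facts gives exactly the endpoint statement. For the non-endpoint cases $s > a + \ha$ (including all $s \ge \ha$, where we take $a$ close to $0$), I would argue by the mapping property of b-operators together with a gain of regularity: the point is that $H(x)u(x)$ is smoother than the borderline amount by the excess $s - (a+\ha)$, and one can absorb this by noting that once $H(x)u(x) \in H^{a+\ha,a}_\bo$ one bootstraps using the extra ordinary regularity of $u$ itself, since $xD_x$ applied to $H(x)u(x)$ equals $H(x)(xD_x u)(x)$ (no boundary term from differentiating $H$ against the weight $x$, as $x\delta(x)=0$) and $xD_x u \in H^{s-1}$. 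Iterating this commutation identity $xD_x \circ H = H \circ xD_x$ on $L^2$-based spaces, combined with the endpoint result, yields the general statement by interpolation in $(s,a)$.

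The main obstacle is making the bootstrapping step precise at non-integer orders: the identity $xD_x(Hu) = H(xD_x u)$ is clean, but to leverage it one needs to know that $H(x)u(x) \in H^{a+\ha, a}_\bo$ with the sharp weight and then that applying powers (or complex powers, via Lemma~\ref{lem:b-conj} conjugating by $\rho^z$) of $xD_x$ stays in b-Sobolev spaces — this is fine by Lemma~\ref{lem:b-map_prop}, but one must check the weights track correctly, i.e. that differentiating by $xD_x$ does not change the weight exponent while it drops the differentiability order by one on the $H^s(\R)$ side. Concretely I expect the cleanest route is: prove the endpoint $s = a + \ha$ directly from Lemmas~\ref{lem:cutting} and~\ref{lem:b_sob_to_sob}; prove the case $a \to 0^-$, $s$ arbitrary $\ge \ha$ by writing $u = u(0)\chi_0 + (u - u(0)\chi_0)$ where the second term vanishes at $0$ so that $H$ times it gains a power of $x$; and then interpolate (complex interpolation of the b-Sobolev scale, using the Mellin-side description~\eqref{eq:b-sob}) between these to fill in all $(s,a)$ with $a \in (-1,0)$, $s \ge a + \ha$. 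The interpolation step is the part most likely to require care, since one is interpolating simultaneously in the smoothness index and the weight.
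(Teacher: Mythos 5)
Your proposal is correct and takes essentially the same route as the paper: the endpoint $s = a + \tfrac{1}{2}$ comes from composing Lemmas~\ref{lem:cutting} and~\ref{lem:b_sob_to_sob}, the integer offsets $s = a + \tfrac{1}{2} + k$ come from the commutation $xD_x\circ H = H\circ xD_x$, and interpolation fills in the rest. One simplification to the step you flag as delicate: the bootstrap keeps the weight $a$ fixed, so there is no need to interpolate simultaneously in $(s,a)$ — if $u\mapsto Hu$ is bounded $H^{s}(\R)\to H^{s,a}_\bo(\R_+)$, then for $u\in H^{s+1}(\R)$ supported near $0$ one has $Hu\in H^{s,a}_\bo$ and $xD_x(Hu)=H(xD_x u)\in H^{s,a}_\bo$ (since $xD_x u\in H^s$), which together give $Hu\in H^{s+1,a}_\bo$; interpolating in $s$ alone with $a$ fixed then reduces, after conjugating by $\rho^a$ via Lemma~\ref{lem:b-conj}, to standard interpolation of the weighted-$L^2$ Mellin description~\eqref{eq:b-sob}. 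The Taylor-peeling alternative and the ``$a\to 0^-$'' digression in your last paragraph are unnecessary.
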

\begin{proof}
    It follows from Lemma~\ref{lem:b_sob_to_sob} that $u(x) \mapsto H(x) u(x)$ is continuous from
    \[H^{a + \ha + k}(\R) \to H^{a + \ha + k, a + k}_\bo(\R_+)\]
    for $k \in \N_0$. The lemma then follows by interpolation. 
\end{proof}

Finally, we introduce polyhomogeneous conormal distributions. 

\begin{definition}
    An \textup{index family} is a set $E \subset \C \times \N_0$ such that 
    \begin{enumerate}
        \item $(z, k) \in E \implies (z + 1, k) \in E$ and $(z, k - 1) \in E$ if $k \ge 1$,

        \item $\{(z, k) \in E: \Re z < C\}$ is a finite set for any $C \in \R$. 
    \end{enumerate}
    Due to condition (2), we can define
    \[\inf E:= \inf\{z: (z, k) \in E \ \text{for some $k \in \N_0$}\}.\]
\end{definition}
Then we define the space of polyhomogeneous conormal distribution with respect to the index family $E$ by
\begin{equation}
    u \in \mathcal A^E_\phg(\R_+) \iff u \in \overline{\mathcal E}'(\R_+) \quad \text{and} \quad u \sim \sum_{(z, k) \in E} a_{k, z} x^z (\log x)^k
\end{equation}
for some coefficients in $a_{k, z} \in \C$. The `$\sim$' in this context means that 
\[\chi_0 \Big[u - \sum_{\substack{(z, k) \in E \\ \Re z \le N}} a_{z, k} x^z (\log x)^k \Big] \in \dot C^N_{\mathrm{c}}(\R_+) \qquad \text{for all $N \in \N$}. \]

Polyhomogeneous distributions can be characterized by their Mellin transforms. 

\begin{lemma}\label{lem:mellin_polyhom}
    Assume that $u \in \overline{\mathcal E}'(\R_+)$. Then $u \in \mathcal A^E_{\phg}(\R_+)$ if and only if 
    \begin{enumerate}
        \item $\mathcal M u(\sigma)$ extends from $\Im \sigma \gg 1$ to a meromorphic function over $\C$,

        \item if $\sigma = -iz$ is a pole of order $k$, then $(z, k) \in E$, 

        \item for all $C > 0$, there exists a $C'$ such that for $|\Im \sigma| < C$,
        \[|\mathcal M u(\sigma)| \le C_N |\Re \sigma|^{-N} \quad \text{if} \quad |\Re \sigma| > C'.\]
    \end{enumerate}
\end{lemma}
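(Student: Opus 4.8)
The claim is a standard characterization of polyhomogeneous distributions via their Mellin transforms, and the plan is to prove both directions by passing to logarithmic coordinates $t = \log x$, where the Mellin transform becomes the Fourier--Laplace transform and $x^z(\log x)^k$ becomes $e^{zt}t^k$. I would first reduce to the case of $u$ supported near $0$ by writing $u = \chi_0 u + (1-\chi_0)u$; the second piece has Mellin transform entire and rapidly decaying in horizontal strips by Paley--Wiener, so it contributes nothing to the pole structure and does not affect the asymptotic expansion. Thus it suffices to treat $v := \chi_0 u \in \overline{\mathcal E}'(\R_+)$ supported in, say, $[0,2)$, whose Mellin transform is holomorphic in $\{\Im\sigma > M\}$ for some $M$ (as noted in the text after~\eqref{eq:mellin}) and satisfies the decay in~\eqref{eq:Mprop1}-type estimates there.

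For the forward direction, suppose $v \in \mathcal A^E_{\phg}(\R_+)$, so $v \sim \sum_{(z,k)\in E} a_{z,k} x^z (\log x)^k$. For each $N$, write $v = v_N + r_N$ where $v_N$ is the partial sum over $\{\Re z \le N\}$ (a finite sum by the index-family condition (2)) and $r_N = \chi_0 \cdot (\text{tail}) \in \dot C^N_{\mathrm c}(\R_+)$. A direct computation gives $\mathcal M(\chi_0 x^z (\log x)^k)(\sigma)$: integrating by parts or differentiating the elementary identity $\mathcal M(\chi_0 x^z)(\sigma) = \frac{i}{\sigma + i z} + (\text{entire})$ in the parameter $z$ shows this has a single pole at $\sigma = -iz$ of order exactly $k+1$... more carefully, of order $k+1$ from the $(\log x)^k$ factor, but since $(z,k)\in E$ implies $(z,j)\in E$ for $j\le k$, this is consistent with condition (2). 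Meanwhile $r_N \in \dot C^N_{\mathrm c}$ has Mellin transform holomorphic in $\{\Im\sigma > -N\}$ with the decay of condition (3) there (this is the Mellin analogue of the fact that $C^N_{\mathrm c}$ functions have Fourier transforms decaying like $\langle\sigma\rangle^{-N}$, which I would prove by integration by parts using $\mathcal M(xD_x u)(\sigma) = \sigma \mathcal M u(\sigma)$ from~\eqref{eq:Mprop1}). Letting $N \to \infty$ and using that the partial sums agree on overlapping strips gives the meromorphic continuation with the asserted pole locations (1),(2) and the polynomial decay (3) on every horizontal strip.

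For the converse, suppose $\mathcal M v(\sigma)$ extends meromorphically with poles only at $\sigma = -iz$, $(z,k)\in E$, of order $\le k$... (order at most the multiplicity recorded in $E$), with the strip decay (3). Recover $v$ by the Mellin inversion formula $v(x) = \frac{1}{2\pi}\int_{\Im\sigma = c} x^{i\sigma}\,\mathcal M v(\sigma)\,d\sigma$ for $c > M$. For each $N$, shift the contour from $\Im\sigma = c$ down to $\Im\sigma = -N$ (no pole on the line, by shrinking $N$ slightly if needed). The decay (3) justifies that the horizontal connecting segments vanish in the limit and that both contour integrals converge; the residue theorem picks up, at each pole $\sigma = -iz$ with $\Re z < N$, a term of the form $\sum_{k} a_{z,k} x^z (\log x)^k$ arising from the Laurent expansion of $x^{i\sigma} = x^z e^{i(\sigma+iz)\log x}$ against the principal part. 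The remaining integral over $\Im\sigma = -N$ defines a function that is $C^{N-1}$ near $0$ (again by the decay estimate, differentiating under the integral sign), i.e.\ lies in $x^{N}L^\infty$-type spaces, which is exactly the remainder condition in the definition of `$\sim$'. This yields $v \in \mathcal A^E_{\phg}(\R_+)$.

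The main obstacle is bookkeeping at the poles: one must check that the order of the pole of $\mathcal M v$ at $\sigma = -iz$ matches the highest power of $\log x$ allowed by $E$ (condition (2) only bounds it), and that residues of higher-order poles produce precisely the polynomial-in-$\log x$ coefficients with no spurious lower-order logarithmic terms — this requires carefully expanding $x^{i\sigma}$ to the appropriate order in $(\sigma + iz)$ and matching against the Laurent tail of $\mathcal M v$. The other technical point requiring care is the uniformity of the contour-shifting argument as $N \to \infty$: one needs the decay estimate (3) to be strong enough (polynomial of every order) to control the horizontal segments uniformly and to guarantee the tail integrals have the claimed smoothness, but this is exactly what is hypothesized. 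I would cite Melrose~\cite{Melrose93} for the general statement but the logarithmic-coordinates argument above is self-contained.
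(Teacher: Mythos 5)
The paper does not actually prove this lemma; it cites Hintz \cite[Lemma 2.2]{Hintz_notes}. Your proposal is the standard argument and is almost certainly what that reference does: split off the part of $u$ near the boundary, compute the Mellin transform of the model terms $\chi_0 x^z(\log x)^k$, and run a contour-shifting argument for the converse. The overall structure — Paley--Wiener for the interior piece, differentiation in $z$ for the model terms, residue calculus plus the polynomial strip decay (3) to control the horizontal connecting segments — is correct and complete in outline.

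The one genuine flaw is in the way you handle the pole order. You correctly compute that $\mathcal M(\chi_0 x^z(\log x)^k)$ has a pole of order $k+1$ at $\sigma = -iz$, and you sense a mismatch with condition (2), but your resolution is wrong: the downward closure of the index family ($(z,k)\in E \Rightarrow (z,j)\in E$ for $j\le k$) goes in the opposite direction from what you need. If the pole has order $k+1$, condition (2) as stated demands $(z,k+1)\in E$, and downward closure gives you $(z,j)$ for $j\le k$, not $j = k+1$. To see the discrepancy concretely: take $u = \chi_0 x^{z_0}$ with the minimal index family $E = \{(z_0 + j, 0): j\ge 0\}$. Then $u\in\mathcal A^E_\phg$, but $\mathcal M u$ has a simple pole at $-iz_0$, and condition (2) would require $(z_0,1)\in E$, which fails. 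The correct bookkeeping is that a pole of order $m$ at $\sigma = -iz$ corresponds to log powers $0,\dots,m-1$ appearing in the expansion, i.e.\ $(z,m-1)\in E$; you can see this directly from the residue computation you set up, since the coefficient of $(\sigma + iz)^{-1}$ in $x^{i\sigma}\mathcal M u(\sigma)$ is $x^z\sum_{j=1}^m c_j (i\log x)^{j-1}/(j-1)!$. So condition (2) in the statement is off by one (it should read ``pole of order $k+1$'' or equivalently ``$(z,k-1)\in E$''), and rather than fold the issue into a non-sequitur about downward closure you should have flagged it and proved the corrected version. With that correction, the rest of your argument — in particular the remainder estimate after shifting the contour to $\Im\sigma = -N$, which gives $x^N\cdot(\text{bounded, with bounded derivatives in }\log x)$ and hence a $\dot C^{N-1}$ tail — goes through.
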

See \cite[Lemma 2.2]{Hintz_notes} for a detailed proof of this Lemma.

\subsection{Boundary terms}\label{sec:boundary_terms}
The small b-calculus is especially convenient since it is symbolic. However, the residual class $\Psi^{-\infty}_\bo$ of the small b-calculus is too big. In particular, the mapping property in Lemma~\ref{lem:b-map_prop} does not improve decay near $0$, and indeed, one can check that the embedding $H^{s, a}_\bo \hookrightarrow H^{s - N, a}_\bo$ is not compact for any $N > 0$. This means that with the small parametrix from Proposition~\ref{prop:small_parametrix} is not enough to give Fredholm properties for elliptic b-operators. We are thus led to consider operators with Schwartz kernels that do not vanish to infinite order at lb and rb. Moreover, the restricted boundary reduced operators do not lie in the small b-calculus in the first place. 

Let $\mathcal E = (E_\lb, E_\rb)$ denote a pair of index sets $E_\lb$ and $E_{\rb}$. We define the space $\widetilde \Psi_\bo^{-\infty, \mathcal E}$ by the set of operators with kernel $K$ that satisfies $r(1 - \tau) K(r, \tau)$ is polyhomogeneous conormal to lb and rb with index set $E_\lb$ and $E_\rb$ respectively, smoothly up to $\tau = 0$ and decaying rapidly as $\tau \to \infty$. Here $K(r, \tau)$ is the Schwartz kernel lifted to the blown-up quarter space $\widetilde{\R}^2_+$. More precisely, 
\begin{multline}\label{eq:full_smoothing}
    r(1 - \tau) K(r, \tau) - \sum_{\substack{(z, k) \in E_\lb \\ \Re z \le N}} a_{z, k, \lb}(r, \tau) (1 + \tau)^z (\log (1 + \tau))^k \\
    - \sum_{\substack{(z, k) \in E_\rb \\ \Re z \le N}} a_{z, k, \rb}(r, \tau) (1 - \tau)^z (\log (1 - \tau))^k \in \dot {\mathscr S}((\R_+)_r; \dot C^N_{\mathrm{c}}([-1, 1])_\tau),
\end{multline}
where $a_{z, k, \bullet} \in \dot{\mathscr S}(\R_+; C^\infty([-1, 1]))$. Implicit here is that $r(1 - \tau) K(r, \tau)$ is polyhomogenous conormal with respect to ff with the index set $\N_0$, that is, $r(1 - \tau)K(r, \tau)$ is smooth up to ff. 

While it may seem that looking at the index family of $r(1 - \tau) K(r, \tau)$ rather than of $K(r, \tau)$ is unnatural, we are merely compensating for the fact that the Schwartz kernel of an operator acting on functions is not canonically a function. We are fixing an underlying choice of smooth measure on $\R^+$ to integrate against so that we can write the Schwartz kernel as a function, but it does not transform like one under pullback by the blow-down map $\beta$. See \cite[\S5]{Melrose93} for details, where the b-calculus is developed acting on half-densities to avoid some technical issues, but we merely adjust the weights manually here to avoid these discussions. As a result, we will see that the definition of $\widetilde \Psi^{-\infty, \mathcal E}_\bo(\R_+)$ gives mapping properties with uncomplicated numerology in Lemma~\ref{lem:full_b-map_prop}. We first consider the composition with elements of the small b-calculus. 
\begin{lemma}\label{lem:full_composition}
    Let $\mathcal E = (E_\lb, E_\rb)$ be a pair of indicial families and let $P \in \Psi^m_\bo(\R_+)$. Then 
    \begin{equation}
        \widetilde \Psi_\bo^{-\infty, \mathcal E}(\R_+) \ni R \mapsto P \circ R \in \widetilde \Psi_\bo^{-\infty, \mathcal E}(\R_+)
    \end{equation}
\end{lemma}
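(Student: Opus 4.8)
The plan is to reduce the statement to two separate pieces: the action of $P$ near the front face and the boundary hypersurfaces $\lb$, $\rb$, and the behavior far from $0$. First I would decompose $R = \chi_0 R \chi_0 + R'$ where $\chi_0$ is the cutoff from~\eqref{eq:zero_cutoff} and $R'$ has Schwartz kernel supported away from a neighborhood of $\ff\cap\lb$ and $\ff\cap\rb$ in $\widetilde\R^2_+$; because $R'$ then has kernel rapidly decaying in $r$ and compactly supported and smooth in the interior, $P\circ R'$ is again of this harmless type (here one uses the rapid-decay-in-$r$ condition in~\eqref{eq:full_smoothing} together with the fact that $P$ preserves supports only up to $\Psi^{-\infty}_\bo$-errors, whose kernels already vanish rapidly on $\lb$ and $\rb$, hence have index family contained in any $\mathcal E$). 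So the content is the piece localized near $0$, i.e.\ near $\ff$.

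For that piece I would pass to logarithmic / Mellin coordinates. Write the Schwartz kernel of $R$ (localized near $0$) as $K_R(x,y)$; after multiplying by $r(1-\tau) = 2x$ we know $2x\,K_R$ is polyhomogeneous conormal to $\lb=\{y/x=0\}$ with index set $E_\lb$ and to $\rb=\{x/y=0\}$ with index set $E_\rb$, smooth up to $\ff$, and Schwartz in $r=x+y$. The key computation is then that $P\in\Psi^m_\bo(\R_+)$ acts in the left variable $x$; since $P$ is built from $xD_x$ and symbols smooth up to $x=0$ and rapidly decaying as $x\to\infty$, applying $P$ to $K_R(\cdot,y)$ (for fixed $y$) preserves the two structures: it does not move the boundary faces, it preserves smoothness up to $\ff$ (using Lemma~\ref{lem:b-composition} / the small-calculus composition acting on the front-face model, which is a b-operator there), and it preserves polyhomogeneity at $\lb$ with the same index set $E_\lb$ because $x^z(\log x)^k$-type expansions are stable under b-differential and b-pseudodifferential operators in $x$ (the expansion of $P(x^z(\log x)^k)$ is again of the same polyhomogeneous type, with leading exponent $z$, by~\eqref{eq:Mprop1} in the differential case and by the symbolic calculus in general). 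At $\rb$, where $y/x\to\infty$ so $x$ is the ``large'' variable relative to $y$, one uses instead the Schwartz decay in $r$ — equivalently in $x$ — of the coefficients $a_{z,k,\rb}$ together with the rapid decay of $K_R$ as $x/y\to 0$ forced by the Lacunary condition~\eqref{eq:lacunary}, to see that $P$ maps this structure into itself with index set $E_\rb$. Assembling: $P\circ R$ has kernel whose product with $r(1-\tau)$ is again of the form~\eqref{eq:full_smoothing} with the same pair $\mathcal E=(E_\lb,E_\rb)$, which is exactly the claim. Uniformity in $\omega$, if $P$ and $R$ depend on a parameter, is automatic since every estimate is in terms of the defining seminorms.

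The main obstacle I expect is bookkeeping the front-face behavior rigorously: one must check that $P$, viewed through the blow-down $\beta$, genuinely restricts to a b-operator on $\ff$ (so that smoothness up to $\ff$ is preserved rather than degraded), and simultaneously that its interaction with the $(1+\tau)^z(\log(1+\tau))^k$ expansions at $\lb$ does not generate uncontrolled logarithmic growth or shift the index set. The cleanest route is to conjugate by the weight $r(1-\tau)$ at the outset — exactly the normalization built into the definition of $\widetilde\Psi^{-\infty,\mathcal E}_\bo$, and the reason (as the text notes before Lemma~\ref{lem:full_b-map_prop}) the numerology stays uncomplicated — and then argue entirely with the pushed-forward kernel, where $P$ acts as an honest b-operator in the $x$-variable and the stability of polyhomogeneous conormal expansions under $\Psi^m_\bo(\R_+)$ (a standard fact, provable via Mellin transform as in Lemma~\ref{lem:mellin_polyhom} and Lemma~\ref{lem:b-conj}) finishes the job.
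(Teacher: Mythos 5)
The paper itself offers no proof; it cites \cite[Proposition~5.46]{Melrose93}, where the result is established via the pullback/pushforward machinery for polyhomogeneous conormal distributions under b-fibrations on the b-double space. Your sketch is therefore not ``the same as the paper'' in any literal sense; rather it is a hands-on reconstruction of what that citation hides, and it does capture the right picture.

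On the substance: your plan is correct in outline. The two structural observations you make are the ones that matter. First, $P$ acts in the outgoing variable $x$, and $\lb=\{x=0\}$, so the expansion at $\lb$ is governed by how a b-operator in $x$ acts on $x^z(\log x)^k$ (leading exponent preserved, which is exactly what keeps $E_\lb$ fixed). Second, the factor $r(1-\tau)$ is the right normalization so that the weighted kernel transforms cleanly. A few places where a complete argument would need more than you wrote: (i) the decomposition $R=\chi_0 R\chi_0+R'$ localizes in both variables, but a genuine argument should localize on the blown-up double space near each of $\lb\cap\ff$, $\rb\cap\ff$, the diagonal and the far region separately, since the model analysis is different in each regime; (ii) at $\rb$ the expansion variable is $y/x$ and $P$ acts in $x$, so $(y/x)^z(\log(y/x))^k$ unpacks into $y^z x^{-z}$ times a polynomial in $\log y$ and $\log x$ --- the b-operator again preserves the leading exponent in $x^{-z}$, but one must check the cross-terms in $\log x$ do not enlarge the rb index set, which is precisely the kind of bookkeeping the pushforward theorem automates; (iii) ``symbolic calculus preserves polyhomogeneity'' is true but is itself a lemma, most easily proved exactly as you note, by Mellin transform in the spirit of Lemma~\ref{lem:mellin_polyhom} (the Mellin transform of a polyhomogeneous distribution is meromorphic with poles at the index set, a b-operator multiplies it by a function holomorphic and polynomially bounded in the relevant strip, so no new poles appear). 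With those three points fleshed out, your route gives a self-contained proof; it is heavier on explicit coordinates than the Melrose argument, but it has the pedagogical advantage of not invoking the general pushforward theorem as a black box.
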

See \cite[Proposition 5.46]{Melrose93} for the proof.

Next, we consider the mapping properties of operators in $\widetilde \Psi_\bo^{-\infty, \mathcal E}$. For the sake of completeness, we adapt the proof from \cite{Melrose93} below for operators acting on functions rather than half densities. The proof also applies to a slightly broader class of Schwartz kernels, which we will need later. 
\begin{lemma}\label{lem:full_b-map_prop}
    Let $R \in \widetilde \Psi_\bo^{-\infty, \mathcal E}$. Assume that $-\inf E_\rb < \inf E_\lb$. For any $N >0$ and $a \in (-\inf E_\rb, \inf E_\lb)$, $R$ extends to an operator 
    \[R : H^{s, a}_\bo(\R_+) \to H^{s + N, a}_\bo(\R_+).\] 
\end{lemma}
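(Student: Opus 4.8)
The plan is to reduce the estimate to a clean statement about the Mellin transform, following the strategy in \cite{Melrose93} but keeping track of the explicit weights. Since the claim is local near $0$ (away from $0$ the operator is genuinely smoothing by the rapid decay in $r$ and the fact that $r(1-\tau)K$ is Schwartz in $r$), I would fix the cutoff $\chi_0$ from~\eqref{eq:zero_cutoff} and reduce to showing that $\chi_0 R \chi_0$ maps $H^{s,a}_\bo$ to $H^{s+N,a}_\bo$; the off-diagonal pieces $\chi_0 R(1-\chi_0)$ and $(1-\chi_0)R\chi_0$ have kernels supported where $\tau$ is bounded away from $\pm 1$ (for the first, $y$ stays away from $0$ while $x\to 0$, so $\tau\to 1$ is excluded on the support... actually one must be slightly careful, but the point is that these pieces are controlled by the Schwartz decay and the polyhomogeneous expansions at the ends) and are treated separately by crude estimates.

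For the main piece, the key step is to compute the action of $R$ in Mellin-transformed coordinates. Writing the Schwartz kernel in the coordinates $(x,y)$ on $\R_+^2$, an operator with kernel of the stated type acts, after Mellin transform, essentially by multiplication: if $K(x,y)=x^{-1}G(x,y/x)$ with $G(x,s)$ having a suitable polyhomogeneous expansion as $s\to 0$ (governed by $E_\rb$, since $y/x\to 0$ corresponds to $\tau\to 1$) and as $s\to\infty$ (governed by $E_\lb$, since $\tau\to -1$), then $Ru$ has Mellin transform
\begin{equation}
    \mathcal M(Ru)(\sigma) = \widehat G(\sigma)\,\mathcal M u(\sigma), \qquad \widehat G(\sigma) = \int_0^\infty s^{i\sigma-1} G(x,s)\,\frac{ds}{s}\Big|_{\text{averaged in }x},
\end{equation}
up to the lower-order terms coming from the genuine $x$-dependence of $G$, which are handled by treating $x$ as a parameter and using that everything is Schwartz in $x$. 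The convergence of the $s$-integral defining $\widehat G(\sigma)$ on the line $\Im\sigma=-a$ is exactly where the hypothesis enters: near $s=0$ we need $\Re(i\sigma) + (\text{leading exponent from }E_\rb) > 0$, i.e.\ $a < \inf E_\rb$ after accounting for the $r(1-\tau)$ weight shift — wait, more precisely the $-\inf E_\rb < a$ bound — while near $s=\infty$ we need decay controlled by $\inf E_\lb$, giving $a < \inf E_\lb$. On that line $\widehat G(\sigma)$ is rapidly decaying in $\Re\sigma$ (this is the smoothing, hence the gain of $N$ derivatives for any $N$), because $G$ is smooth and compactly controlled in the blown-up picture and one integrates by parts in $s$ arbitrarily many times.

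Concretely the steps in order: (i) localize near $0$ and dispose of the off-diagonal contributions; (ii) change to projective coordinate $s=y/x$ and identify $R$ with a Mellin multiplier up to Schwartz-in-$x$ remainders, using Lemma~\ref{lem:b-conj} to absorb the weight $\rho^a$ and reduce to the case $a=0$ shifted to the line $\Im\sigma=-a$; (iii) verify that the polyhomogeneity of $r(1-\tau)K$ at $\lb$ and $\rb$ together with the hypothesis $-\inf E_\rb < a < \inf E_\lb$ makes $\widehat G$ holomorphic and bounded in a strip around $\Im\sigma=-a$; (iv) integrate by parts in the projective variable to get $\langle\sigma\rangle^{-N}$ decay of $\widehat G$ for every $N$; (v) conclude via the Mellin-Plancherel characterization of $H^{s,a}_\bo$ in~\eqref{eq:weighted_b}. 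The main obstacle is step (ii)--(iii): making the reduction to a Mellin multiplier fully rigorous when $G$ genuinely depends on $x$ (not just on $s=y/x$), since then $R$ is not literally a convolution in $\log x$; the fix is to expand $G(x,s)$ in $x$ (Taylor in $x$, which is legitimate by smoothness up to $\ff$) and note each term $x^j$ times a pure multiplier only improves the weight, with the Schwartz tail giving a genuinely smoothing, weight-improving remainder by Lemma~\ref{lem:b-map_prop} applied crudely.
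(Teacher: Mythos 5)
Your overall strategy --- localize near $0$, work in Mellin variables, exploit the index-set asymptotics at $\lb,\rb$ to identify the relevant strip, and integrate by parts for the $\langle\sigma\rangle^{-N}$ decay --- is a sound plan \emph{for dilation-invariant operators}, and indeed this is exactly what Lemma~\ref{lem:normal_fam} and~\eqref{eq:dil-map-prop} encode. The paper takes a genuinely different route that never reduces to a multiplier: after using conjugation by $x^z$ (to shift the index sets and take $a=0$) and the composition Lemma~\ref{lem:full_composition} (to take $s=0$), it splits the kernel into pieces supported near $\lb\cap\ff$ and $\rb\cap\ff$ and proves the $L^2_\bo$ bound for the first piece \emph{directly} via a weighted Cauchy--Schwarz: in coordinates $x=ty$ one uses that $yK(ty,y)$ is smooth in $y$ and polyhomogeneous in $t$ with index set $E_\lb$, inserts the weight $t^{\mp\epsilon}$ with $0<\epsilon<\inf E_\lb$, and estimates the two factors separately; the $\rb\cap\ff$ piece then follows by duality. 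This avoids the multiplier question entirely.

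The gap in your argument is the Taylor-expansion fix for the $x$-dependence, which as stated cannot cover the full range $a\in(-\inf E_\rb,\inf E_\lb)$. Writing $G(x,s)=\frac{1}{s}F(x(1+s),\tau(s))$ with $\tau(s)=(1-s)/(1+s)$, the Taylor coefficient in $x$ at $0$ is
\[
G_j(s)=\frac{(1+s)^j}{j!\,s}\,\partial_r^j F(0,\tau(s)),
\]
which behaves like $s^{\inf E_\rb-1}$ as $s\to 0$ but like $s^{\,j-1-\inf E_\lb}$ as $s\to\infty$ (the prefactor $(1+s)^j$ is not innocuous: restricting $F$ to $r=0$ kills the Schwartz decay in $r$, so the only control as $s\to\infty$ is the $\lb$-polyhomogeneity). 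Consequently the multiplier $m_j(\sigma)=\int_0^\infty s^{i\sigma}G_j(s)\,ds$ is holomorphic only on $\Im\sigma\in(j-\inf E_\lb,\inf E_\rb)$, and after the shift from the prefactor $x^j$ the contribution $R_j=x^j M_j$ acts on $H^{s,a}_\bo$ at weight $a$ only when $a-j\in(j-\inf E_\lb,\inf E_\rb)$, i.e.\ $a>j-\inf E_\rb$. This fails for all $j$ large, so the tail of the Taylor series does not ``only improve the weight'' --- it loses admissibility at the $\rb$ end --- and the remainder $x^M R_M(x,s)$ inherits the same $(1+s)^M$ growth so cannot be dispatched by a crude application of Lemma~\ref{lem:b-map_prop}. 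The paper's direct Cauchy--Schwarz estimate with the $t^{\pm\epsilon}$ weight sidesteps this entirely and is the cleaner route here; alternatively, if you want a Mellin-side proof you would need to first freeze the kernel at $\ff$ to get a genuine multiplier and then show that the error $R-\chi_\cu R_{\mathrm{dil}}\chi_\cu$ has strictly smaller index sets (which is essentially the strategy behind Proposition~\ref{prop:full_b-est}, where the paper does pass to the Mellin side --- but for the already dilation-invariant normal operator, with the remainder treated by the bilinear estimate of this very lemma).
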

\begin{proof}
    1. We first make a few simplifications. Let $K$ denote the Schwartz kernel of $R$. The lemma is clear if $K$ is supported away from $0$, so it suffices to assume that $K$ lifted to the blown-up space is supported in a neighborhood of $\ff \subset \widetilde \R_+^2$. Observe that for $z \in \C$, the Schwartz kernel of $x^{-z} R x^z$ is given by $(x/y)^z K(x, y)$. Therefore $x^{-z}R x^z \in \widetilde \Psi^{-\infty, \mathcal E'}_\bo$ where $\mathcal E' = (E'_\lb, E_\rb')$ are index sets satisfying 
    \[\inf E'_\lb = \inf E_\lb + z, \quad \inf E'_\rb = \inf E_\rb -z.\]
    Therefore, we may assume that $\inf E_\lb, \inf E_\rb > 0$ and take $a = 0$. Finally, by Lemma~\ref{lem:full_composition}, it suffices to take $s = 0$. 

    \noindent
    2. First consider the case that $K$ is supported in a small neighborhood of $\lb \cap \ff$. Let $\langle \bullet, \bullet \rangle_\bo$ denote the inner product on $L^2(\R_+; \frac{dx}{x}) = H^0_\bo(\R_+)$. It suffices to show that 
    \[|\langle Ru, u \rangle_\bo| \le C\|u\|_{H^0_\bo}^2.\]
    We compute
    \begin{align*}
        |\langle Ru, u \rangle_\bo| &= \int_{\R_+^2} y K(x, y) u(y) \bar u(x)\, \frac{dy}{y} \frac{dx}{x}\\
        &= \int_0^\delta \int_0^\delta y K(ty, y) u(y) \bar u(ty)\, \frac{dy}{y} \frac{dt}{t}
    \end{align*}
    where we used the assumption that $K$ is supported in a small neighborhood of $\lb \cap \ff$. Since $R \in \widetilde \Psi^{-\infty, \mathcal E}_\bo$, it follows that,
    \begin{equation}\label{eq:smooth-to-ff}
        yK(ty, y) \in C^\infty([0, \delta)_y; \mathcal A_\phg^{E_\lb}(\R_+)),
    \end{equation}
    that is, $yK(ty, y)$ is a family of conormal distributions in $t$ that depends smoothly on $y$. Fix $0 < \epsilon < \inf E_\lb$. Then by the Cauchy--Schwartz inequality, we have 
    \begin{align*}
        |\langle Ru, u \rangle_\bo|^2 &\le \left(\int_0^\delta \int_0^\delta |t^{-\epsilon} y K(ty, y) u(y)|^2\, \frac{dy}{y} \frac{dt}{t} \right) \left( \int_0^\delta \int_0^\delta |t^\epsilon \bar u(ty)|^2 \, \frac{dy}{y}\frac{dt}{t}\right) \\
        &\lesssim \|u\|_{H^0_\bo} \int_0^\delta t^{2\epsilon} \int_0^{\delta t} |u(y)|^2 \,\frac{dy}{y} \frac{dt}{t} \\
        &\lesssim \|u\|_{H^0_\bo}^2.
    \end{align*}
    as desired. It remains to consider if $K$ is supported in a neighborhood of $\rb \cap \ff$. In this case, we simply note that the Schwartz kernel of the adjoint with respect to $\langle \bullet, \bullet \rangle_\bo$ is supported in a neighborhood of $\lb \cap \ff$, and the same bound follows by duality. 
\end{proof}
% \begin{Remark}
%     Observe that in the proof, the smoothness in \eqref{eq:smooth-to-ff} is way stronger than necessary to deduce the bound. In particular, we only used the fact that 
%     \[\int_0^\delta |t^{-\epsilon} y K(ty, y)|^2 \frac{dt}{t} \in L^\infty([0, \delta)_y).\]
%     In particular, let $F \in \mathcal A^{E}_\phg(\R_+)$ for some index set $E$ with $\inf E > 0$, and consider an operator $P$ with Schwartz kernel
%     \[K(x, y) = y^{-1} F(xy).\]
%     Observe that $x^{-z} F(xy)$ is conormal to $\ff$ with index set $2E$ and conormal to $\lb$ with index set $E'$ such that $\inf E' = \inf E + \Re z$. Therefore, if $\ell \le 2 \inf E$, the same proof applies, and we see that $P: H^{s, a}_\bo(\R_+) \to H^{s- N, a + \ell}_\bo(\R_+)$. 
% \end{Remark}

Define the space
\begin{equation}
    \widetilde \Psi_\bo^{m, \mathcal E}(\R_+) := \Psi_\bo^m(\R_+) + \widetilde \Psi_\bo^{-\infty, \mathcal E}(\R_+) 
\end{equation}

It will be useful to compare b-operators against dilation invariant operators on $\R_+$. We first collect some basic properties about dilation-invariant operators. 

We denote the full space of order $m$ dilation invariant b-operators by $\widetilde \Psi^{m, \mathcal E}_{\mathrm{dil}}(\R_+) \subset \mathcal L(\dot C_{\mathrm c}^\infty(\R_+), \overline{\mathcal D}'(\R_+))$, and
\begin{equation}
    P \in \widetilde \Psi^{m, \mathcal E}_{\mathrm{dil}}(\R_+) \iff \chi_0 P \in \Psi^{m, \mathcal E}_\bo(\R_+) \quad \text{and} \quad M_\lambda^*(Pu) = P(M_\lambda^* u) \ \forall \lambda > 0,
\end{equation}
where again $M_\lambda(x) = \lambda x$. By dilation invariance, the Schwartz kernel $K$ of $P \in \widetilde \Psi^{m, \mathcal E}_{\mathrm{dil}}(\R_+)$ satisfies $K(x, y) = \lambda K(\lambda x, \lambda y)$ for all $\lambda > 0$. In particular, when lifted to the blown-up space $\widetilde \R_+^2$ using coordinates~\eqref{eq:quarter_bcoords}, $r K(r, \tau)$ is independent of $r$, and we have 
\begin{equation}\label{eq:Kff}
    \frac{1}{2} r(1 - \tau)K(r, \tau) = \frac{1}{2}(r(1 - \tau)K(r, \tau))|_{\ff} =: K_\ff(\tau).
\end{equation}
Here, we inserted an extra factor of $\frac{1}{2} r (1 - \tau)$, again to adjust for the fact that we implicitly fixed an underlying choice of density on $\R_+$ to define the Schwartz kernel.

Tracing back the definitions of $\widetilde \Psi^{\infty, \mathcal E}_\bo(\R_+)$ in~\eqref{eq:full_smoothing} and $\Psi^m_\bo(\R_+)$ in~\eqref{eq:b-calc}, we immediately have the following. 
\begin{lemma}\label{lem:Kff}
    Let $\mathcal E = (E_{\lb}, E_\rb)$ be a pair of indicial families and let $K$ be the Schwartz kernel of an operator in $\widetilde \Psi^{m, \mathcal E}_{\mathrm{dil}}(\R_+)$ with $K_\ff$ as in~\eqref{eq:Kff}. Then
    \begin{equation}\label{eq:Kff_asymptotics}
    \begin{gathered}
        \chi_0(\tau) K_\ff (\tau - 1) \in \mathcal A^{E_\lb}_\phg(\R_+), \\
        \chi_0(\tau) K_\ff (1 - \tau) \in \mathcal A^{E_\rb}_\phg(\R_+),
    \end{gathered}
    \end{equation}
    where $\chi_0$ is as defined in~\eqref{eq:zero_cutoff} with $\supp \chi_0 \subset [0, 1)$. Let $\chi \in \CIc((-1, 1); [0, 1])$ such that $\chi \equiv 1$ near $0$. Then 
    \begin{equation}\label{eq:Kff_0}
        \chi(\tau) K_\ff(\tau) = \frac{1}{2 \pi} \int e^{i\tau \xi} p(\xi) \, d \xi
    \end{equation}
    for some $p(\xi)$ such that $\chi_0(x) p(\xi) \in S^m_+$. 
\end{lemma}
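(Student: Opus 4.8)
The plan is to prove Lemma~\ref{lem:Kff} by unwinding the two definitions it references: the characterization of $\widetilde\Psi^{-\infty,\mathcal E}_\bo(\R_+)$ in~\eqref{eq:full_smoothing} and of $\Psi^m_\bo(\R_+)$ in~\eqref{eq:b-calc}–\eqref{eq:b-kernel}, and then tracking what dilation invariance forces on the Schwartz kernel. First I would split $P \in \widetilde\Psi^{m,\mathcal E}_{\mathrm{dil}}(\R_+)$ into the part of its kernel supported near $\ff \cap \lb$, near $\ff \cap \rb$, and near the interior of $\ff$ (i.e.\ near the lifted diagonal), using a partition of unity on $\widetilde\R^2_+$; this is legitimate because cutting off in $\tau$ commutes with the structure and dilation invariance is automatic once we have $\chi_0 P \in \Psi^{m,\mathcal E}_\bo$. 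For the part near $\ff\cap\lb$: by~\eqref{eq:full_smoothing}, $r(1-\tau)K(r,\tau)$ is polyhomogeneous conormal to $\lb = \{\tau = -1\}$ with index set $E_\lb$, smoothly up to $r = 0$; but dilation invariance gives~\eqref{eq:Kff}, namely $\frac12 r(1-\tau)K(r,\tau) = K_\ff(\tau)$ is independent of $r$. Hence $K_\ff(\tau)$, as a function of $\tau$ near $\tau = -1$, is polyhomogeneous conormal with index set $E_\lb$, which after the change of variable $\tau \mapsto \tau - 1$ (so the singular point moves to $0$, with the orientation built into the definition of $\mathcal A^{E_\lb}_\phg(\R_+)$) is exactly the first line of~\eqref{eq:Kff_asymptotics}. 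The $\rb$ statement is symmetric, using $\tau = 1$ and $\tau \mapsto 1-\tau$, and follows either directly or by the adjoint argument already used in the proof of Lemma~\ref{lem:full_b-map_prop}.

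Next, for the piece of $K$ supported near the interior of $\ff$ (away from $\tau = \pm 1$), we know $P$ restricted there lies in $\chi_0\Psi^m_\bo(\R_+)$, so its kernel has the oscillatory-integral form~\eqref{eq:b-kernel}, $K(x,y) = \frac{1}{2\pi} x^{-1}\int e^{i(1 - y/x)\xi} p(x,\xi)\,d\xi$ with $p \in S^m_\bo$. In the blown-up coordinates~\eqref{eq:quarter_bcoords}, $1 - y/x = 2\tau/(1+\tau)$ and $x^{-1} = \tfrac12(1+\tau)/r$, so $rK(r,\tau) = \frac{1+\tau}{4\pi}\int e^{2i\tau\xi/(1+\tau)} p(x,\xi)\,d\xi$; dilation invariance forces the right side to be $r$-independent, which (since $x = r(1+\tau)/2$) means $p(x,\xi)$ must in fact depend on $x$ only in a way that drops out, i.e.\ we may take $p = p(\xi)$ with $\chi_0(x)p(\xi)\in S^m_\bo$ — the Lacunary condition~\eqref{eq:lacunary} and the symbol bounds~\eqref{eq:b-symbol_est} on $p$ are inherited. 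Rescaling $\xi$ (absorbing the smooth factor $(1+\tau)/2$ near $\tau = 0$ into a reparameterization of the symbol, which preserves the symbol class on $\supp\chi$) yields~\eqref{eq:Kff_0}: $\chi(\tau)K_\ff(\tau) = \frac{1}{2\pi}\int e^{i\tau\xi}p(\xi)\,d\xi$ with $\chi_0(x)p(\xi)\in S^m_\bo$. One must also check consistency on the overlap of the interior region with the $\lb$/$\rb$ regions, but there $K_\ff$ is smooth (being a finite-order conormal distribution away from its singular point plus a Schwartz tail), so there is nothing to reconcile.

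The only genuinely delicate point — and the one I expect to be the main obstacle — is the bookkeeping at the overlaps and the translation between the ``polyhomogeneous conormal to $\lb$'' language of~\eqref{eq:full_smoothing}, which is phrased in terms of the boundary defining function $1+\tau$ of $\lb$ with a Schwartz remainder in $(r,\tau)$, and the ``$\mathcal A^{E_\lb}_\phg(\R_+)$ in the variable $\tau - 1$'' language of the conclusion; one must verify that setting $r$-dependence to be trivial collapses the joint statement~\eqref{eq:full_smoothing} to a statement purely in $\tau$ without losing or gaining terms in the index set, and that the compactly-supported cutoffs $\chi_0,\chi$ are inserted in a way compatible with the membership in $\mathcal A^{E_\lb}_\phg(\R_+)$ and $S^m_\bo$ respectively (recall the latter requires the Lacunary condition, which is exactly what the support condition $\supp\chi_0\subset[0,1)$ and the finite range of $\tau$ near $\ff$ encode). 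Everything else is a direct, essentially notational, consequence of $rK$ being $r$-independent together with the two definitions; no new estimates are needed beyond what~\eqref{eq:b-kernel}, \eqref{eq:full_smoothing}, and the Mellin characterization (Lemma~\ref{lem:mellin_polyhom}) already supply.
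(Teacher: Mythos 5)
The paper offers no proof here: it states the lemma is "immediate" from the definitions~\eqref{eq:full_smoothing} and~\eqref{eq:b-calc}, so your write-up is supplying details the author left implicit. Your overall strategy — use $r$-independence of $r(1-\tau)K$ to restrict the polyhomogeneous expansion at $\lb,\rb$ to the front face for~\eqref{eq:Kff_asymptotics}, and use the explicit oscillatory form~\eqref{eq:b-kernel} of the small-calculus kernel in $(r,\tau)$ coordinates for~\eqref{eq:Kff_0} — is exactly what "tracing back the definitions" means, and your discussion of the orientation conventions $\tau\mapsto\tau\mp1$ is correct. Two issues should be fixed.

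First, a computational slip: from $x = r(1+\tau)/2$ you get $x^{-1} = 2/(r(1+\tau))$, not $\tfrac12(1+\tau)/r$. The correct outcome is $r(1-\tau)K(r,\tau) = \tfrac{1-\tau}{\pi(1+\tau)}\int e^{2i\tau\xi/(1+\tau)}p(x,\xi)\,d\xi$. This only changes harmless smooth prefactors, but it should be corrected.

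Second, the closing step is vaguer than it should be. The substitution $\eta = 2\xi/(1+\tau)$ is $\tau$-dependent, so it does not immediately produce a symbol $p(\xi)$ independent of $\tau$ as the statement requires; and "dilation invariance forces $p=p(\xi)$" is not literally true for the small-calculus component of a decomposition $P=P_{\mathrm{sm}}+R$ (only the sum is dilation-invariant, not each piece). The clean way to finish is: observe from the kernel formula that near $\tau=0$, $K_\ff$ is a conormal distribution of order $m$ at $\tau=0$ (the phase $2\tau/(1+\tau)$ is a diffeomorphism of a neighborhood of $0$ fixing $0$, and multiplication by the smooth nonvanishing factor $\tfrac{1-\tau}{1+\tau}$ preserves conormality); then simply define $p(\xi) := \mathcal F(\chi K_\ff)(\xi)$, which is in $S^m$ because the Fourier transform of a compactly supported conormal distribution of order $m$ is a symbol of order $m$. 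The Lacunary condition~\eqref{eq:lacunary} for $\chi_0(x)p(\xi)$ is then automatic: $\int e^{it\xi}p(\xi)\,d\xi = 2\pi(\chi K_\ff)(t) = 0$ for $t\ge 1$ since $\supp\chi\subset(-1,1)$, and the $\langle x\rangle^{-N}$ decay is supplied by $\chi_0(x)$. With these repairs your proof is complete and matches what the paper intends by "immediately."
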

Following the lemma, we give the natural notion of ellipticity for dilation invariant operators. 
\begin{definition}
    We say that $P \in \widetilde \Psi^{m, \mathcal E}_{\mathrm{dil}}(\R_+)$ is elliptic if $p(\xi)$ as in~\eqref{eq:Kff_0} satisfies 
\begin{equation}
    |p(\xi)| \ge c|\xi|^m \quad \text{for all} \quad |\xi| \ge c^{-1}.
\end{equation}
\end{definition}
For dilation-invariant operators, it is natural to measure regularity with respect to $xD_x$ on all of $\R_+$ rather than just near $0$. We define the dilation-invariant b-Sobolev spaces on $\R_+$ by 
\begin{equation}
    \begin{gathered}
        H^s_{\mathrm{dil}}(\R_+) := \{u \in \overline{\mathcal D}'(\R_+) : \langle \sigma \rangle \mathcal Mu \in L^2(\R)\}, \\
        H^{s, a}_{\mathrm{dil}}(\R_+) := x^a H^s(\R_+). 
    \end{gathered}
\end{equation}
Comparing with~\eqref{eq:b-sob} and~\eqref{eq:weighted_b}, we see that the only difference is how regularity at infinity is handled. In $H^{s, a}_\bo(\R_+)$ we treat $0$ as the boundary and characterize regularity with vector fields tangent to $0$ and uniformly bounded up to $\infty$. On the other hand, in $H^{s, a}_{\mathrm{dil}}(\R_+)$, we are essentially taking the projective compactification of $\R_+$ and considering $\{0, \infty\}$ as the boundary, then measuring regularity with respect to vector fields tangent to both $0$ and $\infty$.

The advantage of dilation-invariant operators is that their behavior under Mellin transform is very simple. The following lemma is an extension of~\eqref{eq:Mprop1} to all dilation-invariant operators. Define the change of coordinates $\pi: \overline \R_+ \to [-1, 1]$ by
\begin{equation}\label{eq:projective_coords}
    \pi(t) = \frac{t - 1}{t + 1}.
\end{equation}
\begin{lemma}\label{lem:normal_fam}
    Let $\mathcal E = (E_{\lb}, E_\rb)$ be a pair of index families. Let $P \in \Psi^{m, \mathcal E}_{\mathrm{dil}}(\R_+)$ with Schwartz kernel $K$, and let $K_\ff$ be as in~\eqref{eq:Kff}. Then the normal family of $P$ by
    \[N(P, \sigma) := \mathcal M (\pi^* K_\ff)(\sigma)\]
    is holomorphic for $-\Im \sigma \in (-\inf E_\rb, E_\lb)$, and for every $U \Subset (-\inf E_\rb, E_\lb)$, there exists a constant $C_U$ such that 
    \begin{equation}\label{eq:Mk_est}
        N(P, \sigma) \le C_U \langle \sigma \rangle^m \quad \text{for $-\Im \sigma \in U$}
    \end{equation}
    If $P$ is elliptic, we also have a lower bound
    \begin{equation}\label{eq:Mk_est_elliptic}
        N(P, \sigma) \ge C_U^{-1} \langle \sigma \rangle^m \quad \text{for $-\Im \sigma \in U$ and $\Re \sigma > C_U$}.
    \end{equation}
    Finally, for $u \in \overline{\mathcal E}'(\R_+) \cap H^{s, a}_\bo(\R_+)$
    \begin{equation}\label{eq:DI_mellin}
        \mathcal M(Pu)(\sigma) = N(P, \sigma) \mathcal M u(\sigma)
    \end{equation}
    for $-\Im \sigma \in (-\inf E_\rb, a]$.  
\end{lemma}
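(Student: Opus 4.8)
## Proof proposal for Lemma~\ref{lem:normal_fam}

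\textbf{Overall strategy.}The proof rests on the elementary fact that a dilation-invariant operator acts as a convolution in the logarithmic variable $\log x$, whose Fourier transform is exactly the normal family. \textbf{Step 1: rewrite $P$ as a convolution.} Since the Schwartz kernel satisfies $K(x,y)=\lambda K(\lambda x,\lambda y)$ for all $\lambda>0$, it is homogeneous of degree $-1$, so $K(x,y)=x^{-1}K(1,y/x)$ and $Pu(x)=\int_0^\infty K(1,t)\,u(xt)\,dt$. Substituting $x=e^s$, $U(s)=u(e^s)$ turns this into the convolution $\int_\R\kappa(q)\,U(s+q)\,dq$ with $\kappa(q)=e^qK(1,e^q)$. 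Under $t=y/x$ one has $\tau=(x-y)/(x+y)=-\pi(t)$ and $\tfrac12 r(1-\tau)=y$, so $K_\ff(\tau)=t\,K(1,t)\big|_{\tau=-\pi(t)}$; hence $\pi^*K_\ff$ is, up to the reflection $t\mapsto 1/t$, the kernel $t\,K(1,t)$, and $N(P,\sigma)=\mathcal M(\pi^*K_\ff)(\sigma)$ is precisely the Mellin transform of $\kappa$ read on the relevant horizontal line. This already makes the multiplier identity plausible and fixes the geometry for the analytic estimates.

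\textbf{Step 2: holomorphy strip and the bounds \eqref{eq:Mk_est}, \eqref{eq:Mk_est_elliptic}.} Use the decomposition of $K_\ff$ from Lemma~\ref{lem:Kff}. Near $\tau=0$, $\chi(\tau)K_\ff(\tau)$ is the inverse Fourier transform of $p(\xi)$ with $\chi_0(x)p(\xi)\in S^m_+$, so its pullback $\pi^*$ is compactly supported in $(0,\infty)$ and, by the standard symbol-to-Fourier-transform estimate, its Mellin transform is entire and bounded by $C_U\langle\sigma\rangle^m$ on every horizontal strip; if $P$ is elliptic, $|p(\xi)|\ge c|\xi|^m$ for $|\xi|\ge c^{-1}$ gives the matching lower bound $|\,\mathcal M(\chi K_\ff)(\sigma)|\ge c'\langle\sigma\rangle^m$ for $\Re\sigma$ large. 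Near $\lb$ (resp.\ $\rb$), $\chi_0(\tau)K_\ff(\tau-1)\in\mathcal A^{E_\lb}_\phg(\R_+)$ (resp.\ $\chi_0(\tau)K_\ff(1-\tau)\in\mathcal A^{E_\rb}_\phg(\R_+)$) by \eqref{eq:Kff_asymptotics}, so by Lemma~\ref{lem:mellin_polyhom} the corresponding Mellin transforms are holomorphic for $-\Im\sigma<\inf E_\lb$ (resp.\ $-\Im\sigma>-\inf E_\rb$), with poles only at $\sigma=-iz$, $(z,k)\in E_\bullet$, and with rapid decay $O(|\Re\sigma|^{-\infty})$ on horizontal strips. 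Summing the three contributions yields holomorphy of $N(P,\sigma)$ on $-\Im\sigma\in(-\inf E_\rb,\inf E_\lb)$ and the estimate \eqref{eq:Mk_est}; the two polyhomogeneous remainders are negligible compared to the elliptic main term once $\Re\sigma>C_U$, giving \eqref{eq:Mk_est_elliptic}.

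\textbf{Step 3: the multiplier identity \eqref{eq:DI_mellin}.} For $u\in\dot C^\infty_{\mathrm c}(\R_+)$ write $\mathcal M(Pu)(\sigma)=\int_0^\infty\!\!\int_0^\infty K(1,t)\,u(xt)\,x^{-i\sigma}\,\tfrac{dx}{x}\,dt$, substitute $x\mapsto x/t$ in the inner integral, and apply Fubini to factor the result as $N(P,\sigma)\,\mathcal M u(\sigma)$; the interchange is legitimate on the strip $-\Im\sigma\in(-\inf E_\rb,\inf E_\lb)$ because the $t$-integral converges absolutely there (polynomial control at $t=0$ from $E_\lb$, at $t=\infty$ from $E_\rb$, symbolic in between) while $u$ is Schwartz and compactly supported. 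For general $u\in\overline{\mathcal E}'(\R_+)\cap H^{s,a}_\bo(\R_+)$ with $a$ in the admissible range, the left side is well defined on $-\Im\sigma\in(-\inf E_\rb,a]$ because $Pu\in H^{s-m,a}_\bo(\R_+)$ by Lemmas~\ref{lem:full_composition} and~\ref{lem:full_b-map_prop}, the right side by Steps~1--2 together with $u\in H^{s,a}_\bo$, and the two agree by density of $\dot C^\infty_{\mathrm c}(\R_+)$ and continuity of $\mathcal M$ on the relevant weighted $L^2$ lines.

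\textbf{Main obstacle.} The only genuinely delicate point is Step~3: justifying the Fubini interchange and, more importantly, extending the identity to the boundary line $-\Im\sigma=a$, where both sides are only of $L^2$-type rather than absolutely convergent. This is handled by approximating $u$ in $H^{s,a}_\bo(\R_+)$ by functions in $\dot C^\infty_{\mathrm c}(\R_+)$ and invoking the boundedness of $P$ between weighted b-Sobolev spaces from Lemma~\ref{lem:full_b-map_prop} together with Lemma~\ref{lem:b-map_prop}; the conormal remainders carrying logarithmic factors near $\tau=\pm1$ require the polyhomogeneous bookkeeping of Lemma~\ref{lem:mellin_polyhom} but introduce no new difficulty. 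Everything else is routine symbolic and Mellin-transform computation.
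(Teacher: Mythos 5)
Your argument follows essentially the same route as the paper: read $K_\ff$ in the three regions via Lemma~\ref{lem:Kff}, use symbol bounds for the middle piece and Lemma~\ref{lem:mellin_polyhom} for the conormal ends, and obtain the multiplier identity by unwinding the convolution $Pu(x)=\int(\pi^*K_\ff)(x/y)\,u(y)\,\tfrac{dy}{y}$ under the Mellin transform, with your logarithmic-coordinate reformulation in Step~1 merely a cosmetic change of viewpoint. One caution on Step~3: the phrase ``the $t$-integral converges absolutely'' is not literally true, since $K(1,t)$ carries a nonintegrable diagonal singularity $\sim(t-1)^{-1}$ for $m\ge -1$; the interchange must be read as commuting a distributional pairing with an integral (standard, and the paper glosses over it as well), not as an application of Fubini's theorem for absolutely convergent integrals.
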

\begin{Remarks}
    1. For dilation invariant differential operators $P = \sum_{k = 1}^N a_k (xD_x)^k$, $a_k \in \C$, it is easy to see that the associated normal family is simply $N(P, \sigma) = \sum_{k = 1}^N a_k \sigma^k$. 
    
    \noindent
    2. In the literature, the normal operator (or sometimes called the indicial operator) is defined for all b-operators $P \in \Psi^{m, \mathcal E}_\bo$ by restricting the (appropriately weighted) Schwartz kernel of $P$ to ff and taking the Mellin transform. The operators that appear in this paper are truly dilation invariant in a neighborhood of $0$, so we can avoid these discussions. 
\end{Remarks}
\begin{proof}
    It follows from~\eqref{eq:Kff_asymptotics} that $\mathcal M(\pi^* K_\ff)(\sigma)$ is holomorphic in the strip $-\inf E_\rb < \sigma < \inf E_\lb$. The estimate~\eqref{eq:Mk_est} follows from~\eqref{eq:Kff_0} and is simply a consequence of symbol bounds. Similarly, the lower bounds~\eqref{eq:Mk_est_elliptic} follows from the lower bounds on the symbol in the definition of ellipticity.

    From~\eqref{eq:Kff}, observe that 
    \begin{equation*}
        Pu(x) = \int_{\R_+} (\pi^* K_\ff)(x/y) u(y) \, \frac{dy}{y}
    \end{equation*}
    Taking the Mellin transform and using property~\eqref{eq:Mprop3} of the Mellin transform, we find that 
    \[\mathcal M(Pu)(\sigma) = \int_0^\infty \mathcal M(\pi^* K_\ff)(\sigma) y^{-i\sigma} u(y)\, dy = \mathcal M(\pi^* K_\ff)(\sigma) \mathcal Mu(\sigma)\]
    as desired. 
\end{proof}
An immediate consequence of Lemma~\ref{lem:normal_fam} is that if $P \in \widetilde \Psi^{m, \mathcal E}_{\mathrm{dil}}(\R_+)$, then 
\begin{equation}\label{eq:dil-map-prop}
    P: H^{s, a}_{\mathrm{dil}}(\R_+) \to H^{s - m, a}_{\mathrm{dil}}(\R_+) \quad \text{if} \quad a \in (-\inf E_\rb, \inf E_\lb).
\end{equation}

We now present a special case of the full elliptic estimate for b-operators.
\begin{proposition}\label{prop:full_b-est}
    Let $\mathcal E = (E_\lb, E_\rb)$ be a pair of index families such that $-\inf E_\rb < \inf E_\lb$ and $P \in \widetilde \Psi^{0, \mathcal E}_{\mathrm{dil}}(\R_+)$. Let $\chi_0$ be as in~\eqref{eq:zero_cutoff} and let $\chi_1, \chi_2 \in \overline C^\infty_{\mathrm c}(\R_+; [0, 1])$ be such that $\chi_j \equiv 1$ on $\supp \chi_{j - 1}$, $j= 1, 2$. Finally, let $a \in (-\inf E_\rb, \inf E_\lb)$ be such that no zeros of the normal family $N(P, \sigma)$ lie on the line $\{t - ia: t \in \R\}$. Then given that $v \in H^{-N, a}_\bo$, we have the estimate
    \[\|\chi_0 v\|_{H^{s, a}_\bo} \le C\big[\|\chi_1 P \chi_2 v\|_{H^{s, a}_\bo} + \|v\|_{H^{-N, a - \delta}_\bo}\big]\]
    for some $0 < \delta < a + \inf E_\rb$. 
\end{proposition}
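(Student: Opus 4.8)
### Proof proposal

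The plan is to prove the estimate by working on the Mellin transform side, where the dilation-invariant operator $P$ becomes multiplication by the normal family $N(P,\sigma)$, and then transferring back. First I would use the cutoffs to reduce to a model situation: since $\chi_1 P \chi_2$ only differs from $P$ by operators whose Schwartz kernels are supported away from $\ff$, and since $\chi_0 v$ is supported near $0$, the commutator terms $[\chi_0,P]$, $(1-\chi_2)$-contributions, etc., all have Schwartz kernels supported away from the front face; by Lemma~\ref{lem:full_b-map_prop} (applied to the off-diagonal, off-ff pieces, whose index sets can be taken arbitrarily positive because the kernel vanishes to infinite order there) these map into $H^{s,a-\delta}_\bo$ with a gain, and hence get absorbed into the error term $\|v\|_{H^{-N,a-\delta}_\bo}$. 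So it suffices to prove $\|\chi_0 v\|_{H^{s,a}_\bo} \le C\|\chi_0 Pv\|_{H^{s,a}_\bo} + C\|v\|_{H^{-N,a-\delta}_\bo}$, and since $P$ is dilation invariant near $0$ we may even replace the left and right by a genuinely dilation-invariant model and work with the $H^{s,a}_{\mathrm{dil}}$ spaces, which by~\eqref{eq:dil-map-prop} and the Mellin characterization are measured by $\int_{\Im\sigma=-a}\langle\sigma\rangle^{2s}|\mathcal Mu(\sigma)|^2\,d\sigma$.

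Next I would set $u = \chi_0 v$ and write $\mathcal M(Pu)(\sigma) = N(P,\sigma)\mathcal Mu(\sigma)$ on the line $\Im\sigma = -a$ via~\eqref{eq:DI_mellin}, which is legitimate because $a \in (-\inf E_\rb,\inf E_\lb)$ and $u$ is extendible with compact support. The key point is the quantitative invertibility of $N(P,\sigma)$ on that line: by hypothesis no zeros of $N(P,\sigma)$ lie on $\{t - ia : t\in\R\}$, and by the elliptic lower bound~\eqref{eq:Mk_est_elliptic} we have $|N(P,\sigma)| \ge C_U^{-1}$ for $\Re\sigma$ large (here $m = 0$). The zeros of $N(P,\cdot)$ form a discrete set — this follows because $N(P,\sigma)$ is holomorphic in a strip and the elliptic lower bound prevents accumulation at $\Re\sigma\to\pm\infty$ — so on the compact remaining part of the line $N(P,\sigma)$ is bounded below by a positive constant as well. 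Combining, $|N(P,\sigma)|^{-1} \le C$ uniformly on $\{\Im\sigma = -a\}$. Therefore
\[
\int_{\Im\sigma=-a}\langle\sigma\rangle^{2s}|\mathcal Mu(\sigma)|^2\,d\sigma
\le C^2\int_{\Im\sigma=-a}\langle\sigma\rangle^{2s}|N(P,\sigma)\mathcal Mu(\sigma)|^2\,d\sigma
= C^2\int_{\Im\sigma=-a}\langle\sigma\rangle^{2s}|\mathcal M(Pu)(\sigma)|^2\,d\sigma,
\]
which is exactly $\|u\|_{H^{s,a}_{\mathrm{dil}}}^2 \le C^2\|Pu\|_{H^{s,a}_{\mathrm{dil}}}^2$, and hence $\|\chi_0 v\|_{H^{s,a}_\bo} \le C\|Pu\|_{H^{s,a}_\bo} + (\text{error})$.

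Finally I would unwind the reductions: $Pu = P(\chi_0 v) = \chi_0 Pv + [P,\chi_0]v$, and $\chi_0 Pv = \chi_0 \chi_1 P \chi_2 v + \chi_0(P - \chi_1 P\chi_2)v$. The commutator $[P,\chi_0]$ and the difference $P - \chi_1 P\chi_2$ both have Schwartz kernels vanishing to infinite order near $\ff$ (and supported where the cutoffs differ, hence away from the diagonal near $0$), so they lie in $\widetilde\Psi_\bo^{-\infty,\mathcal E'}$ with $\inf E'_\lb, -\inf E'_\rb$ as large as we like; Lemma~\ref{lem:full_b-map_prop} then bounds their contributions by $C\|v\|_{H^{-N,a-\delta}_\bo}$ for any $\delta < a + \inf E_\rb$ and any $N$. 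This yields the claimed estimate. The main obstacle is the bookkeeping in the first paragraph's reduction — carefully justifying that all the cutoff-mismatch operators have kernels supported off the front face so that one genuinely lands in a better weighted space with a $\delta$ gain, rather than just the same weight; once the problem is localized to the dilation-invariant model near $0$, the Mellin-side argument is essentially immediate from the lemmas already established.
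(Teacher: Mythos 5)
Your overall strategy—invert the normal family $N(P,\sigma)$ on the zero-free line $\Im\sigma=-a$ via the Mellin transform—is the crux of the paper's Step~2, so you have the right core idea. However, there is a genuine gap in your reduction of the cutoff-mismatch terms, and it is in fact where most of the technical work of the proof lives.

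You claim that $[\chi_0,P]$, $(1-\chi_2)$-contributions, etc.\ have "index sets [that] can be taken arbitrarily positive because the kernel vanishes to infinite order there," and then invoke Lemma~\ref{lem:full_b-map_prop} to get a $\delta$-gain in the weight. Neither part of this is right. First, the commutator kernel $(\chi_0(x)-\chi_0(y))K(x,y)$ vanishes near $\ff$ (i.e.\ near $(0,0)$), but it does \emph{not} vanish to infinite order at $\lb$ or $\rb$ away from $\ff$: the decay there is governed by the original index sets $E_\lb$, $E_\rb$ of $P$, which you cannot choose. Second, Lemma~\ref{lem:full_b-map_prop} gains only in the Sobolev order $s$, not in the weight $a$—it maps $H^{s,a}_\bo\to H^{s+N,a}_\bo$, never $H^{s,a}_\bo\to H^{s,a-\delta}_\bo$. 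So the lemma, as stated, does not produce the $\delta$-improvement you need.

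The $\delta$-improvement is the whole point of the proposition (without it the error term $\|v\|_{H^{-N,a}_\bo}$ is not compact relative to $H^{s,a}_\bo$, and the semi-Fredholm estimate would be useless). In the paper it comes from two separate devices. Step~1 uses a small-calculus parametrix $G$ with $G\circ(\chi_1 P\chi_2)=\chi_0+R$, $R\in\widetilde\Psi^{-\infty,\mathcal E}_\bo$, to obtain the a priori regularity estimate with an $\|\chi v\|_{H^{-N,a}_\bo}$ error (same weight $a$, no gain). Step~2 is your Mellin argument, applied at the low-regularity $H^{-N,a}_\bo$ level to the error term. Step~3—the piece your proposal is missing entirely—handles the term $\|(1-\chi_1)P\chi_2 v\|_{H^{-N,a}_{\mathrm{dil}}}$ by pulling back under the inversion $\iota(x)=x^{-1}$: the kernel of $\iota^*\circ(1-\chi_1)P\chi_2$ takes the form $\psi(x,y)\,y^{-1}F(xy)$ with $F\in\mathcal A^{E_\rb}_\phg$, which maps $H^{-N,-\inf E_\rb+\epsilon}_\bo\to H^{N,\inf E_\rb-\epsilon}_\bo$, and this is precisely where the weight improvement $a\mapsto a-\delta$ is extracted. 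Without an argument of this kind, the behavior of $P(\chi_0 v)$ at $\infty$ (equivalently, of the kernel near $\rb$) is not under control, and you cannot bound $\|Pu\|_{H^{s,a}_{\mathrm{dil}}}$ by the claimed right-hand side. You correctly flag "the bookkeeping in the first paragraph's reduction" as the main obstacle, but it is more than bookkeeping: it requires a dedicated estimate that your argument does not supply.

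A secondary point: you attempt to run the Mellin bound directly at the $H^{s,a}_\bo$ level for $u=\chi_0 v$. This is defensible as a formal inequality between possibly-infinite quantities since $u\in H^{-N,a}_\bo$ guarantees $\mathcal M u$ is defined on $\Im\sigma=-a$, but the paper's Step~1 parametrix route is cleaner—it separates the elliptic-regularity (gain in $s$) from the normal-family argument (gain in $a$), which is why the paper needs the Mellin estimate only at the $H^{-N,a}_\bo$ level where the operator identities on the Mellin side are easiest to justify.
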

\begin{proof}
    1. We first obtain an a priori estimate using by using a small parametrix. Observe that that $\chi_1 P \chi_2 \in \widetilde \Psi^{0, \mathcal E}_\bo(\R_+)$. Therefore, there exists $P_{\mathrm{sm}} \in \Psi^0_\bo(\R_+)$ and $Q_0 \in \widetilde\Psi^{-\infty, \mathcal E}_\bo$ such that
    \[\chi_1 P \chi_2 = P_{\mathrm{sm}} + R_0\]
    where $P_{\mathrm{sm}}$ is uniformly elliptic on an open set containing $\supp \chi_0$. By Proposition~\ref{prop:small_parametrix}, there exists a small parametrix $G \in \Psi_\bo^0(\R_+)$ so that
    \[G \circ (\chi_1 P \chi_2) = \chi_0 + R \quad \text{where} \quad Q = G \circ R_0 \in \widetilde \Psi^{-\infty, \mathcal E}_\bo(\R_+),\]
    where we used Lemma~\ref{lem:full_composition} to find the operator class for $R$. Using mapping properties from Lemmas~\ref{lem:b-map_prop} and~\ref{lem:full_b-map_prop}, we obtain the estimate
    \begin{equation}\label{eq:apriori_b-est}
        \|\chi_0 v \|_{H^{s, a}_\bo} \le C \big[ \|\chi_1 P \chi_2 v\|_{H^{s, a}_\bo} + \|\chi v\|_{H_\bo^{-N, a}} \big]
    \end{equation}
    where $\chi = 1$ near $\supp \chi_2$. 
    \noindent
    2. Since $a$ is chosen so that no zeros of $N(P, \sigma)$ has imaginary part $-ia$, it follows from Lemma~\ref{lem:normal_fam} that we have a uniform lower bound
    \[0 < c \le |N(P, \sigma)| \quad \text{for all} \Im \sigma = -a.\]
    Therefore, it follows from Lemma~\ref{lem:normal_fam} and the mapping property~\eqref{eq:dil-map-prop} that
    \begin{align}
        \|\chi v\|_{H^{-N, a}_\bo} &\le \|\chi_2 v\|_{H^{-N, a}_\bo} + \|v\|_{H^{-N, a - \delta}_\bo} \\
        &\lesssim \|\langle \lambda \rangle^{-N} \mathcal M(\chi_2 v)\|_{L^2(\{\Im \lambda = - a\})}+ \|v\|_{H^{-N, a - \delta}_\bo} \nonumber \\
        &\lesssim \|N(P, \sigma) \langle \lambda \rangle^{-N} \mathcal M(\chi_2 v)\|_{L^2(\{\Im \lambda = - a\})} + \|v\|_{H^{-N, a - \delta}_\bo} \nonumber \\
        &\lesssim \|P\chi_2 v \|_{H^{-N, a}_{\mathrm{dil}}} + \|v\|_{H^{-N, a - \delta}_\bo} \nonumber \\
        &\lesssim \|\chi_1 P \chi_2 v\|_{H^{-N, a}_\bo} + \|v\|_{H^{-N, a - \delta}_\bo} + \|(1 - \chi_1) P \chi_2 v\|_{H^{-N, a}_{\mathrm{dil}}} \label{eq:chiv_bound}
    \end{align}
    It remains to estimate the last term of~\eqref{eq:chiv_bound}.

    \noindent
    3. We further split up the last term of~\eqref{eq:chiv_bound} by 
    \[(1 - \chi_1) P \chi_2 = (1 - \chi)P \chi_2 + (\chi - \chi_1) P \chi_2,\]
    from which we see that 
    \begin{equation}\label{eq:chiv_decay}
        \|(1 - \chi_1) P \chi_2 v\|_{H^{-N, a}_{\mathrm{dil}}} \le \|v\|_{H^{-N, a - \delta}_\bo} + \|(1 - \chi) P \chi_2 v\|_{H^{-N, a}_{\mathrm{dil}}}.
    \end{equation}
    The upshot is that the Schwartz kernel of $(1 - \chi) P \chi_2$ is smooth in the interior of $\R_+^2$. Let $\iota(x) = x^{-1}$ be the inversion map on $\R_+$. In particular, we see that $\mathcal M (\iota^* u)(\sigma) = \mathcal M u(-s)$ where $\iota: \R_+ \to \R_+$ is the inversion map $\iota(x) = x^{-1}$. Therefore, 
    \[u \in H^{s, a}_{\mathrm{dil}}(\R_+) \iff \iota^* u \in H^{s, -a}_{\mathrm{dil}}(\R_+).\]
    Since $P\in \widetilde \Psi^{0, \mathcal E}_{\mathrm{dil}}$, we see that the Schwartz kernel of $\iota^* \circ (1 - \chi_1) P \chi$ is given by 
    \begin{equation}\label{eq:lower_right_kernel}
        \psi(x, y) y^{-1} F(xy)
    \end{equation}
    where $F \in \mathcal A^{E_\rb}_\phg$ and $\psi\in \overline C^\infty_{\mathrm c}(\R_+^2)$ with $\psi \equiv 1$ near $(0, 0)$. To be precise, 
    \[F = \iota^*(\pi^{*} (K_\ff)), \qquad \psi(x, y) = (1 - \chi_1(x^{-1})) \chi(y).\]
    It is easy to see that 
    \[\iota^* \circ (1 - \chi_1) P \chi: H^{-N, -\inf E_{\rb} + \epsilon}_\bo(\R_+) \to H^{N, \inf E_{\rb} - \epsilon}_\bo(\R_+)\]
    for any $\epsilon > 0$. Therefore, for $a \in (-\inf E_\rb, \inf E_\rb)$, 
    \begin{equation}\label{eq:lower_right}
        \|(1 - \chi_1) P \chi v\|_{H^{-N, a}_{\mathrm{dil}}} = \|\iota^* \circ (1 - \chi_1) P \chi v \|_{H^{-N, -a}_\bo} \le \|\chi_1 v\|_{H^{-N, a - \delta}_\bo}
    \end{equation}
    Piecing together the estimates~\eqref{eq:chiv_bound}, \eqref{eq:chiv_decay}, and~\eqref{eq:lower_right}, we see that  
    \[\|\chi v\|_{H^{-N, a}_\bo} \le C\big[\|\chi P \chi v\|_{H^{-N, a}_\bo}+ \|\chi v\|_{H^{-N, a - \delta}_\bo} \big],\]
    which gives the desired estimate upon substituting into the a priori estimate~\eqref{eq:apriori_b-est}. 
\end{proof}

\begin{Remark}
    While we only considered b-operators that act on functions on $\R_+$, it is easy to see that the theory extends to vector-valued functions mapping $\R_+ \to \C^n$. In such cases, b-operators are formed by quantizing an $n \times n$ matrix of symbols applying the same procedure in~\eqref{eq:b-quantization} entry-wise. 
\end{Remark}

\section{Reduction to boundary}\label{sec:reduction_to_boundary}
In this section, we reduce the analysis of the resolvent $(P - \omega^2)^{-1}$ for $\Im \omega > 0$ to the analysis of a family of operators $\mathcal C_\omega$ on the boundary $\partial \Omega$. The reduction uses the method of boundary layer potentials, but this will require more justification near the corners. Furthermore, we compute the Schwartz kernel of $d \mathcal C_\omega$ in \S\ref{sec:kernel_computation}.

\subsection{Fundamental solutions}\label{sec:FS}
We shift our focus to the differential operator
\begin{equation}\label{eq:P(omega)}
    P(\omega):= (1 - \omega^2) \partial_{x_2}^2 - \omega^2 \partial_{x_1}^2, \qquad \omega \in \C, \, 0 < \Re \omega < 1
\end{equation}
on $\R^2_{x_1, x_2}$. Recall that our goal is to understand the spectrum of the operator $P$ defined in~\eqref{eq:P_def}. Formally, $P(\omega)$ is related to $P$ by 
\begin{equation}\label{eq:resolvent_relation}
    P(\omega) = (P - \omega^2) \Delta_\Omega, \qquad P(\omega)^{-1} = \Delta_\Omega^{-1}(P - \omega^2)^{-1}.
\end{equation}
We see that analyzing the resolvent of $P$ is equivalent to analyzing $P(\omega)^{-1}$, which is far more tractable since $P(\omega)$ is a second order differential operator. We first collect some useful properties of $P(\omega)$. Note that $P(\omega)$ can be factorized as
\begin{equation}\label{eq:L_factor}
    P(\omega) = 4 L^+_\omega L^-_\omega, \quad L^\pm_\omega := \frac{1}{2} (\pm \omega \partial_{x_1} + \sqrt{1 - \omega^2} \partial_{x_2}),
\end{equation}
where the square root is taken on the branch $\C \setminus (-\infty, 0]$. When $\Im \omega \neq 0$, $L^\pm_\omega$ are Cauchy--Riemann type operators. On the other hand, if $\lambda \in (0, 1)$, then $L^\pm_\lambda$ are simply linearly independent vector fields. 

Recall the functions $\ell^\pm(x, \lambda)$ from \eqref{eq:dual_factor}, whose level curves formed the trajectories for the chess billiard flow. For $\omega \in (0, 1) + i\R$, we can similarly define
\begin{equation}\label{eq:ell}
    \ell^\pm(x, \omega):= \pm \frac{x_1}{\omega} + \frac{x_2}{\sqrt{1 - \omega^2}},
\end{equation}
which satisfy
\begin{equation}\label{eq:L_dual}
    L_\omega^\pm \ell^\pm(x, \omega) = 1, \quad L_\omega^\mp \ell^\pm(x, \omega) = 0.
\end{equation}

There exist explicit formulas for a fundamental solution of $P(\omega)$. That is, there exists distributions $E_\omega \in \mathcal D'(\R^2)$ with explicit formulas that satisfy
\[P(\omega) E_\omega = \delta_0.\]
Note that $P(\omega)$ is uniformly elliptic when $\Im \omega \neq 0$. On the other hand, as $\omega$ approaches the real line, $P(\omega)$ degenerates to a hyperbolic operator. If $\omega \in (0, 1)$, $P(\omega)$ is the $1 + 1$-dimensional wave operator. In the elliptic case, the fundamental solution is essentially a rescaled version of the Newton potential. As $\omega$ approaches the real line, the fundamental solutions converge in distribution to one of the Feynmann propagators for the wave operator, depending on if the limit is taken from above or below the real line. The explicit formulas are recorded in the following lemma. 
\begin{lemma}\label{lem:fs}
For $\Im \omega \neq 0$, a fundamental solution of $P(\omega)$ is given by the locally integrable function
\begin{equation}
    E_\omega(x) = c_\omega \log A(x, \omega), \qquad x \in \R^2 \setminus \{0\}
\end{equation}
where
\begin{equation}
    A(x, \omega) := \ell^+(x, \omega) \ell^-(x, \omega) = - \frac{x_1^2}{\omega^2} + \frac{x_2^2}{1 - \omega^2}, \qquad c_\omega := \frac{i \sgn \Im w}{4 \pi \omega \sqrt{1 - \omega^2}}.
\end{equation}
    The distributional limits $E_{\lambda \pm i0} := \lim_{\epsilon \to 0} E_{\lambda \pm i \epsilon}$, given by
\begin{equation}\label{eq:lim_fs}
    E_{\lambda \pm i0}(x) = \pm c_\lambda \log(A(x, \lambda) \pm i0), \qquad c_\lambda := \frac{i}{4 \pi \lambda \sqrt{1 - \lambda^2}}.
\end{equation}
are then fundamental solutions for $P(\lambda)$ for $\lambda \in (0, 1)$. 
\end{lemma}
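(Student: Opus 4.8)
The plan is to verify directly that $E_\omega = c_\omega \log A(x,\omega)$ solves $P(\omega)E_\omega = \delta_0$ by exploiting the factorization $P(\omega) = 4L^+_\omega L^-_\omega$ from~\eqref{eq:L_factor} together with the fact that $\log A(x,\omega) = \log(\ell^+(x,\omega)\ell^-(x,\omega)) = \log \ell^+(x,\omega) + \log \ell^-(x,\omega)$ on appropriate branches. The point is that, after a complex-linear change of coordinates, $L^\pm_\omega$ become (constant multiples of) the Cauchy--Riemann operator $\partial_{\bar z}$, and $\log$ of a linear coordinate is, up to constants, a fundamental solution of $\partial_{\bar z}$. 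First I would introduce the coordinates $z^\pm = \ell^\pm(x,\omega)$, which are (complex) linear functions of $(x_1,x_2)$; from~\eqref{eq:L_dual} we have $L^\pm_\omega z^\pm = 1$ and $L^\pm_\omega z^\mp = 0$, so in these coordinates $L^+_\omega = \partial_{z^+}$ and $L^-_\omega = \partial_{z^-}$ when acting on functions of $(z^+,z^-)$. The Jacobian of the change of variables $x\mapsto (z^+, z^-)$ is computed from~\eqref{eq:dual_factor}–\eqref{eq:ell} and equals $\tfrac{2}{\omega\sqrt{1-\omega^2}}$ up to sign, which is what produces the constant $c_\omega$; I would record this computation carefully since tracking the $\sgn\Im\omega$ is the only delicate bookkeeping point.

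The main computation is then: $P(\omega)(c_\omega \log A) = 4c_\omega\, \partial_{z^+}\partial_{z^-}(\log z^+ + \log z^-)$. Each term is handled separately — $\partial_{z^+}\partial_{z^-}\log z^+ = \partial_{z^+}(0) = 0$ away from the relevant locus, and similarly for the other, but the point is that $\log z^\pm$ is locally a holomorphic (resp. conjugate-holomorphic, depending on the sign of $\Im\omega$) function of the appropriate variable away from its zero set, so the mixed derivative is a delta-type distribution supported where $z^+ = 0$ or $z^- = 0$. Here I would use the standard fact that $\partial_{\bar w}(\tfrac{1}{\pi w}) = \delta_0$, equivalently $\partial_w\partial_{\bar w}\log|w|^2 = 2\pi\delta_0$, transported through the linear change of variables; the Jacobian factor converts $\delta_0$ in the $z$-variables to $\delta_0$ in the $x$-variables times the reciprocal Jacobian, and after collecting constants one gets exactly $P(\omega)E_\omega = \delta_0$. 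One must be slightly careful because for $\Im\omega\neq 0$ neither $z^+$ nor $z^-$ alone is a holomorphic coordinate on $\mathbb{R}^2$ in the usual sense — rather $P(\omega)$ is uniformly elliptic and $E_\omega$ is a genuine rescaled Newtonian potential; the cleanest route is to verify ellipticity, note $\log A$ is locally integrable (its only singularity is the logarithmic one at $0$, since $A(x,\omega)\neq 0$ for $x\neq 0$ when $\Im\omega\neq 0$), and then check the distributional identity by testing against $\varphi\in C_c^\infty$, integrating by parts, and reducing to the known Newtonian-potential computation after the linear change of variables that diagonalizes the elliptic operator.

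For the limiting statement, I would argue by continuity: for $x\neq 0$ fixed, $A(x,\omega)$ depends continuously (indeed holomorphically) on $\omega$, and as $\omega = \lambda\pm i\epsilon \to \lambda$, the quadratic $A(x,\lambda)$ is real and changes sign (it is negative on the light cone's spatial side and positive on the timelike side), so $\log A(x,\lambda\pm i\epsilon) \to \log(A(x,\lambda)\pm i0)$ pointwise; together with a uniform local $L^1$ bound near $0$ (the logarithmic singularity is integrable uniformly in $\omega$ near $\lambda$) and dominated convergence one upgrades this to convergence in $\mathcal D'(\mathbb R^2)$. Since $P(\omega)\to P(\lambda)$ as operators on distributions and $P(\omega)E_\omega = \delta_0$ for all $\Im\omega\neq 0$, passing to the limit gives $P(\lambda)E_{\lambda\pm i0} = \delta_0$. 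The sign of the imaginary part in $A(x,\lambda)\pm i0$ is inherited from $\sgn\Im\omega$ in $c_\omega$ and from the sign of $\Im A(x,\lambda\pm i\epsilon)$, which must be checked to be consistent — this is where the $\pm$ signs in~\eqref{eq:lim_fs} get pinned down.

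I expect the main obstacle to be purely bookkeeping: correctly tracking the branch of $\sqrt{1-\omega^2}$, the sign of the Jacobian, the factor $\sgn\Im\omega$, and the $\pm i0$ prescription so that all constants match the stated formula. The analytic content — that $\log$ is, after a linear change of variables, the fundamental solution of the relevant first-order operators, and that the elliptic-to-hyperbolic limit is controlled by dominated convergence away from the integrable logarithmic singularity — is standard; the risk is a sign error, so I would double-check the final constant by evaluating both sides against a convenient test function (e.g. a Gaussian) or by specializing to the rotationally-symmetric case $\omega = i$, where $P(i) = 2\Delta$ and $E_i$ should reduce to the usual $\tfrac{1}{4\pi}\log|x|^2 = \tfrac{1}{2\pi}\log|x|$.
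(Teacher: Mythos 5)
The paper does not give its own proof of this lemma; it cites \cite[\S 4.3]{DWZ}, so there is no in-paper argument to compare against. Evaluated on its own merits, your overall strategy — factor $P(\omega) = 4L^+_\omega L^-_\omega$, chain-rule through $\log A$, recognize the logarithmic potential, and pass to the limit by dominated convergence — is the right one, and your checks that $A(x,\omega)$ avoids $(-\infty,0]$ for $x\neq 0$, that the singularity is uniformly $L^1_{\mathrm{loc}}$, and that $\omega = i$ fixes the constant, are all sound.

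However, the central formal computation $P(\omega)(c_\omega\log A) = 4c_\omega\,\partial_{z^+}\partial_{z^-}(\log z^+ + \log z^-)$, treating $z^\pm = \ell^\pm$ as independent holomorphic coordinates, has a genuine gap. The functions $\ell^+(\cdot,\omega)$ and $\ell^-(\cdot,\omega)$ are two \emph{different} $\mathbb R$-linear isomorphisms $\mathbb R^2 \to \mathbb C$; they are not complex conjugates, and they are not independent coordinates on any space. In the formal "independent variables" picture, $\partial_{z^+}(1/z^-)\equiv 0$ and you would get no $\delta$ at all; the $\delta$ arises precisely because the picture is wrong at $x=0$. The corrected argument fixes a \emph{single} complex coordinate, say $w := \ell^-(x,\omega)$, notes $L^+_\omega w = 0$ and hence $L^+_\omega = \beta\,\partial_{\bar w}$ with $\beta := L^+_\omega\overline{\ell^-(\cdot,\omega)}$, applies $L^-_\omega\log A = 1/\ell^-$ and then $\partial_{\bar w}(1/w) = \pi\delta^{(w)}_0$, and converts $\delta^{(w)}_0$ to $\delta^{(x)}_0$ using the \emph{real} $2\times 2$ Jacobian of $x\mapsto w$ (not the complex determinant of $x\mapsto(z^+,z^-)$ that you quote). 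It is the sign of that real determinant that produces $\sgn\Im\omega$. Your split $\log A = \log\ell^+ + \log\ell^-$ is also a minor trap: each summand has a half-line branch-cut singularity in $\mathbb R^2$ that cancels only in the sum, whereas $\log A$ itself is smooth off the origin. The "cleanest route" you offer at the end — test against $\varphi\in C^\infty_{\mathrm c}$, integrate by parts off a small disc, read the constant from the boundary term — does sidestep all of this and is the version I would actually write down.
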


In order to understand the spectrum near the forcing frequency $\lambda$, we will need some regularity in $\lambda$. It is clear that $E_\omega$ is holomorphic on the interior of the regions $(0, 1) \pm i [0, \infty)$. It turns out that $E_\omega$ is also $C^\infty$ up to the boundary in the following sense:
\begin{lemma}\label{lem:fs_holomorphic}
    For every $\varphi \in \CIc(\R^2)$, the map  given by the distributional pairing 
    \[\omega \mapsto (E_\omega, \varphi)\]
    lies in $C^\infty ((0, 1) \pm i[0, \infty))$ and is holomorphic on the interior $(0, 1) \pm i(0, \infty)$. 
\end{lemma}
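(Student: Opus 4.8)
It suffices to treat the upper region $(0,1)+i[0,\infty)$, the lower one being identical after complex conjugation. The plan rests on one elementary observation: for $\Re\omega\in(0,1)$, $\Im\omega>0$ and $x\ne 0$ one computes
\[
\Im A(x,\omega)=\frac{2\,\Re\omega\,\Im\omega}{|\omega|^{4}}\,x_1^{2}+\frac{2\,\Re\omega\,\Im\omega}{|1-\omega^{2}|^{2}}\,x_2^{2}>0,
\]
so $A(x,\omega)$ lies in the open upper half-plane. Hence the principal branch of $\log A(x,\omega)$ is a genuine function, holomorphic in $\omega$ and jointly smooth in $(x,\omega)$ for $x\ne0$, with $|\log A(x,\omega)|\lesssim 1+\bigl|\log|x|\bigr|$ locally uniformly; since $c_\omega=i/(4\pi\omega\sqrt{1-\omega^{2}})$ is holomorphic there as well, Morera's theorem and dominated convergence give holomorphy of $\omega\mapsto(E_\omega,\varphi)$ on $(0,1)+i(0,\infty)$. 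The same observation shows that for $\Im\omega>0$ there is never a branch obstruction, so the only way smoothness can fail at a boundary point $\lambda_0\in(0,1)$ is through the formation of singularities along the characteristic cone $\mathcal C=\{A(\,\cdot\,,\lambda_0)=0\}$ as $\omega\to\lambda_0$.

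To handle this I would localize $\varphi$ near $\lambda_0$ into three cases, according to whether $\supp\varphi$ is (a) away from $\mathcal C$, (b) in a small ball about $x_0\in\mathcal C\setminus\{0\}$, or (c) in a small ball about $0$. Case (a) is immediate after shrinking the $\omega$-neighborhood, since then $\log A$ is jointly smooth on $\supp\varphi$. Case (c) reduces to (b) via polar coordinates: from $\log A(r\theta,\omega)=2\log r+\log A(\theta,\omega)$, integrating out $r$ leaves, up to an $\omega$-independent constant, $\int_{\mathbb S^{1}}\log A(\theta,\omega)\Psi(\theta)\,d\theta$ with $\Psi\in C^\infty(\mathbb S^{1})$ and $\theta\mapsto A(\theta,\lambda_0)$ vanishing nondegenerately at finitely many points — the one-dimensional form of (b). For case (b): $A(\,\cdot\,,\lambda_0)$ is a nondegenerate quadratic form with $d_xA(x_0,\lambda_0)\ne 0$, so I use $\rho:=A(x,\lambda_0)$ as one of a pair of coordinates $(\rho,\sigma)$; then $A(x,\omega)$ is at most quadratic in $\rho$ with $A\equiv\rho$ at $\omega=\lambda_0$, and solving $A=0$ for the root $\rho=g(\sigma,\omega)$ near $0$ (implicit function theorem, holomorphic in $\omega$, smooth in $\sigma$, $g\equiv0$ at $\omega=\lambda_0$, complex valued in general) and dividing gives $A(x,\omega)=u(\rho,\sigma,\omega)\bigl(\rho-g(\sigma,\omega)\bigr)$ with $u$ smooth and $u\equiv1$ at $\omega=\lambda_0$, hence nonvanishing nearby. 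Tracking arguments — this is where the positivity of $\Im A$ enters — one gets $\log A=\log u+\mathcal L(\rho-g)$ for the branch $\mathcal L$ of $\log$ used in~\eqref{eq:solution_near_corner}. Absorbing $\log u$ and the coordinate Jacobian into an amplitude $\Phi(\rho,\sigma;\omega)$ that is smooth in $(\rho,\sigma)$ and $C^\infty$ in $\omega$ up to the boundary (holomorphic inside), case (b) becomes the assertion that $\omega\mapsto\iint\mathcal L\bigl(\rho-g(\sigma,\omega)\bigr)\Phi(\rho,\sigma;\omega)\,d\rho\,d\sigma$ is $C^\infty$ up to the boundary.

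The analytic core is then a one-variable model: for $\psi\in\CIc(\mathbb R)$, the function $h(w):=\int_{\mathbb R}\mathcal L(\rho+w)\psi(\rho)\,d\rho$ is holomorphic where it is defined and extends to a $C^\infty$ function up to $\{\Im w=0\}$, with all bounds uniform as $\psi$ ranges over a bounded family in $\CIc(K)$, $K$ compact, and $w$ over a compact set; one then composes with the smooth map $w=-g(\sigma,\omega)$ (which stays in the region of validity, as $\arg(\rho-g)$ stays near $[0,\pi]$) and integrates in $\sigma$, differentiating under the integral using the uniform bounds. To prove the model statement I would integrate by parts once,
\[
h(w)=-\int_{\mathbb R}(\rho+w)\bigl(\mathcal L(\rho+w)-1\bigr)\psi'(\rho)\,d\rho,
\]
observe that the new integrand is continuous up to $\{\Im w=0\}$ with $w$-derivative $\mathcal L(\rho+w)$ dominated locally uniformly by $C(1+|\log|\rho+\Re w||)\in L^1_{\mathrm{loc}}$, so one may differentiate under the integral in both real directions of $w$: this gives $\partial_{\Re w}h=-h[\psi']$, and by the Cauchy--Riemann equations in the interior, with the same domination surviving up to the boundary, $\partial_{\Im w}h=-i\,h[\psi']$, where $h[\,\cdot\,]$ denotes the same functional. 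Iterating, every partial derivative of $h$ is a power of $i$ times $h$ applied to a derivative of $\psi$, all continuous up to $\{\Im w=0\}$, whence $h\in C^\infty(\{\Im w\ge 0\})$.

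I expect the main obstacle to be organizational rather than analytic. Once the sign $\Im A(x,\omega)>0$ on the open region is in hand, interior holomorphy is free and all branch ambiguities disappear; the real work is setting up the localization near $\mathcal C$ carefully — in particular resolving the cone vertex $x=0$ by polar coordinates — and verifying that the one-dimensional model depends on the parameters $(\sigma,\omega)$ smoothly, with estimates uniform enough to survive both the composition with $g$ and the final $\sigma$-integration.
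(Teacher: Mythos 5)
The paper does not prove this lemma; it refers the reader to~\cite[\S 4.3]{DWZ}, so there is no in-text argument to compare against. On its own terms your proof is correct and attractively self-contained. The computation $\Im A(x,\omega)=2\,\Re\omega\,\Im\omega\bigl(|\omega|^{-4}x_1^2+|1-\omega^2|^{-2}x_2^2\bigr)>0$ is right, and it really does give interior holomorphy for free (Morera plus a local $L^1$ domination). The localization into cases (a), (b), (c), the polar reduction of (c) to a one-dimensional problem, the Weierstrass factorization $A=u\,(\rho-g)$ in case (b), and the one-variable model $h(w)=\int\mathcal L(\rho+w)\psi(\rho)\,d\rho$ handled by a single integration by parts and differentiation under the integral — all of this hangs together, and I checked the identities $\partial_{\Re w}h=-h[\psi']$ and $\partial_{\Im w}h=-ih[\psi']$ and the $L^1_{\mathrm{loc}}$ domination by $1+|\log|\rho+\Re w||$.

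Two places deserve an explicit sentence rather than a gesture. First, the model lemma is only usable if the composition $w=-g(\sigma,\omega)$ lands in the closed half-plane $\{\Im w\ge 0\}$, i.e.\ one needs $\Im g\le 0$ for $\Im\omega\ge 0$; this follows from $\Im A>0$ evaluated at $\rho=0$ together with $u\to 1$, but you should say so, since for $\Im w<0$ the integrand $\mathcal L(\rho+w)$ jumps at the single $\rho$ where $\rho+w$ meets the cut and the smooth extension would genuinely fail. Second, in the one-dimensional reduction of case (c), if you parametrize $\mathbb S^1$ by arclength $\theta$ near a zero $\theta_j$ of $A(\cdot,\lambda_0)$ you find $u(\theta_j,\lambda_0)=A'(\theta_j,\lambda_0)$, which may be negative; this spoils the normalization $u\to 1$ and, more importantly, puts $u$ near the branch cut of the principal $\log$. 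The fix is the same device you already use in case (b): take $\rho:=A(\theta,\lambda_0)$ itself as the local coordinate near $\theta_j$ (legitimate since $A'(\theta_j,\lambda_0)\ne 0$), so that again $A\equiv\rho$ at $\omega=\lambda_0$ and $u\to 1$, possibly at the cost of a harmless orientation flip absorbed into the test function. With those two points nailed down, I see no remaining gap.
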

We refer the reader to \cite[\S4.3]{DWZ} for the proofs of Lemmas \ref{lem:fs} and \ref{lem:fs_holomorphic}.

\subsection{Elliptic boundary value problem}
Consider the problem
\begin{equation}\label{eq:EBVP}
    P(\omega) u = f, \quad u|_{\partial \Omega} = 0, \quad \Re \omega \in (0, 1), \quad \Im \omega \neq 0, \quad f \in \CIc(\Omega),
\end{equation}
where $P(\omega)$ was defined in~\eqref{eq:P(omega)}, and $\Omega$ is a domain with straight corners. Before describing the reduction to boundary, we first need to establish some basic existence and uniqueness properties for the elliptic problem.  

\begin{lemma}\label{lem:basic_EU}
    Let $\Omega$ be a domain with Lipschitz and piecewise smooth boundary. The operator $P(\omega)$ is an isomorphism between $H^1_0(\Omega)$ and $H^{-1}(\Omega)$.
\end{lemma}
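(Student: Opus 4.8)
The plan is to exhibit $P(\omega)$ as a bounded, invertible operator $H^1_0(\Omega)\to H^{-1}(\Omega)$ via the Lax--Milgram theorem applied to the sesquilinear form naturally associated with $P(\omega)$. First I would integrate by parts: for $u,v\in H^1_0(\Omega)$ set
\[
B_\omega(u,v):=\int_\Omega \big( (1-\omega^2)\,\partial_{x_2}u\,\overline{\partial_{x_2}v} - \omega^2\,\partial_{x_1}u\,\overline{\partial_{x_1}v}\big)\,dx,
\]
so that $B_\omega(u,v)=-\langle P(\omega)u,\bar v\rangle$ in the distributional sense; since $u\in H^1_0(\Omega)$ has no boundary contribution, this identifies the weak formulation of~\eqref{eq:EBVP} with $B_\omega(u,\cdot)=-\langle f,\cdot\rangle$. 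Boundedness of $B_\omega$ on $H^1_0(\Omega)\times H^1_0(\Omega)$ is immediate from Cauchy--Schwarz. The point is coercivity. Writing $\omega = a+ib$ with $a\in(0,1)$, $b\neq 0$, one computes $\Re\big((1-\omega^2)z\big)$ and $\Re\big(-\omega^2 z\big)$ for the relevant $z\ge 0$, but it is cleaner to test with a complex rotation: there is a unimodular constant $\theta=\theta(\omega)$ (coming from the fact that $1-\omega^2$ and $-\omega^2$ lie in a common open half-plane through the origin when $\Im\omega\neq 0$) such that $\Re\big(\theta(1-\omega^2)\big)>0$ and $\Re\big(-\theta\omega^2\big)>0$. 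Indeed $1-\omega^2 = 1-a^2+b^2 - 2iab$ and $-\omega^2 = b^2-a^2 - 2iab$ differ by the positive real number $1$, so both have the same sign of imaginary part $-2ab\ne 0$, hence both lie strictly inside an open half-plane $\{\Re(\theta\,\cdot)>0\}$ for a suitable unimodular $\theta$. Then
\[
\Re\big(\theta\,B_\omega(u,u)\big)\ \ge\ c(\omega)\int_\Omega\big(|\partial_{x_1}u|^2+|\partial_{x_2}u|^2\big)\,dx\ =\ c(\omega)\,\|\nabla u\|_{L^2(\Omega)}^2,
\]
and by the Poincar\'e inequality on the bounded Lipschitz domain $\Omega$ this dominates $c'(\omega)\|u\|_{H^1_0(\Omega)}^2$. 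Thus $\theta B_\omega$ is coercive, and Lax--Milgram yields, for each $f\in H^{-1}(\Omega)$, a unique $u\in H^1_0(\Omega)$ with $B_\omega(u,v)=-\langle f,v\rangle$ for all $v$, i.e.\ $P(\omega)u=f$; the solution map is bounded with norm controlled by $c'(\omega)^{-1}$.

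It remains to check that this weak solution operator is exactly the inverse of $P(\omega)$ acting $H^1_0(\Omega)\to H^{-1}(\Omega)$. For injectivity: if $u\in H^1_0(\Omega)$ and $P(\omega)u=0$ in $H^{-1}(\Omega)$, then $B_\omega(u,v)=0$ for all $v\in H^1_0(\Omega)$, so in particular $\Re(\theta B_\omega(u,u))=0$ forces $\nabla u=0$, hence $u=0$ by Poincar\'e. For surjectivity: this is precisely the existence half of Lax--Milgram above. Boundedness of $P(\omega):H^1_0(\Omega)\to H^{-1}(\Omega)$ itself is clear since $P(\omega)$ is a constant-coefficient second-order operator. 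Combining, $P(\omega)$ is a bounded bijection between Hilbert spaces, hence an isomorphism by the open mapping theorem.

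I do not expect any serious obstacle here; this is a standard Lax--Milgram argument and the only mild subtlety is the complex rotation $\theta(\omega)$ needed to turn the \emph{indefinite} real part of the symbol (the $-\omega^2\partial_{x_1}^2$ term has the ``wrong sign'' when $\omega$ is real) into a coercive form — but this is exactly where the hypothesis $\Im\omega\neq 0$ enters and makes the operator genuinely elliptic. One should also note that no regularity of $\partial\Omega$ beyond Lipschitz is used: Poincar\'e's inequality and density of $\CIc(\Omega)$ in $H^1_0(\Omega)$ are all that is required, consistent with the statement of the lemma. If one prefers to avoid the rotation trick, an equivalent route is to observe that $P(\omega)$ is, after the linear change of variables diagonalizing it (or directly via the factorization $P(\omega)=4L_\omega^+L_\omega^-$ from~\eqref{eq:L_factor} into conjugate Cauchy--Riemann operators when $\Im\omega\neq0$), a nondegenerate elliptic operator with the same principal part structure as $\Delta$ up to an invertible complex linear map, so that $\Delta_\Omega^{-1}$-type estimates transfer directly; but the Lax--Milgram argument above is the most self-contained.
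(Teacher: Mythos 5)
Your proof is correct and takes essentially the same approach as the paper: both apply Lax--Milgram after observing that there is a unimodular rotation making the sesquilinear form coercive, the only cosmetic difference being that the paper first rescales $P(\omega) = -\omega^2 Q_{\omega^{-2}}$ with $Q_\mu = -\partial_{x_1}^2-(1-\mu)\partial_{x_2}^2$ and rotates $Q_\mu$, whereas you rotate $P(\omega)$ directly.
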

\begin{proof}
    Recall that $P(\omega) = (P - \omega^2) \Delta_\Omega^{-1}$. Since $P$ is self-adjoint (see \cite[Lemma~6]{Ralston_73}) and $\Im \omega^2 \neq 0$, we see that $P - \omega^2$ is invertible on $H^{-1}(\Omega)$, hence the lemma follows. 
\end{proof}
\begin{Remark}
    Of course, the direct proof using the Lax--Milgram lemma also works for establishing Lemma \ref{lem:basic_EU}. See for instance \cite[\S6.2]{evans10} and \cite[Chapter 6]{Lax_functional} for such arguments. It is just convenient here to use the self-adjointness of $P$. 
\end{Remark}

If $\partial \Omega$ is smooth, then it follows from standard elliptic boundary regularity theory that solutions to \eqref{eq:EBVP} are smooth up to the boundary. We also need to characterize the solutions near corners. To do so, it is convenient to blow up the corner analogous to the blow up of the double space $\R_+^2$ in \eqref{eq:R2_blowup}. Consider $\Omega \subset \R^2$ with Lipschitz and piecewise-smooth boundary. Let $\mathcal K \subset \partial \Omega$ be the set of points where $\partial \Omega$ fails to be smooth, i.e. the corners. The blow up of $\Omega$ is given by a blown-up domain $\widetilde \Omega$ and a blow-down map $\beta: \widetilde \Omega \to \overline \Omega$. The blown-up domain as a set is given by 
\begin{equation}\label{eq:Omega_tilde_set}
    \widetilde \Omega := (\overline \Omega \setminus \mathcal K) \sqcup S^+ (\mathcal K)
\end{equation}
where $S^+(\mathcal K)$ denotes the inward-pointing unit tangent vectors at $\mathcal K$. The blow-down map is given by 
\begin{equation}
    \beta(x) := \begin{cases} x & \text{if $x \in \overline \Omega \setminus \mathcal K$} \\ \kappa & \text{if $x \in S^+(\kappa)$, $\kappa \in \mathcal K$.} \end{cases}
\end{equation}
See Figure \ref{fig:omega_blowup}.
\begin{figure}
    \centering
    \includegraphics[scale = 0.6]{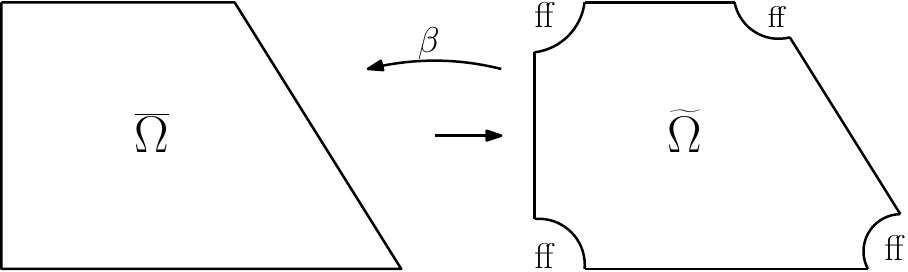}
    \caption{Diagram of the blown-up space $\widetilde \Omega$.}
    \label{fig:omega_blowup}
\end{figure}

It remains to give the natural topology and the smooth structure on $\widetilde \Omega$, and with this structure, $\beta$ is smooth. Let $\mathscr C(\Omega) \subset C^\infty([0, 1], \overline \Omega])$ be the subset of curves such that $\varphi \in \mathscr C(\Omega)$ if and only if $\varphi(0) \in \mathcal K$ and $|\varphi'(0)| = 1$. Note that $\varphi \in \mathscr C(\Omega)$ gives rise to the map 
\[\widetilde \varphi: [0, 1] \to \widetilde \Omega \quad \text{by} \quad \widetilde \varphi(0) = \varphi'(0), \quad \widetilde \varphi(t) = \varphi(t) \ \text{for} \ t > 0.\]
To give the topology, we say that $U \subset \widetilde \Omega$ is open if and only if $U$ intersects $\overline \Omega \setminus \mathcal K$ and $S^+(\mathcal K)$ in open sets, and for every $\varphi \in \mathscr C(\Omega)$ such that $\widetilde \varphi(0) \in U$, there exists a neighborhood $V \subset \mathscr C(\Omega)$ of $\varphi$ and $\epsilon > 0$ such that $\widetilde \psi(t) \in U$ for all $t \in [0, \epsilon)$ and $\psi \in V$. Now it remains to give the smooth structure near $\beta^{-1}(\kappa)$ for every $\kappa \in \mathcal K$. Let $x$ and $x'$ be two local boundary defining functions for the two boundary hypersurfaces near $\kappa$. Observe that the function $\tau = (x-x')/(x + x')$ extends to a continuous function on $\widetilde \Omega$. Smooth functions on $\widetilde \Omega$ are defined to be the functions generated by $\tau$ and $\beta^* C^\infty(\overline \Omega)$. 

The solution $u$ to~\eqref{eq:EBVP} is singular near the corners, but we can characterize this singularity when we lift $u$ to $\partial \Omega$ by the blow-down map $\beta^* u$. The idea is that we are making the geometry more complicated but the singularities are easily understood analytically in the more complicated geometry. Let $\mathcal V_b(\widetilde \Omega)$ denote the space of smooth vector fields in $\widetilde \Omega$ that are tangent to $\partial \widetilde \Omega$. In order to give the precise regularity of solution $u$ to~\eqref{eq:EBVP}, we will show that $V_1 \dots V_N u \in H^1_0(\Omega)$ for any $V_j \in \mathcal V_b(\widetilde \Omega)$ and $N \in \N$. First we need the following technical lemma on difference quotients. For $V \in \mathcal V_\bo(\widetilde \Omega)$, define the difference quotient
\begin{equation}\label{eq:DQ}
    D^h_V u(x) := \frac{u(\varphi_V^h(x)) - u(x)}{h}
\end{equation}
where $\varphi_V^h$ is the time $h$ flow of $V$. Clearly, since the vector field $V$ is tangent to $\partial \widetilde \Omega$, the difference quotient is well-defined and maps $H^1_0(\Omega) \to H^1_0(\Omega)$ and $H^{-1}(\Omega) \to H^{-1}(\Omega)$, although not uniformly as $h \to 0$. 

\begin{lemma}\label{lem:diff_quotient}
    Let $\Omega$ be a domain with straight corners and let $P$ be a constant coefficient second-order differential operator. There exists a finite collection of $\{V_j\}_{j \in J} \subset \mathcal V_\bo(\widetilde \Omega)$ such that $\{V_j\}_{j = 1}^J$ generates $\mathcal V_\bo(\widetilde \Omega)$ over $C^\infty(\widetilde \Omega)$ and
    \begin{equation}\label{eq:DQ_commutator}
        \|[P, D_{V_j}^h]\|_{H^1_0(\Omega) \to H^{-1}(\Omega)} \le C
    \end{equation}
    for all sufficiently small $h > 0$. 
\end{lemma}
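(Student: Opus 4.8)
The plan is to reduce everything to the b-structure near the corners and to exploit that $P$ has \emph{constant} coefficients, so that the commutators $[P,D_{V_j}^h]$ are effectively commutators with dilation vector fields. I would choose the generating collection $\{V_j\}$ to depend only on $\Omega$, not on $P$. Away from $\corner$, take a standard finite family of smooth vector fields tangent to $\partial\Omega$ (interior coordinate fields, boundary‑tangential fields, and normal dilation fields $\rho\partial_\rho$ near smooth boundary points), assembled with a partition of unity; for these the uniform bound \eqref{eq:DQ_commutator} is the classical difference‑quotient estimate used in elliptic boundary regularity — indeed for constant‑coefficient $P$ one even has $[P,D^h_{\partial_{x_i}}]=0$, and the tangential difference quotients near smooth boundary behave as in Nirenberg's method. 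Near each corner $\kappa$ I would use that the corner is straight: pick linear coordinates $(s,t)=(s_\kappa,t_\kappa)$ centered at $\kappa$ in which $\partial\Omega$ locally coincides with $\{s=0\}\cup\{t=0\}$ and $\Omega$ with the quadrant $\{s>0,t>0\}$, fix a radial cutoff $\chi_\kappa=\chi(|x-\kappa|)\in C^\infty(\widetilde\Omega)$ with $\chi_\kappa\equiv1$ near $\kappa$, and set $V^1_\kappa=\chi_\kappa\,s\partial_s$, $V^2_\kappa=\chi_\kappa\,t\partial_t$. Since $s,t$ are linear these are smooth on $\R^2$, they are tangent to $\{s=0\}$ and $\{t=0\}$, and being homogeneous of degree one they lift to vector fields tangent to $\ff$; hence $V^j_\kappa\in\mathcal V_\bo(\widetilde\Omega)$.

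The first substantive step is to check that this finite collection generates $\mathcal V_\bo(\widetilde\Omega)$ over $C^\infty(\widetilde\Omega)$. By compactness this is local, and the only non‑obvious region is a neighborhood of $\beta^{-1}(\kappa)$. There I would verify, by direct computation in the blow‑up coordinates $r=s+t$, $\tau=(s-t)/(s+t)$ over the interior of $\ff$ and $w=t/r$ near the corner $\ff\cap\overline{\{t=0\}}$ (and symmetrically near $\ff\cap\overline{\{s=0\}}$), the identities
\[ s\partial_s+t\partial_t=r\partial_r,\qquad \partial_\tau=\tfrac{1}{1+\tau}\,s\partial_s-\tfrac{1}{1-\tau}\,t\partial_t,\qquad w\partial_w=-\tfrac{w}{1-w}\,s\partial_s+t\partial_t, \]
whose coefficients are smooth on $\widetilde\Omega$ in the respective regions. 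Thus $\{s\partial_s,t\partial_t\}$ generate $\mathcal V_\bo=\langle r\partial_r,\partial_\tau\rangle$ over the interior of $\ff$ and $\langle r\partial_r,w\partial_w\rangle$ near each corner of $\widetilde\Omega$; combined with the smooth fields away from $\corner$ and a partition of unity this gives global generation. This is exactly where hypothesis (4), the straightness of the corner, enters.

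For the commutator bound, given $V\in\mathcal V_\bo(\widetilde\Omega)$ with flow $\varphi^h:=\varphi_V^h$ — a homeomorphism of $\overline\Omega$, smooth away from $\corner$, fixing $\partial\Omega$ setwise — I would write $P^h:=((\varphi^h)^*)^{-1}P(\varphi^h)^*$, so that $[P,D_V^h]=h^{-1}(P-P^h)\circ(\varphi^h)^*$. For the corner fields, near $\kappa$ the flow is the linear diagonal scaling $\varphi^h(s,t)=(e^hs,t)$ (or $(s,e^ht)$), so there $P^h$ is a constant‑coefficient second‑order operator with coefficients polynomial in $e^{\pm h}$ — smooth in $h$ with $P^0=P$; on the compact transition annulus of $\chi_\kappa$, and for the fields supported away from $\corner$, the same holds with smooth $h$‑ and $x$‑dependent coefficients. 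Hence, uniformly for $|h|\le h_0$, $h^{-1}(P-P^h)$ is a second‑order operator with $L^\infty(\Omega)$ coefficients bounded independently of $h$, so it maps $H^1(\Omega)\to H^{-1}(\Omega)$ uniformly. Likewise each $\varphi^h$ is bi‑Lipschitz with $d\varphi^h$, $d(\varphi^h)^{-1}$ and Jacobians bounded uniformly in $|h|\le h_0$, and preserves $\partial\Omega$, so $(\varphi^h)^*:H^1_0(\Omega)\to H^1_0(\Omega)$ is bounded uniformly; composing the two bounds yields \eqref{eq:DQ_commutator}.

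The main obstacle is the generation step at the corner. A priori, $\mathcal V_\bo(\widetilde\Omega)$ is not generated by vector fields with linear coefficients — yet linear (dilation) coefficients are precisely what one needs so that the second derivatives of the coefficients vanish and $[P,D_V^h]$ contains no $|x-\kappa|^{-1}\cdot(\text{first order})$ term, which in two dimensions would fail to be bounded $H^1_0\to H^{-1}$ (the relevant $2$D Hardy inequality being false). The straight‑corner hypothesis is what reconciles the two requirements: it makes $s\partial_s$ and $t\partial_t$ simultaneously linear and generating near $\beta^{-1}(\kappa)$, and verifying the generation via the explicit identities above is the computational heart of the argument. Everything else is standard difference‑quotient bookkeeping and the b‑calculus mapping properties established in \S\ref{sec:microlocal}.
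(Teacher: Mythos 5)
Your proof is correct in substance and takes a genuinely cleaner route than the paper's, so a comparison is worth recording. The paper's generating set near a corner is $\{\chi\,r\partial_r,\ \chi(c_1-\tau)(c_2+\tau)\partial_\tau\}$ in the blow-up coordinates $r=x_2$, $\tau=x_1/x_2$, and the proof computes $[\partial_{x_jx_k},D^h_{V_\ell}]$ explicitly term by term; the angular field $V_2$ has a \emph{nonlinear} flow, which produces singular terms such as $r^{-2}\partial_\tau u=\partial_{x_1}(r^{-1}u)$, and the paper bounds these via the angular Poincar\'e inequality $\|r^{-1}u\|_{L^2(\Omega)}\lesssim\|u\|_{H^1_0(\Omega)}$ (valid in a sector with Dirichlet data). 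You instead pick linear coordinates $(s,t)$ in which the straight corner becomes a quadrant and take $\chi_\kappa\,s\partial_s$, $\chi_\kappa\,t\partial_t$; both flows are linear where $\chi_\kappa\equiv1$, so the conjugated operator has constant coefficients there, and smooth uniformly bounded coefficients on the compact transition annulus, and you never meet a singular factor. That sidesteps the Poincar\'e step entirely and replaces the explicit expansion by the one-line commutator formula. Your verification that $\{s\partial_s, t\partial_t\}$ generate $\mathcal V_\bo(\widetilde\Omega)$ near $\beta^{-1}(\kappa)$, via the identities for $r\partial_r$, $\partial_\tau$, $w\partial_w$, is correct, and this is exactly where the straight-corner hypothesis enters for you, just as it does for the paper's choice of $V_2$.

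Three imprecisions should be fixed. First, the algebraic identity should read $[P,D^h_V]=(\varphi^h)^*\circ h^{-1}(P^h-P)$, not $h^{-1}(P-P^h)\circ(\varphi^h)^*$: the pullback sits on the left and the sign is opposite. This is harmless, since you then need uniform boundedness of $(\varphi^h)^*$ on $H^{-1}(\Omega)$ rather than on $H^1_0(\Omega)$, and both hold for a bi-Lipschitz diffeomorphism of $\overline\Omega$ fixing $\partial\Omega$. Second — a genuine slip in the reasoning — a second-order operator with coefficients merely in $L^\infty$ does \emph{not} map $H^1\to H^{-1}$: multiplication by $L^\infty$ is not bounded on $H^{-1}$, so the non-divergence-form piece $a_{ij}\partial_i\partial_j$ requires at least Lipschitz $a_{ij}$ to rewrite as $\partial_i(a_{ij}\partial_j\cdot)-(\partial_ia_{ij})\partial_j$. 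Your coefficients are in fact smooth in $(h,x)$ uniformly (constant near $\kappa$, smooth on the compact annulus), so the conclusion is correct, but the stated justification ``$L^\infty$ coefficients, hence bounded'' is insufficient and should be replaced by the $C^1$ argument. Third, your parenthetical rationale for insisting on linear fields — that a $|x-\kappa|^{-1}\cdot(\text{first order})$ term ``would fail to be bounded $H^1_0\to H^{-1}$ (the relevant 2D Hardy inequality being false)'' — is not correct: in a sector with the Dirichlet condition the Hardy inequality $\|r^{-1}u\|_{L^2}\lesssim\|\nabla u\|_{L^2}$ \emph{does} hold, by Poincar\'e in the angular direction, and the paper uses precisely this to handle the terms coming from its nonlinear angular field. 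Linear fields are a genuine convenience that makes those terms never appear; they are not forced on you.
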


\begin{Remarks}
    1. As $h \to 0$, the difference quotient approaches differentiation by $V$. In fact, it is easier to check $V \in \mathcal V_\bo(\widetilde \Omega)$, the commutator with $P$ has the mapping property
    \[[P, V]: H^1_0(\Omega) \to H^{-1}(\Omega).\]
    This can be done directly with an application of  the Poincar\'e inequality as in~\eqref{eq:poincare}. However, this mapping property is not enough, and we must go through the difference quotient. Unlike differentiation by $V$, taking the difference quotient does not drop the Sobolev order. To obtain regularity for the solution $u$ to~\eqref{eq:EBVP}, one might try to feed $Vu$ back into operator and study $P(\omega)(Vu)$. But we do not know a priori that $Vu \in H^1_0(\Omega)$, so we use the difference quotient instead. 

    \noindent
    2. Eventually, we will show that solutions to~\eqref{eq:EBVP} remains regular under repeated differentiation by vector fields that are only tangent to $\ff$ of the blow up, meaning solutions are smooth up to the boundary away from corners. However, differentiating (or taking the difference quotient) with respect to vector fields normal to the boundary kills the Dirichlet boundary condition. Normal regularity will thus be handled separately by using ellipticity of $P(\omega)$.  
\end{Remarks}

\begin{proof}
    1. If $V \in \mathcal V_\bo(\widetilde \Omega)$ is supported away from the corners, then~\eqref{eq:DQ_commutator} is obvious, so it suffices to consider vector fields supported near the corner. Without the loss of generality, we may assume that the corner is at $0$. By a linear change of coordinates, a local coordinate chart for the blow up $\widetilde \Omega$ near $\beta^{-1}(0)$ is then given by 
    \[r = x_2, \quad \tau = \frac{x_1}{x_2}, \quad 0 \le r < \delta, \quad -c_2 \le \tau \le c_1.\]
    In these coordinates, consider the vector fields
    \begin{equation}\label{eq:V1V2}
        V_1 := \chi(r, \tau) r \partial_r \quad \text{and} \quad V_2 = \chi(r, \tau) (c_1 - \tau)(c_2 + \tau) \partial_\tau
    \end{equation}
    where $\chi \in \CIc(\widetilde \Omega)$, $\chi = 1$ near $\beta^*(0)$, and $\supp \chi \subset \{r < \delta\}$. Clearly, $V_1$ and $V_2$ generate the vector fields $\mathcal V_\bo(\widetilde \Omega)$ supported in a sufficiently small neighborhood of $\beta^{-1}(0)$. It remains to show that
    \begin{equation}\label{eq:DQ_simple}
        [\partial_{x_j x_k} , D_{V_\ell}^h] :H^1_0(\Omega) \to H^{-1}(\Omega), \quad j, k, \ell = 1, 2
    \end{equation}
    uniformly for all sufficiently small $h$. Indeed, \eqref{eq:DQ_simple} recovers \eqref{eq:DQ_commutator} up to linear combinations and linear changes of coordinates. 

    \noindent
    2. To go between derivatives in $(x_1, x_2)$ coordinates and $(r, \tau)$ blow-up coordinates, note that 
    \[\partial_{x_1} = r^{-1} \partial_\tau, \quad \partial_{x_2} = \partial_r - \frac{\tau}{r} \partial_\tau, \quad \partial_{\tau} = x_2 \partial_{x_1}, \quad \partial_r = \frac{x_1}{x_2} \partial_{x_1} + \partial_{x_2}.\]
    Clearly, $r^{-1} \partial_\tau$ and $\partial_r$ are bounded operators from $H^1_0(\Omega)$ to $L^2(\Omega)$. By duality, they are also bounded from $L^2 \to H^{-1}(\Omega)$. 
    
    Let $u \in \CIc(\Omega)$ be such that $\supp u \cap \supp(1 - \chi) = \emptyset$. Then for all sufficiently small $h$, we have
    \begin{align*}
        [\partial_{x_1 x_1}, D_{V_1}^h] u &= \frac{r^{-2} u_{\tau \tau}(e^h r, \tau) - r^{-2} u_{\tau \tau}(r, \tau)}{h} \\
        &\qquad - \frac{e^{-h} r^{-2} u_{\tau \tau}(e^h r, \tau) + r^{-2} u_{\tau \tau}(r, \tau)}{h} \\
        &= \frac{1 - e^{-h}}{h} r^{-2} u_{\tau \tau}(e^h r, \tau) \\
        &= \frac{1 - e^{-h}}{h} (r^{-1} \partial_\tau)^2 (u \circ \varphi^h_{V_1}).
    \end{align*}
    Note that $u \mapsto u \circ \varphi^h_{V_1}$ is uniformly bounded from $H^1_0(\partial \Omega) \to H^1_0(\partial \Omega)$ for sufficiently small $h$. Therefore, for all sufficiently small $h$, 
    \begin{equation*}
        \|[\partial_{x_1 x_1}, D_{V_1}^h] u\|_{L^2(\Omega)} \le C\|u\|_{H^1_0(\Omega)}
    \end{equation*}
    The above inequality extends by density to $u \in H^1_0(\Omega)$ supported near the corner, and it is easy to see that it holds for $u$ supported away from $0$ since $V_1$ is only singular near the corner, so it follows that 
    \begin{equation}\label{eq:d11V1}
        \|[\partial_{x_1 x_1}, D_{V_1}^h]\|_{H^1_0(\Omega) \to H^{-1}(\Omega)} < C.
    \end{equation}
    Next, we similarly compute
    \begin{align}
        [\partial_{x_1 x_2}, D_{V_1}^h]u &= \frac{e^h - e^{-h}}{h} r^{-1} \partial_\tau \partial_r (u \circ \varphi^h_{V_1}) \nonumber \\
        &\qquad - \frac{1 - e^{-2h}}{h} \big((r^{-1} \partial_\tau) r^{-1} + \tau (r^{-1} \partial_\tau)^2 \big) (u \circ \varphi^h_{V_1}), \label{eq:d12V1_expansion} \\
        [\partial_{x_2 x_2}, D_{V_1}^h]u &= \frac{e^{2h} - 1}{h} \partial_r^2 (u \circ \varphi^h_{V_1}) - 2\frac{e^h - e^{-h}}{h} \tau r^{-1} \partial_\tau \partial_r (u \circ \varphi^h_{V_1}) \nonumber\\
        & \qquad + 2\frac{1 - e^{-2h}}{h} \big(\tau r^{-2} \partial_\tau + \tau^2 (r^{-1} \partial_\tau)^2 \big) (u \circ \varphi^h_{V_1}). \label{eq:d22V1_expansion}
    \end{align}
    Observe that by Poincar\`e inequality,
    \begin{align}
        \|r^{-1} u\|_{L^2(\Omega)} &\simeq \int_0^\infty \int_a^b |r^{-1} u(r, \tau)|^2 r \, dr d \theta \nonumber \\
        &\le C \int_0^\infty \int_a^b |r^{-1} \partial_\tau u(r, \tau)|^2 r \, dr d\theta \nonumber \\
        &\le \|u\|_{H^1_0}. \label{eq:poincare}
    \end{align}
    Here, `$\simeq$' denotes equivalence in norm. Therefore, it follows from \eqref{eq:d12V1_expansion} that
    \begin{equation}\label{eq:d12V1}
        \|[\partial_{x_j x_2}, D_{V_1}^h]\|_{H^1_0(\Omega) \to H^{-1}(\Omega)} < C, \quad j = 1, 2.
    \end{equation}

    \noindent
    3. Finally, it remains to check the commutator with the vector field $V_2$ defined in \eqref{eq:V1V2}. Let $\widetilde \varphi^h$ denote the time $h$ (one-dimensional) flow of $(c_1 - \tau)(c_2 + \tau) \partial_\tau$. Note that 
    \[\widetilde \varphi^\bullet \in C^\infty([0, \infty)_h \times [-c_1, c_2]_\tau),\]
    and since $\tilde \varphi^0(\tau) = \tau$, we see that both 
    \begin{equation}\label{eq:diff_flow_bdd}
        h^{-1} (1 - \partial_\tau \widetilde \varphi^h(\tau)),\  h^{-1} \partial_{\tau}^2 \widetilde \varphi^h(\tau) \in C^\infty([0, \infty)_h \times [-c_1, c_2]_\tau).
    \end{equation}
    For $u \in \CIc(\Omega)$ such that $\supp u \cap \supp (1 - \chi) = \emptyset$, we compute
    \begin{equation}\label{eq:d11V2_expansion}
        [\partial_{x_1 x_1}, D_{V_2}^h] u = \frac{(\partial_\tau \widetilde \varphi^h(\tau))^2 - 1}{h} r^{-1} u_{\tau \tau}(r, \widetilde \varphi^h(\tau)) + \frac{\partial_\tau^2 \widetilde \varphi^h(\tau)}{h} r^{-2} u_\tau(r, \widetilde \varphi^h(\tau)).
    \end{equation}
    Observe that by \eqref{eq:poincare} and \eqref{eq:diff_flow_bdd}, we have that for all sufficiently small $h$, 
    \begin{equation*}
        \|[\partial_{x_1x_2}, D_{V_2}^h] u\|_{L^2(\Omega)} \le C\|u\|_{H^1_0(\Omega)}.
    \end{equation*}
    Again, $V_2$ is only singular near the corner, so the above holds for $u$ supported away from the corner. Therefore, 
    \begin{equation}\label{eq:d11V2}
        \|[\partial_{x_1x_1}, D_{V_2}^h]\|_{H^1_0(\Omega) \to H^{-1}(\Omega)} < C.
    \end{equation}
    The same inequality holds for the commutator of $D_{V_2}^h$ with $\partial_{x_1 x_2}$ and $\partial_{x_2 x_2}$ following similar computations to \eqref{eq:d11V2_expansion}, only involving more terms as in \eqref{eq:d12V1_expansion} and \eqref{eq:d22V1_expansion}.
\end{proof}

Now we obtain a polyhomogeneous expansion for $u$ near the corner in the blown-up coordinates. This type of result is rather standard, but our setting is special enough to warrant a self-contained proof. See, for instance, Mazzeo \cite[Corollary 4.19]{Mazzeo} for the polyhomogenous expansion near the boundary of solutions to b-elliptic differential equations, and see \cite[\S 5.2]{Hintz_notes} for the polyhomogeneous expansion near corners of Laplace's equation on polygonal domains.  

\begin{proposition}\label{prop:polyhom_EU}
    If $u$ is a solution to~\eqref{eq:EBVP} for $\omega = \lambda + i\epsilon$, then $\beta^* u$ is smooth at the lift of $\partial \Omega \setminus \mathcal K$. Furthermore, if $\kappa \in \mathcal K$ has characteristic ratio $\alpha(\lambda)$ (see Definition~\ref{def:corner_type}), then the pullback by the blow-down map $\beta^* u$ is polyhomogeneous conormal to $\ff$ with expansion
    \begin{equation}\label{eq:u_expansion}
        u(r, \tau) \sim \sum_{k = 1}^\infty r^{\mathfrak{l}_\omega k} w_k(\tau), \quad w_k \in C^\infty([-\alpha_-, \alpha_+]), \quad w_k(-\alpha_-) = w_k(\alpha_+) = 0
    \end{equation}
    near $\beta^{-1}(\kappa)$ where $(r, \tau)$ are the blowup coordinates given in~\eqref{eq:good_corner_coords} and
    \begin{equation}\label{eq:conormal_indices}
        \mathfrak{l}_\omega = \frac{2 \pi i}{i \pi - \log \alpha(\lambda)} + \mathcal O(\epsilon)
    \end{equation}
\end{proposition}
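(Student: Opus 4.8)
The statement has two parts, and the plan is to treat them in order: first that $\beta^*u$ is conormal with respect to $\partial\widetilde\Omega$ — in particular $C^\infty$ at the lift of $\partial\Omega\setminus\mathcal K$ — and second that near each corner this conormality upgrades to the polyhomogeneous expansion with the exponents in~\eqref{eq:conormal_indices}.

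For the first part I would prove by induction on $N$ that $V_1\cdots V_N u\in H^1_0(\Omega)$ for all $V_1,\dots,V_N\in\mathcal V_\bo(\widetilde\Omega)$; the case $N=0$ is Lemma~\ref{lem:basic_EU}. For the inductive step, fix the generators $\{V_j\}_{j\in J}$ from Lemma~\ref{lem:diff_quotient} and, for a word $W=D^h_{V_{j_1}}\cdots D^h_{V_{j_N}}$ of difference quotients, write $P(\omega)(Wu)=Wf+[P(\omega),W]u$. Expanding the commutator by the Leibniz rule, $[P(\omega),W]u$ is a finite sum of terms $W'[P(\omega),D^h_{V_j}]W''u$ with $W',W''$ words of total length $\le N-1$; by Lemma~\ref{lem:diff_quotient} together with the inductive hypothesis each term is bounded in $H^{-1}(\Omega)$ uniformly in $h$, and $Wf$ is bounded since $f\in\CIc$, so Lemma~\ref{lem:basic_EU} gives $\|Wu\|_{H^1_0}\le C$ uniformly in $h$. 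Passing to a weak-$*$ limit as $h\to0$ then yields $V_{j_1}\cdots V_{j_N}u\in H^1_0(\Omega)$, i.e.\ conormality of $\beta^*u$ relative to $\partial\widetilde\Omega$. Away from the corners, $\mathcal V_\bo(\widetilde\Omega)$ spans all vector fields tangent to $\partial\Omega$; combined with uniform ellipticity of $P(\omega)$ (valid since $\Im\omega\neq0$) and classical elliptic boundary regularity for the Dirichlet problem, this gives $\beta^*u\in C^\infty$ at the lift of $\partial\Omega\setminus\mathcal K$.

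For the second part, fix a corner $\kappa$; by the reflection symmetries in the Remark after Definition~\ref{def:corner_type} we may assume $\kappa$ is of type $(+,+)$. This is where the straight-corner hypothesis is used: in the coordinates~\eqref{eq:good_corner_coords} a neighborhood of $\beta^{-1}(\kappa)$ in $\widetilde\Omega$ is exactly the model sector $\{0\le r<\delta,\ -\alpha_-\le\tau\le\alpha_+\}$, and since $P(\omega)$ has constant coefficients, $N_\omega:=r^2P(\omega)$ is a \emph{dilation-invariant} second-order b-differential operator in $(r,\tau)$; a direct computation from~\eqref{eq:L_factor}--\eqref{eq:L_dual} gives $\tfrac14 N_\lambda=(r\partial_r-\tau\partial_\tau-1)\partial_\tau$ and $N_\omega-N_\lambda=\mathcal O(\epsilon)$. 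Its normal family (Mellin transform in $r$) is a second-order ODE in $\tau$ on $[-\alpha_-,\alpha_+]$ with Dirichlet conditions at both endpoints, and ellipticity of $P(\omega)$ for $\epsilon\neq0$ keeps this ODE non-degenerate on the closed interval, so its Dirichlet inverse is meromorphic in the spectral parameter with a discrete pole set — the indicial roots. At $\omega=\lambda$ one solves the limiting equation explicitly by $w=A(\tau+i0)^{z}+B$, and the Dirichlet conditions force $(-\alpha_-+i0)^{z}=\alpha_+^{z}$, i.e.\ $z(i\pi-\log\alpha(\lambda))\in2\pi i\Z$; the indicial roots depend continuously on $\omega$, so for $\omega=\lambda+i\epsilon$ they are $\mathfrak l_\omega k$, $k\in\Z$, with $\mathfrak l_\omega=\tfrac{2\pi i}{i\pi-\log\alpha(\lambda)}+\mathcal O(\epsilon)$, and only $k\ge1$ (equivalently $\Re\mathfrak l_\omega>0$) is compatible with $u\in H^1_0(\Omega)$.

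To conclude, I would use that $f\in\CIc(\Omega)$ vanishes near $\kappa$, so $N_\omega u=0$ near $\ff$; cutting off by $\chi=\chi(r)$ supported near $r=0$, the function $N_\omega(\chi u)=[N_\omega,\chi]u$ is supported away from $\ff$ and conormal, whence $\mathcal M_r(\chi u)(\sigma,\cdot)=\widehat{N_\omega}(\sigma)^{-1}\mathcal M_r(N_\omega(\chi u))(\sigma,\cdot)$ is meromorphic with poles only at the indicial roots, which are simple for small $\epsilon$. The conormality established in the first part supplies the symbolic bounds on vertical lines needed to shift the Mellin inversion contour past these poles; summing residues (the $\tau$-valued analogue of Lemma~\ref{lem:mellin_polyhom}) then yields $u(r,\tau)\sim\sum_{k\ge1}r^{\mathfrak l_\omega k}w_k(\tau)$, with $w_k\in C^\infty([-\alpha_-,\alpha_+])$ because the normal-family ODE is non-degenerate there and $w_k(-\alpha_-)=w_k(\alpha_+)=0$ from the inherited Dirichlet conditions. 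The step I expect to be hardest is the b-regularity bootstrap of the first part: Lemma~\ref{lem:diff_quotient} controls only a single commutator, so propagating regularity through $N$-fold products of non-commuting difference quotients with bounds uniform in $h$ requires careful bookkeeping of the iterated commutators; a close second is extracting from the conormal estimates the decay of $\widehat{N_\omega}(\sigma)^{-1}$ along horizontal lines that the contour shift requires.
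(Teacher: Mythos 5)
Your proposal is correct and follows essentially the same route as the paper's: an inductive difference-quotient bootstrap for $\mathcal V_\bo$-regularity (the paper's Step~1), elliptic regularity for the normal direction (its Step~2), and a Mellin/indicial-root analysis near each straight corner (its Step~3), with reflection symmetry handling the other corner types. The only presentational difference worth noting is that you recover $\tau$-smoothness of the coefficients $w_k$ a posteriori from ellipticity of the indicial ODE, whereas the paper establishes conormality with respect to $\mathcal V_\ff(\widetilde\Omega)$ a priori before applying the Mellin transform; both work, and your explicit identification $\tfrac14 N_\lambda=(r\partial_r-\tau\partial_\tau-1)\partial_\tau$ is a tidy normalization of the factorization in~\eqref{eq:ind_factor}.
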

\begin{proof}
1. If $u$ is a solution, then $u \in C^\infty(\Omega)$ by elliptic regularity, so it remains to check smoothness \textit{up to} the boundary and conormality near the corner. 

Let $\mathcal S \subset \mathcal V_\bo(\widetilde \Omega)$ be the collection of vector fields given in Lemma~\ref{lem:diff_quotient}, and let $V \in \mathcal S$. Observe that
\begin{equation}\label{eq:PDu}
    P(\omega) (D^h_{V} u) = D^h_{V} f + [P(\omega), D^h_{V}] u.
\end{equation}
Clearly, 
\[D^h_V f \in \CIc(\Omega)\]
uniformly. Therefore, by Lemma~\ref{lem:diff_quotient}, the right hand side of~\eqref{eq:PDu} is uniformly bounded in $H^{-1}(\Omega)$ for all sufficiently small $h$. Since $P(\omega):H^1_0(\Omega) \to H^{-1}(\Omega)$ is an isomorphism, we conclude that $D^h_Vu$ is uniformly bounded in $H^1_0$. Since $\mathcal S$ generates $\mathcal V_\bo(\widetilde \Omega)$, it follows that 
\[Vu \in H^1_0(\Omega) \quad \text{for all} \quad V \in \mathcal V_\bo(\widetilde \Omega).\]
Now $P(\omega)Vu \in H^{-1}(\Omega)$. Iterating this procedure, we see that 
\[V_1\cdots V_k u \in H^1_0 \quad \text{for all} \quad V_1, \dots, V_k \in \mathcal V_\bo(\widetilde \Omega)\]

\noindent 
2. Next, we show that $V_1 \dots V_k u \in L^2(\Omega)$ for for all $V_1, \dots, V_k \in \mathcal V_\ff(\widetilde \Omega)$, that is, smooth vector fields in $\widetilde \Omega$ tangent to $\ff$. After step 1, it remains to verify regularity in the normal direction. We first work away from corners. Let $(y_1, y_2)$ be local coordinates centered at $x \in \partial \Omega \setminus \mathcal K$ so that $y_2$ is a boundary defining function and $\partial_{y_1}$ is tangent to the boundary. Let $\chi \in \CIc(\overline \Omega)$ be a cutoff with $\chi = 1$ near $x$ with sufficiently small support in the coordinate neighborhood. Since $P(\omega)$ is elliptic and $P(\omega)u(y) = 0$ for $y$ near the boundary, 
\[\chi(y) \partial_{y_2}^2 u(y) = \chi(y) \sum_{\substack{|\alpha| \le 2 \\ \alpha \neq (0, 2)}} c_{\alpha}(y) \partial_y^\alpha u(y)\]
for some $c_\alpha \in C^\infty(\overline \Omega)$. By Step 1, the right hand side lies in $L^2(\Omega)$, so $\chi(y) \partial_{y_2}^2 u \in L^2(\Omega)$. Similarly, we also find that $\partial_{y_2}^2 \partial_{y_1}^k u \in L^2(\Omega)$ for all $k \in \N$. Higher derivatives of $\partial_{y_2}$ are handled inductively. Assume that $\chi(y) \partial_{y_1}^k \partial_{y_2}^n u \in L^2(\Omega)$ for some $n \in \N$ and all $k \in \N$. Then using 
\[\chi(y) \partial_{y_1}^k \partial_{y_2}^{n + 1} u(y) = \chi(y)\sum_{\substack{|\alpha| \le 2 \\ \alpha \neq (0, 2)}} \partial_y^\alpha \partial_{y_1}^k \partial_{y_2}^{n - 1} u(y),\]
we see that $\chi(y) \partial_{y_1}^k \partial_{y_2}^{n + 1} u \in L^2(\Omega)$, completing the induction. 

Now we work near a corner, which we may place at the origin. Since the corner is straight, we can choose linear coordinates $(y_1, y_2)$ so that 
\[\{(y_1, 0): y_1 \in [0, \delta]\} \cup \{(0, y_2): y_2 \in [0, \delta]\} \subset \partial \Omega\] 
for some sufficiently small $\delta > 0$. Then in a neighborhood of the corner, the vector field
\[\chi(y_2/ y_1) y_1 \partial_{y_2}, \quad \chi \in \CIc([0, \infty)), \quad \chi \equiv 1 \quad \text{near} \quad 0\]
lifts to a vector field up tangent to $\ff$, nonvanishing near $\ff \cap \beta^{-1}(\{(y_1, y_2): y_2 = 0, y_1 > 0\})$, and is transversal to the piece of boundary at $\beta^{-1}(\{(y_1, y_2): y_2 = 0, y_1 > 0\})$. Again, by ellipticity, 
\[\chi(y_2/ y_1) y_1^2 \partial_{y_2}^2 u = \chi (y_2/y_1)  \sum_{\substack{|\alpha| \le 2 \\ \alpha \neq (0, 2)}} c_\alpha y_1^2 \partial^\alpha u(y) \quad \text{for} \quad y_1 \in [0, \delta]\]
and some constants $c_\alpha \in \C$. Observe that $\chi(y_2/y_1) y_1 \partial_{y_1} \in \mathcal V_\bo(\widetilde \Omega)$, so it follows from Lemma~\ref{lem:diff_quotient} that the right hand side lies in $L^2(\Omega)$, so normal derivatives near the corner still lies in $L^2(\Omega)$. Higher normal derivatives are again handled inductively. Therefore, taking $\overline C^\infty(\widetilde \Omega)$-linear combination, we indeed see that
\begin{equation}\label{eq:u_conormal}
    V_1 \dots V_k u \in L^2(\Omega) \quad \text{for all} \quad V_1, \dots, V_k \in \mathcal V_\ff(\widetilde \Omega).
\end{equation}
\noindent
3. It remains to show that we have a polyhomogeneous expansion. Without the loss of generality we assume that the corner is at $0$, and we first consider the case that $0$ is a type-$(+, +)$ corner. Blowing up the corner at $0$, the coordinates
\begin{equation}\label{eq:bcoords}
    r = \ell^-(\bullet, \lambda), \quad \tau = \frac{\ell^+(\bullet, \lambda)}{\ell^-(\bullet, \lambda)}, \quad (r, \tau) \in U := [0, \delta)_r \times [-\alpha_-, \alpha_+]_\tau
\end{equation}
for a sufficiently small $\delta$ give a local chart for $\widetilde \Omega$ near $\ff_0 := \beta^{-1}(0)$. Let $\chi \in C_c^\infty(\widetilde \Omega)$ be such that $\chi \equiv 1$ near $\ff_0$ and $\chi$ is supported in the coordinate patch of~\eqref{eq:bcoords}. Then 
\begin{equation}\label{eq:Pchiu}
    r^2 P(\omega)(\chi u) = r^2 [P(\omega), \chi] u + r^2 \chi P(\omega)u
\end{equation}
Note that both sides are supported in $U$, so we may consider~\eqref{eq:Pchiu} on $[0, \infty)_r \times [-\alpha_-, \alpha_+]_\tau$. Taking the Mellin transform of the right hand side in $r$ 
\begin{equation}
    \mathcal M_r([P(\omega), \chi] u + \chi Pu) \in \mathscr O(\C; C^\infty([-\alpha_-, \alpha_+]_\tau)),
\end{equation}
since the right hand side of~\eqref{eq:Pchiu} is clearly supported away from $\ff_0$. 

Now we examine the left hand side of~\eqref{eq:Pchiu}. For simplicity, write $\ell^\pm := \ell^\pm( \bullet, \lambda)$. Recall from~\eqref{eq:L_factor} that we have $P(\omega) = L^+_\omega L^-_\omega$, and
\begin{align}
    L^\mu_\omega \ell^\nu &= \frac{1}{2} \left( \mu \nu \frac{\lambda + i\epsilon}{\lambda} + \sqrt{\frac{1 - (\lambda + i\epsilon)^2}{1 - \lambda^2}} \right) \nonumber\\
    &= \frac{1}{2} \left( \mu \nu\cdot 1 + 1+ i \epsilon\left( \frac{\mu \nu}{\lambda} - \frac{\lambda}{1 - \lambda^2} \right) \right) + \mathcal O(\epsilon^2) \label{eq:Ll_formula}
\end{align}
for $\mu, \nu \in \{+, -\}$. Note that $L^\mu_\omega \ell^\nu$ does not depend on $x$. Then writing $L^\pm_\omega$ in $(r, \tau)$-coordinates, we have
\begin{equation}\label{eq:blow_up_L}
    L_\omega^\pm = (L_\omega^\pm\ell^-) \partial_r + [(L_\omega^\pm \ell^+) - (L^\pm_\omega \ell^-) \tau] r^{-1} \partial_\tau 
\end{equation}
for $(r, \tau) \in U$. Note that $[r, L^\pm_\omega] = -L^\pm_\omega \ell^-$ and $L^\pm$ commute with each other, so 
\begin{equation}\label{eq:ind_factor}
\begin{gathered}
    r^2P(\omega) = (r L^+_\omega - L_\omega^+ \ell^-) (r L^-_\omega), \\
    r^2P(\omega) = (r L^-_\omega - L_\omega^- \ell^-) (r L^+_\omega), 
\end{gathered}
\end{equation}
are two equivalent formulas for $r^2P(\omega)$ near $\ff_0$. Observe that by~\eqref{eq:u_conormal},
\[\mathcal M_r(\chi u)(\sigma, \tau) \in \mathscr O(\{\Im \sigma > 1\}; C^\infty([-\alpha_-, \alpha_+]_\tau)).\]
Therefore, the Mellin transform of the left hand side of~\eqref{eq:Pchiu} is given by
\begin{equation}
    \mathcal M_r(r^2 P(\omega)(\chi u))(\sigma, \tau) = I(\sigma) \mathcal M_r(\chi u)(\sigma, \tau), \quad \Im \sigma > 1
\end{equation}
where $I(\sigma)$ is easily computed using~\eqref{eq:blow_up_L} and~\eqref{eq:ind_factor}. We use the convenient change of variables $s = i \sigma$. Then two equivalent formulas for $I(\sigma)$ are given by
\begin{equation}\label{eq:corner_ind_fam}
    \begin{gathered}
        I(-is) = \big((L^+_\omega \ell^-)(s - 1) + (L^+_\omega \ell^+ - (L^+_\omega \ell^-) \tau) \partial_\tau \big)\big((L^-_\omega \ell^-) s + (L^-_\omega \ell^+ - (L^-_\omega \ell^-)\tau)\partial_\tau \big), \\
        I(-is) = \big((L^-_\omega \ell^-)(s - 1) + (L^-_\omega \ell^+ - (L^-_\omega \ell^-) \tau) \partial_\tau \big)\big((L^+_\omega \ell^-) s + (L^+_\omega \ell^+ - (L^+_\omega \ell^-)\tau)\partial_\tau \big).
    \end{gathered}
\end{equation}
In particular, $I(\sigma):H^1_0([-\alpha_-, \alpha_+]) \to H^{-1}([-\alpha_-, \alpha_+])$ is a holomorphic family of Fredholm operators with index $0$, and $I(\sigma)$ is noninvertible if and only if there exists nonzero $w(\tau) \in C^\infty([-\alpha_-, \alpha_+])$ such that 
\begin{equation}\label{eq:corner_ind_roots_eq}
    I(\sigma) w(\tau) = 0, \qquad w(-\alpha_-) = w(\alpha_+) = 0.
\end{equation}
From~\eqref{eq:Ll_formula}, we have
\begin{equation}\label{eq:Ll}
    \begin{gathered}
    L^-_\omega \ell^+ - (L^-_\omega \ell^-)\tau = -\tau - i \epsilon \left[ \left(\frac{1}{\lambda} + \frac{\lambda}{1 - \lambda^2} \right) + \tau \left(\frac{1}{\lambda} - \frac{\lambda}{1 - \lambda^2} \right) \right] + (1 - \tau) \mathcal O(\epsilon^2), \\
    L^+_\omega \ell^+ - (L^+_\omega \ell^-)\tau = 1 + i \epsilon \left[\left(\frac{1}{\lambda} - \frac{\lambda}{1 - \lambda^2} \right) + \tau \left(\frac{1}{\lambda} + \frac{\lambda}{1 - \lambda^2} \right)\right]+ (1 - \tau)\mathcal O(\epsilon^2).
    \end{gathered}
\end{equation}
In particular, for $\tau \in [-\alpha_-, \alpha_+]$ and all sufficiently small $\epsilon$, the function in~\eqref{eq:Ll} never takes value in $i[0, \infty)$. Therefore, using the two different factorizations of $I(\sigma)$ in~\eqref{eq:corner_ind_fam}, solutions to the ODE $I(\sigma) w(\tau) = 0$ must be linear combinations
\begin{equation}\label{eq:ode_solutions}
    w(\tau) = c_1(L^-_\omega \ell^+ - (L^-_\omega \ell^-)\tau)^s + c_2(L^+_\omega \ell^+ - (L^+_\omega \ell^-)\tau)^s, \quad s = i \sigma
\end{equation}
where $z^s = \exp(s \log z)$ is taken with the branch of $\log$ on $\C \setminus i[0, \infty)$ that agrees with the real-valued $\log$ on the positive real line. It is easy to verify that this makes sense for all sufficiently small $\epsilon = \Im \omega$ in view of~\eqref{eq:Ll}. Furthermore, $w$ must satisfy the boundary conditions in~\eqref{eq:corner_ind_roots_eq}, and we see that $I(\sigma)$ is noninvertible if and only if $s = i \sigma$ satisfies
\begin{equation}\label{eq:corner_ind_roots}
\det
\begin{pmatrix}
    (L^-_\omega \ell^+ - (L^-_\omega \ell^-)\alpha_+)^s & (L^+_\omega \ell^+ - (L^+_\omega \ell^-) \alpha_+)^s \\
    (L^-_\omega \ell^+ + (L^-_\omega \ell^-)\alpha_-)^s & (L^+_\omega \ell^+ + (L^+_\omega \ell^-) \alpha_-)^s 
\end{pmatrix}
= 0, \quad s \neq 0.
\end{equation}
Therefore, for all sufficiently small $\epsilon = \Im \omega$, $I(s)$ is noninvertible if and only if
\[s \log \left(\frac{(-L^-_\omega \ell^+ + (L^-_\omega \ell^-)\alpha_+)(L^+_\omega \ell^+ + (L^+_\omega \ell^-) \alpha_-)}{(L^-_\omega \ell^+ + (L^-_\omega \ell^-)\alpha_-)(L^+_\omega \ell^+ - (L^+_\omega \ell^-) \alpha_+)} \right) - is\pi \in 2 \pi i \Z \setminus \{0\}.\]
Define 
\begin{align*}
    \mathfrak{l}_\omega:=& 2 \pi i \left( \log \left(\frac{(L^+_\omega \ell^+ + (L^+_\omega \ell^-) \alpha_-^{-1})(-L^-_\omega \ell^+ + (L^-_\omega \ell^-)\alpha_+^{-1})}{(L^-_\omega \ell^+ + (L^-_\omega \ell^-)\alpha_-^{-1})(L^+_\omega \ell^+ - (L^+_\omega \ell^-) \alpha_+^{-1})} \right) - i\pi \right)^{-1} \\
    =& \frac{2 \pi i}{i\pi - \log \alpha} + \mathcal O(\epsilon).
\end{align*}
The second equality above follows from~\eqref{eq:Ll}. In particular, observe that $I(-is)$ is noninvertible if and only if $s \in \mathfrak{l}_\omega \Z$ and that
\begin{equation}
    \Re \mathfrak{l}_\omega = \frac{2 \pi^2}{(\log \alpha)^2 + \pi^2} + \mathcal O(\epsilon).
\end{equation}

From~\eqref{eq:Pchiu}, we see that 
\begin{equation}\label{eq:mellin_comparison}
    \mathcal M_r(\chi u)(\sigma, \tau) = I(\sigma)^{-1} \mathcal M_r([P(\omega), \chi] u + \chi Pu),
\end{equation}
so for all sufficiently small $\epsilon$, and we see from~\eqref{eq:u_conormal} that $\mathcal M_r(\chi u)(\sigma, \tau)$ is holomorphic in $\sigma$ for $\Im \sigma > 1$. Therefore, $\mathcal M_r(\chi u)(\sigma, \tau)$ is meromorphic with possible poles in $\{\sigma = \Z \mathfrak{l}_\omega : \Im \sigma \le 1\}$. Since $\chi u \in H^1_0(\Omega)$, the expansion~\eqref{eq:u_expansion} follows from Proposition~\ref{lem:mellin_polyhom}.

\noindent
4. Finally, for the other types of corners, note that if $u$ is a solution to~\eqref{eq:EBVP} in $\Omega$, then $\mathrm{Ref}_j^* u$ is a solution to~\eqref{eq:EBVP} in $\mathrm{Ref}_j(\Omega)$. The result for the other corner types then follows by the remark following Definition~\ref{def:corner_type}. 
\end{proof}

\begin{Remark}
    The above proposition also gives heuristic justification for why we expect singularities to form along a ray coming out of the corner. If one examines the solutions~\eqref{eq:ode_solutions}, we see that as $\epsilon \to 0+$, the functions $w_k$ in the polyhomogeneous expansions~\eqref{eq:u_expansion} forms singularities at $\tau = 0$. This corresponds precisely to the special trajectory $\sr$ in Theorem~\ref{thm:spectral}.
\end{Remark}

\subsection{Distributions on \texorpdfstring{$\partial \Omega$}{partial Omega}} Here, we take a break to discuss the spaces of distributions that will appear when we do the reduction to boundary via single layer potentials. 
\subsubsection{b-Sobolev spaces on \texorpdfstring{$\partial \Omega$}{partial Omega} and \texorpdfstring{$\widetilde \Omega$}{Omega}}
Observe that $\partial \Omega$ can be understood as the union of one-dimensional manifolds with boundary, with the boundary smooth structure inherited from the canonical embedding $\partial \Omega \hookrightarrow \R^2$. By an abuse of notation, we also denote by $\partial \Omega$ the disjoint union of these one-dimensional manifolds. Then the spaces of supported and extendible distributions are well-defined, which we denote by $\dot{\mathcal D}'(\partial \Omega)$ and $\overline{\mathcal D}'(\partial \Omega)$ respectively. Similarly, we have extendible and supported Sobolev spaces and smooth functions. Most importantly, we need b-Sobolev spaces on $\partial \Omega$. Since $\partial \Omega$ consists of compact one-dimensional manifolds with a metric inherited from $\R^2$, b-Sobolev spaces can simply be defined via charts, that is, 
\[v \in H^{s, a}_\bo(\partial \Omega) \iff \varphi_j^* v \in H^{s, a}_\bo (\R_+), \quad \|v\|_{H^{s, a}_\bo(\partial \Omega)} = \sum_{j = 1}^J \|\varphi_j^* v\|_{H^{s, a}_\bo(\R_+)}\]
where $\varphi_j: \R^+ \supset U \to V \subset \partial \Omega$ is a finite collection of charts and $H^{s, a}_\bo(\R_+)$ is as defined in \eqref{eq:b-sob}. 

For particular b-Sobolev spaces on $\partial \Omega$ as a union of one-dimensional manifolds, they can be related to functions on $\partial \Omega$ as a homeomorphic copy of $\mathbb S^1$. Clearly, the spaces $\dot C^\infty(\partial \Omega)$ and $\overline C^\infty (\partial \Omega)$ can be trivially identified with a subset of $L^\infty$ functions on $\partial \Omega \simeq \mathbb S^1$. Let $\rho_\bo \in \overline C^\infty(\partial \Omega; \R)$ be such that 
\begin{enumerate}
    \item $\rho_\bo(\theta) > 0$ for $\theta \in \partial \Omega \setminus \mathcal K$,

    \item $\rho_\bo(\kappa_j) = 0$ for all $j$, and $\rho_\bo' \ge \delta > 0$ in neighborhoods of $\kappa_j$.  
\end{enumerate}
Here, $\rho_\bo$ is analogous to a boundary defining function. Observe that for $a \in (-1, 0)$, we have
\begin{equation}\label{eq:Hbinfty_bd}
\begin{aligned}
    H^{\infty, a}_\bo(\partial \Omega) :=& \bigcap_{s \in \R} H^{s, a}_\bo(\partial \Omega) \\
    =& \left\{v \in H^{a + \ha}(\partial \Omega): \rho_\bo^{- a -\ha}(\rho_\bo \partial_\theta)^k v \in L^2(\partial \Omega) \ \forall \ k \in \N \right\}
\end{aligned}
\end{equation}
Indeed, this follows from Lemma \ref{lem:b_sob_to_sob}, and also note that there is no need to distinguish between $\dot H^{a + \ha}(\partial \Omega)$ and $\bar H^{a + \ha}(\partial \Omega)$ for $a \in (-1, 0)$. Finally, we remark that it is helpful to observe that the numerology is set so that 
\[\rho_\bo^{a + \epsilon} \in H^{\infty, a}_\bo(\Omega) \quad \text{for all} \quad \epsilon > 0.\]
Furthermore, for $a > -1$, $H^{\infty, a}_\bo(\partial \Omega) \subset L^1(\partial \Omega)$ since
\begin{equation}
    \|v\|_{L^1} \le \|\rho_\bo^{-a - \ha} v\|_{L^2} \|\rho_\bo^{a + \ha}\|_{L^2} < \infty
\end{equation}

\subsubsection{Conormal distributions on \texorpdfstring{$\widetilde \Omega$}{widetilde Omega}}
Next, we consider subspaces of extendible distributions of $\overline \Omega$. Note that the space of extendible distributions over $\Omega$ and the space of extendible distributions over the blown up space $\widetilde \Omega$ are isomorphic. There are more smooth vector fields over $\widetilde \Omega$, so we work over $\widetilde \Omega$ to better capture singularities at the corners. Let $\rho_\ff \in \overline C^\infty(\widetilde \Omega)$ be a boundary defining function for the front face $\ff \subset \widetilde \Omega$ of the blow up. We equip $\widetilde \Omega$ with a measure $\mu_\ff$ which satisfies 
\begin{equation}
    \beta_*(\rho_{\ff} \mu_\ff) = \mu_{\mathrm{Leb}}
\end{equation}
where $\beta$ is the blow-down map and $\mu_{\mathrm{Leb}}$ is the measure on $\Omega \subset \R^2$ inherited from the Lebesgue measure on $\R^2$. In the blown-up coordinates $(r, \tau)$ given in the remark following Definition~\ref{def:corner_type}, we see that $\mu_{\ff} \simeq dr d \tau$ near the corners.

% Let $\mathcal V_{\ff}(\widetilde \Omega) \subset \mathcal V(\widetilde \Omega)$ be the smooth vector fields in $\widetilde \Omega$ tangent to $\ff$. Let $\rho_{\ff} \in C^\infty(\widetilde \Omega)$ be a boundary defining function for $\ff \subset \partial \widetilde \Omega$. Define
% \begin{equation}\label{eq:H^infty_ff}
%     H_{\ff}^{\infty, a}(\widetilde \Omega) := \left\{u \in \overline{\mathcal D}'(\widetilde \Omega): V_1\cdots V_k \rho_\ff^{-a}u \in L^2(\widetilde \Omega, \rho_\ff^{-1} \mu_\ff), \ V_j \in \mathcal V_{\ff}(\widetilde \Omega), \ k \in \N_0 \right \}
% \end{equation}
% where $\mu_\ff$ is a measure on $\tilde \Omega$ such that
% \begin{equation}
%     \beta_*(\rho_{\ff} \mu_\ff) = \mu_{\mathrm{Leb}}
% \end{equation}
% where $\mu_{\mathrm{Leb}}$ is the measure on $\Omega \subset \R^2$ inherited from the Lebesgue measure on $\R^2$. 
% In the $(r, \tau)$ coordinates given in the remark following Definition~\ref{def:corner_type}, we see that $\mu_{\ff} = dr d \tau$. 
% The numerology in \eqref{eq:H^infty_ff} is chosen precisely so that 
% \[\rho_\ff^{a + \epsilon} \in H_{\ff}^{\infty, a}(\Omega) \quad \text{for all} \quad \epsilon > 0.\]

Now we can define the conormal distributions on $\widetilde \Omega$. Let $M_1, \dots, M_n$ be a collection of relatively closed one-dimensional submanifolds of $\mathcal N$, and assume that they may intersect pairwise transversally. Denote by 
\[\mathcal V_{M_1, \dots, M_n}(\widetilde \Omega) \subset \mathcal V(\widetilde \Omega)\]
the subset of smooth vector fields on $\widetilde \Omega$ tangent to $M_1, \dots, M_n$. Then define
\begin{multline}\label{eq:conormal_def}
    \mathcal A^{[s]}(\widetilde \Omega; M_1, \dots, M_n) := \big \{u \in \overline{\mathcal D}'(\widetilde \Omega) : \\
    V_1\dots V_k u \in \bar H^s(\widetilde \Omega, \mu_\ff), \ V_j \in \mathcal V_{M_1, \dots, M_n}(\widetilde \Omega), \ k \in \N_0 \big \}.
\end{multline}
For instance, we will use the space $\mathcal A^{[s]}(\widetilde \Omega; \ff, \sr)$ where $\sr$ the trajectories coming out of the corners defined in \eqref{eq:sr_def}. Of course, we can define conormal spaces over any manifold with measure given a collection of submanifolds. In particular, we will also encounter conormal spaces on $\R_+$, $\partial \Omega$, and $\Omega$ with the naturally induced Lebesgue measure on all of them.

Finally, we define b-Sobolev spaces with respect to $\ff$ so that we can define a sensible ``Neumann data'' operator near the corner later. Let $\mathcal V_{\ff}(\widetilde \Omega) \subset \mathcal V(\widetilde \Omega)$ be the smooth vector fields in $\widetilde \Omega$ tangent to $\ff$. Define
\begin{equation}\label{eq:H^infty_ff}
    H_{\ff}^{\infty, a}(\widetilde \Omega) := \left\{u \in \overline{\mathcal D}'(\widetilde \Omega): V_1\cdots V_k \rho_\ff^{-a}u \in L^2(\widetilde \Omega, \rho_\ff^{-1} \mu_\ff), \ V_j \in \mathcal V_{\ff}(\widetilde \Omega), \ k \in \N_0 \right \}.
\end{equation}
The numerology in \eqref{eq:H^infty_ff} is chosen precisely so that 
\[\rho_\ff^{a + \epsilon} \in H_{\ff}^{\infty, a}(\Omega) \quad \text{for all} \quad \epsilon > 0.\]
Comparing \eqref{eq:H^infty_ff} and \eqref{eq:conormal_def}, we see that
\begin{equation}
    H^{\infty, a}_\ff(\widetilde \Omega) = \mathcal A^{[a + 1/2]}(\widetilde \Omega; \ff) \quad \text{for} \quad a \in (-1, 0)
\end{equation}
This helps us go between $\bo$-Sobolev spaces and conormal distributions. Going back and forth between these spaces is sometimes useful since b-pseudodifferential operstors act on b-Sobolev spaces in a nice way and pseudodifferential operators act on conormal distributions in a nice way.

\subsection{Single layer potential}
Equipped with the polyhomogeneous conormal expansion from Proposition \ref{prop:polyhom_EU}, we show that reduction to boundary via the single layer potential is still valid. See \cite{H3} for a detailed study of elliptic boundary value problems in domains with smooth boundary via the single layer potential reduction to boundary.

First observe the following
\begin{lemma}\label{lem:Lu_restriction}
    Let $a > -1$. Let $V$ be a smooth vector field on $\Omega$ such that $\mathbf v := V|_{\partial \Omega}$ is inward-pointing. Then 
    \begin{equation}
        (L^+_\omega u)|_{\partial \Omega}(\theta) := \lim_{\delta \to 0} L^+ u(\theta + \delta \mathbf v)
    \end{equation}
    is well-defined as an operator $(L^+_\omega \bullet)|_{\partial \Omega} : H^{\infty, a+ 1}_\ff \to H^{\infty, a}_\bo(\partial \Omega)$. 
\end{lemma}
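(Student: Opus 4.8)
The plan is to localize $\partial\Omega$ with a partition of unity subordinate to a cover consisting of a small neighborhood of each corner $\kappa\in\mathcal K$ together with open sets avoiding $\mathcal K$, and to handle the two kinds of pieces separately. Away from $\mathcal K$ the front face $\ff$ is not present, so locally $\mathcal V_\ff(\widetilde\Omega)$ is \emph{all} smooth vector fields and $H^{\infty,a+1}_\ff(\widetilde\Omega)$ reduces to $\bigcap_s H^s$, i.e.\ to functions $C^\infty$ up to $\partial\Omega$. As $L^+_\omega$ is a constant-coefficient first-order operator, $L^+_\omega u$ is then $C^\infty$ up to $\partial\Omega\setminus\mathcal K$; in particular $\lim_{\delta\to0}L^+_\omega u(\theta+\delta\mathbf v)$ exists, does not depend on the inward vector $\mathbf v$, and defines a smooth — hence $H^{\infty,a}_\bo$ — function there. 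By the remark following Definition~\ref{def:corner_type} it then suffices to work near a type-$(+,+)$ corner, placed at the origin, in the blown-up coordinates $(r,\tau)\in[0,\delta)\times[-\alpha_-,\alpha_+]$ of~\eqref{eq:bcoords}; there $\ff_0:=\beta^{-1}(\kappa)=\{r=0\}$, the two boundary edges at $\kappa$ lift to $\{\tau=-\alpha_-\}$ and $\{\tau=\alpha_+\}$, and $\mu_\ff\simeq dr\,d\tau$.

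Next I would unwind the definition~\eqref{eq:H^infty_ff} near $\ff_0$, where $\rho_\ff\simeq r$ and $\mathcal V_\ff$ is generated over $C^\infty$ by $r\partial_r$ and $\partial_\tau$: writing $u=r^{a+1}\tilde u$, the hypothesis $u\in H^{\infty,a+1}_\ff$ becomes $(r\partial_r)^j\partial_\tau^k\tilde u\in L^2(r^{-1}\,dr\,d\tau)$ for all $j,k\ge0$. Passing to the logarithmic coordinate $t=\log r\in(-\infty,\log\delta)$, in which $r\partial_r=\partial_t$ and $r^{-1}dr=dt$, this says precisely that $\tilde u\in H^\infty$ of the half-strip $(-\infty,\log\delta)_t\times[-\alpha_-,\alpha_+]_\tau$; in particular $\tilde u$ is continuous, smooth in $\tau$ up to $\tau=-\alpha_-$ and $\tau=\alpha_+$, and uniformly bounded in every $H^m_t$-norm. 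I then apply $L^+_\omega$ via its blown-up form~\eqref{eq:blow_up_L}, $L^+_\omega=(L^+_\omega\ell^-)\partial_r+[(L^+_\omega\ell^+)-(L^+_\omega\ell^-)\tau]r^{-1}\partial_\tau$ with $L^+_\omega\ell^\pm$ constant: since $\partial_r u=r^a[(a+1)\tilde u+(r\partial_r)\tilde u]$ and $r^{-1}\partial_\tau u=r^a\partial_\tau\tilde u$, one obtains $L^+_\omega u=r^a h$ with $h$ a $C^\infty$-linear combination (coefficients $\tau$ and constants) of $\tilde u$, $(r\partial_r)\tilde u$, $\partial_\tau\tilde u$, so that $h$ again lies in $H^\infty$ of the same half-strip in the $t$-variable. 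The point of the bookkeeping is that the single power of $r^{-1}$ produced by $L^+_\omega$ is exactly absorbed by the mismatch between the weight $a+1$ and measure $r^{-1}dr\,d\tau$ defining $H^{\infty,a+1}_\ff$ and the weight $a$ and measure $r^{-1}dr$ defining $H^{\infty,a}_\bo(\partial\Omega)$ near the corner.

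It then remains to restrict $L^+_\omega u=r^a h$ to the two edges. For the edge $\{\tau=\alpha_+\}$, since $h\in H^\infty$ of the half-strip in $(t,\tau)$, the classical trace theorem, applied to $\partial_t^j h$ for every $j$ so that nothing is lost, gives $h(\cdot,\alpha_+)\in H^\infty((-\infty,\log\delta)_t)$, i.e.\ $(r\partial_r)^j h(\cdot,\alpha_+)\in L^2(r^{-1}dr)$ for all $j$; the same holds on $\{\tau=-\alpha_-\}$. Undoing the weight, $r^{-a}(L^+_\omega u)|_{\tau=\alpha_+}=h(\cdot,\alpha_+)\in\bigcap_s H^s_\bo$ near $r=0$, which is exactly $(L^+_\omega u)|_{\partial\Omega}\in H^{\infty,a}_\bo$ near $\kappa$ along that edge. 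Since $r^a h$ is continuous on $\{r>0\}$ up to each edge, the defining limit $\lim_{\delta\to0}L^+_\omega u(\theta+\delta\mathbf v)$ exists for $\theta\ne\kappa$ on the edge, equals $r(\theta)^a h(r(\theta),\alpha_+)$, and is independent of the inward $\mathbf v$; the corner point itself is a single point, invisible to $H^{\infty,a}_\bo(\partial\Omega)$ since $a>-1$ forces $H^{\infty,a}_\bo(\partial\Omega)\subset L^1(\partial\Omega)$. Reassembling the localized pieces and observing that each step above is a bounded linear map gives the asserted continuity of $(L^+_\omega\bullet)|_{\partial\Omega}:H^{\infty,a+1}_\ff(\widetilde\Omega)\to H^{\infty,a}_\bo(\partial\Omega)$.

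I expect the main obstacle to be precisely this restriction-and-weight-matching step: one has to recognize that the factor $r^{-1}$ lost by applying $L^+_\omega$ in blown-up coordinates is the same $r^{-1}$ gained in passing from the $\ff$-measure $\rho_\ff^{-1}\mu_\ff$ to the b-measure $dr/r$ on the one-dimensional boundary, and that after the substitution $t=\log r$ the weighted ``b-trace'' onto an edge is nothing but the classical trace $H^\infty\to H^\infty$, which costs nothing because all derivatives are under control. The remaining ingredients — the form~\eqref{eq:blow_up_L} of $L^+_\omega$ near a straight corner, the identification of $H^{\infty,a+1}_\ff$ with a weighted $H^\infty$ space in logarithmic coordinates, and the triviality away from $\mathcal K$ — are routine given the definitions of Section~\ref{sec:reduction_to_boundary}.
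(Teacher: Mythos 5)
Your proof is correct and follows essentially the same approach as the paper: recognize that in blown-up coordinates $L^+_\omega$ is a $\overline C^\infty(\widetilde\Omega)$-combination of $\rho_\ff^{-1}\mathcal V_\ff$-operators, deduce the mapping $H^{\infty,a+1}_\ff\to H^{\infty,a}_\ff$, and restrict to the boundary. The only difference is that you spell out the restriction step (the paper dispatches it in one line) via the logarithmic coordinate $t=\log r$ and the classical trace theorem, which is a welcome clarification but not a different argument.
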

\begin{proof}
    Observe that from the explicit formulas computed in \eqref{eq:blow_up_L}, $L^+_\omega$ is a $\overline C^\infty(\widetilde \Omega)$-linear combination of differential operators given by vector fields in $\rho^{-1}_\ff \mathcal V_\ff(\widetilde \Omega)$. Indeed, this is clear away from the corners, and in a neighborhood of a corner, say of type-$(+, +)$, we see that $L^\pm_\omega$ is a $\overline C^\infty(\widetilde \Omega)$-linear combination of $\partial_r$ and $r^{-1} \partial_\tau$ where
    \[r = \ell^-(\bullet, \lambda), \quad \tau = \frac{\ell^+(\bullet, \lambda)}{\ell^-(\bullet, \lambda)}.\]
    Therefore, 
    \[L^+_\omega u: H^{\infty, a + 1}_\ff(\Omega) \to H^{\infty, a}_\ff(\Omega).\]
    The lemma follows upon restricting to the boundary. 
\end{proof}

In view of Lemma~\ref{lem:Lu_restriction}, we may define the ``Neumann data" operator
\begin{equation}\label{eq:neumann_def}
    \mathcal N_\omega: H^{\infty, a + 1}_{\ff}(\Omega) \to H^{\infty, a}_\bo(\partial \Omega; T^*\partial \Omega) \subset L^1(\partial \Omega; T^*\partial \Omega)
\end{equation}
for $a > -1$ by
\begin{equation}
    \mathcal N_\omega u := -2 \omega \sqrt{1 - \omega^2} \mathbf j^*(L^+_\omega u d \ell^+(\bullet, \omega)).
\end{equation}
where $\mathbf j : \partial \Omega \to \overline \Omega$ is the embedding of the boundary. 

Also define $\mathcal I: L^1(\partial \Omega; T^*\partial \Omega) \to \mathcal E'(\R^2)$ by 
\begin{equation}
    \int_{\R^2} \mathcal I v(x) \varphi(x) \, dx := \int_\R \varphi v \quad \text{for all} \quad \varphi \in \CIc(\R^2).
\end{equation}
The left hand side above is understood as a distributional pairing. Essentially, $\mathcal I$ can be thought of as tensoring with a delta distribution supported on $\partial \Omega$.
\begin{lemma}\label{lem:fundamental}
    Let $\omega \in (0, 1) + i(0, \infty)$. Let $u \in H^1_0(\Omega)$ be the solution to \eqref{eq:EBVP} for some $f \in \CIc(\Omega)$. Put $U := \indic_\Omega u \in \mathcal E'(\R^2)$ and $v := \mathcal N_\omega u$. Then 
    \begin{align}
        P(\omega) U &= f - \mathcal I v, \label{eq:fundamental_1}\\
        U &= R_\omega f - R_\omega \mathcal I v \label{eq:fundamental_2}
    \end{align}
\end{lemma}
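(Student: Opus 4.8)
The plan is to derive \eqref{eq:fundamental_1} by a distributional integration by parts (Green's identity) on $\Omega$, being careful because $\partial\Omega$ is only Lipschitz and $u$ is singular at the corners; then \eqref{eq:fundamental_2} follows by convolving with the fundamental solution $E_\omega$ from Lemma \ref{lem:fs} (writing $R_\omega := E_\omega * \,\cdot\,$). First I would compute $P(\omega)U$ in $\mathcal D'(\R^2)$: for $\varphi\in\CIc(\R^2)$, $\langle P(\omega)U,\varphi\rangle = \int_\Omega u\, P(\omega)\varphi\, dx$ since $P(\omega)$ is (formally) self-adjoint with constant coefficients. The goal is to integrate by parts twice to move $P(\omega)$ back onto $u$. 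In the interior this gives $\int_\Omega (P(\omega)u)\varphi = \int_\Omega f\varphi$, and the boundary contributions must assemble into $-\int_{\partial\Omega}\varphi\, v$, i.e. $-\langle \mathcal I v,\varphi\rangle$. Using the factorization $P(\omega)=4L^+_\omega L^-_\omega$ from \eqref{eq:L_factor} streamlines this: integrating by parts in the two characteristic vector fields produces boundary terms involving $\mathbf j^*(L^+_\omega u\, d\ell^+)$, which is precisely (up to the normalizing constant $-2\omega\sqrt{1-\omega^2}$) the definition of $\mathcal N_\omega u$ in \eqref{eq:neumann_def}. Since $u|_{\partial\Omega}=0$, only the "Neumann-type" boundary term survives, and the terms involving $L^-_\omega u$ restricted to the boundary drop out after using the Dirichlet condition together with the fact that $\ell^-$ is constant along one family of characteristics. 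One then checks the constant bookkeeping so that the single surviving boundary integral is exactly $-\int_{\partial\Omega}\varphi v$.

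The main technical point is justifying the integration by parts given the corner singularities: $u$ is not smooth up to $\partial\Omega$, so one cannot naively apply the divergence theorem. Here I would invoke Proposition \ref{prop:polyhom_EU}, which gives $\beta^*u$ polyhomogeneous conormal to $\ff$ with leading exponent $\mathfrak{l}_\omega$, $\Re\mathfrak l_\omega>0$; in particular $u$ and $L^+_\omega u$ have integrable (indeed, by Lemma \ref{lem:Lu_restriction}, $H^{\infty,a}_\bo(\partial\Omega)\subset L^1$) traces on $\partial\Omega$, and $u$ vanishes at the corners. The clean way to make this rigorous is to excise small balls $B_\epsilon(\kappa)$ around each corner, apply Green's identity on $\Omega\setminus\bigcup_\kappa B_\epsilon(\kappa)$ (whose boundary is now Lipschitz with $u$ smooth up to it away from $\kappa$), and send $\epsilon\to 0$. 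The contributions from the arcs $\partial B_\epsilon(\kappa)\cap\Omega$ vanish in the limit: using the polyhomogeneous expansion \eqref{eq:u_expansion}, $u = O(\epsilon^{\Re\mathfrak l_\omega})$ and $L^+_\omega u = O(\epsilon^{\Re\mathfrak l_\omega - 1})$ on such an arc of length $O(\epsilon)$, so each boundary term is $O(\epsilon^{\Re\mathfrak l_\omega})\to 0$ (and similarly for the term with $\nabla\varphi$, which is even better behaved). This is the step I expect to be the main obstacle — not conceptually deep, but it requires care to see that the corner arc terms really do vanish with the correct power of $\epsilon$ and that the limiting boundary integral over $\partial\Omega$ converges, which is exactly what the $L^1$-mapping property of $\mathcal N_\omega$ guarantees.

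For \eqref{eq:fundamental_2}, I would apply $R_\omega = E_\omega *\,\cdot\,$ to both sides of \eqref{eq:fundamental_1}. Since $P(\omega)E_\omega = \delta_0$ and $U\in\mathcal E'(\R^2)$ has compact support, $R_\omega(P(\omega)U) = (P(\omega)E_\omega)*U = U$; the convolution is well-defined because $E_\omega\in L^1_{\mathrm{loc}}$ (Lemma \ref{lem:fs}) and both $f$ and $\mathcal I v$ are compactly supported distributions ($f\in\CIc(\Omega)$, and $\mathcal I v\in\mathcal E'(\R^2)$ since $v\in L^1(\partial\Omega;T^*\partial\Omega)$ with $\partial\Omega$ compact). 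Hence $U = R_\omega f - R_\omega\mathcal I v$, which is \eqref{eq:fundamental_2}. The only thing to double-check is associativity/commutativity of the convolutions, which holds here because at most one of the factors fails to be compactly supported while the others are, and $E_\omega$ is a locally integrable function.
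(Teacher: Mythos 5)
Your proposal is correct and follows the same route as the paper: factorize $P(\omega) = 4L^+_\omega L^-_\omega$, integrate by parts so that the only surviving boundary term is $\mathbf j^*(\varphi L^+_\omega u\, d\ell^+)$ (which is $\mathcal N_\omega u$ up to the constant $-2\omega\sqrt{1-\omega^2}$), then convolve with $E_\omega$ to pass from \eqref{eq:fundamental_1} to \eqref{eq:fundamental_2}. The one small divergence is in how Stokes' theorem is justified near the corners: you propose excising $\epsilon$-balls and sending $\epsilon\to 0$, whereas the paper integrates by parts once using only $u\in H^1_0(\Omega)$ (so no boundary trace of $u$ is needed) and then applies Stokes to $d(\varphi L^+_\omega u\, d\ell^+)$ directly, citing Proposition~\ref{prop:polyhom_EU} and Lemma~\ref{lem:Lu_restriction} to put $\varphi L^+_\omega u\, d\ell^+$ in $L^1$ on the boundary — both justifications rest on the same polyhomogeneous expansion at the corner, so this is a packaging difference rather than a substantive one.
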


\begin{proof}
    Let $\varphi \in \CIc(\R^2)$. Then since $u \in H^1_0(\Omega)$, we can integrate by parts to find
    \begin{align*}
        \int_{\R^2} (P(\omega) U) \varphi \, dx &= 4 \int_{\Omega} u L^+_\omega L^-_\omega \varphi \, dx \\
        &= -4 \int_{\Omega} (L^+_\omega u) (L^-_\omega \varphi)\, dx \\
        &= \int_\Omega f \varphi\, dx - 4 \int_{\Omega} L^-_\omega (\varphi L^+_\omega u)\, dx
    \end{align*}
    Observe that 
    \[2 \omega \sqrt{1 - \omega^2} d(\varphi L_\omega^+ u d \ell^+) = -4L^-_\omega(\varphi L^+_\omega u) \, dx.\]
    By Proposition \ref{prop:polyhom_EU} and Lemma \ref{lem:Lu_restriction}, $\varphi L^+_\omega u \, d\ell^+ \in C^\infty(\Omega; T^* \Omega)$ and restricts to $L^1(\partial \Omega)$. Therefore, we can apply Stoke's theorem to conclude
    \[\int_{\R^2} (P(\omega) U) \varphi \, dx = \int (f \varphi - \mathcal I \mathcal N_\omega u) \, dx,\]
    which gives~\eqref{eq:fundamental_1}. Finally, since $U$ is a compactly supported distribution, \eqref{eq:fundamental_2} follows immediately from \eqref{eq:fundamental_1}. 
\end{proof}
 
We are thus motivated to define the single layer potential
\begin{equation}\label{eq:single_layer_op}
    S_\omega: L^1(\partial \Omega; T^* \partial \Omega) \to \overline{\mathcal D}'(\Omega) \quad \text{by} \quad S_\omega v := (R_\omega \mathcal I v)|_{\Omega}.
\end{equation}
The above mapping property is clear since $R_\omega$ is a convolution operator and $\mathcal I v$ is compactly supported.

\subsubsection{Non-real case mapping properties}
Observe that by restricting \eqref{eq:fundamental_2} to $\partial \Omega$, we effectively reduce \eqref{eq:EBVP} to an equation on $\partial \Omega$ where we solve for the boundary data $v = \mathcal N_\omega u$. Therefore, we wish to make sense of $(S_\omega v)|_{\partial \Omega}$.

\begin{lemma}\label{lem:RSP_def}
    Let $S_\omega$ be the single layer potential defined in \eqref{eq:single_layer_op} for $\omega \in (0, 1) \pm (0, \infty)$. Then for $v \in H^{\infty, a}_\bo$ for $a > -1$, $S_\omega v$ is smooth up to $\partial \Omega \setminus \mathcal K$, and the restriction to boundary defines a map 
    \begin{equation}
        \mathcal C_\omega := (S_\omega \bullet)|_{\partial \Omega \setminus \mathcal K}: H_\bo^{\infty, a}(\partial \Omega; T^* \partial \Omega)\to C^\infty(\partial \Omega \setminus \mathcal K).
    \end{equation}
\end{lemma}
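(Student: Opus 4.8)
\emph{Proof proposal.} The plan is to study $W := R_\omega\,\mathcal I v = E_\omega * \mathcal I v$ as a distribution on $\R^2$ and show that it is smooth up to $\partial\Omega$ from the $\Omega$-side away from $\mathcal K$; once this is known, $\mathcal C_\omega v := (S_\omega v)|_{\partial\Omega\setminus\mathcal K}$ is automatically well defined with values in $C^\infty(\partial\Omega\setminus\mathcal K)$. First one records the basic structural facts. Since $a>-1$, \eqref{eq:Hbinfty_bd} gives $v\in H^{\infty,a}_\bo(\partial\Omega)\subset L^1(\partial\Omega)$, and $\partial\Omega$ is compact, so $\mathcal I v\in\mathcal E'(\R^2)$ and the convolution against the locally integrable fundamental solution $E_\omega$ from Lemma~\ref{lem:fs} is well defined. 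For $\Im\omega\neq 0$ the operator $P(\omega)$ is uniformly elliptic, hence $E_\omega\in C^\infty(\R^2\setminus\{0\})$; since $P(\omega)W=\mathcal I v$, elliptic regularity forces $\mathrm{sing\,supp}\,W\subset\mathrm{sing\,supp}\,\mathcal I v\subset\partial\Omega$, so $S_\omega v = W|_\Omega\in C^\infty(\Omega)$ and the whole content is regularity up to $\partial\Omega\setminus\mathcal K$.

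Next I would localize near an arbitrary point $x_0\in\partial\Omega\setminus\mathcal K$. Fix $\chi\in\CIc(\partial\Omega\setminus\mathcal K;[0,1])$ equal to $1$ near $x_0$ and write $v=\chi v+(1-\chi)v$. By \eqref{eq:Hbinfty_bd}, away from the corners $H^{\infty,a}_\bo(\partial\Omega)$ coincides locally with $\bigcap_s H^s$, so $\chi v\in\CIc(\partial\Omega\setminus\mathcal K;T^*\partial\Omega)$ is a smooth compactly supported density, while $(1-\chi)v\in L^1(\partial\Omega)$ is supported away from a neighborhood $U$ of $x_0$. For the second piece, $E_\omega*\mathcal I((1-\chi)v)$ is $C^\infty$ on $U$: for $x\in U$ and $y$ in the support of $(1-\chi)v$ the difference $x-y$ stays in a compact subset of $\R^2\setminus\{0\}$ on which all derivatives of $E_\omega$ are bounded, so one may differentiate arbitrarily often under the integral against the $L^1$ density. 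Note this is the one place where the possibly very singular behaviour of $v$ at corners enters, and it causes no trouble precisely because $E_\omega$ is smooth off the origin and the relevant arguments are bounded away from $0$. It therefore suffices to prove that the single layer potential $W' := E_\omega*\mathcal I(\chi v)$ of a smooth compactly supported density is smooth up to $\partial\Omega$ from inside near $x_0$.

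Finally I would reduce this to the smooth-boundary theory. Choose a diffeomorphism $\Phi$ from a neighborhood of $x_0$ onto a neighborhood of $0$ in $\R^2_z$ sending $\partial\Omega$ to $\{z_2=0\}$ and $\Omega$ to $\{z_2>0\}$. Conjugating the convolution operator $R_\omega$ by $\Phi$ produces a properly supported classical pseudodifferential operator $A$ of order $-2$, elliptic (it is a parametrix for the elliptic operator $\Phi_*P(\omega)$ modulo smoothing), and $\Phi_*(\mathcal I(\chi v))$ equals, modulo $C^\infty$, a distribution of the form $h(z_1)\delta(z_2)$ with $h\in\CIc(\R)$. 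Then $\Phi_*W' = A\big(h(z_1)\delta(z_2)\big)$ up to a smooth error, and a classical pseudodifferential operator applied to such a distribution is smooth up to $\{z_2=0\}$ from each side — this is the transmission property, equivalently the classical statement that single layer potentials of smooth densities for elliptic operators with smooth coefficients are smooth up to a smooth hypersurface from each side; see \cite{H3}. Pulling back by $\Phi$ and combining with the interior regularity above shows $S_\omega v$ is smooth up to $\partial\Omega\setminus\mathcal K$, whence $\mathcal C_\omega$ maps into $C^\infty(\partial\Omega\setminus\mathcal K)$ as claimed. The step that genuinely needs care, rather than being formal, is this last one: conormality of $W'$ to $\partial\Omega$ alone does \emph{not} yield smoothness up to the boundary, and one must invoke the transmission property — but away from $\mathcal K$ this is exactly the situation already handled for smooth domains in \cite{H3}, and the corners, where $S_\omega v$ really does fail to be smooth (as in Proposition~\ref{prop:polyhom_EU}), are excluded from the statement.
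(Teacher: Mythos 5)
Your proof is correct and takes essentially the same route as the paper: split $v$ into a smooth piece near the boundary point of interest and an $L^1$ piece supported away from it, handle the latter via smoothness of $E_\omega$ off $0$ together with $H^{\infty,a}_\bo\subset L^1$, and handle the former by the smooth-boundary single layer potential theory. The paper's proof uses a single global cutoff $\chi_\bo$ concentrated near $\mathcal K$ (so the two pieces are swapped relative to yours) and cites \cite[Lemma 4.7]{DWZ} where you re-derive that step via the transmission property, but these are cosmetic differences and the underlying argument is identical.
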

$\mathcal C_\omega$ is known as the restricted single layer operator. We remark that, $C^\infty(\partial \Omega \setminus \mathcal K)$ is the space of smooth functions on $\partial \Omega \setminus \mathcal K$ as an open submanifold of $\R^2$. One can also check directly that $\mathcal C_\omega$ maps $H^{\infty, a}_\bo$ to the space of extendible distributions when $\partial \Omega$ is viewed as a closed immersed smooth submanifold of $\R^2$, but this will be clear when we compute the Schwartz kernel of $\mathcal C_\omega$ later in \S\ref{sec:kernel_computation} anyways. 
\begin{proof}
    Let $\delta > 0$ and let $\chi_\bo \in \overline C^\infty(\partial \Omega)$ be a cutoff function supported in a $\delta$-neighborhood of $\mathcal K$ with $\chi_\bo = 1$ near $\mathcal K$. Then we can write
    \begin{equation}
        S_\omega v= S_\omega (\chi_\bo v) + S_\omega ((1 - \chi_\bo) v).
    \end{equation}
    From \cite[Lemma 4.7]{DWZ}, we see that $S_\omega ((1 - \chi_\bo) v) \in C^\infty(\Omega)$ is smooth up to a neighborhood of $\supp (1 - \chi_\bo) \subset \partial \Omega$, that is, away from a $\delta$-neighborhood of the corners. Furthermore, since the fundamental solution $E_\omega$ is smooth away from $0$, it is clear that $R_\omega \mathcal I (\chi_\bo v) \in \mathcal D'(\R^2)$ is smooth away from $\supp (\chi_\bo) \subset \R^2$. Therefore, $S_\omega v$ is smooth up to the $\partial \Omega$ away from a $\delta$-neighborhood of the corner. The lemma then follows by taking smaller and smaller $\delta$ and exhausting $\partial \Omega \setminus \mathcal K$. 
\end{proof}
Restricting \eqref{eq:fundamental_2} to $\Omega$, we see that 
\[u = R_\omega f - S_\omega v_\omega.\]
In view of Lemma~\ref{lem:RSP_def} and the fact that $R_\omega f \in \overline C^\infty(\Omega)$, we see that 
\begin{equation}\label{eq:Cv_reduction}
    \mathcal C_\omega v_\omega = (R_\omega f)|_{\partial \Omega \setminus \mathcal K}.
\end{equation}
Note that $(R_\omega f)|_{\partial \Omega \setminus \mathcal K}$ extends to an element of $\overline C^\infty(\partial \Omega) \subset H^{\infty, 0-}_\bo(\partial \Omega)$. This is useful since upon computing the Schwartz kernel for $d\mathcal C_\omega$, we will see that $d\mathcal C_\omega$ extends to an operator with target space $\overline{\mathcal D}'(\partial \Omega)$.

\subsubsection{Real case mapping properties}
In order to obtain the limiting absorption principle of Theorem \ref{thm:spectral} later, we also need to understand the mapping properties of the limiting single layer operators, which we define as 
\begin{equation}
    S_{\lambda \pm i0}: L^1(\partial \Omega; T^* \partial \Omega) \to \overline{\mathcal D}'(\Omega) \quad \text{by} \quad S_{\lambda \pm i0} := (R_{\lambda \pm i0} \mathcal I v)|_{\Omega}
\end{equation}
Again, the a priori mapping property above is clear since $R_{\lambda \pm i0}$ is a convolution operator and $\mathcal I v$ is compactly supported. 

First, we need the following technical lemma. The distributions $S_{\lambda + i0} v$ will take a particular form near the corners, and we show that such distributions lies in the appropriate conormal space in this lemma. 

\begin{lemma}\label{lem:blowup_conorm}
    Let $u \in \mathcal A^{[s]}([0, \infty))$, $s > 1/2$. Let $v(r, \tau) = u(r\tau)$ and $M = [-1, 1]_\tau \times [0, 1)_r$. Then $v \in \mathcal A^{[s]}(M; [-1, 1] \times\{0\}, \{0\} \times [0, 1))$.
\end{lemma}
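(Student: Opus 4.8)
The plan is to reduce the statement to a one–dimensional estimate for the pullback along the product map $\Phi(r,\tau):=r\tau$, and then to prove that estimate near the point where the two submanifolds meet. Write $\ff:=\{r=0\}=[-1,1]_\tau\times\{0\}_r$ and $\sr:=\{\tau=0\}=\{0\}_\tau\times[0,1)_r$. The smooth vector fields on $M$ tangent to both $\ff$ and $\sr$ form the $C^\infty(M)$–module generated by the commuting fields $r\partial_r$ and $\tau\partial_\tau$, and since multiplication by $C^\infty(M)$ preserves every $\bar H^s(M,\mu_\ff)$, membership $v\in\mathcal A^{[s]}(M;\ff,\sr)$ is equivalent to $(r\partial_r)^j(\tau\partial_\tau)^k v\in\bar H^s(M)$ for all $j,k\in\N_0$. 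A direct computation gives
\[
  r\partial_r(\Phi^*g)=(r\tau)\,g'(r\tau)=\Phi^*(y\partial_y g)=\tau\partial_\tau(\Phi^*g),
\]
so by induction $(r\partial_r)^j(\tau\partial_\tau)^k(\Phi^*u)=\Phi^*\big((y\partial_y)^{j+k}u\big)$. Since $u\in\mathcal A^{[s]}([0,\infty))$ implies $(y\partial_y)^m u\in\mathcal A^{[s]}([0,\infty))$ for every $m$, the lemma reduces to the claim: for every $g\in\mathcal A^{[s]}([0,\infty))$ one has $\Phi^*g\in\bar H^s(M)$ (locally).

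Away from the origin this is immediate. For $(r,\tau)\neq(0,0)$ with $r\ge 0$ the differential $d\Phi=\tau\,dr+r\,d\tau$ is nonzero — on $\{r=0\}$ it equals $\tau\,dr\neq 0$ since $\tau\neq 0$ there — so $\Phi$ is a submersion near such a point and $\Phi^*$ preserves $\bar H^s$ there by the standard invariance of Sobolev spaces under submersion pullback. It therefore remains to prove $\Phi^*g\in\bar H^s$ on a neighbourhood $M_0$ of the origin; examining the half–quadrants $\tau\ge 0$ and $\tau\le 0$ separately, it suffices to treat $M_0=[0,\delta)_r\times[0,\delta)_\tau$ with the Lebesgue measure (which agrees with $\mu_\ff$ near $\ff$).

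Near the origin I would argue with the Gagliardo norm. For $s\in(0,1)$, bound the $L^2$ term by substituting $y=r\tau$ in the $r$–integral and using the Sobolev embedding $\bar H^s\hookrightarrow L^{2/(1-2s)}$ (or $\bar H^s\hookrightarrow L^\infty$ when $s>1/2$), which gives $\int_0^{\delta\tau}|g|^2\lesssim(\delta\tau)^{2s}\|g\|_{\bar H^s}^2$ and hence $\|\Phi^*g\|_{L^2(M_0)}^2\lesssim\|g\|_{\bar H^s}^2\int_0^\delta\tau^{2s-1}\,d\tau<\infty$. For the seminorm, write $g(r\tau)-g(r'\tau')=[g(r\tau)-g(r'\tau)]+[g(r'\tau)-g(r'\tau')]$; in the first bracket integrate out $\tau'$ (using $\int_{\R}(|r-r'|^2+t^2)^{-1-s}\,dt\lesssim|r-r'|^{-1-2s}$) and then substitute $y=r\tau,\ y'=r'\tau$ at fixed $\tau$, reducing the contribution to $\int_0^\delta\tau^{2s-1}[g]_{\bar H^s}^2\,d\tau<\infty$; the second bracket is handled symmetrically, integrating out $r$ and substituting at fixed $r'$. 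This already proves the reduced claim for $s\in(1/2,1)$ — the range relevant to the applications — using only $g\in\bar H^s$. For $s\ge 1$ one iterates via $\|w\|_{\bar H^s(M_0)}\simeq\|w\|_{L^2}+\sum_\bullet\|\partial_\bullet w\|_{\bar H^{s-1}}$ together with $\partial_r(\Phi^*g)=\tau\,\Phi^*(g')$ and $\partial_\tau(\Phi^*g)=r\,\Phi^*(g')$, where $g'$ is again conormal of order $s-1$ and the smooth prefactors cost nothing; expanding $\partial_r^a\partial_\tau^b(\Phi^*g)=\sum_j c_{abj}\,\tau^{a-j}r^{b-j}\,\Phi^*\big(g^{(a+b-j)}\big)$, the substitution $y=r\tau$ produces a Jacobian $\tau^{-1}$ (resp. $r^{-1}$) absorbed by the factors $\tau^{a-j}r^{b-j}$, while the growth of $g^{(m)}$ is controlled by writing $\partial_y^m=y^{-m}Q_m(y\partial_y)$ with $Q_m$ of degree $m$, so that $y^p g^{(m)}$ is expressed through $(y\partial_y)^{\le m}g\in\bar H^s$.

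The main obstacle is precisely this last point for $s\ge 1$: one must verify that in every term of $\partial_r^a\partial_\tau^b(\Phi^*g)$ the vanishing supplied by the chain–rule monomials $\tau^{a-j}r^{b-j}$, together with the conormal budget of $g$ (the negative power $y^{-m}$ hidden in $\partial_y^m$ and the $y$–weights from the substitution), never overdraws the available $\bar H^s$–regularity — a finite but somewhat tedious bookkeeping. For $s\in(1/2,1)$ there is no obstacle, and the argument is the short computation of the previous paragraph.
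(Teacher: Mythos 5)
Your reduction via the commutation identity $(r\partial_r)^j(\tau\partial_\tau)^k(\Phi^*u) = \Phi^*\bigl((y\partial_y)^{j+k}u\bigr)$ is exactly the paper's Step~2, and correctly reduces the lemma to the single claim that $u\in \bar H^s([0,\infty))$, $s>1/2$, implies $\Phi^*u\in \bar H^s(M)$ near the origin. Where you and the paper part ways is in proving this reduced claim. The paper does it in one stroke by Fourier transform in (say) the $r$ variable: $\mathcal F_r(\Phi^*u)(\xi,\tau)=|\tau|^{-1}\hat u(\xi/\tau)$, and the change of variables $\xi\mapsto \tau\xi$ turns $\int|\xi|^{2s}|\mathcal F_r(\Phi^*u)|^2\,d\xi\,d\tau$ into $\int|\xi|^{2s}|\tau|^{2s-1}|\hat u(\xi)|^2\,d\xi\,d\tau$, finite over bounded $\tau$ because $2s-1>-1$; the $L^2$ piece follows from $\bar H^s\hookrightarrow L^\infty$. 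Crucially, this calculation is not perturbative in $s$: it works verbatim for every $s>1/2$.

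Your Gagliardo-norm calculation for $s\in(1/2,1)$ is the physical-space transcription of this Fourier computation and is fine. The issue is your treatment of $s\ge 1$, which is both needed (the paper applies the lemma with $s=a+\tfrac32\in(\tfrac12,\tfrac32)$) and genuinely incomplete — as you acknowledge. Concretely, the chain-rule monomials $\tau^{a-j}r^{b-j}$ do \emph{not} absorb the inverse powers coming from $\partial_y^m=y^{-m}Q_m(y\partial_y)$; already at second order one finds
\[
  \partial_r\partial_\tau(\Phi^*g)=g'(r\tau)+r\tau\,g''(r\tau)=\frac{1}{r\tau}\,\Phi^*\bigl((y\partial_y)^2 g\bigr),
\]
so the rewrite leaves behind an overall factor $(r\tau)^{-1}$. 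To push the induction through one would then have to use that $(y\partial_y)^{m}g$ vanishes at $y=0$ for $m\ge 1$ and trade this vanishing against $(r\tau)^{-1}$, together with a base case in the range $s-1\in[0,1/2)$ that your Gagliardo argument does not cover. None of this is insurmountable, but your sketch does not close it, and it is exactly the bookkeeping the paper's Fourier argument sidesteps.
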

\begin{proof}
    1. We first show that $v \in H^s(M)$. Since $H^{s}(\R) \hookrightarrow C^0(\R)$ for $s > 1/2$, we see that $v \in L^2(M)$. It remains to prove that 
    \[|D_x|^s v \in L^2(M), \qquad |D_y|^s v \in L^2(M).\]
    Indeed 
    \begin{align*}
        \||D_x|^s v \|_{L^2(M)}^2 &\le \int \big||\xi|^s (\mathcal F_xv)(\xi, y) \big|^2\, d\xi d y \\
        &= \int |\xi|^{2s} \frac{1}{y^2} \hat u(\xi/y) \, d \xi dy \\
        &= \int |\xi|^{2s} y^{2s - 1} |\hat u(\xi)|^2\, d\xi dy \\
        &\lesssim \|u\|_{H^s}^2.
    \end{align*}
    Similarly, $(D_y)^sv \in L^2(M)$, so $v \in H^s(M)$. 

    \noindent
    2. Observe that 
    \[r\partial_r v(r, \tau) = r\tau u'(r\tau) = r \tau u'(r \tau).\]
    Since $u \in \mathcal A^{[s]}([0, \infty))$, $x \partial_x u \in H^s([0, \infty))$, it follows from Step 1 that $(r \partial_r)^m (\tau \partial_\tau)^n v \in H^s(M)$ for all $m, n \in \N$. Therefore $v \in \mathcal A^{[s]}(M)$. 
\end{proof}

The following two lemmas are all the mapping properties of $S_{\lambda \pm i0}$ we will need. In particular, they tell us how $S_{\lambda \pm i0}$ map singularities at the corners, at the reflections of corners, and away from characteristic points. We first consider singularities at the corners. 
\begin{lemma}\label{lem:S_map_prop_1}
Let $\Omega$ be $\lambda$-simple and let $a \in (-1, 0)$. Then for $s > a + \ha$, 
\begin{equation}\label{eq:S_0_SobMapProp}
    S_{\lambda \pm i0}: H_\bo^{s, a}(\partial \Omega; T^* \partial \Omega)\to \bar H^{a + \frac{3}{2}}(\Omega).
\end{equation}
In fact,
\begin{equation}\label{eq:S_0_CoMapProp}
    S_{\lambda \pm i0}: H_\bo^{\infty, a}(\partial \Omega; T^* \partial \Omega) \to \mathcal A^{[a + \frac{3}{2}]}(\widetilde \Omega; \ff, \sr).
\end{equation}
\end{lemma}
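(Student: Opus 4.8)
The plan is to split $S_{\lambda\pm i0}$ into a contribution localized near the corner set $\mathcal K$ and a contribution away from it, handle the latter by the smooth-boundary analysis of \cite[\S4]{DWZ}, and reduce the former to a one-dimensional model computation that is then fed into Lemma~\ref{lem:blowup_conorm}. Fix $\chi_\bo\in\overline C^\infty(\partial\Omega;[0,1])$ supported in a small neighborhood of $\mathcal K$ and identically $1$ near $\mathcal K$, and write $v=\chi_\bo v+(1-\chi_\bo)v$. Since $a>-1$ we have $H^{\infty,a}_\bo(\partial\Omega)\subset L^1(\partial\Omega)$, so $\mathcal I v$ is a compactly supported distribution and $S_{\lambda\pm i0}((1-\chi_\bo)v)=(R_{\lambda\pm i0}\mathcal I((1-\chi_\bo)v))|_\Omega$ has boundary data on the smooth part of $\partial\Omega$. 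By the single layer analysis of \cite[Lemma~4.7]{DWZ} together with the explicit form of $E_{\lambda\pm i0}=c_\lambda\log(A\pm i0)$ from Lemma~\ref{lem:fs}, this term lies in $\mathcal A^{[a+\frac32]}(\widetilde\Omega;\ff,\sr)$ --- here $\ff$ is irrelevant to it, and conormality to $\sr$ is just conormality to (a translate of) the characteristic variety $\{A=0\}$ of $E_{\lambda\pm i0}$. So it remains to analyze $S_{\lambda\pm i0}(\chi_\bo v)$ near a fixed $\kappa\in\mathcal K$; by the reflection symmetry in the Remark after Definition~\ref{def:corner_type} I may take $\kappa$ to be a type-$(+,+)$ corner at the origin, and by conjugation symmetry it suffices to treat $S_{\lambda+i0}$.

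Near $\kappa$ the datum is supported on the two straight edges, which I parametrize by $t=\ell^-\in[0,\delta)$; on each edge the pulled-back datum lies in $H^{s,a}_\bo([0,\delta))$ (resp.\ $H^{\infty,a}_\bo$), and by Lemma~\ref{lem:b_sob_to_sob} and Lemma~\ref{lem:b-cutting} I rewrite it as $H(t)\,w(t)$ with $w\in H^{s}(\R)$, $s>a+\tfrac12$ (resp.\ $w\in H^\infty$) --- this is the step converting the b-Sobolev weight into a standard half-line Sobolev statement. Using $E_{\lambda+i0}=c_\lambda\log(\ell^+\ell^-+i0)$ and the splitting $\log(\ell^+\ell^-+i0)\equiv \log(\ell^++i0)+\log(\ell^-+i0)$ modulo locally constant terms (with the branch cuts dictated by the signs of $\ell^\pm$ on $\Omega$ near the type-$(+,+)$ corner), the convolution $R_{\lambda+i0}\mathcal I(\chi_\bo v)$ becomes, near $\kappa$, a sum over the two edges of integrals
\[
\int_0^\delta \log\big((\ell^+(x)-\alpha_+ t)(\ell^-(x)-t)+i0\big)\,w(t)\,dt
\]
plus a term smooth near $\overline\Omega$. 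The integrand is singular only where $\ell^+(x)=\alpha_+t$ or $\ell^-(x)=t$ for some $t\in[0,\delta]$; passing to the blown-up coordinates $(r,\tau)$ of~\eqref{eq:good_corner_coords} and substituting $t=r\tau$ up to smooth factors puts the integral into the form handled by Lemma~\ref{lem:blowup_conorm}, whose conclusion is joint conormality to $\ff$ and to the special ray $\sr$ of~\eqref{eq:sr_def}.

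To extract the index $a+\tfrac32$ I combine: (i) convolution against $\log(\pm\bullet+i0)$ in one variable maps $H^{\sigma}\to H^{\sigma+1}_{\mathrm{loc}}$ and produces conormality to the zero set of its argument; (ii) the transverse integration over the edge supplies the extra $\tfrac12$ of regularity (equivalently, the single layer of a second-order elliptic operator in two dimensions gains $\tfrac32$ over its boundary datum), turning $s\ge a+\tfrac12$ into $\bar H^{a+\frac32}$ regularity for $S_{\lambda+i0}v$, and in the $H^{\infty,a}_\bo$ case this survives arbitrarily many differentiations by vector fields in $\mathcal V_\bo(\widetilde\Omega)$ tangent to $\ff$ and $\sr$ because the tangential regularity of $w$ is unconstrained; (iii) Lemma~\ref{lem:blowup_conorm} transports the resulting bound through the blow-down to $\mathcal A^{[a+\frac32]}(\widetilde\Omega;\ff,\sr)$. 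Patching the corner contributions over all $\kappa\in\mathcal K$ with the complementary term proves both parts of the lemma.

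The main obstacle I anticipate is the branch-cut bookkeeping for $\log(A+i0)$ after the blow-down: one must verify that near a type-$(+,+)$ corner the only singular hypersurfaces $S_{\lambda+i0}v$ acquires are $\ff$ and $\sr$ --- in particular that the edges themselves carry no singularity beyond the usual single-layer jump, and that the $+i0$ prescription does not destroy conormality at $\sr$. This is exactly where straightness of the corner and the product structure $A=\ell^+\ell^-$ of the fundamental solution are essential, and where Lemma~\ref{lem:blowup_conorm} carries the analytic weight; once that is in place the order count in (i)--(iii) is routine.
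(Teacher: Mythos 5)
Your proposal follows essentially the same route as the paper's proof: split the datum into near-corner and away-from-corner pieces, reduce the corner contribution to an explicit one-dimensional convolution using the factorization $A = \ell^+\ell^-$ and the parameterization~\eqref{eq:good_param}, pass b-Sobolev data to ordinary half-line Sobolev data via Lemma~\ref{lem:b_sob_to_sob}, gain one order from convolution against $\log$, and feed the result into Lemma~\ref{lem:blowup_conorm} to produce joint conormality to $\ff$ and $\sr$. Two details deserve a small correction, though neither is a fatal gap.

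First, your splitting $\log(\ell^+\ell^- + i0) \equiv \log(\ell^+ + i0) + \log(\ell^- + i0)$ ``modulo locally constant terms'' is not quite accurate: the discrepancy is $-2\pi i\, H(-\ell^+)H(-\ell^-)$, which is piecewise constant with jumps \emph{across the same characteristic lines} you are already tracking. It is therefore not negligible, but it is harmless because it produces another Heaviside-type convolution of the same character you already handle. The paper instead takes the real/imaginary split $\log(A+i0) = \log|\ell^+| + \log|\ell^-| + i\pi H(-A)$ (the pieces $S^\pm_\lambda$ and $S^0_\lambda$ in~\eqref{eq:S_decomp}), which avoids any residual term and gives the cleanest bookkeeping: each summand is manifestly a 1D convolution kernel in one of $\ell^\pm$ against a $\log|\cdot|$ or Heaviside. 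You should redo your decomposition along these lines, or else explicitly carry the $H(-\ell^+)H(-\ell^-)$ correction through the estimates.

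Second, the regularity accounting in your item (ii) invokes the $3/2$ gain of the elliptic single layer, but that gain is not available here: at $\omega = \lambda \in (0,1)$ the operator $P(\lambda)$ is degenerate (hyperbolic), and the fundamental solution gains only along noncharacteristic directions. There is no ``extra $1/2$ from transverse integration.'' The correct count, as in the paper, is: $v \in H^{s,a}_\bo$ with $s \ge a + 1/2$ gives $H(\theta)v \in H^{a+1/2}_{\comp}(\R)$ by Lemma~\ref{lem:b_sob_to_sob}, and the one-dimensional $\log$- and Heaviside-convolutions gain exactly one order, landing in $H^{a+3/2}_{\loc}$; this 1D regularity then transfers to $\bar H^{a+3/2}(\Omega)$ because the resulting potential is, near the corner, a function of a single scalar variable. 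The output order is controlled by the weight $a$, not by $s$, precisely because the b-Sobolev-to-Sobolev conversion caps the ordinary Sobolev order at $a + 1/2$.

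With these two repairs the argument matches the paper's step for step, including the final application of Lemma~\ref{lem:blowup_conorm} to convert one-dimensional conormality at $0$ into joint conormality to $\ff$ and $\sr$ on the blown-up domain.
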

\begin{proof}
1. We focus on the $S_{\lambda + i0}$ case since $S_{\lambda - i0}$ follows similarly. We first consider neighborhoods of the corner, and it suffices to work near a type-$(+, +)$ corner centered at $0$. Let $\ell^\pm = \ell^\pm(\bullet, \lambda)$ be the coordinate functions on $\R^2$. Then the boundary in $(\ell^+, \ell^-)$ coordinates in a sufficiently small neighborhood of the corner at $0$ may be parameterized as 
\begin{equation}\label{eq:S_0_param}
    \mathbf x(\theta) = \begin{cases} (\alpha_+ \theta, \theta), & 0 \le \theta < \epsilon, \\ (\alpha_-\theta, -\theta), & 0 > \theta > -\epsilon. \end{cases}
\end{equation}
Let 
\[v d\theta \in H^{\infty, a}_\bo(\partial \Omega; T^* \partial \Omega), \qquad \supp v \subset (-\epsilon, \epsilon).\] 
Observe that the explicit formulas for $E_{\lambda + i0}$ given in \eqref{eq:lim_fs} can be decomposed as
\begin{equation}\label{eq:S_decomp}
    S_{\lambda + i0} = c_\lambda (S_\lambda^+ + S_\lambda^- + S_\lambda^0), \quad S^\pm_\lambda(x) = \log|\ell^\pm(x)|, \quad S^0_\lambda = i \pi H(-A(x)). 
\end{equation}
In particular, each piece acts on $v d\theta$ by
\begin{gather}
\begin{aligned}
    S^+_\lambda(v d\theta)(\ell^+, \ell^-) &= \int_{-\epsilon}^0 \log|\ell^+ - \alpha_-\theta| v(\theta)\, d \theta + \int_0^\epsilon \log|\ell^+ - \alpha_+\theta| v(\theta)\, d \theta \\
    &= \int_{-\infty}^\infty \log|\ell^+ - \theta| \big(\alpha_-^{-1}H(-\theta)v(\alpha_-^{-1} \theta) + \alpha_+^{-1}H(\theta)v(\alpha_+^{-1} \theta) \big)\, d \theta 
    \end{aligned} \label{eq:S^+}\\
    \begin{aligned}
    S^-_\lambda(v d \theta) (\ell^+, \ell^-) &= \omega \int_{-\epsilon}^0 \log|\ell^- + \theta| v (\theta)\, d \theta + \int_0^\epsilon \log|\ell^- - \theta| v (\theta) \, d \theta \\
    &= \int_0^\infty \log|\ell^- - \theta| \big(v(\theta) + v(-\theta)\big)
    \end{aligned} \label{eq:S^-}\\
    \begin{aligned}
    \frac{1}{i \pi}S^0_\lambda(v d \theta)(\ell^+, \ell^-) &=  \int_{-\infty}^0 H(\ell^- - \theta)v(\theta)\, d \theta + \int_{-\infty}^0  \alpha_-^{-1} H(\theta - \ell^+) v( \alpha_-^{-1}\theta)\, d \theta \\
    & \quad + \int_0^\infty H(\ell^- - \theta) v(\theta) \, d \theta - \int_0^\infty \alpha_+^{-1} H(\ell^+ - \theta) v(\alpha_+^{-1}\theta)\, d \theta
    \end{aligned} \label{eq:S^0}
\end{gather}
By Lemma~\ref{lem:b_sob_to_sob}, it follows that $H(\theta) v(\theta), H(-\theta)v (-\theta) \in H^{a + \ha}_{\comp}(\R)$. Convolution with $\log$ and convolution with the Heaviside function both map $H^{a + \ha}_{\comp}(\R) \to H^{a + \frac{3}{2}}_{\loc}(\R)$. Then it immediately follows from the formulas~\eqref{eq:S^+}, \eqref{eq:S^-}, and \eqref{eq:S^0} that if $\supp v \subset [-\epsilon, \epsilon]$, 
\begin{equation}
    S_{\lambda + i0}(v d \theta) \in \bar H^{a + \frac{3}{2}}(\Omega)
\end{equation}
On the other hand, if $v d \theta$ is smooth and supported away from the corners, it follows from \cite[Lemma 4.8]{DWZ} that $S_{\lambda + i0}(v d \theta) \in \overline{C}^\infty(\Omega)$. Therefore, the mapping property~\eqref{eq:S_0_SobMapProp} follows. 

\noindent
2. For the conormal mapping properties, observe that
\[H(\theta)v(\theta), H(-\theta) v(\theta) \in 
\mathcal A^{[a + \ha]}_{\comp}(\R; {0}).\] Convolution with $\log$ and convolution with the Heaviside function both have the mapping property
\[\mathcal A^{[a + \ha]}_{\comp}(\R; {0}) \to \mathcal A^{[a + \frac{3}{2}]}_{\loc}(\R; {0}).\] 
Then from \eqref{eq:S^+}, we see that 
\[S^+(v d \theta) \in \mathcal A^{[a + \frac{3}{2}]}(\widetilde \Omega; \ff) \subset \mathcal A^{[a + \frac{3}{2}]}(\widetilde \Omega; \ff, \sr)\]
We see from \eqref{eq:S^-} that 
\[\tilde u(\ell^-) := \chi(\ell^-) S^-(v d\theta)(\ell^+, \ell^-) \in \mathcal A^{[a + \frac{3}{2}]}([0, \infty); \{0\})\] 
where $\chi \in \CIc([0, \infty))$ and $\chi \equiv 1$ near $0$. In a sufficiently small neighborhood of the corner, we have the blown-up coordinates
\[r = \ell^-, \qquad \tau = \ell^+/\ell-, \qquad (r, \tau) \in [0, \epsilon) \times [-\alpha_-^{-1}, \alpha_+^{-1}].\]
Observe that for sufficiently small $\epsilon$, 
\[S^-(v d \theta)(r, \tau) = \tilde u(r \tau), \qquad (r, \tau) \in [0, \epsilon) \times [-\alpha_-^{-1}, \alpha_+^{-1}],\]
so it follows from Lemma~\ref{lem:blowup_conorm} that $S^-(v d \theta) \in \mathcal A^{[a + \frac{3}{2}]}(\widetilde \Omega; \ff, \sr)$. Via similar analysis, we also find that $S^0(v d \theta) \in \mathcal A^{[a + \frac{3}{2}]}(\widetilde \Omega; \ff, \sr)$. The mapping property \eqref{eq:S_0_CoMapProp} then follows upon applying \cite[Lemma 4.8]{DWZ} as in the end of Step 1. 
\end{proof}

We will see that the distributions $v$ we feed into $S_{\lambda \pm i0}$ are smooth near noncorner characteristic points, so noncorner characteristic points are covered by the above lemma as well. Now we consider what happens near noncharacteristic points. We consider $S_{\lambda + i0}$ only since $S_{\lambda - i0} v = \overline{S_{\lambda + i0} \bar v}$. 
\begin{lemma}\label{lem:S_map_prop_2}
    Let $\lambda$ be Morse--Smale. Let $x_0 \in \partial \Omega$ be noncharacteristic. Then for all $\chi \in C^\infty(\partial \Omega)$ supported in a sufficiently small neighborhood of $x_0$, we have 
    \begin{equation}
        S_{\lambda + i0} \chi : \bar H^s(\partial \Omega; T^* \partial \Omega) \to \bar H^{s + 1}(\Omega)
    \end{equation}
    and
    \[\WF(S_{\lambda + i0} \chi v) \subset \{(x, \xi) \in N^*_{\sgn(\eta)} \Gamma^{\sgn(\eta)}_\lambda(y) : (y, \eta) \in \WF(\chi v)\}.\]
    Moreover, 
    \begin{equation}\label{eq:Schi_map_prop}
        S_{\lambda + i0} \chi : \mathcal A^{[s]}(\partial \Omega; T^* \partial \Omega; N^*_\pm\{x_0\}) \to \mathcal A^{[s + 1]}(\Omega; N^*_\pm \Gamma_{\lambda}^\pm(x_0)).
    \end{equation} 
\end{lemma}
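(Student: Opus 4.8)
The plan is to localize near $x_0$ and reduce everything to one-dimensional convolutions in null coordinates, exactly as in the proof of Lemma~\ref{lem:S_map_prop_1} but now away from any corner. Since $x_0$ is noncharacteristic, $\ell^+(\bullet,\lambda)$ and $\ell^-(\bullet,\lambda)$ both have nonvanishing differential along $\partial\Omega$ at $x_0$: by Definition~\ref{def:lambda_simple}(2) the functions $\ell^\pm|_{\partial\Omega}$ have no critical points outside the four extremal points, and $x_0$ is none of them. First I would pass to the linear coordinates $(\sigma_+,\sigma_-):=(\ell^+(x,\lambda),\ell^-(x,\lambda))$ on $\R^2$, in which $P(\lambda)$ is a nonzero constant multiple of $\partial_{\sigma_+}\partial_{\sigma_-}$ and, near $x_0$, $\partial\Omega$ is a graph $\sigma_-=\phi(\sigma_+)$ with $\phi$ a local diffeomorphism and $\phi'\ne0$, with $\Omega$ lying on one side. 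For $\chi$ supported in a small enough neighborhood of $x_0$ the layer distribution is then $\mathcal I(\chi v)=g(s)\,\delta(t-\phi(s))$ in the convolution variables $(s,t)$, where $g$ is the image of $\chi v$ under the diffeomorphism $\theta\mapsto\ell^+(\mathbf x(\theta))$; in particular $g$ is compactly supported with the Sobolev regularity, wavefront set, and conormality at $\ell^+(x_0)$ (including the one-sided frequency condition) inherited from $\chi v$.

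Next I would use the decomposition $E_{\lambda+i0}=c_\lambda\big(\log|\sigma_+|+\log|\sigma_-|+i\pi H(-\sigma_+\sigma_-)\big)$ from~\eqref{eq:S_decomp} and convolve against $g(s)\,\delta(t-\phi(s))$. The same computations as in~\eqref{eq:S^+}--\eqref{eq:S^0} give, on $\Omega$,
\[
S_{\lambda+i0}(\chi v)=c_\lambda\big[(\log*g)(\sigma_+)+(\log*\tilde g)(\sigma_-)+i\pi\,\sgn\big(\phi^{-1}(\sigma_-)-\sigma_+\big)\big(G(\phi^{-1}(\sigma_-))-G(\sigma_+)\big)\big],
\]
where $\tilde g=(g\circ\phi^{-1})(\phi^{-1})'$ and $G$ is an antiderivative of $g$; here one uses that, for fixed $(\sigma_+,\sigma_-)$, the function $s\mapsto H(-(\sigma_+-s)(\sigma_--\phi(s)))$ is the indicator of the interval between $\sigma_+$ and $\phi^{-1}(\sigma_-)$, and that $\sgn(\phi^{-1}(\sigma_-)-\sigma_+)$ is constant on $\Omega$. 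Thus $S_{\lambda+i0}(\chi v)|_\Omega$ is a sum of a function of $\ell^+$ and a function of $\ell^-$. Since convolution with $\log$ and with the Heaviside function (i.e.\ taking an antiderivative) maps $\bar H^s_{\comp}(\R)\to\bar H^{s+1}_{\loc}(\R)$ and $\mathcal A^{[s]}_{\comp}(\R;\{p\})\to\mathcal A^{[s+1]}_{\loc}(\R;\{p\})$, and pullback by the submersions $\ell^\pm(\bullet,\lambda)\colon\R^2\to\R$ preserves Sobolev order while sending conormality at a point to conormality along its $\ell^\pm$-level set — which is $\Gamma^\pm_\lambda$ of that point — this gives the stated Sobolev bound and the conormal statement~\eqref{eq:Schi_map_prop}, with output conormal to $\Gamma^\pm_\lambda(x_0)$. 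For $v$ smooth away from $x_0$, the part of $S_{\lambda+i0}$ whose Schwartz kernel is off the characteristic set is handled as at the end of the proof of Lemma~\ref{lem:S_map_prop_1} via \cite[Lemma~4.8]{DWZ}.

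The hard part will be upgrading the resulting rough wavefront inclusion — singularities only on $\bigcup_y(\Gamma^+_\lambda(y)\cup\Gamma^-_\lambda(y))$ over base points $y$ of $\WF(\chi v)$, with conormals along $d\ell^\pm$ — to the sign-refined bound $\WF(S_{\lambda+i0}\chi v)\subset\{(x,\xi)\in N^*_{\sgn(\eta)}\Gamma^{\sgn(\eta)}_\lambda(y):(y,\eta)\in\WF(\chi v)\}$. This is where the Feynman ($+i0$) prescription enters: over $\{\sigma_+=0\}$ one has $\log(\sigma_+\sigma_-+i0)=\log|\sigma_-|+\log(\sigma_++i0)$ where $\sigma_->0$ and $\log(\sigma_+\sigma_-+i0)=\log|\sigma_-|+i\pi+\log(\sigma_+-i0)$ where $\sigma_-<0$, so the conormal singularity of $E_{\lambda+i0}$ over $\{\sigma_+=0\}$ points along $+d\sigma_+$ on $\{\sigma_->0\}$ and along $-d\sigma_+$ on $\{\sigma_-<0\}$, and symmetrically over $\{\sigma_-=0\}$. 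Since in $\WF(E_{\lambda+i0}*\mathcal I(\chi v))$ the covector is common to both factors, a wavefront point $(y,\eta)$ of $\chi v$ — extended inside $\mathcal I(\chi v)$ to the coset of covectors at $y$ restricting to $\eta$ — can meet the $d\sigma_+$-conormal or the $d\sigma_-$-conormal part of $E_{\lambda+i0}$ only at the unique scaling whose restriction to $T_y\partial\Omega$ equals $\eta$, and matching signs then leaves exactly the component $N^*_{\sgn(\eta)}\Gamma^{\sgn(\eta)}_\lambda(y)$. Carrying this bookkeeping through the formula above — keeping track of which of the two terms in each line of~\eqref{eq:S^+}--\eqref{eq:S^0} carries which sign of frequency through the $+i0$ — is the only genuinely delicate point; the Sobolev and conormal estimates themselves are routine once the reduction to one-dimensional convolutions is in place.
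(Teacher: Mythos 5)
Your plan is essentially the paper's approach: reduce to one-dimensional convolutions against $\log(\cdot\pm i0)$ in null coordinates, observe these are pseudodifferential with one-sided frequency, and then track the signs. The main difference is that the paper simply cites \cite[Lemma 4.9]{DWZ} for the formula
\[
S_{\lambda+i0}(\chi v\,d\theta)=c_\lambda\bigl(g_+(\ell^+)+g_-(\ell^-)+c_0\textstyle\int v\bigr),\qquad g_\pm=(\pi^\pm_\lambda(\chi v))*\log_{\pm\nu^\pm},
\]
whereas you rederive it by hand starting from the decomposition $E_{\lambda+i0}=c_\lambda(\log|\sigma_+|+\log|\sigma_-|+i\pi H(-\sigma_+\sigma_-))$ used in Lemma~\ref{lem:S_map_prop_1}. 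That rederivation is fine, but one identity is stated too glibly: $H(-(\sigma_+-s)(\sigma_--\phi(s)))$ is the indicator of the interval between $\sigma_+$ and $\phi^{-1}(\sigma_-)$ only when $\phi'>0$, i.e.\ when $\nu^+\nu^->0$; when $\phi'<0$ it is the indicator of the complement, and the $S^0$ term acquires an extra additive constant $i\pi\int g$. This is precisely the $c_0\int v$ appearing in the paper's~\eqref{eq:S_expansion} ("$c_0=0$ or $2\pi i$ depending on $x_0$"), so the final formula should include it; since it is a constant it does not affect the Sobolev, wavefront, or conormal conclusions, but you should not assert the indicator identity unconditionally. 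Your treatment of the sign-refined wavefront bound via the branch identities $\log(\sigma_+\sigma_-+i0)=\log|\sigma_-|+\log(\sigma_+\pm i0)$ on $\{\pm\sigma_->0\}$ is the right mechanism and is exactly what the paper is summarizing as "careful inspection of the signs"; for the completed argument you do need to combine this with the constancy of $\sgn(\phi^{-1}(\sigma_-)-\sigma_+)$ on $\Omega$ to pick out which branch actually survives in the restriction to $\Omega$, which is why only $N^*_{\sgn\eta}\Gamma^{\sgn\eta}_\lambda(y)$, rather than both $N^*_\pm\Gamma^\pm_\lambda(y)$, appears.
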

\begin{Remarks}
    1. Here, the notation $\mathcal A^{[s]}(\partial \Omega; N^*_\pm\{x_0\}) \subset \mathcal A^{[s]}(\partial \Omega; \{x_0\})$ denotes the subset of conormal distribution with wavefront set in $N^*_\pm\{x_0\}$. The same goes for $A^{[s + 1]}(\Omega; N^*_\pm \Gamma_{\lambda}^\pm(x_0))$ where $N^*_\pm\Gamma_{\lambda}^\pm(x_0)$ is defined in~\eqref{eq:N^*_pm}. 
    
    \noindent 
    2. In the case that $\gamma^\pm(x_0)$ is a corner, observe that $\Gamma^\pm(x)$ goes directly into the corner. In this case, we remark that it follows immediately from the formula~\eqref{eq:S_decomp} in the proof and Lemma~\ref{lem:blowup_conorm}
    \[S_{\lambda + i0} \chi : \mathcal A^{[s]}(\partial \Omega; T^* \partial \Omega; N^*_\pm\{x_0\}) \to \mathcal A^{[s + 1]}(\widetilde \Omega; N^*_\pm \Gamma_{\lambda}^\pm(x_0); \ff).\]
    In all the other cases, the space $\mathcal A^{[s + 1]}(\Omega; N^*_\pm \Gamma_{\lambda}^\pm(x_0))$ is smooth up to the corners. 
\end{Remarks}
\begin{proof}
    Fix a b-parameterization on $\partial \Omega$ and consider $v(\theta) d \theta \in \overline{\mathcal D}'(\partial \Omega; T^* \partial \Omega)$. Define for $\chi$ supported away from characteristic points, we can define $\pi^\pm_\lambda(\chi v) \in \mathcal E'(\R)$ by 
    \[\int_\R \pi^\pm_\lambda(\chi v)(s) \varphi(s) \, ds = \int_{\partial \Omega} \chi(x) v(x) \varphi(\ell^\pm(x))\, d \theta(x).\]
    Then if $\chi$ is supported in a sufficiently small neighborhood of $x_0$, we have the formula
    \begin{equation}\label{eq:S_expansion}
        S_{\lambda + i0} (\chi vd \theta)(x) = c_{\lambda} \left( g_+(\ell^+(x)) + g_-(\ell^-(x)) + c_0 \int_{\partial \Omega} v(\theta) \, d \theta \right)
    \end{equation}
    where $c_0 = 0$ or $2 \pi i$ depending on $x_0$ and 
    \begin{equation}
        g_+:= (\pi^+_\lambda(\chi v)) * \log_{\nu^+},\qquad g_-:= (\pi^-_\lambda(\chi v))* \log_{-\nu^-}
    \end{equation}
    where $\nu^\pm := \sgn \partial_\theta \ell^\pm(x_0)$ and $\log_\pm x := \log(x \pm i0)$. See \cite[Lemma 4.9]{DWZ} for the detailed derivation of the formula. Convolution with $\log_\pm$ is a pseudodifferential operator with wavefront set supported on the postive (or negative) frequencies. The lemma then follows from~\eqref{eq:S_expansion} upon careful inspection of the signs.
\end{proof}

\subsection{Restricted single layer potential}\label{sec:kernel_computation}
Recall from Lemma \ref{lem:RSP_def}, we have the restricted single layer potential
\begin{equation}
\begin{gathered}
    \mathcal C_\omega: \dot C^\infty(\partial \Omega; T^* \partial \Omega)) \to \mathcal D' (\partial \Omega \setminus \mathcal K), \\
    \text{given by} \quad \mathcal C_\omega v := (S_\omega v)|_{\partial \Omega \setminus \mathcal K}.
\end{gathered}
\end{equation}
From the mapping properties of $S_{\lambda \pm i0}$ in Lemmas~\ref{lem:S_map_prop_1} and~\ref{lem:S_map_prop_2}, we can also define the limiting operators 
\begin{equation}
\begin{gathered}
    \mathcal C_{\lambda \pm i0}: \dot C^\infty(\partial \Omega; T^* \partial \Omega)) \to \mathcal D' (\partial \Omega \setminus \mathcal K), \\
    \text{given by} \quad \mathcal C_{\lambda \pm i0} v := (S_{\lambda \pm i0} v)|_{\partial \Omega \setminus \mathcal K}.
\end{gathered}
\end{equation}
In terms of the fundamental solution $E_\omega$, whose formula is given in Lemma~\ref{lem:fs}, we see that 
\begin{equation}
    \mathcal C_\omega v (x) = \int_{\partial \Omega} E_\omega(x - y) v(y), \qquad x \in \partial \Omega \setminus \mathcal K
\end{equation}

It will be convenient to compose $\mathcal C_\omega$ with a differential so that the domain and the target are both over the tangent bundle:
\begin{equation}
    d\mathcal C_\omega: \dot C^\infty(\partial \Omega; T^*\partial \Omega) \to \mathcal D'(\partial \Omega \setminus \mathcal K; T^* (\partial \Omega\setminus \mathcal K)).
\end{equation}
Moreover, we will see that composing with the differential in fact allows us to decompose $d \mathcal C_\omega$ as the sum of pulled-back pseudodifferential operators away from the corners and as b-differential operators near the corners. This will be done by explicitly computing the Schwartz kernel of $d \mathcal C_\omega$, from which we will also obtain extended mapping properties for the restricted single layer operator.

The explicit formula for the Schwartz kernel of $d \mathcal C_\omega$ can be computed using the explicit formula for the fundamental solution given in Lemma \ref{lem:fs}. In particular, it follows that the Schwartz kernel $K_\omega$ of $d \mathcal C_\omega$ can be decomposed as 
\begin{equation}
    K_\omega = K^+_\omega + K^-_\omega.
\end{equation}
With respect to a b-parameterization $\mathbf x(\theta)$, $K^\pm_\omega$ are given by 
\begin{equation}\label{eq:kernel_formula}
    K^\pm_\omega(\theta, \theta') = c_\omega \lim_{\delta \to 0+} \frac{ \partial_\theta \ell^\pm(\mathbf x(\theta), \omega))}{\ell^\pm(\mathbf x(\theta) - \mathbf x (\theta') + \delta \mathbf v(\theta), \omega)}
\end{equation}
respectively, where 
\[\mathbf v(\theta) = \mathbf V(\mathbf x(\theta))\]
is an inward pointing vector field on $\partial \Omega$ that descends from a smooth vector field $\mathbf V$ in $\R^2$. For the remainder of this section, we compute the Schwartz kernel for $\omega$ in the upper half plane. Throughout, let $\mathcal J \subset (0, 1)$ be an open set such that $\Omega$ is $\lambda$-simple for every $\lambda \in \mathcal J$. 

\subsubsection{Schwartz kernel away from corner}
Fix a b-parameterization so that $x \in \Omega$ can be identified with $\theta \in \mathbb S^1$. We first consider the case near 
\[\theta, \theta' \notin \mathcal K.\]
In this case, the kernel is identical to the setting \cite[\S 4.6]{DWZ}, and we collect the results in the following proposition.
\begin{proposition}\label{prop:easy_kernel}
    Let $\omega = \lambda + i\epsilon \in \mathcal J + i (0, \epsilon_0)$ for some sufficiently small $\epsilon_0$. Let $U \Subset \partial \Omega \setminus \mathcal K$. Then for $\theta, \theta' \in U \times U$, the Schwartz kernel is given by
    \begin{multline}
        K^\pm_\omega(\theta, \theta') \equiv c_\omega \chi_{\Diag}(\theta, \theta')(\theta - \theta' \pm i0)^{-1} \\
        + \tilde c_\omega^\pm(\theta') \chi_{\Diag}(\gamma^\pm_\lambda(\theta), \theta') (\gamma^\pm_\lambda(\theta) - \theta' + i \epsilon z_\omega^\pm(\theta'))^{-1},
    \end{multline}
    where `$\equiv$' denotes equivalence modulo smooth functions in $\theta$, $\theta'$, and $\omega$ up to $\epsilon = \Im \omega = 0$. Furthermore, 
    \begin{enumerate}
        \item $\chi_{\Diag} \in C^\infty(\mathbb S^1 \times \mathbb S^1)$ is identically $1$ near $\Diag := \{(\theta, \theta): \theta \in \mathbb S^1\}$, and is supported in a small neighborhood of $\Diag$,

        \item $z_\omega^\pm(\theta) \in C^\infty(\mathbb S^1)$ is smooth in $\omega  \in \mathcal J + i[0, \epsilon_0)$ up to the real line, and $\Re z^\pm_\omega(\theta) \ge \delta > 0$,

        \item $\tilde c_\omega^\pm(\theta) \in C^\infty(\mathbb S^1)$ is smooth in $\omega  \in \mathcal J + i[0, \epsilon_0)$ up to the real line and has the formula
        \begin{equation}
        \tilde c_\omega^\pm(\theta') = \frac{c_\omega}{\partial_{\theta'} \gamma^\pm_\lambda(\theta')} + \mathcal O(\epsilon). 
        \end{equation}
    \end{enumerate}
\end{proposition}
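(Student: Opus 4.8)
The plan is to extract the claimed structure directly from the explicit kernel formula~\eqref{eq:kernel_formula}, treating the two terms $K^\pm_\omega$ separately and identifying the two sources of singularity in each: the diagonal contribution where $\mathbf x(\theta) = \mathbf x(\theta')$, and the ``reflected diagonal'' contribution where $\ell^\pm(\mathbf x(\theta)) = \ell^\pm(\mathbf x(\theta'))$, i.e.\ where $\theta' = \gamma^\pm_\lambda(\theta)$. Since the whole analysis is local near $U \times U \Subset (\partial\Omega \setminus \mathcal K)^2$, the point of the hypothesis $\mathcal K \cap U = \emptyset$ is that $\mathbf x$ is smooth on $U$, so all the Taylor expansions below are genuine smooth expansions. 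Away from $\Diag$ and from the graph of $\gamma^\pm_\lambda$, the quantity $\ell^\pm(\mathbf x(\theta) - \mathbf x(\theta'), \omega)$ is bounded away from zero uniformly for $\omega \in \mathcal J + i[0,\epsilon_0)$ (because on a $\lambda$-simple domain $\ell^\pm(\bullet,\lambda)$ has no critical points on $\partial\Omega \setminus \mathcal K$), so there $K^\pm_\omega$ is smooth up to $\epsilon = 0$; this is what lets us insert the cutoffs $\chi_{\Diag}(\theta,\theta')$ and $\chi_{\Diag}(\gamma^\pm_\lambda(\theta),\theta')$ at the cost of a smooth remainder.

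Next I would analyze the diagonal piece. Near $\Diag$, write $\mathbf x(\theta) - \mathbf x(\theta') = (\theta - \theta')\big(\mathbf x'(\theta') + O(\theta - \theta')\big)$, a smooth nonvanishing factor times $\theta - \theta'$. Then
\[
\ell^\pm(\mathbf x(\theta) - \mathbf x(\theta') + \delta\mathbf v(\theta), \omega) = (\theta - \theta')\,\partial_\theta\ell^\pm(\mathbf x(\theta'),\omega) + \delta\,\ell^\pm(\mathbf v(\theta),\omega) + O(|\theta-\theta'|^2 + \delta|\theta-\theta'|),
\]
using linearity of $\ell^\pm(\bullet,\omega)$. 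Since $\mathbf v$ is inward-pointing and $\Im\omega > 0$, one checks $\Im\big(\ell^\pm(\mathbf v(\theta),\omega)/\partial_\theta\ell^\pm(\mathbf x(\theta'),\omega)\big)$ has a fixed sign as $\delta \to 0+$ — this is exactly the $i0$ prescription — so the factor $\partial_\theta\ell^\pm(\mathbf x(\theta),\omega)$ in the numerator of~\eqref{eq:kernel_formula} cancels the leading coefficient and we get $c_\omega(\theta - \theta' \pm i0)^{-1}$ up to a term that is $O(1)$ and smooth, hence absorbable into $\equiv$. The sign $\pm$ matching the superscript on $K^\pm$ comes from tracking the sign of $\partial_\theta\ell^\pm$ along a positively oriented b-parameterization together with the sign of $c_\omega$'s imaginary part; I would do this bookkeeping carefully since it is the one genuinely sign-sensitive point.

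For the reflected piece, near $\{\theta' = \gamma^\pm_\lambda(\theta)\}$ set $\sigma = \gamma^\pm_\lambda(\theta)$, so $\sigma$ is a smooth function of $\theta$ (smoothness of $\gamma^\pm_\lambda$ away from $\mathcal K$ follows from the implicit-function-theorem characterization~\eqref{eq:gamma_def} and nonvanishing of $\partial_\theta\ell^\pm$). By definition $\ell^\pm(\mathbf x(\sigma),\omega) = \ell^\pm(\mathbf x(\theta),\omega)$ when $\omega = \lambda$ is real, with an $O(\epsilon)$ correction for $\omega = \lambda + i\epsilon$; Taylor expanding in $\theta' - \sigma$ gives
\[
\ell^\pm(\mathbf x(\theta) - \mathbf x(\theta') + \delta\mathbf v(\theta),\omega) = (\sigma - \theta')\,\partial_{\theta'}\ell^\pm(\mathbf x(\sigma),\omega) + \delta\,\ell^\pm(\mathbf v(\theta),\omega) + \epsilon\,(\text{smooth}) + O(|\theta'-\sigma|^2),
\]
and dividing the numerator $\partial_\theta\ell^\pm(\mathbf x(\theta),\omega)$ by $\partial_{\theta'}\ell^\pm(\mathbf x(\sigma),\omega)$ and using the chain rule $\partial_\theta\big(\ell^\pm(\mathbf x(\gamma^\pm_\lambda\theta))\big) = \partial_\theta\big(\ell^\pm(\mathbf x(\theta))\big)$ produces the factor $1/\partial_{\theta'}\gamma^\pm_\lambda(\theta')$, giving $\tilde c^\pm_\omega(\theta')$ as in (3). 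The remaining $\delta$- and $\epsilon$-linear terms combine, after taking $\delta \to 0+$, into $i\epsilon z^\pm_\omega(\theta')$ with $\Re z^\pm_\omega \ge \delta > 0$ — the positivity again coming from $\mathbf v$ inward-pointing plus the sign of $\sqrt{1-\omega^2}$ on the chosen branch — which gives (2). Smoothness of $z^\pm_\omega$, $\tilde c^\pm_\omega$ up to $\epsilon = 0$ follows from Lemma~\ref{lem:fs_holomorphic} together with smoothness of $\mathbf x$, $\gamma^\pm_\lambda$ on $U$.

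The main obstacle is the sign/branch bookkeeping: correctly matching the superscript $\pm$ in $K^\pm$ to the $\pm i0$ in the diagonal term and to the sign of $\Re z^\pm_\omega$ in the reflected term, while keeping track of the orientation convention on the b-parameterization, the sign of $\partial_\theta\ell^\pm$, the argument of $c_\omega$, and the branch of $\sqrt{1-\omega^2}$. Everything else is routine Taylor expansion and the observation that off-diagonal, off-reflected contributions are uniformly nondegenerate hence smooth up to the real axis; but the signs must be gotten exactly right because they determine on which side of the real axis the spectral parameter must be approached, which is the whole point of the limiting absorption principle downstream. I would organize this by reducing to a type-$(+,+)$ model via the reflections $\mathrm{Ref}_j$ as in the remark after Definition~\ref{def:corner_type} only if needed, but since $U$ avoids $\mathcal K$ the direct computation should suffice.
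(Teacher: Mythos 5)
The paper gives no proof of this proposition; it simply states that for $\theta,\theta'$ away from $\mathcal K$ the kernel computation is identical to the smooth-boundary setting of \cite[\S 4.6]{DWZ} and imports the result. Your plan to carry out the Taylor expansion directly from~\eqref{eq:kernel_formula} is therefore a legitimate alternative, and the structure of what you do — isolate the diagonal singularity, isolate the reflected singularity $\theta' = \gamma^\pm_\lambda(\theta)$, expand $\ell^\pm$ in each, track the sign of $\Im(\ell^\pm(\mathbf v,\omega)/\partial_\theta\ell^\pm(\mathbf x(\theta'),\omega))$ to fix the $\pm i0$ convention — is exactly what a direct proof looks like; it is also what this paper does near a corner in Proposition~\ref{prop:corner_kernel}.

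However, there is a genuine gap. You assert that on a $\lambda$-simple domain ``$\ell^\pm(\bullet,\lambda)$ has no critical points on $\partial\Omega\setminus\mathcal K$.'' That is false: by parts (2)--(3) of Definition~\ref{def:lambda_simple}, any characteristic extremum $x^\pm_{\min},x^\pm_{\max}$ that is \emph{not} a corner is a nondegenerate critical point of $\ell^\pm(\bullet,\lambda)$ lying in $\partial\Omega\setminus\mathcal K$, hence is allowed to lie in $U$. At such a point $\theta_0$ you have $\gamma^\pm_\lambda(\theta_0)=\theta_0$, so $\Diag$ and the graph of $\gamma^\pm_\lambda$ intersect, and the two singular pieces you analyze separately overlap. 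Your diagonal expansion is also degenerate there, because both $\partial_\theta\ell^\pm(\mathbf x(\theta),\omega)$ in the numerator and $\partial_{\theta'}\ell^\pm(\mathbf x(\theta'),\omega)$ in your linearized denominator vanish to first order in $\theta-\theta_0$, so you are in a $0/0$ situation and the leading behavior comes from the quadratic term of $\ell^\pm\circ\mathbf x$, not the linear one. The formula in the proposition is nonetheless correct there (one sees this by writing $\ell^\pm(\mathbf x(\theta),\lambda)-\ell^\pm(\mathbf x(\theta'),\lambda) = \tfrac12 a(\theta-\theta')(\theta+\theta')+\cdots$ near $\theta_0=0$ and performing a partial-fraction decomposition, which reproduces both terms with $\tilde c^\pm_\omega(\theta_0) = c_\omega/\partial_{\theta'}\gamma^\pm_\lambda(\theta_0) = -c_\omega$), but this requires an argument you have not supplied and cannot obtain from the two separate expansions as written. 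This is precisely the case handled by \cite[Lemma~4.13]{DWZ}, and it must be addressed for the proposition to hold on the stated domain $U\Subset\partial\Omega\setminus\mathcal K$. You should also drop the citation of Lemma~\ref{lem:fs_holomorphic} for smoothness in $\omega$: that lemma concerns the fundamental solution $E_\omega$, whereas here smoothness up to $\epsilon=0$ follows directly from the explicit dependence of $\ell^\pm(\bullet,\omega)$ on $\omega$ through $1/\omega$ and $1/\sqrt{1-\omega^2}$ on the chosen branch.
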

In other words, away from the corners, $K^\pm_\omega(\theta, \theta')$ still looks like the sum of a pseudodifferential operator plus the pullback by $\gamma^\pm_\lambda$ of a psuedodifferential operator.

The remaining parts of the kernel consists of the cases when at least one of $\theta$ or $\theta'$ is in a small neighborhood of a corner. This can be separated into four cases that behave differently:
\begin{enumerate}
    \item both $\theta$ and $\theta'$ are near the same corner,
    \item one of $\theta$ or $\theta'$ is near a corner $\kappa$, and the other is near a reflected corner $\gamma_\lambda^\pm(\kappa)$, the sign depending on the corner,
    \item $\theta$ and $\theta'$ are near different corners,
    \item one of $\theta$ or $\theta'$ is near a corner $\kappa$ and the other is away from the corners $\mathcal K$ and the reflected corner $\gamma_\lambda^\pm (\kappa)$. 
\end{enumerate}
The first case will be obtained via direct computation in Proposition~\ref{prop:corner_kernel}. The last three cases are obtained by transferring the results from \cite[\S4.6]{DWZ}. The idea is to smooth out the corner by perturbing only one side of the corner, and then comparing the kernels associated to the smooth perturbed domain to the desired kernel. These last three cases are handled in Proposition~\ref{prop:easy_kernel}. See Figure~\ref{fig:POS} or~\ref{fig:refPOS} for a diagram of where the singularities of the Schwartz kernel lie in the case of one corner. 

\subsubsection{Schwartz kernel near the corner}
Now we consider the case that both $\theta$ and $\theta'$ are in a small neighborhood of a corner. Without the loss of generality, we put the corner at $0$. We compute the explicit formulas for type-$(+, +)$ corners only, since we can deduce propagation estimates near other corner types later using reflection symmetries. Again, we write $\omega = \lambda + i \epsilon$ and use $\ell^\pm = \ell(\bullet, \lambda)$ as coordinate functions on $\R^2$. In $(\ell^+, \ell^-)$-coordinates, we fix a particularly convenient choice of $\lambda$-dependent parameterizations of the boundary near the straight corner by
\begin{equation}\label{eq:good_param}
    \mathbf x(\theta) = \begin{cases} (\alpha_+ \theta, \theta), & \theta \ge 0, \\ (\alpha_- \theta, - \theta), & \theta < 0 \end{cases}
\end{equation}
for $\theta, \theta'$ in a sufficiently small neighborhood $U \subset \R$ of $0$.

For $\theta, \theta' \in U$, we have the following explicit formula for the Schwartz kernel. 

\begin{proposition}\label{prop:corner_kernel}
    Write $\omega = \lambda + i \epsilon$ for $\lambda \in \mathcal J \Subset (0, 1)$ and $0 < \epsilon < \epsilon_0$ for some sufficiently small $\epsilon_0$. Assume that there is a corner at $0$ parameterized by \eqref{eq:good_param} in a small neighborhood $U$. For $\theta, \theta' \in U$, we have
    \begin{equation}
        K^+_\omega(\theta, \theta') = \begin{cases} c_\omega(\theta - \theta' + i0)^{-1} & \theta \cdot \theta' > 0 \\ c_\omega (\theta - \alpha(1 + i \epsilon z^{++}_\omega)\theta')^{-1} & \theta < 0, \ \theta' > 0 \\ c_\omega (\theta - \alpha^{-1}(1 - i \epsilon z^{+-}_\omega)\theta')^{-1} & \theta > 0, \ \theta' < 0\end{cases} 
    \end{equation}
    and
    \begin{equation}
        K^-_\omega(\theta, \theta') = \begin{cases} c_\omega(\theta - \theta' - i0)^{-1} & \theta \cdot \theta' > 0 \\ c_\omega\big(\theta + (1 + i \epsilon z^{-+}_\omega) \theta'\big)^{-1} & \theta < 0, \ \theta' > 0 \\ c_\omega\big(\theta + (1 - i \epsilon z^{--}_\omega)\theta'\big)^{-1} & \theta > 0, \ \theta' < 0\end{cases} 
    \end{equation}
    where $\alpha := \alpha_+/\alpha_-$, $\Im z^{\mu\nu}_\omega = \mathcal O(\epsilon)$, and $\Re z^{\mu\nu}_\omega > \delta > 0$ for $\mu, \nu \in \{+, -\}$ locally uniformly for $\lambda \in \mathcal J$. $z_\omega^{\mu \nu}$ also depends smoothly on $\omega$. 
\end{proposition}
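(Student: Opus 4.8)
The plan is to insert the boundary parameterization \eqref{eq:good_param} directly into the kernel formula \eqref{eq:kernel_formula} and compute each of the six regions ($\theta\cdot\theta' > 0$, $\theta<0<\theta'$, $\theta'<0<\theta$, crossed with $\pm$) by hand. In $(\ell^+,\ell^-)$-coordinates we have $\ell^+(\mathbf x(\theta),\lambda) = \alpha_\pm\theta$ and $\ell^-(\mathbf x(\theta),\lambda) = \pm\theta$ on the two legs, so the numerator $\partial_\theta\ell^\pm(\mathbf x(\theta),\omega)$ is, up to an $\mathcal O(\epsilon)$ correction coming from $\omega = \lambda + i\epsilon$ versus $\lambda$, a nonzero constant on each leg. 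The denominator is $\ell^\pm(\mathbf x(\theta)-\mathbf x(\theta') + \delta\mathbf v(\theta),\omega)$, which by linearity of $\ell^\pm$ splits as $\ell^\pm(\mathbf x(\theta),\omega) - \ell^\pm(\mathbf x(\theta'),\omega) + \delta\,\ell^\pm(\mathbf v(\theta),\omega)$. Using \eqref{eq:L_dual} and the expansion \eqref{eq:Ll_formula} of $L^\mu_\omega\ell^\nu$ in $\epsilon$, one writes $\ell^\pm(\mathbf x(\theta),\omega)$ on each leg as (constant in $\omega$) $\times\theta$ plus $i\epsilon\times$(explicit linear-in-$\theta$ term) plus $\mathcal O(\epsilon^2)$. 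The key point is that $\mathbf v(\theta)$ is inward-pointing, so $\ell^\pm(\mathbf v(\theta),\lambda)$ has a definite sign, which pins down the sign of the $i0$ regularization in the same-leg case and makes $\Re z^{\mu\nu}_\omega > \delta > 0$ in the crossed-leg cases after the $\delta\to 0^+$ limit is taken together with the $\epsilon>0$ term.

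The first step is the \emph{same-leg} case $\theta\cdot\theta'>0$. Here $\mathbf x(\theta)$ and $\mathbf x(\theta')$ lie on the same straight segment, so $\ell^\pm(\mathbf x(\theta)-\mathbf x(\theta'),\omega)$ is exactly a constant (depending on which leg, but the same for $\theta,\theta'$) times $(\theta-\theta')$ — there is no $\epsilon$-dependence in this ratio at all, the $\omega$-dependence cancelling between numerator and denominator up to a smooth prefactor, which is exactly $c_\omega$ after accounting for the normalization in \eqref{eq:lim_fs}. The $\delta\mathbf v(\theta)$ term contributes $\delta\,\ell^\pm(\mathbf v(\theta),\lambda)$, a nonzero real multiple of $\delta$, whose sign (inward-pointing, combined with $\partial_\theta\ell^\pm$) determines whether we get $(\theta-\theta'+i0)^{-1}$ for $K^+$ or $(\theta-\theta'-i0)^{-1}$ for $K^-$; this sign computation is a short check against the orientation conventions in \eqref{eq:dual_factor}. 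The second step is the \emph{crossed-leg} cases. For $\theta<0<\theta'$ and $K^+$: $\ell^+(\mathbf x(\theta),\omega)$ is computed on the $\theta<0$ leg (where $\mathbf x(\theta) = (\alpha_-\theta,-\theta)$) and $\ell^+(\mathbf x(\theta'),\omega)$ on the $\theta'>0$ leg (where $\mathbf x(\theta') = (\alpha_+\theta',\theta')$). Carrying out $L^+_\omega$-type computations with \eqref{eq:Ll_formula}, the ratio becomes $c_\omega(\theta - \alpha(1+i\epsilon z^{++}_\omega)\theta')^{-1}$ with $\alpha = \alpha_+/\alpha_-$ and $z^{++}_\omega$ explicitly read off from the $\epsilon$-linear terms in \eqref{eq:Ll}; smoothness in $\omega$ up to the real line follows from Lemma~\ref{lem:fs_holomorphic} (or directly from the explicit formula). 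The analogous computations give the other three crossed-leg entries, and one checks $\Im z^{\mu\nu}_\omega = \mathcal O(\epsilon)$ and $\Re z^{\mu\nu}_\omega > \delta > 0$ locally uniformly in $\lambda\in\mathcal J$ from the sign of the real part of the bracketed expressions in \eqref{eq:Ll}, which never vanish for $\tau\in[-\alpha_-,\alpha_+]$ and small $\epsilon$ by the observation already recorded after \eqref{eq:Ll}.

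The main obstacle I expect is \emph{bookkeeping of signs and branch choices} rather than any conceptual difficulty: one must track simultaneously (i) the orientation of the b-parameterization and hence the sign of $\partial_\theta\ell^\pm$ on each leg, (ii) the inward-pointing condition on $\mathbf v(\theta)$ and its interaction with the $\delta\to 0^+$ limit, and (iii) the sign of the $i\epsilon$-term so that the poles of $K^+$ sit in the correct half-plane relative to those of $K^-$ (this is what ultimately distinguishes $\pm i0$ and is the origin of the $\Re z^{\mu\nu}_\omega > 0$ claim). Getting these consistent with the conventions of \S\ref{sec:FS} and with Proposition~\ref{prop:easy_kernel} — so that the corner formulas glue correctly to the away-from-corner formulas on the overlap region where $\theta,\theta'$ are on the same leg but not both near $0$ — is the delicate part. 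The actual algebra is just substitution into \eqref{eq:kernel_formula} and Taylor expansion in $\epsilon$, using that near a straight corner $L^\pm_\omega\ell^\nu$ is independent of $x$, which is exactly \eqref{eq:Ll_formula}. Finally, the smooth $\omega$-dependence of $z^{\mu\nu}_\omega$ and of the error terms is immediate from the explicit closed forms obtained, so no separate regularity argument is needed.
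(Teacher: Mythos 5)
Your proposal follows essentially the same route as the paper: substitute the parameterization~\eqref{eq:good_param} into the kernel formula~\eqref{eq:kernel_formula}, Taylor-expand $\ell^\pm(\bullet,\omega)$ in $\epsilon$ around $\ell^\pm(\bullet,\lambda)$ (the paper records this as~\eqref{eq:ell_in_ell}, \eqref{eq:ell_pos}, \eqref{eq:ell_neg}), and read off the six cases by tracking signs. One small clarification: the inward-pointing vector $\mathbf v$ only matters in the same-leg case $\theta\cdot\theta'>0$, where it selects the $\pm i0$ in the limit $\delta\to 0^+$; in the crossed-leg cases the $\delta$-limit is trivial since $\theta\neq\theta'$, and the bound $\Re z^{\mu\nu}_\omega > 0$ comes instead from the sign of the $\mathcal O(\epsilon)$ correction in the ratio $\ell^\pm(\mathbf x(\theta'),\omega)/\partial_\theta\ell^\pm(\mathbf x(\theta),\omega)$, which produces explicit positive constants like $\frac{\alpha_+^{-1}+\alpha_-^{-1}}{2\lambda(1-\lambda^2)}$ — you in fact say this correctly later when you point to the $\epsilon$-linear terms, so this is only a question of clean attribution, not a gap.
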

Here, $\alpha$ and $\alpha_\pm$ are the characteristic ratios defined in Definition~\ref{def:corner_type}.
\begin{proof}
1. We compute using~\eqref{eq:kernel_formula}. Since we assume that $\ell^-(x, \lambda) > 0$ for all $x \in U$ a choice of inward pointing vector field $\mathbf v$ is given by 
\[\mathbf v = \frac{1}{2}(-\lambda \hat x_1 + \sqrt{1 - \lambda^2} \hat x_2)\]
in a neighborhood of $\theta = 0$, where $\hat x_j$ are unit vectors in the $x_1$ and $x_2$ directions respectively. Observe that 
\begin{equation}\label{eq:ellv}
\begin{gathered}
    \ell^+(\mathbf v, \omega) = \frac{1}{2} \left(-\frac{\lambda}{\omega} + \sqrt{ \frac{1 - \lambda^2}{1 - \omega^2}} \right) = i \epsilon \frac{1}{2\lambda(1 - \lambda^2)} + \mathcal O(\epsilon^2) \\
    \ell^-(\mathbf v, \omega) = \frac{1}{2} \left(\frac{\lambda}{\omega} + \sqrt{ \frac{1 - \lambda^2}{1 - \omega^2}} \right) = 1 + i \epsilon \frac{2 \lambda^2 - 1}{2 \lambda (1 - \lambda^2)} + \mathcal O(\epsilon^2)
\end{gathered}
\end{equation}

Next we express $\ell^\pm(\bullet, \omega)$ in terms of $\ell^\pm(\bullet, \lambda)$. $\ell^\pm$ with arguments omitted will implicitly denote $\ell^\pm(\bullet, \lambda)$. Then we see that
\begin{align}
    \ell^\pm(\bullet, \omega) &= \frac{1}{2} \left[ \left( \sqrt{\frac{1 - \lambda^2}{1 - (\lambda + i \epsilon)^2}} \pm \frac{\lambda}{\lambda + i \epsilon} \right) \ell^+ + \left( \sqrt{\frac{1 - \lambda^2}{1 - (\lambda + i\epsilon)^2}} \mp  \frac{\lambda}{\lambda + i \epsilon} \right) \ell^- \right] \nonumber \\
    &= \left(1 + i \epsilon \frac{2\lambda^2 - 1}{2 \lambda (1 - \lambda^2)} + \mathcal O(\epsilon^2) \right) \ell^\pm + \left(i \epsilon \frac{1}{2\lambda(1 - \lambda^2)} + \mathcal O(\epsilon^2) \right) \ell^\mp \label{eq:ell_in_ell}
\end{align}
We emphasize that $\mathcal O(\epsilon^2)$ depends smoothly on on $\epsilon$ and $\lambda$ only, and is locally uniform in $\lambda$, and thus uniform for $\lambda \in \mathcal J$. Restricting~\eqref{eq:ell_in_ell} to the boundary using the parameterization~\eqref{eq:good_param}, we find
\begin{equation}\label{eq:ell_pos}
    \begin{gathered}
        \ell^+(\mathbf x(\theta), \omega) = \left(\alpha_+ + i \epsilon \frac{2 \lambda^2 \alpha_+ - \alpha_+ + 1}{2 \lambda(1 - \lambda^2)} + \mathcal O(\epsilon^2) \right) \theta \\
    \ell^-(\mathbf x(\theta), \omega) = \left(1 + i \epsilon \frac{2 \lambda^2  - 1 + \alpha_+}{2 \lambda(1 - \lambda^2)} + \mathcal O(\epsilon^2) \right) \theta
    \end{gathered}
\end{equation}
for $0 < \theta < \delta$, and 
\begin{equation}\label{eq:ell_neg}
\begin{gathered}
    \ell^+(\mathbf x(\theta), \omega) = \left(\alpha_- + i \epsilon \frac{2 \lambda^2 \alpha_- - \alpha_- - 1}{2 \lambda(1 - \lambda^2)} + \mathcal O(\epsilon^2) \right) \theta \\
    \ell^-(\mathbf x(\theta), \omega) = -\left(1 + i \epsilon \frac{2 \lambda^2 - 1 - \alpha_-}{2 \lambda(1 - \lambda^2)} + \mathcal O( \epsilon^2) \right) \theta
\end{gathered}
\end{equation}
for $0 > \theta > -\delta$.

\noindent
2. We first compute the kernel for $\theta, \theta' > 0$. Observe that~\eqref{eq:ellv} and~\eqref{eq:ell_pos} gives
\[\Im \frac{\ell^+(\mathbf v, \omega)}{\partial_\theta \ell^+(\mathbf x(\theta), \omega)} = \Im \frac{i \epsilon \frac{1}{2\lambda (1 - \lambda^2)}}{\alpha_+ + i \epsilon \frac{2 \lambda^2 \alpha_+ - \alpha_+ + 1}{2 \lambda(1 - \lambda^2)}} + \mathcal O(\epsilon^2) > 0\]
for all sufficiently small $\epsilon$. Using~\eqref{eq:kernel_formula}, we find
\begin{equation}\label{eq:diag+1}
    K^+_\omega(\theta, \theta') = c_\omega \lim_{\delta \to 0} \left(\theta - \theta' + \delta \frac{\ell^+(\mathbf v, \omega)}{\partial_\theta \ell^+(\mathbf x(\theta), \omega)} \right)^{-1} = c_\omega (\theta - \theta' +i0)^{-1}.
\end{equation}
Similarly, we see that 
\[\Im \frac{\ell^-(\mathbf v, \omega)}{\partial_\theta \ell^-(\mathbf x(\theta), \omega)} = \Im \frac{1 + i \epsilon \frac{2 \lambda^2 - 1}{2 \lambda (1 - \lambda^2)}}{1 + i \epsilon \frac{2 \lambda^2  - 1 + \alpha_+}{2 \lambda(1 - \lambda^2)}} + \mathcal O(\epsilon^2) = -\epsilon \frac{\alpha_+}{2 \lambda(1 - \lambda^2)} + \mathcal O(\epsilon^2) < 0,\]
so 
\begin{equation}\label{eq:diag+2}
    K^+_\omega(\theta, \theta') = c_\omega \lim_{\delta \to 0} \left(\theta - \theta' + \delta \frac{\ell^-(\mathbf v, \omega)}{\partial_\theta \ell^-(\mathbf x(\theta), \omega)} \right)^{-1} = c_\omega(\theta - \theta' - i0)^{-1}
\end{equation}

Now for $\theta, \theta' < 0$ and using~\eqref{eq:ell_neg}, we have
\[\Im \frac{\ell^+(\mathbf v, \omega)}{\partial_\theta \ell^+(\mathbf x(\theta), \omega)} = \Im \frac{i \epsilon \frac{1}{2 \lambda(1 - \lambda^2)}}{\alpha_- + i \epsilon \frac{2 \lambda^2 \alpha_- - \alpha_- - 1}{2 \lambda(1 - \lambda^2)}} + \mathcal O(\epsilon^2) > 0\]
and 
\[\Im \frac{\ell^-(\mathbf v, \omega)}{\partial_\theta \ell^-(\mathbf x(\theta), \omega)} = -\Im \frac{1 + i \epsilon \frac{2 \lambda^2 - 1}{2 \lambda (1 - \lambda^2)}}{1 + i \epsilon \frac{2 \lambda^2 - 1 - \alpha_-}{2 \lambda(1 - \lambda^2)}} + \mathcal O(\epsilon^2) = -\epsilon \frac{\alpha_-}{2 \lambda( 1- \lambda^2)} + \mathcal O(\epsilon^2) < 0\]
for all sufficiently small $\epsilon$. Therefore, similar to~\eqref{eq:diag+1} and~\eqref{eq:diag+2}, we have 
\begin{equation}\label{eq:diag-}
    K^-_\omega(\theta, \theta') = c_\omega(\theta - \theta' \pm i0)^{-1}
\end{equation}
for $\theta, \theta' < 0$. 

\noindent
3. Next, we consider the cases where $\theta \cdot \theta' < 0$. Since $\theta \neq \theta'$ in these cases, evaluating the limit the kernel formula~\eqref{eq:kernel_formula} is trivial, and~\eqref{eq:ell_pos} and~\eqref{eq:ell_neg} are both linear in $\theta$. Therefore, we have
\begin{equation}
    K(\theta, \theta) = c_\omega \left(\theta - \frac{\ell^\pm(\mathbf x(\theta'), \omega)}{\partial_\theta \ell^\pm(\mathbf x(\theta), \omega))} \right)^{-1}, \qquad \theta, \theta' \in U, \ \theta \cdot \theta' < 0. 
\end{equation}
Now we just compute the four possible cases.
\begin{enumerate}
\item $\theta < 0$, $\theta' > 0$ for $K^+_\omega$:
\begin{align*}
    \frac{\ell^+(\mathbf x(\theta'), \omega)}{\partial_\theta \ell^+(\mathbf x(\theta), \omega)} &= \alpha \left(\frac{1 + i \epsilon \frac{2 \lambda^2 - 1}{2 \lambda (1 - \lambda^2)} + i \epsilon \frac{\alpha^{-1}_+}{2 \lambda (1 - \lambda^2)}}{1 + i \epsilon \frac{2 \lambda^2 - 1}{2 \lambda(1 - \lambda^2)} - i \epsilon \frac{\alpha^{-1}_-}{2 \lambda(1 - \lambda^2)}} + \mathcal O(\epsilon^2) \right) \theta' \\
    &= \alpha \left(1 + i \epsilon \frac{\alpha^{-1}_+ + \alpha^{-1}_-}{2 \lambda(1 - \lambda^2)} + \mathcal O(\epsilon^2)\right) \theta',
\end{align*}
so
\begin{equation}\label{eq:++}
    K^+(\theta, \theta') = c_\omega (\theta - \alpha(1 + i \epsilon z_\omega^{++})\theta')^{-1}, \qquad z_\omega^{++} = \frac{\alpha^{-1}_+ + \alpha^{-1}_-}{2 \lambda(1 - \lambda^2)} + \mathcal O(\epsilon).
\end{equation}

\item $\theta < 0$, $\theta' > 0$ for $K^-_\omega$:
\begin{align*}
    \frac{\ell^-(\mathbf x(\theta'), \omega)}{\partial_\theta \ell^-(\mathbf x(\theta), \omega)} &= -\left(\frac{1 + i \epsilon \frac{2 \lambda^2 - 1}{2 \lambda(1 - \lambda^2)} + i \epsilon \frac{\alpha_+}{2 \lambda(1 - \lambda^2)}}{1 + i \epsilon \frac{2 \lambda^2 - 1}{2 \lambda(1 - \lambda^2)} - i \epsilon \frac{\alpha_-}{2 \lambda (1 - \lambda^2)}} + \mathcal O(\epsilon^2) \right) \theta' \\
    &= -\left(1 + i \epsilon \frac{\alpha_+ + \alpha_-}{2 \lambda(1 - \lambda^2)} + \mathcal O(\epsilon^2)\right) \theta',
\end{align*}
so
\begin{equation}\label{eq:-+}
    K^{-}(\theta, \theta') = c_\omega (\theta + (1 + i \epsilon z^{-+}_\omega) \theta')^{-1}, \qquad z_{\omega}^{-+} = \frac{\alpha_+ + \alpha_-}{2 \lambda(1 - \lambda^2)} + \mathcal O(\epsilon)
\end{equation}

\item $\theta > 0$, $\theta' < 0$ for $K^+_\omega$:
\begin{align*}
    \frac{\ell^+(\mathbf x(\theta'), \omega)}{\partial_\theta \ell^+(\mathbf x(\theta), \omega)} &= \alpha^{-1}\left(\frac{1 + i \epsilon \frac{2 \lambda^2 - 1}{2 \lambda (1 - \lambda^2)} - i \epsilon \frac{\alpha^{-1}_-}{2 \lambda (1 - \lambda^2)}}{1 + i \epsilon \frac{2 \lambda^2 - 1}{2 \lambda(1 - \lambda^2)} + i \epsilon \frac{\alpha^{-1}_+}{2 \lambda(1 - \lambda^2)}} + \mathcal O(\epsilon^2) \right) \theta' \\
    &= \alpha^{-1} \left(1 - i \epsilon \frac{\alpha^{-1}_+ + \alpha^{-1}_-}{2 \lambda(1 - \lambda^2)} + \mathcal O(\epsilon^2)\right) \theta'
\end{align*}
so
\begin{equation}\label{eq:+-}
    K^+(\theta, \theta') = c_\omega (\theta - \alpha^{-1} (1 - i \epsilon z_\omega^{+-})\theta')^{-1}, \qquad z_\omega^{+-} = \frac{\alpha^{-1}_+ + \alpha^{-1}_-}{2 \lambda(1 - \lambda^2)} + \mathcal O(\epsilon)
\end{equation}

\item $\theta > 0$, $\theta' < 0$ for $K^-_\omega$:  
\begin{align*}
    \frac{\ell^-(\mathbf x(\theta'), \omega)}{\partial_\theta \ell^-(\mathbf x(\theta), \omega)} &= -\left(\frac{1 + i \epsilon \frac{2 \lambda^2 - 1}{2 \lambda(1 - \lambda^2)} - i \epsilon \frac{\alpha_-}{2 \lambda(1 - \lambda^2)}}{1 + i \epsilon \frac{2 \lambda^2 - 1}{2 \lambda (1 - \lambda^2)} + i \epsilon \frac{\alpha_+}{2 \lambda(1 - \lambda^2)}} + \mathcal O(\epsilon^2) \right) \theta' \\
    &= - \left(1 - i \epsilon \frac{\alpha_- + \alpha_+}{2 \lambda(1 - \lambda^2)} + \mathcal O(\epsilon^2) \right) \theta'
\end{align*}
Therefore, 
\begin{equation}\label{eq:--}
    K^-(\theta, \theta') = c_\omega (\theta + (1 - i \epsilon z_\omega)\theta')^{-1}, \qquad z^{--}_\omega = \frac{\alpha_- + \alpha_+}{2 \lambda(1 - \lambda^2)} + \mathcal O(\epsilon)
\end{equation} 
\end{enumerate}
Observe that $z^{\mu\nu}_\omega$ given in~\eqref{eq:++}, \eqref{eq:-+}, \eqref{eq:+-}, and \eqref{eq:--} satisfies the desired properties, that is, the leading order part in $\epsilon$ is real and positive.
\end{proof}

\subsubsection{Remaining parts of the Schwartz kernel}
We compute the remaining parts of the kernel by transferring results from the smooth case. In particular, we use the following lemma, which allows us to smooth out the corner by only affecting one side of the corner while retaining $\lambda$-simplicity. See Figure \ref{fig:corner_smoothing}.
\begin{lemma}\label{lem:smoothing}
    Let $\Omega$ be a $\lambda$-simple domain with straight characteristic corners. Let $\kappa \subset \partial \Omega$ be a type-$(+, +)$ corner. Fix $r > 0$ small and define the half balls
    \[U_\pm = \{x \in \R^2: \pm \ell^+(x, \lambda) > 0, \ |x - \kappa| < r \}\]
    Then there exists $\lambda$-simple domains with straight corners $\Omega_{\kappa, \pm}$ such that $\partial \Omega_{\kappa, \pm} \cap B_\kappa(r)$ is smooth and
    \begin{equation}\label{eq:smooth_domain_properties}
        \Omega \subset \Omega_{\kappa, \pm}, \quad \partial \Omega \setminus U_\pm = \partial \Omega_{\kappa, \pm} \setminus U_\pm.
    \end{equation}
\end{lemma}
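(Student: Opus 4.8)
The plan is to reduce the statement to a one-dimensional graph-modification problem in coordinates adapted to the corner, and then to verify $\lambda$-simplicity by inspection. First I would fix $r>0$ small enough that $B_\kappa(r)$ contains none of the other corners of $\Omega$ and none of the points $x^+_{\max}, x^+_{\min}, x^-_{\max}$, and that $B_\kappa(r)\cap\partial\Omega$ lies inside the straight-corner neighborhood supplied by Definition~\ref{def:corner_type}. Placing $\kappa$ at the origin and using the linear coordinates $(s,t) := (\ell^+(\bullet,\lambda),\, \ell^-(\bullet,\lambda))$, Definition~\ref{def:corner_type} for a type-$(+,+)$ corner shows that within $B_\kappa(r)$ the set $\partial\Omega$ is the graph $t = \phi_0(s) := \max(-s/\alpha_-,\, s/\alpha_+)$ and $\Omega$ is $\{t > \phi_0(s)\}$, with $U_+ = \{s>0\}\cap B_\kappa(r)$ containing the edge $\{t = s/\alpha_+\}$ and $U_- = \{s<0\}\cap B_\kappa(r)$ containing the edge $\{t = -s/\alpha_-\}$.

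To construct $\Omega_{\kappa,+}$ I would leave $\partial\Omega$ unchanged outside $U_+$ and, over $U_+$, replace $\phi_0$ by a function $\phi_+\in C^\infty(\R)$ with four properties: (i) $\phi_+(s) = -s/\alpha_-$ for $s\le\varepsilon$ with $\varepsilon>0$ small, so that near $\kappa$ the new boundary is the straight line extending the retained edge and hence is $C^\infty$ there; (ii) $\phi_+(s) = s/\alpha_+$ for $s\ge a_+$, where $a_+$ is chosen so the modified arc stays in $U_+$, and the two pieces agree to infinite order, so $\partial\Omega_{\kappa,+}$ is smooth throughout $B_\kappa(r)$; (iii) $\phi_+(s)\le s/\alpha_+ = \phi_0(s)$ for all $s\ge 0$, which forces $\Omega\subset\Omega_{\kappa,+}$; and (iv) $\phi_+'$ has a single zero $s_*\in(\varepsilon,a_+)$, with $\phi_+''(s_*)>0$ and $\phi_+'>0$ on $(s_*,\infty)$. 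Such a $\phi_+$ exists: its derivative must run from $-1/\alpha_-$ at $s=\varepsilon$ up through $0$ and, because of the mismatch between the starting height $-\varepsilon/\alpha_-$ and the target height $a_+/\alpha_+$, temporarily overshoot the slope $1/\alpha_+$ before returning to it to infinite order at $a_+$; this is an elementary construction with cutoff functions, which also lets one keep $s/\alpha_+ - \phi_+(s) > 0$ on $[\varepsilon, a_+)$. The domain $\Omega_{\kappa,-}$ is built identically with the roles of the two edges exchanged: keep $\{t = s/\alpha_+,\ s\ge 0\}$ and, over $U_-$, replace $\phi_0$ by a smooth $\phi_-$ equal to the line $t = s/\alpha_+$ near $0$ and enjoying the analogous four properties.

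Finally I would verify that $\Omega_{\kappa,\pm}$ are $\lambda$-simple with straight characteristic corners in the sense of Definition~\ref{def:lambda_simple}. Modifying a single boundary arc by a smooth graph with the same endpoints keeps $\partial\Omega_{\kappa,\pm}$ a piecewise-smooth Jordan curve, so $\Omega_{\kappa,\pm}$ is open, bounded, simply connected with Lipschitz, piecewise-smooth boundary, and its non-smooth points are exactly the corners of $\Omega$ other than $\kappa$, which are untouched, still straight, and still extrema of $\ell^\pm$. On the modified arc $\ell^+ = s$ is a coordinate, so $\ell^+$ acquires no new critical points and retains its (unchanged) unique maximum and minimum, while $\ell^- = \phi_\pm(s)$ has exactly one critical point there, a nondegenerate minimum at $s_*$. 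Since $\Omega\subset\Omega_{\kappa,\pm}$ with equality outside $B_\kappa(r)$, since $\ell^-\ge 0 = \ell^-(\kappa)$ on $\partial\Omega$, and since $\phi_\pm(s_*)<0$, this $s_*$ is the unique new $x^-_{\min}$; the point $x^-_{\max}$ and the other three distinguished points, the non-degeneracy of the remaining smooth extrema, and the inequalities $\ell^{\pm}(x^{\mp}_{\max})\neq\ell^{\pm}(x^{\mp}_{\min})$ all persist by continuity once $r$, hence $s_*$, is small, so Definition~\ref{def:lambda_simple} holds. I expect the only real work to be the explicit construction of $\phi_+$ in the second step — arranging simultaneously the infinite-order matching at both ends, the one-sided bound $\phi_+\le\phi_0$, and exactly one nondegenerate critical point of $\ell^-$ on the modified arc is elementary but slightly fiddly bump-function bookkeeping.
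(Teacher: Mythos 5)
Your proposal is correct and follows essentially the same strategy as the paper's proof: pass to $(\ell^+,\ell^-)$ coordinates, realize the boundary near $\kappa$ as a piecewise-linear graph $\ell^-=\phi_0(\ell^+)$, and replace $\phi_0$ on the $\{\ell^+>0\}$ side by a smooth function lying below $\phi_0$ with a unique nondegenerate minimum, then verify $\lambda$-simplicity by inspection. The only cosmetic difference is that the paper obtains the smooth replacement $\Gamma$ by mollifying a discontinuous piecewise-linear profile $\widetilde\Gamma$ rather than by direct bump-function construction of $\phi_+$.
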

\begin{proof}
    We construct $\Omega_{\kappa, +}$ since the other sign is similar. We may assume that $\kappa = 0$ and that $r$ is sufficiently small so that the parameterization~\eqref{eq:good_param} holds for $|\theta| < r$. Let $\delta = \min \{r/2, \alpha_+ r/2\}$ and put
    \begin{equation}
        \widetilde \Gamma(\theta) = \begin{cases}-\alpha^{-1}_- \theta & \theta < \delta, \\ \alpha^{-1}_+ \theta & \theta \ge \delta. \end{cases}
    \end{equation}
    It is easy to see that there exists an even function $\psi \in \CIc(\R; \R)$ such that $\supp \psi \subset (-\delta/10, \delta/10)$, $\int \psi = 1$, and that $\Gamma := \psi * \widetilde \Gamma$ has a unique nondegenerate critical point. Then in $(\ell^+, \ell^-)$ coordinates, 
    \[\tilde{\mathbf x}(\theta) = (\theta, \Gamma(\theta))\]
    gives the parameterization for the boundary of $\Omega_{\kappa, +}$. Smoothness and $\lambda$-simplicity follows from the construction. 
\end{proof}
\begin{figure}
    \centering
    \includegraphics[scale = 0.9]{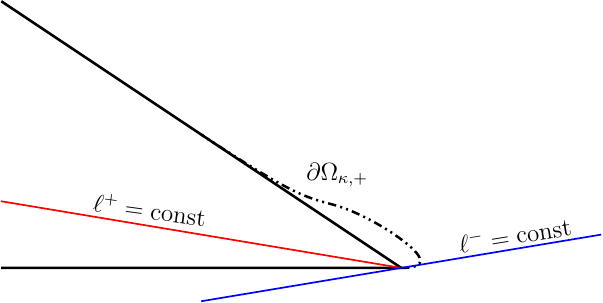}
    \caption{Dotted line represents the new boundary of the smoothed domain $\partial \Omega_{\kappa, +}$ by modifying one side of the corner. By visual inspection, the domain is still $\lambda$-simple.}
    \label{fig:corner_smoothing}
\end{figure}

In the following lemma, we give formulas for the Schwartz kernel of $d \mathcal C_\omega$ when one of the incoming or outgoing variables is in a neighborhood of a type-$(+, +)$ corner. The formula near the other corners can be easily deduced from this case using reflections. 
\begin{proposition}\label{prop:rest_of_kernel}
Let $\Omega$ be Morse--Smale for $\lambda \in \mathcal J \Subset (0, 1)$ and let $\kappa$ be a type-$(+, +)$. Fix a $\lambda$-dependent family of b-parameterizations $\mathbf x: \mathbb S^1 \to \partial \Omega$ such that the implicit dependence on $\lambda$ is smooth, $\mathbf x(0) = \kappa$ and $\gamma^+_\lambda:\mathbb S^1 \to \mathbb S^1$ is smooth. Then for $\epsilon = \Im \omega > 0$ sufficiently small, we have the following formulas. 
\begin{enumerate}
    \item In a sufficiently small neighborhood $U$ of $(0, \gamma_\lambda^+(0))$, there exists $\psi^{\pm}_\omega \in C^\infty(U)$ smooth in $\omega$ up to $\epsilon = \Im \omega = 0$ such that 
    \[K_\omega(\theta, \theta') = \sum_{\pm} H(\pm\theta) \left[\tilde c_\omega^{+}(\theta') (\gamma_\lambda^+(\theta) - \theta' + i \epsilon z_\omega^{+}(\theta'))^{-1} + \psi^{\pm}_\omega(\theta, \theta') \right] \]
    for $(\theta, \theta') \in U$.

    Similarly, in a small neighborhood $V$ of $(\gamma_\lambda^+(0), 0)$, there exists $\varphi^\pm_\omega \in C^\infty(U)$ smooth in $\omega$ up to $\epsilon = \Im \omega = 0$ such that 
    \[K_\omega(\theta, \theta') = \sum_{\pm} H(\pm\theta') \left[\tilde c_\omega^{+}(\theta') (\gamma_\lambda^+(\theta) - \theta' + i \epsilon z_\omega^{+}(\theta'))^{-1} + \varphi^{\pm}_\omega(\theta, \theta') \right] \]
    for $(\theta, \theta') \in V$. 

    Here, $z_\omega^{+} \in \overline C^\infty(\partial \Omega)$ is smooth in $\omega = \lambda + i\epsilon$ up to $\epsilon = 0$. Furthermore $\Re z_\omega^\pm (\theta) \ge \delta > 0$ where $\delta$ is independent of $\epsilon$ and $\theta$.
    
    $\tilde c_\omega^{+} \in C^\infty(\mathbb S^1)$ is also smooth in $\omega$ up to $\epsilon = 0$, and 
    \begin{equation}
        \tilde c_\omega^+(\theta') = \frac{c_\omega}{\partial_{\theta'} \gamma^+_\lambda(\theta')} + \mathcal O(\epsilon).
    \end{equation}
    \item If $\theta_c \in (0, 1)$ is such that $\mathbf x(\theta_c)$ is a corner, then there exists a neighborhood $U$ of $(0, \theta_c)$ and $\psi_\omega^{\mu, \nu} \in C^\infty(U)$ smooth in $\omega$ up to $\epsilon = \Im \omega = 0$ such that 
    \[K_\omega(\theta, \theta') = \sum_{\mu, \nu \in \{+, -\}}H(\mu\theta)H(\nu(\theta' - \theta_c)) \psi^{\mu, \nu}_\omega(\theta, \theta'), \qquad (\theta, \theta') \in U\]

    \item Let $\theta_0 \in \partial \Omega \setminus (\mathcal K \cup \gamma_\lambda^+(\mathcal K, \lambda) \cup \gamma_\lambda^-(\mathcal K, \lambda))$. In a sufficiently small neighborhood $U$ of $(0, \theta_0)$, there exists $\psi^\pm_\omega \in C^\infty(U)$ smooth in $\omega$ up to $\epsilon = \Im \omega = 0$ such that 
    \[K_\omega(\theta, \theta') = \sum_{\pm} H(\pm \theta) \psi_\omega^\pm(\theta, \theta'), \qquad (\theta, \theta') \in U,\]
    and in a sufficiently small neighborhood $V$ of $(\theta_0, 0)$, there exists $\varphi^\pm_\omega \in C^\infty(U)$ locally uniformly in $\omega$ such that 
    \[K_\omega(\theta, \theta') = \sum_{\pm} H(\pm \theta') \varphi_\omega^\pm(\theta, \theta'), \qquad (\theta, \theta') \in U,\]
\end{enumerate}
    
\end{proposition}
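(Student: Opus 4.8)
The plan is to reduce all three formulas to the smooth-boundary kernel asymptotics already established in Proposition~\ref{prop:easy_kernel}, by smoothing out the corner $\kappa$ one side at a time using Lemma~\ref{lem:smoothing}. What makes this work is that the kernel formula~\eqref{eq:kernel_formula} is \emph{geometrically local}: $K_\omega(\theta,\theta')$ depends only on the positions $\mathbf x(\theta),\mathbf x(\theta')$ and on the inward vector $\mathbf v(\theta)$, so two domains with literally identical boundary (together with matching b-parameterizations and a common inward vector field) have identical kernels at any $(\theta,\theta')$ whose images lie in the common part of the boundary.

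First I would normalize $\kappa=\mathbf x(0)$ via~\eqref{eq:good_param}, so the two boundary pieces at $\kappa$ are $\{\theta>0\}$ (where $\ell^+>0$) and $\{\theta<0\}$, and let $\Omega_{\kappa,\pm}$ be the domains from Lemma~\ref{lem:smoothing}: these coincide with $\Omega$ off the half-ball $U_\pm=\{\pm\ell^+>0,\ |x-\kappa|<r\}$ and have smooth boundary near $\kappa$. Choosing a b-parameterization of $\Omega_{\kappa,\mp}$ equal to $\mathbf x$ on the common set $\partial\Omega\setminus U_\pm$ (and, as permitted in the statement, arranged so that the associated $\gamma^+_\lambda$ is globally smooth), and a single smooth vector field $\mathbf V$ inward-pointing for $\Omega$ and for all the $\Omega_{\kappa,\pm}$ at once (possible since the $\mathbf v$ of Proposition~\ref{prop:corner_kernel} keeps $\ell^+$ fixed and increases $\ell^-$, hence is inward for each of these domains near $\kappa$, and the boundaries agree away from $\kappa$), one obtains the transfer identity
\[
K^{\Omega}_\omega(\theta,\theta')=K^{\Omega_{\kappa,\mp}}_\omega(\theta,\theta')\qquad\text{whenever }\pm\theta>0\text{ and }\mathbf x(\theta')\notin U_+\cup U_-.
\]
Since $\Omega_{\kappa,\mp}$ is $\lambda$-simple with boundary smooth near $\kappa$, Proposition~\ref{prop:easy_kernel} applies to it at such $(\theta,\theta')$. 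I would then check that the involution $\gamma^+_\lambda$, the function $z^+_\omega$, and the constant $\tilde c^+_\omega$ of $\Omega_{\kappa,\mp}$ agree with those of $\Omega$ on the relevant neighbourhoods, again by locality: $\gamma^+_\lambda$ sends a point near $\kappa$ to the far intersection of an $\ell^+$-level set with $\partial\Omega$, hence to a point near $\gamma^+_\lambda(\kappa)$ which lies off $U_+\cup U_-$, while $\gamma^-_\lambda$ fixes $\kappa$.

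With this in hand the cases follow by inspection. In every case $\theta\neq\theta'$, so the diagonal terms of Proposition~\ref{prop:easy_kernel} vanish and, modulo functions smooth up to $\epsilon=0$, $K_\omega$ is a sum of a $\gamma^+_\lambda$-term and a $\gamma^-_\lambda$-term. In case (3) the point $\theta'$ is bounded away from $\mathcal K$ and from $\gamma^\pm_\lambda(\kappa)$, and since $\gamma^-_\lambda(\kappa)=\kappa$ both terms are nonsingular, so $K^\Omega_\omega$ is smooth near $(0,\theta_0)$; splitting by $H(\pm\theta)$ according to which $\Omega_{\kappa,\mp}$ was used gives $\sum_\pm H(\pm\theta)\psi^\pm_\omega$. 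In case (1) we have $\theta'$ near $\gamma^+_\lambda(\kappa)\neq\kappa$, so only the $\gamma^+_\lambda$-term survives, producing $\tilde c^+_\omega(\theta')(\gamma^+_\lambda(\theta)-\theta'+i\epsilon z^+_\omega(\theta'))^{-1}$ plus a smooth remainder, which after the $H(\pm\theta)$ split is the claimed expression, with the formula for $\tilde c^+_\omega$ inherited from Proposition~\ref{prop:easy_kernel}(3). The $V$-neighbourhood statements near $(\gamma^+_\lambda(0),0)$ and near $(\theta_0,0)$ are obtained by rerunning the argument with $\theta'$ (rather than $\theta$) in the small neighbourhood of $\kappa$, so that the Heaviside split is in $\theta'$; since $K_\omega$ is not symmetric this genuinely has to be redone, but it is verbatim.

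Case (2) requires smoothing both corners: $\theta$ is near $\kappa$ and $\theta'$ near another corner $\mathbf x(\theta_c)$, which here is genuinely distinct, i.e.\ $\mathbf x(\theta_c)\neq\gamma^\pm_\lambda(\kappa)$ (the reflected-corner subcase being case (1)). Applying Lemma~\ref{lem:smoothing} at $\mathbf x(\theta_c)$ as well — after conjugating by a reflection $\mathrm{Ref}_j$ to make it type-$(+,+)$, which the construction in Lemma~\ref{lem:smoothing} respects — yields four doubly-smoothed domains and the transfer identity on the four quadrants $\{\pm\theta>0\}\times\{\pm(\theta'-\theta_c)>0\}$; on each, Proposition~\ref{prop:easy_kernel} gives a kernel whose $\gamma^-_\lambda$-term is nonsingular (since $\gamma^-_\lambda(\kappa)=\kappa\neq\mathbf x(\theta_c)$) and whose $\gamma^+_\lambda$-term is nonsingular (since $\mathbf x(\theta_c)\neq\gamma^+_\lambda(\kappa)$), so $K_\omega$ is smooth and the four-fold Heaviside split gives $\sum_{\mu,\nu}H(\mu\theta)H(\nu(\theta'-\theta_c))\psi^{\mu,\nu}_\omega$. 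I expect the main obstacle to be bookkeeping rather than a real difficulty: justifying the transfer identity carefully (compatibility of the b-parameterizations and of the inward vector field across $\Omega$ and the smoothed domains, and — the one genuinely substantive point — that $\gamma^\pm_\lambda$, $z^\pm_\omega$, $\tilde c^\pm_\omega$ of the smoothed domains coincide locally with those of $\Omega$, together with smoothness of the perturbed involution near $\kappa$ and near its own $\ell^\pm$-extrema), and tracking exactly which of the finitely many singular loci of Proposition~\ref{prop:easy_kernel} can meet the region under consideration.
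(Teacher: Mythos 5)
Your approach is essentially the same as the paper's: the single substantive tool is the transfer identity obtained from Lemma~\ref{lem:smoothing} (the Schwartz kernel~\eqref{eq:kernel_formula} depends only on $\mathbf x(\theta)$, $\mathbf x(\theta')$, and $\mathbf v(\theta)$, hence two domains with matching boundary parameterizations and a common inward vector field have identical kernels on the common boundary piece), and you then read off the structure from the smooth-boundary kernel asymptotics of Proposition~\ref{prop:easy_kernel} applied to the one-sided smoothings $\Omega_{\kappa,\pm}$. This is exactly what the paper does; for (1) the paper quotes the analogous DWZ lemma directly, which you recover from Proposition~\ref{prop:easy_kernel} by noting only the $\gamma^+_\lambda$ singular term survives near $(0,\gamma^+_\lambda(0))$ — same content, different citation. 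The supporting observations you make — that $\gamma^-_\lambda(\kappa)=\kappa$ for a type-$(+,+)$ corner, that $\gamma^+_\lambda(\kappa)$ cannot itself be a corner because it would contradict the $\lambda$-simple condition $\ell^+(x^-_{\max})\neq\ell^+(x^-_{\min})$, and that $\kappa$ and $\gamma^+_\lambda(\kappa)$ are noncharacteristic in the smoothed domains — are all correct and are the points the paper invokes tersely.

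The one place you genuinely deviate is case (2), where you propose smoothing \emph{both} corners, producing four doubly-smoothed domains. This works, and it is in fact the honest way to make the paper's phrase ``the Schwartz kernel of $d\mathcal C_\omega$ for a \emph{smooth} $\lambda$-simple domain is smooth away from $\mathrm{Diag}\cup\{\gamma^\pm_\lambda\text{-graphs}\}$'' literally applicable; single-smoothing alone leaves a domain with a corner at $\mathbf x(\theta_c)$ so that Proposition~\ref{prop:easy_kernel} does not apply near $(0,\theta_c)$, and one would otherwise be reaching for the very result being proved. Your bookkeeping caveat about the reflections is real but harmless: Lemma~\ref{lem:smoothing} is stated only for type-$(+,+)$ corners, and since conjugating by $\mathrm{Ref}_j$ reshapes the whole domain (changing the type of $\kappa$), you should either perform the two smoothings in sequence (smooth $\kappa$, then reflect the once-smoothed domain to smooth $\mathbf x(\theta_c)$, then reflect back) or — more simply — observe that the construction in Lemma~\ref{lem:smoothing} is manifestly symmetric in the corner type so that the lemma holds for all four types without any reflection. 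Alternatively, and this is perhaps worth noting, both case (2) and case (3) can be handled without any smoothing at all: for $\theta\neq\theta'$ and $\mathbf x(\theta')\notin\{\gamma^\pm_\lambda(\mathbf x(\theta))\}$, the denominator in~\eqref{eq:kernel_formula} is bounded away from $0$ uniformly down to $\epsilon=0$, so the $\delta\to0^+$ limit is trivial and $K_\omega$ is literally
\[
K^\pm_\omega(\theta,\theta')=c_\omega\,\frac{\partial_\theta\ell^\pm(\mathbf x(\theta),\omega)}{\ell^\pm(\mathbf x(\theta)-\mathbf x(\theta'),\omega)},
\]
which is smooth in $(\theta,\theta',\omega)$ wherever $\mathbf x$ is smooth and hence piecewise smooth with jumps only across $\{\theta=0\}$ and $\{\theta'=\theta_c\}$; the Heaviside split is then immediate, with no appeal to Proposition~\ref{prop:easy_kernel}. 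Either route is fine; yours reuses the machinery you've already set up, at the cost of the extra bookkeeping you flagged.
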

\begin{proof}
    In the notation of Lemma~\ref{lem:smoothing}, 
\[K_\omega(\theta, \theta'; \Omega) = K_\omega(\theta, \theta'; \Omega_{0, \pm}) \quad \text{for} \quad \theta, \theta' \notin U_\pm\]
where $K_\omega(\theta, \theta'; \Omega_\bullet)$ denotes the Schwartz kernel of $d \mathcal C_\omega$ for the domain $\Omega_\bullet$. Since $\Omega_{0, \pm}$ is smooth near the corner at $0$, we have the explicit formulas for $K_\omega(\theta, \theta'; \Omega_{0, \pm})$ near $\kappa$.

    Observe that $0$ and and $\gamma_\lambda^\pm(0)$ are both noncharacteristic in the domain $\Omega_{0, \pm}$ as constructed in Lemma~\ref{lem:smoothing}. Therefore (1) follows from \cite[Lemma 4.13]{DWZ}. 

    (2) and (3) follows immediately from Proposition~\ref{lem:smoothing} and the fact that the Schwartz kernel of $d\mathcal C_\omega$ for a smooth $\lambda$-simple domain is smooth away from $\mathrm{Diag} \cup \{(\theta, \theta') : \theta' = \gamma_\lambda^\pm(\theta)\}$. 
\end{proof}

We end this section by collecting some convergence properties of the restricted single layer potential. First, cutting off the operator away from the corners, it is clear that~\cite[Lemma 4.16]{DWZ} still holds in the following sense:
\begin{lemma}\label{lem:ddC_easy}
    Assume that $\Omega$ is $\lambda$-simple for some $\lambda \in (0, 1)$. Then for $s > t$ and $\chi, \chi' \in \CIc(\partial \Omega \setminus \mathcal K)$, 
    \begin{equation}
        \|\chi(\partial_\omega^k \mathcal C_{\omega_j} - \partial_\lambda^k \mathcal C_{\lambda + i0}) \chi' \|_{H^{s + k} \to H^{t + 1}} \to 0
    \end{equation}
    for any sequence $\omega_j \to \lambda$ with $\Im \omega_j > 0$. 
\end{lemma}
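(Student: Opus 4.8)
The plan is to localize away from the corners --- where, by Proposition~\ref{prop:easy_kernel}, the Schwartz kernel of $d\mathcal C_\omega$ has exactly the structure it has in the smooth case --- and then run the argument of \cite[Lemma 4.16]{DWZ} with only cosmetic changes, the margin $s>t$ being spent to absorb the $\omega$-dependent regularization of the reflected part of the kernel.

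First I would fix a b-parameterization identifying $\partial\Omega$ with $\mathbb S^1$ and choose an open $U\Subset\partial\Omega\setminus\mathcal K$ with $\supp\chi\cup\supp\chi'\subset U$. Since $\chi,\chi'$ vanish near $\mathcal K$, the operator $\chi\,\mathcal C_\omega\,\chi'$ only involves the kernel $K_\omega(\theta,\theta')$ for $(\theta,\theta')\in U\times U$, so none of the corner formulas of Propositions~\ref{prop:corner_kernel} and~\ref{prop:rest_of_kernel} enter. By the explicit formula for $E_\omega$ in Lemma~\ref{lem:fs} --- equivalently, by integrating the kernel formulas of Proposition~\ref{prop:easy_kernel} once in $\theta$, as in \cite[\S4.6]{DWZ} --- the operator $\chi\,\mathcal C_\omega\,\chi'$ has Schwartz kernel of the form
\[
  c_\omega\,P_0(\theta,\theta')\;+\;\sum_{\pm}\tilde c^\pm_\omega(\theta')\,\chi_{\Diag}(\gamma^\pm_\lambda(\theta),\theta')\,\log\!\bigl(\gamma^\pm_\lambda(\theta)-\theta'+i\epsilon z^\pm_\omega(\theta')\bigr)\;+\;G_\omega(\theta,\theta'),
\]
all cut down by $\chi\otimes\chi'$: here $P_0$ is a fixed ($\omega$-independent) $\log$-type kernel supported near $\Diag$, so that $c_\omega P_0$ is a classical $\Psi$DO of order $-1$; the reflected part is the pullback by the diffeomorphism $\gamma^\pm_\lambda$ of an order-$(-1)$ operator; $\Re z^\pm_\omega\ge\delta>0$; and $\tilde c^\pm_\omega,z^\pm_\omega,G_\omega$ depend smoothly on $\omega\in\mathcal J+i[0,\epsilon_0)$ up to $\epsilon=\Im\omega=0$, with $G_\omega\in C^\infty(\mathbb S^1\times\mathbb S^1)$. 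The constant contribution coming from the mean of $\mathcal C_\omega$ is harmless, since it converges in every operator norm because $E_{\omega_j}\to E_{\lambda+i0}$ in $\mathcal D'$ by Lemma~\ref{lem:fs_holomorphic}; we fold it into $G_\omega$. I would then estimate the three pieces in turn.

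The smoothing remainder and the diagonal term are immediate: $\partial_\omega^k G_\omega$ is continuous on $\mathcal J+i[0,\epsilon_0)$ with values in $C^\infty(\mathbb S^1\times\mathbb S^1)$, so $\partial_\omega^k G_{\omega_j}\to\partial_\lambda^k G_{\lambda+i0}$ in $C^\infty$, hence in $H^{s'}\to H^{t'}$ for all $s',t'$; and in $c_\omega P_0$ the operator $\partial_\omega^k$ acts only on the scalar $c_\omega$, which is holomorphic for $\Im\omega>0$ with $\partial_\omega^k c_{\omega_j}\to\partial_\lambda^k c_\lambda$ (Lemma~\ref{lem:fs}), so this term's contribution to $\partial_\omega^k\mathcal C_{\omega_j}-\partial_\lambda^k\mathcal C_{\lambda+i0}$ is a scalar tending to $0$ times the fixed order-$(-1)$ operator $P_0$, which tends to $0$ in $H^{s+k}\to H^{s+k+1}\hookrightarrow H^{t+1}$. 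The substance is the reflected pieces, which I would handle exactly as in \cite[Lemma 4.16]{DWZ}. Conjugating by $\gamma^\pm_\lambda$ reduces each to a classical order-$(-1)$ operator on $\mathbb S^1$ with kernel $\log(\eta-\theta'+i\epsilon z^\pm_\omega(\theta'))$ times smooth coefficients; since $\Re z^\pm_\omega\ge\delta>0$, the quantity $\epsilon\,\Re z^\pm_\omega$ is a genuine positive regularization for $\epsilon>0$, and the $\epsilon\to0$ limit of this factor is the $+i0$-regularized kernel $\log(\eta-\theta'+i0)$. Each application of $\partial_\omega$ either falls on a smooth coefficient (which converges as $\omega_j\to\lambda$) or brings down a factor of $\epsilon$ together with a kernel one order worse, so after $k$ differentiations every resulting term either carries a factor $\epsilon^{\ge1}\to0$ and maps $H^{s+k}$ into some $H^{s'}$ with $s'\ge s+1>t+1$, or is of the form $(\text{smooth, convergent coefficient})\cdot\log(\eta-\theta'+i\epsilon z^\pm_\omega)$. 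For the latter, splitting the difference between $\omega_j$ and $\lambda+i0$ into $[\text{coefficient difference}]\cdot\log(\,\cdot+i\epsilon_j z)$ plus $[\text{limiting coefficient}]\cdot[\log(\,\cdot+i\epsilon_j z)-\log(\,\cdot+i0)]$, the first summand tends to $0$ in $H^{s+k}\to H^{s+k+1}$, while the symbol of $\log(\,\cdot+i\epsilon z)-\log(\,\cdot+i0)$ is $O\bigl(\min(\langle\xi\rangle^{-1},\epsilon)\bigr)=O\bigl(\epsilon^{\vartheta}\langle\xi\rangle^{\vartheta-1}\bigr)$ for every $\vartheta\in(0,1)$ (and likewise for its $\xi$-derivatives), so it defines an operator $H^{s+k}\to H^{s+k+1-\vartheta}$ of norm $O(\epsilon^\vartheta)$; choosing $\vartheta\in(0,\min(1,s+k-t))$ --- possible precisely because $s>t$ --- makes $s+k+1-\vartheta>t+1$, so this term also converges to $0$ in $H^{s+k}\to H^{t+1}$. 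Summing the three contributions proves the lemma.

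The main obstacle is this last step: one must verify that replacing the plain $i0$-regularization of the smooth case by the $\omega$-dependent parameter $i\epsilon z^\pm_\omega$ does not destroy the quantitative gain, which is exactly where the positivity $\Re z^\pm_\omega>0$ recorded in Proposition~\ref{prop:easy_kernel} is used and where the one-derivative margin $s>t$ is consumed; everything else is a routine transcription of the corresponding estimate in \cite[\S4.6]{DWZ}.
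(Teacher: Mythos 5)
Your proposal is correct and takes the same route the paper implicitly relies on: the paper gives this lemma no separate proof, stating instead that once both variables are cut off away from $\mathcal K$, the Schwartz kernel of $\mathcal C_\omega$ has exactly the structure of the smooth case (Proposition~\ref{prop:easy_kernel}), so the argument of \cite[Lemma~4.16]{DWZ} applies verbatim. Your decomposition into diagonal, reflected, and smoothing pieces, and the interpolation $\min(\langle\xi\rangle^{-1},\epsilon)\le \epsilon^\vartheta\langle\xi\rangle^{\vartheta-1}$ that trades $\vartheta$ derivatives (affordable since $s>t$) for a factor $\epsilon^\vartheta\to0$ in the reflected piece, is precisely the content of that DWZ argument.

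One small technical point worth noticing (it does not change the conclusion): the decomposition of Proposition~\ref{prop:easy_kernel} is \emph{not} holomorphic in $\omega$ --- the involution $\gamma^\pm_\lambda$, the cutoff $\chi_{\Diag}(\gamma^\pm_\lambda(\theta),\theta')$, and the regularizer $z^\pm_\omega$ all depend on $\Re\omega$ rather than on $\omega$ itself --- so applying $\partial_\omega^k$ termwise to that formula is not literally the same as applying $\partial_\omega^k$ to the true kernel $E_\omega(\mathbf x(\theta)-\mathbf x(\theta'))$. Relatedly, for $\omega_j=\lambda_j+i\epsilon_j$ with $\lambda_j\neq\lambda$ the reflected singularity of the $\omega_j$ kernel sits on $\{\theta'=\gamma^\pm_{\lambda_j}(\theta)\}$, which differs from $\{\theta'=\gamma^\pm_\lambda(\theta)\}$, so one must either conjugate by the $\lambda_j$-dependent involution and then use $\gamma^\pm_{\lambda_j}\to\gamma^\pm_\lambda$ in $C^\infty$ (the pullback difference costing an arbitrarily small loss of Sobolev order), or work directly with $\partial_\omega^k E_\omega$ via Lemma~\ref{lem:fs_holomorphic}. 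Both are routine and are already handled in the DWZ proof you cite, so this is a matter of making the argument airtight rather than a gap in the plan.
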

Next, if only one of the cutoffs is supported away from the corners, we can still deduce the necessary convergence properties from~\cite[Lemma 4.16]{DWZ}, since the only difference is an application of a Heaviside cutoff. The following lemma then follows immediately from Proposition~\ref{prop:rest_of_kernel}. 
\begin{lemma}\label{lem:ddC_heaviside}
    Assume that $\Omega$ is $\lambda$-simple for some $\lambda \in (0, 1)$. Let $\chi_\bo \in \overline C^\infty(\partial \Omega)$ be such that $\chi_\bo = 1$ near $\mathcal K$, and let $\chi \in \CIc(\partial \Omega \setminus \mathcal K)$. Fix $a \in (-1, 0)$. Then for $s \ge a + \ha$ and $t < a + \ha$, we have
    \begin{equation}
        \|\chi(\partial_\omega^k \mathcal C_{\omega_j} - \partial_\lambda^k \mathcal C_{\lambda + i0}) \chi_\bo \|_{H^{s, a}_\bo \to H^{t + 1 - k}} \to 0.
    \end{equation}
    for any sequence $\omega_j \to \lambda$ with $\Im \omega_j > 0$. 

    On the other hand, for $s > t \ge a - \ha$, we have
    \begin{equation}
        \|\chi_\bo(\partial_\omega^k \mathcal C_{\omega_j} - \partial_\lambda^k \mathcal C_{\lambda + i0}) \chi \|_{H^{s + k} \to H^{t + 1, a}_\bo} \to 0.
    \end{equation}
    for any sequence $\omega_j \to \lambda$ with $\Im \omega_j > 0$. 
\end{lemma}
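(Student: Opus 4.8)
The plan is to localize near the corners and reduce everything to the smooth-boundary convergence already available in \cite[Lemma 4.16]{DWZ} (equivalently Lemma~\ref{lem:ddC_easy}), absorbing the extra Heaviside factors in the Schwartz kernel by the mapping properties of multiplication by $H$ between $\bo$-Sobolev and ordinary Sobolev spaces. Using a partition of unity on $\partial\Omega$ I would write $\chi_\bo$ as a finite sum of cutoffs each supported near a single corner, and by the reflection symmetries following Definition~\ref{def:corner_type} it suffices to treat a type-$(+,+)$ corner $\kappa$, placed at $\theta=0$ via the parameterization~\eqref{eq:good_param}. On the support of $\chi$ (which avoids $\mathcal K$), Proposition~\ref{prop:rest_of_kernel} presents the Schwartz kernel $K_\omega$ of $d\mathcal C_\omega$ as $\sum_\pm H(\pm\theta')\,\widetilde K^{\pm}_\omega(\theta,\theta')$ when $\theta'$ is near $\kappa$, and symmetrically as $\sum_\pm H(\pm\theta)\,\widetilde K^{\pm}_\omega(\theta,\theta')$ when $\theta$ is near $\kappa$; here, modulo kernels smooth in $(\theta,\theta',\omega)$ up to $\epsilon=\Im\omega=0$, each $\widetilde K^{\pm}_\omega$ is a piece of the Schwartz kernel of $d\mathcal C_\omega$ for the smoothed $\lambda$-simple domain $\Omega_{\kappa,\pm}$ of Lemma~\ref{lem:smoothing} — it consists of a term $\tilde c^+_\omega(\theta')(\gamma^+_\lambda(\theta)-\theta'+i\epsilon z)^{-1}$ and a smooth remainder. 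Since $\Omega_{\kappa,\pm}$ has smooth boundary near both $\kappa$ and $\gamma^+_\lambda(\kappa)$, \cite[Lemma 4.16]{DWZ} applies to it verbatim: for smooth cutoffs and any $s'>t'$, the corresponding localizations of $\partial_\omega^k d\mathcal C_{\omega_j}-\partial_\lambda^k d\mathcal C_{\lambda+i0}$ (for $\Omega_{\kappa,\pm}$) tend to $0$ in $\mathcal L(H^{s'+k},H^{t'+1})$.

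For the first estimate, where $\theta'$ is near the corner, I would write $\chi\,d\mathcal C_\omega\,\chi_\bo v=\sum_\pm \chi\cdot\widetilde A^{\pm}_\omega\big(H(\pm\cdot)\chi_\bo v\big)$ modulo a smoothing remainder, where $\widetilde A^{\pm}_\omega$ has kernel $\widetilde K^{\pm}_\omega$. Since $a\in(-1,0)$ one has $a+\tfrac12\in(-\tfrac12,\tfrac12)$, so Lemma~\ref{lem:b_sob_to_sob} gives the inclusion $H^{s,a}_\bo\hookrightarrow H^{a+1/2,\,a}_\bo=H^{a+1/2}$ for $s\ge a+\tfrac12$, and Lemma~\ref{lem:cutting} shows $H(\pm\cdot)$ is bounded on $H^{a+1/2}$; hence $H(\pm\cdot)\chi_\bo v\in H^{a+1/2}$ with norm controlled by $\|\chi_\bo v\|_{H^{s,a}_\bo}$. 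Applying \cite[Lemma 4.16]{DWZ} with $s'=a+\tfrac12-k$ and $t'=t-k$ — permissible exactly because $t<a+\tfrac12$ makes $s'>t'$ — yields convergence to $0$ in $\mathcal L(H^{a+1/2},H^{t+1-k})$, which combined with the two bounds above gives the first claim. For the second estimate, where $\theta$ is near the corner, I would instead write $\chi_\bo\,d\mathcal C_\omega\,\chi v=\sum_\pm \chi_\bo\cdot H(\pm\cdot)\,\widetilde A^{\pm}_\omega(\chi v)$ modulo smoothing; \cite[Lemma 4.16]{DWZ} with $s'=s$, $t'=t$ (admissible since $s>t$) sends $\chi v\in H^{s+k}$ to $H^{t+1}$ with norm tending to $0$, and Lemma~\ref{lem:b-cutting} — which requires $t+1\ge a+\tfrac12$, i.e. the hypothesis $t\ge a-\tfrac12$ — shows $H(\pm\cdot)$ maps $H^{t+1}(\R)$ boundedly into $H^{t+1,\,a}_\bo(\R_+)$, completing that case.

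The step I expect to need the most care is the identification of the corner data: one must check that, on each side of the corner, the relevant part of $K_\omega(\,\cdot\,;\Omega)$ agrees up to a smooth-in-$\omega$ error with a part of $K_\omega(\,\cdot\,;\Omega_{\kappa,\pm})$ on the region where the two domains coincide — this is precisely the identity $K_\omega(\theta,\theta';\Omega)=K_\omega(\theta,\theta';\Omega_{\kappa,\pm})$ for $\theta,\theta'\notin U_\pm$ already exploited in the proof of Proposition~\ref{prop:rest_of_kernel} — and that differentiation in $\omega$ preserves this structure while raising the order of the pseudodifferential-type terms by $k$, a cost already built into the $H^{s+k}$ and $H^{t+1-k}$ bookkeeping. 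The bundle factor $T^*\partial\Omega$ plays no role and is trivialized locally; the portions of $\chi\,d\mathcal C_\omega\,\chi_\bo$ and $\chi_\bo\,d\mathcal C_\omega\,\chi$ in which both points avoid neighborhoods of the corners are handled directly by Lemma~\ref{lem:ddC_easy}. The rest is a routine assembly of the cited lemmas.
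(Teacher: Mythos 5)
Your proposal correctly fills in what the paper leaves as "follows immediately from Proposition~\ref{prop:rest_of_kernel}": localize near each corner, view the kernel as a Heaviside cutoff applied to the kernel for the smoothed domain $\Omega_{\kappa,\pm}$, invoke \cite[Lemma~4.16]{DWZ} there, and absorb the Heaviside factors via Lemma~\ref{lem:b_sob_to_sob}, Lemma~\ref{lem:cutting}, and Lemma~\ref{lem:b-cutting}, with the numerology ($t<a+\tfrac12$ and $t\ge a-\tfrac12$) checked correctly. One small notational slip: you write $d\mathcal C_\omega$ in a few places where the lemma is stated for $\mathcal C_\omega$; this is harmless since you already account for the extra degree of smoothing in the $H^{t+1}$ bookkeeping, exactly as in Lemma~\ref{lem:ddC_easy}.
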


We remark that in application, we only use the convergence properties in case $k = 0$ in Lemmas~\ref{lem:ddC_easy} and~\ref{lem:ddC_heaviside}. In the case that $k = 1$, we just need the uniform boundedness properties of $\partial_\omega \mathcal C_\omega$ in $\omega$ locally in $\Re \omega$ up to $\Im \omega = 0$. This is clearly implied by the Lemmas.

\section{Propagation estimates}\label{sec:propagation_estimates}
Composing \eqref{eq:Cv_reduction} with the differential, we obtain the reduced equation 
\begin{equation}\label{eq:bdr}
    d\mathcal C_\omega v_\omega = g_\omega, \qquad g_\omega := d(R_\omega f)_{\partial \Omega \setminus \mathcal K} \in \overline C^\infty(\partial \Omega) \subset H^{\infty, 0-}_\bo(\partial \Omega; T^* \Omega).
\end{equation}
It suffices for us to understand $v_\omega$ in the limit as $\Im \omega \to 0$ since~\eqref{eq:fundamental_2} relates $v_\omega$ back to $u_\omega$. In this section, our goal is to obtain high frequency estimates for $v_\omega$. That is, we wish to control the high frequencies of $v_\omega$ using $g_\omega$ and the low frequencies of $v_\omega$ uniformly as $\omega$ approaches the real line. The main result of this section is the global estimate Proposition~\ref{prop:global_semifredholm}. This is crucial for proving the limiting absorption principle of Theorem~\ref{thm:spectral}, and it is proved by stitching together local estimates.

\subsection{Propagation of singularities and radial estimates}
We first prove discrete-time analogs to the microlocal propagation of singularities and the radial point estimates (see, for instance, \cite[\S E.4]{DZ_resonances} for a version of these estimates). These estimates will capture the behavior of $d \mathcal C_\omega$ away from the problematic corners and their reflections.

With a b-parameterization, $\partial \Omega$ is identified with $\mathbb S^1$. Away from the corners, this identification is smooth, so pseudodifferential operators on $\mathbb S^1$ with Schwartz kernel supported in a subset of $\partial \Omega \setminus \mathcal K \times \partial \Omega \setminus \mathcal K$ still makes perfect sense on $\partial \Omega$. The point here is that $d \mathcal C_\omega$ localized away from the corners can still be described as standard pseudodifferential operators. 
% Furthermore, with b-parameterizations, Sobolev spaces of order between $(-\ha, \ha)$ on $\partial \Omega$ are unambiguously defined because of Lemma~\ref{lem:cutting}. Therefore, acting on $H^s(\partial \Omega)$ for $s \in (-\ha, \ha)$, we may still use pseudodifferential operators on $\mathbb S^1$. 

\subsubsection{Propagation of singularities away from corners and their reflections}
For convenience, we define the corners with all its reflections as
\begin{equation}
    \widetilde \corner := \corner \cup \gamma^+_\lambda(\corner) \cup \gamma^-_\lambda(\corner). 
\end{equation}
For $v$ that solves~\ref{eq:bdr}, we first estimate $v$ near $\theta_0 \in \partial \Omega \setminus \widetilde{\mathcal K}$. To do so, we need to analyze the Schwartz kernel of $\chi d \mathcal C_\omega$ where $\chi$ is a cutoff supported in a small neighborhood of $\theta_0$. From the kernel computations in \S\ref{sec:kernel_computation}, we see that $\chi d \mathcal C_\omega$ is singular in four regions, corresponding the point itself, the two reflections of the point by $\gamma^\pm$, and the corners. The Schwartz kernel of $\chi d \mathcal C_\omega$ is singular near $(\theta, \theta') = (\theta_0, \theta_0)$, $(\theta_0, \gamma^+(\theta_0))$, $(\theta_0, \gamma^-(\theta_0))$, and $(\theta_0, \kappa_j)$ for $\kappa_j \in \mathcal K$. See Figure~\ref{fig:POS}. We first organize some of the results from the previous section into the following lemma.  
\begin{lemma}\label{lem:psido_no_corner}
    Let $\mathcal J \Subset (0, 1)$ be such that $\Omega$ is $\lambda$-simple for all $\lambda \in \mathcal J$. Write $\omega = \lambda + i \epsilon$ for $\lambda \in \mathcal J$ and sufficiently small $\epsilon > 0$. Let $\theta_0 \in \partial \Omega \setminus \widetilde{\mathcal K}$. Then for $\chi_\cu \in C^\infty(\partial \Omega)$ supported in a sufficiently small neighborhood of $\theta_0$ with $\chi_\cu = 1$ near $\theta_0$, the operator $\chi_\cu d \mathcal C_\omega$ can be understood in pieces as follows.
    \begin{enumerate}
        \item $\chi_\cu d \mathcal C_\omega \widetilde \chi \in \Psi^{0}(\mathbb S^1)$ uniformly in $\omega$, where $\widetilde \chi \in C^\infty(\partial \Omega)$ has sufficiently small support and $\widetilde \chi = 1$ on $\supp \chi_\cu$. Furthermore, $\chi_\cu d \mathcal C_\omega \widetilde \chi$ has principle symbol 
        \[\sigma_0(\chi_c d \mathcal C_\omega \widetilde \chi) = -2 \pi i c_\omega \chi_\cu(\theta) \sgn(\xi),\]
        which means $\chi_\cu d \mathcal C_\omega \widetilde \chi$ elliptic on 
        \[\{(\theta, \xi) \in T^* \mathbb S^1: \chi_\cu(\theta) = 1\}\] 
        uniformly in $\omega$. 
        \item $\chi_\cu d \mathcal C_\omega (\widetilde \chi \circ \gamma^\pm_\lambda) = (\gamma^\pm_\lambda)^* (\chi_\cu \circ \gamma^\pm_\lambda) T^\pm_\omega (\widetilde \chi \circ \gamma^\pm_\lambda)$ where $T^\pm_\omega \in \Psi^0(\mathbb S^1)$ has principle symbol 
        \begin{equation}
            \sigma_0(T_\omega^\pm)=(\mp2 \pi ic_\omega + \mathcal O(\epsilon)) H(\pm\xi) e^{- \epsilon z_\omega^\pm(\theta)|\xi|}
        \end{equation}
        and wavefront set 
        \[\WF\big((\chi_\cu \circ \gamma^\pm_\lambda) T^\pm_\omega (\widetilde \chi \circ \gamma^\pm_\lambda)\big) \subset \{(\theta, \xi) \in T^* \mathbb S^1: \pm\xi > 0\}\]
        uniformly in $\omega$
        \item $\chi_\cu d \mathcal C_\omega (1 - \widetilde \chi - (\widetilde \chi \circ \gamma^-_\lambda) - (\widetilde \chi \circ \gamma^+_\lambda)): H^{-N, a}_\bo(\partial \Omega) \to H^N(\partial \Omega)$ uniformly in $\omega$ for $a > -1$.
    \end{enumerate}
\end{lemma}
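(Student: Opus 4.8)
The plan is to read off each statement directly from the Schwartz kernel computations of \S\ref{sec:kernel_computation}, since the point $\theta_0$ is away from $\widetilde{\mathcal K}$ and so only the ``easy'' part of the kernel from Proposition~\ref{prop:easy_kernel} is relevant near $(\theta_0,\theta_0)$, $(\theta_0,\gamma^+_\lambda(\theta_0))$ and $(\theta_0,\gamma^-_\lambda(\theta_0))$, while Proposition~\ref{prop:rest_of_kernel}(2)--(3) controls the behavior near $(\theta_0,\kappa_j)$. First I would fix $\widetilde\chi$ with $\supp\widetilde\chi$ so small that the three points $\theta_0,\gamma^+_\lambda(\theta_0),\gamma^-_\lambda(\theta_0)$ and the corners $\mathcal K$ have pairwise disjoint neighborhoods meeting $\supp\widetilde\chi\cup\gamma^\pm_\lambda(\supp\widetilde\chi)$; this is possible precisely because $\theta_0\notin\widetilde{\mathcal K}$ guarantees $\theta_0$, its two reflections, and all corners are distinct.

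For (1): near the diagonal, Proposition~\ref{prop:easy_kernel} gives $K^\pm_\omega(\theta,\theta')\equiv c_\omega\chi_{\Diag}(\theta,\theta')(\theta-\theta'\pm i0)^{-1}$ modulo kernels smooth up to $\epsilon=0$, plus the $\gamma^\pm$-reflected terms which are supported away from $\supp\chi_\cu\times\supp\widetilde\chi$ by the support choice. Since $(\theta-\theta'\pm i0)^{-1}$ has Fourier transform (in $\theta-\theta'$) equal to a constant times $H(\mp\xi)$ up to smoothing, $\chi_\cu(K^+_\omega+K^-_\omega)\widetilde\chi$ is the Schwartz kernel of an operator in $\Psi^0(\mathbb S^1)$ uniformly in $\omega$, with principal symbol $c_\omega\chi_\cu(\theta)\big(\text{(const)}H(-\xi)+\text{(const)}H(\xi)\big)$; carrying the constants (the Fourier transform of $(t\pm i0)^{-1}$ is $\mp 2\pi i H(\mp\xi)$) gives $\sigma_0=-2\pi i c_\omega\chi_\cu(\theta)\sgn\xi$. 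Ellipticity on $\{\chi_\cu=1\}$ is immediate since $c_\omega\ne 0$ uniformly for $\lambda\in\mathcal J$, $\epsilon$ small. For (2): the reflected term in Proposition~\ref{prop:easy_kernel}, namely $\tilde c^\pm_\omega(\theta')\chi_{\Diag}(\gamma^\pm_\lambda(\theta),\theta')(\gamma^\pm_\lambda(\theta)-\theta'+i\epsilon z^\pm_\omega(\theta'))^{-1}$, is exactly the Schwartz kernel of $(\gamma^\pm_\lambda)^*$ composed with the Fourier multiplier (after changing variables $\theta\mapsto\gamma^\pm_\lambda(\theta)$ and using Lemma~\ref{lem:CoV}) whose symbol is the Fourier transform in $\gamma^\pm_\lambda(\theta)-\theta'$ of $\tilde c^\pm_\omega(\theta')(s+i\epsilon z^\pm_\omega(\theta'))^{-1}$; since $\Re z^\pm_\omega\ge\delta>0$ the transform is $\mp 2\pi i\,\tilde c^\pm_\omega(\theta')H(\pm\xi)e^{-\epsilon z^\pm_\omega(\theta')|\xi|}$ (analytic continuation of the half-line Fourier transform), which gives a symbol in $S^0$ uniformly in $\omega$ with the stated principal symbol (using $\tilde c^\pm_\omega=c_\omega/\partial_{\theta'}\gamma^\pm_\lambda+\mathcal O(\epsilon)$ and $\sgn\partial_\theta\gamma^\pm_\lambda=-1$ to sort out signs), and its wavefront set lies in $\{\pm\xi>0\}$.

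For (3): the operator $\chi_\cu d\mathcal C_\omega(1-\widetilde\chi-(\widetilde\chi\circ\gamma^-_\lambda)-(\widetilde\chi\circ\gamma^+_\lambda))$ has Schwartz kernel supported in $\supp\chi_\cu\times\big(\partial\Omega\setminus(\text{nbhd of }\theta_0\text{ and its reflections})\big)$, so on this support $\theta'$ avoids $\gamma^\pm_\lambda(\theta)$ and the diagonal. By Proposition~\ref{prop:easy_kernel} the kernel is smooth there unless $\theta'$ is near a corner $\kappa_j$; in that corner region Proposition~\ref{prop:rest_of_kernel}(2)--(3) (the latter with roles of $\theta,\theta'$ swapped, i.e. $\theta_0$ noncharacteristic and $\theta'$ near a corner) shows the kernel is a finite sum $\sum_{\mu,\nu}H(\mu(\theta-\theta_0))H(\nu(\theta'-\kappa_j))\psi^{\mu\nu}_\omega$ with $\psi^{\mu\nu}_\omega\in C^\infty$ uniformly up to $\epsilon=0$. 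An operator with kernel $H(\nu(\theta'-\kappa_j))\psi(\theta,\theta')$, $\psi\in C^\infty$, maps $H^{-N,a}_\bo(\partial\Omega)\to C^\infty$ for any $a>-1$: indeed $H^{-N,a}_\bo\subset\rho_\bo^{a}H^{-N}_\loc$ away from corners and near $\kappa_j$ pairing against $\rho_\bo^{-a-1/2}\in L^2$-type weights works since $a>-1$ makes $\rho_\bo^{a+1/2}$ locally integrable (this is the remark after \eqref{eq:Hbinfty_bd} that $H^{\infty,a}_\bo\subset L^1$, and the $H^{-N,a}_\bo$ case follows since the smooth kernel $\psi$ can absorb derivatives), and the smoothness in the outgoing variable of $\psi$ gives the $H^N$ target; uniformity in $\omega$ is the smoothness up to $\epsilon=0$ from Proposition~\ref{prop:rest_of_kernel}. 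I expect the main obstacle to be purely bookkeeping: correctly tracking the constants and signs (the factors $-2\pi i$, the sign $\sgn\partial_\theta\gamma^\pm_\lambda=-1$, the orientation-reversal in Lemma~\ref{lem:Heaviside_identities}) so that the stated principal symbols come out exactly, and making the $L^1$/$L^2$-duality bound for the Heaviside-cutoff-times-smooth kernels against weighted b-Sobolev spaces precise uniformly in $N$ and $\omega$; the microlocal content itself is entirely contained in the kernel formulas already established.
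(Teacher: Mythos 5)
Your proof follows essentially the same route as the paper's: read (1) and (2) off Proposition~\ref{prop:easy_kernel} (diagonal and reflection pieces of the Schwartz kernel), appeal to \cite[Lemma 3.9]{DWZ} or the explicit Fourier transform for the pulled-back pseudodifferential structure in (2), and treat the remaining piece in (3) by combining Propositions~\ref{prop:easy_kernel} and \ref{prop:rest_of_kernel}. Two small points are worth flagging. First, in (1) you state the Fourier transform of $(t\pm i0)^{-1}$ as $\mp 2\pi i H(\mp\xi)$; with the paper's quantization convention \eqref{eq:std_quant} it is $\mp 2\pi i H(\pm\xi)$, and your stated formula actually yields $+2\pi i\sgn\xi$, inconsistent with the $-2\pi i\sgn\xi$ you (correctly) conclude — the conclusion is right but the intermediate formula has the Heaviside argument flipped. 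Second, your argument for (3) is substantively correct but less crisp than the paper's: the paper simply observes that the $L^2(\partial\Omega)$-dual of $H^{-N,a}_\bo(\partial\Omega)$ is $H^{N,-a-1}_\bo(\partial\Omega)$ and that the localized kernel, as a function of the incoming variable with the outgoing variable held in $\supp\chi_\cu$, lies in $H^{\infty,0-}_\bo(\partial\Omega) \subset H^{N,-a-1}_\bo(\partial\Omega)$ for $a>-1$, depending smoothly on the outgoing variable and on $\omega$ up to $\Im\omega=0$; this duality phrasing gives the uniform $H^{-N,a}_\bo\to H^N$ bound cleanly, while your appeal to $H^{\infty,a}_\bo\subset L^1$ and ``the kernel absorbing derivatives'' gestures at the same fact but would need the duality made explicit to handle negative-order inputs uniformly.
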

We remark that it is implicit in this lemma that the supports of $\widetilde \chi$ and $\widetilde \chi \circ \gamma^\pm_\lambda$ must be disjoint from the corners.
\begin{proof}
    (1) follows immediately from Proposition~\ref{prop:easy_kernel}. Indeed, we see that the kernel of $\chi_c d \mathcal C_\omega \widetilde \chi$ is given by 
    \[c_\omega \chi_c(\theta)\widetilde \chi(\theta') \sum_{\pm} (\theta - \theta' \pm i0)^{-1}\]
    up to smooth functions uniform in $\omega$. The principle symbol and the ellipticity claim in (1) then follows.
    
    For (2), we see from Proposition \ref{prop:easy_kernel} that the kernel of $\chi_\cu d \mathcal C_\omega(\widetilde \chi \circ \gamma^\pm_\lambda)$ is given by 
    \[\chi_\cu(\theta) \widetilde \chi(\gamma^\pm_\lambda(\theta')) \tilde c_\omega(\theta') (\gamma_{\lambda}^\pm(\theta) - \theta' \pm i \epsilon z_\omega^\pm(\theta'))^{-1}\]
    modulo smooth terms uniform in $\omega$. It follows from \cite[Lemma 3.9]{DWZ} that this is a family of pseudodifferential operators pulled back by $\gamma^\pm_\lambda$ with principle symbol given by 
    \[\sigma_0(T^\pm_\omega) = (\mp2 \pi i + \mathcal O(\epsilon))H(\pm\xi) e^{- \epsilon z_\omega^\pm(\theta)|\xi|}.\]
    
    Finally, for (3), recall that the $L^2(\partial \Omega)$-dual of $H^{-N, a}_\bo(\partial \Omega)$ is $H^{N, -a - 1}(\partial \Omega)$. By Proposition~\ref{prop:rest_of_kernel}, we see that for each $\theta \supp \in \chi_\cu$, 
    \[K_\omega(\theta, \bullet) (1 - \widetilde \chi - (\widetilde \chi \circ \gamma^-_\lambda) - (\widetilde \chi \circ \gamma^+_\lambda)) \in H^{\infty, 0-}_{\bo}(\partial \Omega) \subset H^{N, -a - 1}(\partial \Omega)\]
    for $a > -1$ and depending smoothly on $\theta$ and $\omega = \lambda + i\epsilon$ up to $\epsilon = 0$. Therefore the mapping property in (3) follows. 
\end{proof}
\begin{figure}
    \centering
    \includegraphics{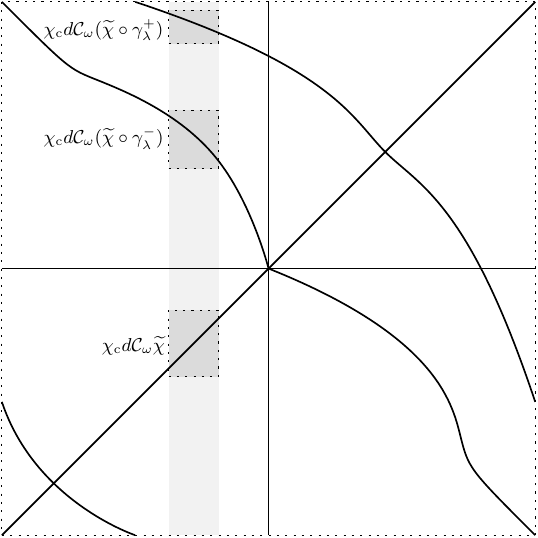}
    \caption{A diagram of the Schwartz kernel of $d\mathcal C_\omega$ with a type-$(+, +)$ corner only. The singularities of the Schwartz kernel are given by the solid black lines. $\chi_\cu d \mathcal C$ as in Lemma~\ref{lem:psido_no_corner} is shaded, and the darker shaded regions correspond to (1) and (2) of the lemma.}
    \label{fig:POS}
\end{figure}

\begin{proposition}\label{prop:POS}
    Let $\mathcal J \Subset (0, 1)$ be such that $\Omega$ is $\lambda$-simple for all $\lambda \in \mathcal J$. Write $\omega = \lambda + i \epsilon$ for $\lambda \in \mathcal J$ and sufficiently small $\epsilon > 0$. Let $\chi, \widetilde \chi \in \CIc(\partial \Omega \setminus \widetilde \corner)$ be such that $\widetilde \chi \equiv 1$ on $\supp \chi$, and assume that $\widetilde \chi$ is supported in a sufficiently small neighborhood of some $\theta_0 \in \partial \Omega \setminus \widetilde{\mathcal K}$. Suppose that $v \in H^{\infty, a}_\bo(\partial \Omega)$ solves~\eqref{eq:bdr} for $a > -1$, then
    \begin{equation}\label{eq:POS}
        \|\Pi^\pm \chi v\|_{H^s} \le C \big(\|\Pi^\mp (\widetilde \chi \circ \gamma_\lambda^\mp) v\|_{H^s} + \|\Pi^\pm \widetilde \chi g\|_{H^s} + \|v\|_{H^{-N, a}_\bo} \big).
    \end{equation}
    The constant $C$ is locally uniform in $\lambda$ and does not depend on $\epsilon$. 
\end{proposition}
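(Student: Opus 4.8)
The plan is to treat~\eqref{eq:POS} as a discrete-time microlocal elliptic estimate. By Lemma~\ref{lem:psido_no_corner}, near $\theta_0$ the operator $d\mathcal C_\omega$ decomposes into an order-zero \emph{diagonal} piece that is elliptic on both frequency half-spaces, two \emph{reflected} pieces that are the pull-backs under $\gamma^+_\lambda$, $\gamma^-_\lambda$ of order-zero operators $T^+_\omega$, $T^-_\omega$ with $\WF(T^\mu_\omega)\subset\{\mu\xi>0\}$, and a remainder mapping $H^{-N,a}_\bo\to H^N$. Because $\gamma^\pm_\lambda$ are orientation-reversing (Lemma~\ref{lem:Heaviside_identities}), among the reflected and remainder pieces only the $\gamma^\mp_\lambda$-reflected one contributes to $\Pi^\pm\chi v$ at high frequency, and it contributes exactly the $\Pi^\mp$-frequency content of $v$ near $\gamma^\mp_\lambda(\theta_0)$, which is the source term on the right of~\eqref{eq:POS}.

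Concretely, I would first choose a cutoff $\chi_\cu\in C^\infty(\partial\Omega)$ with $\chi_\cu\equiv1$ on $\supp\chi$, $\widetilde\chi\equiv1$ on $\supp\chi_\cu$, and $\supp\widetilde\chi$ so small that $\supp\widetilde\chi$, $\gamma^+_\lambda(\supp\widetilde\chi)$, $\gamma^-_\lambda(\supp\widetilde\chi)$ are pairwise disjoint and avoid $\mathcal K$ (possible since $\theta_0\notin\widetilde{\mathcal K}$; if $\theta_0$ happens to be a fixed point of $\gamma^+_\lambda$ or $\gamma^-_\lambda$ the same argument applies, precisely because the corresponding reflected contribution is killed by a frequency-sign mismatch). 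By Lemma~\ref{lem:psido_no_corner}(1), $P:=\chi_\cu\,d\mathcal C_\omega\,\widetilde\chi\in\Psi^0(\mathbb S^1)$ is elliptic on $T^*\mathbb S^1$ over $\{\chi_\cu=1\}\supset\supp\chi$, with constants uniform in $\omega=\lambda+i\epsilon$ up to $\epsilon=0$ and locally uniform in $\lambda$ — the principal symbol $-2\pi i c_\omega\chi_\cu\,\sgn\xi$ has modulus bounded below for $\lambda\in\mathcal J\Subset(0,1)$, and Lemma~\ref{lem:symbol_bound} controls the parametrix constant. The microlocal elliptic estimate of Lemma~\ref{lem:elliptic} applied with $A=\Pi^\pm\chi$, $B=\Pi^\pm\widetilde\chi$, combined with the equation $Pv=\chi_\cu g-\chi_\cu\,d\mathcal C_\omega(1-\widetilde\chi)v$ and Lemma~\ref{lem:psido_no_corner}(3) for the part of $\chi_\cu d\mathcal C_\omega(1-\widetilde\chi)$ landing near neither $\gamma^\pm_\lambda(\theta_0)$ nor $\mathcal K$, gives
\[
\|\Pi^\pm\chi v\|_{H^s}\lesssim \|\Pi^\pm\widetilde\chi g\|_{H^s}+\sum_{\mu\in\{+,-\}}\bigl\|\Pi^\pm\chi_\cu\,d\mathcal C_\omega(\widetilde\chi\circ\gamma^\mu_\lambda)v\bigr\|_{H^s}+\|v\|_{H^{-N,a}_\bo},
\]
all uniformly, where the $g$-term is understood up to the routine smoothing error produced by commuting cutoffs past $\Pi^\pm$.

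It remains to analyze the two reflected terms, which is the heart of the argument. By Lemma~\ref{lem:psido_no_corner}(2), $\chi_\cu\,d\mathcal C_\omega(\widetilde\chi\circ\gamma^\mu_\lambda)=(\gamma^\mu_\lambda)^*(\chi_\cu\circ\gamma^\mu_\lambda)\,T^\mu_\omega\,(\widetilde\chi\circ\gamma^\mu_\lambda)$ with $\|T^\mu_\omega\|_{H^s\to H^s}$ uniformly bounded via Lemma~\ref{lem:symbol_bound}. Using the identity $\Pi^\pm(\gamma^\mu_\lambda)^*=(\gamma^\mu_\lambda)^*\Pi^\mp+\Psi^{-\infty}$ from Lemma~\ref{lem:Heaviside_identities} (after replacing $\gamma^\mu_\lambda$ by a local smooth extension near the relevant supports), commuting $\Pi^\mp$ through the smooth cutoffs, and invoking $\Pi^\mp T^\pm_\omega\in\Psi^{-\infty}$ while $\Pi^\mp T^\mp_\omega=T^\mp_\omega+\Psi^{-\infty}$: the $\mu=\pm$ term is smoothing and contributes only to $\|v\|_{H^{-N,a}_\bo}$; the $\mu=\mp$ term equals $(\gamma^\mp_\lambda)^*(\chi_\cu\circ\gamma^\mp_\lambda)\,T^\mp_\omega\,(\widetilde\chi\circ\gamma^\mp_\lambda)\,\Pi^\mp+\Psi^{-\infty}$, so, since $(\gamma^\mp_\lambda)^*$ is bounded on $H^s$ (Lemma~\ref{lem:CoV}) and $[\widetilde\chi\circ\gamma^\mp_\lambda,\Pi^\mp]\in\Psi^{-\infty}$, it is bounded by $C\bigl(\|\Pi^\mp(\widetilde\chi\circ\gamma^\mp_\lambda)v\|_{H^s}+\|v\|_{H^{-N,a}_\bo}\bigr)$. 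Substituting into the displayed inequality yields~\eqref{eq:POS}.

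The main obstacle is the bookkeeping in this last step: threading $\Pi^\pm$ through the composition $(\gamma^\mp_\lambda)^*\circ(\text{cutoff})\circ T^\mp_\omega\circ(\text{cutoff})$ so that exactly the claimed $\Pi^\mp$-frequency term at $\gamma^\mp_\lambda(\theta_0)$ survives — this is where the single orientation reversal does its work, converting a $\pm$-frequency statement at $\theta_0$ into a $\mp$-frequency statement at $\gamma^\mp_\lambda(\theta_0)$. A secondary but pervasive point, needed for the asserted uniformity of $C$ in $\epsilon$ and its local uniformity in $\lambda$, is that every operator bound used — the elliptic parametrix constant, the $H^{-N,a}_\bo\to H^N$ mapping of the smoothing remainders, and the $H^s\to H^s$ boundedness of $T^\mu_\omega$ — is uniform up to $\epsilon=0$, which holds because all the symbols and Schwartz kernels produced in \S\ref{sec:kernel_computation} depend smoothly on $\omega$ up to the real axis and locally uniformly on $\lambda$.
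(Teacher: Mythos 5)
Your proposal is correct and follows essentially the same route as the paper: wedge a cutoff $\chi_\cu$ between $\chi$ and $\widetilde\chi$, apply the microlocal elliptic estimate from Lemma~\ref{lem:elliptic} using Lemma~\ref{lem:psido_no_corner}(1), decompose the right-hand side via Lemma~\ref{lem:psido_no_corner}(2)--(3), and observe that after commuting $\Pi^\pm$ past the orientation-reversing pullback $(\gamma^\mu_\lambda)^*$ the $\mu=\pm$ reflected term is smoothing by the wavefront-set restriction on $T^\pm_\omega$, leaving only the $\Pi^\mp(\widetilde\chi\circ\gamma^\mp_\lambda)v$ contribution. Your parenthetical about $\theta_0$ being a characteristic (non-corner) fixed point of $\gamma^\pm_\lambda$ is a correct observation that the paper handles implicitly rather than remarking on.
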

\begin{proof}
    We focus on the $\Pi^+$ case since the $\Pi^-$ estimate is completely analogous. Introduce a cutoff $\chi_\cu \in C^\infty(\partial \Omega)$ so that $\chi_\cu = 1$ on $\supp \chi$ and $\widetilde \chi = 1$ on $\supp \chi_\cu$. In other words, $\chi_\cu$ is wedged in between $\chi$ and $\widetilde \chi$. Further assume that $\chi_\cu$ and $\widetilde \chi$ have sufficiently small support so that Lemma~\ref{lem:psido_no_corner} holds. Then by (1) of Lemma~\ref{lem:psido_no_corner} and the elliptic estimates of Lemma~\ref{lem:elliptic},
    \begin{equation}\label{eq:POS_elliptic_step}
        \|\Pi^+ \chi v\|_{H^s} \le C \big(\|\Pi^+ \chi_\cu d \mathcal C_\omega \widetilde \chi v\|_{H^s} + \|\chi_\cu v\|_{H^{-N}} \big).
    \end{equation}
    Now using (2) and (3) of Lemma~\ref{lem:psido_no_corner},
    \begin{equation}\label{eq:POS_decomp_step}
    \begin{aligned}
        \|\Pi^+ \chi_\cu d \mathcal C_\omega \widetilde \chi v\|_{H^s} &\le \|\Pi^+ \chi_\cu d \mathcal C_\omega (\widetilde \chi \circ \gamma^-_\lambda) v\|_{H^s} + \|\Pi^+ \chi_\cu d \mathcal C_\omega (\widetilde \chi \circ \gamma^+_\lambda) v\|_{H^s} \\
        &\qquad + \|\Pi^+ \chi_\cu d \mathcal C_\omega v\|_{H^s} + \|\chi_\cu d \mathcal C_\omega (1 - \widetilde \chi - (\widetilde \chi \circ \gamma^-_\lambda) - (\widetilde \chi \circ \gamma^+_\lambda))v\|_{H^s} \\
        &\le \|(\gamma^-_\lambda)^* \Pi^-(\chi_\cu \circ \gamma^-_\lambda) T^-_\omega (\widetilde \chi \circ \gamma^-_\lambda)v\|_{H^s} \\
        &\qquad + \|(\gamma^+_\lambda)^* \Pi^- (\chi_\cu \circ \gamma^+_\lambda) T^+_\omega (\widetilde \chi \circ \gamma^+_\lambda)v\|_{H^s} \\
        &\qquad \qquad + C \big(\|\Pi^+ \chi_\cu g\|_{H^s} + \|v\|_{H^{-N, a}_\bo} \big). 
    \end{aligned}
    \end{equation}
    The wavefront set condition in (2) of Lemma~\ref{lem:psido_no_corner} implies that $\Pi^- (\chi_\cu \circ \gamma^+_\lambda) T^+_\omega (\widetilde \chi \circ \gamma^+_\lambda) \in \Psi^{-\infty}(\mathbb S^1)$ uniformly in $\omega$, so  
    \begin{equation*}
        \|\Pi^+ \chi_\cu d \mathcal C_\omega \widetilde \chi v\|_{H^s} \le C \big(\| \Pi^-(\widetilde \chi \circ \gamma^-_\lambda) v\|_{H^s} + \|\Pi^+ \chi_\cu g\|_{H^s} + \|v\|_{H^{-N, a}_\bo} \big).
    \end{equation*}
    Combining with~\eqref{eq:POS_elliptic_step} yields 
    \begin{equation*}
        \|\Pi^+ \chi v\|_{H^s} \le C \big(\| \Pi^-(\widetilde \chi \circ \gamma^-_\lambda) v\|_{H^s} + \|\Pi^+ \chi_\cu g\|_{H^s} + \|v\|_{H^{-N, a}_\bo} \big)
    \end{equation*}
    where the constant $C$ may change from line to line but remains independent of $\epsilon$. Finally~\eqref{eq:POS} follows since $\widetilde \chi = 1$ on $\supp \chi_\cu$. 
\end{proof}
For $\theta_0 \in \partial \Omega \setminus \widetilde{\mathcal K}$, this estimate is saying that the positive (or negative) frequencies of $v$ near $\theta_0$ is controlled by the negative (or positive) frequencies of $v$ near $\gamma^-_\lambda(\theta)$ (or $\gamma_\lambda^+(\theta)$), given that $g$ is sufficiently regular. This can be interpreted as a discrete-time version of H\"ormander's theorem on propagation of singularities, which says singularities travel along the bicharacteristics of the Hamiltonian of the symbol of the operator (see \cite[\S23.2]{H3}). In our case, singularities travel along the Hamiltonian dynamics of $P(\lambda)$ restricted to the boundary.

Now we turn our attention to the periodic points $\Sigma_\lambda^\pm$ defined in \eqref{eq:periodic_points}. In a neighborhood of $\Sigma_\lambda$, we have better control over the constant in the estimate in~\eqref{eq:POS}.
\begin{lemma}\label{lem:radial_prep}
    Let $\mathcal J \Subset (0, 1)$ be such that $\Omega$ satisfies the Morse--Smale condition for all $\lambda \in \mathcal J$. Write $\omega = \lambda + i \epsilon$ for $\lambda \in \mathcal J$. Let $\theta_p \in \Sigma_\lambda$. Let $\chi, \widetilde \chi \in C^\infty(\partial \Omega)$ be such that $\widetilde \chi = 1$ on $\supp \chi$, and assume that $\supp \widetilde \chi$ lies in a sufficiently small neighborhood of $\theta_p$. Suppose that $v \in H^{-N, a}_\bo(\partial \Omega)$ solves~\eqref{eq:bdr} for $a > -1$, then
    \begin{equation}\label{eq:radial_prep}
        \|\Pi^\pm \chi v\|_{H^s} \le (1 + \delta)(\partial_\theta\gamma^\mp_\lambda(\theta_p))^{\ha + s}\|\Pi^\mp (\widetilde \chi \circ \gamma_\lambda^\mp) v\|_{H^s} + C \big(\|\Pi^\pm \widetilde \chi g\|_{H^s} + \|v\|_{H^{-N, a}_\bo} \big).
    \end{equation}
    for any $\delta > 0$ and all sufficiently small $\epsilon$ depending on $\delta$. The constant $C$ is locally uniform in $\lambda$ and independent of $\epsilon$.
\end{lemma}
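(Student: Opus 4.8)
The plan is to re-run the argument of Proposition~\ref{prop:POS}, but in the distinguished b-parameterization of Lemma~\ref{lem:linear_coords} and keeping track of constants. Fix such a parameterization. Since $\Sigma_\lambda$ is invariant under $\gamma^\pm_\lambda$ (as $\gamma^-_\lambda$ conjugates $b$ to $b^{-1}$) and disjoint from $\corner$ by the Morse--Smale hypothesis, all of $\theta_p$, $\gamma^+_\lambda(\theta_p)$, $\gamma^-_\lambda(\theta_p)$ lie away from the corners, and on small neighborhoods of each of them $\gamma^\mp_\lambda$ is affine linear with constant slope $\beta := \partial_\theta\gamma^\mp_\lambda(\theta_p)$. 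We treat only the $\Pi^+$ estimate; the $\Pi^-$ case is identical with $\gamma^+_\lambda$ replacing $\gamma^-_\lambda$. Choose nested cutoffs $\chi,\chi_\cu,\widetilde\chi$ supported near $\theta_p$ with $\chi_\cu\equiv1$ on $\supp\chi$ and $\widetilde\chi\equiv1$ on $\supp\chi_\cu$, small enough that Lemma~\ref{lem:psido_no_corner} applies and $\{\chi=1\}$ lies in the elliptic set of $\chi_\cu\,d\mathcal C_\omega\widetilde\chi$.

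Applying the elliptic parametrix of Lemma~\ref{lem:elliptic} to the diagonal piece and substituting $d\mathcal C_\omega v = g$ together with $v = \widetilde\chi v + (1-\widetilde\chi)v$ and $1-\widetilde\chi = (\widetilde\chi\circ\gamma^-_\lambda)+(\widetilde\chi\circ\gamma^+_\lambda)+(1-\widetilde\chi-(\widetilde\chi\circ\gamma^-_\lambda)-(\widetilde\chi\circ\gamma^+_\lambda))$, one sees exactly as in Proposition~\ref{prop:POS} that the $\gamma^+_\lambda$-reflection piece is smoothing after applying $\Pi^+$ (since $\WF(T^+_\omega)\subset\{\xi>0\}$ and $\Pi^+(\gamma^+_\lambda)^* = (\gamma^+_\lambda)^*\Pi^- + \Psi^{-\infty}$ by Lemma~\ref{lem:Heaviside_identities}, so the composition involves $\Pi^- T^+_\omega\in\Psi^{-\infty}$), the $(1-\widetilde\chi-\cdots)$ piece maps into $H^N$ by part (3) of Lemma~\ref{lem:psido_no_corner}, and the on-diagonal terms together with $\Pi^+\chi_\cu g$ go into $\|\Pi^+\widetilde\chi g\|_{H^s}$ and the error. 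None of these carries a distinguished constant, so they are absorbed into $C(\|\Pi^+\widetilde\chi g\|_{H^s}+\|v\|_{H^{-N,a}_\bo})$. The only piece to examine quantitatively is the $\gamma^-_\lambda$-reflection: the parametrix $Q$ of the diagonal part has principal symbol $(-2\pi i c_\omega\chi_\cu(\theta))^{-1}\sgn\xi$ on the elliptic set, and combining it with the reflection operator $(\gamma^-_\lambda)^*(\chi_\cu\circ\gamma^-_\lambda)T^-_\omega(\widetilde\chi\circ\gamma^-_\lambda)$ — whose amplitude is $\sigma_0(T^-_\omega) = (-2\pi i c_\omega + \mathcal O(\epsilon))H(-\xi)e^{-\epsilon z^-_\omega(\theta)|\xi|}$ — and writing $Q(\gamma^-_\lambda)^* = (\gamma^-_\lambda)^*\bigl((\gamma^-_\lambda)^* Q(\gamma^-_\lambda)^*\bigr)$ via the change of variables of Lemma~\ref{lem:CoV}, we obtain $\Pi^+\chi v = (\gamma^-_\lambda)^* B_\omega (\widetilde\chi\circ\gamma^-_\lambda)v$ modulo terms bounded by $C(\|\Pi^+\widetilde\chi g\|_{H^s}+\|v\|_{H^{-N,a}_\bo})$, where $B_\omega\in\Psi^0(\mathbb S^1)$ is microsupported in $\{\xi<0\}$. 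The crucial cancellation is that the $-2\pi i c_\omega$ in the reflection amplitude cancels the reciprocal factor carried by $Q$, leaving $\sigma_0(B_\omega) = \pm(1+\mathcal O(\epsilon))H(-\xi)e^{-\epsilon z^-_\omega|\xi|}$ times cutoffs; since $\Re z^-_\omega\ge\delta>0$ we get $|\sigma_0(B_\omega)|\le 1+\mathcal O(\epsilon)$ uniformly in $\omega$, so (after writing $B_\omega = B_\omega\Pi^- + \Psi^{-\infty}$) the sharp bound of Lemma~\ref{lem:symbol_bound} gives $\|B_\omega h\|_{H^s}\le(1+\mathcal O(\epsilon))\|\Pi^- h\|_{H^s} + C\|h\|_{H^{-N}}$.

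It remains to bound $\|(\gamma^-_\lambda)^*(h\,d\theta)\|_{H^s}$ for $h$ supported near $\gamma^-_\lambda(\theta_p)$. Since $\gamma^-_\lambda$ is affine with slope $\beta$ there, the pullback of the one-form is $\beta\,(h\circ\gamma^-_\lambda)\,d\theta$ — a dilation by $\beta$ (plus translation) combined with the Jacobian factor $\beta$ — and a direct Fourier computation gives $\|(\gamma^-_\lambda)^*(h\,d\theta)\|_{H^s}^2 = |\beta|\int\langle\beta\eta\rangle^{2s}|\widehat h(\eta)|^2\,d\eta$. Because $\langle\beta\eta\rangle^{2s}/\bigl(|\beta|^{2s}\langle\eta\rangle^{2s}\bigr)\to1$ as $|\eta|\to\infty$, for any $\delta'>0$ one fixes a frequency threshold $R$ so that $\|(\gamma^-_\lambda)^*(h\,d\theta)\|_{H^s}\le(1+\delta')|\beta|^{s+\ha}\|h\,d\theta\|_{H^s}$ once $h$ is frequency-localized above $R$; the low-frequency part is smoothing and absorbed into $\|v\|_{H^{-N,a}_\bo}$, using that $(\widetilde\chi\circ\gamma^-_\lambda)v$ is supported away from $\corner$, where the b-Sobolev and ordinary Sobolev norms are equivalent by Lemma~\ref{lem:b_sob_to_sob}. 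Chaining this with the previous bound, writing $|\beta| = |\partial_\theta\gamma^-_\lambda(\theta_p)|$, and choosing $\epsilon$ small enough that all $\mathcal O(\epsilon)$ errors are $<\delta$ yields~\eqref{eq:radial_prep}.

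The main obstacle is this bookkeeping of constants: producing the elliptic parametrix with principal symbol exactly the reciprocal of $\sigma_0(\chi_\cu\,d\mathcal C_\omega\widetilde\chi)$ so that the $c_\omega$ factors cancel, inserting the Heaviside projectors correctly so that the $\gamma^+_\lambda$-reflection really is smoothing after $\Pi^+$, and extracting the sharp geometric constant $|\partial_\theta\gamma^\mp_\lambda(\theta_p)|^{s+\ha}$ from the pullback with only a multiplicative $(1+\delta)$ loss by separating high and low frequencies — all the while keeping every estimate uniform in $\omega = \lambda + i\epsilon$ as $\epsilon\to0$, which is guaranteed by the uniformity statements in Proposition~\ref{prop:easy_kernel}, Proposition~\ref{prop:rest_of_kernel}, and Lemma~\ref{lem:psido_no_corner}.
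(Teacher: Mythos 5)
Your proof is correct and takes essentially the same approach as the paper: fix the affine b-parameterization of Lemma~\ref{lem:linear_coords}, re-run the decomposition of Proposition~\ref{prop:POS} while tracking constants via the sharp symbol bound of Lemma~\ref{lem:symbol_bound}, and extract the geometric factor $|\partial_\theta\gamma^\mp_\lambda(\theta_p)|^{s+1/2}$ from the dilation-scaling of Sobolev norms of one-forms (the paper records this as its Step~1; your frequency-threshold version and your packaging of the $c_\omega$-cancellation into a single operator $B_\omega$, rather than multiplying the parametrix constant $((2\pi|c_\omega|)^{-1}+\delta)$ against the reflection constant $(2\pi|c_\omega|+\delta)$ at the end, are equivalent). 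One small sign typo: by Lemma~\ref{lem:psido_no_corner}(2), $\sigma_0(T^-_\omega)=(+2\pi ic_\omega+\mathcal O(\epsilon))H(-\xi)e^{-\epsilon z^-_\omega|\xi|}$ rather than $-2\pi ic_\omega$, which is immaterial since only $|c_\omega|$ enters.
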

\begin{proof}
    1. We first consider how Sobolev norms of 1-forms scale with dilation. We claim that if $f \in \CIc(\R; T^* \R)$, then 
    \begin{equation}\label{eq:sob_scaling}
        \|M_\sigma^* f\|_{H^s} \le \sigma^{\ha + s} \|f\|_{H^s} + C_{N, s, \sigma} \|f\|_{H^{-N}}
    \end{equation}
    where $M_\sigma:\R \to \R$ is given by $x \mapsto \sigma x$. Indeed, 
    \begin{align*}
        \|M_\sigma^* f\|_{H^s}^2 &= \int_{-\infty}^\infty \langle \xi \rangle^{2s} |\hat f(\xi/\sigma)|^2\, d \xi \\
        &= \int_{-\infty}^\infty \sigma \langle \sigma \xi \rangle^{2s} |\hat f(\xi)|^2 \, d \xi \\
        &\le \int_{-\infty}^\infty \big(\sigma^{1 + 2s} \langle \xi \rangle^{2s} + C_{N, s, \sigma} \langle \xi \rangle^{-2N}\big) |\hat f(\xi)|^2\, d \xi \\
        &= \sigma^{\ha + s} \|f\|_{H^s}^2 + C_{N, s, \sigma} \|f\|_{H^{-N}}^2
    \end{align*}
    as desired. 

    \noindent
    2. Now we consider the problem at hand, and again, we focus on the $\Pi^+$ case since the $\Pi^-$ case is similar. By Lemma~\ref{lem:linear_coords}, we can fix a b-parameterization $\mathbf x: \mathbb S^1 \to \partial \Omega$ so that when $\gamma^\pm_\lambda$ is lifted to $\mathbb S^1$, there exists a neighborhood $U$ of $\theta_p$ such that 
    \[\partial_\theta \gamma^\pm_\lambda (\theta) = \partial_\theta \gamma^\pm_\lambda (\theta_p), \quad \theta \in U.\]
    We assume that $\supp \widetilde \chi \subset U$, and that $\supp \widetilde \chi$ is sufficiently small so that Proposition~\ref{prop:POS} holds. It is easy to see that \eqref{eq:POS_decomp_step} still holds with $\chi_\cu$ constructed in the same as as in the proof. We can gain better control over the first term of the right-hand-side of \eqref{eq:POS_decomp_step}. Observe that 
    \begin{equation}\label{eq:radial_prep_com}
        \|(\gamma^-_\lambda)^* \Pi^-(\chi_\cu \circ \gamma^-_\lambda) T^-_\omega (\widetilde \chi \circ \gamma^-_\lambda)v\|_{H^s} \le \|(\gamma^-_\lambda)^* (\chi_\cu \circ \gamma^-_\lambda) T^-_\omega (\widetilde \chi \circ \gamma^-_\lambda) \Pi^- v \|_{H^s} + C\|v\|_{H^{-N, a}_\bo}. 
    \end{equation}
    By Lemma~\ref{lem:CoV} and (2) of Lemma~\ref{lem:psido_no_corner}, 
    \begin{gather*}
        \sigma_0\big((\gamma^-_\lambda)^* \Pi^-(\chi_\cu \circ \gamma^-_\lambda) T^-_\omega (\widetilde \chi \circ \gamma^-_\lambda) (\gamma^-_\lambda)^*\big) \le 2 \pi|c_\omega| + \mathcal O(\Im \omega), \\
        \WF\big((\gamma^-_\lambda)^* \Pi^-(\chi_\cu \circ \gamma^-_\lambda) T^-_\omega (\widetilde \chi \circ \gamma^-_\lambda) (\gamma^-_\lambda)^*\big) \subset \supp \chi_\cu \times \R_+.
    \end{gather*}
    Thus continuing~\eqref{eq:radial_prep_com} and using Lemma~\ref{lem:symbol_bound}, 
    \begin{equation}\label{eq:radial_prep_cov}
    \begin{aligned}
        \|(\gamma^-_\lambda)^* \Pi^-(\chi_\cu \circ \gamma^-_\lambda) T^-_\omega (\widetilde \chi \circ \gamma^-_\lambda)v\|_{H^s} &\le (2 \pi|c_\omega| + \delta)\|(\gamma^-_\lambda)^* \widetilde \chi \Pi^- v\|_{H^s} + C\|v\|_{H^{-N, a}_\bo} \\
        &\le (2 \pi|c_\omega| + \delta)(\partial_\theta \gamma_\lambda^-(\theta_p))^{\ha + s} \|\Pi^-(\widetilde \chi \circ \gamma^-_\lambda) v\|_{H^s} \\
        &\qquad + C \|v\|_{H^{-N, a}_\bo}
    \end{aligned}
    \end{equation}
    for any $\delta > 0$ and all sufficiently small $\epsilon$ depending on $\delta$. In the second inequality above, we used Step 1 together with the linear choice of coordinates. Again using Lemma~\ref{lem:symbol_bound}, we can be explicit about the constant in~\eqref{eq:POS_elliptic_step} and see that 
    \begin{equation}\label{eq:POS_elliptic_constant}
        \|\Pi^+ \chi v\|_{H^s} \le ((2 \pi|c_\omega|)^{-1} + \delta) \|\Pi^+ \chi_\cu d \mathcal C_\omega \widetilde \chi v\|_{H^s} + C \|\chi_\cu v\|_{H^{-N}}.
    \end{equation}
    for all $\delta > 0$ and some $C$ depending on $\delta$ and $N$. Combining~\eqref{eq:POS_decomp_step}, \eqref{eq:radial_prep_cov}, and~\eqref{eq:POS_elliptic_constant}, we obtain the desired estimate~\eqref{eq:radial_prep}.  
\end{proof}

\subsubsection{Source/sink estimates}\label{sec:source/sink}
By our choice of coordinates from Lemma~\ref{lem:linear_coords}, it is clear that there exists $\chi_\pm, \widetilde \chi_\pm \in \CIc(\Sigma^\pm_\lambda(\delta))$ such that 
\begin{equation}\label{eq:chi_pm}
\begin{gathered}
    \chi_\pm = 1 \ \text{near}\ \Sigma_\lambda^\pm, \qquad \widetilde \chi_\pm = 1 \ \text{near} \ \supp \chi_\pm, \\
    \widetilde \chi_\pm \circ b_\lambda^{\pm 1} \in \CIc(\Sigma^\pm_\lambda(\delta)), \qquad \chi_\pm = 1 \ \text{on} \ \supp(\widetilde \chi_\pm \circ b_\lambda^{\mp 1}).
\end{gathered}
\end{equation}
In the previous section, Proposition~\eqref{prop:POS} roughly tells us that the behavior of $v$ near $\gamma^\pm(\theta_0)$ controls the behavior of $v$ near $\theta_0$. However, this is a tautology near periodic points of the the chess billiard map; the propagation estimate will just tell us that the behavior of $v$ near the periodic points is controlled by the behavior near the periodic points. 

What saves us is the Morse--Smale condition. The fact that the periodic points are hyperbolic allows us to say more about the regularity of $v$ near the periodic points. We start with a low regularity estimate near for positive frequencies near the attracting set $\Sigma_\lambda^+$ (and for negative frequencies near the repulsive set $\Sigma_{\lambda^-}$). We often refer to this as the sink estimate.  
\begin{proposition}\label{prop:sink}
    Let $\mathcal J \Subset (0, 1)$ be such that $\Omega$ satisfies the Morse--Smale condition for all $\lambda \in \mathcal J$. Write $\omega = \lambda + i \epsilon$ for $\lambda \in \mathcal J$ and sufficiently small $\epsilon > 0$. Let $\chi_\pm, \widetilde \chi_\pm \in \CIc(\Sigma_\lambda(\delta))$ be as in~\eqref{eq:chi_pm}. Suppose that $v \in H^{\infty, a}_\bo$, $a > -1$, solves~\eqref{eq:EBVP}. Then for $s < -1/2$, we have
    \begin{multline}\label{eq:high_reg}
        \|\Pi^\mp \chi_\pm v\|_{H^s} \le C \big(\|\Pi^\mp (\widetilde \chi_\pm \circ b_\lambda^{\pm 1} - \chi_\pm) v\|_{H^s} \\
        + \|\Pi^\mp \widetilde \chi_\pm g + \Pi^\pm(\widetilde \chi_\pm \circ \gamma^\pm_\lambda) g\|_{H^s} + 
        \|v\|_{H^{-N, a}_\bo} \big).
    \end{multline}
    The constant $C$ is locally uniform in $\lambda$. 
\end{proposition}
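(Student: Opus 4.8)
The plan is to iterate the one-step radial estimate of Lemma~\ref{lem:radial_prep} around a periodic orbit and to use hyperbolicity to close the resulting inequality. Since $\Sigma^+_\lambda$ is a finite union of attracting periodic orbits, and the cutoffs of~\eqref{eq:chi_pm} may be localized near one orbit at a time, it suffices to treat a single orbit $\mathcal O=\{\theta_p,b_\lambda(\theta_p),\dots,b_\lambda^{n-1}(\theta_p)\}\subset\Sigma^+_\lambda$ of minimal period $n$; throughout I would use the $b$-parameterization of Lemma~\ref{lem:linear_coords}, in which $\gamma^\pm_\lambda$ is affine near $\Sigma_\lambda$, so that $\partial_\theta\gamma^\pm_\lambda$ is locally constant near each point of $\Sigma_\lambda$ and the constant in Lemma~\ref{lem:radial_prep} is genuinely that value at the periodic point. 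I carry out the upper-sign case ($\Pi^-$ near the attracting set $\Sigma^+_\lambda$); the lower-sign statement follows by the same argument with the roles of $b_\lambda$ and $b_\lambda^{-1}$ (and of $\gamma^\pm_\lambda$, $\Pi^\pm$) interchanged. Note that $\chi_+v$ is smooth, since $\Sigma^+_\lambda$ is disjoint from $\corner$ and $v\in H^{\infty,a}_\bo$, so $\|\Pi^-\chi_+v\|_{H^s}<\infty$ and there is something to absorb at the end.

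First I would chain two successive applications of Lemma~\ref{lem:radial_prep} into a composite step. To control $\Pi^-\chi v$ for $\chi$ supported near a point $\theta_q\in\mathcal O$, apply the lower-sign estimate at $\theta_q$ (legitimate: $\theta_q$ is a hyperbolic periodic point off $\corner$) to pass to $\Pi^+(\widetilde\chi\circ\gamma^+_\lambda)v$ near $\gamma^+_\lambda(\theta_q)$, with gain $(1+\delta)\,|\partial_\theta\gamma^+_\lambda(\theta_q)|^{\ha+s}$; then apply the upper-sign estimate at $\gamma^+_\lambda(\theta_q)$ to return to $\Pi^-$ near $\gamma^-_\lambda\gamma^+_\lambda(\theta_q)=b_\lambda^{-1}(\theta_q)$, with a further gain $(1+\delta)\,|\partial_\theta\gamma^-_\lambda(\gamma^+_\lambda(\theta_q))|^{\ha+s}$. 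This is valid because each $\gamma^\pm_\lambda$ conjugates $b_\lambda$ to $b_\lambda^{-1}$ (a consequence of $\gamma^\pm_\lambda$ being involutions), so $\gamma^+_\lambda(\theta_q)$ is again a hyperbolic periodic point off $\corner$ with $\partial_\theta b^n_\lambda(\gamma^+_\lambda(\theta_q))=(\partial_\theta b^n_\lambda(\theta_q))^{-1}$; and since $\gamma^-_\lambda\circ\gamma^+_\lambda=b_\lambda^{-1}$, the chain rule collapses the two derivative factors into $|\partial_\theta b^{-1}_\lambda(\theta_q)|^{\ha+s}$. At each of the two applications I would split the cutoff appearing in the error term into a piece equal to $1$ near $\mathcal O$ (to be fed into the next application) and a piece supported in a fixed annular neighborhood of $\mathcal O$ disjoint from $\Sigma^+_\lambda$, which over the whole iteration collects into $\|\Pi^-(\widetilde\chi_+\circ b_\lambda-\chi_+)v\|_{H^s}$; the inhomogeneous terms produced by the first two applications are precisely $\|\Pi^-\widetilde\chi_+g\|_{H^s}$ and $\|\Pi^+(\widetilde\chi_+\circ\gamma^+_\lambda)g\|_{H^s}$, and since $g\in\overline C^\infty(\partial\Omega)$ all later inhomogeneous terms (which involve $g$ composed with fixed smooth maps) are bounded by these, while every remaining error is of the harmless form $\|v\|_{H^{-N,a}_\bo}$.

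Running the composite step $n$ times around $\mathcal O$ returns the cutoff to $\chi_+$ and produces the total gain
\[
A=(1+\delta)^{2n}\prod_{j=0}^{n-1}\big|\partial_\theta b^{-1}_\lambda(b_\lambda^{-j}(\theta_p))\big|^{\ha+s}=(1+\delta)^{2n}\,\big(\partial_\theta b^n_\lambda(\theta_p)\big)^{-(\ha+s)},
\]
using the chain rule for $b^{-n}_\lambda=b^{-1}_\lambda\circ\cdots\circ b^{-1}_\lambda$ together with $\partial_\theta b^{-n}_\lambda(\theta_p)=(\partial_\theta b^n_\lambda(\theta_p))^{-1}$. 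Since $\theta_p\in\Sigma^+_\lambda$ is attracting, $0<\partial_\theta b^n_\lambda(\theta_p)<1$, and since $s<-1/2$ the exponent $-(\ha+s)$ is positive; hence $(\partial_\theta b^n_\lambda(\theta_p))^{-(\ha+s)}<1$, and choosing $\delta>0$ small (and correspondingly $\epsilon=\Im\omega$ small, as Lemma~\ref{lem:radial_prep} permits) forces $A<1$. This yields the self-improving inequality $\|\Pi^-\chi_+v\|_{H^s}\le A\,\|\Pi^-\chi_+v\|_{H^s}+C\,R$, where $R$ denotes the right-hand side of~\eqref{eq:high_reg}; as $\|\Pi^-\chi_+v\|_{H^s}<\infty$ we absorb the first term to get~\eqref{eq:high_reg} on this orbit, and then take the maximum of the finitely many thresholds for $\delta$ and $\epsilon_0$ over the periodic orbits comprising $\Sigma^+_\lambda$.

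The main obstacle is the bookkeeping of cutoffs in the second step: one must build a finite ladder of cutoffs around $\mathcal O$ so that at each of the $2n$ invocations of Lemma~\ref{lem:radial_prep} the part of the error cutoff that is $1$ near $\mathcal O$ is dominated by the cutoff entering the next invocation, while all residual annular pieces are dominated uniformly by $\widetilde\chi_+\circ b_\lambda-\chi_+$ — and one must check that none of the constants, in particular the number $n$ of iterations and the size of $\delta$ needed for $A<1$, degenerates as $\epsilon\to0+$. The latter is fine, because $n$ is fixed by the dynamics and, once $\delta$ is fixed, Lemma~\ref{lem:radial_prep} holds with $\epsilon$-uniform constants; but it is precisely the place where care is required, and where the normalization of the cutoffs in~\eqref{eq:chi_pm} is used.
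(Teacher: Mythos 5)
Your overall strategy — iterating the one-step radial estimate and using hyperbolicity to close the inequality — is the right one, and your formula for the total gain $A=(1+\delta)^{2n}(\partial_\theta b^n_\lambda(\theta_p))^{-(1/2+s)}$ is correct. However, the paper's proof is considerably shorter: it applies Lemma~\ref{lem:radial_prep} \emph{twice} (one pass through $\gamma^\mp_\lambda$ and one through $\gamma^\pm_\lambda$), not $2n$ times around the orbit. The point you miss is that the b-parameterization of Lemma~\ref{lem:linear_coords} is constructed so that $\pm\log(\partial_\theta b_\lambda)<0$ on a whole neighborhood of $\Sigma^\pm_\lambda$, i.e.\ the \emph{one-step} factor $\partial_\theta b^{\mp 1}_\lambda(\theta)$ already exceeds $1$ at every point of $\Sigma^\pm_\lambda$, not merely in the multiplicatively averaged sense around the period. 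You invoke these coordinates only to replace the constant in Lemma~\ref{lem:radial_prep} by the exact value at the periodic point; you do not exploit that this value is already strictly greater than $1$ after a single application of $b^{\mp 1}_\lambda$. Once one sees this, a single composite step gives a contraction factor $\sigma_{\mathrm{sink},\pm}^{1/2+s}<1$ and the absorption goes through immediately.

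This matters because the extra iterations you perform are exactly where your proof has a gap. You acknowledge the cutoff bookkeeping as "the main obstacle" but do not resolve it, and it is genuinely delicate for the $n$-fold loop: the annular pieces produced at later steps are supported in $b^j_\lambda$-images of the original annulus, which accumulate onto $\Sigma^+_\lambda$, so it is not automatic that they are all "dominated uniformly by $\widetilde\chi_+\circ b_\lambda-\chi_+$" as you assert. Similarly, $2n$ applications produce $2n$ distinct $g$-terms (with cutoffs $\widetilde\chi^{(j)}\circ b^{-j}_\lambda$, etc.), whereas the claimed estimate~\eqref{eq:high_reg} contains exactly the two terms $\|\Pi^\mp\widetilde\chi_\pm g\|_{H^s}$ and $\|\Pi^\pm(\widetilde\chi_\pm\circ\gamma^\pm_\lambda)g\|_{H^s}$; reducing the former to the latter needs a separate argument (it works in the application because $g$ is smooth, but it is not part of the stated inequality). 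With the paper's two-step approach, the normalizations built into~\eqref{eq:chi_pm} — in particular $\widetilde\chi_\pm\circ b^{\pm1}_\lambda\in\CIc(\Sigma^\pm_\lambda(\delta))$ and $\chi_\pm=1$ on $\supp(\widetilde\chi_\pm\circ b^{\mp1}_\lambda)$ — are exactly what makes a single absorption legal, and no further bookkeeping is required.

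In short: your argument is not wrong, but it reproves the result the hard way, and the places where you wave your hands (cutoff ladder, $g$-terms) are precisely the places your longer route makes hard. Reread Lemma~\ref{lem:linear_coords}: the hyperbolicity has already been "linearized away," so you can and should stop after two applications of Lemma~\ref{lem:radial_prep}.
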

\begin{proof}
    Applying Lemma~\ref{lem:radial_prep} twice, we find that for any $\delta > 0$, 
    \begin{multline}
        \|\Pi^\mp \chi_\pm v\|_{H^s} \le (\sigma_{\mathrm{sink}, \pm}^{\ha + s} + \delta) \|\Pi^\mp (\widetilde \chi_\pm \circ b_\lambda^{\pm 1}) v\|_{H^s} \\
        + C \big(\|\Pi^\mp \widetilde \chi_\pm g + \Pi^\pm(\widetilde \chi_\pm \circ \gamma^\pm_\lambda) g\|_{H^s} + 
        \|v\|_{H^{-N, a}_\bo} \big),
    \end{multline}
    for all sufficiently small $\epsilon$, where
    \[\sigma_{\mathrm{sink}, \pm} = \max\{\partial_\theta b^{\mp 1}_\lambda(\theta): \theta \in \Sigma_\lambda^\pm\}.\]
    In the convenient choice of coordinates from Lemma~\ref{lem:linear_coords}, it is clear that $1 < \sigma_{\mathrm{sink}, \pm}$. Therefore, for $s < -1/2$, \eqref{eq:high_reg} follows upon subtracting $(\sigma_{\mathrm{sink}, \pm}^{\ha + s} + \delta)\|\Pi^\pm \chi_\pm v\|_{H^s}$ from both sides using a sufficiently small $\delta$. 
\end{proof}

For positive frequencies near $\Sigma_\lambda^-$ and negative frequencies near $\Sigma_\lambda^+$, we have a high frequency estimate. We will often refer to this as the source estimate. 
\begin{proposition}\label{prop:source}
    Let $\mathcal J \Subset (0, 1)$ be such that $\Omega$ satisfies the Morse--Smale condition for all $\lambda \in \mathcal J$. Write $\omega = \lambda + i \epsilon$ for $\lambda \in \mathcal J$ and $\epsilon > 0$. Let $\chi_\pm, \widetilde \chi_\pm \in \CIc(\Sigma_\lambda(\delta))$ be as in~\eqref{eq:chi_pm}. Suppose that $v \in H^{\infty, a}_\bo$, $a > -1$, solves~\eqref{eq:EBVP}. Then for $s > -1/2$, 
    \begin{equation}\label{eq:low_reg}
        \|\Pi^\pm \chi_\pm v\|_{H^s} \le C \big(\|\Pi^\pm \widetilde \chi_\pm g + \Pi^\mp(\widetilde \chi_\pm \circ \gamma^\pm_\lambda) g\|_{H^s} + \|v\|_{H^{-N, a}_\bo} \big).
    \end{equation}
    The constant $C$ is locally uniform in $\lambda$ and $\epsilon > 0$ sufficiently small. 
\end{proposition}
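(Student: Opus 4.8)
The plan is to prove the source estimate \eqref{eq:low_reg} by iterating the propagation estimate of Lemma~\ref{lem:radial_prep} in the \emph{unstable} direction, which is opposite to the iteration used for the sink estimate in Proposition~\ref{prop:sink}. I focus on the $\Pi^+$ case near $\Sigma_\lambda^-$; the $\Pi^-$ case near $\Sigma_\lambda^+$ is symmetric. The key observation is that at a repulsive periodic point $\theta_p \in \Sigma_\lambda^-$, the minimal-period derivative satisfies $\partial_\theta b^n_\lambda(\theta_p) > 1$, so in the linear coordinates of Lemma~\ref{lem:linear_coords} the relevant contraction factor $\sigma_{\mathrm{source}} := \min\{\partial_\theta b_\lambda(\theta) : \theta \in \Sigma_\lambda^-\}$ (reading $b_\lambda$ in the direction that moves toward the orbit) satisfies $\sigma_{\mathrm{source}} > 1$. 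Applying Lemma~\ref{lem:radial_prep} repeatedly relates $\|\Pi^+ \chi_- v\|_{H^s}$ to itself with a gain of a factor $\sigma_{\mathrm{source}}^{-(\frac12 + s)} < 1$ when $s > -\frac12$, plus forcing and low-regularity remainders. Absorbing the self-referential term for a sufficiently small $\delta$ closes the estimate.

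First I would fix, via Lemma~\ref{lem:linear_coords}, a b-parameterization in which $\gamma^\pm_\lambda$ is locally affine near $\Sigma_\lambda$ and $\pm\log \partial_\theta b_\lambda < 0$ near $\Sigma^\pm_\lambda$; then choose the cutoffs $\chi_-, \widetilde\chi_-$ as in \eqref{eq:chi_pm}, shrinking their supports so that Proposition~\ref{prop:POS} and Lemma~\ref{lem:radial_prep} apply on the whole orbit and on its $\gamma^\pm$-images. Next I would apply Lemma~\ref{lem:radial_prep} with the roles arranged so that a $\Pi^+$ estimate near a point is controlled by a $\Pi^-$ estimate near its $\gamma^+_\lambda$-image, then chain this with a second application of Lemma~\ref{lem:radial_prep} relating that $\Pi^-$ term near $\gamma^+_\lambda(\theta)$ to a $\Pi^+$ term near $\gamma^-_\lambda(\gamma^+_\lambda(\theta)) = b_\lambda(\theta)$ — but now, crucially, stepping \emph{against} the chess-billiard flow so that the derivative factors are $\partial_\theta b^{\mp1}_\lambda$ evaluated to give $\sigma_{\mathrm{source}}^{-(\frac12+s)}$ rather than $\sigma_{\mathrm{sink}}^{\frac12+s}$. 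The net of two applications yields
\begin{equation}
    \|\Pi^+ \chi_- v\|_{H^s} \le (\sigma_{\mathrm{source}}^{-(\frac12 + s)} + \delta)\|\Pi^+(\widetilde\chi_- \circ b_\lambda^{-1}) v\|_{H^s} + C\big(\|\Pi^+\widetilde\chi_- g + \Pi^-(\widetilde\chi_-\circ\gamma^-_\lambda) g\|_{H^s} + \|v\|_{H^{-N,a}_\bo}\big),
\end{equation}
valid for all sufficiently small $\epsilon$. Since $\widetilde\chi_- \circ b_\lambda^{-1}$ is again supported in $\Sigma^-_\lambda(\delta)$ with $\chi_- \equiv 1$ on its support, the first term on the right is bounded by $(\sigma_{\mathrm{source}}^{-(\frac12+s)} + \delta)\|\Pi^+\chi_- v\|_{H^s}$ up to a remainder controlled by $\|v\|_{H^{-N,a}_\bo}$ (using that the difference $\widetilde\chi_-\circ b_\lambda^{-1} - \chi_-$ is supported where $\chi_- $ localizes and the wavefront/frequency projections combine favourably, as in Proposition~\ref{prop:sink}). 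For $s > -\frac12$ and $\delta$ small the coefficient is $<1$, so I absorb it into the left side and obtain \eqref{eq:low_reg}.

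A few points require care. One must check that the remainder terms generated when commuting $\Pi^\pm$ past the cutoffs and the pullbacks $(\gamma^\pm_\lambda)^*$ are genuinely lower order, i.e. lie in $\Psi^{-\infty}$ uniformly in $\omega$ — this is exactly the content of Lemma~\ref{lem:Heaviside_identities} together with the wavefront statement in (2) of Lemma~\ref{lem:psido_no_corner}, which guarantees that the "wrong-sign" frequency contributions are smoothing. One must also verify that the supports can be chosen small enough that \emph{no} intermediate point $\gamma^+_\lambda(\theta)$ lands on a corner or a reflected corner; this is possible precisely because $\Sigma_\lambda \cap \mathcal K = \varnothing$ by the Morse--Smale hypothesis, so $\Sigma^\pm_\lambda(\delta)$ and its $\gamma^\pm_\lambda$-images avoid $\widetilde{\mathcal K}$ for small $\delta$, and the forcing terms picked up along the way all involve $\widetilde\chi_- g$ or $(\widetilde\chi_-\circ\gamma^\pm_\lambda) g$ as in the statement. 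Finally, the uniformity in $\epsilon$ follows because the constants in Lemmas~\ref{lem:elliptic}, \ref{lem:symbol_bound}, \ref{lem:radial_prep} are uniform in $\omega$ up to $\Im\omega = 0$, and the $\mathcal O(\epsilon)$ corrections to the principal symbols in Lemma~\ref{lem:psido_no_corner} are harmless after choosing $\epsilon$ small relative to $\delta$. The main obstacle is bookkeeping the direction of iteration correctly: one has to be sure that moving from $\theta$ to $b_\lambda(\theta)$ (or its inverse) contracts toward $\Sigma^-_\lambda$ in the relevant sense so that the Sobolev-scaling factor $\sigma^{-(\frac12+s)}$ from Step 1 of Lemma~\ref{lem:radial_prep} is $<1$ for $s>-\frac12$ — the inequality flips relative to Proposition~\ref{prop:sink}, and this sign is the entire reason the source estimate gives \emph{high} rather than \emph{low} regularity control.
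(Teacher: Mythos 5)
Your overall strategy is the same as the paper's: apply Lemma~\ref{lem:radial_prep} twice around the periodic orbit to produce a self-referential estimate whose coefficient is $<1$ when $s > -\tfrac12$, then absorb. But you have set up the wrong combination of frequency projection and cutoff, and this is not cosmetic. You estimate $\|\Pi^+\chi_- v\|_{H^s}$ (positive frequencies near the \emph{repeller} $\Sigma^-_\lambda$), whereas the proposition asserts $\|\Pi^\pm\chi_\pm v\|_{H^s}$ — matching signs, i.e.\ $\Pi^+$ near $\Sigma^+_\lambda$ and $\Pi^-$ near $\Sigma^-_\lambda$. The combination $\Pi^+\chi_-$ is precisely the $-$-case of the \emph{sink} estimate (Proposition~\ref{prop:sink}), not the source estimate, and the arithmetic there goes the other way: two applications of Lemma~\ref{lem:radial_prep} starting from $\Pi^+\chi_-$ give a factor $|\partial_\theta b_\lambda(\theta_p)|^{1/2+s}$ with $\partial_\theta b_\lambda(\theta_p) > 1$ on $\Sigma^-_\lambda$, which is $>1$ for $s>-\tfrac12$, so the absorption does not close. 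Your displayed factor $\sigma_{\mathrm{source}}^{-(1/2+s)}$ has the wrong sign in the exponent; there is no mechanism in Lemma~\ref{lem:radial_prep} that lets you "step against the flow" to invert it — the direction of the chaining is forced by the frequency sign: Lemma~\ref{lem:radial_prep} controls $\Pi^+$ near $\theta$ by $\Pi^-$ near $\gamma^-_\lambda(\theta)$ (not $\gamma^+_\lambda(\theta)$ as you wrote), and then $\Pi^-$ near $\gamma^-_\lambda(\theta)$ by $\Pi^+$ near $b_\lambda(\theta)$, full stop.

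The repair is minimal and brings you back to the paper's argument. Start from $\|\Pi^\pm\chi_\pm v\|_{H^s}$ with $\chi_\pm$ supported near $\Sigma^\pm_\lambda$. Two applications of Lemma~\ref{lem:radial_prep}, in the direction the lemma actually goes, yield
\[
\|\Pi^\pm\chi_\pm v\|_{H^s} \le \bigl(\sigma_{\mathrm{source},\pm}^{1/2+s}+\delta\bigr)\,\|\Pi^\pm(\widetilde\chi_\pm\circ b_\lambda^{\mp1})v\|_{H^s}
+ C\bigl(\|\Pi^\pm\widetilde\chi_\pm g + \Pi^\mp(\widetilde\chi_\pm\circ\gamma^\pm_\lambda)g\|_{H^s}+\|v\|_{H^{-N,a}_\bo}\bigr),
\]
where $\sigma_{\mathrm{source},\pm}=\max\{\partial_\theta b_\lambda^{\pm1}(\theta):\theta\in\Sigma^\pm_\lambda\}$. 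In the Lemma~\ref{lem:linear_coords} coordinates $\sigma_{\mathrm{source},\pm}<1$ pointwise on the orbit — this is where the attracting/repelling hyperbolicity enters, not via a flipped exponent — so for $s>-\tfrac12$ the coefficient is $<1$ and, using $\chi_\pm\equiv1$ on $\supp(\widetilde\chi_\pm\circ b_\lambda^{\mp1})$, you absorb and conclude \eqref{eq:low_reg}. The surrounding remarks in your writeup (uniformity in $\epsilon$, avoiding $\widetilde{\mathcal K}$, use of Lemma~\ref{lem:Heaviside_identities}) are all fine once the signs are corrected.
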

\begin{proof}
    Similar to the proof of Proposition~\ref{prop:sink}, we apply Lemma~\ref{lem:radial_prep} twice and find that for any $\delta > 0$, 
    \begin{multline}\label{eq:low_reg_twice}
        \|\Pi^\pm \chi_\pm v\|_{H^s} \le (\sigma_{\mathrm{source}, \pm}^{\ha + s} + \delta)\|\Pi^\pm (\widetilde \chi_\pm \circ b^{\mp 1}_\lambda)v\|_{H^s} \\
        + C \big(\|\Pi^\pm \widetilde \chi_\pm g + \Pi^\mp(\widetilde \chi_\pm \circ \gamma^\pm_\lambda) g\|_{H^s} + 
        \|v\|_{H^{-N, a}_\bo} \big),
    \end{multline}
    for all sufficiently small $\epsilon > 0$, where
    \[\sigma_{\mathrm{source}, \pm} = \max\{\partial_\theta b^{\pm 1}_\lambda(\theta): \theta \in \Sigma_\lambda^\pm\}.\]
    In particular, we see that $\sigma_{\mathrm{source}, \pm} < 1$, so~\eqref{eq:low_reg} follows immediately from~\eqref{eq:low_reg_twice} for $s > -1/2$ since $\chi_\pm \equiv 1$ on the support of $\chi_\pm \circ b_\lambda^{\mp1}$.
\end{proof}

\begin{Remark}
    The sign of $\epsilon = \Im \lambda$ controls the direction of the dynamics, and we have worked exclusively with $\epsilon > 0$. For $\epsilon < 0$, the direction of the dynamics is flipped, so sources become sinks and sinks become sources. The analogous estimates hold up to sign changes for $\epsilon < 0$. 
\end{Remark}

\subsubsection{Propagation near corner reflections}Finally, there is a propagation estimate that is mildly influenced by the corner. All the estimates so far are about controlling $v$ away from the corner as well as the reflection of corners. We see that to control the positive frequencies near $\theta \in \partial \Omega$, we need the negative frequencies near $\gamma^-_\lambda(\theta)$. In neighborhoods of reflections of the corners, the same is essentially still true. To be more precise we work near a type-$(+, +)$ corner since the other estimates can be obtained by symmetry. Furthermore, we center the corner at $0$ for convenience. We start with an analogue of Lemma~\ref{lem:psido_no_corner}. 
\begin{lemma}\label{lem:psido_corner_ref}
    Let $\mathcal J \Subset (0, 1)$ be a sufficiently small subinterval, and assume that $\Omega$ satisfies the Morse--Smale condition for all $\lambda \in \mathcal J$. Write $\omega = \lambda + i \epsilon$ for $\lambda \in \mathcal J$ and sufficiently small $\epsilon > 0$. Then for $\chi_\cu \in C^\infty(\partial \Omega)$ supported in a sufficiently small neighborhood of $\gamma^+_\lambda(0)$ with $\chi_\cu = 1$ near $\gamma^+_\lambda(0)$ for all $\lambda \in \mathcal J$, the operator $\chi_\cu d \mathcal C_\omega$ can be understood in pieces as follows.
    \begin{enumerate}
        \item $\chi_\cu d \mathcal C_\omega \widetilde \chi \in \Psi^{0}(\mathbb S^1)$ uniformly in $\omega$, where $\widetilde \chi \in C^\infty(\partial \Omega)$ has sufficiently small support and $\widetilde \chi = 1$ on $\supp \chi_\cu$. Furthermore, $\chi_\cu d \mathcal C_\omega \widetilde \chi$ has principle symbol 
        \[\sigma_0(\chi_c d \mathcal C_\omega \widetilde \chi) = -2 \pi i c_\omega \chi_\cu(\theta) \sgn(\xi),\]
        which means $\chi_\cu d \mathcal C_\omega \widetilde \chi$ elliptic on 
        \[\{(\theta, \xi) \in T^* \mathbb S^1: \chi_\cu(\theta) = 1\}\] 
        uniformly in $\omega$.  

        \item $\chi_\cu d \mathcal C_\omega (\widetilde \chi \circ \gamma^-_\lambda) = (\gamma^-_\lambda)^* (\chi_\cu \circ \gamma^-_\lambda) T^-_\omega (\widetilde \chi \circ \gamma^-_\lambda)$ where $T^-_\omega \in \Psi^0(\mathbb S^1)$ 
        has principle symbol 
        \begin{equation}
            \sigma_0(T_\omega^-)= (2 \pi ic_\omega + \mathcal O(\epsilon)) H(-\xi) e^{- \epsilon z_\omega^-(\theta)|\xi|}
        \end{equation}
        and wavefront set 
        \[\WF\big((\chi_\cu \circ \gamma^-_\lambda) T^-_\omega (\widetilde \chi \circ \gamma^-_\lambda)\big) \subset \{(\theta, \xi) \in T^* \mathbb S^1: \xi < 0\}\]
        uniformly in $\omega$

        \item $\chi_\cu d \mathcal C_\omega (\widetilde \chi \circ \gamma^+_\lambda) = \sum_{\pm} (\gamma^+_\lambda)^* (\chi_\cu \circ \gamma^+_\lambda) T^+_{\omega, \pm} (\widetilde \chi \circ \gamma^+_\lambda) H(\pm \theta')$ where $T^+_{\omega, \pm} \in \Psi^0(\mathbb S^1)$ has principle symbol 
        \begin{equation}\label{eq:split_T_symbol}
            \sigma_0(T^+_{\omega, \pm}) = (-2 \pi ic_\omega + \mathcal O(\epsilon)) H(\xi) e^{-\epsilon \tilde z^+_{\omega, \pm}(\theta)|\xi|}
        \end{equation}
        uniformly in $\omega$ where $\tilde z^+_{\omega, \pm} \in C^\infty(\mathbb S^1)$ satisfies
        \begin{equation}\label{eq:tilde_z}
            \tilde z^+_{\omega, \pm}(\theta) = z^+_\omega(\theta) \ \text{for} \ \pm \theta > 0\ \text{and}\ \theta \in \supp (\chi \circ \gamma^+_\lambda).
        \end{equation} 

        \item $\chi_\cu d \mathcal C_\omega (1 - \widetilde \chi - (\widetilde \chi \circ \gamma^-_\lambda) - (\widetilde \chi \circ \gamma^+_\lambda)): H^{-N, a}_\bo(\partial \Omega) \to H^N(\partial \Omega)$ uniformly in $\omega$ for $a > -1$.
    \end{enumerate}
\end{lemma}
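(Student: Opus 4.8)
The plan is to read the Schwartz kernel of $\chi_\cu d\mathcal C_\omega$ in a small neighborhood of $\gamma^+_\lambda(0)$ off the computations of \S\ref{sec:kernel_computation} and split it according to its singular loci, exactly as in the proof of Lemma~\ref{lem:psido_no_corner}; the one new feature is that $\gamma^+_\lambda$ is an involution sending $\gamma^+_\lambda(0)$ back to the corner $0$, so the $\gamma^+_\lambda$-reflection piece is precisely the corner-touching one and must be handled with a Heaviside split. First I would record the dynamical input: since $\Omega$ is $\lambda$-simple and $0$ is a type-$(+,+)$ corner, the point $\gamma^+_\lambda(0)$ is neither a corner nor a characteristic point (the four extrema of $\ell^\pm$ on $\partial\Omega$ are distinct, $\ell^+(x^-_{\max})\neq\ell^+(x^-_{\min})$, and $\gamma^+_\lambda$ has only $x^+_{\min}$ and $x^+_{\max}$ as fixed points), and after shrinking $\mathcal J$ and the neighborhood the same holds for $\gamma^-_\lambda(\gamma^+_\lambda(0))=b_\lambda^{-1}(0)$. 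Consequently the singular set of the kernel $K_\omega=K^+_\omega+K^-_\omega$ of $d\mathcal C_\omega$ that meets the slice $\{\theta\approx\gamma^+_\lambda(0)\}$ consists of the diagonal at $(\gamma^+_\lambda(0),\gamma^+_\lambda(0))$, the $\gamma^-_\lambda$-reflection at $(\gamma^+_\lambda(0),b_\lambda^{-1}(0))$, and the $\gamma^+_\lambda$-reflection at $(\gamma^+_\lambda(0),0)$, and nothing else modulo $\overline C^\infty$; these will give (1), (2) and (3) respectively, with (4) collecting the leftover.

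Parts (1) and (2) are then a transcription of the corresponding parts of Lemma~\ref{lem:psido_no_corner} via Proposition~\ref{prop:easy_kernel}. Near the diagonal both $K^\pm_\omega$ equal $c_\omega\chi_{\Diag}(\theta,\theta')(\theta-\theta'\pm i0)^{-1}$ modulo smooth terms uniform in $\omega$, and the two kernels $(\cdot\pm i0)^{-1}$ have symbols $-2\pi iH(\xi)$ and $2\pi iH(-\xi)$, which add to $-2\pi i\sgn\xi$; hence $\chi_\cu d\mathcal C_\omega\widetilde\chi\in\Psi^0(\mathbb S^1)$ with principal symbol $-2\pi ic_\omega\chi_\cu(\theta)\sgn\xi$, and since $c_\omega\neq 0$ this is elliptic where $\chi_\cu=1$, uniformly in $\omega$. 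For (2), by Proposition~\ref{prop:easy_kernel} the only contribution to $\chi_\cu d\mathcal C_\omega(\widetilde\chi\circ\gamma^-_\lambda)$ near $(\gamma^+_\lambda(0),b_\lambda^{-1}(0))$ comes from $K^-_\omega$ and has kernel $\chi_\cu(\theta)\tilde c^-_\omega(\theta')(\gamma^-_\lambda(\theta)-\theta'+i\epsilon z^-_\omega(\theta'))^{-1}(\widetilde\chi\circ\gamma^-_\lambda)(\theta')$ modulo smooth; using $\gamma^-_\lambda\circ\gamma^-_\lambda=\mathrm{id}$ and \cite[Lemma 3.9]{DWZ} to recognize this as $(\gamma^-_\lambda)^*$ composed with a pseudodifferential operator (the orientation reversal of $\gamma^-_\lambda$ flipping the frequency sign), together with $\tilde c^-_\omega=c_\omega/\partial_{\theta'}\gamma^-_\lambda+\mathcal O(\epsilon)$, yields the stated principal symbol and wavefront support, uniformly in $\omega$ up to $\epsilon=0$.

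For (3), since $\gamma^+_\lambda\circ\gamma^+_\lambda=\mathrm{id}$ the cutoff $\widetilde\chi\circ\gamma^+_\lambda$ is supported near the corner $0$, so $\chi_\cu d\mathcal C_\omega(\widetilde\chi\circ\gamma^+_\lambda)$ is localized in the neighborhood $V$ of $(\gamma^+_\lambda(0),0)$ where Proposition~\ref{prop:rest_of_kernel}(1) applies; there the kernel equals $\sum_\pm H(\pm\theta')\big[\tilde c^+_\omega(\theta')(\gamma^+_\lambda(\theta)-\theta'+i\epsilon z^+_\omega(\theta'))^{-1}+\varphi^\pm_\omega(\theta,\theta')\big]$. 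The smooth remainders $\varphi^\pm_\omega$ feed into (4). For the leading terms, on the set $\supp(\widetilde\chi\circ\gamma^+_\lambda)\cap\{\pm\theta'>0\}$ I would replace $z^+_\omega$ by a function $\tilde z^+_{\omega,\pm}\in C^\infty(\mathbb S^1)$, smooth in $\omega$ up to $\epsilon=0$ with $\Re\tilde z^+_{\omega,\pm}\geq\delta>0$, agreeing with $z^+_\omega$ there (possible since one only needs agreement on a set where $z^+_\omega$ is already smooth), which is exactly \eqref{eq:tilde_z}. Then $\tilde c^+_\omega(\theta')(\gamma^+_\lambda(\theta)-\theta'+i\epsilon\tilde z^+_{\omega,\pm}(\theta'))^{-1}$ is, modulo smooth terms, the Schwartz kernel of a genuine operator $T^+_{\omega,\pm}\in\Psi^0(\mathbb S^1)$ evaluated at $(\gamma^+_\lambda(\theta),\theta')$ (the singularity at $\theta'=0$ having been removed), with principal symbol as in \eqref{eq:split_T_symbol} by the same application of \cite[Lemma 3.9]{DWZ} as in part (2). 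Reassembling with the cutoffs and the involution identity produces the claimed factorization $\sum_\pm(\gamma^+_\lambda)^*(\chi_\cu\circ\gamma^+_\lambda)T^+_{\omega,\pm}(\widetilde\chi\circ\gamma^+_\lambda)H(\pm\theta')$, with all constants and operator classes uniform in $\omega=\lambda+i\epsilon$ by the corresponding uniformities in Proposition~\ref{prop:rest_of_kernel}.

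For (4), subtracting the pieces found in (1)--(3) leaves a Schwartz kernel of $\chi_\cu d\mathcal C_\omega$ disjoint from the diagonal and both reflection sets; by Proposition~\ref{prop:rest_of_kernel} it is then smooth in $\theta$ near $\gamma^+_\lambda(0)$ and, in $\theta'$, lies in $\overline C^\infty(\partial\Omega)\subset H^{\infty,0-}_\bo(\partial\Omega)$ (smooth up to whatever corner it may meet in the $\theta'$ slot), uniformly in $\omega$ up to $\epsilon=0$. Since $H^{\infty,0-}_\bo(\partial\Omega)$ pairs with $H^{-N,a}_\bo(\partial\Omega)$ for $a>-1$, this piece maps $H^{-N,a}_\bo(\partial\Omega)\to H^N(\partial\Omega)$ uniformly in $\omega$, which is (4). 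I expect the main difficulty to be purely organizational: correctly matching each kernel formula of \S\ref{sec:kernel_computation} to the relevant region around $\gamma^+_\lambda(0)$ — in particular confirming that the corner enters only through the $\gamma^+_\lambda$-reflection and that $\gamma^+_\lambda(0)$ and $\gamma^-_\lambda(\gamma^+_\lambda(0))$ are noncharacteristic non-corner points — and then tracking the Heaviside cutoffs, the pullbacks by $\gamma^\pm_\lambda$, the orientation-induced sign changes, and the $\epsilon$-dependence carefully enough that every bound is uniform as $\omega$ approaches the real axis.
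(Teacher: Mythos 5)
Your overall approach matches the paper's proof closely: parts (1), (2), and (4) are transcriptions of the corresponding parts of Lemma~\ref{lem:psido_no_corner} using Proposition~\ref{prop:easy_kernel}, and for part (3) you correctly read the kernel off Proposition~\ref{prop:rest_of_kernel}(1), extend $z^+_\omega$ from each side of the corner to $\widetilde z^+_{\omega,\pm}\in C^\infty(\mathbb S^1)$, and then invoke \cite[Lemma 3.9]{DWZ}. Your preliminary dynamical observations (that $\gamma^+_\lambda(0)$ and $b_\lambda^{-1}(0)$ are noncorner, noncharacteristic) are the correct prerequisites.

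There is, however, one bookkeeping slip worth flagging. You send the smooth remainders $\varphi^\pm_\omega$ from the kernel formula of Proposition~\ref{prop:rest_of_kernel}(1) ``into (4),'' but that does not fit the cutoff structure of the lemma. Part (3) asserts the \emph{exact} identity
\[
\chi_\cu\, d\mathcal C_\omega\, (\widetilde\chi\circ\gamma^+_\lambda)
 \;=\; \sum_\pm (\gamma^+_\lambda)^* (\chi_\cu\circ\gamma^+_\lambda)\, T^+_{\omega,\pm}\, (\widetilde\chi\circ\gamma^+_\lambda)\, H(\pm\theta'),
\]
and the $\varphi^\pm_\omega$ terms come multiplied by $H(\pm\theta')(\widetilde\chi\circ\gamma^+_\lambda)(\theta')$, so they are part of the left-hand side above. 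They cannot appear in part (4), whose operator carries the complementary cutoff $1 - \widetilde\chi - (\widetilde\chi\circ\gamma^-_\lambda) - (\widetilde\chi\circ\gamma^+_\lambda)$, which vanishes near $\theta'=0$. The correct move, as the paper does implicitly when it writes that $T^+_{\omega,\pm}$ has the stated Schwartz kernel ``modulo smooth functions uniform in $\omega$,'' is to absorb $\varphi^\pm_\omega$ into $T^+_{\omega,\pm}$ as a $\Psi^{-\infty}(\mathbb S^1)$ contribution: after pulling back by $\gamma^+_\lambda$, the term $\varphi^\pm_\omega(\gamma^+_\lambda(\theta),\theta')$ is a smooth kernel on $\mathbb S^1\times\mathbb S^1$ uniform in $\omega$, hence defines an element of $\Psi^{-\infty}(\mathbb S^1)\subset\Psi^0(\mathbb S^1)$, and including it in $T^+_{\omega,\pm}$ does not change the principal symbol \eqref{eq:split_T_symbol}. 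Once you make this correction, the factorization in (3) is exact and part (4) is the genuine complementary piece, handled verbatim as in Lemma~\ref{lem:psido_no_corner}(3).
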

\begin{Remark}
    Observe that in (3), multiplication by Heaviside function in $\theta$ is unambiguous since $\widetilde \chi \circ \gamma^+_\lambda$ is supported in a small neighborhood of $\gamma^+_\lambda(\theta_0)$. Furthermore, observe that all choices of $\tilde z^+_{\omega, \pm} \in C^\infty(\mathbb S^1)$ gives the same expansion for $\chi_\cu d \mathcal C_\omega(\widetilde \chi \circ \gamma^+_\lambda)$. 
\end{Remark}
\begin{proof}
    (1), (2), and (4) are identical to that of (1), (2) and (3) in Lemma~\ref{lem:psido_no_corner} respectively. 

    For (3), with the same notation from Proposition~\ref{prop:rest_of_kernel}, we see that the Schwartz kernel of $\chi_\cu d \mathcal C_\omega( \widetilde \chi \circ \gamma^+_\lambda)$ is given by 
    \begin{equation}\label{eq:split_kernel}
        \chi_\cu(\theta) \widetilde \chi(\gamma^+_\lambda(\theta')) \sum_{\pm} H(\pm\theta') \big[\tilde c_\omega^+(\theta')(\gamma^+_\lambda(\theta) - \theta' + i \epsilon z^+_\omega(\theta'))^{-1} + \psi^\pm_\omega(\theta, \theta') \big].
    \end{equation}
    Here, $z_\omega^+ \in \overline C^\infty(\partial \Omega)$, which in particular does not pull back to a smooth function on $\mathbb S^1$ by the parameterization. However, $z_\omega^+$ can be extended from the $\theta > 0$ or $\theta < 0$ to $\tilde z^+_{\omega, \pm}$, so~\eqref{eq:split_kernel} can be interpreted as the sum of two pulled-back pseudodifferential operators composed with cutoffs by the Heaviside function. In particular, (3) holds with $T^+_{\omega, \pm} \in \Psi^0_\omega(\mathbb S^1)$ given by the Schwartz kernel 
    \[\partial_\theta \gamma'(\theta) \tilde c_\omega^+(\theta - \theta' + i \epsilon z_\omega^+(\theta'))^{-1}\]
    modulo smooth functions uniform in $\omega$. Then \cite[Lemma 3.9]{DWZ} gives us the principal symbol formula in~\eqref{eq:split_T_symbol} uniformly in $\omega$. 
\end{proof}
\begin{figure}
    \centering
    \includegraphics{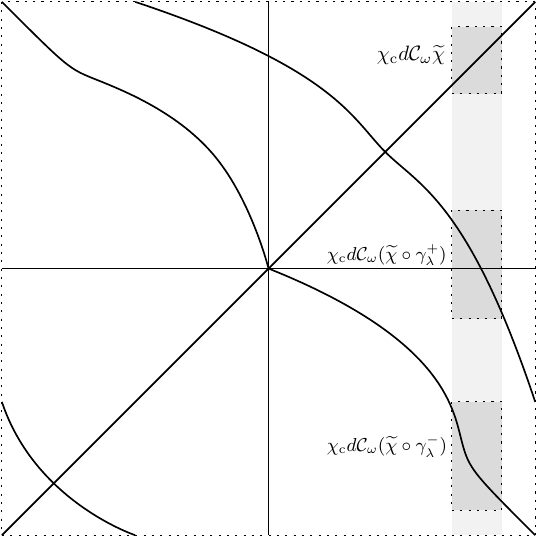}
    \caption{A diagram of the Schwartz kernel of $d\mathcal C_\omega$ with a type-$(+, +)$ corner only. The singularities of the Schwartz kernel are given by the solid black lines. $\chi_\cu d \mathcal C$ as in Lemma~\ref{lem:psido_corner_ref} is shaded, and the darker shaded regions correspond to (1), (2), and (3) of the lemma.}
    \label{fig:refPOS}
\end{figure}

\begin{proposition}\label{prop:ref_corner_POS}
    Let $\mathcal J \Subset (0, 1)$ be such that $\Omega$ is $\lambda$-simple for all $\lambda \in \mathcal J$. Write $\omega = \lambda + i \epsilon$ for $\lambda \in \mathcal J$ and sufficiently small $\epsilon > 0$. Let $\chi, \widetilde \chi \in C^\infty(\partial \Omega)$ be supported in a sufficiently small neighborhood of $\gamma^+_\lambda(0)$ and $\widetilde \chi = 1$ on $\supp \chi$. Assume that $v \in H^{\infty, a}_\bo$ for $a \in (-1, 0)$. Then for $s \in (-\ha, \ha)$, 
    \begin{equation}\label{eq:corner_POS_hard}
    \|\Pi^- \chi v \|_{H^{s}} \le C \big(\|(\widetilde \chi \circ \gamma^+_\lambda) v\|_{H^{s, s - \ha}_\bo} + \|\Pi^- \widetilde \chi g\|_{H^{s}} + \|v\|_{H^{-N, a}_\bo} \big).
    \end{equation}
    For all $s \in \R$, we have
    \begin{equation}\label{eq:corner_POS_easy}
    \|\Pi^+ \chi v \|_{H^s} \le C \big(\|\Pi^-(\widetilde \chi \circ \gamma^-_\lambda) v\|_{H^s} + \|\Pi^+ \widetilde \chi g\|_{H^s} + \|v\|_{H^{-N, a}_\bo} \big).
    \end{equation}
    The constants are locally uniform in $\lambda$ for sufficiently small $\epsilon > 0$. 
\end{proposition}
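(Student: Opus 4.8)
The plan is to run the argument in exact parallel with the proof of Proposition~\ref{prop:POS}, now feeding in the microlocal decomposition of $\chi_\cu d\mathcal C_\omega$ near a corner reflection provided by Lemma~\ref{lem:psido_corner_ref}. First I fix nested cutoffs $\chi, \chi_\cu, \widetilde\chi \in C^\infty(\partial\Omega)$, all supported in a small neighborhood of $\gamma^+_\lambda(0)$, with $\chi_\cu \equiv 1$ on $\supp\chi$ and $\widetilde\chi \equiv 1$ on $\supp\chi_\cu$, and small enough that Lemma~\ref{lem:psido_corner_ref} applies. Part (1) of that lemma says $\chi_\cu d\mathcal C_\omega\widetilde\chi \in \Psi^0(\mathbb S^1)$ is elliptic on $\{(\theta,\xi): \chi_\cu(\theta)=1\}$ uniformly in $\omega$, so the elliptic estimate of Lemma~\ref{lem:elliptic} gives, for either frequency sign,
\[
\|\Pi^\pm\chi v\|_{H^s}\le C\big(\|\Pi^\pm\chi_\cu d\mathcal C_\omega\widetilde\chi v\|_{H^s}+\|v\|_{H^{-N,a}_\bo}\big).
\]
Since $d\mathcal C_\omega v=g$, I then write $\chi_\cu d\mathcal C_\omega\widetilde\chi v=\chi_\cu g-\chi_\cu d\mathcal C_\omega\big[(\widetilde\chi\circ\gamma^-_\lambda)+(\widetilde\chi\circ\gamma^+_\lambda)+\rho\big]v$ with $\rho:=1-\widetilde\chi-(\widetilde\chi\circ\gamma^-_\lambda)-(\widetilde\chi\circ\gamma^+_\lambda)$; by part (4) the $\rho$-term maps $H^{-N,a}_\bo\to H^N$ and contributes only $\|v\|_{H^{-N,a}_\bo}$, and the $\chi_\cu g$-term is absorbed into $\|\Pi^\pm\widetilde\chi g\|_{H^s}$ modulo $\Psi^{-\infty}$-commutators exactly as in Proposition~\ref{prop:POS}.

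For the easy estimate \eqref{eq:corner_POS_easy} I apply $\Pi^+$. The $\gamma^+_\lambda$-piece is, by part (3), a sum of operators with operator wavefront in $\{\xi>0\}$ (from \eqref{eq:split_T_symbol}) pulled back by the orientation-reversing map $\gamma^+_\lambda$ and precomposed with Heaviside cutoffs; by Lemma~\ref{lem:Heaviside_identities} the pullback moves the wavefront into $\{\xi<0\}$, so $\Pi^+$ of it is smoothing and contributes only $\|v\|_{H^{-N,a}_\bo}$. The $\gamma^-_\lambda$-piece equals $(\gamma^-_\lambda)^*(\chi_\cu\circ\gamma^-_\lambda)T^-_\omega(\widetilde\chi\circ\gamma^-_\lambda)$ with $\WF(T^-_\omega)\subset\{\xi<0\}$ by part (2); since $T^-_\omega$, the cutoffs and $\Pi^-$ commute modulo $\Psi^{-\infty}$ (Lemma~\ref{lem:Heaviside_identities}) and $\Psi^0$-operators are bounded on $H^s$, this piece is controlled by $\|\Pi^-(\widetilde\chi\circ\gamma^-_\lambda)v\|_{H^s}+\|v\|_{H^{-N,a}_\bo}$. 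Collecting the three contributions gives \eqref{eq:corner_POS_easy}, with no restriction on $s$.

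For the hard estimate \eqref{eq:corner_POS_hard} I apply $\Pi^-$ and the roles of the two reflected pieces swap. Now the orientation-reversing pullback moves $\WF(T^-_\omega)$ into $\{\xi>0\}$, so $\Pi^-$ annihilates the $\gamma^-_\lambda$-piece modulo smoothing; but the $\gamma^+_\lambda$-piece $\sum_\pm(\gamma^+_\lambda)^*(\chi_\cu\circ\gamma^+_\lambda)T^+_{\omega,\pm}(\widetilde\chi\circ\gamma^+_\lambda)H(\pm\theta')$ survives, since $\WF(T^+_{\omega,\pm})\subset\{\xi>0\}$ is moved into $\{\xi<0\}$ by the pullback. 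The key observation is that the incoming variable $\theta'$ here ranges near the corner $0$ (because $\widetilde\chi\circ\gamma^+_\lambda$ is supported near $\gamma^+_\lambda(\gamma^+_\lambda(0))=0$), so $H(\pm\theta')$ is the Heaviside splitting at the corner, and estimating this piece amounts to bounding $T^+_{\omega,\pm}(\widetilde\chi\circ\gamma^+_\lambda)H(\pm\theta')v$ in $H^s$. Since, near the corner, $v$ lies in $H^{s,s-\ha}_\bo=H^s(\R_+)$ for $s\in(-\ha,\ha)$ by Lemma~\ref{lem:b_sob_to_sob}, multiplication by $H(\pm\theta')$ preserves $H^s$ for $s\in(-\ha,\ha)$ (Lemma~\ref{lem:cutting}), and $T^+_{\omega,\pm}\in\Psi^0$ is bounded on $H^s$, so this piece is bounded by $\|(\widetilde\chi\circ\gamma^+_\lambda)v\|_{H^{s,s-\ha}_\bo}+\|v\|_{H^{-N,a}_\bo}$. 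This is precisely the range and the b-Sobolev norm appearing in \eqref{eq:corner_POS_hard}; assembling the pieces completes the proof.

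The main obstacle is the bookkeeping around the Heaviside factor $H(\pm\theta')$ in part (3) of Lemma~\ref{lem:psido_corner_ref}: one must recognize that it lives over the \emph{corner} variable, that it is the sole reason the naive $H^s$-estimate fails outside $s\in(-\ha,\ha)$, and that it forces the loss from a plain Sobolev norm to the b-Sobolev norm $\|(\widetilde\chi\circ\gamma^+_\lambda)v\|_{H^{s,s-\ha}_\bo}$ on the right of \eqref{eq:corner_POS_hard} — this is where Lemmas~\ref{lem:cutting}, \ref{lem:b_sob_to_sob}, and~\ref{lem:b-cutting} combine. The orientation-tracking (which half-line $\Pi^\pm$ survives pullback by $\gamma^\pm_\lambda$) is routine via Lemma~\ref{lem:Heaviside_identities} but must be executed carefully, since it is exactly what makes \eqref{eq:corner_POS_easy} hold for all $s$ while \eqref{eq:corner_POS_hard} is confined to $s\in(-\ha,\ha)$.
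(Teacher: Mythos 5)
Your proof follows the paper's argument essentially verbatim: nested cutoffs, elliptic estimate from Lemma~\ref{lem:psido_corner_ref}(1), decomposition into the $\gamma^\pm_\lambda$-reflected and residual pieces, and then the same orientation-tracking via Lemma~\ref{lem:Heaviside_identities} to identify which piece is smoothing and which survives, with the key step of converting the surviving Heaviside-cut term to the $H^{s,s-\ha}_\bo$ norm via Lemma~\ref{lem:b_sob_to_sob}. The reasoning and the identification of where the $s\in(-\ha,\ha)$ restriction enters are both correct and match the paper's proof.
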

\begin{proof}
    1. Wedge a cutoff $\chi_\cu \in C^\infty(\partial \Omega)$ in between $\chi$ and $\widetilde \chi$ so that $\chi_\cu = 1$ on $\supp \chi$ and $\widetilde \chi = 1$ on $\supp \chi_\cu$. It follows from (1) of Lemma~\ref{lem:psido_corner_ref} and the elliptic estimates that
    \begin{equation}\label{eq:ref_corner_elliptic}
        \|\Pi^\pm \chi v\|_{H^s} \le C \big(\|\Pi^\pm \chi_\cu d \mathcal C_\omega \widetilde \chi v\|_{H^s} + \|\chi_\cu v\|_{H^{-N}}  \big)
    \end{equation}
    First consider the $\Pi^-$ case. From (2) of Lemma~\ref{lem:psido_corner_ref}, we see that
    \begin{equation}\label{eq:refPOS++}
        \|\Pi^- \chi_\cu d \mathcal C_\omega (\widetilde \chi \circ \gamma^-_\lambda)v \|_{H^s} \le \|\Pi^+(\chi_\cu \circ \gamma_\lambda^-) T^-_\omega (\widetilde \chi \circ \gamma_\lambda^-) v\|_{H^s} \lesssim \|(\widetilde \chi \circ \gamma^-_\lambda)v\|_{H^{-N}}.    \end{equation}
    where the hidden constant is locally uniform in $\lambda$. On the other hand, it follows from (3) of Lemma~\ref{lem:psido_corner_ref} that
    \begin{equation*}
        \|\Pi^- \chi_\cu d \mathcal C_\omega (\widetilde \chi \circ \gamma_\lambda^+)v\|_{H^s} \le \sum_{\pm} \|\Pi^+ (\chi_\cu \circ \gamma_\lambda^+) T^+_{\omega, \pm}(\widetilde \chi \circ \gamma^+_\lambda) H(\pm \theta)v\|_{H^s}
    \end{equation*}
    where $T_{\omega, \pm}^+ \in \Psi^{0}(\mathbb S^1)$ locally uniformly in $\omega$. Therefore, it follows from Lemma~\ref{lem:b_sob_to_sob} that for $s \in (- \ha, \ha)$, 
    \begin{equation*}
        \|\Pi^+ (\chi_\cu \circ \gamma_\lambda^+) T^+_{\omega, \pm}(\widetilde \chi \circ \gamma^+_\lambda) H(\pm \theta)v\|_{H^s} \lesssim \|(\widetilde \chi \circ \gamma_\lambda^+)v\|_{H^{s, s - \ha}_\bo}.
    \end{equation*}
    Therefore,
    \begin{equation}\label{eq:refPOS+-}
        \|\Pi^- \chi_\cu d \mathcal C_\omega (\widetilde \chi \circ \gamma_\lambda^+)v\|_{H^s} \lesssim \|(\widetilde \chi \circ \gamma_\lambda^+)v\|_{H^{s, s - \ha}_\bo}. 
    \end{equation}
    Then applying~\eqref{eq:refPOS++} and~\eqref{eq:refPOS+-} to~\eqref{eq:ref_corner_elliptic} along with (4) of Lemma~\ref{lem:psido_corner_ref}, we find
    \begin{equation*}
        \|\Pi^- \chi v\|_{H^s} \lesssim \|(\widetilde \chi \circ \gamma_\lambda^+)v\|_{H^{s, s - \ha}_\bo} + \|\Pi^- \chi_\cu g \|_{H^s} + \|v\|_{H^{-N, a}_\bo}
    \end{equation*}
    as desired. 

    \noindent
    2. It remains to consider the $\Pi^+$ case. From (2) of Lemma~\ref{lem:psido_corner_ref}, we see that
    \begin{equation}\label{eq:refPOS-+}
    \begin{aligned}
        \|\Pi^+ \chi_\cu d \mathcal C_\omega (\widetilde \chi \circ \gamma^-_\lambda)v \|_{H^s} &\le \|\Pi^-(\chi_\cu \circ \gamma_\lambda^-) T^-_\omega (\widetilde \chi \circ \gamma_\lambda^-) v\|_{H^s} \\
        &\lesssim \|\Pi^-(\widetilde \chi \circ \gamma^-_\lambda)v\|_{s} + \|v\|_{H^{-N, a}_\bo}.
    \end{aligned}
    \end{equation}
    On the other hand, it follows from (3) of Lemma~\ref{lem:psido_corner_ref} that
    \begin{equation*}
        \|\Pi^+ \chi_\cu d \mathcal C_\omega (\widetilde \chi \circ \gamma_\lambda^+)v\|_{H^s} \le \sum_{\pm} \|\Pi^- (\chi_\cu \circ \gamma_\lambda^+) T^+_{\omega, \pm}(\widetilde \chi \circ \gamma^+_\lambda) H(\pm \theta)v\|_{H^s}.
    \end{equation*}
    Observe that $T_{\omega, \pm}^+$ satisfies
    \[\Pi^- T_{\omega, \pm}^+ \in \Psi^{-\infty}(\mathbb S^1)\]
    uniformly in $\omega$. Therefore, for $a > -1$, 
    \[\|\Pi^- (\chi_\cu \circ \gamma_\lambda^+) T^+_{\omega, \pm}(\widetilde \chi \circ \gamma^+_\lambda) H(\pm \theta)v\|_{H^s} \lesssim \|v\|_{H^{-N, a}_{\bo}}.\]
    It then follows that 
    \begin{equation}\label{eq:refPOS--}
        \|\Pi^+ \chi_\cu d \mathcal C_\omega (\widetilde \chi \circ \gamma_\lambda^+)v\|_{H^s} \lesssim \|v\|_{H^{-N, a}_\bo}
    \end{equation}
    Then applying~\eqref{eq:refPOS-+} and~\eqref{eq:refPOS--} to~\eqref{eq:ref_corner_elliptic} along with (4) of Lemma~\ref{lem:psido_corner_ref}, we find
    \[\|\Pi^+ \chi v \|_{H^s} \le C \big(\|\Pi^-(\widetilde \chi \circ \gamma^-_\lambda) v\|_{H^s} + \|\Pi^+ \chi_\cu g\|_{H^s} + \|v\|_{H^{-N, a - \delta}_\bo} \big)\]
    as desired.
\end{proof}

\subsection{Propagation through corner}
It remains to analyze what happens when the propagation hits the corners. The idea is to break $\partial \Omega$ apart at the corners. Then distributions (or rather distributional one-forms) supported near the corner can be considered as $\C^2$-valued distributions on $\R_+$ supported near $0$. Note that this procedure cannot be performed on arbitrary distributions. In particular, if there is a distribution supported at the corner, then there is no canonical way to cut this distribution in half at the corner. On the other hand, if there are two extendible distributions on $\R_+$ equal to $1/x$ near $x = 0$, there is no way canonical way to glue them together and make sense of it distributionally. 

More concretely, we will use Lemmas~\ref{lem:cutting} and~\ref{lem:b_sob_to_sob} to make sense of this cutting and gluing procedure. Again, we focus on a neighborhood of a type-$(+, +)$ corner, which we may center at $0$. Let $\mathbf x$ be a b-parameterization centered at the corner. Fix a cutoff 
\begin{equation}\label{eq:sym_cutoff}
    \chi_\cu \in \overline C^\infty(\partial \Omega), \qquad \chi_\cu = 1 \ \text{near}\ 0, \qquad \chi_\cu = \chi_\cu \circ \gamma^-_\lambda.
\end{equation}
Note that this is possible since $\gamma^-_\lambda$ is an involution that fixes $0$. We call any cutoff satisfying~\eqref{eq:sym_cutoff} a symmetric cutoff, and this choice simply makes the computations near the corner a bit cleaner. In particular, in the coordinates~\eqref{eq:good_param}, symmetric cutoffs are simply even functions. Define the cutting map with respect to $\mathbf x$ by
\begin{equation}\label{eq:cutting}
\begin{gathered}
    \mathfrak X: H^s(\partial \Omega; T^* \partial \Omega) \to H^{s, s - \ha}_\bo(\R_+; \C^2), \qquad s \in (-\ha, \ha),\\
    v(\theta) d\theta \mapsto \begin{pmatrix} H(\theta) \chi_\cu(\theta) v(\theta) \\ H(\theta) \chi_\cu(\theta)v(-\theta) \end{pmatrix} \quad \text{in coordinates $\theta \mapsto \mathbf x(\theta)$}.
\end{gathered}
\end{equation}
Now fix another symmetric cutoff 
\begin{equation}\label{eq:sym_big_cutoff}
    \widetilde \chi \in \overline C^\infty(\partial \Omega), \qquad \widetilde \chi = 1 \ \text{near}\ \supp \chi_\cu, \qquad \widetilde \chi = \widetilde \chi \circ \gamma^-_\lambda.
\end{equation}
We define the gluing map 
\begin{equation}\label{eq:gluing}
\begin{gathered}
    \mathfrak G: H^{s, s - \ha}_\bo(\R_+; \C^2) \to H^s(\partial \Omega; T^* \partial \Omega), \qquad s \in (-\ha, \ha),\\
    \begin{pmatrix} v_1(\theta) \\ v_2(\theta) \end{pmatrix} \mapsto \widetilde \chi(\theta) (v_1(\theta) + v_2(-\theta))\, d \theta \quad \text{in coordinates $\theta \mapsto \mathbf x(\theta)$},
\end{gathered}
\end{equation}
where we use the fact that $v_j \in H^{s, s - \ha}_\bo(\R_+)$ can be identified with $v_j \in H^s(\R)$ by extending by zero. With the correct choice of b-parameterization $\mathbf x$, we will see that 
\begin{equation}\label{eq:Q}
    Q_\omega := \mathfrak X\circ  d\mathcal C_\omega \circ \mathfrak G
\end{equation}
is a family of b-operators elliptic near $0$ uniformly in $\omega$. We will often drop $\omega$ from the notation. 

\begin{lemma}\label{lem:transform_to_b_operator}
    Let $\mathcal J \Subset (0, 1)$ be such that $\Omega$ is $\lambda$-simple for all $\lambda \in \mathcal J$. Let $\omega = \lambda + i\epsilon$ for $\lambda \in \mathcal J$ and $\epsilon > 0$. Assume that the supports of the cutoffs $\chi_\cu$ and $\widetilde \chi$ from~\eqref{eq:sym_cutoff} and~\eqref{eq:sym_big_cutoff} are supported in a neighborhood where the parameterization~\eqref{eq:good_param} holds. Using these cutoffs and the parameterization~\eqref{eq:good_param} to define $\mathfrak X$ and $\mathfrak G$, the resulting operator $Q_\omega$ lies in $\widetilde \Psi^{0, \mathcal E}_\bo(\R_+; \C^2)$ where $\mathcal E = (E_{\lb}, E_{\rb})$ is given by
    \begin{equation}\label{eq:index_set}
        E_\lb = \{(k, 0) : k \in \Z,\, k \ge 0\}, \quad E_\rb = \{(k, 0) : k \in \Z,\, k \ge 1\}.
    \end{equation}
    Moreover, $Q_\omega$ takes the form 
    \begin{equation}\label{eq:DI_form}
        Q_\omega = \chi_\cu Q_{\omega, \mathrm{dil}} \widetilde \chi
    \end{equation}
    for some dilation invariant $Q_{\omega, \mathrm{dil}} \in \Psi^{0, \mathcal E}_{\mathrm{dil}} (\R_+; \C^2)$. Furthermore, $Q_\omega$ is uniformly elliptic on $(\supp(1 - \chi_\cu))^C$ for all sufficiently small $\epsilon$, locally uniformly in $\lambda$.
\end{lemma}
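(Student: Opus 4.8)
The plan is to compute the Schwartz kernel of $Q_\omega = \mathfrak X\circ d\mathcal C_\omega\circ\mathfrak G$ explicitly, recognize it as a cut-off dilation-invariant b-operator, and then read off the index sets and the leading symbol directly from that formula. First I would unwind the definitions \eqref{eq:cutting} and \eqref{eq:gluing}: for $(v_1,v_2)$ supported near $0$, the one-form $\mathfrak G(v_1,v_2)$ equals $\widetilde\chi(\theta)v_1(\theta)\,d\theta$ on $\{\theta>0\}$ and $\widetilde\chi(\theta)v_2(-\theta)\,d\theta$ on $\{\theta<0\}$ in the parameterization \eqref{eq:good_param}; applying $d\mathcal C_\omega$, whose kernel near $0$ is given exactly by Proposition~\ref{prop:corner_kernel}, and then $\mathfrak X$ (multiplication by the even cutoff $\chi_\cu$, restriction to $\{\theta>0\}$, plus the reflection $\theta\mapsto-\theta$ for the second slot), the substitution $\theta'\mapsto-\theta'$ together with the evenness of $\widetilde\chi$ turns $Q_\omega$ into the operator on $\R_+$ with $2\times2$ matrix kernel $\chi_\cu(\theta)M_\omega(\theta,\theta')\widetilde\chi(\theta')$. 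Here the $(1,1)$ and $(2,2)$ entries of $M_\omega$ come from the $\theta\cdot\theta'>0$ cases of Proposition~\ref{prop:corner_kernel} (with an overall sign flip in $(2,2)$ produced by the reflection), while the $(1,2)$ and $(2,1)$ entries come from the $\theta\cdot\theta'<0$ cases with $\theta'$ replaced by $-\theta'$. Every entry is a finite sum of terms $c_\omega\,(\theta-\beta\theta')^{-1}$ with $\beta$ a constant depending only on $\omega$ (the $z^{\mu\nu}_\omega$ in Proposition~\ref{prop:corner_kernel} are constants near the corner), namely $\beta=1$ with a $\pm i0$ prescription for the diagonal terms, and $\beta\in\{-\alpha^{-1}(1-i\epsilon z^{+-}_\omega),\,1-i\epsilon z^{--}_\omega,\,-\alpha(1+i\epsilon z^{++}_\omega),\,1+i\epsilon z^{-+}_\omega\}$ for the cross-corner terms. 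Each such kernel is homogeneous of degree $-1$, so $M_\omega$ is the Schwartz kernel of a dilation-invariant operator $Q_{\omega,\mathrm{dil}}$, which gives the form \eqref{eq:DI_form}.

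Next I would check that each term lies in the full b-calculus with the asserted index sets. In the b-coordinates $r=\theta+\theta'$, $\tau=(\theta-\theta')/(\theta+\theta')$ one has $\theta-\beta\theta'=\tfrac r2[(1-\beta)+(1+\beta)\tau]$, so the appropriately weighted kernel is
\[
\tfrac12 r(1-\tau)\,(\theta-\beta\theta')^{-1}=\frac{1-\tau}{(1-\beta)+(1+\beta)\tau}.
\]
For $\beta=1$ this equals $\tfrac{1-\tau}{2(\tau\pm i0)}$: conormal of order $0$ at $\ff=\{\tau=0\}$, smooth up to $\lb=\{\tau=-1\}$, vanishing to first order at $\rb=\{\tau=1\}$, and $r$-independent; splitting off a small-calculus $\Psi^0_\bo$ operator carrying the conormal singularity at $\ff$ and observing that the remainder is residual shows that this term lies in $\widetilde\Psi^{0,\mathcal E}_{\mathrm{dil}}$ with $\mathcal E$ exactly \eqref{eq:index_set}. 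For the cross-corner values one has $\beta\notin\R_+$ whenever $\epsilon>0$ (since $\Re z^{\mu\nu}_\omega>\delta>0$ forces either a nonzero imaginary part or a negative real part), so $(1-\beta)+(1+\beta)\tau$ never vanishes for $\tau\in[-1,1]$ and the weighted kernel above is smooth up to $\ff$, smooth up to $\lb$, and vanishing to first order at $\rb$; hence these terms are residual, lying in $\widetilde\Psi^{-\infty,\mathcal E}_\bo$ with the same $\mathcal E$. Therefore $Q_{\omega,\mathrm{dil}}\in\widetilde\Psi^{0,\mathcal E}_{\mathrm{dil}}(\R_+;\C^2)$ and $Q_\omega=\chi_\cu Q_{\omega,\mathrm{dil}}\widetilde\chi\in\widetilde\Psi^{0,\mathcal E}_\bo(\R_+;\C^2)$. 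Although $\mathfrak X$ and $\mathfrak G$ are only defined on $H^{s,s-\ha}_\bo$ for $s\in(-\ha,\ha)$, this identification of $Q_\omega$ with an honest b-operator is what lets it act on all weighted b-Sobolev spaces.

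Finally I would compute the leading symbol in order to obtain ellipticity. Fourier transforming in $\theta-\theta'$, the diagonal terms $c_\omega[(\theta-\theta'+i0)^{-1}+(\theta-\theta'-i0)^{-1}]$ have symbol $-2\pi i c_\omega\sgn\xi$ in the $(1,1)$ block and $+2\pi i c_\omega\sgn\xi$ in the $(2,2)$ block; the cross-corner terms with $\beta\to1\mp i\epsilon z$ contribute, after the weight, symbols supported in $\{\xi>0\}$ for the $(1,2)$ block and in $\{\xi<0\}$ for the $(2,1)$ block, each carrying a decaying factor $e^{-\epsilon\Re z\,|\xi|}$, while the terms with $\beta\to-\alpha^{\mp1}$ are genuinely smoothing. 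Consequently the matrix-valued leading symbol of $Q_{\omega,\mathrm{dil}}$ is upper triangular for $\xi>0$ and lower triangular for $\xi<0$, in both cases with diagonal entries $\mp2\pi i c_\omega\sgn\xi$, hence with determinant $4\pi^2 c_\omega^2=-\tfrac1{4\lambda^2(1-\lambda^2)}+\mathcal O(\epsilon)$. This stays bounded away from $0$ for all sufficiently small $\epsilon$, locally uniformly in $\lambda\in\mathcal J$, so the symbol is invertible with uniformly bounded inverse; since $\widetilde\chi\equiv1$ on $\supp\chi_\cu$, this yields that $Q_\omega$ is uniformly elliptic on $(\supp(1-\chi_\cu))^C$.

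The main obstacle is purely bookkeeping: tracking which of the sign-cases of Proposition~\ref{prop:corner_kernel} feeds each matrix block through the reflections built into $\mathfrak X$ and $\mathfrak G$, and handling the $\pm i0$ prescriptions carefully so that the off-diagonal symbols come out one-sided in $\xi$ and the leading symbol is genuinely triangular (rather than merely having nonzero determinant for some other reason). Once the kernel formula for $M_\omega$ is in hand, the membership in $\widetilde\Psi^{0,\mathcal E}_\bo$ and the ellipticity are both direct inspections of rational functions of $\tau$ and their Fourier transforms.
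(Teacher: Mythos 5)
Your proof is correct and follows essentially the same route as the paper: write out the $2\times2$ matrix kernel from Proposition~\ref{prop:corner_kernel} after unwinding $\mathfrak X,\mathfrak G$, split off the near-diagonal part as a small-calculus operator and identify the remainder as residual in $\widetilde\Psi^{-\infty,\mathcal E}_\bo$, then read off ellipticity from the triangular structure and the determinant $4\pi^2 c_\omega^2$. Your explicit check of $E_\lb,E_\rb$ via the weighted kernel in $(r,\tau)$-coordinates fills in a step the paper leaves tacit, and you skip the paper's explicit mollification to produce $q_{\mathrm{sm}}\in S^0_\bo$, but neither difference constitutes a genuinely different argument.
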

\begin{proof}
Let $\alpha$ be the characteristic ratio for the corner at $0$ as in Definition~\ref{def:corner_type}. Then using the parameterization~\eqref{eq:good_param} to define $\mathfrak X$ and $\mathfrak G$, we see that the Schwartz kernel of $\mathfrak X\circ  d\mathcal C_\omega \circ \mathfrak G$ is given by a $2 \times 2$ matrix with entries in $\overline{\mathcal E}'(\R_+^2)$. In particular, Proposition~\ref{prop:corner_kernel} gives us the exact formula for the Schwartz kernel, the first column of which is given by 
\begin{equation}\label{eq:kernel_C1}
c_\omega \chi_\cu(\theta) \widetilde \chi(\theta') \begin{pmatrix} (\theta - \theta' + i0)^{-1} + (\theta - \theta' - i0)^{-1} \\ -(\theta - (1 + i \epsilon z_\omega^{-+})\theta')^{-1} - (\theta + \alpha(1 + i \epsilon z_\omega^{++}) \theta')^{-1}\end{pmatrix}
\end{equation}  
and the second column given by 
\begin{equation}\label{eq:kernel_C2}
    c_\omega \chi_\cu(\theta) \widetilde \chi(\theta') \begin{pmatrix} (\theta - (1 - i \epsilon z_\omega^{--})\theta')^{-1} + (\theta + \alpha^{-1}(1 - i \epsilon z_\omega^{+-})\theta')^{-1} \\ -(\theta - \theta' + i0)^{-1} - (\theta - \theta' - i0)^{-1}  \end{pmatrix}.
\end{equation}
Observe that 
\begin{equation}
\begin{gathered}
    c_\omega (\theta - \theta' \pm i0)^{-1} = \frac{1}{2 \pi} \int_\R e^{i(\theta - \theta') \xi}  2 \pi i c_\omega H(\pm\xi)\, d \xi,\\
    c_\omega (\theta - (1 \pm i \epsilon z_\omega^{- \pm}) \theta')^{-1} = \frac{1}{2 \pi} \int_\R e^{i (\theta - \theta') \xi} 2 \pi i c_\omega^\pm e^{-\epsilon \tilde z^{- \pm}_\omega \theta |\xi|} H(\mp\xi),
\end{gathered}
\end{equation}
where 
\[\mp i \epsilon \tilde z_\omega^{- \pm} := (1 \pm i\epsilon z_\omega^{-\pm})^{-1} - 1, \quad c_\omega^\pm = \frac{c_\omega}{1 \pm i \epsilon z_\omega^{- \pm}}\]
Note that $\tilde z_\omega^{-\pm}$ and $c_\omega^\pm$ enjoy essentially the same properties as $z_\omega^{-\pm}$ and $c_\omega$ respectively, that is, 
\[\Im \tilde z_\omega^{-\pm} = \mathcal O( \epsilon), \quad \Re \tilde z_\omega^{-\pm} > 0, \quad c^\pm_\omega \to c_\lambda \ \text{as $\epsilon \to 0^+$}.\]
Let
\begin{equation*}
    \tilde q_{\mathrm{sm}}(\theta, \xi) := 2 \pi i \chi_\cu(\theta) \begin{pmatrix} c_\omega \sgn(\xi) & c^-_\omega e^{-\epsilon \tilde z_\omega^{--} |\xi|} H(\xi) \\ -c_\omega^+ e^{-\epsilon  \tilde z_\omega^{-+} |\xi|} H(-\xi) & - c_\omega \sgn(\xi)  \end{pmatrix}.
\end{equation*}
Let $\psi \in \CIc(\R)$ be such that $\supp \psi \subset (-1, 1)$ and $\int \hat \psi = 1$. Then define the symbol 
\begin{equation}
    q_{\mathrm{sm}}(\theta, \xi) = [\hat \psi * p(\theta, \bullet)](\xi) \in S^0_\bo
\end{equation}
uniformly. Here, the slight abuse of notation $p \in S^0_\bo$ means that each entry of $p$ lies in $S^0_\bo$. In particular, we see that
\begin{equation*}
\begin{gathered}
    Q - \Op_\bo(q_{\mathrm{sm}}) \in \widetilde \Psi^{-\infty, \mathcal E}_\bo
\end{gathered}
\end{equation*}
where $\mathcal E$ is given in~\eqref{eq:index_set}. Indeed, we see that $Q$ is the sum of an element of the small b-calculus $\Op_\bo(q_{\mathrm{sm}})$ and an element of $\widetilde \Psi_\bo^{-\infty, \mathcal E}$.

Furthermore, there exists $c > 0$ such that for all sufficiently small $\epsilon > 0$,  
\begin{equation*}
    |\det(q_{\mathrm{sm}}(\theta, \xi))| \ge c \quad \text{for} \quad \theta \in (\supp(1 - \chi_\cu))^C, \quad |\xi| > c^{-1}. 
\end{equation*}
Therefore, the ellipticity claim follows. The dilation invariance property of~\eqref{eq:DI_form} follows by inspection from the Schwartz kernel formulas~\eqref{eq:kernel_C1} and~\eqref{eq:kernel_C2}. 
\end{proof}

Now we are in a position to use the b-elliptic estimates from Proposition~\ref{prop:full_b-est} with the operator $Q$. To apply the proposition, we also need to compute the normal family defined in Lemma~\ref{lem:normal_fam}. In the notation of Lemma~\ref{lem:normal_fam}, it is easy to see from~\eqref{eq:kernel_C1} and~\eqref{eq:kernel_C2} that the restriction of the Schwartz kernel of $Q$ to the front face is given by 
\begin{multline}\label{eq:Kff_matrix}
    c_\omega^{-1}(\pi^*K_\ff)(t) = \left(
    \begin{matrix}
        (t - 1 + i0)^{-1} + (t - 1 - i0)^{-1}\\
        -(t - 1 - i \epsilon z_\omega^{-+})^{-1} - (t + \alpha(1 + i \epsilon z_\omega^{++}))^{-1}
    \end{matrix} \right. \\
    \left.\begin{matrix}
        (t - 1 + i\epsilon z_\omega^{--})^{-1} + (t + \alpha^{-1}(1 - i\epsilon z_\omega^{+-}))^{-1} \\
        -(t - 1 + i0)^{-1} -(t - 1 -i0)^{-1}
    \end{matrix}\right)
\end{multline}
The Mellin transform of $\pi^* K_\ff$ is well defined in the strip $\{\sigma: \Im \sigma \in (0, 1)\}$. To actually compute the Mellin transform, we use the following technical lemma. 

\begin{lemma}\label{lem:convolution}
    Let $h_{\pm\epsilon}(x) = (x \pm i\epsilon)^{-1}$, $\epsilon \ge 0$. Let $x_+^a \in \mathscr S'(\R)$, $\Re a > -1$ be the distribution supported on $[0, \infty)$ and equal to $x^a$ on $[0, \infty)$. Then 
    \[(h_{\pm \epsilon} * x_+^a)(x) = \frac{\pi}{\sin(\pi a)}e^{\mp i \pi a} (x \pm i\epsilon)^a\]
    where $(x \pm i\epsilon)^a$ is defined using the branch of $\log$ on $\C \setminus (-\infty, 0]$ that takes real value on the real line. 
\end{lemma}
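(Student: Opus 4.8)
The plan is to establish the identity first for $-1 < \Re a < 0$, where the convolution is an absolutely convergent integral of locally integrable functions, and then extend to the full range $\Re a > -1$ by analytic continuation: both sides depend holomorphically on $a$ as $\mathscr S'(\R)$-valued maps, the left side because $x_+^a$ does, the right side because $\pi/\sin(\pi a)$ together with $(x \pm i\epsilon)^a$ does. For the base case I would pass to the Fourier transform $\widehat f(\xi) = \int_\R e^{-ix\xi} f(x)\, dx$, under which $\widehat{f \ast g} = \widehat f\,\widehat g$. A residue computation (closing the contour in the half-plane where $e^{-ix\xi}$ decays) gives $\widehat{h_{\pm\epsilon}}(\xi) = \mp 2\pi i\, e^{\mp\epsilon\xi}\, H(\pm\xi)$, and the standard Fourier transform of the homogeneous distribution $x_+^a$ is $\widehat{x_+^a}(\xi) = \Gamma(a+1)\,\big(i(\xi - i0)\big)^{-a-1}$, with the branch of the logarithm cut along $(-\infty,0]$; here the $-i0$ is exactly the regularization making $\int_0^\infty x^a e^{-ix\xi}\,dx$ converge.

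Multiplying, on the support $\pm\xi > 0$ of $\widehat{h_{\pm\epsilon}}$ one has $\big(i\xi\big)^{-a-1} = e^{\mp i\pi(a+1)/2}\,|\xi|^{-a-1}$, so $\widehat{h_{\pm\epsilon} \ast x_+^a}(\xi) = \mp 2\pi i\,\Gamma(a+1)\,e^{\mp i\pi(a+1)/2}\, e^{\mp\epsilon\xi}\,|\xi|^{-a-1}\,H(\pm\xi)$. On the other hand, for $-1 < \Re a < 0$ a direct evaluation of the inverse transform (the exponential decay coming from $\Re(\epsilon \mp ix) = \epsilon > 0$) gives $\mathcal F\big[(\epsilon \mp ix)^a\big](\xi) = \tfrac{2\pi}{\Gamma(-a)}\, e^{\mp\epsilon\xi}\,|\xi|^{-a-1}\,H(\pm\xi)$. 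Comparing, $h_{\pm\epsilon}\ast x_+^a = \mp i\,\Gamma(a+1)\,\Gamma(-a)\, e^{\mp i\pi(a+1)/2}\,(\epsilon \mp ix)^a$. Then I would simplify the constant with the reflection formula $\Gamma(a+1)\Gamma(-a) = a\,\Gamma(a)\Gamma(-a) = -\Gamma(a)\Gamma(1-a) = -\pi/\sin(\pi a)$, which turns the prefactor $\mp i$ into $\pm i = e^{\pm i\pi/2}$, and rewrite the power via $\epsilon \mp ix = e^{\mp i\pi/2}(x \pm i\epsilon)$, hence $(\epsilon \mp ix)^a = e^{\mp i\pi a/2}(x \pm i\epsilon)^a$, which is legitimate on the chosen branch because $x \pm i\epsilon$ lies in the open upper/lower half-plane. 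Collecting the phases $e^{\pm i\pi/2}$, $e^{\mp i\pi(a+1)/2}$, $e^{\mp i\pi a/2}$ yields total phase $e^{\mp i\pi a}$, i.e. $h_{\pm\epsilon}\ast x_+^a = \frac{\pi}{\sin(\pi a)}\, e^{\mp i\pi a}\,(x \pm i\epsilon)^a$, as claimed.

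The remaining points are routine: the case $\epsilon = 0$ follows by letting $\epsilon \to 0^+$, both sides converging in $\mathscr S'(\R)$; the continuation in $a$ promotes the identity from $-1 < \Re a < 0$ to all $\Re a > -1$, $a \notin \N_0$, the right-hand side at the remaining integers being read off by meromorphic continuation; and only one sign needs to be written out, the other being the same computation with every $\pm$ reversed (equivalently, conjugation when $a$ is real). The one place demanding genuine care — and the likeliest source of a stray sign — is the bookkeeping of branch cuts: knowing which boundary value of $(i\xi)^{-a-1}$ occurs on each half-line, and verifying that $\epsilon \mp ix = e^{\mp i\pi/2}(x \pm i\epsilon)$ holds with the principal branch rather than off by a factor $e^{\pm 2\pi i a}$. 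As an alternative to the Fourier argument, one may compute $\int_0^\infty y^a (x - y + i\epsilon)^{-1}\,dy$ directly using the beta integral $\int_0^\infty t^{s-1}(1+t)^{-1}\,dt = \pi/\sin(\pi s)$ after rotating the contour onto the ray $\arg t = \arg(x+i\epsilon)$, which produces the same constant and the same branch.
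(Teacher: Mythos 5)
Your proof is correct and follows essentially the same route as the paper: Fourier transform both sides, use the known transforms of $h_{\pm\epsilon}$ and $x_+^a$ (H\"ormander's formulas), invert, and finish with the reflection formula $\Gamma(a+1)\Gamma(-a) = -\pi/\sin(\pi a)$. The added care you take --- first restricting to $-1 < \Re a < 0$ and then analytically continuing, and routing the phase bookkeeping through the auxiliary form $(\epsilon \mp ix)^a$ --- is a sensible way to guard against branch-cut sign errors, but it lands in the same place as the paper's direct computation with $\chi_\pm^{-a-1}$.
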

\begin{proof}
    Let $\chi_+^a = x_+^a/\Gamma(a + 1)$ where $\Gamma$ denotes the Gamma function. From \cite[Example 7.1.17]{H1}, we have the identity
    \begin{equation*}
    \begin{gathered}
        \mathcal F((x \pm i\epsilon)^a)(\xi) = 2 \pi e^{\pm i\pi a /2} e^{\mp i \epsilon \xi}\chi_\pm^{-a - 1}(\xi), \quad \epsilon > 0, \\
        \mathcal F((x \pm i\epsilon)^a) \to \mathcal F((x \pm i0)^a) = 2 \pi e^{\pm i\pi a /2}\chi_\pm^{-a - 1}(\xi) \quad \text{as} \quad \epsilon \to 0\pm
    \end{gathered}
    \end{equation*}
    Then we can compute the convolution on the Fourier side and find
    \begin{align*}
    (h_{\pm \epsilon} * x_+^a)(x) &= \mathcal F^{-1} \left(\mp 2 \pi i e^{\mp \epsilon \xi} H(\pm \xi) \Gamma(a + 1) e^{-i \pi (a + 1)/2}(\xi - i0)^{-a - 1} \right)\\
    &= \mathcal F^{-1} \left(-2 \pi e^{\mp i \pi a/2} e^{\mp \epsilon \xi} \Gamma(a + 1) \Gamma(-a) \chi_\pm^{-a - 1}(\xi) \right) \\
    &= -2 \pi e^{-i \pi a/2} \Gamma(a + 1) \Gamma(-a) \mathcal F^{-1} \left(e^{\mp \epsilon \xi} \chi_\pm^{-a - 1} (\xi) \right) \\
    &= -\Gamma(a + 1) \Gamma(-a) e^{\mp i \pi a}( x \pm i\epsilon)^a \\
    &= \frac{\pi}{\sin(\pi a)}e^{\mp i \pi a} (x \pm i\epsilon)^a
    \end{align*}
    as desired. 
\end{proof}
Now we can compute the normal family. 

\begin{lemma}
    Let $\Omega$ be $\lambda$-simple and let $\omega = \lambda + i \epsilon $ for sufficiently small $\epsilon$. Let $\alpha = \alpha(\lambda)$ be the characteristic ratio defined in Definition~\ref{def:corner_type} for the corner at $0$. The normal family 
    \[N(Q_\omega, \sigma): \R + i (0, 1)\to \mathrm{Mat}_{2 \times 2}(\C)\] 
    is holomorphic and has finitely many roots in the strip $\R + i (0, 1)$ that converge to the the set
    \begin{equation}\label{eq:zeros}
        \mathcal Z_{\lambda, 0} := \left\{\sigma \in (0, 1) + i \R: i \sigma + 1 = \frac{2 \pi i}{2 \pi - \log \alpha}k \ \mathrm{or} \ i \sigma + 1 = \frac{2 \pi i}{3i \pi + \log \alpha} k, \ k \in \Z \right\}.
    \end{equation}
\end{lemma}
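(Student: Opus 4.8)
The strategy is to explicitly compute the Mellin transform of $\pi^* K_\ff$ using the formula~\eqref{eq:Kff_matrix} and Lemma~\ref{lem:convolution}, reduce the vanishing of $\det N(Q_\omega, \sigma)$ to a transcendental equation, and then identify its zero set in the limit $\epsilon \to 0$. Holomorphy of $N(Q_\omega, \sigma)$ on the strip $\R + i(0,1)$ is immediate from Lemma~\ref{lem:normal_fam} applied with the index sets $\mathcal E = (E_\lb, E_\rb)$ from~\eqref{eq:index_set}: indeed $-\inf E_\rb = -1$ and $\inf E_\lb = 0$, so the strip of holomorphy is exactly $-\Im\sigma \in (-1, 0)$, i.e. $\Im\sigma \in (0,1)$.

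\emph{Step 1: reduce to a Mellin computation.} By Lemma~\ref{lem:normal_fam}, $N(Q_\omega, \sigma) = \mathcal M(\pi^* K_\ff)(\sigma)$, so I must compute the Mellin transform of each entry of the $2\times 2$ matrix in~\eqref{eq:Kff_matrix}. Recall that $\mathcal M u(\sigma) = \int_0^\infty t^{-i\sigma} u(t)\,\frac{dt}{t}$, which in the logarithmic variable is a Fourier transform; equivalently, after changing variables the Mellin transform of $(t - c)^{-1}$-type kernels can be read off from Lemma~\ref{lem:convolution}. Concretely, writing $s = i\sigma$ as in Proposition~\ref{prop:polyhom_EU}, a term $(t + \beta)^{-1}$ with $\beta \notin (-\infty, 0]$ contributes (up to the universal factor $\frac{\pi}{\sin(\pi \cdot)}$ and a power of $\beta$) a term proportional to $\beta^{s}$, while the boundary terms $(t - 1 \pm i0)^{-1}$ combine into something proportional to $\cos(\pi s)/\sin(\pi s)$ or similar. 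The upshot is that each matrix entry of $N(Q_\omega, \sigma)$ becomes, modulo a common nonvanishing prefactor $\tfrac{\pi}{\sin\pi s}$, a sum of two exponentials $\beta_j^{s}$ in the variable $s$, exactly as in the factorized solutions~\eqref{eq:ode_solutions} of the corner indicial equation.

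\emph{Step 2: compute the determinant and find its zeros.} Having each entry as a combination of two powers $\beta^s$, the determinant $\det N(Q_\omega, \sigma)$ is a sum of a bounded number of terms of the form $(\text{const})\cdot \beta^s$; setting it to zero gives, after dividing through by one such term, an equation of the form $c_1 + c_2\, \zeta_1^{s} + c_3\, \zeta_2^{s} + \dots = 0$ where the $\zeta_j$ are explicit ratios built from $\alpha$, $\alpha_\pm$, and the boundary phases $\pm i0$. In the limit $\epsilon \to 0$ (so $\omega \to \lambda$), the perturbation parameters $z_\omega^{\mu\nu}$ drop out and, just as in the derivation of $\mathfrak l_\omega$ in the proof of Proposition~\ref{prop:polyhom_EU}, the equation factors: it becomes $(\zeta_1^s - 1)(\zeta_2^s - 1) = 0$ for two explicit constants $\zeta_1, \zeta_2$. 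Reading off $\log\zeta_1$ and $\log\zeta_2$ — one will be $2\pi - \log\alpha$ (up to $i\pi$) and the other $3i\pi + \log\alpha$, matching the two factorizations~\eqref{eq:ind_factor} of $r^2 P(\omega)$ at the corner, one "with" and one "against" the $i0$ prescription inherited from $E_{\lambda+i0}$ — yields precisely the set $\mathcal Z_{\lambda, 0}$ in~\eqref{eq:zeros}, upon substituting $s = i\sigma$ and using the translation $(k,0)\mapsto(k+1,0)$ in the index set (this is the origin of "$i\sigma + 1 = \dots k$" rather than "$i\sigma = \dots k$"). Finiteness of the zero set inside $\R + i(0,1)$ for $\epsilon > 0$ small follows from the symbol estimate~\eqref{eq:Mk_est} (which forces any accumulation of zeros to be at $|\Re\sigma| \to \infty$, impossible since the leading exponentials have distinct moduli there) together with holomorphy and a normal-families/Hurwitz argument: the $\epsilon = 0$ limit has its zeros discretely placed along the two lines defining $\mathcal Z_{\lambda,0}$, and only finitely many of them lie in the compact-in-$\Im\sigma$, bounded-in-$\Re\sigma$ region where zeros can occur, so by Hurwitz the $\epsilon$-dependent zeros converge to them.

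\emph{Main obstacle.} The delicate point is \textbf{bookkeeping the $\pm i0$ boundary prescriptions and the branch of the logarithm} when computing the Mellin transforms and then the determinant. The two diagonal entries each contain the combination $(t-1+i0)^{-1} + (t-1-i0)^{-1}$, whose Mellin transform is a principal value (real, $\propto \cot$), while the off-diagonal entries have definite $\pm i0$ or $\pm i\epsilon z$ prescriptions that produce one-sided phases $e^{\mp i\pi a}$; keeping these straight is exactly what separates the two lines $2\pi - \log\alpha$ and $3i\pi + \log\alpha$ in $\mathcal Z_{\lambda,0}$, and an error here collapses or mislabels them. I would handle this by working entirely with Lemma~\ref{lem:convolution}, treating each $(t + \beta_j)^{-1}$ with $\beta_j$ having a small positive or negative imaginary part uniformly, computing the Mellin transform there, and only taking $\epsilon \to 0$ at the very end — mirroring the care taken with the branch of $\log$ on $\C \setminus i(-\infty,0)$ in~\eqref{eq:ode_solutions}. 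Everything else (holomorphy, symbol bounds, finiteness) is routine given Lemma~\ref{lem:normal_fam}.
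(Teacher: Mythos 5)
Your proposal takes essentially the same route as the paper: reduce to the Mellin transform of $\pi^*K_\ff$ via Lemma~\ref{lem:normal_fam}, compute each of the four matrix entries explicitly using Lemma~\ref{lem:convolution} with careful tracking of the $\pm i0$ and branch prescriptions, factor the resulting determinant equation into two single-exponential conditions, and then argue finiteness/convergence of zeros from holomorphy in $\epsilon$ plus lower bounds on $|N(Q_\omega,\sigma)|$ as $|\Re\sigma|\to\infty$. The only substantive step you leave unverified is the algebraic factorization of the determinant; the paper writes $\alpha^{-s}e^{-i\pi s}+\alpha^s e^{i\pi s}-e^{-2i\pi s}-e^{2i\pi s}=0$ and factors it as $(\alpha^{-s}-e^{-i\pi s})(\alpha^s e^{2i\pi s}-e^{-i\pi s})=0$ (up to a nonvanishing multiplier), which indeed matches the two-exponential picture you describe, so your plan would succeed once that computation is carried out.
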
 
We will refer to $\mathcal Z_{\lambda, \kappa}$ as the limiting indicial roots for a corner $\kappa$. 

\begin{Remark}
    Recall that the solution $v_\omega$ to \eqref{eq:bdr} is ``Neumann data'' the solution $u$ to \eqref{eq:EBVP}. In particular, $u$ has the polyhomogenous expansion given in Proposition~\ref{prop:polyhom_EU}. Therefore, one might expect that $\mathcal Z_{\lambda, \kappa}$ corresponds exactly to the indices in the polyhomogeneous expansion of $u$ near a corner $\kappa$, in which case $\mathcal Z_{\lambda, \kappa}$ would contain only $i \sigma + 1 \in \mathfrak l_\lambda \Z \cap \{(0, 1) + i \R\}$ where $\mathfrak l_\lambda := \frac{2 \pi i}{2 \pi - \log \alpha(\lambda)}$ is defined in~\eqref{eq:conormal_indices}. However, we see that $\mathcal Z_{\lambda, \kappa}$ contains more than the indices that correspond to $u$. This is because the boundary reduction does not distinguish between solutions supported inside the domain and outside the domain. One can check that the extra indices that appear in $\mathcal Z$ indeed correspond to the analogous ``Neumann data'' of solutions supported outside $\Omega$. 
\end{Remark}
\begin{proof}
We compute the normal family by computing the Mellin transform of~\eqref{eq:Kff_matrix} using Lemma~\ref{lem:convolution}. Observe that for $\sigma \in \R + i(0, 1)$,
\begin{align*}
    \mathcal M[(t - 1 \pm i0)^{-1}](\sigma) &= -\int_{\R_+} (1 - t \mp i0) t^{-i \sigma - 1}\, dt \\
    &= -(h_{\mp 0} * x_+^{-i \sigma - 1})(1) \\
    &= -\tfrac{\pi}{\sin(\pi(- i \sigma - 1))} e^{\pm i \pi(- i \sigma - 1)}.
\end{align*}
Similarly, 
\begin{equation*}
    \mathcal M[(t - 1 \pm i\epsilon z^{\mu \nu}_\omega)^{-1}](\sigma) = -\tfrac{\pi}{\sin(\pi(- i \sigma - 1))} e^{\pm i \pi(- i \sigma - 1)}(1 \mp i \epsilon z_\omega^{\mu \nu})^{-i \sigma - 1}
\end{equation*}
for $\mu, \nu \in \{+, -\}$. Finally, using the branch of log on $\C \setminus (-\infty, 0]$, we find
\begin{align*}
    \mathcal M[(t + \alpha^{-1}(1 - i\epsilon z_\omega^{+-}))^{-1}](\sigma) &= -\tfrac{\pi}{\sin(\pi(- i \sigma - 1))} e^{-i \pi (-i \sigma - 1)}(-\alpha^{-1} + i\epsilon \alpha^{-1} z_\omega^{+-})^{-i \sigma - 1} \\
    &=-\tfrac{\pi}{\sin(\pi(- i \sigma - 1))} (\alpha^{-1} - i\epsilon \alpha^{-1} z_\omega^{+-})^{-i \sigma - 1},
\end{align*}
and 
\begin{align*}
    \mathcal M[(t + \alpha(1 + i\epsilon z_\omega^{++}))^{-1}](\sigma) &= -\tfrac{\pi}{\sin(\pi(- i \sigma - 1))} e^{i \pi (-i \sigma - 1)}(-\alpha - i\epsilon \alpha z_\omega^{++})^{-i \sigma - 1} \\
    &=-\tfrac{\pi}{\sin(\pi(- i \sigma - 1))} (\alpha + i\epsilon \alpha z_\omega^{++})^{-i \sigma - 1}.
\end{align*}
With $s := i \sigma + 1$, we now see that the normal family of $Q$ is given by 
\begin{multline}
    c_\omega^{-1} N(Q, s) = \frac{\pi}{\sin(\pi s)} \left(\begin{matrix} e^{i \pi s} + e^{- i \pi s} \\ -e^{i \pi s}(1 + i \epsilon z_\omega^{-+})^{-s} - (\alpha + i \epsilon \alpha z_\omega^{++})^{-s} \end{matrix} \right. \\
    \left. \begin{matrix} e^{-i \pi s} (1 - i \epsilon z_\omega^{--})^{-s} + (\alpha^{-1} - i\epsilon \alpha^{-1} z_\omega^{+-})^{-s} \\ -e^{i \pi s} - e^{- i \pi s} \end{matrix} \right)
\end{multline}
for $\sigma \in \R + i(-1, 0)$, or equivalently, $s \in (0, 1) + i \R$.

Observe that when $\epsilon = \Im \omega = 0$, $N(Q_{\lambda + i0}, \sigma)$ is noninvertible when 
\begin{equation}\label{eq:b-system_roots}
    \alpha^{-s} e^{-i \pi s} + \alpha^{s} e^{i\pi s} - e^{-2 i \pi s} - e^{2 i \pi s} = 0.
\end{equation}
The left-hand-side factors, so~\eqref{eq:b-system_roots} holds precisely when 
\[\alpha^{-s} - e^{-i \pi s} = 0 \quad \text{or} \quad \alpha^{s} e^{2 i \pi s} - e^{-i \pi s} = 0,\]
which means the roots in  $s = i\sigma + 1$ are given by 
\begin{equation}
    s \in \frac{2 \pi i}{i \pi - \log \alpha} \Z \cap \{(0, 1) + i\R\}, \quad s \in \frac{2 \pi i}{3i \pi - \log(\alpha^{-1})} \Z \cap \{(0, 1) + i\R\}.
\end{equation}
at $\epsilon = 0$. Note that $N(Q, \sigma)$ extends holomorphically in $\epsilon$ for $\epsilon$ in a small neighborhood of $0$, and $N(Q, \sigma)$ is lower bounded as $\Im s \to \infty$ locally uniformly in $\Re s \in (-1, 0)$, so it follows that for all sufficiently small $\epsilon$, $N(Q_\omega, \sigma)$ has finitely many zeros in the strip $\{\sigma \in C: \Im \sigma \in (0, 1)\}$ that converge to the zeros of the $N(Q_{\lambda + i0}, \sigma)$. 
\end{proof}

Now we give the proposition that allows us to propagate through the corner. We consider a type-$(+, +)$ corner, and we will use the parameterization~\eqref{eq:good_param} near the corner throughout the proof. 
\begin{proposition}\label{prop:corner_POS}
Let $\mathcal J \Subset (0, 1)$ be a sufficiently small interval and assume that $\Omega$ is $\lambda$-simple for every $\lambda \in \mathcal J$. Fix symmetric cutoffs $\chi, \widetilde \chi \in \overline C^\infty(\partial \Omega)$ near a corner $\kappa$ in the sense of~\eqref{eq:sym_cutoff} such that $\widetilde \chi = 1$ on $\supp \chi$. Furthermore, assume that the parameterization~\eqref{eq:good_param} holds in the support of $\widetilde \chi$. Let $v$ and $g$ be as in~\eqref{eq:bdr}, and let $a \in (-1, 0)$ be such that 
\[\mathcal Z_{\lambda, \kappa} \cap (\R - ia) = \emptyset\]
for all $\lambda \in \mathcal J$ where $\mathcal Z_{\lambda, \kappa}$ are the limiting indicial roots defined in~\eqref{eq:zeros}. Assume that $v \in H^{\infty, a}_\bo(\partial \Omega)$. Then for $s \ge a + \ha$, 
\begin{equation}
    \|\chi v\|_{H^{s, a}_\bo} \le C \big[\|\Pi^+(\widetilde \chi \circ \gamma^+_\lambda)v\|_{H^s} + \|\widetilde \chi g\|_{H^{s, a}_\bo} + \|v\|_{H^{-N, a - \delta}_\bo}\big]
\end{equation}
where $\delta > 0$ is such that $a - \delta > -1$.
\end{proposition}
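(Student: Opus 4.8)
The strategy is to transfer the reduced equation \eqref{eq:bdr} near $\kappa$ into an equation for a $\mathbb C^2$-valued distribution on $\R_+$ via the cutting map $\mathfrak X$ and gluing map $\mathfrak G$, apply the full b-elliptic estimate of Proposition~\ref{prop:full_b-est}, and then translate the result back to $\partial\Omega$. I would set up the argument around the operator $Q_\omega = \mathfrak X\circ d\mathcal C_\omega\circ \mathfrak G$, which by Lemma~\ref{lem:transform_to_b_operator} lies in $\widetilde\Psi^{0,\mathcal E}_\bo(\R_+;\mathbb C^2)$ with the index family $\mathcal E$ from \eqref{eq:index_set}, is of dilation-invariant form $\chi_\cu Q_{\omega,\mathrm{dil}}\widetilde\chi$, and is uniformly elliptic outside $\supp(1-\chi_\cu)$, locally uniformly in $\lambda$ and for small $\epsilon$. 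Since $\inf E_\rb = 1$ and $\inf E_\lb = 0$, we have $-\inf E_\rb = -1 < 0 = \inf E_\lb$, so $a\in(-1,0)$ is an admissible weight for Proposition~\ref{prop:full_b-est}; the hypothesis $\mathcal Z_{\lambda,\kappa}\cap(\R-ia)=\emptyset$ is exactly the condition that no zeros of the normal family $N(Q_\omega,\sigma)$ lie on $\{\Im\sigma = -a\}$ for all small $\epsilon$ (using that the actual zeros of $N(Q_\omega,\sigma)$ converge to $\mathcal Z_{\lambda,\kappa}$ as $\epsilon\to0$).

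First I would apply the cutting map to \eqref{eq:bdr}. Choosing $s\in(-\tfrac12,\tfrac12)$ temporarily and using Lemma~\ref{lem:b_sob_to_sob} (so that $\mathfrak X, \mathfrak G$ are bounded between the relevant b-Sobolev spaces), and using that $\mathfrak G\circ\mathfrak X$ acts as the identity composed with the cutoff $\chi_\cu$ on distributions supported near $\kappa$ — up to the subtlety that $v$ need not have small support, which is handled because $d\mathcal C_\omega$ localized by disjoint cutoffs away from the corner is smoothing on b-Sobolev spaces (Lemma~\ref{lem:ddC_heaviside} and Proposition~\ref{prop:rest_of_kernel}) — I would derive an equation of the form
\begin{equation*}
    Q_\omega (\mathfrak X v) = \mathfrak X g + E_\omega v
\end{equation*}
where $E_\omega v$ collects error terms that are controlled by $\|v\|_{H^{-N,a}_\bo}$ together with contributions supported at $\gamma^\pm_\lambda(\kappa)$. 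The contribution at $\gamma^-_\lambda(\kappa)=0$ is in fact again $\kappa$ itself (since $\gamma^-_\lambda$ fixes the type-$(+,+)$ corner), absorbed into $Q_\omega$; the contribution at $\gamma^+_\lambda(\kappa)$ is the reason the term $\|\Pi^+(\widetilde\chi\circ\gamma^+_\lambda)v\|_{H^s}$ appears on the right-hand side. Here I would invoke Proposition~\ref{prop:corner_kernel} and, more precisely, the structure of $K^\pm_\omega$ near the corner: the pieces of $d\mathcal C_\omega$ mapping a neighborhood of $\gamma^+_\lambda(\kappa)$ into a neighborhood of $\kappa$ are pseudodifferential (pulled back by $\gamma^+_\lambda$) with wavefront set in the positive frequencies, so only $\Pi^+(\widetilde\chi\circ\gamma^+_\lambda)v$ enters, with its full Sobolev norm (no weight), exactly as in Proposition~\ref{prop:ref_corner_POS}.

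With the equation $Q_\omega(\mathfrak X v) = \mathfrak X g + (\text{controlled errors})$ in hand, I would apply Proposition~\ref{prop:full_b-est} with the cutoffs inherited from $\chi,\widetilde\chi$ (which become $\overline C^\infty_\mathrm c(\R_+)$ cutoffs on the $\R_+$ side after the change of coordinates) to get
\begin{equation*}
    \|\mathfrak X(\chi v)\|_{H^{s,a}_\bo}\le C\big[\|Q_\omega \mathfrak X(\widetilde\chi v)\|_{H^{s,a}_\bo} + \|\mathfrak X v\|_{H^{-N,a-\delta}_\bo}\big]
\end{equation*}
for some $0<\delta<a+1$, and then expand the first term on the right using the derived equation and the error estimates. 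To conclude, I would bootstrap: Proposition~\ref{prop:full_b-est} gives the estimate for any $s$ in the admissible range once $v\in H^{-N,a}_\bo$ is known, and by the hypothesis $s\ge a+\tfrac12$ and Lemma~\ref{lem:b_sob_to_sob} / Lemma~\ref{lem:b-cutting} the left side $\|\mathfrak X(\chi v)\|_{H^{s,a}_\bo}$ controls $\|\chi v\|_{H^{s,a}_\bo}$ (translating back from $\R_+$ to $\partial\Omega$). Finally I would reduce the general type-$(\mu,\nu)$ corner to the type-$(+,+)$ case by the reflection symmetry recorded in the remark following Definition~\ref{def:corner_type}, noting that $\mathrm{Ref}_j$ preserves the characteristic ratio $\alpha$ and hence $\mathcal Z_{\lambda,\kappa}$, and interchanges $\Pi^+$ with $\Pi^-$ and $\gamma^+_\lambda$ with $\gamma^-_\lambda$ as needed.

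\textbf{Main obstacle.} The delicate point is the bookkeeping of the cutting/gluing procedure: verifying that $\mathfrak X\circ d\mathcal C_\omega\circ\mathfrak G$ really equals $Q_\omega$ modulo operators mapping $H^{-N,a}_\bo\to H^{\infty}$, and that the ``missing'' piece $d\mathcal C_\omega(1-\mathfrak G\mathfrak X)$ — which picks up the non-local interaction with $\gamma^+_\lambda(\kappa)$ and with regions far from the corner — contributes exactly the term $\|\Pi^+(\widetilde\chi\circ\gamma^+_\lambda)v\|_{H^s}$ plus genuinely lower-order errors in $\|v\|_{H^{-N,a-\delta}_\bo}$, with no loss of weight and no spurious negative-frequency term at the reflected corner. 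This requires carefully combining Proposition~\ref{prop:corner_kernel}, Proposition~\ref{prop:rest_of_kernel}, and the wavefront/frequency-projection analysis as in Lemma~\ref{lem:psido_corner_ref}. The application of Proposition~\ref{prop:full_b-est} itself is then essentially automatic once the inputs — b-operator class, ellipticity, index family, weight admissibility, and absence of normal-family zeros on $\{\Im\sigma=-a\}$ — are all in place from Lemma~\ref{lem:transform_to_b_operator} and the normal family computation.
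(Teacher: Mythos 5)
Your proposal matches the paper's proof: it applies the b-elliptic estimate of Proposition~\ref{prop:full_b-est} to $Q_\omega = \mathfrak X\circ d\mathcal C_\omega\circ\mathfrak G$ as constructed in Lemma~\ref{lem:transform_to_b_operator} (with the indicial-root hypothesis guaranteeing the weight $a$ is admissible), then decomposes $\chi_\cu d\mathcal C_\omega\widetilde\chi$ via Proposition~\ref{prop:rest_of_kernel} so that the pulled-back pseudodifferential piece near $\gamma^+_\lambda(\kappa)$ contributes $\|\Pi^+(\widetilde\chi\circ\gamma^+_\lambda)v\|_{H^s}$ via Lemma~\ref{lem:b-cutting}, with the remainder absorbed into $\|v\|_{H^{-N,a-\delta}_\bo}$. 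The paper's only simplification relative to your write-up is to introduce an auxiliary cutting map $\mathfrak X_0$ whose cutoff equals $1$ on $\supp\widetilde\chi$, so that $\chi\mathfrak X_0 v = \chi v$ holds identically and one applies Proposition~\ref{prop:full_b-est} directly, without the temporary restriction to $s\in(-\tfrac12,\tfrac12)$ that you use to set up $\mathfrak G\circ\mathfrak X$.
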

\begin{Remark}
    A simple case of this estimate is when we take $s = a + \ha$ and we can state the estimate entirely in terms of regular Sobolev spaces:
    \begin{equation}
        \|\chi v\|_{H^s} \le C \big[\|\Pi^+\widetilde \chi \circ \gamma_\lambda^+)v\| + \|\widetilde g\|_{H^s} + \|v\|_{H^{s - \delta}}  \big]
    \end{equation}
    In other words, up to high frequency effects, $v$ near a type-$(+, +)$ corner at $0$ is controlled by the negative frequencies of $v$ near $\gamma^{-}_\lambda(0)$ and $g$. Also note that $H^s(\mathbb S^1) \hookrightarrow H^{s - \delta}(\mathbb S^1)$ is compact. 
\end{Remark}

\begin{proof}
    1. Wedge a symmetric cutoff $\chi_\cu \in \overline C^\infty(\partial \Omega)$ between $\chi$ and $\widetilde \chi$ so that $\chi_\cu = 1$ on $\supp \chi$ and $\widetilde \chi = 1$ on $\supp \chi_\cu$. Let $\mathfrak X$ be a cutting map defined in~\eqref{eq:cutting} using the cutoff $\chi_\cu$ and let $\mathfrak G$ be the gluing map in~\ref{eq:gluing} defined using $\widetilde \chi$. Then define $Q_\omega$ using $\mathfrak X$ and $\mathfrak G$ along with the parameterization~\eqref{eq:good_param}. By assumption, no zeros of $N(Q_{\lambda + i\epsilon}, \sigma)$ lie in the line $\R - ia$ for all sufficiently small $\epsilon$. Then by the b-elliptic estimate in Proposition~\ref{prop:full_b-est}, 
    \begin{align}
        \|\chi v\|_{H^{s, a}_\bo} &= \|\chi \mathfrak X_0 v\|_{H^{s, a}_\bo} \nonumber \\
        &\le C \big(\|Q_\omega \mathfrak X_0 v\|_{H^{s, a}_\bo} + \|v\|_{H^{-N, a - \delta}_\bo} \big) \nonumber \\
        &\le C \big(\|\chi_\cu d \mathcal C_\omega \widetilde \chi v\|_{H^{s, a}_\bo} + \|v\|_{H^{-N, a - \delta}_\bo} \big) \label{eq:corner_iso}
    \end{align}
    where $\mathfrak X_0$ is a cutting operator defined using a cutoff that is identically 1 on the support of $\widetilde \chi$. The constant $C$ is locally uniform in $\lambda$ and sufficiently small $\epsilon > 0$. 
    
    \noindent
    2. Now we split up the first term on the right hand side of~\eqref{eq:corner_iso} by 
    \begin{equation}\label{eq:corner_pieces}
        \chi_\cu d \mathcal C_\omega \widetilde \chi = \chi_\cu d \mathcal C_\omega - \chi d \mathcal C_\omega(\widetilde \chi \circ \gamma^+_\lambda) - \chi_\cu d \mathcal C_\omega (1 - \widetilde \chi - (\widetilde \chi \circ \gamma^+_\lambda)).
    \end{equation}
    It follows from (2) and (3) of Proposition~\ref{prop:rest_of_kernel} and Lemma~\ref{lem:b-cutting} that
    \begin{equation}\label{eq:corner_smooth_est}
        \|\chi_\cu d \mathcal C_\omega (1 - \widetilde \chi - (\widetilde \chi \circ \gamma^+_\lambda) v\|_{H^{s, a}_\bo} \lesssim \|v\|_{H^{-N, a - \delta}_\bo}
    \end{equation}
    for $s \ge a + \ha$.  
    
    It remains to analyze the term $\chi_\cu d \mathcal C_\omega (\widetilde \chi \circ \gamma^+_\lambda)$. This term is similar to (3) of Lemma~\ref{lem:psido_corner_ref} except that the Heaviside cutoffs are on the outgoing variable rather than the incoming variable. In particular, it follows from from (1) of Proposition~\ref{prop:rest_of_kernel} that there exists $T_{\omega, \pm} \in \Psi^0(\mathbb S^1)$ locally uniformly in $\lambda$ such that 
    \begin{equation}
        \chi_\cu d \mathcal C_\omega(\widetilde \chi \circ \gamma^+_\lambda) = \sum_\pm H(\pm(\theta))(\gamma^+_\lambda)^* (\chi_\cu \circ \gamma_\lambda^+) T^+_{\omega, \pm} (\widetilde \chi \circ \gamma^+_\lambda)
    \end{equation}
    and
    \begin{equation}
        \WF(T_{\omega, \pm}^+) \subset \{(\theta, \xi) \in T^* \mathbb S^1: \xi > 0\}
    \end{equation}
    By Lemma~\ref{lem:b-cutting}, we then see that for $s \ge a + \ha$.
    \begin{equation}\label{eq:corner_ref_est}
        \|\chi_\cu d \mathcal C_\omega (\widetilde \chi \circ \gamma^+_\lambda) v\|_{H^{s, a}_\bo} \lesssim \|\Pi^+ (\widetilde \chi \circ \gamma^+_\lambda) v\|_{H^{s}} + \|(\chi_{0} \circ \gamma^+_\lambda) v\|_{H^{-N}}.
    \end{equation}
    Using~\eqref{eq:corner_smooth_est} and~\eqref{eq:corner_ref_est} to estimate~\eqref{eq:corner_pieces}, we find
    \[\|\chi_\cu d \mathcal C_\omega \widetilde \chi v\|_{H^{s, a}_\bo} \lesssim \|\Pi^+ (\widetilde \chi \circ \gamma^+_\lambda) v\|_{H^{s}} + \|\chi_\cu g\|_{H^{s, a}_\bo} + \|v\|_{H^{-N, a - \delta}_\bo}\]
    Combining with~\eqref{eq:corner_iso} then gives the desired estimate. 
\end{proof}

Finally, we end this section by giving the mapping properties for the differentiated operator $\partial^k_{\omega} d \mathcal C_{\omega}$.  
\begin{lemma}\label{lem:ddC}
    Assume that $\Omega$ is $\lambda_0$-simple for some $\lambda_0 \in (0, 1)$. Let $\chi_\bo \in \overline C^\infty(\partial \Omega)$ be supported in a sufficiently small neighborhood of $\mathcal K$ and let $\chi_\bo = 1$ near $\mathcal K$. Then for all $s \in \R$, $k \in \N$, and $a \in (0, 1)$, there exists $C = C_{s, a, k} > 0$ such that 
    \begin{equation}\label{eq:ddC_converge}
        \|\chi_\bo\partial_\omega^k d \mathcal C_{\omega} \chi_\bo \|_{H^{s + k, a}_\bo \to H^{s, a}_\bo} \le C
    \end{equation}
    for all $\omega = \lambda + i\epsilon$ such that $0 \le \epsilon \le \epsilon_0$ and $|\lambda - \lambda_0| \le \epsilon_0$ for sufficiently small $\epsilon_0 > 0$. Note that this includes $\omega = \lambda + i0$. 
\end{lemma}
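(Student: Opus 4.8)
The plan is to reduce the estimate to the three kinds of model behavior already isolated in Propositions~\ref{prop:corner_kernel}, \ref{prop:rest_of_kernel}, and Lemma~\ref{lem:transform_to_b_operator}, and then to show that each piece differentiates well in $\omega$. The key structural observation is that $\chi_\bo d\mathcal C_\omega \chi_\bo$, after applying the cutting map $\mathfrak X$ and the gluing map $\mathfrak G$ near each corner, takes the dilation-invariant form $Q_\omega = \chi_\cu Q_{\omega,\mathrm{dil}}\widetilde\chi \in \widetilde\Psi^{0,\mathcal E}_\bo(\R_+;\C^2)$ with $\mathcal E$ as in~\eqref{eq:index_set}; between corners, the Schwartz kernel of $d\mathcal C_\omega$ is of one of the explicit forms in Proposition~\ref{prop:rest_of_kernel}(2)--(3), which is either smooth (hence trivially handled) or a Heaviside-cutoff of an honest $\omega$-dependent family in $\Psi^0(\mathbb S^1)$. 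So I would first reduce to the single-corner model: choose symmetric cutoffs, localize $\chi_\bo$ near one corner $\kappa$ of type $(+,+)$ (the others following by the reflection symmetries in the Remark after Definition~\ref{def:corner_type}), and transfer everything to $\widetilde\Psi^{0,\mathcal E}_\bo(\R_+;\C^2)$ via $\mathfrak X, \mathfrak G$, using Lemma~\ref{lem:b_sob_to_sob} to match the b-Sobolev spaces $H^{s+k,a}_\bo(\R_+)$ with the Sobolev spaces $H^{s+k}$ away from $0$ (here the weight condition $a\in(0,1)$ rather than $(-1,0)$ simply reflects that $\chi_\bo d\mathcal C_\omega\chi_\bo$ increases regularity by one order and maps into a \emph{more} decaying weight space).

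Next I would establish the key point: \emph{differentiation in $\omega$ preserves the operator class and does not worsen the weight}. For the dilation-invariant model, the Schwartz kernel restricted to the front face is given explicitly by~\eqref{eq:Kff_matrix}, and the entries are rational functions of $t$ with poles at $t=1$, $t=-\alpha(1+i\epsilon z_\omega^{++})$, etc., whose coefficients $c_\omega$, $z_\omega^{\mu\nu}$ are smooth in $\omega = \lambda+i\epsilon$ up to $\epsilon=0$ (Proposition~\ref{prop:corner_kernel}) with $\Re z_\omega^{\mu\nu}>\delta>0$. Differentiating $\partial_\omega$ in $t$-space produces kernels of the same type: a term like $\partial_\omega (t + \alpha(1+i\epsilon z_\omega^{++}))^{-1}$ is $-(t+\alpha(1+i\epsilon z_\omega^{++}))^{-2}\cdot\partial_\omega(\alpha(1+i\epsilon z_\omega^{++}))$, which, on the symbol side, corresponds to multiplying by a smooth bounded factor and replacing $H(\pm\xi)e^{-\epsilon\tilde z|\xi|}$ by $|\xi| H(\pm\xi)e^{-\epsilon\tilde z|\xi|}$ (plus lower order); crucially $|\xi|e^{-\epsilon\tilde z|\xi|}$ is $\mathcal O(\epsilon^{-1})$, but since we only need \emph{boundedness} (not uniformity in a limit) and we allow a loss of $k$ derivatives — i.e. we map from $H^{s+k}_\bo$ to $H^s_\bo$ — the symbol of $\partial_\omega^k d\mathcal C_\omega$ is in $S^k_\bo$ uniformly in $\omega$ in a full neighborhood of $\lambda_0$ including the real line. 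Indeed, the point is that each $\partial_\omega$ either hits an analytic coefficient (harmless) or lowers the decay-in-$\xi$ rate by one power of $|\xi|$, turning an $S^0_\bo$ symbol into an $S^1_\bo$ symbol, and the full derivative $\partial_\omega^k$ lands in $S^k_\bo + \widetilde\Psi^{-\infty,\mathcal E}_\bo$; this follows from the explicit formulas for $z^{\mu\nu}_\omega$ and the fact that $\partial_\omega$ commutes with the Mellin transform in the dilation-invariant model. The away-from-corner pieces of $\chi_\bo d\mathcal C_\omega\chi_\bo$ (reflections of the corner against other corners, governed by Proposition~\ref{prop:rest_of_kernel}(2)--(3)) are either genuinely smooth or of Heaviside-times-$\Psi^0$ type with smooth $\omega$-dependence, so the same differentiation argument applies with even less effort, using Lemma~\ref{lem:b-cutting} to absorb the Heaviside factors into $H^{s,a}_\bo$.

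Finally I would combine the pieces: write $\chi_\bo d\mathcal C_\omega\chi_\bo$ as a finite sum over corners $\kappa$ of the conjugated dilation-invariant models $\mathfrak G_\kappa Q_{\omega,\kappa}\mathfrak X_\kappa$ plus error terms with kernels smooth on $\partial\Omega\times\partial\Omega$ away from a fixed small neighborhood of the diagonal-at-corners set, apply the mapping property of Lemma~\ref{lem:full_b-map_prop} / Lemma~\ref{lem:b-map_prop} to $Q_{\omega,\kappa}$ and its $\omega$-derivatives (noting $-\inf E_\rb = -1 < 0 = \inf E_\lb$ so $a\in(0,1)$ is admissible after the weight shift, or equivalently we conjugate by $\rho^{-z}$ as in Lemma~\ref{lem:b-conj} to move to an admissible weight window), and read off the uniform bound~\eqref{eq:ddC_converge}. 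The main obstacle, and the step I would spend the most care on, is bookkeeping the weight arithmetic: one must check that $a\in(0,1)$ genuinely lands in the interval $(-\inf E_\rb,\inf E_\lb) = (-1, 0)$ only after the order-one gain is accounted for — i.e. that $d\mathcal C_\omega$ maps $H^{s+k,a}_\bo$ to $H^{s,a}_\bo$ is consistent with $Q_\omega: H^{s+k,a'}_\bo\to H^{s,a'}_\bo$ for the correct shifted weight $a'$ coming from the $\mathfrak X,\mathfrak G$ normalization in~\eqref{eq:cutting}--\eqref{eq:gluing} (which involve the $s-\ha$ shift via Lemma~\ref{lem:b_sob_to_sob}) — and that no zero of $N(Q_\omega,\sigma)$ obstructs, which is where the hypothesis that $\Omega$ is $\lambda_0$-simple (together with continuity of the indicial roots) is used to pick $\epsilon_0$ small. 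Once the weights are lined up, the estimate is a routine consequence of the symbolic calculus plus the explicit $\omega$-smoothness of all the coefficient functions.
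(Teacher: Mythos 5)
Your proposal takes essentially the same route as the paper: differentiate the explicit Schwartz kernel formulas from Propositions~\ref{prop:corner_kernel} and~\ref{prop:rest_of_kernel}, transfer to the $b$-calculus near each corner via $\mathfrak X, \mathfrak G$, and conclude from the mapping properties of Lemmas~\ref{lem:b-map_prop} and~\ref{lem:full_b-map_prop}. Your order bookkeeping is in fact the more careful accounting: the $K^-$-type kernels $(\theta - (1\pm i\epsilon z_\omega)\theta')^{-1}$ appearing in the off-diagonal entries of~\eqref{eq:kernel_C1}--\eqref{eq:kernel_C2} become near-diagonal after the $\mathfrak X$ flip, and each $\partial_\omega$ genuinely raises the near-diagonal symbol order by one power of $|\xi|$ (with $|\xi|^j e^{-\epsilon\tilde z|\xi|}\le |\xi|^j$ giving the uniform bound including $\epsilon = 0$), so $\mathfrak X\circ\partial_\omega^k d\mathcal C_\omega\circ\mathfrak G \in \widetilde\Psi^{k,\mathcal E}_\bo$ rather than $\widetilde\Psi^{0,\mathcal E}_\bo$ as the paper's proof text states; this matches the $H^{s+k,a}_\bo\to H^{s,a}_\bo$ mapping in the lemma's conclusion.

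One step you should drop: the claim at the end that ``no zero of $N(Q_\omega,\sigma)$ obstructs'' is not needed and is a red herring. That hypothesis is part of the elliptic parametrix estimate Proposition~\ref{prop:full_b-est}, which produces invertibility modulo compact errors and does depend on the indicial roots staying off the weight line. Lemma~\ref{lem:ddC} is a pure boundedness statement and follows from the symbolic $L^2$ bound (Lemma~\ref{lem:b-L2}, packaged via Lemmas~\ref{lem:b-map_prop} and~\ref{lem:full_b-map_prop}) with no invertibility or normal-family input whatsoever. The $\lambda_0$-simplicity assumption is used only to guarantee that the kernel formulas of Propositions~\ref{prop:corner_kernel} and~\ref{prop:rest_of_kernel} remain valid and $\omega$-smooth in a full neighborhood of $\lambda_0$ including the real line, not to avoid indicial roots.
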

\begin{proof}
    Choose $\epsilon_0$ so that $\Omega$ is $\lambda$-simple for all $\lambda \in (\lambda_0 - \epsilon_0, \lambda_0 + \epsilon_0)$. Differentiating the formula for the Schwartz kernel of $d \mathcal C_\omega$ from Proposition~\ref{prop:corner_kernel}, we see that for $\theta$ and $\theta'$ in a small neighborhood of a type-$(+, +)$ corner 
    \begin{equation}
        \partial_\omega^k K^+_\omega(\theta, \theta') = \begin{cases}\partial_\omega^k c_\omega (\theta - \theta' + i0)^{-1} & \theta \cdot \theta' > 0, \\
        \sum_{n = 1}^k c^+_{k, n, \omega} \cdot (\theta')^{n - 1}(\theta - \alpha(1 + i \epsilon z_\omega^{++})\theta')^{-n} & \theta < 0, \ \theta' > 0 \\
        \sum_{n = 1}^k c^-_{k, n, \omega} \cdot (\theta')^{n - 1}(\theta - \alpha^{-1}(1 + i \epsilon z_\omega^{+-})\theta')^{-n} & \theta > 0, \ \theta' < 0
        \end{cases}
    \end{equation}
    where $c^\pm_{k, n, \omega}$ are constants depending smoothly on $\omega$ up to $\Im \omega = 0$. Analogous expressions hold for $\partial_\omega K^-_\omega$. In particular, with $\mathfrak X$ and $\mathfrak G$ as in Lemma~\ref{lem:transform_to_b_operator}, it follows that
    \[\mathfrak X \circ \partial_\omega^k d \mathcal C_\omega \circ \mathfrak G \in \widetilde \Psi_\bo^{0, \mathcal E}\]
    with the same index set $\mathcal E$ from~\eqref{eq:index_set}. Therefore, for every $s \in \R$ and $a \in (0, 1)$, there exists a $C > 0$ such that for $j$ sufficiently large, 
    \begin{equation}
        \|\partial_\theta \chi_\bo \partial_\omega^k \mathcal C_{\omega_j} \chi_\bo\|_{H^{s + k, a}_\bo \to H^{s,a}_\bo} \le C
    \end{equation}
    as desired. 
\end{proof}

\subsection{Global high frequency estimate}
Now we string together all the local estimates. For $v$ solving~\eqref{eq:bdr}, we wish to obtain a global high frequency estimate for $v$ with a compact error.

For every corner $\kappa \in \mathcal K$, we have the limiting index set $\mathcal Z_{\lambda, \kappa}$ defined in~\eqref{eq:zeros}. Denote by
\begin{equation}\label{eq:all_zeros}
    \mathcal Z_{\lambda} = \bigcup_{\kappa \in \mathcal K} \mathcal Z_{\lambda, \kappa}
\end{equation}
the union of the limiting index sets. 
\begin{proposition}\label{prop:global_semifredholm}
    Let $\mathcal J \Subset \Omega$ be such that $\Omega$ is Morse--Smale for every $\lambda \in \mathcal J$, and write $\omega = \lambda + i \epsilon$ for $\lambda \in \mathcal J$ and sufficiently small $\epsilon > 0$. Let $\chi_\pm, \widetilde \chi_\pm \in \overline C^\infty(\partial \Omega)$ be cutoffs near $\Sigma_\lambda^\pm$ that satisfy~\eqref{eq:chi_pm}. Assume that $v_\omega \in H^{\infty, a}_\bo(\partial \Omega)$, $a \in (-1, 0)$, is a solution of~\eqref{eq:bdr}, and assume that $\mathcal Z_{\lambda} \cap \{\R - ia\} = \emptyset$ for all $\lambda \in \mathcal J$. Then for $\beta > 0$, $\delta \in (0, a + 1)$, and $s \ge a + \ha$, we have
    \begin{equation}\label{eq:low_reg_semifredholm}
        \|v_\omega\|_{H^{-\ha - \beta, a}_\bo} \le C \big( \|v_\omega\|_{H^{-N, a - \delta}_\bo} + \sum_{\pm} \|\Pi^\pm \chi_\mp g_\omega\|_{H^{- \ha - \beta}} + \|\Pi^\pm(1 - \chi_\mp) g_\omega\|_{H^{s, a}_\bo} \big)
    \end{equation}
    and
    \begin{equation}\label{eq:high_reg_semifredholm}
        \|\Pi^\pm(1 - \widetilde \chi_\mp) v_\omega\|_{H^{a + \ha, a}_\bo} \le C \big( \|v_\omega\|_{H^{-N, a - \delta}_\bo} + \sum_{\pm} \|\Pi^\pm(1 - \chi_\mp) g_\omega\|_{H^{s, a}_\bo} \big)
    \end{equation}
    The constant $C$ is locally uniform in $\lambda = \Re \omega \in \mathcal J$ and independent of $\epsilon$. 
\end{proposition}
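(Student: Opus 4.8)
The strategy is to cover $\partial\Omega$ by finitely many open sets adapted to the dynamics and to the corners, apply the appropriate local estimate on each piece, and then combine them via a partition of unity and a finite-propagation-type bookkeeping argument, absorbing the overlaps.

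First I would fix the data: since $\Sigma_\lambda$ is a finite union of hyperbolic periodic orbits and $\mathscr O^\pm_\lambda\setminus\Sigma^\pm_\lambda(\delta)$ is finite, I can choose $\delta>0$ small and a finite collection of cutoffs so that $\partial\Omega$ is covered by (i) a neighborhood $\Sigma^+_\lambda(\delta)$ of the attracting orbits, where Propositions~\ref{prop:sink} (for the "bad" frequency $\Pi^\mp\chi_\pm$) and~\ref{prop:source} (for the "good" frequency $\Pi^\pm\chi_\pm$) apply; (ii) a neighborhood of the repulsive orbits $\Sigma^-_\lambda(\delta)$, symmetric to (i); (iii) symmetric neighborhoods of each corner $\kappa\in\mathcal K$, where Proposition~\ref{prop:corner_POS} applies (this is where the hypothesis $\mathcal Z_\lambda\cap(\R-ia)=\emptyset$ is used); (iv) neighborhoods of the corner reflections $\gamma^\pm_\lambda(\mathcal K)$, where Proposition~\ref{prop:ref_corner_POS} applies; and (v) finitely many neighborhoods of the remaining points of $\partial\Omega\setminus\widetilde{\mathcal K}$, where the plain propagation estimate Proposition~\ref{prop:POS} applies. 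On each of these, each application of a propagation estimate controls $\Pi^\pm$-frequencies of $v$ near a point $\theta_0$ by $\Pi^\mp$-frequencies of $v$ near $\gamma^\mp_\lambda(\theta_0)$, plus $g$-terms, plus a term $\|v\|_{H^{-N,a-\delta}_\bo}$ that is error by compactness of $H^{-\ha-\beta,a}_\bo\hookrightarrow H^{-N,a-\delta}_\bo$.

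The combinatorial core is then: starting from any point and following the $\gamma^\mp_\lambda$ maps (i.e., iterating the chess-billiard dynamics), every forward/backward orbit limits to $\Sigma^+_\lambda$ or $\Sigma^-_\lambda$ in finitely many steps once we have cut away $\delta$-neighborhoods of the periodic sets, since outside $\Sigma_\lambda(\delta)$ the map $b_\lambda$ moves points a definite distance toward the attractor. So I would run the following iteration: to control $\Pi^+(1-\widetilde\chi_-)v$ in $H^{a+\ha,a}_\bo$, chain the estimates from Propositions~\ref{prop:POS}, \ref{prop:corner_POS}, \ref{prop:ref_corner_POS} along dynamical trajectories; each chain terminates at $\Sigma^+_\lambda(\delta)$ where Proposition~\ref{prop:source} (the source estimate, valid for $s>-1/2$, in particular $s=a+\ha$ since $a>-1/2$ forces $a+\ha>0$... actually for $a\in(-1,0)$ we may have $a+\ha<0$, so one must be slightly careful and run the source estimate at regularity $s=a+\ha$ only when $a+\ha>-1/2$, i.e. $a>-1$, which holds) closes the loop with no loss. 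The finiteness of the number of trajectory segments needed (uniformly in $\lambda\in\mathcal J$ and $\epsilon$, by openness of Morse--Smale and compactness of $\mathcal J$) gives a finite sum of $g$-terms and a single compact-error term, yielding~\eqref{eq:high_reg_semifredholm}. For~\eqref{eq:low_reg_semifredholm}, one additionally needs to control the "bad" frequencies $\Pi^\mp\chi_\pm v$ near the attractors themselves: here one uses the sink estimate Proposition~\ref{prop:sink}, which only holds for $s<-1/2$, hence the regularity $-\ha-\beta$ in the statement; then partition-of-unity-glue with the already-controlled $\Pi^\pm(1-\widetilde\chi_\mp)v$ from the high-regularity estimate, noting $H^{a+\ha,a}_\bo\subset H^{-\ha-\beta,a}_\bo$ since $a+\ha>-\ha-\beta$.

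The main obstacle is organizing the chaining so that the errors genuinely close up rather than cascade: each propagation step introduces a constant, and naively iterating could blow up the constant or the number of $\|v\|_{H^{-N,a-\delta}_\bo}$ terms. The resolution is that (a) only finitely many steps are needed because of the Morse--Smale structure away from $\Sigma_\lambda(\delta)$, (b) the constants in Propositions~\ref{prop:POS}, \ref{prop:corner_POS}, \ref{prop:ref_corner_POS}, \ref{prop:source} are locally uniform in $\lambda$ and independent of $\epsilon$, and (c) the source estimate at the attractor has \emph{no loss} (constant genuinely absorbs), so the terminal step does not degrade. One must also check that when a trajectory passes through a corner, the corner estimate Proposition~\ref{prop:corner_POS} transfers $H^{s,a}_\bo$ control into plain $H^s$ control of $\Pi^+(\widetilde\chi\circ\gamma^+_\lambda)v$, which is then picked up by the next (plain) propagation estimate --- the index-set hypothesis $\mathcal Z_\lambda\cap(\R-ia)=\emptyset$ is exactly what makes this corner step lossless in the weighted scale. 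A final technical point is uniformity as $\epsilon\to 0^+$: all the cited local estimates are stated uniformly for small $\epsilon$, and the finite cover and finite chaining are chosen once for the compact interval $\mathcal J$, so the combined constant is uniform; I would remark on this explicitly at the end.
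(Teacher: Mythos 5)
Your proposal is correct and follows essentially the same strategy as the paper's proof: one starts from the source estimate (Proposition~\ref{prop:source}), chains the propagation estimates (Propositions~\ref{prop:POS},~\ref{prop:corner_POS},~\ref{prop:ref_corner_POS}) backward along finitely many dynamical segments that terminate in $\Sigma^+_\lambda(\delta)$ (or $\Sigma^-_\lambda(\delta)$), yielding~\eqref{eq:high_reg_semifredholm}, and then appends the sink estimate (Proposition~\ref{prop:sink}) near the attractors to obtain~\eqref{eq:low_reg_semifredholm}. Your side observations---that $a+\ha>-\ha$ so the source estimate indeed applies at regularity $s=a+\ha$, that the Morse--Smale structure gives a finite chain length uniform over $\mathcal J$, and that the hypothesis $\mathcal Z_\lambda\cap(\R-ia)=\emptyset$ is what makes the corner step lossless in the weighted scale---are all exactly the points the paper's proof relies on.
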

\begin{Remarks}
    1. A particularly useful and simpler version of this estimate is given by 
    a useful special case of this estimate is
    \begin{equation}
        \|v_\omega\|_{H^{-\ha - \beta, a}_\bo} \le C \big( \|v_\omega\|_{H^{-N, \alpha - \delta}_\bo} + \|g_\omega\|_{H^{s, a}_\bo} \big).
    \end{equation}
    
    \noindent
    2. The main point here is that $H^{-\ha - \beta, a}_\bo (\partial \Omega) \hookrightarrow H^{-N, a - \delta}_\bo(\partial \Omega)$ is a compact embedding. This compactness is crucial to establishing a limiting absorption principle. With a limiting absoprtion principle in place, we can then use the local propagation estimates again to better characterize the regularity of the limit. 
\end{Remarks}

\begin{proof}
    Of course, since $\partial \Omega$ is compact, proving~\eqref{eq:low_reg_semifredholm} is equivalent to showing that for every $\theta_0 \in \partial \Omega$, there exists a a cutoff $\chi \in \overline C^\infty(\partial \Omega)$ such that $\chi = 1$ near $\theta_0$ and 
    \[\|\Pi^\pm \chi v\|_{H^{-\ha - \beta, a}_\bo} \le C \big( \|v\|_{H^{-N, \alpha - \delta}_\bo} + \sum_{\pm} \|\Pi^\pm \chi_\mp g\|_{H^{- \ha - \beta}} + \|\Pi^\pm(1 - \chi_\mp) g\|_{H^{s, a}_\bo} \big).\]
    On the other hand, proving~\eqref{eq:high_reg_semifredholm} is equivalent to showing that for every $\theta_0 \in (\supp (1 - \chi_\mp))^\circ$, there exists a a cutoff $\chi \in \overline C^\infty(\partial \Omega)$ such that $\chi = 1$ near $\theta_0$ and 
    \[\|\Pi^\pm(1 - \widetilde \chi_\mp) v_\omega\|_{H^{a + \ha, a}_\bo} \le C \big( \|v_\omega\|_{H^{-N, a - \delta}_\bo} + \sum_{\pm} \|\Pi^\pm(1 - \chi_\mp) g_\omega\|_{H^{s, a}_\bo} \big)\]
    We split this into several cases. 
    
    \noindent
    1. We start with the high regularity source estimate. Take $s \ge a + \ha$. Then by Proposition~\ref{prop:source},
    \begin{equation}\label{eq:start}
        \|(\Pi^+ \chi_+ + \Pi^- \chi_-) v\|_{H^s} \le C \big(\|\Pi^+ \widetilde \chi_+ g + \Pi^- \widetilde \chi_- g\|_{H^s} + \|v\|_{H^{-N, a - \delta}_\bo} \big).
    \end{equation}
    We will propagate all other estimates into this case. 
    
    \noindent
    2. Now consider $\theta_0$ such that $\theta_0 \in (\supp (1 - \chi_\pm))^\circ$ and $b_\lambda^{\pm k}(\theta_0) \notin \mathcal K$ for all $k \in \N_0$. Observe that since corners appear only as characteristic points, this also implies that $\gamma^{\mp}_{\lambda} \circ  b_\lambda^{\pm k}(\theta_0) \notin \mathcal K$ for all $k \in \N_0$. It follows from the Morse--Smale assumption that there exists a neighborhood $U \ni \theta_0$ such that  
    \[b^{\pm k}_\lambda(\theta) \notin \mathcal K \quad \text{and} \quad \gamma^{\mp}_{\lambda} \circ  b_\lambda^{\pm k}(\theta) \notin \mathcal K \quad \text{for all} \quad k \in \N_0, \ \theta \in U.\]
    Choose cutoffs $\chi, \widetilde \chi \in \CIc(U)$ such that $\chi = 1$ near $\theta_0$ and $\widetilde \chi = 1$ near $\supp \chi$. Observe that there exists $N > 0$ such that 
    \[\supp(\widetilde \chi \circ b^{\mp N}_\lambda) \subset b^{\pm N}_{\lambda}(U) \subset \supp \chi_\pm.\]
    Iterating the estimate in Proposition~\ref{prop:POS} (and possibly starting with the estimate~\eqref{eq:corner_POS_easy} if $\theta_0$ is a corner reflection), we see that for all $s \ge a + \ha$, 
    \begin{equation}\label{eq:easy_into_source}
    \begin{aligned}
        \|\Pi^\pm \chi v\|_{H^s} &\le C\big(\|\Pi^\pm(\widetilde \chi \circ b^{\mp N}) v\| + \sum_{\pm} \|\Pi^\pm(1 - \chi_\mp) g\|_{H^{s, a}_\bo} + \|v\|_{H^{-N, a - \delta}_\bo} \big) \\
        &\le C\big(\|\Pi^\pm \chi_\pm v\| + \sum_{\pm} \|\Pi^\pm(1 - \chi_\mp) g\|_{H^{s, a}_\bo} + \|v\|_{H^{-N, a - \delta}_\bo} \big) \\
        &\le C\big(\sum_{\pm} \|\Pi^\pm(1 - \chi_\mp) g\|_{H^{s, a}_\bo} + \|v\|_{H^{-N, a - \delta}_\bo} \big),
    \end{aligned}
    \end{equation}
    where the last inequality follows the source estimate in~\eqref{eq:start}.

    \noindent
    3. Now suppose $\theta_0$ is a corner. Again, we only consider the case that $\theta_0$ is a type-$(+, +)$ corner since the other cases can be deduced via reflection symmetry. Let $\chi, \widetilde \chi$ be symmetric cutoffs near $\theta_0$ in the sense of~\eqref{eq:sym_cutoff}, and assume that $\widetilde \chi = 1$ near $\supp \chi_0$. Furthermore, observe that $\supp \widetilde \chi$ can be made sufficiently small so that 
    \[b_\lambda^k(\theta) \notin \mathcal K \quad \text{for every} \quad \theta \in \gamma^+_{\lambda}(\supp \widetilde \chi).\]
    By Proposition~\ref{prop:corner_POS}, we see that for $s \ge a + \ha$, 
    \begin{equation}\label{eq:corner_into_source}
    \begin{aligned}
        \|\chi v\|_{H^{s, a}_\bo} &\le C \big(\|\Pi^+(\widetilde \chi \circ \gamma^+_\lambda) v\|_{H^s} + \|\widetilde \chi g\|_{H^{s, a}_\bo} + \|v\|_{H^{-N, a - \delta}_{\bo}} \big) \\
        &\le C\big(\|\Pi^+(1 - \chi_-) g\|_{H^{s, a}_\bo} + \|\Pi^-(1 - \chi_+) g\|_{H^{s, a}_\bo} + \|v\|_{H^{-N, a - \delta}_\bo} \big),
    \end{aligned}
    \end{equation}
    where the last inequality follows from~\eqref{eq:easy_into_source}. 

    \noindent
    4. Now assume that $\theta \in (\supp (1 - \chi_\pm))^\circ$ and $b^{\pm k}_\lambda(\theta_0) \in \mathcal K$ for some $k \in \N$. Note that by the Morse--Smale assumption, this occurs for precisely one value of $k \in \N$. We assume that $b^{\pm k}_\lambda(\theta_0) = \kappa$ is a type-$(+, +)$ corner. In the case that $b^{k}_\lambda(\theta_0)$ is a corner for some $k \in \N$, observe that 
    \[(\gamma_\lambda^- \circ b^{k - 1}_{\lambda})(\theta_0) = \gamma^+(\kappa).\]
    Let $\chi_j \in \overline C^\infty(\partial \Omega)$, $j = 1, 2, 3$, be such that $\chi_1 = 1$ near $\theta_0$ and $\widetilde \chi_j = 1$ near $\supp \chi_{j - 1}$ for $j = 2, 3$. If $\chi_3$ has sufficiently small support, it follows from iterating Proposition~\ref{prop:POS} followed by an application of Proposition~\ref{prop:ref_corner_POS} that for $s \ge a + \ha$, 
    \begin{equation}
        \begin{aligned}
            \|\Pi^+ \chi_1 v\|_{H^{a + \ha}} &\le C\big(\|\Pi^-(\chi_2 \circ b^{-(k - 1)}_\lambda \circ \gamma^-_\lambda)v\|_{H^{a + \ha}} \\
            &\qquad \qquad+ \sum_{\pm} \|\Pi^\pm(1 - \chi_\mp) g\|_{H^{s, a}_\bo} + \|v\|_{H^{-N, a - \delta}_\bo} \big) \\
            &\le C \big(\|(\chi_3 \circ b^{-k}_\lambda)v\|_{H^{a + \ha, a}_\bo} + \sum_{\pm} \|\Pi^\pm(1 - \chi_\mp) g\|_{H^{s, a}_\bo} + \|v\|_{H^{-N, a - \delta}_\bo} \big).
        \end{aligned}
    \end{equation}
    Upon possibly further shrinking the support of $\chi_3$ and applying \ref{eq:corner_into_source}, we find that
    \begin{equation}
        \|\Pi^+ \chi_1 v\|_{H^{a + \ha}} \le C\big(\sum_{\pm} \|\Pi^\pm(1 - \chi_\mp) g\|_{H^{s, a}_\bo} + \|v\|_{H^{-N, a - \delta}_\bo} \big).
    \end{equation}
    Observe that $\Pi^- \chi_1 v$ is handled by Step 2 since $b^k(\theta_0) = \kappa$ implies that $b^{-k}(\theta_0) \notin \mathcal K$ for all $k \in \N$. 

    Similarly, if $b^{-k}(\theta_0) = \kappa$ for some $k \in \N$, we instead see that 
    \[b^{k - 1}(\theta_0) = \gamma^+(\kappa).\]
    Therefore, with $\chi_3$ having sufficiently small support near $\theta_0$, we have
    \begin{equation}
        \begin{aligned}
            \|\Pi^- \chi_1 v\|_{H^{a + \ha}} &\le C\big(\|\Pi^+(\chi_2 \circ b^{k - 1}_\lambda)v\|_{H^{a + \ha}} \\
            &\qquad \qquad + \sum_{\pm} \|\Pi^\pm(1 - \chi_\mp) g\|_{H^{s, a}_\bo} + \|v\|_{H^{-N, a - \delta}_\bo} \big) \\
            &\le C \big(\|(\chi_3 \circ b^{-(k - 1)}_\lambda \circ \gamma^-_\lambda)v\|_{H^{a + \ha, a}_\bo} \\
            &\qquad \qquad+ \sum_{\pm} \|\Pi^\pm(1 - \chi_\mp) g\|_{H^{s, a}_\bo} + \|v\|_{H^{-N, a - \delta}_\bo} \big) \\
            &\le C\big(\|g\|_{H^{s, a}_\bo} + \|v\|_{H^{-N, a - \delta}_\bo} \big).
        \end{aligned}
    \end{equation}
    Now, $\Pi^+ \chi_1 v$ is handled by Step 2, so this completes the analysis near points in $\partial \Omega$ that hits a corners under iterated mapping by $b^{\pm 1}_{\lambda}$. 

    \noindent
    5. Only one case remains. We must consider $(\Pi^+ \chi'_- + \Pi^- \chi'_+)v$ where $\chi'_\pm \in \overline C^\infty(\Omega)$ are such that $\chi_\pm = 1$ on $\supp \chi'_\pm$. For this, we use the low regularity sink estimates. Proposition~\ref{prop:sink} implies
    \begin{multline*}
        \|(\Pi^+ \chi_- + \Pi^- \chi_+)v\|_{H^{-\ha - \beta}} \le \big(\sum_\pm \|\Pi^\mp (\widetilde \chi_\pm \circ b_\lambda^{\pm 1} - \chi_\pm) v\|_{H^{-\ha - \beta}} \\
        + \sum_{\pm} \|\Pi^\pm \chi_\mp g\|_{H^{- \ha - \beta}} + 
        \|v\|_{H^{-N, a - \delta}_\bo} \big).
    \end{multline*}
    Note that all the points in $\supp(\widetilde \chi_\pm \circ b_\lambda^{\pm 1} - \chi_\pm)$ falls in one of the cases in Steps 2-4. Therefore, we immediately conclude that 
    \[\|(\Pi^+ \chi_- + \Pi^- \chi_+)v\|_{H^{-\ha - \beta}} \le C\big(\sum_{\pm} \|\Pi^\pm \chi_\mp g\|_{H^{- \ha - \beta}} + \|\Pi^\pm(1 - \chi_\mp) g\|_{H^{s, a}_\bo} + \|v\|_{H^{-N, a - \delta}_\bo} \big),\]
    which completes the proof.
\end{proof}

\subsection{Persistence of conormal regularity}
So far, the propagation of singularities estimates that we have shown in Proposition~\ref{prop:POS} and~\ref{prop:ref_corner_POS} all concern how Sobolev regularity propagates. The shortcoming of these estimate is that the singularities that propagate out of the corner are not fully encapsulated by Sobolev regularity since the regularity at the corner is characterized using b-Sobolev spaces. We now improve the the propagation estimates to show that conormal regularity propagates in some sense. In particular, Propositions~\ref{prop:conorm_POS} and~\ref{prop:conorm_ref} are conormal analogues of Proposition~\ref{prop:POS} and~\ref{prop:ref_corner_POS} respectively. Essentially, we take the proofs of of Proposition~\ref{prop:POS} and~\ref{prop:ref_corner_POS} and iteratively differentiate with respect to vector fields vanishing at a point.

Without the loss of generality, we will again work near a type-$(+, +)$ corner centered at $0$. We fix some b-parameterization $\mathbf x(\theta)$. Mostly to clean up notation later, we have the following lemma which follows by a direct computation. 
\begin{lemma}\label{lem:bvf_conj}
    Let $\rho_{\mathrm{ref}} \in \overline C^\infty(\partial \Omega)$ be supported in a small neighborhood of $\gamma^+_\lambda(0)$ with $\rho_{\mathrm{ref}}(\gamma^+_\lambda(0)) = 0$. Then there exists $\psi_\pm, \varphi_\pm \in \overline C^\infty(\partial \Omega)$ such that 
    \[(\gamma^+)^*(\rho_{\mathrm{ref}} \partial_\theta) (\gamma^+)^*(v(\theta) d \theta) = \begin{cases} (\psi_+(\theta) \theta \partial_\theta + \varphi_+(\theta))v(\theta) d \theta & \theta > 0, \\ (\psi_-(-\theta) \theta \partial_\theta + \varphi_-(-\theta))v(\theta) d \theta & \theta < 0.\end{cases} \]  
\end{lemma}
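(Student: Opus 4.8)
The plan is to reduce the statement to a direct computation in the parameterization $\mathbf x$ near $\gamma^+_\lambda(0)$, using that $\gamma^+_\lambda$ is an involution fixing $0$ and is a (piecewise) smooth orientation-reversing homeomorphism by Lemma~\ref{lem:linear_coords} (or by the smoothness assumption built into the $\lambda$-simple hypothesis, so that $\gamma^+_\lambda$ is smooth near $0$ when the b-parameterization is chosen appropriately). First I would set up notation: write $\gamma := \gamma^+_\lambda$ for the involution on $\mathbb S^1$ in the chosen b-parameterization, so that $\gamma(\gamma(\theta)) = \theta$, $\gamma(0) = 0$, and near $0$ one has $\gamma$ smooth with $\gamma'(\theta) < 0$. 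The key observation is that $\gamma$ maps a neighborhood of $0$ to a neighborhood of $0$ while sending $\{\theta > 0\}$ to one side and $\{\theta < 0\}$ to the other; since $\rho_{\mathrm{ref}}$ is supported near $\gamma(0) = 0$ and vanishes there, the pulled-back vector field $\gamma^*(\rho_{\mathrm{ref}} \partial_\theta)$ is a smooth vector field supported near $0$ that vanishes at $0$, hence is of the form $a(\theta)\theta\partial_\theta$ with $a \in C^\infty$, \emph{but only away from the corner structure} — the subtlety is that the pullback of a one-form (rather than a function) by $\gamma$ across $\theta = 0$ picks up the Jacobian factor $\gamma'$, which need not match on the two sides because $\gamma$ relates a neighborhood of a non-corner point $\gamma^+_\lambda(0)$ to a neighborhood of the corner $0$.

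The main computation is as follows. Acting on the one-form $v(\theta)\,d\theta$, one has $\gamma^*(v(\theta)\,d\theta) = v(\gamma(\theta)) \gamma'(\theta)\,d\theta$. Applying $\rho_{\mathrm{ref}}\partial_\theta$ as a derivation and then pulling back again by $\gamma$ (using $\gamma\circ\gamma = \mathrm{id}$, so $\gamma'(\gamma(\theta))\gamma'(\theta) = 1$), I would expand
\[
(\gamma)^*\big[(\rho_{\mathrm{ref}}\partial_\theta)(\gamma^*(v\,d\theta))\big]
= \Big(\rho_{\mathrm{ref}}(\gamma(\theta))\,\gamma'(\theta)\,\partial_\theta v(\theta) + b(\theta) v(\theta)\Big)\,d\theta
\]
for some $b \in \overline C^\infty(\partial\Omega)$ (a combination of $\rho_{\mathrm{ref}}$, $\gamma'$, $\gamma''$ evaluated at $\gamma(\theta)$, all smooth near $0$). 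Now $\rho_{\mathrm{ref}}(\gamma(\theta))\gamma'(\theta)$ is smooth near $0$ and vanishes at $\theta = 0$ (since $\rho_{\mathrm{ref}}$ vanishes at $\gamma(0)=0$), so it equals $\theta\,\tilde\psi(\theta)$ for a smooth $\tilde\psi$; splitting into $\theta > 0$ and $\theta < 0$ and writing $\tilde\psi(\theta) = \psi_+(\theta)$ for $\theta > 0$, $\tilde\psi(\theta) = \psi_-(-\theta)$ for $\theta < 0$ (and likewise for $b$ in terms of $\varphi_\pm$) gives exactly the claimed form, since on the two sides the parameterization $\mathbf x$ identifies the boundary piece with $\R_+$ via $\theta \mapsto \pm\theta$, and it is natural to record the coefficients as functions on $\R_+$.

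I expect the only genuine obstacle to be bookkeeping: making sure that the one-form transformation law is applied correctly (so that the Jacobian $\gamma'$ appears with the right power and the cross terms from differentiating $\gamma'$ are absorbed into the zeroth-order coefficients $\varphi_\pm$), and checking that everything remains in $\overline C^\infty(\partial\Omega)$ near $0$, i.e.\ smooth up to $\theta = 0$ from each side. There is no essential difficulty because $\gamma^+_\lambda$ is smooth in a neighborhood of the fixed point $0$ under the b-parameterization, and the decomposition into $\theta>0$ and $\theta<0$ pieces is precisely the statement that we view $\partial\Omega$ near the corner as two copies of $\R_+$ glued at $0$. The lemma as stated is purely a normalization device for the conormal propagation arguments in Propositions~\ref{prop:conorm_POS} and~\ref{prop:conorm_ref}, so I would keep the proof to a few lines of this direct calculation.
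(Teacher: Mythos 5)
Your computation follows the same route as the paper's one-line proof (compute the conjugated operator via the one-form pullback formula and the product/chain rule, then observe that $\rho_{\mathrm{ref}}\circ\gamma^+$ is piecewise smooth and vanishes at $0$), and the conclusion is reached correctly. However, there are two slips worth flagging.

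First, the assertion near the start that $\gamma(0)=0$, and that $\gamma$ "maps a neighborhood of $0$ to a neighborhood of $0$", is false: $0$ is a type-$(+,+)$ corner, hence a critical point of $\ell^-$ fixed by $\gamma^-$, and by $\lambda$-simplicity it is \emph{not} a critical point of $\ell^+$, so $\gamma^+_\lambda(0)\neq 0$. You contradict this yourself later when you correctly note that $\gamma$ "relates a neighborhood of a non-corner point $\gamma^+_\lambda(0)$ to a neighborhood of the corner $0$." The argument does not actually use $\gamma(0)=0$ — what it uses is that $\rho_{\mathrm{ref}}\circ\gamma^+$ vanishes at $\theta=0$ and is $\overline C^\infty$ there, which is true — so the slip is harmless, but the internal inconsistency should be cleaned up.

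Second, the principal coefficient in your displayed formula is off by a factor of $(\gamma')^{-2}$: chasing the pullback carefully, with $\gamma^*(v\,d\theta)=\gamma'(\theta)v(\gamma(\theta))\,d\theta$ and $\gamma'(\gamma(\theta))\gamma'(\theta)=1$, one finds the coefficient of $\partial_\theta v$ to be $\rho_{\mathrm{ref}}(\gamma(\theta))/\gamma'(\theta)$, not $\rho_{\mathrm{ref}}(\gamma(\theta))\gamma'(\theta)$. Since $\gamma'$ is nonvanishing and piecewise smooth near $0$, this does not change the factorization as $\theta\psi_\pm(\mp\theta)$ and the lemma still holds, but as a bookkeeping check the power matters.
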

\begin{proof}
    The pullback on one-forms is explicitly given by 
    \[(\gamma^+)^*(v(\theta) d \theta) = \partial_\theta \gamma^{-1}(\theta) v(\gamma^+(\theta)) d \theta.\] 
    The formula then follows by the product rule and that $\rho_{\mathrm{ref}} \circ \gamma^+ \in \overline C^\infty(\partial \Omega)$ vanishes at $0$. 
\end{proof}
The following proposition is a refinement of the estimate~\eqref{eq:corner_POS_hard} in Proposition~\ref{prop:ref_corner_POS}.
\begin{proposition}\label{prop:conorm_ref}
    Let $\mathcal J \Subset (0, 1)$ and assume that $\Omega$ is $\lambda$-simple for all $\lambda \in \mathcal J$. Write $\omega = \lambda + i \epsilon$ for $\lambda \in \mathcal J$ and sufficiently small $\epsilon > 0$. Let $v = v_\omega \in H^{\infty, a}_\bo(\partial \Omega)$, $a \in (-1, 0)$, be a solution to~\eqref{eq:bdr}. Let $\rho_{\mathrm{ref}}$ be as in Lemma~\ref{lem:bvf_conj}. Fix cutoffs $\chi, \widetilde \chi \in \CIc(\partial \Omega)$ be supported in a small neighborhood of $\gamma^+_\lambda(0)$, and assume that $\chi = 1$ near $\gamma^+_\lambda(0)$ and $\widetilde \chi = 1$ near $\supp \chi$. Then for $s \in (-\ha, \ha)$, 
    \begin{equation}
        \|\Pi^- (\rho_{\mathrm{ref}} \partial_\theta)^k \chi v\|_{H^s} \lesssim \sum_{j = 0}^k \|(\theta \partial_\theta)^j (\widetilde \chi \circ \gamma^+_\lambda) v\|_{H^s} + \| \widetilde \chi g\|_{H^{s + k}} + \|v\|_{H^{-N, a}_\bo}
    \end{equation}
    where the hidden constant is locally uniform in $\lambda$. 
\end{proposition}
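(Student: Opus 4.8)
The plan is to reduce to the underlying estimate \eqref{eq:corner_POS_hard} from Proposition~\ref{prop:ref_corner_POS} and then commute powers of the vector field $\rho_{\mathrm{ref}}\partial_\theta$ through the boundary-reduced equation, tracking how that vector field interacts with $d\mathcal C_\omega$ under the two relevant pieces of its Schwartz kernel: the pullback-by-$\gamma^+_\lambda$ part (which carries the corner singularity) and the genuinely pseudodifferential pieces. The key observation is that $\rho_{\mathrm{ref}}\partial_\theta$ vanishes at $\gamma^+_\lambda(0)$, so under $(\gamma^+_\lambda)^*$ it becomes an operator of the form $\psi_\pm\cdot\theta\partial_\theta + \varphi_\pm$ near $0$ by Lemma~\ref{lem:bvf_conj}; these are exactly the b-vector fields adapted to the corner at $0$, so commuting them against $d\mathcal C_\omega$ near the corner stays within the b-calculus and does not worsen the $H^{s,s-\ha}_\bo$ weight.

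\textbf{Key steps.} First I would revisit the proof of the $\Pi^-$ case of Proposition~\ref{prop:ref_corner_POS}: with $\chi_\cu$ wedged between $\chi$ and $\widetilde\chi$, the elliptic estimate of Lemma~\ref{lem:psido_corner_ref}(1) gives $\|\Pi^-(\rho_{\mathrm{ref}}\partial_\theta)^k\chi v\|_{H^s} \lesssim \|\Pi^-(\rho_{\mathrm{ref}}\partial_\theta)^k\chi_\cu d\mathcal C_\omega\widetilde\chi v\|_{H^s} + \|v\|_{H^{-N}}$, using that $(\rho_{\mathrm{ref}}\partial_\theta)^k$ is a differential operator of order $k$ supported near $\gamma^+_\lambda(0)$ and $\chi_\cu d\mathcal C_\omega\widetilde\chi\in\Psi^0$ is elliptic there, together with the fact that differentiating by a smooth vector field and applying a $\Psi^0$ operator commute up to lower order. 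Next, decompose $\chi_\cu d\mathcal C_\omega\widetilde\chi$ as in the proof of Proposition~\ref{prop:ref_corner_POS} into the $\widetilde\chi\circ\gamma^-_\lambda$ piece (handled by Lemma~\ref{lem:psido_corner_ref}(2), giving a term bounded by $\|v\|_{H^{-N}}$ after commuting $(\rho_{\mathrm{ref}}\partial_\theta)^k$ through a smoothing operator), the $\widetilde\chi\circ\gamma^+_\lambda$ piece (the main term), and the smoothing remainder of Lemma~\ref{lem:psido_corner_ref}(4) which, after commuting $k$ vector fields, still maps $H^{-N,a}_\bo\to H^N$. For the main term: $(\rho_{\mathrm{ref}}\partial_\theta)^k$ acting on the outgoing variable commutes through the $(\gamma^+_\lambda)^*$-pullback structure to become, by Lemma~\ref{lem:bvf_conj}, a sum of operators of the form $(\theta\partial_\theta)^j$ acting on the incoming argument (for $\theta\gtrless 0$), composed with the b-pseudodifferential operators $T^+_{\omega,\pm}$ of Lemma~\ref{lem:psido_corner_ref}(3); since $(\theta\partial_\theta)$ preserves $H^{s,s-\ha}_\bo(\R_+)$ and, by Lemma~\ref{lem:b_sob_to_sob}, $H^{s,s-\ha}_\bo(\R_+)\simeq H^s(\R_+)$ for $s\in(-\ha,\ha)$, iterating produces the sum $\sum_{j=0}^k\|(\theta\partial_\theta)^j(\widetilde\chi\circ\gamma^+_\lambda)v\|_{H^s}$. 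The $g$-term picks up $k$ derivatives from commuting $(\rho_{\mathrm{ref}}\partial_\theta)^k$ past the $\Psi^0$ part, yielding $\|\widetilde\chi g\|_{H^{s+k}}$.

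\textbf{Main obstacle.} The delicate point is bookkeeping the commutators $[(\rho_{\mathrm{ref}}\partial_\theta)^k, \cdot]$ against the pieces of $d\mathcal C_\omega$ while keeping all estimates uniform in $\omega$ as $\epsilon\to 0$, and in particular making sure that when $(\rho_{\mathrm{ref}}\partial_\theta)^k$ is pushed onto the incoming variable it really lands as a clean b-differential operator $(\theta\partial_\theta)^j$ with $\overline C^\infty$ coefficients --- the Heaviside cutoffs $H(\pm\theta)$ appearing in Lemma~\ref{lem:psido_corner_ref}(3) must be handled, but since $\theta\partial_\theta$ annihilates the jump of $H$ at $0$ (i.e.\ $\theta\partial_\theta H(\theta)=0$ distributionally and $(\theta\partial_\theta)$ maps $H^{s,s-\ha}_\bo$ to itself), the Heaviside factors commute harmlessly through. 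A secondary technical nuisance is that $z^+_\omega$ and related coefficients are only $\overline C^\infty(\partial\Omega)$ near the corner, not smooth on $\mathbb S^1$, but this is already accounted for in the $T^+_{\omega,\pm}$ formalism of Lemma~\ref{lem:psido_corner_ref}(3) (via the extensions $\tilde z^+_{\omega,\pm}$), and the b-derivatives $\theta\partial_\theta$ preserve $\overline C^\infty$. I expect the argument to be essentially a careful induction on $k$ following the scaffold of Proposition~\ref{prop:ref_corner_POS}'s proof, with the only genuine content being the commutation identity supplied by Lemma~\ref{lem:bvf_conj}.
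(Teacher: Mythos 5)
Your proposal matches the paper's proof in all essential respects: the elliptic estimate via Lemma~\ref{lem:psido_corner_ref}(1), the decomposition into the $\gamma^-_\lambda$, $\gamma^+_\lambda$, and smoothing pieces, the conjugation identity $(\gamma^+_\lambda)^*(\rho_{\mathrm{ref}}\partial_\theta)(\gamma^+_\lambda)^* = \psi_\pm\,\theta\partial_\theta + \varphi_\pm$ from Lemma~\ref{lem:bvf_conj} as the genuine content, and the induction on $k$ to absorb the commutator terms. The only imprecision worth flagging is in your displayed elliptic estimate: as written, $\|\Pi^-(\rho_{\mathrm{ref}}\partial_\theta)^k\chi v\|_{H^s}\lesssim\|\Pi^-(\rho_{\mathrm{ref}}\partial_\theta)^k\chi_\cu d\mathcal C_\omega\widetilde\chi v\|_{H^s}+\|v\|_{H^{-N}}$ suppresses the commutator terms $\sum_{j<k}\|(\rho_{\mathrm{ref}}\partial_\theta)^j\widetilde\chi v\|_{H^s}$ coming from $[\chi_\cu d\mathcal C_\omega\widetilde\chi,(\rho_{\mathrm{ref}}\partial_\theta)^k]\in\sum_{j<k}\Psi^0\cdot(\rho_{\mathrm{ref}}\partial_\theta)^j$, which are not $\mathcal O(\|v\|_{H^{-N}})$; they are precisely what the induction on $k$ must absorb (the paper records this explicitly as \eqref{eq:conorm_commutator}), and your closing remark about "careful induction" acknowledges this, so it is a presentational gap rather than a mathematical one.
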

Generally speaking, it is more natural to demand conormal regularity of $g$, such control is not needed for the paper since the $g$ in practice will be smooth away from the corners. Here, we choose to simplify the notation rather than having the most precise estimates. 
\begin{proof}
    1. We first consider the case $k = 1$. Let $\chi_\cu \in C^\infty(\partial \Omega)$ be such that $\chi_\cu = 1$ on $\supp \chi$ and $\widetilde \chi = 1$ on $\supp \chi_\cu$. Recall from (1) of Lemma~\ref{lem:psido_corner_ref} that $\chi_\cu d \mathcal C_\omega \widetilde \chi \in \Psi^{0}(\mathbb S^1)$ is uniformly elliptic in $\omega$. Therefore,
    \[[\chi_\cu d \mathcal C_\omega \widetilde \chi, \rho_{\mathrm{ref}} \partial_\theta ] \in \Psi^0(\mathbb S^1), \qquad \WF ([\chi_\cu d \mathcal C_\omega \widetilde \chi, \rho_{\mathrm{ref}} \partial_\theta ]) \subset \supp \chi_\cu \times \R. \]
    Therefore, 
    \begin{equation}\label{eq:conorm1}
    \begin{aligned}
        \|\Pi^- (\rho_{\mathrm{ref}} \partial_\theta) \chi v\|_{H^s} &\lesssim \|\chi_\cu d \mathcal C_\omega \widetilde \chi (\rho_{\mathrm{ref}} \partial_\theta) v\|_{H^s} + \|\widetilde \chi v\|_{H^{-N}} \\
        &\lesssim \|(\rho_{\mathrm{ref}} \partial_\theta) \chi_\cu d \mathcal C_\omega \widetilde \chi v\|_{H^s} + \|\widetilde \chi v\|_{H^s} + \|\widetilde \chi v\|_{H^{-N}}.
    \end{aligned}
    \end{equation}
    It follows from (2) and (4) of Lemma~\ref{lem:psido_corner_ref} that 
    \begin{gather*}
        \|(\rho_{\mathrm{ref}} \partial_\theta) \chi_\cu d \mathcal C_\omega (\widetilde \chi \circ \gamma^-_\lambda)v\|_{H^s} \lesssim \|(\widetilde \chi \circ \gamma^-_\lambda)v\|_{H^{-N}}\\
        \|(\rho_{\mathrm{ref}} \partial_\theta) \chi_\cu d \mathcal C_\omega (1 - \widetilde \chi - (\widetilde \chi \circ \gamma^+_\lambda) - (\widetilde \chi \circ \gamma^-_\lambda))v\|_{H^s} \lesssim \|v\|_{H^{-N, a}_\bo}.
    \end{gather*}
    Then continuing~\eqref{eq:conorm1}, we see that
    \begin{equation}
        \|\Pi^- (\rho_{\mathrm{ref}} \partial_\theta) \chi v\|_{H^s} \lesssim \|(\rho_{\mathrm{ref}} \partial_\theta) \chi_\cu d \mathcal C_\omega (\widetilde \chi \circ \gamma^+_\lambda)v\|_{H^s} +  \|(\rho_{\mathrm{ref}} \partial_\theta) \widetilde \chi g\|_{H^s} + \|\widetilde \chi v\|_{H^s} + \|v\|_{H^{-N, a}_\bo}
    \end{equation}
    By (3) of Lemma~\ref{lem:psido_corner_ref}, we have the decomposition 
    \begin{gather*}
        \chi_\cu d \mathcal C_\omega (\widetilde \chi \circ \gamma^+_\lambda) = \sum_{\pm} (\gamma^+_\lambda)^* (\chi_\cu \circ \gamma^+_\lambda) T^+_{\omega, \pm} (\widetilde \chi \circ \gamma^+_\lambda)H(\pm \theta) \\
        T^+_{\omega, \pm} \in \Psi^0(\mathbb S^1), \qquad \WF(T^+_{\omega, \pm}) \subset \supp \chi_0 \times \R_+.
    \end{gather*}
    Using Lemma~\ref{lem:bvf_conj} with the same notation, we see that 
    \begin{equation}\label{eq:4pieces}
    \begin{aligned}
        (\rho_{\mathrm{ref}} \partial_\theta) \chi_\cu d \mathcal C_\omega \widetilde \chi &= (\gamma_\lambda^+)^*\big[(\gamma_\lambda^+)^*(\rho_{\mathrm{ref}} \partial_\theta) (\gamma_\lambda^+)^*\big] (\gamma^+_\lambda)^* (\chi_\cu \circ \gamma^-_\lambda) T^+_\omega (\widetilde \chi \circ \gamma_\lambda^+)\\
        &= (\gamma_\lambda^+)^*\sum_{\mu, \nu = \pm} H(\mu x)\big(\psi_\mu(\mu x) T^+_{\omega, \nu} (\theta \partial_\theta) + R_{\mu, \nu}\big) H(\nu x)
    \end{aligned}
    \end{equation}
    for some $R_{\mu, \nu} \in \Psi^0(\mathbb S^1)$ uniformly in $\omega$. Applying~\eqref{eq:4pieces} to~\eqref{eq:conorm1}, we find 
    \[\|\Pi^- \rho_{\mathrm{ref}} \partial_\theta \chi v\|_{H^s} \lesssim \|\chi_0 (\theta \partial_\theta) v\|_{H^s} + \|\chi_0 v\|_{H^s} + \|(\rho_{\mathrm{ref}} \partial_\theta) \widetilde \chi g\|_{H^s} + \|\chi v\|_{H^s} + \|v\|_{H^{-N, a}_\bo}.\]

    \noindent
    2. Now we consider the $k > 1$ estimates. First, we claim that for every $k \in \N$, there exists $A_j \in \Psi^0(\mathbb S^1)$ such that 
    \begin{equation}\label{eq:conorm_commutator}
        [\chi_\cu d \mathcal C_\omega \widetilde \chi, (\rho_{\mathrm{ref}} \partial_\theta)^k] = \sum_{j = 0}^{k - 1} A_j (\rho_{\mathrm{ref}} \partial_\theta)^j, \quad \WF(A_j) \subset \supp \chi_\cu. 
    \end{equation}
    Indeed, this holds for $k = 1$, and assuming that~\eqref{eq:conorm_commutator} holds for $k$, we have
    \begin{align*}
        [\chi_\cu d \mathcal C_\omega \widetilde \chi, (\rho_{\mathrm{ref}} \partial_\theta)^{k + 1}] &= (\rho_{\mathrm{ref}} \partial_\theta) [\chi_\cu d \mathcal C_\omega \widetilde \chi, (\rho_{\mathrm{ref}} \partial_\theta)^k] - [\rho_{\mathrm{ref}} \partial_\theta, \chi_\cu d \mathcal C_\omega \widetilde \chi] (\rho_{\mathrm{ref}} \partial_\theta)^k \\
        &= \sum_{j = 0}^{k - 1} [\rho_{\mathrm{ref}} \partial_\theta, A_j] (\rho_{\mathrm{ref}} \partial_\theta)^j + \sum_{j = 0}^{k - 1} A_j (\rho_{\mathrm{ref}} \partial_\theta)^{j + 1} \\
        &\qquad - [\rho_{\mathrm{ref}} \partial_\theta, \chi_\cu d \mathcal C_\omega \widetilde \chi] (\rho_{\mathrm{ref}} \partial_\theta)^k,
    \end{align*}
    which is in the form~\ref{eq:conorm_commutator} for $k + 1$, completing the induction. Applying the elliptic estimate as in Step 1, we have
    \begin{equation}\label{eq:cpk}
    \begin{aligned}
        \|\Pi^-(\rho_{\mathrm{ref}} \partial_\theta)^k \chi v\|_{H^s} &\lesssim \|\chi_\cu d \mathcal C_\omega \widetilde \chi (\rho_{\mathrm{ref}} \partial_\theta)^k v\|_{H^s} + \|\widetilde \chi v\|_{H^{-N}} \\
        &\lesssim \|(\rho_{\mathrm{ref}} \partial_\theta)^k \chi_\cu d \mathcal C_\omega \widetilde \chi v \|_{H^s} + \sum_{j = 0}^{k - 1} \|(\rho_{\mathrm{ref}} \partial_\theta)^k \widetilde \chi v\|_{H^s} + \|\widetilde \chi v\|_{H^{-N}} \\
        &\lesssim \|(\rho_{\mathrm{ref}} \partial_\theta)^k \chi_\cu d \mathcal C_\omega (\widetilde \chi \circ \gamma^+_\lambda) v\|_{H^s} +  \|(\rho_{\mathrm{ref}} \partial_\theta)^k \widetilde \chi g\|_{H^s} \\
        &\qquad + \sum_{j = 0}^{k - 1} \|(\rho_{\mathrm{ref}} \partial_\theta)^k \widetilde \chi v\|_{H^s} + \|v\|_{H^{-N, a}_\bo}.
    \end{aligned}
    \end{equation}
    Applying $\rho_{\mathrm{ref}} \partial_\theta$ repeatedly to~\eqref{eq:4pieces}, we see that there exists $A_{j, \mu, \nu} \in \Psi^0(\mathbb S^1)$ for $j = 1, \dots, k$ and $\mu, \nu = \pm$ with $\WF(A_{j, \mu, \nu}) \subset (\supp \chi_c \circ \gamma^+_\lambda) \times \R$ such that 
    \begin{equation*}
        (\rho_{\mathrm{ref}} \partial_\theta)^k \chi_\cu d \mathcal C_\omega (\widetilde \chi \circ \gamma^+_\lambda) = (\gamma^+_\lambda)^*\sum_{j = 1}^k \sum_{\mu, \nu= \pm} H(\mu x) A_{j, \mu, \nu} (\theta \partial_\theta)^j H(\nu x).
    \end{equation*}
    This gives 
    \begin{equation*}
        \|(\rho_{\mathrm{ref}} \partial_\theta)^k \chi_\cu d \mathcal C_\omega (\widetilde \chi \circ \gamma^+_\lambda)v\|_{H^s} \lesssim \sum_{j = 1}^k \|(\theta \partial_\theta)^j (\widetilde \chi \circ \gamma^+_\lambda) v \|_{H^s}
    \end{equation*}
    Applying this to~\eqref{eq:cpk}, we find
    \begin{multline*}
        \|\Pi^- (\rho_{\mathrm{ref}} \partial_\theta)^k \chi v\|_{H^s} \lesssim \sum_{j = 0}^{k - 1} \|(\rho_{\mathrm{ref}} \partial_\theta)^j \widetilde \chi v\|_{H^s} + \sum_{j = 0}^k \|(\theta \partial_\theta)^j (\widetilde \chi \circ \gamma^+_\lambda) v\|_{H^s} \\
        + \|(\rho_{\mathrm{ref}} \partial_\theta)^k \widetilde \chi g\|_{H^s} + \|v\|_{H^{-N, a}_\bo}.
    \end{multline*} 
    We may inductively apply this estimate to itself by adjusting the cutoffs until we reach
    \begin{multline}
        \|\Pi^- (\rho_{\mathrm{ref}} \partial_\theta)^k \chi v\|_{H^s} \lesssim \|\widetilde \chi v\|_{H^s} + \sum_{j = 0}^k \|(\theta \partial_\theta)^j (\widetilde \chi \circ \gamma^-_0) v\|_{H^s} \\
        + \|\widetilde \chi g\|_{H^{s + k}} + \|v\|_{H^{-N, a}_\bo},
    \end{multline}
    at which point we apply Proposition~\ref{prop:ref_corner_POS} to get the desired estimate. 
\end{proof}

In other words, the conormal regularity at $\gamma^+_\lambda(0)$ is controlled by the conormal regularity at the corner. Of course, this phenomenon is not limited to the corner and its reflection. More generally, for any $x \in \partial \Omega \setminus \widetilde{\mathcal K}$, the positive (or negative) frequency conormal regularity of $v$ at $x$ is controlled by the negative (or positive) frequency conormal regularity of $v$ at $\gamma^\mp(x)$.

Let $x_0 \in \partial \Omega \setminus \widetilde{\mathcal K}$. Fix
\begin{equation}\label{eq:rhovf}
    \rho \in C^\infty(\partial \Omega), \qquad \rho(x_0) = 0, \qquad \rho'(x_0) \neq 0.
\end{equation}
Define
\begin{equation}\label{eq:rhovfpm}
    \rho_\pm := \rho \circ \gamma^\pm_\lambda.
\end{equation}
With pullbacks and derivatives acting on one-forms, a simpler version of Lemma~\ref{lem:bvf_conj} gives
\begin{equation}\label{eq:nvf_cov}
    (\gamma^\pm)^* (\rho \partial_\theta) (\gamma^\pm)^* = \psi_\pm \rho_\pm \partial_\theta + \varphi_\pm
\end{equation}
for some $\psi_\pm, \varphi_\pm \in C^\infty(\partial \Omega)$. 
\begin{proposition}\label{prop:conorm_POS}
    Let $\mathcal J \Subset (0, 1)$ be a sufficiently small subinterval, and assume that $\Omega$ is $\lambda$-simple for all $\lambda \in \mathcal J$. Write $\omega = \lambda + i \epsilon$ for $\lambda \in \mathcal J$ and sufficiently small $\epsilon > 0$. Let $x_0 \in \partial \Omega \setminus \widetilde{\mathcal K}$ and let $\rho$ and $\rho_\pm$ be as in~\eqref{eq:rhovf} and ~\eqref{eq:rhovfpm}. Let $\chi, \widetilde \chi \in C^\infty(\partial \Omega)$ be supported in a small neighborhood of $x_0$ such that $\chi = 1$ near $x_0$ and $\widetilde \chi = 1$ on $\supp \chi$. Then for $a > -1$, we have
    \begin{equation}
        \|\Pi^\pm(\rho \partial_\theta)^k \chi v\|_{H^s} \le C \big(\sum_{j = 0}^{k - 1} \|\Pi^\pm (\rho \partial_\theta)^j \widetilde \chi v\|_{H^s} + \|\widetilde \chi g\|_{H^{s + k}} + \|\chi v\|_{H^{-N, a}_\bo} \big).
    \end{equation}
\end{proposition}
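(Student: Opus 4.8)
The plan is to replay the proof of Proposition~\ref{prop:conorm_ref} at the non-corner point $x_0$, where everything simplifies: since $x_0\notin\widetilde{\mathcal K}$, none of $x_0$, $\gamma^+_\lambda(x_0)$, $\gamma^-_\lambda(x_0)$ is a corner, so near the three relevant loci $(x_0,x_0)$, $(x_0,\gamma^+_\lambda(x_0))$ and $(x_0,\gamma^-_\lambda(x_0))$ the Schwartz kernel of $d\mathcal C_\omega$ is a sum of pullbacks of genuine pseudodifferential operators on $\mathbb S^1$ with no Heaviside cutoffs, so Lemma~\ref{lem:psido_no_corner} applies verbatim. I would induct on $k$; the case $k=0$ is exactly Proposition~\ref{prop:POS}.

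For the inductive step I would wedge a cutoff $\chi_\cu$ between $\chi$ and $\widetilde\chi$, all supported in a neighborhood of $x_0$ on which Lemma~\ref{lem:psido_no_corner} holds, so that $\chi_\cu d\mathcal C_\omega\widetilde\chi\in\Psi^0(\mathbb S^1)$ is uniformly elliptic over $\{\chi_\cu=1\}$. First I would prove, by an induction identical to the one behind \eqref{eq:conorm_commutator}, that $[\chi_\cu d\mathcal C_\omega\widetilde\chi,(\rho\partial_\theta)^k]=\sum_{j=0}^{k-1}A_j(\rho\partial_\theta)^j$ with $A_j\in\Psi^0(\mathbb S^1)$, $\WF(A_j)\subset\supp\chi_\cu$, uniformly in $\omega$. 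Feeding this and the elliptic estimate of Lemma~\ref{lem:elliptic} into the usual scheme reduces $\|\Pi^\pm(\rho\partial_\theta)^k\chi v\|_{H^s}$ to $\|(\rho\partial_\theta)^k\chi_\cu d\mathcal C_\omega\widetilde\chi v\|_{H^s}$ plus the lower-order terms $\sum_{j=0}^{k-1}\|(\rho\partial_\theta)^j\widetilde\chi v\|_{H^s}$ and a compact error $\|\chi v\|_{H^{-N,a}_\bo}$.

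To handle $(\rho\partial_\theta)^k\Pi^\pm\chi_\cu d\mathcal C_\omega\widetilde\chi v$ I would use \eqref{eq:bdr} to write $\chi_\cu d\mathcal C_\omega\widetilde\chi v=\chi_\cu g-\chi_\cu d\mathcal C_\omega(\widetilde\chi\circ\gamma^+_\lambda)v-\chi_\cu d\mathcal C_\omega(\widetilde\chi\circ\gamma^-_\lambda)v-\chi_\cu d\mathcal C_\omega\big(1-\widetilde\chi-(\widetilde\chi\circ\gamma^+_\lambda)-(\widetilde\chi\circ\gamma^-_\lambda)\big)v$ and estimate each term with Lemma~\ref{lem:psido_no_corner}: (i) $g\in\overline C^\infty(\partial\Omega)$ and $\rho\partial_\theta$ is first order, so $\|(\rho\partial_\theta)^k\chi_\cu g\|_{H^s}\lesssim\|\widetilde\chi g\|_{H^{s+k}}$; (ii) the reflection whose sign matches $\Pi^\pm$ is smoothing, by the wavefront condition of part~(2) together with Lemma~\ref{lem:Heaviside_identities}, so after $k$-fold differentiation it still contributes only $\lesssim\|v\|_{H^{-N,a}_\bo}$; (iii) the opposite reflection is the substantial term: writing it, modulo $\Psi^{-\infty}$, as $(\gamma^\mp_\lambda)^*\Pi^\mp(\chi_\cu\circ\gamma^\mp_\lambda)T^\mp_\omega(\widetilde\chi\circ\gamma^\mp_\lambda)v$, I would pull $(\rho\partial_\theta)^k$ through $(\gamma^\mp_\lambda)^*$ using \eqref{eq:nvf_cov}, turning it into a degree-$k$ polynomial in $\rho_\mp\partial_\theta$, commute past $T^\mp_\omega\in\Psi^0(\mathbb S^1)$, and bound the outcome by $\sum_{j=0}^{k}\|\Pi^\mp(\rho_\mp\partial_\theta)^j(\widetilde\chi\circ\gamma^\mp_\lambda)v\|_{H^s}+\|v\|_{H^{-N,a}_\bo}$; (iv) part~(3) makes the last term map $H^{-N,a}_\bo\to H^N$, hence again $\lesssim\|v\|_{H^{-N,a}_\bo}$ after differentiation. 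Iterating this inequality in $k$—peeling off the $\sum_{j<k}\|(\rho\partial_\theta)^j\widetilde\chi v\|$ terms one order at a time and reabsorbing the conormal derivatives produced at $\gamma^\mp_\lambda(x_0)$ by re-running the estimate there (using Proposition~\ref{prop:conorm_ref} in place of this one when $b_\lambda^{\pm1}(x_0)$ is a corner, so that $\gamma^\mp_\lambda(x_0)$ is a reflected corner)—bottoms out at the $j=0$ Sobolev estimate, which is Proposition~\ref{prop:POS}. This yields the stated bound.

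The step I expect to be the main obstacle is (iii): controlling the conormal \emph{order} when transferring across the orientation-reversing involutions. One must check that \eqref{eq:nvf_cov} turns $(\rho\partial_\theta)^k$ into a differential operator whose top-order part is a nonvanishing smooth multiple of $(\rho_\mp\partial_\theta)^k$ on the relevant support, so that each transfer around the chess-billiard orbit costs only lower-order conormal derivatives and finitely many invocations of Proposition~\ref{prop:POS}, and the iteration terminates; everything else is bookkeeping of commutators through $\omega$-uniform families of pseudodifferential operators, exactly as in Proposition~\ref{prop:conorm_ref}.
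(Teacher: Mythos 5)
Your approach matches the paper's proof closely: wedge a cutoff $\chi_\cu$ between $\chi$ and $\widetilde\chi$, establish $[\chi_\cu d\mathcal C_\omega\widetilde\chi,(\rho\partial_\theta)^k]=\sum_{j<k}A_j(\rho\partial_\theta)^j$ with $A_j\in\Psi^0$ microsupported in $\supp\chi_\cu$, feed this into the elliptic estimate of Lemma~\ref{lem:elliptic}, then decompose $\chi_\cu d\mathcal C_\omega\widetilde\chi v=\chi_\cu g - \chi_\cu d\mathcal C_\omega(\widetilde\chi\circ\gamma^+_\lambda)v-\chi_\cu d\mathcal C_\omega(\widetilde\chi\circ\gamma^-_\lambda)v-(\text{residual})$ and treat the pieces with Lemma~\ref{lem:psido_no_corner}: the same-sign reflection is annihilated by $\Pi^\pm$, the residual maps $H^{-N,a}_\bo\to H^N$, and $(\rho\partial_\theta)^k$ is pushed through the surviving pullback via~\eqref{eq:nvf_cov}, becoming a degree-$k$ differential operator in $\rho_\mp\partial_\theta$ with nonvanishing leading coefficient. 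That is precisely the paper's mechanism, and the worry you flag at the end (that~\eqref{eq:nvf_cov} produces a nondegenerate top-order $(\rho_\mp\partial_\theta)^k$ term) is exactly the point the paper addresses via $L_0=1$ in Lemma~\ref{lem:CoV} / the $\psi_{j,\pm}$ bookkeeping.

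The one step to be careful about is your closing sentence, where you "re-run the estimate at $\gamma^\mp_\lambda(x_0)$" and propagate around the chess-billiard orbit, invoking Proposition~\ref{prop:conorm_ref} at reflected corners, until things "bottom out." That global iteration cannot be closed inside this proposition: the hypothesis here is only $\lambda$-simplicity (not Morse--Smale), so the orbit of $x_0$ under $b_\lambda$ has no reason to reach a corner or a hyperbolic sink in finitely many steps. What the single-step argument actually yields, and what the paper's proof produces before the final "apply inductively" sentence, is an estimate whose right-hand side still carries the conormal derivatives $\Pi^\mp(\rho_\mp\partial_\theta)^j(\widetilde\chi\circ\gamma^\mp_\lambda)v$ at the reflected point; the internal "induction" only collapses the local tail $\sum_{j<k}\|\Pi^\pm(\rho\partial_\theta)^j\widetilde\chi v\|$ down to $j=0$, where Proposition~\ref{prop:POS} takes over. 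The orbitwise chaining you describe is what Lemma~\ref{lem:convergence_upgrade} does afterward, under the Morse--Smale hypothesis, applying Proposition~\ref{prop:conorm_POS} finitely many times and Proposition~\ref{prop:conorm_ref} at the corner reflection. So you should stop at the one-step transfer estimate and not claim to eliminate the $\gamma^\mp_\lambda(x_0)$ terms here; as written, the stated inequality (which lists only $\widetilde\chi v$ on the right) is not what the decomposition delivers, and your attempt to recover it by circling the orbit would require dynamical input not available under the stated hypotheses.
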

\begin{proof}
    Let $\chi_\cu \in C^\infty(\partial \Omega$ be such that $\chi_\cu = 1$ on $\supp \chi$ and $\widetilde \chi = 1$ on $\supp \chi_\cu$. Similar to Step 2 in the proof of Proposition~\ref{prop:conorm_ref}, we have that for every $k \in \N$, there exists $A_j \in \Psi^0(\mathbb S^1)$ with $\WF (A_j) \subset \supp \chi_\cu \times \R$ such that
    \begin{equation}
        [\chi_\cu d \mathcal C_\omega \widetilde \chi, (\rho \partial_\theta)^k] = \sum_{j = 0}^{k - 1} A_j(\rho \partial_\theta)^j. 
    \end{equation}
    It then follows from the elliptic estimate and Lemma~\ref{lem:psido_no_corner}
    \begin{equation}\label{eq:cPOS_decomp}
    \begin{aligned}
        \|\Pi^\pm(\rho \partial_\theta)^k \chi v\|_{H^s} &\lesssim \|\Pi^\pm (\chi_\cu d \mathcal C_\omega \widetilde \chi) (\rho \partial_\theta)^k v\|_{H^s} + \|\widetilde v\|_{H^{-N}} \\
        &\lesssim \|\Pi^\pm (\rho \partial_\theta)^k \chi_\cu d \mathcal C_\omega \widetilde \chi v\|_{H^s} + \sum_{j = 0}^{k - 1} \|\Pi^\pm (\rho \partial_\theta)^j \widetilde \chi v\|_{H^s} + \|\chi v\|_{H^{-N}} \\
        &\lesssim \|\Pi^\pm (\rho \partial_\theta)^k \chi_\cu d \mathcal C_\omega (\widetilde \chi \circ \gamma^+_\lambda) v\|_{H^s} +  \|\Pi^\pm (\rho \partial_\theta)^k \chi_\cu d \mathcal C_\omega (\widetilde \chi \circ \gamma^-_\lambda) v\|_{H^s}\\
        &\qquad + \|\Pi^\pm (\rho \partial_\theta)^k \chi_\cu g\|_{H^s} +  \sum_{j = 0}^{k - 1} \|\Pi^\pm (\rho \partial_\theta)^j \widetilde \chi v\|_{H^s} + \|\chi v\|_{H^{-N, a}_\bo},
    \end{aligned}
    \end{equation}
    where in the last inequality, we used (3) from Lemma~\ref{lem:psido_no_corner}. Now from (2) in Lemma~\ref{lem:psido_no_corner}, we see that 
    \begin{gather*}
        \Pi^\pm (\rho \partial_\theta)^k \chi_\cu d \mathcal C_\omega (\widetilde \chi \circ \gamma^\pm_\lambda) \in \Psi^{-\infty}(\mathbb S^1),
    \end{gather*}
    and 
    \begin{equation}
        (\gamma^\pm_\lambda)^* \Pi^\mp (\rho \partial_\theta)^k \chi_\cu d \mathcal C_\omega (\widetilde \chi \circ \gamma^\pm_\lambda) = (\gamma^\pm_\lambda)^* (\rho \partial_\theta)^k (\gamma^\pm_\lambda)^* (\chi_\cu \circ \gamma^\pm_\lambda) T^\pm_\omega (\widetilde \chi \circ \gamma^\pm)\Pi^\pm + \mathcal R_\omega^\pm
    \end{equation}
    for some $\mathcal R^\pm_\omega \in \Psi^{-\infty}(\mathbb S^1)$. Observe that by \eqref{eq:nvf_cov}, there exists $\psi_{j, \pm} \in C^\infty(\partial \Omega)$ such that 
    \[(\gamma^\pm_\lambda)^* (\rho \partial_\theta)^k (\gamma^\pm_\lambda)^* = \sum_{j = 0}^k \psi_{j, \pm} (\rho_\pm \partial_\theta)^k.\]
    Therefore, 
    \begin{equation*}
        \|\Pi^\mp (\rho \partial_\theta)^k \chi_\cu d \mathcal C_\omega (\widetilde \chi \circ \gamma^\pm_\lambda) v\|_{H^s} \lesssim \sum_{j = 1}^k \|\Pi^\pm (\rho_\pm \partial_\theta)^k (\widetilde \chi \circ \gamma^\pm)v\|_{H^s} + \|v\|_{H^{-N, a}_\bo}.
    \end{equation*}
    Combining with \ref{eq:cPOS_decomp}, we get
    \begin{multline*}
        \|\Pi^\pm(\rho \partial_\theta)^k \chi v\|_{H^s} \le C \big(\sum_{j = 1}^k \|\Pi^\pm (\rho_\pm \partial_\theta)^k (\widetilde \chi \circ \gamma^\pm)v\|_{H^s} +\sum_{j = 0}^{k - 1} \|\Pi^\pm (\rho \partial_\theta)^j \widetilde \chi v\|_{H^s} \\
        + \|\Pi^\pm (\rho \partial_\theta)^k \widetilde \chi g\|_{H^s} + \|\chi v\|_{H^{-N, a}_\bo} \big).
    \end{multline*}
    Applying this estimate to its own right hand side inductively by adjusting cutoffs and then applying Proposition~\ref{prop:POS} yields the desired estimate. 
\end{proof}

\section{Limiting absorption principle}\label{sec:LAP}
The limiting absorption principle is established using the high frequency estimates developed in Proposition~\ref{prop:global_semifredholm} and the uniqueness of problem~\eqref{eq:EBVP} in the limit $\Im \omega \to 0+$. Again we only consider the limit of $\omega$ approaching the real line from the upper half plane. The arguments for limit from the lower half plane is the same up to sign changes. Throughout the section, let $\mathcal J \Subset (0, 1)$ be such that $\Omega$ is $\lambda$-simple and $b_\lambda$ is Morse--Smale for all $\lambda \in \mathcal J$.

\subsection{Consequences of the high frequency estimate}
We first summarize the consequences of the high frequency estimate for the limit as $\omega$ approaches the real line from the upper half complex plane. Let $\mathcal Z_\lambda$ be the set of limiting indicial roots defined in~\eqref{eq:all_zeros}. 
\begin{lemma}\label{lem:convergence_upgrade}
    Let $\omega_j \to \lambda \in \mathcal J$ with $\Im \omega_j > 0$. Let $-1 < a < a' < 0$ be such that 
    \[\mathcal Z_\lambda \cap \{\R - ia'\} = \emptyset.\] Assume that $v_j \in H^{\infty, a'}_\bo$ satisfies
    \begin{equation}\label{eq:vj_limit_weak}
        \begin{gathered}
            v_j \to v_0 \quad \text{in} \quad H^{-N, a - \epsilon}_\bo \quad \text{for some} \quad N \in \N, \ 0 < \epsilon < a + 1, \\
            d \mathcal C_{\omega_j} v_j \quad \text{is bounded in} \quad H^{s, a'}_{\bo}(\partial \Omega) \quad \text{for all} \quad s \ge a + \ha. 
        \end{gathered}
    \end{equation}
    Then for all sufficiently small $\delta > 0$ and $\chi_\bo, \chi_\pm \in C^\infty(\partial \Omega;[0, 1])$ such that 
    \begin{equation*}
    \begin{gathered}
        \chi_\bo = 1 \quad \text{near} \quad \mathcal K,\qquad \supp \chi \subset \mathcal K + (-\delta, \delta), \\
        \chi_\pm = 1 \quad \text{on} \quad \Sigma^\pm_\lambda(\delta), \qquad \supp \chi_\pm \subset \Sigma^\pm_\lambda(2 \delta),
    \end{gathered}
    \end{equation*}
    we have the following improvements:
    \begin{align}
        v_j \to v_0 \quad &\text{in} \quad H^{-\ha - \beta, a}_\bo \quad \text{for all} \quad \beta > 0, \label{eq:vj_limit_strong}\\
        \chi_\bo v_j \to \chi_\bo v_0  \quad &\text{in} \quad H^{\infty, a}_\bo, \label{eq:v0_bo} \\
        \Pi^\pm(1 - \chi_\mp - \chi_\bo) v_0 &\in \mathcal A^{[a + \ha]}(\partial \Omega; N^*_\pm (\mathscr O^\mp_{\lambda} \setminus \Sigma^\mp_\lambda(\delta))) \label{eq:v0_co} 
    \end{align}
\end{lemma}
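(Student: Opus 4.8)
\textbf{Proof proposal for Lemma~\ref{lem:convergence_upgrade}.}
The plan is to bootstrap the weak hypotheses in~\eqref{eq:vj_limit_weak} using the uniform (in $\omega$) propagation estimates from Section~\ref{sec:propagation_estimates}. The central tool is the global semi-Fredholm estimate of Proposition~\ref{prop:global_semifredholm}, together with the fact that the various local estimates (Propositions~\ref{prop:POS}, \ref{prop:source}, \ref{prop:sink}, \ref{prop:ref_corner_POS}, \ref{prop:corner_POS}, and the conormal refinements Propositions~\ref{prop:conorm_POS}, \ref{prop:conorm_ref}) hold with constants locally uniform in $\lambda = \Re\omega$ and independent of $\epsilon = \Im\omega$, hence apply verbatim with $\omega$ replaced by $\omega_j$ and $v$ replaced by $v_j$, where $g$ is taken to be $g_j := d\mathcal C_{\omega_j} v_j$. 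Note each $v_j$ lies in $H^{\infty,a'}_\bo$, so it is a valid input for all these propositions, and $a'$ (rather than $a$) is the weight we feed them; since $\mathcal Z_\lambda \cap (\mathbb R - ia') = \emptyset$, the indicial-root hypothesis of Propositions~\ref{prop:corner_POS} and~\ref{prop:global_semifredholm} is satisfied at the weight $a'$, and I will work at weight $a'$ throughout and only lose to $a$ (or $a - \epsilon$) in the error terms.

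\emph{Step 1: the strong limit~\eqref{eq:vj_limit_strong}.} Apply Proposition~\ref{prop:global_semifredholm} (estimate~\eqref{eq:low_reg_semifredholm}) to $v_j$ with data $g_j$, at weight $a$ with the compact error at weight $a - \epsilon$: this gives $\|v_j\|_{H^{-\ha-\beta,a}_\bo} \le C(\|v_j\|_{H^{-N,a-\epsilon}_\bo} + \text{(norms of }g_j\text{)})$, and both terms on the right are bounded by hypothesis~\eqref{eq:vj_limit_weak}. Thus $(v_j)$ is bounded in $H^{-\ha-\beta,a}_\bo$ for every $\beta > 0$. Combined with $v_j \to v_0$ in the weaker space $H^{-N, a-\epsilon}_\bo$, an interpolation/weak-compactness argument (bounded sequence in $H^{-\ha-\beta,a}_\bo$, convergent in a weaker topology, hence convergent in every intermediate space, in particular in $H^{-\ha-\beta',a}_\bo$ for $\beta' > \beta$) yields~\eqref{eq:vj_limit_strong}; relabeling $\beta$ gives the claim for all $\beta > 0$. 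In particular $v_0 \in H^{-\ha-\beta,a}_\bo$ for all $\beta > 0$.

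\emph{Step 2: b-smoothness at the corners~\eqref{eq:v0_bo}.} Now iterate the corner propagation estimate of Proposition~\ref{prop:corner_POS}: for a symmetric cutoff $\chi_\bo$ near $\mathcal K$ and $\widetilde\chi_\bo$ slightly larger, and for each $s \ge a + \ha$, Proposition~\ref{prop:corner_POS} bounds $\|\chi_\bo v_j\|_{H^{s,a}_\bo}$ by $\|\Pi^+(\widetilde\chi_\bo \circ \gamma^+_\lambda) v_j\|_{H^s}$ plus $\|\widetilde\chi_\bo g_j\|_{H^{s,a}_\bo}$ plus a compact error $\|v_j\|_{H^{-N,a-\delta}_\bo}$. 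The term at $\gamma^+_\lambda(\mathcal K)$ is controlled, via finitely many applications of Propositions~\ref{prop:POS} and~\ref{prop:ref_corner_POS} as in Steps 2--4 of the proof of Proposition~\ref{prop:global_semifredholm}, by the smoothness of $g_j$ away from $\mathcal K$ and the source estimate Proposition~\ref{prop:source}, which all give uniform bounds (the key point is that, under the Morse--Smale condition, a neighborhood of $\gamma^+_\lambda(\mathcal K)$ maps into the source region $\Sigma^+_\lambda(\delta)$ under forward iteration of $b_\lambda$ without re-hitting $\mathcal K$). Hence $(\chi_\bo v_j)$ is bounded in $H^{s,a}_\bo$ for every $s$, i.e. in $H^{\infty,a}_\bo$. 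Together with $v_j \to v_0$ in $H^{-\ha-\beta,a}_\bo$ from Step 1, the same interpolation argument gives $\chi_\bo v_j \to \chi_\bo v_0$ in $H^{\infty,a}_\bo$, which is~\eqref{eq:v0_bo}.

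\emph{Step 3: conormal regularity on the corner trajectories~\eqref{eq:v0_co}.} Finally, away from $\mathcal K$ and away from the source/sink regions $\Sigma^\pm_\lambda(\delta)$, the set $\mathscr O^\mp_\lambda \setminus \Sigma^\mp_\lambda(\delta)$ is finite; near each such point the relevant conormal direction is $N^*_\pm$, coming from the trajectory emanating from a corner. To prove~\eqref{eq:v0_co} I would apply the conormal propagation estimates Propositions~\ref{prop:conorm_POS} and~\ref{prop:conorm_ref}: starting from the b-conormal regularity of $\chi_\bo v_0$ at the corners established in Step 2 (which controls all $(\theta\partial_\theta)^j$-derivatives), Proposition~\ref{prop:conorm_ref} propagates it to $N^*_-$-conormal regularity of $\Pi^-$ at the reflected corner $\gamma^+_\lambda(\kappa)$, and then Proposition~\ref{prop:conorm_POS} propagates $N^*_\pm$-conormal regularity forward along the finitely many iterates $b^k_\lambda$, as long as we stay away from $\mathcal K$ and from the periodic sets. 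Each point of $\mathscr O^\mp_\lambda \setminus \Sigma^\mp_\lambda(\delta)$ is reached in finitely many such steps, and the compact errors converge to zero by Step 1 (and Lemmas~\ref{lem:ddC_easy}, \ref{lem:ddC_heaviside} for the $g_j$ terms). This gives $\Pi^\pm(1 - \chi_\mp - \chi_\bo) v_0 \in \mathcal A^{[a+\ha]}(\partial\Omega; N^*_\pm(\mathscr O^\mp_\lambda \setminus \Sigma^\mp_\lambda(\delta)))$; one has to be slightly careful that the finite set of ``bad'' points in the conormal space is exactly the orbit data appearing in the definition of $\mathscr O^\mp_\lambda$, and that the two families $\Pi^+$, $\Pi^-$ pick out opposite-ends of the corner trajectories, matching the $\pm$/$\mp$ bookkeeping in~\eqref{eq:v0_co}.

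\emph{Main obstacle.} The technically delicate part is Step 3: tracking conormal (as opposed to merely Sobolev) regularity through the corner and along the reflection, because at the corner the natural regularity is b-conormal in $(r,\tau)$-coordinates while along the propagating trajectory it is ordinary conormal with respect to a conormal direction $N^*_\pm$. Matching these two descriptions — i.e. checking that the b-conormal smoothness supplied by Step 2 is exactly what the hypothesis of Proposition~\ref{prop:conorm_ref} (conormal derivatives $(\theta\partial_\theta)^j$) requires, and that the output feeds correctly into Proposition~\ref{prop:conorm_POS} — together with keeping the finitely many applications uniform in $j$ and verifying that the limiting object lands in the precise conormal space (with the correct submanifolds and frequency signs), is where the real work lies. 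The Sobolev parts (Steps 1 and 2) are essentially immediate consequences of Propositions~\ref{prop:global_semifredholm} and~\ref{prop:corner_POS} plus weak compactness.
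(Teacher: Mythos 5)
Your overall strategy mirrors the paper's proof closely: apply the global semi-Fredholm estimate plus a compactness argument to upgrade the weak convergence, use the corner elliptic/propagation estimate (effectively eq.~\eqref{eq:corner_into_source}) to get b-smoothness at $\mathcal K$, and iterate the conormal propagation estimates (Propositions~\ref{prop:conorm_POS}, \ref{prop:conorm_ref}) to land in the conormal space along $\mathscr O^\mp_\lambda$. The one slip is in Step 1: you announce you will work at weight $a'$ (where $\mathcal Z_\lambda\cap(\R - ia')=\emptyset$ is assumed) but then apply Proposition~\ref{prop:global_semifredholm} at weight $a$, which would require the indicial-root condition on $\R - ia$ that the lemma does not grant; moreover, interpolating $H^{-\ha-\beta,a}_\bo$ against $H^{-N,a-\epsilon}_\bo$ does not return a space with weight exactly $a$. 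The correct fix — and what the paper does — is to obtain boundedness in $H^{-\ha-\beta',a'}_\bo$ and then use the compact embedding $H^{-\ha-\beta',a'}_\bo \hookrightarrow H^{-\ha-\beta,a}_\bo$ together with the uniqueness of the limit in the weaker space.
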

\begin{Remark}
    We note here at $\mathscr O^\pm_\lambda \setminus \Sigma^\pm_\lambda(\delta)$ is a finite set of points, so the conormal distributions with respect to this set is well-defined.
\end{Remark}
\begin{proof}
    1. Let $0 < \beta' < \beta$. By Proposition~\ref{prop:global_semifredholm}, it follows from~\eqref{eq:vj_limit_weak} that $\|v_j\|_{H^{-\ha - \beta', a'}_\bo}$ is bounded. The embedding $H^{-\ha - \beta', a'}_\bo(\partial \Omega) \hookrightarrow H^{-\ha - \beta, a}_\bo(\partial \Omega)$ is compact, therefore there exists a subsequence of $v_j$ that converges to $v_0$ in $H^{-\ha - \beta, a}_\bo(\partial \Omega)$. Since $v_j \to v_0$ in $H^{-N, a - \epsilon}(\partial \Omega)$, the whole sequence must then converge to $v_0$ in $H^{-\ha - \beta, a}_\bo(\partial \Omega)$, which gives~\eqref{eq:vj_limit_strong}.

    \noindent
    2. Observe that~\eqref{eq:corner_into_source} gives
    \begin{equation}\label{eq:corner_improvement}
        \|\chi_\bo v_j\|_{H^{s, a'}_\bo} \lesssim \|d \mathcal C_{\omega_j} v_j\|_{H^{s, a'}_{\bo}} + \|v_j\|_{H^{-N, a}_\bo}
    \end{equation}
    for all $s > a' + \ha$. In particular, this shows that $\|\chi_\bo v_j\|_{H^{s, a'}_\bo}$ is bounded. A similar compactness argument from step 1 then shows that $\chi_\bo v_j \to \chi_\bo v_0$ in $H^{s, a}_\bo$ for all $s \gg 1$, which gives~\eqref{eq:v0_bo}.

    \noindent
    3. It remains to verify~\eqref{eq:v0_co}. First observe that if $\chi \in \overline C^\infty(\Omega)$ is such that $\supp \chi$ does not contain any points in $\mathscr O_\lambda^\pm$ or $\mathcal K$, then~\eqref{eq:easy_into_source} applies and we see that 
    \[\|\chi v_j\|_{H^s} \lesssim \|d \mathcal C_{\omega_j} v_j\|_{H^{s, a'}_\bo} + \|v_j\|_{H^{-N, a}_\bo}\]
    for every $s \ge a' + \ha$. Again by a similar compactness argument from Step 1, we conclude that $\chi v_0 \in H^s$ for all $s$. 

    Now consider $\theta_0 \in \mathscr O_\lambda^\pm(\theta)$, and assume that $b^{\mp n}(\theta_0) = \kappa$ for some $n \in \N$ where $\kappa$ is a type $(+, +)$ corner. The other corner types are covered by taking reflections of the domain. Observe that in the case $\theta_0 \in \mathscr O_\lambda^+(\theta)$ and $b^{-n}(\theta_0) = \kappa$, then
    \[b^{-(n - 1)}(\theta_0) = \gamma^+(\kappa).\]
    Therefore, applying Proposition~\ref{prop:conorm_POS} iteratively $2(n - 1)$ times and then applying Proposition~\ref{prop:conorm_ref}, we find that there exists $\chi \in \overline C^\infty(\partial \Omega)$ such that $\chi = 1$ near $\theta_0$ and 
    \begin{equation}\label{eq:co_improvement}
    \begin{aligned}
        \|\Pi^+ (\rho_{\mathrm{ref}} \partial_\theta)^k \chi v_j\|_{H^{a' + \ha}} &\lesssim \sum_{j = 0}^k \|(\theta \partial_\theta)^j \chi_\bo v_j\|_{H^{a' + \ha}} + \|d \mathcal C_{\omega_j} v_j\|_{H^{a' + \ha + k, a'}_\bo} + \|v_j\|_{H^{-N, a}_\bo} \\
        &\lesssim \|\chi_\bo v\|_{H^{a' + \ha + k, a'}_\bo} + \|d \mathcal C_{\omega_j} v_j\|_{H^{a' + \ha + k, a'}_\bo} + \|v_j\|_{H^{-N, a}_\bo} \\
        &\lesssim \|d \mathcal C_{\omega_j} v_j\|_{H^{a' + \ha + k, a'}_{\bo}} + \|v_j\|_{H^{-N, a}_\bo}
    \end{aligned}
    \end{equation}
    The last inequality above follows from~\eqref{eq:corner_improvement}. Since~\eqref{eq:co_improvement} holds for all $k$ and the right hand side is bounded, we use the same compactness argument as in Step 1 to conclude that 
    \begin{equation}\label{eq:local_co_est}
        \Pi^+ \chi v_0 \in \mathcal A^{[a - \ha]}(\partial \Omega; \theta_0).
    \end{equation}

    In the case that $\theta_0 \in \mathscr O_{\lambda}^-(\theta)$ and $b^n(\theta_0) = \kappa$, we instead have
    \[\gamma^- \circ b^{n - 1}(\theta_0) = \gamma^+(\kappa).\]
    Applying Proposition~\ref{prop:conorm_POS} iteratively $2n - 1$ times and then applying Proposition~\ref{prop:conorm_ref} in a similar computation to \ref{eq:co_improvement}, we find that there exists $\chi \in \overline C^\infty(\partial \Omega)$ such that $\chi = 1$ near $\theta_0$ and 
    \[\|\Pi^+ (\rho_{\mathrm{ref}} \partial_\theta)^k \chi v_j\|_{H^{a' + \ha}} \lesssim \|d \mathcal C_{\omega_j} v_j\|_{H^{a' + \ha + k, a'}_{\bo}} + \|v_j\|_{H^{-N, a}_\bo}.\]
    Again following a similar compactness argument to Step 1, we find that $\Pi^- \chi v_0 \in \mathcal A^{[a - \ha]}(\partial \Omega; \theta_0)$. This completes the proof of~\eqref{eq:v0_co}.
\end{proof}

\subsection{Uniqueness of the limiting problem} 
We first need some lemmas about dilation invariant distributions. Restrict the multiplication map $M_\sigma: \R \to \R$ to the interval $(-1, 1)$ and assume that $\sigma \in (0, 1)$. Then we have a map 
\begin{equation}\label{eq:local_dynamics}
    M_\sigma: (-1, 1) \to (-1, 1) \quad \text{by} \quad x \mapsto \sigma x, \quad 0 < \sigma < 1
\end{equation}
The point is that $M_\sigma$ provides a linearized local model of the chess billiard map near the attractors $\Sigma_\lambda^+$. Studying distributions on $(0, 1)$ invariant by $M_\sigma$ will then easily transfer to results on $\partial \Omega$. 

To study such distributions, we first define the \textit{flux} of $u \in \overline{\mathcal D}'([-1, 1])$. Assume that 
\begin{equation}\label{eq:dil_inv_condition}
    M_\sigma^* u = u \quad \text{and} \quad \widetilde \chi u \in \bar{H}^{\ha +}(\R) \quad \text{for every} \quad \widetilde \chi \in \CIc([-1, 1] \setminus \{0\}).
\end{equation}
For such $u$, define
\begin{equation}\label{eq:flux_def}
    \mathbf F(u) := i \int_{-1}^1 (M_\sigma^* \chi - \chi) \bar u \, du
\end{equation}
where $\chi \in \CIc((- \sigma, \sigma))$ and $\chi = 1$ near $0$. Here, the integral is understood as a pairing with a distributional 1-form. In particular, $du \in H^{-\ha +}(\R; T^* \R)$, so the pairing is well-defined. Note that for every $\widetilde \chi \in \CIc((-\sigma, \sigma) \setminus \{0\})$ we have,
\begin{equation}
    \int_{-1}^1 (M_\sigma^* \widetilde \chi - \widetilde \chi) \bar u \, du = \int (M_\sigma^* \widetilde \chi)(\bar u \, du - M_\sigma^* (\bar u du)) = 0,
\end{equation}
where the last equality follows from~\eqref{eq:dil_inv_condition}. Therefore, the definition of the flux is independent of the choice of cutoff. Note that the above holds even if $\widetilde \chi = \indic_{[-\sigma, \sigma]} - \chi$. Then by Lemma~\ref{eq:cutting}, we see that
\begin{equation}
    \mathbf F(u) = i \int_{[-1, -\sigma] \cup [\sigma, 1]} \bar u \, du
\end{equation}
where the integral should be interpreted as $\bar u$ paired with $\indic_{[-1, -\sigma] \cup [\sigma, 1]} du$. We also mention now that the flux is not limited to dilation maps $M_\sigma$. The definition~\ref{eq:flux_def} can be easily adjusted to make sense for any map with $0$ as an attracting fixed point, and the definition would be independent of the choice of cutoffs again.

The following lemma is essentially the same as \cite[Lemma 6.1]{DWZ}. The key difference is that we have to allow for $u$ with lower regularity away from $0$.  
\begin{lemma}\label{lem:local_const}
    Let $M_\sigma$ be as in~\eqref{eq:local_dynamics}. Let $u \in \mathcal D'((-1, 1))$ be such that $M_\sigma^* u = u$ and $\WF(u) \subset \{(x, \xi) \in T^*(-1, 1) : \xi > 0\}$. Furthermore, assume that 
    \begin{equation}\label{eq:reg_assumption}
        \widetilde \chi u \in H^{\ha +}(\R) \quad \text{for every} \quad \widetilde \chi \in \CIc([-1, 1] \setminus \{0\}).
    \end{equation}
    Then $\mathbf F(u) \ge 0$ implies that $u$ is constant. 
\end{lemma}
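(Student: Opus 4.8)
The plan is to adapt the argument of \cite[Lemma 6.1]{DWZ} using the Mellin transform, exploiting the dilation-invariance of $u$. First I would pass to logarithmic coordinates $t = \log|x|$ on each of the two half-intervals $(0,1)$ and $(-1,0)$; there the map $M_\sigma$ becomes the translation $t \mapsto t - \log(1/\sigma)$, and $u$ (times the appropriate density factor) becomes a periodic distribution in $t$, periodic with period $T := \log(1/\sigma) > 0$. Away from $t = -\infty$, i.e. away from $x = 0$, the regularity hypothesis~\eqref{eq:reg_assumption} gives that $u$ lies in $H^{\ha+}$ locally, so its Fourier series in $t$ (equivalently, the Mellin transform evaluated on the vertical line dictated by the density weight) has coefficients $c_n$ indexed by $n \in \Z$, corresponding to exponents $x^{i n \cdot 2\pi/T}$. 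The wavefront condition $\WF(u) \subset \{\xi > 0\}$ forces all the negative-frequency contributions (as $x \to 0^+$ on the right interval, and the corresponding side on the left) to vanish, so only finitely many — in fact only nonnegative-frequency — modes survive near $0$; combined with $H^{\ha+}$ regularity away from $0$ this pins $u$ down to a finite sum $u = \sum_{n \ge 0} c_n x_+^{2\pi i n/T} + (\text{similar on } x < 0)$, with each term separately dilation-invariant.

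Next I would compute the flux $\mathbf F(u)$ on such a finite sum. Using the alternative formula $\mathbf F(u) = i \int_{[-1,-\sigma]\cup[\sigma,1]} \bar u\, du$ and the explicit modal expansion, each cross term $c_m \bar c_n$ with $m \neq n$ integrates to something one controls, while the diagonal terms $|c_n|^2$ contribute with a definite sign proportional to $n$ (this is the standard ``energy flux of a plane wave'' computation: $i \bar u\, du$ for $u = x^{i\alpha}$ picks out $\alpha |u|^2 \frac{dx}{x}$-type contributions). More precisely, I expect $\mathbf F(u) = c\sum_{n} n |c_n|^2 + (\text{left-interval analogue})$ for a positive constant $c$ depending only on $\sigma$ and the normalization, after the off-diagonal terms telescope away by dilation-invariance exactly as in the cutoff-independence computation already carried out in the excerpt. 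Hence $\mathbf F(u) \ge 0$ is automatic, but the hypothesis $\mathbf F(u) \ge 0$ is being used in the \emph{opposite} direction in the source estimate — here I should double check the sign conventions; the conclusion I want is that $\mathbf F(u) \ge 0$ combined with the structure forces $c_n = 0$ for all $n \neq 0$, leaving $u$ constant. Concretely, if the flux of each positive mode is nonpositive (which is what the wavefront orientation should give, matching how this lemma feeds the uniqueness argument), then $\mathbf F(u) \ge 0$ forces every $c_n$ with $n \ge 1$ to vanish, and the $n=0$ mode is the constant.

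I would organize the write-up as: (i) reduce to the two half-intervals and pass to logarithmic/periodic coordinates; (ii) use~\eqref{eq:reg_assumption} and the Paley--Wiener-type characterization (Lemma~\ref{lem:mellin_polyhom}, or a direct Fourier-series argument) together with the wavefront hypothesis to show $u$ is a finite sum of pure dilation-homogeneous modes $x_\pm^{2\pi i n/T}$; (iii) compute $\mathbf F$ on this sum and read off that $\mathbf F(u)\ge 0$ kills all nonconstant modes. Steps (i) and (iii) are routine given the cutoff-independence already established in the excerpt. The main obstacle is step (ii): unlike in \cite{DWZ}, $u$ is only assumed to be in $H^{\ha+}$ away from $0$ rather than smooth, so I cannot immediately say the modal expansion terminates — I need to argue that the surviving exponents near $0$ are forced by the $\{\xi > 0\}$ wavefront condition to have nonnegative imaginary part in the Mellin variable (no blow-up at $0$), while the $H^{\ha+}$ bound away from $0$ controls the tail of the Fourier series in $t$, and these two together leave only a finite collection; making this rigorous requires carefully matching the wavefront-set statement in $(x,\xi)$ coordinates with decay of Fourier coefficients in the periodic $t$-variable, which is the technical heart of the lemma.
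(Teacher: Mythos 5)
Your plan is a genuinely different route from the paper's. The paper's proof first extends $u$ to a dilation-invariant distribution $v$ on all of $\R$, shows $\hat v$ vanishes on $(-\infty, 0)$ by combining the wavefront condition with scaling, then passes to the Fourier--Laplace transform $V(z)$ holomorphic in the upper half-plane and applies the Cauchy--Pompeiu formula over the annular region between semicircles of radius $\sigma$ and $1$ to get $\mathbf{F}(u) = -2\int_\Gamma |\partial_z V|^2 \le 0$. That contour wraps \emph{around} $0$, so any jump or $\{0\}$-supported contribution to $u$ is handled automatically. Your plan — log coordinates, Fourier series in $t$, and a per-mode flux identity — avoids complex analysis entirely and is closer in spirit to a spectral decomposition. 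It can be made to work, and for steps (i) and (iii) your account is essentially right: in log coordinates $M_\sigma$ becomes translation by $T = \log(1/\sigma)$, $u$ restricted to each half-interval is $T$-periodic with local $H^{\ha+}$ regularity, and a direct Parseval computation gives $\mathbf{F}(u)_\pm = \mp 2\pi\sum_n n|c_n^\pm|^2$ on the right/left pieces, with the wavefront hypothesis killing negative-index coefficients on the right and positive-index on the left. Each contribution is then $\le 0$, and $\mathbf{F}(u)\ge 0$ kills every nonconstant mode. So your stated sign uncertainty is resolvable; the key point is that both half-interval fluxes are nonpositive under the hypotheses, so $\mathbf{F}(u)\ge 0$ does carry information.

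There are two concrete gaps. First, your claim that the $H^{\ha+}$ regularity ``pins $u$ down to a finite sum'' is false: $H^{\ha+}$ periodic regularity only gives $\sum_n |c_n|^2\langle n\rangle^{1+} < \infty$, which allows infinitely many nonzero coefficients. The claim is fortunately not needed — the absolute convergence of $\sum |n||c_n|^2$ under the $H^{\ha+}$ bound is enough to justify the termwise flux identity, and that is what you actually use — but you should not state it. Second, even after concluding that each restriction $u|_{(0,1)}$ and $u|_{(-1,0)}$ is constant, you have only shown $u$ is piecewise constant up to a distribution supported at $\{0\}$; to conclude $u$ is constant you must separately rule out (a) a jump between the two constants and (b) a nontrivial $M_\sigma$-invariant distribution at $\{0\}$. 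Item (a) is ruled out by the wavefront hypothesis, since a jump produces wavefront in both $\xi>0$ and $\xi<0$ over $\{0\}$. Item (b) is ruled out because distributions supported at $\{0\}$ are finite linear combinations of $\delta_0^{(k)}$ and none of these is $M_\sigma$-invariant for $\sigma\in(0,1)$; the paper makes exactly this observation at the end of its proof. Your write-up should address both explicitly, since these are precisely what the paper's complex-analytic contour handles for free.
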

\begin{proof}
    1. We first extend $u$ to all of $\R$. Fix 
    \begin{equation*}
        \psi \in \CIc((\sigma, \sigma) \setminus [-\sigma^2, \sigma^2]), \quad \sum_{k \in \Z} (M_\sigma^k)^* \psi = 1 \quad \text{for} \quad x \neq 0.
    \end{equation*}
    % Note that there exists $x_1, \dots, x_j$, $j < \infty$, such that 
    % \begin{equation}\label{eq:psiu_reg}
    %     \psi u \in I^{-\frac{3}{4} - \delta}((-1, 1); \{x_1, \dots, x_j\}) \subset H^{\ha +}(\R_+). 
    % \end{equation}
    Define
    \begin{equation*}
        v = \sum_{k \in \Z} (M_\sigma^k)^*(\psi u) \in \mathcal D'(\R \setminus \{0\}).  
    \end{equation*}
    Observe that as elements of $\mathcal D'((-1, 1) \setminus \{0\})$, $u = v$, so we can extend $v$ to an element of $\mathcal D'(\R)$ by setting $v|_{(-1, 1)} = u$. Note that 
    \[v \in \mathscr S'(\R), \quad M_\sigma^* v = v, \quad \WF(v) \subset \{(x, \xi) \in T^* \R : \xi > 0\}.\]
    Furthermore, with the regularity assumption in~\eqref{eq:reg_assumption}, we have
    \[\mathbf F(v) = \mathbf F (u) = i\int_{[-1, -\sigma] \cup [\sigma, 1]} \overline v \, dv\ge 0.\]
    It suffices to show that the extension $v$ is constant. 

    \noindent
    2. Next, we claim that $\hat v(\xi) = 0$ for $\xi < 0$. Decompose $v$ by 
    \[v = v_1 + v_2, \quad v_2 := \sum_{k = 0}^\infty (M_\sigma^k)^*(\psi u), \quad v_1 := v - v_2.\]
    $v_1$ is compactly supported, and $\WF(v_1) \subset \{\xi > 0\}$. Therefore, 
    \[\hat v_1(\xi) = \mathcal O(|\xi|^{-\infty}) \quad \text{as}\quad  \xi \to -\infty.\]
    Similarly, we also see that $\widehat \psi u(\xi) = \mathcal O(|\xi|^{-\infty})$ as $\xi \to -\infty$. Therefore, for every $N \in \N$, 
    \begin{equation*}
        |\hat v_2(\xi)| = \sum_{k = 0}^\infty \sigma^{-k} \left|(\widehat{\psi u})(\sigma^{-k} \xi) \right| \lesssim \sum_{k = 0}^\infty \sigma^{-k} |\sigma^{-k} \xi|^{-N} \lesssim |\xi|^{-N}
    \end{equation*}
    Therefore, $\hat v(\xi) = \mathcal O(|\xi|^{-\infty})$ as $\xi \to -\infty$. Now using invariance by $M_\sigma$, we see that for $\xi < 0$ and $N > 1$, 
    \[|\hat v(\xi)| = |\sigma^{-k} \hat v(\sigma^{-k}\xi)| \lesssim \sigma^{(N - 1) k} |\xi|^{-N} \]
    where the hidden constant is independent of $k$. Taking $k \to \infty$, it follows that 
    \begin{equation}
        \hat v(\xi) = 0 \quad \text{for} \quad \xi < 0.
    \end{equation} 

    \noindent
    3. Since $\hat v \in \mathscr S'(\R)$ is supported on $[0, \infty)$, the (inverse) Fourier-Laplace transform then exists and is holomorphic on the upper half plane:
    \begin{equation*}
        V(z) := \frac{1}{2 \pi} \hat v(e^{i z \bullet}), \quad \Im z > 0.
    \end{equation*}
    Furthermore, $V(\bullet + i\epsilon) \to v$ in $\mathscr S'(\R)$ as $\epsilon \to 0+$ and $V(\bullet + i \epsilon)$ is uniformly bounded in $H^{\ha+}(\gamma_0)$ for all $\epsilon > 0$ where $\gamma_0 = [-1, -\sigma] \cup [\sigma, 1]$. Therefore,
    \begin{equation}\label{eq:v_convergence}
        V(\bullet + i\epsilon) \to v  \quad \text{in} \quad H^{\ha+}(\gamma_0)
    \end{equation}
    Let $\gamma_1$ and $\gamma_\sigma$ be positively oriented semicircle contours in the upper half plane centered at $0$ of radius $1$ and $\sigma$ respectively, see Figure~\ref{fig:contour}. Observe that $V(z) = V(\sigma z)$, so
    \[\int_{\gamma_1} \overline{V(z)} \partial_z V(z) \, dz = \int_{\gamma_1} \sigma \overline{V(\sigma z)} (\partial_z V)(\sigma z)\, dz = \int_{\gamma_\sigma} \overline{V(z)} \partial_z V(z) \, dz.\]
    Let $\Gamma$ be the region bound by $\partial \Gamma = \gamma_1 + \gamma_0 - \gamma_\sigma$. Therefore, by the Cauchy--Pompeiu formula (see \cite[(3.1.9)]{H1}), we find that 
    \begin{align*}
        F(v) &= i \int_{\gamma_0} \overline{V(z)} \partial_z V(z) \, dz  =  i\int_{\partial \Gamma} \overline{V(z)} \partial_z V(z) \, dz \\
        &= -2 \int_{\gamma} \partial_{\overline z}(\overline{V(z)} \partial_z V(z))\, dx dy = -2 \int_{\Gamma} |\partial_z V(z)|^2\, dx dy \le 0,
    \end{align*}
    where we used~\eqref{eq:v_convergence} to justify integrating over $\gamma_0$.  We assumed that $\mathbf F(v) \ge 0$, so $\partial_z V(z) = 0$ in $\Gamma$. Therefore $v$ is constant on $\gamma_0$, and by invariance under $M_\sigma$, $v$ must be constant on $\R \setminus \{0\}$. Therefore $\supp(v - \mathrm{const.})$ is contained in $\{0\}$ for some constant, so it must be a linear combination of $\delta^{(k)}_0$. However, no such distributions are invariant by $M_\sigma$, so $v$ is constant everywhere.  
\end{proof}

\begin{figure}
    \centering
    \includegraphics[scale = 0.7]{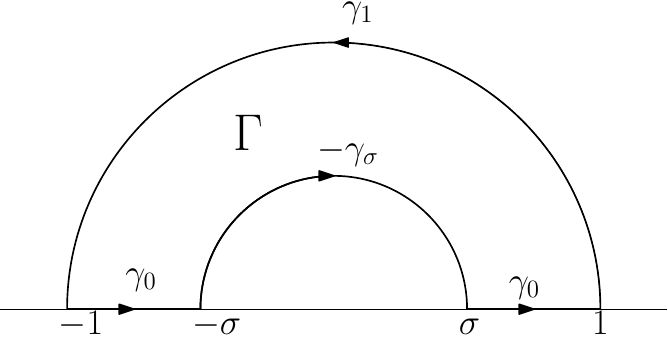}
    \caption{Picture of the region $\Gamma$ and the contour $\partial \Gamma = \gamma_1 + \gamma_0 - \gamma_\sigma$}
    \label{fig:contour}
\end{figure}

The map $M_\sigma$ in Lemma~\ref{lem:local_const} serves as a local model of the dynamics near periodic points in our Morse--Smale system. It implies that under the same regularity assumptions, the only distributions invariant under the chess-billiard map are constant:
\begin{lemma}\label{lem:global_const}
    Assume that $\Omega$ is Morse--Smale with respect to $\lambda \in (0, 1)$. Let $w \in \overline{\mathcal D}'(\partial \Omega)$. Assume that for all $\chi_\pm \in C^\infty(\partial \Omega)$ with $\chi_\pm = 1$ near $\Sigma_\lambda^\pm$ and sufficiently small support, 
    \begin{equation}\label{eq:w_hyp}
        \begin{gathered}
            \mathbf x^* ((1 - \chi_+ - \chi_-) w) \in H^{\ha +} (\mathbb S^1), \\
            \WF(\chi_\pm w) \subset \{(\theta, \xi) \in T^* \partial \Omega : \theta \in \supp \chi_\pm, \ \xi > 0\}
        \end{gathered}
    \end{equation}
    for some $b$-parameterization $\mathbf x$. Then $b_\lambda^* w = w$ implies that $w$ is constant. 
\end{lemma}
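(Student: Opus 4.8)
The plan is to reduce the statement to the local model treated in Lemma~\ref{lem:local_const} by working near the attracting and repulsive periodic cycles and using the flux as the quantity that is transported by $b_\lambda$. First I would introduce the flux $\mathbf F$ on $\partial\Omega$ near each periodic point $\theta_p\in\Sigma_\lambda$: in the linearizing $b$-parameterization from Lemma~\ref{lem:linear_coords}, $b_\lambda^n$ (with $n$ the period) is conjugate to $M_\sigma$ with $\sigma=\partial_\theta b_\lambda^n(\theta_p)<1$ at an attracting point and $\sigma^{-1}<1$ at a repelling point, so the definition \eqref{eq:flux_def} makes sense verbatim (or with $b_\lambda^n$ in place of $M_\sigma$, which as noted after \eqref{eq:flux_def} gives a cutoff-independent quantity). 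The regularity hypothesis \eqref{eq:w_hyp} is exactly what is needed for the pairing $\bar w\,dw$ to be defined near $\Sigma_\lambda$, since there $w$ has a one-sided wavefront set and $dw\in H^{-\ha+}$ locally.

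The key computation is that $b_\lambda^*w=w$ forces the total flux through $\partial\Omega$ to vanish, and then a sign argument pins down the sign of the flux at sinks versus sources. Concretely, pick a cutoff $\chi$ that is $1$ on a large portion of $\partial\Omega$ and transition it across finitely many fundamental domains of $b_\lambda$ between the attracting and repelling cycles; invariance $b_\lambda^*(\bar w\,dw)=\bar w\,dw$ away from $\Sigma_\lambda$ (valid because $w$ is smooth enough there by the first line of \eqref{eq:w_hyp} after pushing forward to $\mathbb S^1$) makes the telescoping sum of fluxes collapse, giving
\[
\sum_{\theta_p\in\Sigma^+_\lambda}\mathbf F_{\theta_p}(w)\;=\;\sum_{\theta_p\in\Sigma^-_\lambda}\mathbf F_{\theta_p}(w).
\]
Orientation considerations (the repelling cycle is an attracting cycle for $b_\lambda^{-1}$, under which $\bar w\,dw\mapsto \overline{b_\lambda^{-1*}w}\,d(b_\lambda^{-1*}w)$ and the wavefront hypothesis is preserved) show that the right side is $\le 0$ while Lemma~\ref{lem:local_const}, applied at each attracting $\theta_p$ via the conjugacy to $M_\sigma$, shows each term on the left is $\ge 0$ \emph{unless} $w$ is already locally constant there — in fact the proof of Lemma~\ref{lem:local_const} gives $\mathbf F(u)=-2\int_\Gamma|\partial_z V|^2\le 0$, so combined with $\mathbf F(u)\ge 0$ one gets $\mathbf F=0$ and $V$ holomorphic-constant, hence $w$ constant near each attracting point. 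Thus $\mathbf F_{\theta_p}(w)=0$ for every $\theta_p\in\Sigma^+_\lambda$, and the same argument run for $b_\lambda^{-1}$ gives $w$ constant near each repelling point as well.

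Finally I would globalize: $w$ is now locally constant near $\Sigma_\lambda=\Sigma^+_\lambda\cup\Sigma^-_\lambda$, and on $\partial\Omega\setminus\Sigma_\lambda$ the map $b_\lambda$ has no other minimal sets, so every point has forward orbit accumulating on $\Sigma^+_\lambda$ and backward orbit accumulating on $\Sigma^-_\lambda$. Since $w$ is genuinely a distribution and $b_\lambda$-invariant, the value of (the constant) $w$ near $\Sigma^+_\lambda$ must agree with that near $\Sigma^-_\lambda$ — otherwise $w-c$ would be supported on the complement of a neighborhood of, say, $\Sigma^+_\lambda$, be $b_\lambda$-invariant, and be forced (by the first line of \eqref{eq:w_hyp} together with propagation of singularities/the smoothing that occurs along the dynamics, exactly as in the argument of Lemma~\ref{lem:local_const} Step~3) to vanish. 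Hence $w$ is a single constant on all of $\partial\Omega$. The main obstacle I expect is the bookkeeping in the globalization and sign-tracking step: making rigorous that the telescoped flux identity holds with the correct orientations across all the fundamental domains, and that there is no ``extra'' $b_\lambda$-invariant distribution hiding in the complement of a neighborhood of the full periodic set — this is where one must use the Morse--Smale structure (finiteness and hyperbolicity of $\Sigma_\lambda$, no wandering invariant distributions) together with the one-sided wavefront hypothesis, rather than any soft argument.
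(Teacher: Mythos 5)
Your proposal assembles the right ingredients — the flux from \eqref{eq:flux_def}, Lemma~\ref{lem:local_const} applied in linearizing coordinates from Lemma~\ref{lem:linear_coords}, and cutoff-independence — and the paper's proof is built from exactly these. But the architecture and, more importantly, the sign bookkeeping differ in ways that matter.

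The paper does not go through per-periodic-point fluxes, telescoping, and then a separate globalization. It defines just two quantities
\[
\mathbf F_\pm(w) = i\int_{\partial\Omega}\bigl((g^{\pm1})^*\chi_\pm - \chi_\pm\bigr)\bar w\,dw,\qquad g=b_\lambda^n,
\]
with $\chi_\pm$ localized near $\Sigma^\pm_\lambda$, and argues by contradiction. Assuming $w$ is not constant, Lemma~\ref{lem:local_const} applied at the attracting cycle (with $g$) and at the repelling cycle (with $g^{-1}$, which is attracting there, and with the fixed one-sided wavefront hypothesis) forces $\mathbf F_+(w)<0$ and $\mathbf F_-(w)>0$. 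The ``telescoping'' across fundamental domains that you describe is what the single choice $\chi_+:=1-(g^{-1})^*\chi_-$ accomplishes in one stroke: the cutoff-independence of $\mathbf F$ then gives $\mathbf F_+(w)=\mathbf F_-(w)$, which contradicts the opposite strict signs. In particular there is no need for an explicit globalization step; the same fact (that constancy near $\Sigma^\pm_\lambda$ forces global constancy, via $b_\lambda$-invariance and the $H^{1/2+}$ regularity off $\Sigma_\lambda$) is used implicitly, in the contrapositive, to justify the strictness of the inequalities.

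The genuine gap in your proposal is precisely the step you flag yourself: the sign of the flux at the repelling cycle. You first assert each term on the attracting side is ``$\ge 0$ unless locally constant'' — this misstates Lemma~\ref{lem:local_const}, whose conclusion is that $\mathbf F(u)\ge 0$ \emph{implies} constancy, i.e.\ (from the computation $\mathbf F = -2\int_\Gamma|\partial_zV|^2$) $\mathbf F\le 0$ always, with equality iff constant. You then appeal to ``combined with $\mathbf F(u)\ge 0$'' without giving any source for that inequality, and you claim the repelling side is $\le 0$ purely from an ``orientation consideration'' that is never spelled out. As it stands, both sides of your telescoped identity are shown $\le 0$, which gives no contradiction and no vanishing; the argument collapses exactly where you anticipated. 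To repair it one has to track carefully how the flux at a repelling point, written relative to $g$ rather than to the attracting map $g^{-1}$, acquires the opposite sign, so that the cutoff-independence identity becomes a genuine contradiction (or, equivalently, forces all fluxes to vanish). This is the nontrivial content the paper handles by the specific choice $\chi_+ = 1-(g^{-1})^*\chi_-$ and the way $\mathbf F_\pm$ are set up.

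One smaller remark: the pieces of your globalization step are essentially correct, but can be compressed. Once $w$ is constant near $\Sigma^+_\lambda$, $b_\lambda$-invariance plus $\mathbf x^*((1-\chi_+-\chi_-)w)\in H^{1/2+}(\mathbb S^1)\hookrightarrow C^0$ and Lemma~\ref{lem:pullback_continuity} propagate that constant value to all of $\partial\Omega\setminus\Sigma^-_\lambda$; any residual distribution supported on the finite set $\Sigma^-_\lambda$ would have two-sided wavefront set, contradicting \eqref{eq:w_hyp}. This is shorter than invoking a separate ``no wandering invariant distribution'' statement.
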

\begin{Remark}
    Observe that the regularity condition in~\ref{eq:w_hyp} is independent of the choice of b-parameterization $\mathbf x$  by Lemma~\ref{lem:pullback_continuity}. The point here is that this allows us to characterize the regularity of $w$ through the corners rather than just up to the corners. 
\end{Remark}
\begin{proof}
     Let $n$ be the minimal period of $b_\lambda$ and put $g:= b^n_\lambda$. According to Lemma~\ref{lem:linear_coords}, we may choose $\mathbf x$ so that $b^n(\lambda)$ in $\Sigma^\pm_\lambda(\delta)$ for some sufficiently small $\delta > 0$. Let $\chi_\pm \in C^\infty(\partial \Omega; [0, 1])$ be such that $\chi_\pm = 1$ near $\Sigma^\pm_\lambda$ and
     \begin{equation}\label{eq:chi_pm_supp}
         \supp \chi_\pm \subset \Sigma^\pm_\lambda(\delta/M)
     \end{equation}
     for some
     \[M \ge \max \{g'(\theta_+), g'(\theta_-)^{-1}: \theta_\pm \in \Sigma^\pm_\lambda \}.\]
     Define the fluxes
     \begin{equation}
     \mathbf F_\pm(w) := i \int_{\partial \Omega} ((g^{\pm 1})^* \chi_\pm - \chi_\pm) \bar w\, dw. 
     \end{equation}
     We argue by contradiction and assume that $w$ is not constant. By assumption, $(g^{\pm 1})^* w = w$, and we also have the wavefront set condition~(\ref{eq:w_hyp}) for $\chi_\pm w$. Therefore, by Lemma~\ref{lem:local_const}, we must have 
     \[\mathbf F_+(w) < 0, \qquad \mathbf F_-(w) > 0.\]
     On the other hand, $\mathbf F$ is independent of the choice of cutoffs, and we may choose $\chi_+ := 1 - (g^{-1})^* \chi_-$. This implies $\mathbf F_+ (w) = \mathbf F_-(w)$, an immediate contradiction. 
\end{proof}

Now we deduce the desired uniqueness of the limiting problem using the model case above.

\begin{lemma}\label{lem:uniqueness}
    Let $\lambda \in (0, 1)$ be such that $\Omega$ is Morse--Smale with respect to $\lambda$. Let $v \in \overline{\mathcal D}'(\partial \Omega; T^* \partial \Omega)$. Assume that there exists $\chi_\pm \in \overline C^\infty(\partial \Omega)$ such that $\chi_\pm = 1$ near $\Sigma^\pm_\lambda$ and 
    \begin{equation}\label{eq:v_unique_wf}
        \WF(\chi_\pm v) \subset \{(\theta, \xi) \in T^* \partial \Omega: \theta \in \supp \chi_\pm, \ \mp \xi > 0\}.
    \end{equation}
    Furthermore, assume that for some $a \in (0, 1)$, and $s \ge a + \ha$, 
    \begin{equation}
        \chi v \in H^{s, a}_\bo(\partial \Omega; T^* \partial \Omega) \quad \text{for every} \quad \chi \in \overline C^\infty(\partial \Omega), \ \supp \chi \cap (\Sigma^+_\lambda \cup \Sigma^-_\lambda) = \emptyset,
    \end{equation}
    and that $\mathrm{sing\, supp\,} v$ does not contain any noncorner characteristic points. Then 
    \begin{equation}
        \mathcal C_{\lambda + i0} v = 0, \quad \supp (R_{\lambda + i0} \mathcal I v) \subset \overline{\Omega} \implies v = 0. 
    \end{equation}
\end{lemma}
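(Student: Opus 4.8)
The plan is to show that $u := (R_{\lambda+i0}\mathcal I v)|_\Omega$ vanishes, which via the single-layer construction forces $v = \mathcal N_\lambda u = 0$. Since $\mathcal C_{\lambda+i0}v = 0$ and $\supp(R_{\lambda+i0}\mathcal I v)\subset\overline\Omega$, the distribution $U := R_{\lambda+i0}\mathcal I v$ is a compactly supported distribution with $U|_{\partial\Omega\setminus\mathcal K} = 0$ and $P(\lambda)U = -\mathcal I v$ away from $\partial\Omega$; so $U$ restricted to the exterior is a solution of the exterior problem with zero boundary data, and $u = U|_\Omega$ solves $P(\lambda)u = 0$ in $\Omega$ with $u|_{\partial\Omega} = 0$. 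The first step is to establish enough regularity on $u$: using the assumed wavefront/conormal control on $v$ (localized near $\Sigma^\pm_\lambda$ via \eqref{eq:v_unique_wf} and $H^{s,a}_\bo$ away from the periodic points) together with the mapping properties of $S_{\lambda\pm i0}$ in Lemmas~\ref{lem:S_map_prop_1} and~\ref{lem:S_map_prop_2}, one shows $u$ lies in $\bar H^{a+3/2}$ near the corners and has the expected conormal structure, in particular is regular enough along the (non-periodic) characteristic rays that the energy identity below makes sense.

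The core of the argument is an energy/flux identity for the real operator $P(\lambda) = 4L^+_\lambda L^-_\lambda$. Multiplying $P(\lambda)u = 0$ by $\bar u$ and integrating over $\Omega$ — or better, integrating $L^-_\lambda(\bar u\, L^+_\lambda u)$ — and using $u|_{\partial\Omega}=0$, all the genuinely interior terms drop and one is left with a boundary pairing. The key point is that this pairing is precisely (a multiple of) the flux $\mathbf F$ of the trace of $u$, or rather of a derived quantity $w$ on $\partial\Omega$ built from $L^+_\lambda u|_{\partial\Omega}$ and $L^-_\lambda u|_{\partial\Omega}$, which is invariant under the chess billiard map $b_\lambda$ because the light rays $\ell^\pm$ are preserved. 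Concretely: the restriction $w$ of a suitable component of $du$ along the boundary satisfies $b_\lambda^* w = w$, and the $i\epsilon$-limit forces a sign on $\mathbf F_\pm(w)$ at the attractor/repeller — exactly the mechanism of Lemma~\ref{lem:local_const} and Lemma~\ref{lem:global_const}. The regularity hypotheses and the wavefront condition \eqref{eq:v_unique_wf} are exactly what is needed to apply Lemma~\ref{lem:global_const}: the $H^{\ha+}$ control away from $\Sigma^\pm_\lambda$ (including \emph{through} the corners, which is where the $\bar H^{a+3/2}$ corner regularity from Step 1 with $a \in (0,1)$ buys us $s > \ha$) and the microlocal one-sidedness near the periodic points. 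Lemma~\ref{lem:global_const} then yields $w \equiv \mathrm{const}$.

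Once $w$ is constant, a short separate argument shows the constant must be zero (e.g. integrating $w$ against $d\theta$ over $\partial\Omega$, or evaluating at a corner where $u$ has the polyhomogeneous expansion of Proposition~\ref{prop:polyhom_EU} with leading exponent $\mathfrak l_\lambda$ of positive real part, so $w$ vanishes there). With $L^+_\lambda u|_{\partial\Omega} = 0$ and $u|_{\partial\Omega}=0$ one gets Cauchy data of $u$ vanishing on $\partial\Omega$ for the operator $P(\lambda)$; feeding this back into the representation $u = R_{\lambda+i0}f - S_{\lambda+i0}v$ with $f = 0$ shows $v = \mathcal N_\lambda u = 0$. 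Equivalently, vanishing Cauchy data plus the transport structure of $P(\lambda)$ along characteristics (together with the hypothesis that $u$ is nonsingular at noncorner characteristic points, so characteristics can be integrated) propagates $u \equiv 0$ throughout $\Omega$.

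\textbf{Main obstacle.} The delicate point is Step 1 combined with the flux computation near the corners: one must check that the boundary pairing defining $\mathbf F(w)$ is genuinely well-defined and that $w$ is regular enough ($H^{\ha+}$) through the corners so that Lemma~\ref{lem:global_const} applies — this is where the interplay between the weight $a \in (0,1)$, the conormal corner expansion \eqref{eq:u_expansion}, and the restriction-to-boundary Lemma~\ref{lem:Lu_restriction} has to be handled carefully, since naively $L^\pm_\lambda u$ could be too singular at $\mathcal K$. The resolution is that $\mathfrak l_{\lambda,\kappa}$ has positive real part (hence $u$ and its relevant derivatives decay at the corner), so $w$ extends across $\mathcal K$ with the needed regularity; verifying this rigorously, and matching it against the hypothesis on $v$, is the crux.
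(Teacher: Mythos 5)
Your overall strategy (use the $b$-invariance of a boundary quantity together with the flux argument of Lemmas~\ref{lem:local_const} and~\ref{lem:global_const}) is the right spirit, but several of the mechanisms you propose are either wrong or would not close the argument.

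\textbf{The energy identity is not the right mechanism.} You claim that multiplying $P(\lambda)u = 0$ by $\bar u$, integrating over $\Omega$, and using $u|_{\partial\Omega}=0$ makes ``all the genuinely interior terms drop'' and leaves you with the flux $\mathbf F$. For a hyperbolic operator this fails: integrating $L^-_\lambda(\bar u\, L^+_\lambda u)$ gives $\int_\Omega (L^-_\lambda\bar u)(L^+_\lambda u)$, a non-sign-definite expression (this is the usual obstruction to a naive energy method for the wave operator). In the paper there is no interior integration by parts at all: the flux $\mathbf F$ is defined purely intrinsically on the boundary via~\eqref{eq:flux_def}, and the sign control comes from the Fourier--Laplace/Cauchy--Pompeiu computation inside Lemma~\ref{lem:local_const}, not from integrating $\bar u\, P(\lambda)u$ over $\Omega$.

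\textbf{The choice of $w$ is off.} You propose building $w$ from $L^\pm_\lambda u|_{\partial\Omega}$ (i.e.\ a derivative of $u$). The paper instead uses the explicit solution decomposition $u = u_+\circ\ell^+ - u_-\circ\ell^-$ and sets $w_\pm := (\ell^\pm)^*u_\pm$. The condition $\mathcal C_{\lambda+i0}v = 0$ gives $w_+ = w_-$ on $\partial\Omega$ (this is where the boundary condition enters), and then $(\gamma^\pm)^*w_\pm = w_\pm$ gives $b_\lambda^*w = w$ for $w := w_+ = w_-$. This is the object that plugs directly into Lemma~\ref{lem:global_const}, whose flux pairs $\bar w$ against $dw$ — a derived $1$-form would not match this formula. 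Also, once $w$ is constant, $u_\pm$ are the same constant, so $u = u_+ - u_- = 0$ is immediate; the ``short separate argument showing the constant is zero'' that you sketch is not needed and in fact would not be the right formulation.

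\textbf{You are missing the step from $u=0$ to $v=0$.} You conclude ``feeding this back into $u = R_{\lambda+i0}f - S_{\lambda+i0}v$ shows $v = \mathcal N_\lambda u = 0$,'' but $v$ is not assumed to be the Neumann data of some solution — it is an abstract distribution, and $S_{\lambda+i0}$ is not a priori injective. What the paper actually does after showing $u = U|_\Omega = 0$: first observe that $\supp U \subset \overline\Omega$ together with $u = 0$ forces $\supp U\subset\partial\Omega$; then show $U$ must be supported on the \emph{characteristic} points, using that at a noncharacteristic boundary point $P(\lambda)$ has a nonvanishing $\partial_{y_2}^2$ coefficient (this is a normal-ellipticity/transport argument that inductively kills the coefficients $u_k$ of $U = \sum u_k(y_1)\delta_0^{(k)}(y_2)$); and finally use the assumed $H^{-1/2+}$ regularity of $v$ near characteristic points (corners included) to rule out the only surviving delta-type contributions and conclude $v = 0$. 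This last block is nontrivial and cannot be replaced by ``propagates $u\equiv 0$.''

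Your Step 1 (establishing enough regularity on $u$ from the hypotheses on $v$ and the mapping properties of $S_{\lambda\pm i0}$) and the identification of Lemma~\ref{lem:global_const} as the key ingredient are correct and match the paper. The gaps are in how you enter that lemma and in the endgame.
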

\begin{proof}
    1. Put $U := R_{\lambda + i0} \mathcal I v \in \mathcal D'(\R^2)$. Note that we can recover $v$ from $U$ by 
    \[P(\lambda) U = \mathcal I v,\]
    so the plan is to show $U = 0$. We first show that $\supp U \subset \partial \Omega$. Let $u = U|_{\Omega^\circ}$. Recall the notation for characteristic points $x_{\min}^\pm$ and $x_{\max}^\pm$ from Definition~\ref{def:lambda_simple}, and put
    \[\ell^\pm_{\min} := \ell^\pm(x_{\min}^\pm) = \min_{x \in \partial \Omega} \ell^\pm(x), \quad \ell^\pm_{\max} := \ell^\pm(x_{\max}^\pm) = \max_{x \in \partial \Omega} \ell^\pm(x), \quad \]
    Note that because $P(\lambda) u = 4L^+_\lambda L^-_\lambda u = 0$, there exists $u_\pm \in \overline{\mathcal D}'((\ell^\pm_{\min}, \ell^\pm_{\max}))$ such that 
    \[u(x) = u_+(\ell^+(x)) - u_-(\ell^-(x)).\]
    The behavior of $u_\pm$ up to the end point s $\ell^\pm_{\min}$ and $\ell^\pm_{\max}$ can be easily characterized. If a characteristic point $x^\pm_{\min}$ or $x^\pm_{\max}$ is not a corner, then $u$ is smooth up to the boundary in a neighborhood of that characteristic point. This means that $u_\pm$ is smooth up to the respective $\ell^\pm_{\min}$ or $\ell^\pm_{\max}$. On the other hand, say $x^+_{\min}$ is a corner. Then it follows from Lemma \ref{lem:S_map_prop_1} that $u$ lies in $\bar H^{a + \frac{3}{2}}(\Omega)$ in a neighborhood of $x^+_{\min}$. This means that for all $\chi^+_{\min} \in \CIc([\ell^-_{\min}, \ell^-_{\max}))$ supported in a sufficiently small neighborhood of $\ell^-_{\min}$, we have
    \begin{equation}\label{eq:u_pm_corner}
        \chi^+_{\min} u_\pm \in \bar H^{\ha + }([\ell^-_{\min}, \ell^-_{\max}]).
    \end{equation} 
    Analogous statements clearly hold for when $x^-_{\min}$ or $x^\pm_{\max}$ are corners. Now we characterize the behavior of $u_\pm$ on the interior of $(\ell^\pm_{\min}, \ell^\pm_{\max})$. If $\chi \in \CIc ((\ell^\pm_{\min}, \ell^\pm_{\max}))$ is supported away from $(\ell^\pm)^{-1}(\Sigma^\pm)$, then it follows from Lemmas~\ref{lem:S_map_prop_1} and \ref{lem:S_map_prop_2} that 
    \begin{equation}\label{eq:u_pm_interior}
        \chi u_\pm \in \bar H^{\ha +}([\ell^\pm_{\min}, \ell^\pm_{\max}]).
    \end{equation}
    If $\chi \in \CIc ((\ell^\pm_{\min}, \ell^\pm_{\max}))$ is supported in a sufficiently small neighborhood of $(\ell^\pm)^{-1}(\Sigma^\pm)$, then it follows from Lemma~\ref{lem:S_map_prop_2} and~\eqref{eq:v_unique_wf} that 
    \begin{equation}\label{eq:u_pm_wf}
        \WF(\chi u_\pm) \subset \{(x, \xi) \in T^*(\ell^\pm_{\min}, \ell^\pm_{\max}) : x \in \supp \chi, \ \xi > 0\}.
    \end{equation}
    Let 
    \[w_\pm := (\ell^\pm)^* u_\pm.\]
    Note that $(\gamma^\pm)^* w_\pm = w_\pm$ and 
    \[w_+ - w_- = (R_{\lambda + i0} \mathcal I v)|_{\partial \Omega} = \mathcal C_{\lambda + i0} v = 0.\]
    Put $w := w_+ = w_-$, and we see that $b^* w = w$. It follows from~\eqref{eq:u_pm_corner} and~\eqref{eq:u_pm_interior} that for every $\chi_\pm \in \overline C^\infty(\partial \Omega)$ with $\chi_\pm = 1$ near $\Sigma^\pm_\lambda$ and sufficiently small support, we have
    \[\mathbf x^*((1 - \chi_+ - \chi_-) w) \in H^{\ha +}(\mathbb S^1)\]
    for any b-parameteriztaion $\mathbf x$. It follows from~\ref{eq:u_pm_wf} that
    \[\WF(\chi_\pm w) \subset \{(0, \xi) \subset T^* \partial \Omega: \theta \in \supp \chi_\pm, \xi > 0\}.\]
    Therefore, $w$ satisfies the hypothesis of Lemma~\ref{lem:global_const}, so $w$ is constant, and so $u = 0$. 

    \noindent
    2. By Step 1, we see that $U$ is supported on $\partial \Omega$. Now we show that $U$ must be supported on the characteristic points. Let $x_0 \in \partial \Omega$ be noncharacteristic. Let $V \subset \R^2$ be a small neighborhood of $x_0$. Pick smooth coordinates $(y_1, y_2)$ centered at $x_0$ so that $\partial \Omega \cap V = \{(y_1, y_2) \in V: y_2 = 0\}$. Since $x_0$ is noncharacteristic, we may assume that $V$ is a sufficiently small neighborhood so that
    \[P(\lambda) = \sum_{|\alpha| \le 2} c_\alpha(y) \partial_y^\alpha, \quad c_\alpha \in C^\infty, \quad c_{(0, 2)}\  \text{is nonvanishing}.\]
    In the $y$ coordinates, $U$ takes the form $U|_V = \sum_{k = 0}^K u_k(y_1) \delta_0^{(k)}(y_2)$ for some $u_k \in \mathcal D'(\R)$. Note that $(P(\lambda)u)|_{V} = \tilde v(y_1) \delta_0(y_2)$ for some $\tilde v \in \mathcal D'(\R)$, and 
    \begin{equation*}
        (P(\lambda)u)|_{V} = c_{(0, 2)} u_K (y_1) \delta_0^{(K + 2)}(y_2) + \sum_{k = 1}^{K - 1} \tilde u_k(y_1) \delta_0^{(k)}.
    \end{equation*}
    for some $\tilde u_k \in \mathcal D'(\R)$. Since $c_{(0, 2)}$ is nonvanishing, we see that $u_K = 0$. Inductively, we then see that $u|_V = 0$. Therefore $U$ is supported the characteristic points, and $\mathcal I v = P(\lambda) U$ therefore must also be supported on the characteristic points. However, $v$ is in $H^{-\ha +}$ near the characteristic points (including the corners), so we must have $v = 0$. 
\end{proof}

\subsection{Boundary data analysis}
We first establish a limiting absorption principle for the boundary data. This allows us to then use the single layer potential to obtain to full limiting absorption principle of Theorem~\ref{thm:spectral}. Let $f \in \CIc(\Omega)$. Fix an open interval $\mathcal J \subset (0, 1)$ such that every $\lambda \in \mathcal J$ satisfies the Morse--Smale condition. Let $u_\omega\in H^1_0(\Omega)$ be the unique solution to the boundary value problem
\begin{equation}\label{eq:Pu=f_reprise}
    P(\omega) u_\omega = f, \qquad u_\omega|_{\partial \Omega} = 0, \qquad \omega \in \mathcal J + i (0, \infty).
\end{equation}
The existence and uniqueness of $u_\omega$ were established in Proposition \ref{prop:polyhom_EU}. We use the same notation from Proposition~\ref{prop:polyhom_EU} and recall that near a corner $\kappa$, we have the polyhomogeneous expansion
\begin{equation*}
    u_\omega(r, \tau) \sim \sum_{k = 1}^\infty r^{\mathfrak{l}_\omega k} w_k(\tau), \quad w_k \in C^\infty([-\alpha_-, \alpha_+]), \quad w_k(-\alpha_-) = w_k(\alpha_+) = 0
\end{equation*}
near $\beta^{-1}(\kappa) \subset \widetilde \Omega$ where $(r, \tau)$ are the blowup coordinates given in~\eqref{eq:good_corner_coords} and
\begin{equation*}
    \mathfrak{l}_{\omega, \kappa} = \mathfrak l_{\lambda, \kappa} + \mathcal O(\epsilon), \quad \mathfrak l_{\lambda, \kappa} =\frac{2 \pi i}{i \pi - \log \alpha_{\kappa}(\lambda)}, \quad \omega = \lambda + i\epsilon \in \mathcal J + i(0, \infty).
\end{equation*}
We consider the boundary data
\begin{equation}\label{eq:neumann_data_reprise}
    v_\omega := \mathcal N_\omega u_\omega
\end{equation}
where $\mathcal N_\omega$ is defined in~\eqref{eq:neumann_def}. In particular, observe that by the mapping property in~\ref{eq:neumann_def},
\begin{equation}\label{eq:v_apriori_reg}
    v_\omega \in H^{\infty, a'}_\bo \quad \text{for any} \quad -1 < a' < \min_{\kappa \in \mathcal K} \Re \mathfrak l_{\lambda, \kappa} - 1,
\end{equation}
but not uniformly so as $\omega$ approaches the real line. We now characterize the limiting behavior. 
\begin{lemma}\label{lem:bd_LAP}
    Let $\omega_j \to \lambda \in \mathcal J$, $\Im \omega_j \in (0, \epsilon_0)$ and $\Re \omega_j \in (\lambda - \epsilon_0, \lambda + \epsilon_0)$ for a sufficiently small $\epsilon_0$. Let $a \in (-1, 0)$ be such that 
    \begin{equation}
        -1 < a < \min_{\kappa \in \mathcal K} \Re(\mathfrak l_{\lambda, \kappa}) - 1.
    \end{equation}
   Then 
    \begin{equation}
        v_{\omega_j} \to v_{\lambda + i0} \quad \text{in} \quad H^{-\ha - \beta, a}_{\bo}(\partial \Omega; T^* \partial \Omega) \quad \text{as} \quad j \to \infty
    \end{equation}
    for all $\beta > 0$.
    
    Furthermore, we have the following regularity improvements. For all sufficiently small $\delta > 0$ and $\chi_\bo, \chi_\pm \in C^\infty(\partial \Omega;[0, 1])$ such that 
    \begin{equation*}
    \begin{gathered}
        \chi_\bo = 1 \quad \text{near} \quad \mathcal K,\qquad \supp \chi \subset \mathcal K + (-\delta, \delta), \\
        \chi_\pm = 1 \quad \text{on} \quad \Sigma^\pm_\lambda(\delta), \qquad \supp \chi_\pm \subset \Sigma^\pm_\lambda(2 \delta),
    \end{gathered}
    \end{equation*} 
    we have
    \begin{equation}\label{eq:traj_conorm}
    \begin{gathered}
        \Pi^\pm (1 - \chi_\mp - \chi_\bo) v_{\lambda + i0} \in \mathcal A^{[a + \ha]} (\partial \Omega; N^*_\pm(\mathscr O^\mp_\lambda \setminus \Sigma_\lambda^\mp(\delta))), \\
        \chi_\bo v_{\lambda + i0} \in H^{\infty, a}_{\bo}.
    \end{gathered}
    \end{equation}
    In fact, $v_{\lambda + i0}$ is the unique distribution such that~\eqref{eq:traj_conorm} holds and 
    \[\mathcal C_{\lambda + i0} v_{\lambda + i0} = (R_{\lambda + i0} f)|_{\partial \Omega}.\]
\end{lemma}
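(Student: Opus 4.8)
\textbf{Proof plan for Lemma~\ref{lem:bd_LAP}.}
The plan is to treat the boundary data sequence $v_{\omega_j}$ as an instance of the abstract convergence mechanism of Lemma~\ref{lem:convergence_upgrade} and then to invoke the uniqueness statement of Lemma~\ref{lem:uniqueness} to pin down the limit. First I would verify the hypotheses of Lemma~\ref{lem:convergence_upgrade}. Fix $a < a' < 0$ with both below $\min_\kappa \Re \mathfrak l_{\lambda,\kappa} - 1$ and with $\mathcal Z_\lambda \cap (\R - ia') = \emptyset$ (possible since $\mathcal Z_\lambda$ is discrete and we may perturb $a'$). By~\eqref{eq:v_apriori_reg}, $v_{\omega_j} \in H^{\infty, a'}_\bo$, uniformly in $j$ once $\epsilon_0$ is small, because the polyhomogeneous expansion of $u_{\omega_j}$ from Proposition~\ref{prop:polyhom_EU} has leading exponent $\mathfrak l_{\omega_j} = \mathfrak l_{\lambda,\kappa} + \mathcal O(\epsilon)$, and the Neumann data operator~\eqref{eq:neumann_def} drops the weight by one. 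From~\eqref{eq:Cv_reduction} we have $d\mathcal C_{\omega_j} v_{\omega_j} = g_{\omega_j} = d(R_{\omega_j} f)|_{\partial\Omega\setminus\mathcal K}$, which lies in $\overline C^\infty(\partial\Omega) \subset H^{s,a'}_\bo$ for all $s$, uniformly in $j$ (Lemma~\ref{lem:fs_holomorphic} gives smoothness of $R_\omega f$ up to the real line). A priori boundedness of $v_{\omega_j}$ in some fixed $H^{-N, a-\epsilon}_\bo$ plus a subsequence extraction gives a weak limit $v_0$; by~\eqref{eq:fundamental_2} and the convergence $E_{\omega_j} \to E_{\lambda+i0}$ from Lemma~\ref{lem:fs}, any weak limit point solves the limiting boundary equation. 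Then Lemma~\ref{lem:convergence_upgrade} upgrades the convergence to $H^{-\ha-\beta, a}_\bo$ and yields~\eqref{eq:v0_bo}, \eqref{eq:v0_co}, which are exactly the claimed regularity improvements~\eqref{eq:traj_conorm}, once one identifies the conormal set $N^*_\pm(\mathscr O^\mp_\lambda \setminus \Sigma^\mp_\lambda(\delta))$ produced there with the trajectories-out-of-corners.

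Next I would establish uniqueness, which simultaneously shows the whole sequence (not just a subsequence) converges. Suppose $v_0$ and $v_0'$ both satisfy~\eqref{eq:traj_conorm} and $\mathcal C_{\lambda+i0} v = (R_{\lambda+i0}f)|_{\partial\Omega}$. Their difference $v := v_0 - v_0'$ satisfies $\mathcal C_{\lambda+i0} v = 0$. To apply Lemma~\ref{lem:uniqueness} I need: (i) the wavefront-set condition~\eqref{eq:v_unique_wf} near $\Sigma^\pm_\lambda$, (ii) $H^{s,a}_\bo$ regularity away from $\Sigma^\pm_\lambda$, (iii) $\operatorname{sing\,supp} v$ avoids noncorner characteristic points, and (iv) $\operatorname{supp}(R_{\lambda+i0}\mathcal I v) \subset \overline\Omega$. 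Condition (ii) and the avoidance of noncorner characteristic points follow from the conormality statement~\eqref{eq:v0_co} in Lemma~\ref{lem:convergence_upgrade} together with the fact that $\mathscr O^\pm_\lambda$ accumulates only at $\Sigma^\pm_\lambda$; condition (i) follows because the wavefront set of $\chi_\pm v_{\omega_j}$ lies in the appropriate half by the source/sink structure (Propositions~\ref{prop:source} and~\ref{prop:sink}), and this half-line condition passes to the limit. The support condition (iv) is where the construction $v_\omega = \mathcal N_\omega u_\omega$ with $u_\omega \in H^1_0(\Omega)$ genuinely matters: the single layer potential $S_\omega v_\omega = R_\omega \mathcal I v_\omega$ equals $R_\omega f - u_\omega$ by~\eqref{eq:fundamental_2}, which is supported in $\overline\Omega$ up to the contribution of $R_\omega f$; more precisely, using that the interior solution vanishes outside $\Omega$ one shows $R_{\lambda+i0}\mathcal I v_0$ is supported in $\overline\Omega$ in the limit. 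Granting (i)--(iv), Lemma~\ref{lem:uniqueness} gives $v = 0$, hence $v_0 = v_0'$, and a standard argument (every subsequence has a further subsequence converging to the same limit) promotes this to convergence of the full sequence.

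The main obstacle, I expect, is the verification of the support condition~\eqref{eq:v_unique_wf} and of item (iv) at the level of the \emph{limit} rather than along the approximating sequence. For finite $\epsilon$, $u_{\omega_j} \in H^1_0(\Omega)$ makes $R_{\omega_j}\mathcal I v_{\omega_j} = R_{\omega_j} f - \indic_\Omega u_{\omega_j}$ manifestly supported in $\overline\Omega$; but in the hyperbolic limit one must check that no mass ``leaks out'' of $\overline\Omega$, i.e.\ that $R_{\lambda+i0}\mathcal I v_{\lambda+i0}$ is still supported in $\overline\Omega$. This requires combining the convergence $v_{\omega_j} \to v_{\lambda+i0}$ in $H^{-\ha-\beta,a}_\bo$ with continuity of the single layer operator $S_{\lambda\pm i0}$ established in Lemmas~\ref{lem:S_map_prop_1} and~\ref{lem:S_map_prop_2}, together with the observation that $R_{\omega_j}\mathcal I v_{\omega_j}$ restricted to $\R^2\setminus\overline\Omega$ is exactly $R_{\omega_j}f$ there, which converges nicely. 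A secondary technical point is ensuring that the conormal regularity~\eqref{eq:v0_co} along $\mathscr O^\mp_\lambda$ is strong enough to guarantee that $v_0$ is $H^{-\ha+}$ near each characteristic point including the corners, as needed at the end of the proof of Lemma~\ref{lem:uniqueness}; this is where one uses $a < \min_\kappa \Re \mathfrak l_{\lambda,\kappa} - 1$ and the persistence-of-conormal-regularity propagation (Propositions~\ref{prop:conorm_POS} and~\ref{prop:conorm_ref}) already packaged inside Lemma~\ref{lem:convergence_upgrade}.
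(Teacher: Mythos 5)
Your plan correctly identifies the two ingredients (Lemma~\ref{lem:convergence_upgrade} for the upgrade of weak to strong convergence and Lemma~\ref{lem:uniqueness} for pinning down the limit), but it has a genuine gap at the very first step: the claimed ``a priori boundedness of $v_{\omega_j}$ in some fixed $H^{-N,a-\epsilon}_\bo$'' does not hold a priori, and neither the polyhomogeneous expansion of Proposition~\ref{prop:polyhom_EU} nor the regularity class~\eqref{eq:v_apriori_reg} gives it. Those only say each $v_{\omega_j}$ \emph{belongs} to $H^{\infty,a'}_\bo$; the coefficients in the expansion and hence the norms $\|v_{\omega_j}\|_{H^{-N,a-\epsilon}_\bo}$ could blow up as $\Im\omega_j\to 0^+$, and if the limiting absorption principle failed they would. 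Without a uniform bound you cannot extract a weakly convergent subsequence, so Lemma~\ref{lem:convergence_upgrade} never applies. The semi-Fredholm estimate of Proposition~\ref{prop:global_semifredholm} also does not close this, because its right-hand side still contains a $\|v\|_{H^{-N,a-\delta}_\bo}$ term.

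The paper's proof spends its entire first step establishing exactly this bound by contradiction: assume $\|v_{\omega_j}\|_{H^{-\ha-\beta,a}_\bo}\to\infty$, rescale $\tilde v_j = v_{\omega_j}/\|v_{\omega_j}\|$ so that $d\mathcal C_{\omega_j}\tilde v_j\to 0$, use compactness to extract a convergent subsequence, apply Lemma~\ref{lem:convergence_upgrade} to that subsequence, and then invoke Lemma~\ref{lem:uniqueness} to conclude $\tilde v_0 = 0$, contradicting $\|\tilde v_j\|=1$. In other words, the uniqueness lemma is needed \emph{twice}: once (on the rescaled sequence) to get boundedness, and again (on the original sequence's limit points, your second step) to get convergence of the whole sequence. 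Your proposal only has the second application. If you add the rescaling/contradiction step up front, the rest of your argument is sound and matches the paper's; your discussion of the support condition and the conormal regularity near corners is also consistent with what the paper does in Steps~1--2 of its proof.
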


\begin{proof}
    1. First, recall from~\eqref{eq:Cv_reduction} that $v_{\omega_j}$ satisfies
    \begin{equation}\label{eq:dC_bdunique}
        d\mathcal C_{\omega_j} v_{\omega_j} = d(R_{\omega_j} f)|_{\partial \Omega}.
    \end{equation}
    The right hand side of~\eqref{eq:dC_bdunique} is bounded and converges in $H^{\infty, 0 -}_\bo(\partial \Omega; T^* \partial \Omega)$ as $j \to \infty$. Let $\mathcal Z_{\lambda}$ be the limiting indicial roots defined in~\eqref{eq:all_zeros}. For sufficiently small $\epsilon_0$, there exists $a < a' < \min_{\kappa \in \mathcal K} \Re(\mathfrak l_{\lambda, \kappa}) - 1$ such that
    \[\mathcal Z_{\lambda'} \cap \R - ia' = \emptyset \quad \text{for all} \quad \lambda' \in (\lambda - \epsilon_0, \lambda + \epsilon_0).\]
    Note that from~\eqref{eq:v_apriori_reg}, $v_{\omega_j} \in H^{\infty, a'}_\bo$ for all $j$, although not necessarily uniformly. 
    
    We first show $v_{\omega_j}$ is uniformly bounded in $H^{-\ha - \beta, a}_\bo(\partial \Omega;T^* \partial \Omega)$ for any $\beta > 0$, and we proceed by contradiction. We assume by way of contradiction that upon passing to a subsequence, $\|v_{\omega_j}\|_{H^{-\ha - \beta, a}_\bo} \to \infty$. Define the rescaled sequence 
    \begin{equation}\label{eq:vj_rescale}
        \tilde v_j = \frac{v_{\omega_j}}{\|v_{\omega_j}\|_{H^{-\ha - \beta, a}_\bo}}.
    \end{equation}
    Then it follows from \eqref{eq:dC_bdunique} that
    \begin{equation}\label{eq:dCv_rescale_limit}
        d \mathcal C_{\omega_j} \tilde v_j \to 0 \quad \text{in} \quad H^{N, a'}_\bo(\partial \Omega; T^* \partial \Omega),
    \end{equation}
    for all $N \gg 1$. Moreover, $\tilde v_j$ is bounded in $H^{-\ha -\beta, a}(\partial \Omega; T^* \partial \Omega)$. Since the embedding $H^{-\ha -\beta, a}_\bo(\partial \Omega; T^* \partial \Omega) \hookrightarrow H^{-N, a - \epsilon}_\bo(\partial \Omega;T^* \partial \Omega)$ is compact for all sufficiently large $N$, we may assume that $\tilde v_j \to \tilde v_0$ in $H^{-N, a - \epsilon}_\bo(\partial \Omega; T^* \partial \Omega)$ upon passing to a subsequence. By Lemma~\ref{lem:convergence_upgrade},
    \begin{equation}\label{eq:tilde_v_convergence}
    \begin{aligned}
        \tilde v_j \to \tilde v_0 \quad &\text{in} \quad H^{-\ha - \beta, a}_\bo(\partial \Omega; T^* \partial \Omega), \\
        \chi_\bo \tilde v_j \to \chi_\bo \tilde v_0  \quad &\text{in} \quad H^{\infty, a}_\bo (\partial \Omega; T^* \partial \Omega)
    \end{aligned}
    \end{equation}
   By Lemmas~\ref{lem:ddC_easy} and~\ref{lem:ddC_heaviside}, it follows that $d \mathcal C_{\omega_j} \tilde v_j \to d \mathcal C_{\lambda + i0} \tilde v_0$ in $\mathcal D'(\partial \Omega \setminus \mathcal K; T^*(\partial \Omega \setminus \mathcal K))$. Therefore by~\eqref{eq:dCv_rescale_limit} and~\eqref{eq:tilde_v_convergence}, we have 
    \[d\mathcal C_{\omega + i0} \tilde v_0 = 0.\]
    Finally, $\supp(R_{\omega_j} \mathcal I \tilde v_j) = \supp(\indic_\Omega u_\omega - f) \subset \overline \Omega$, so
    \[\supp (R_{\lambda + i0} \mathcal I \tilde v_0) \subset \overline \Omega.\]
    Therefore, all the hypothesis of Lemma~\ref{lem:uniqueness} are satisfied for $\tilde v_0$, and we conclude that $\tilde v_0 = 0$. This contradicts~\eqref{eq:tilde_v_convergence} and the fact that $\|\tilde v_j\|_{H^{-\ha - \beta, a}_\bo} = 1$ due to~\eqref{eq:vj_rescale}. Therefore, we conclude that $v_{\omega_j}$ is uniformly bounded in $H^{-\ha - \beta, a}_\bo(\partial \Omega; T^* \partial \Omega)$.

    \noindent
    2. In particular, Step 1 also implies that $\|v_{\omega_j}\|_{H^{-\ha - \beta', a'}_\bo}$ is bounded where $0 < \beta < \beta'$ and $a < a' < \min_{\kappa \in \mathcal K} \Re(\mathfrak l_{\lambda, \kappa}) - 1$. Using the compactness of the embedding $H^{-\ha - \beta', a'
    }_\bo(\partial \Omega) \hookrightarrow H^{-\ha - \beta, a
    }_\bo(\partial \Omega)$, we see that $v_{\omega_j}$ is precompact in $H^{-\ha - \beta, a}(\partial \Omega)$. Let $v_{\lambda + i0}, w_{\lambda + i0}$ be limit points of $\{v_{\omega_j}\}$ in $H^{-\ha - \beta, a}(\partial \Omega)$. It follows from Lemma~\ref{lem:convergence_upgrade} and~\eqref{eq:dC_bdunique} that $v_{\lambda + i0}$ and $w_{\lambda + i0}$ must satisfy~\eqref{eq:v0_co} and~\eqref{eq:v0_bo}. Therefore, it follows from Lemmas~\ref{lem:ddC_easy} and~\ref{lem:ddC_heaviside} that 
    \[\mathcal C_{\lambda + i0} v_{\lambda + i0} = \mathcal C_{\lambda + i0} w_{\lambda + i0} = (R_{\lambda + i0} f)|_{\partial \Omega}.\]
    Furthermore, $v_{\lambda + i0}$ and $w_{\lambda + i0}$ both satisfy the support condition 
    \[\supp(R_{\lambda + i0} \mathcal Iv_{\lambda + i0}), \ \supp(R_{\lambda + i0} \mathcal Iw_{\lambda + i0}) \subset \overline \Omega.\]
    Therefore, it follows from Lemma~\ref{lem:uniqueness} that $v_{\lambda + i0} = w_{\lambda + i0}$. Therefore, $v_{\omega_j} \to v_{\lambda + i0} \in H^{-\ha - \beta, a}(\partial \Omega)$. Furthermore, \eqref{eq:traj_conorm} follows since $v_{\lambda + i0}$ satisfies~\eqref{eq:v0_co} and~\eqref{eq:v0_bo}. 
\end{proof}

\subsection{Proof of Theorem \ref{thm:spectral}}
1. Let $f \in \CIc(\Omega)$ and $\lambda \in \mathcal J$. We consider the limit $(P - (\lambda + i\epsilon)^2)^{-1} f$ as $\epsilon \to 0+$, since the limit $(P - (\lambda - i \epsilon)^2)^{-1}$ can be obtained by complex conjugation. As such, let $\omega = \lambda + i \epsilon$ for $0 < \epsilon \ll 1$. Let $u_\omega \in H^1_0(\Omega)$ be the solutions to the elliptic boundary value problem~\eqref{eq:EBVP}. Recall that $P - \omega^2 = P(\omega) \Delta_\Omega^{-1}$, so 
\[(P - \omega^2)^{-1} f = \Delta u_\omega \in H^{-1}(\Omega).\]
Let $v_\omega := \mathcal N_\omega u_\omega$ denote the boundary data. In particular, by Lemma~\ref{lem:bd_LAP}, the limit $v_\omega \to v_{\lambda + i0}$ exists in $H^{-\ha - \beta, a}_\bo(\partial \Omega; T^* \partial \Omega)$ locally uniformly in $\lambda \in \mathcal J$. Recall from Lemma~\ref{lem:fundamental} that $u_\omega$ can be recovered from $v_\omega$ by
\[u_\omega = (R_\omega f)|_{\Omega} - S_\omega v_\omega.\]
Since $S_\omega v_\omega = (E_\omega * \mathcal I v_\omega)$, it follows from Lemma~\ref{lem:fs_holomorphic} that 
\begin{equation*}
    u_\omega \to u_{\lambda + i0} := (R_{\lambda + i0} f)|_\Omega - S_{\lambda + i0} v_{\lambda + i0} \quad \text{in} \quad \mathcal D'(\Omega),
\end{equation*}
which immediately implies that 
\begin{equation*}
    (P - \omega^2)^{-1} f \to (P - (\lambda + i0)^2)^{-1} f := \Delta u_{\lambda + i0} \quad \text{in} \quad \mathcal D'(\Omega).
\end{equation*}

\noindent
2. Now it remains to check the more refined regularity properties of $u_{\lambda + i0}$. Essentially, everything follows from the mapping properties of $S_{\lambda + i0}$ from Lemmas~\ref{lem:S_map_prop_1} and~\ref{lem:S_map_prop_2} together with the refined regularity of $v_{\lambda + i0}$ from Lemma~\ref{lem:bd_LAP}. First, observe that 
\[(R_{\lambda + i0} f)|_{\partial \Omega} \in \overline C^\infty(\partial \Omega),\]
so we indeed only need to consider $S_{\lambda + i0} v_{\lambda + i0}$. 

First, the wavefront set property~\eqref{eq:wavefront} follows immediately from~\eqref{eq:traj_conorm} and Proposition~\ref{lem:S_map_prop_2}. Next, fix $\varphi \in \overline C^\infty(\Omega)$ such that $\supp \varphi \cap \Gamma_\lambda = \emptyset$ where $\Gamma_\lambda$ is the periodic trajectory
\begin{equation*}
    \Gamma_\lambda := \bigcup_{x \in \Sigma^+_\lambda} (\Gamma^+_\lambda(x) \cup \Gamma^-_\lambda(x)). 
\end{equation*}
therefore, there exists $\chi_p \in \overline C^\infty(\partial \Omega)$ such that $\chi_p = 1$ in a small neighborhood $\Sigma_\lambda(\delta)$ for sufficiently small $\delta$, and
\[\supp \varphi \cap \bigcup_{x \in \supp \chi} (\Gamma^+_\lambda(x) \cup \Gamma^-_\lambda(x)) = \emptyset.\]
Clearly, 
\begin{equation}\label{eq:Schip_smooth}
    \varphi S_{\lambda + i0} \chi_p v_{\lambda + i0} \in \overline C^\infty(\Omega).
\end{equation}
On the other hand, it follows from Lemmas~\ref{lem:S_map_prop_1} that $S_{\lambda + i0}(1 - \chi_p) v_{\lambda + i0}$ is conormal to $\sr$ and $\ff$, and from Lemma~\ref{lem:S_map_prop_2}, $S_{\lambda + i0}(1 - \chi_p) v_{\lambda + i0}$ is also conormal to 
\begin{equation*}
    \bigcup_{x \in \mathscr O^+_\lambda \setminus \Sigma_\lambda(\delta)} N^*_- \Gamma^-_\lambda(x) \cup \bigcup_{x \in \mathscr O^-_\lambda \setminus \Sigma_\lambda(\delta)} N^*_+ \Gamma^+_\lambda(x).
\end{equation*}
Restricted to the support of $\varphi$, we simply have 
\begin{equation}\label{eq:Schip_bad}
    \varphi S_{\lambda + i0}(1 - \chi_p) v_{\lambda + i0} \in \mathcal A^{[a + \frac{3}{2}]}(\widetilde \Omega; \Lambda^-(\lambda), \sr, \ff)
\end{equation}
for any $a \in (-1, 0)$ such that 
\[a < \min_{\kappa \in \mathcal K} \Re \mathfrak l_{\lambda, k} - 1.\]
Combining~\eqref{eq:Schip_smooth} and~\eqref{eq:Schip_bad}, we find that 
\[\varphi u_{\lambda + i0} \in \mathcal A^{[a + \frac{3}{2}]}(\widetilde \Omega; \Lambda^-(\lambda), \sr, \ff).\]
Then~\eqref{eq:limit_conorm_reg} follows upon taking the Laplacian of $u_{\lambda + i0}$. 

Finally, observe that by Lemma~\ref{lem:S_map_prop_2}, we have
\[S_{\lambda + i0} \chi_p v_{\lambda + i0} \in \bar H^{\frac{1}{2} - }(\Omega),\]
and it follows from Lemma~\ref{lem:S_map_prop_1} that 
\[S_{\lambda + i0} \chi_p v_{\lambda + i0} \in \bar H^{a + \frac{3}{2}}(\Omega).\]
Therefore, $u_{\lambda + i0} \in \bar H^{-\frac{3}{2} -}(\Omega)$, from which~\eqref{eq:lim_regularity} follows.

\section{Evolution problem}\label{sec:evolution}
Finally, we use solve in the internal waves equation~\eqref{eq:internal_waves} using the functional calculus solution~\eqref{eq:functional_solution}. To do so, we need slightly more regularity for the spectral measure near $\lambda^2$. 

\subsection{Regularity of the spectral measure}
The spectral measure is in fact H\"older continuous near $\lambda^2$.

\begin{proposition}\label{prop:holder}
    Let $\mathcal J \Subset (0, 1)$ be such that $\Omega$ is Morse--Smale with respect to every $\lambda \in \mathcal J$. Let $v_{\lambda + i0}$ be as in Lemma~\ref{lem:bd_LAP}, and $a \in (-1, 0)$ be such that 
    \begin{equation}
        -1 < a < \min_{\kappa \in \mathcal K} \Re(\mathfrak l_{\lambda, \kappa}) - 1.
    \end{equation}
    Then
    \begin{equation}
        v_{\lambda + i0} \in C^\eta(\mathcal J; H^{-1/2 - \eta - \beta, a - \eta}_\bo(\partial \Omega))
    \end{equation}
    for any $\beta > 0$ and all sufficiently small $\eta > 0$. Furthermore, for any $\chi^\pm \in \overline C^\infty(\partial \Omega)$ such that $\chi^\pm = 1$ near $\Sigma^\pm_\lambda$, then 
    \begin{equation}
        \Pi^\pm(1 - \chi_\mp) v_{\lambda + i0} \in C^\eta(\mathcal J; H^{a + \ha - \eta, a - \eta}_\bo(\partial \Omega)).
    \end{equation}
\end{proposition}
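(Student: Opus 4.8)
\textbf{Proof plan for Proposition~\ref{prop:holder}.}
The strategy is to upgrade the limiting absorption principle of Lemma~\ref{lem:bd_LAP} from mere convergence at a fixed $\lambda$ to H\"older continuity in $\lambda$, by differentiating the reduced equation~\eqref{eq:dC_bdunique} in $\omega$ and re-running the semi-Fredholm machinery of Proposition~\ref{prop:global_semifredholm}. Concretely, for $\omega = \lambda + i\epsilon$ the boundary data $v_\omega$ solves $d\mathcal C_\omega v_\omega = g_\omega$ with $g_\omega = d(R_\omega f)|_{\partial\Omega}$, and differentiating gives $d\mathcal C_\omega (\partial_\omega v_\omega) = \partial_\omega g_\omega - (\partial_\omega d\mathcal C_\omega) v_\omega$. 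The key inputs are: (i) the uniform boundedness $\|\chi_\bo \partial_\omega d\mathcal C_\omega \chi_\bo\|_{H^{s+1,a}_\bo \to H^{s,a}_\bo} \le C$ from Lemma~\ref{lem:ddC} (and the analogous easy bounds away from corners and with Heaviside cutoffs implicit in Lemmas~\ref{lem:ddC_easy}, \ref{lem:ddC_heaviside} for $k=1$), which control the right-hand side $\partial_\omega g_\omega - (\partial_\omega d\mathcal C_\omega)v_\omega$ at the cost of one derivative and a slight loss in the weight; and (ii) the global estimate~\eqref{eq:low_reg_semifredholm}–\eqref{eq:high_reg_semifredholm}, which is uniform in $\epsilon$ and locally uniform in $\lambda$, applied now to $\partial_\omega v_\omega$. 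Since $\partial_\omega g_\omega$ is again in $\overline C^\infty(\partial\Omega)$ uniformly, this shows $\partial_\omega v_\omega$ is uniformly bounded in $H^{-\ha-\beta', a'}_\bo$ for some $a < a' < \min_\kappa \Re\mathfrak l_{\lambda,\kappa} - 1$ and some $\beta' < \beta$, where we must also check — exactly as in Lemma~\ref{lem:bd_LAP} — that the limiting indicial roots $\mathcal Z_\lambda$ stay off the line $\R - ia'$ for $\lambda$ in a small interval, so that Proposition~\ref{prop:full_b-est} applies uniformly.

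From the uniform bound $\sup_{\epsilon>0}\|\partial_\omega v_\omega\|_{H^{-\ha-\beta', a'}_\bo} < \infty$ (locally uniform in $\Re\omega$), Lipschitz continuity of $\lambda \mapsto v_{\lambda+i0}$ into a slightly weaker space follows by the fundamental theorem of calculus: $v_{\lambda_1+i0} - v_{\lambda_2+i0}$ is the limit of $v_{\lambda_1+i\epsilon} - v_{\lambda_2 + i\epsilon} = \int_{\lambda_2}^{\lambda_1} \partial_\lambda v_{\lambda+i\epsilon}\,d\lambda$, and the integrand is uniformly bounded. Interpolating this Lipschitz bound (in a space with one less derivative and a weight shifted by one) against the uniform $C^0$ bound from Lemma~\ref{lem:bd_LAP} (in $H^{-\ha-\beta,a}_\bo$) yields the claimed $C^\eta$ regularity into $H^{-\ha-\eta-\beta, a-\eta}_\bo$ for small $\eta > 0$; the exact exponents $-\ha-\eta-\beta$ and $a-\eta$ come out of the standard interpolation inequality $\|u\|_{[X,Y]_\eta} \lesssim \|u\|_X^{1-\eta}\|u\|_Y^\eta$ with $X = H^{-\ha-\beta,a}_\bo$ and $Y$ the one-derivative-weaker space. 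For the second assertion, one instead feeds the $\omega$-differentiated equation into the \emph{high-regularity} part of the propagation estimates — Proposition~\ref{prop:source} (source estimate) and the corner estimate~\eqref{eq:corner_into_source} / Proposition~\ref{prop:corner_POS} — exactly as in the derivation of~\eqref{eq:high_reg_semifredholm} and~\eqref{eq:corner_improvement}, to get that $\Pi^\pm(1-\chi_\mp)\partial_\omega v_\omega$ is uniformly bounded in $H^{a+\ha-\delta, a'}_\bo$ away from the attractors; then the same FTC-plus-interpolation argument against the $C^0$ regularity $\Pi^\pm(1-\chi_\mp)v_{\lambda+i0} \in H^{a+\ha,a}_\bo$ (a consequence of~\eqref{eq:v0_co} and Lemma~\ref{lem:b_sob_to_sob}, since conormality with respect to a finite point set implies membership in $H^{a+\ha,a}_\bo$) gives $C^\eta(\mathcal J; H^{a+\ha-\eta,a-\eta}_\bo)$.

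The main obstacle I anticipate is bookkeeping the loss of one derivative and one order of weight introduced by the commutator term $(\partial_\omega d\mathcal C_\omega)v_\omega$, and verifying that this loss is compatible with the hypotheses of the global estimate and of Proposition~\ref{prop:full_b-est}: the semi-Fredholm estimate requires the solution to already lie in some $H^{-N,a'}_\bo$ with $\mathcal Z_\lambda \cap (\R - ia') = \emptyset$, and one must have enough room between $a$ and $\min_\kappa\Re\mathfrak l_{\lambda,\kappa}-1$ to absorb the weight shift while staying in a gap of the indicial roots — this is why $\eta$ must be taken sufficiently small and why $\mathcal J$ must be a sufficiently small subinterval, so that $\mathcal Z_\lambda$ varies little and the relevant gap persists. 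A secondary technical point is that $\partial_\omega d\mathcal C_\omega$ is only controlled with cutoffs (near the corner by Lemma~\ref{lem:ddC}, away from it and with a single Heaviside cutoff by the $k=1$ cases of Lemmas~\ref{lem:ddC_easy}–\ref{lem:ddC_heaviside}), so one must decompose $v_\omega$ with a partition of unity adapted to $\mathcal K$ exactly as in the proof of Proposition~\ref{prop:global_semifredholm}, applying the appropriate local mapping property in each region; but since every such piece was already needed there, no genuinely new estimate is required — only the observation that all constants are uniform up to $\Im\omega = 0$, which is already asserted in the cited lemmas.
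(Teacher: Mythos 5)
Your proposal differentiates the reduced equation in $\omega$, asserts a uniform bound on $\partial_\omega v_\omega$ via the global semi-Fredholm estimate, and recovers H\"older continuity by the fundamental theorem of calculus plus interpolation. This is precisely the approach the paper's proof avoids, and a remark immediately following that proof explains why it fails in the corner setting: the corner produces a singularity propagating along a trajectory, so $\partial_\omega v_\omega$ does not have the $H^{\ha+}$ regularity away from the periodic points that Lemma~\ref{lem:uniqueness} requires, and the rescaling-and-uniqueness step at the heart of removing the compact error term from the semi-Fredholm estimate therefore breaks down.

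To see this concretely: the forcing of the differentiated equation is $\partial_\omega g_\omega - (\partial_\omega d\mathcal C_\omega)v_\omega$, and the second term, localized away from the corners and away from $\Sigma^\pm_\lambda$, lies only in $H^{a - \ha}(\partial\Omega)$; see~\eqref{eq:ddC_away}. That is a full Sobolev derivative below the threshold $s \ge a + \ha$ that Proposition~\ref{prop:global_semifredholm} demands of the non-attractor part of the forcing, so the proposition cannot be applied at weight $a$, and shifting the weight by a full unit to compensate exits the admissible interval $(-1,0)$ for the b-elliptic estimate of Proposition~\ref{prop:full_b-est}. You flag a ``loss of one derivative and a slight loss in the weight'' as a bookkeeping concern to be absorbed by shrinking $\eta$ and $\mathcal J$, but the loss is not slight and is not $\eta$-sized: it is a full unit, and it arises before any interpolation happens, so the uniform Lipschitz bound on which your FTC-plus-interpolation scheme rests is simply unavailable.

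The paper instead interpolates \emph{before} applying the estimates. It works directly with the divided-difference equation~\eqref{eq:Cv=G_diff} for $(v_\omega - v_{\omega'})/|\lambda - \lambda'|^\eta$, and bounds the error $(d\mathcal C_\omega - d\mathcal C_{\omega'})v_{\omega'}/|\lambda - \lambda'|^\eta$ by Sobolev interpolation between the $\eta = 0$ endpoint (no loss, using mapping properties of $d\mathcal C$) and the $\eta = 1$ endpoint (one-derivative loss, using mapping properties of $\partial_\omega d\mathcal C$); see~\eqref{eq:interpolated_away} and~\eqref{eq:interpolated_periodic}. The effective loss on the divided-difference forcing is then only $\eta$, which for $\eta$ small keeps the weight inside $(-1,0)$, stays within the indicial-root gap, and satisfies the hypotheses of Proposition~\ref{prop:global_semifredholm}, of the compactness step, and of Lemma~\ref{lem:uniqueness}; the contradiction argument then proceeds as in Lemma~\ref{lem:bd_LAP}. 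To repair your proof, replace the $\omega$-derivative throughout by the H\"older divided difference and perform the interpolation on the forcing before invoking any of the propagation or elliptic machinery.
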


\begin{proof}
    1. Recall from~\eqref{eq:Cv_reduction} that $v_{\omega_j}$ satisfies
    \begin{equation}
        d\mathcal C_{\omega} v_{\omega} = d(R_{\omega} f)|_{\partial \Omega} = : g_\omega.
    \end{equation}
    Let $\lambda, \lambda' \in \mathcal J$ be distinct and put
    \begin{equation*}
        \omega := \lambda + i\epsilon, \qquad \omega' := \lambda' + i\epsilon
    \end{equation*}
    for some $\epsilon > 0$.
    Observe that 
    \begin{gather}
        d\mathcal C_\omega \left( \frac{v_\omega - v_{\omega'}}{|\lambda - \lambda'|^\eta} \right) = \frac{g_\omega - g_{\omega'}}{|\lambda - \lambda'|^\eta} - \frac{d\mathcal C_\omega v_{\omega'} - d\mathcal C_{\omega'} v_{\omega'}}{|\lambda - \lambda'|^\eta}, \label{eq:Cv=G_diff}\\
        R_\omega \mathcal I \left(\frac{v_\omega - v_{\omega'}}{|\lambda - \lambda'|^\eta} \right) = \left(\frac{R_{\omega} - R_{\omega'}}{|\lambda - \lambda'|^\eta} \right)(f - \mathcal I v_{\omega'}) - \indic_\Omega \left(\frac{u_\omega - u_{\omega'}}{|\lambda - \lambda'|^\eta}\right). \label{eq:diff_support}
    \end{gather}
    
    \noindent
    2. We first characterize the regularity of the right hand side of~\eqref{eq:Cv=G_diff}. It follows from Lemma~\ref{lem:fs_holomorphic} that 
    \[\frac{g_\omega - g_{\omega'}}{|\lambda - \lambda'|^\alpha} \in H^{\infty, 0-}_\bo(\partial \Omega)\]
    uniformly for all $\lambda, \lambda' \in \mathcal J$ and $\epsilon > 0$. It remains to consider the last term in~\eqref{eq:Cv=G_diff}. Applying the mapping properties of $\partial_\omega d \mathcal C_\omega$ from Lemmas~\ref{lem:ddC_easy}, \ref{lem:ddC_heaviside}, and \ref{lem:ddC} to $v_\omega$, we find that
    \begin{enumerate}
        \item for $\chi_\bo \in C^\infty(\partial \Omega)$ such that $\chi_\bo = 1$ near $\mathcal K$ and supported in a sufficiently small neighborhood of $\mathcal K$, 
        \begin{equation}\label{eq:ddC_c}
            \chi_\bo (\partial_\lambda d \mathcal C_\omega)v_\omega \in H^{\infty, a}_{\bo}(\partial \Omega)
        \end{equation}
        uniformly for $\lambda \in \mathcal J$ and $\epsilon \ge 0$ sufficiently small, 
        
        \item for all $\chi_\pm \in C^\infty(\partial \Omega)$ such that $\chi_\pm = 1$ near $\Sigma_\lambda^\pm$ supported in a sufficiently small neighborhood of $\Sigma_\lambda^\pm$, 
        \begin{equation}\label{eq:ddC_away}
            \Pi^\pm(1 - \chi_\mp - \chi_\bo)(\partial_\lambda d \mathcal C_\omega) v_{\omega} \in H^{-\ha + a}(\partial \Omega)
        \end{equation}
        locally uniformly for $\lambda \in \mathcal J$ and $\epsilon \ge 0$ sufficiently small, 

        \item for all $\beta > 0$, 
        \begin{equation}\label{eq:ddC_periodic}
        \begin{gathered}
            \Pi^\pm \chi_{\mp} (\partial_\lambda d \mathcal C_\omega) v_\omega \in H^{-\frac{3}{2} - \beta} (\partial \Omega)\\
            \Pi^\pm \chi_\pm (\partial_\lambda d \mathcal C_\omega) v_\omega \in \overline C^\infty(\partial \Omega)
        \end{gathered}
        \end{equation}
        locally uniformly for $\lambda \in \mathcal J$ and $\epsilon \ge 0$
    \end{enumerate}
    for any for any $a \in (-1, 0)$ be such that $a < \min_{\kappa \in \mathcal K} \Re(\mathfrak l_{\lambda, \kappa}) - 1$. Note that we specify locally uniformly in (2) and (3) since the choice of $\chi_\pm$ can be made independent of $\lambda$ locally in $\lambda$. 
    
    From $(1)$, we see that for any $\eta \in (0, 1)$, 
    \begin{equation}
        \chi_\bo \left(\frac{d\mathcal C_\omega v_{\omega'} - d\mathcal C_{\omega'} v_{\omega'}}{|\lambda - \lambda'|^\eta} \right) \in H^{\infty, a}_\bo(\partial \Omega)
    \end{equation}
     uniformly for all $\lambda, \lambda' \in \mathcal J$ with $0 < |\lambda - \lambda'| \ll 1$ and all sufficiently small $\epsilon > 0$. Now let $\eta \in (0, 1 + a)$. Then by the Sobolev interpolation, we have
    \begin{multline}\label{eq:interpolated_away}
        \left\|\Pi^\pm(1 - \chi_\mp - \chi_\bo) \frac{d\mathcal C_\omega v_{\omega'} - d\mathcal C_{\omega'} v_{\omega'}}{|\lambda - \lambda'|^\eta} \right\|_{H^{\ha + a - \eta}} \\
        \le \|\Pi^\pm(1 - \chi_\mp - \chi_\bo)(d\mathcal C_\omega v_{\omega'} - d\mathcal C_{\omega'} v_{\omega'})\|_{H^{\ha + a}}^{1 - \eta} \\
        \cdot \frac{\|\Pi^\pm(1 - \chi_\mp - \chi_\bo)(d\mathcal C_\omega v_{\omega'} - d\mathcal C_{\omega'} v_{\omega'})\|_{H^{-\ha + a}}^{\eta}}{|\lambda - \lambda'|^\eta}.
    \end{multline}
    It follows from~\eqref{eq:ddC_away} that the right hand side of~\eqref{eq:interpolated_away} is uniformly bounded for all $\lambda, \lambda' \in \mathcal J$ with $0 < |\lambda - \lambda'| \ll 1$ and all sufficiently small $\epsilon > 0$. Similarly,
    \begin{multline}\label{eq:interpolated_periodic}
        \left\|\Pi^\pm\chi_\mp \frac{d\mathcal C_\omega v_{\omega'} - d\mathcal C_{\omega'} v_{\omega'}}{|\lambda - \lambda'|^\eta} \right\|_{H^{-\ha -\beta - \alpha}} \le \|\Pi^\pm \chi_\mp (d\mathcal C_\omega v_{\omega'} - d\mathcal C_{\omega'} v_{\omega'})\|_{H^{-\ha - \beta}}^{1 - \eta} \\
        \cdot \frac{\|\Pi^\pm \chi_\mp (d\mathcal C_\omega v_{\omega'} - d\mathcal C_{\omega'} v_{\omega'})\|_{H^{-\frac{3}{2} - \beta}}^{\eta}}{|\lambda - \lambda'|^\eta},
    \end{multline}
    and from~\eqref{eq:ddC_periodic}, we see that the right hand side is uniformly bounded for all $\lambda, \lambda' \in \mathcal J$ with $|\lambda - \lambda'| \ll 1$ and all sufficiently small $\epsilon > 0$. Also from~\eqref{eq:ddC_periodic}, it is clear that 
    \begin{equation}\label{eq:g_source}
        \Pi^\pm\chi_\pm \frac{d\mathcal C_\omega v_{\omega'} - d\mathcal C_{\omega'} v_{\omega'}}{|\lambda - \lambda'|^\eta} \in C^\infty(\partial \Omega) 
    \end{equation}
    uniformly for all $\lambda, \lambda' \in \mathcal J$ with $|\lambda - \lambda'| \ll 1$ and all sufficiently small $\epsilon > 0$.

    Finally, observe that it follows from from~\eqref{eq:traj_conorm} that if $x_0 \in \partial \Omega$ is a non-corner characteristic point, then for any $N \ge 0$, there exists a cutoff $\chi_0$ with $\chi_0 = 1$ near $x_0$ such that for any $N \ge 0$,
    \begin{equation}\label{eq:g_traj}
    \begin{gathered}
        \left(\sum_{j = 0}^N \chi_0 \circ b_\lambda^{-j} + \chi \circ b_\lambda^{-j} \circ \gamma_\lambda^- \right)  \frac{d\mathcal C_\omega v_{\omega'} - d\mathcal C_{\omega'} v_{\omega'}}{|\lambda - \lambda'|^\eta} \in C^\infty(\partial \Omega), \\
        \left(\sum_{j = 0}^N \chi_0 \circ b_\lambda^{j} + \chi \circ b_\lambda^{j} \circ \gamma_\lambda^+ \right)  \frac{d\mathcal C_\omega v_{\omega'} - d\mathcal C_{\omega'} v_{\omega'}}{|\lambda - \lambda'|^\eta} \in C^\infty(\partial \Omega)
    \end{gathered}
    \end{equation}
    again, locally uniformly for all $\lambda, \lambda' \in \mathcal J$ with $|\lambda - \lambda'| \ll 1$ and all sufficiently small $\epsilon > 0$. In particular, the summed cutoffs simply cuts off near a trajectory starting at $x_0$.

    \noindent
    3. We may assume that $\mathcal J$ is a sufficiently small interval so that all the estimates in Step 2 hold uniformly in the interval. Now we show that for $\lambda, \lambda' \in \mathcal J$ such that $0 < |\lambda - \lambda'| \ll 1$, we have the H\"older bound
    \begin{equation}\label{eq:v_holder_low_reg}
        \left\|\frac{v_{\lambda + i0} - v_{\lambda' + i0}}{|\lambda - \lambda'|^\eta} \right\|_{H^{-\ha - \eta - \beta, a - \eta}_\bo} + \sum_{\pm} \left\|\Pi^\pm(1 - \chi_\mp)\frac{v_{\lambda + i0} - v_{\lambda' + i0}}{|\lambda - \lambda'|^\eta} \right\|_{H^{a + \ha - \eta, a - \eta}_{\bo}} < C.
    \end{equation}
    For the sake of contradiction, assume that there exists sequences $\omega_j = \lambda_j + i \epsilon_j$ and $\omega'_j = \lambda'_j + i \epsilon_j$ such that $0 < |\lambda_j - \lambda_j'| \ll 1$, $\epsilon_j > 0$, $\epsilon_j \to 0$, and $\lambda_j, \lambda'_j \to \lambda \in \mathcal J$, so that
    \begin{equation}
    \begin{gathered}
        \|w_j\|_{H^{-\ha - \eta - \beta, a - \eta}_\bo} + \sum_{\pm} \|\Pi^\pm(1 - \chi_\mp) w_j \|_{H^{a + \ha - \eta, a - \eta}_\bo} \to \infty \quad \text{as} \quad j \to \infty\\
        \text{where} \quad w_j:= \frac{v_{\omega_j} - v_{\omega'_j}}{|\lambda_j - \lambda'_j|^\eta}.
    \end{gathered}
    \end{equation}
    From~\eqref{eq:Cv=G_diff}, $w_j$ satisfies
    \[d\mathcal C_\omega w_j = f_j, \quad \text{where} \quad f_j := \frac{g_{\omega_j} - g_{\omega'_j}}{|\lambda_j - \lambda'_j|^\alpha} - \frac{d \mathcal C_{\omega_j}v_{\omega'_j} - d\mathcal C_{\omega'_j}v_{\omega'_j}}{|\lambda_j - \lambda'_j|^\alpha}.\]
    Rescale by defining 
    \begin{equation}\label{eq:w_rescale}
    \begin{gathered}
        \widetilde w_j := \frac{w_j}{\|w_j\|_{H^{-\ha - \eta - \beta, a - \eta}_\bo} + \sum_{\pm} \|\Pi^\pm(1 - \chi_\mp) w_j \|_{H^{a + \ha - \eta, a - \eta}_\bo}},\\
        \tilde f_j :=  \frac{f_j}{\|w_j\|_{H^{-\ha - \eta - \beta, a - \eta}_\bo} + \sum_{\pm} \|\Pi^\pm(1 - \chi_\mp) w_j \|_{H^{a + \ha - \eta, a - \eta}_\bo}}.
    \end{gathered}
    \end{equation}
    Let $a < a' < \min_{\kappa \in \mathcal K} \Re(\mathfrak l_{\lambda, k}) - 1$. By the three estimates~\eqref{eq:ddC_c}-\eqref{eq:ddC_periodic} in Step 1, we see that
    \[\sum_{\pm} \big(\|\Pi^\pm (1 - \chi_\mp) \tilde f_j\|_{H^{\ha + a' - \eta, a' - \eta}_\bo} + \|\Pi^\pm \chi^\mp \tilde f_j\|_{H^{-\ha - \eta - \beta}} \big) \to 0\]
    as $j \to \infty$. Observe that $\widetilde w_j \in H^{\infty, a'}_\bo$, albeit not uniformly. This a priori regularity allows us to apply~\eqref{eq:low_reg_semifredholm} from Proposition~\ref{prop:global_semifredholm} to find that for any $\beta > 0$, 
    \begin{equation}
        \|\widetilde w_j\|_{H^{-\ha - \eta - \beta, a' - \eta}_\bo} \le C.
    \end{equation}
    By the compactness of the embedding $H^{-\ha - \eta - \beta
    ', a'- \eta}_\bo \hookrightarrow H^{- \ha - \eta - \beta, a' - \eta}_\bo$ for $\beta' < \beta$, we conclude that upon passing to a subsequence, there exists $\widetilde w_0 \in H^{-\ha - \eta - \beta, a - \eta}_\bo$ such that 
    \begin{equation}
        \widetilde w_j \to \widetilde w_0 \quad \text{in} \quad H^{-\ha - \eta - \beta, a}_\bo. 
    \end{equation}
    A similar argument using~\eqref{eq:high_reg_semifredholm} instead of~\eqref{eq:low_reg_semifredholm} from Proposition~\ref{prop:global_semifredholm} further shows that for any $\chi_\pm \in \overline C^\infty(\partial \Omega)$ supported in a small neighborhood of $\Sigma^\pm_\lambda$, 
    \begin{equation}\label{eq:g_high_reg}
        \Pi^\pm(1 - \chi_{\mp}) \widetilde w_j \to \Pi^\pm(1 - \chi_{\mp}) \widetilde w_0 \quad \text{in} \quad H^{a + \ha - \eta, a - \eta}_\bo.
    \end{equation}
    upon further passing to a subsequence. Note that by~\eqref{eq:w_rescale},
    \begin{equation}\label{eq:norm1}
        \|\widetilde w_0\|_{H^{-\ha - \eta - \beta, a - \eta}_\bo} + \sum_{\pm} \|\Pi^\pm(1 - \chi_\mp) \widetilde w_0 \|_{H^{a + \ha - \eta, a - \eta}_\bo} = 1
    \end{equation}
    Furthermore, it follows from Lemmas~\ref{lem:ddC_easy} and~\ref{lem:ddC_heaviside} that
    \begin{equation}\label{eq:Cw=0}
        \mathcal C_{\lambda + i0} \widetilde w_0 = 0.
    \end{equation}

    \noindent
    4. Finally, we claim that $\widetilde w_0$ is smooth near the non-corner characteristic points. Let $x_0 \in \partial \Omega$ be a non-corner characteristic point. Since $\Omega$ is Morse--Smale with respect to $\lambda \in \mathcal J$, it follows that $b_\lambda^k(x_0) \notin \mathcal K$ for all $k \in \Z$. Due to~\eqref{eq:g_source} and~\eqref{eq:g_traj}, Step 2 in the proof of Proposition~\ref{prop:global_semifredholm} still applies, and we find that for every $s \gg 1$, there exists $C_s > 0$ such that
    \begin{equation}
        \|\Pi^\pm \chi_0 \widetilde w_j\|_{H^s} \le C_s.
    \end{equation}
    Together with~\eqref{eq:Cw=0} and~\eqref{eq:g_high_reg}, we conclude from Lemma~\ref{lem:uniqueness} that $\widetilde w_0 = 0$. This contradicts~\eqref{eq:norm1}. 
    % Therefore by Lemma~\ref{lem:convergence_upgrade}, upon further passing to a subsequence, $\widetilde w_j \to \widetilde w_0$ in $H^{-\ha - \alpha - \beta, a}_\bo(\partial \Omega)$, and 
    % \[d \mathcal C_{\lambda + i0} \widetilde w_0 = 0\]
    % Furthermore, from~\eqref{eq:diff_support}, we see that $\supp (R_\omega \mathcal I \widetilde w_0) \subset \overline \Omega$. Therefore, we conclude from Lemma~\ref{lem:uniqueness} that $\widetilde w_0 = 0$. This is a contradiction since $\|\widetilde w_j\|_{H^{-\ha - \alpha - \beta, a}_\bo} = 1$ for all $j$. 
\end{proof}
\begin{Remark}
    An interesting question that the methods in this paper cannot address is whether or not $v_{\lambda + i0}$ has more than just H\"older regularity in $\lambda$. In particular, if $\partial \Omega$ is smooth, then higher regularity of $v_{\lambda + i0}$ in $\lambda$ is proved in \cite[Proposition 7.4]{DWZ}. However, in our corner setting, the corner necessarily creates a singularity that propagates along a trajectory coming out of the corner. The problem this causes is that $\partial_\omega v_\omega$ does not lie in $H^{\ha +}$ away from the periodic points, so we would be unable to apply the uniqueness in the limit via Lemma~\ref{lem:uniqueness} if we were to differentiate \eqref{eq:Cv_reduction} instead of taking the H\"older quotient as in~\eqref{eq:Cv=G_diff}. 
\end{Remark}

The H\"older regularity of $v_{\lambda \pm i0}$ immediately implies H\"older regularity of the spectral measure for $P$. Denote the spectral measure of $P$ by $dE$. 
\begin{lemma}\label{lem:SM_holder}
    Let $f \in \CIc(\Omega)$ and assume that $\Omega$ is Morse--Smale with respect to $\lambda$. Then there exists $\delta > 0$ and $\nu \in C^\eta((\lambda^2 - \delta, \lambda^2 + \delta); H^{-\frac{3}{2} - \beta - \eta}(\Omega))$ for any $\beta > 0$ such that 
    \begin{equation}
        \left(\int_\R \varphi(z) \, dE_z \right) f = \int_\R \varphi(z) \nu(z)\, dz
    \end{equation}
    for every $\varphi \in \CIc((\lambda^2 - \delta, \lambda^2 + \delta))$. 
\end{lemma}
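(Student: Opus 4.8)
\textbf{Proof plan for Lemma~\ref{lem:SM_holder}.}
The plan is to combine Stone's formula relating the spectral measure to the resolvent with the limiting absorption principle of Theorem~\ref{thm:spectral} and the H\"older continuity of the boundary data from Proposition~\ref{prop:holder}. Recall that for a bounded self-adjoint operator $P$ and $f \in \CIc(\Omega)$, Stone's formula gives
\[
\left(\int_\R \varphi(z)\, dE_z\right) f = \frac{1}{2\pi i}\int_\R \varphi(z)\big[(P - z - i0)^{-1} - (P - z + i0)^{-1}\big] f\, dz
\]
for $\varphi \in \CIc((\lambda^2-\delta,\lambda^2+\delta))$, provided the limits on the right exist in a suitable distributional sense. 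So the natural candidate is
\[
\nu(z) := \frac{1}{2\pi i}\big[(P - z - i0)^{-1} - (P - z + i0)^{-1}\big] f, \qquad z = \mu^2, \ \mu \in \mathcal J,
\]
and the task reduces to showing this is well-defined, lies in $H^{-\frac32 - \beta - \eta}(\Omega)$, and is $\eta$-H\"older in $z$.

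First I would fix $\delta > 0$ small enough that $(\lambda^2 - \delta, \lambda^2+\delta) \subset \mathcal J^2$, and use the smooth bijective change of variables $z = \mu^2$ to transfer everything to the $\mu$-variable, noting that a square-root reparametrization preserves H\"older regularity of the given exponent since $\mathcal J$ is bounded away from $0$. Next, by Theorem~\ref{thm:spectral}, $(P - \mu^2 \pm i0)^{-1} f$ exists in $\mathcal D'(\Omega)$ and lies in $\bar H^{-\frac32-}(\Omega)$, which establishes that $\nu$ is well-defined with the claimed base regularity (absorbing the ``$-$'' into $\beta$). For the H\"older-in-$\mu$ statement: trace through the construction in \S\ref{sec:LAP}, namely $(P-\mu^2+i0)^{-1} f = \Delta u_{\mu+i0}$ with $u_{\mu+i0} = (R_{\mu+i0} f)|_\Omega - S_{\mu+i0} v_{\mu+i0}$. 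The term $(R_{\mu+i0}f)|_\Omega$ depends smoothly on $\mu$ by Lemma~\ref{lem:fs_holomorphic}. For the single-layer term, write
\[
S_{\mu + i0} v_{\mu+i0} - S_{\mu'+i0} v_{\mu'+i0} = (S_{\mu+i0} - S_{\mu'+i0}) v_{\mu+i0} + S_{\mu'+i0}(v_{\mu+i0} - v_{\mu'+i0});
\]
the first difference is controlled by the smooth ($C^\infty$, hence Lipschitz) dependence of $E_\mu$ on $\mu$ up to the real axis (Lemma~\ref{lem:fs_holomorphic}), applied to $v_{\mu+i0}$ which is uniformly bounded in the relevant b-Sobolev/conormal spaces by Lemma~\ref{lem:bd_LAP}, while the second is handled by the H\"older bound on $\|v_{\mu+i0} - v_{\mu'+i0}\|$ from Proposition~\ref{prop:holder} together with the (uniform in $\mu'$) mapping properties of $S_{\mu'+i0}$ from Lemmas~\ref{lem:S_map_prop_1} and~\ref{lem:S_map_prop_2}. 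Applying $\Delta : \bar H^{a+\frac32}(\Omega) \to \bar H^{a-\frac12}(\Omega)$ and absorbing the $\eta$-loss from interpolation gives $\nu \in C^\eta(\mathcal J; H^{-\frac32-\beta-\eta}(\Omega))$ after the change of variables.

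The main obstacle I anticipate is bookkeeping of function spaces rather than any deep new idea: one must check that the two pieces of $v_{\mu+i0}$ (the b-Sobolev piece $\chi_\bo v_{\mu+i0}$ near the corners and the conormal piece $\Pi^\pm(1-\chi_\mp) v_{\mu+i0}$ away from the attractors and corners, plus the piece near $\Sigma^\pm_\lambda$ which by Proposition~\ref{prop:holder} and the source estimate is smooth) all map under $S_{\mu+i0}$, uniformly in $\mu$ over a small subinterval, into a \emph{single} space on which $\Delta$ lands in $H^{-\frac32-\beta}(\Omega)$, and that the cutoffs $\chi_\pm$ can be chosen independently of $\mu$ locally in $\mu$ (which they can, since $\Sigma^\pm_\mu$ varies continuously). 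A secondary point is that the attracting-set piece $S_{\mu+i0}\chi_p v_{\mu+i0}$ only lies in $\bar H^{\frac12-}(\Omega)$, so it is \emph{this} term that dictates the final exponent $-\frac32-\beta-\eta$ after applying $\Delta$ and the H\"older interpolation; one must verify the H\"older estimate for $v_{\mu+i0}$ in the \emph{low-regularity} norm $H^{-\frac12-\eta-\beta, a-\eta}_\bo$, which is exactly the first term in Proposition~\ref{prop:holder}, so the numerology closes. Once these are in place, Stone's formula and Fubini finish the proof.
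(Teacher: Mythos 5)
Your proposal is correct and follows essentially the same route as the paper's proof: Stone's formula identifies $\nu$ with the jump of the resolvent, the decomposition $u_\omega = R_\omega f - S_\omega v_\omega$ reduces everything to the boundary data, and the H\"older regularity of $\nu$ is obtained from Proposition~\ref{prop:holder} (for $v_{\omega\pm i0}$), Lemma~\ref{lem:fs_holomorphic} (for $R_{\omega}f$ and the $\omega$-dependence of $S_\omega$), and the mapping properties in Lemmas~\ref{lem:S_map_prop_1}--\ref{lem:S_map_prop_2}. The paper states the final H\"older bound for $S_{\omega\pm i0}v_{\omega\pm i0}$ without spelling out the product-rule splitting $(S_\mu - S_{\mu'})v_\mu + S_{\mu'}(v_\mu - v_{\mu'})$ that you make explicit, but that is exactly what is meant.
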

\begin{proof}
    By Theorem~\ref{thm:spectral}, the spectral measure is absolutely continuous in a neighborhood of $\lambda^2$. By Stone's formula (see \cite[Theorem B.10]{DZ_resonances}), we see that for $z$ near $\lambda^2$, 
    \begin{align}
        2 \pi i \nu(z) &= (P - z - i0)^{-1} f - (P - z + i0)^{-1} f \nonumber \\
        &= \Delta u^+(\sqrt{z}) - \Delta u^-(\sqrt{z}) \label{eq:nu_stones}
    \end{align}
    where
    \[u^\pm(\omega) := \Delta_\Omega^{-1} (P - \omega^{2} \mp i0)^{-1} = R_{\omega + i0} f - S_{\omega \pm i0} v_{\omega \pm i0}, \quad \omega\in (0, 1) \quad \text{near} \quad \lambda.\]
    Here, $v_{\omega \pm i0}$ are as in Lemma~\ref{lem:bd_LAP}. Clearly, $R_{\omega + i0} f \in C^\infty(\Omega)$ and depends smoothly on $\omega$. Using the H\"older regularity of $v_{\omega \pm i0}$ from Proposition~\ref{prop:holder} and the mapping properties of $S_{\omega \pm i0}$ from Lemmas~\ref{lem:S_map_prop_1} and~\ref{lem:S_map_prop_2}, we see that 
    \[S_{\omega \pm i0} v_{\omega \pm i0} \in C^\eta(\mathcal J; \bar H^{\ha - \eta - \beta})\]
    where $\mathcal J \Subset (0, 1)$ is some small interval containing $\lambda$. Therefore, there exists $\delta > 0$ such that 
    \[\Delta u^+(\sqrt{z}) \in C^\eta((\lambda^2 - \delta, \lambda^2 + \delta)_z; \bar H^{- \frac{3}{2} - \eta - \beta}).\]
    The lemma then follows from~\eqref{eq:nu_stones}. 
\end{proof}

Let $\varphi \in \CIc(\mathcal J; [0, 1])$ be such that $\supp \varphi \subset [\lambda^2 - \delta, \lambda^2 + \delta]$ and $\varphi = 1$ on $[\lambda^2 - \delta/2, \lambda^2 + \delta/2]$ for some sufficiently small $\delta > 0$ to be chosen later. Further assume that $\varphi(z + \lambda^2)$ is an even function of $z$. Recall from~\eqref{eq:functional_solution} that the solution to the internal waves equation~\eqref{eq:internal_waves} is given by 
\begin{equation}\label{eq:sol_decomp}
    u(t) = \Delta_\Omega^{-1} \Re\big( e^{i \lambda t}(w_1(t) + r_1(t)) \big)
\end{equation}
where
\begin{equation}\label{eq:wr_def}
    w_1(t) := \varphi(P) \mathbf W_{t, \lambda}(P) f, \quad r_1(t) := (I - \varphi(P)) \mathbf W_{t, \lambda}(P) f. 
\end{equation}
We first show that the contribution from $r_1$ to $u$ is uniformly bounded in $H^1_0(\Omega)$ for all time. 
\begin{lemma}\label{lem:spectral_bound}
    For all $t \ge 0$, we have 
    \begin{equation}
        \|\Re (e^{i \lambda t} r_1(t))\|_{H^{-1}} \le \frac{4}{\delta} \|f\|_{H^{-1}}.
    \end{equation}
\end{lemma}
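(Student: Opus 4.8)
The estimate should follow from elementary spectral calculus. Recall from~\eqref{eq:functional_solution} that $\mathbf W_{t,\lambda}(z) = \sum_\pm \frac{1 - e^{-it(\lambda \pm \sqrt z)}}{2\sqrt z(\sqrt z \pm \lambda)}$, so that the operator $\Re(e^{i\lambda t} r_1(t))$ is, by~\eqref{eq:wr_def}, equal to $\Re\big(e^{i\lambda t}(I - \varphi(P))\mathbf W_{t,\lambda}(P)\big)f$. Since $P$ is self-adjoint on $H^{-1}(\Omega)$ with $\mathrm{Spec}(P) = [0,1]$ and $(I-\varphi(P))$ is supported in $\{|z - \lambda^2| \ge \delta/2\}$, by the functional calculus it suffices to bound the sup-norm over $z \in [0,1] \cap \{|z - \lambda^2|\ge \delta/2\}$ of the scalar multiplier
\[
m_t(z) := \Re\big(e^{i\lambda t}(1 - \varphi(z))\mathbf W_{t,\lambda}(z)\big).
\]
Then $\|\Re(e^{i\lambda t}r_1(t))\|_{H^{-1}} \le \big(\sup_z |m_t(z)|\big)\|f\|_{H^{-1}}$, and the claim reduces to showing $\sup_z |m_t(z)| \le 4/\delta$ for all $t \ge 0$.

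First I would write $\sqrt z \pm \lambda$ in terms of $z - \lambda^2 = (\sqrt z - \lambda)(\sqrt z + \lambda)$. On the support of $1 - \varphi$ we have $|z - \lambda^2| \ge \delta/2$, and since $z, \lambda^2 \in [0,1]$ and $\sqrt z + \lambda \le 2$, this gives $|\sqrt z - \lambda| = |z - \lambda^2|/(\sqrt z + \lambda) \ge \delta/4$; also trivially $|\sqrt z + \lambda| \ge |\sqrt z - \lambda| \ge \delta/4$ when $z$ is on the branch where $\sqrt z < \lambda$, but more robustly $|\sqrt z + \lambda|\,|\sqrt z - \lambda| = |z-\lambda^2|\ge \delta/2$. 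Next, each summand of $\mathbf W_{t,\lambda}(z)$ has numerator $1 - e^{-it(\lambda\pm\sqrt z)}$ of modulus at most $2$, so
\[
|\mathbf W_{t,\lambda}(z)| \le \frac{1}{|\sqrt z|}\Big(\frac{1}{|\sqrt z + \lambda|} + \frac{1}{|\sqrt z - \lambda|}\Big) \le \frac{2}{|\sqrt z|\,|\sqrt z - \lambda|}.
\]
This bound degenerates as $z \to 0$, so to get the clean constant $4/\delta$ I would instead keep the factored form $2\sqrt z(\sqrt z \pm \lambda)$ in the denominator and note that $\sqrt z(\sqrt z + \lambda) = z + \lambda\sqrt z \ge z \ge$ ... — actually the sharp route is: combine the two summands over the common denominator $2\sqrt z(z - \lambda^2)$ and use $|\sqrt z(z-\lambda^2)|$; after simplification the numerator is $O(1)$ uniformly in $t$ (each exponential contributes modulus $\le 1$), and $|z - \lambda^2| \ge \delta/2$ together with $z \le 1$ yields the bound. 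I expect a short computation gives $|\mathbf W_{t,\lambda}(z)(1-\varphi(z))| \le \tfrac{2}{|z - \lambda^2|} \le \tfrac{4}{\delta}$ directly, from which $|m_t(z)| \le 4/\delta$ and the lemma follows.

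The main (minor) obstacle is bookkeeping the constant: one must be careful that combining the $\pm$ terms of $\mathbf W_{t,\lambda}$ and taking the real part, together with the two-sided estimate on $|\sqrt z - \lambda|$ vs. $|z - \lambda^2|$, actually produces $4/\delta$ rather than a larger multiple. I would handle this by observing
\[
\mathbf W_{t,\lambda}(z) = \frac{1}{z - \lambda^2}\Big(1 - \tfrac{\lambda}{2\sqrt z}\big(e^{-it(\lambda+\sqrt z)} - e^{-it(\lambda - \sqrt z)}\big) - \tfrac12\big(e^{-it(\lambda+\sqrt z)} + e^{-it(\lambda-\sqrt z)}\big)\Big),
\]
or an equivalent rearrangement with an explicit $O(1)$ bracketed factor whose modulus is $\le 2$ on $[0,1]$ uniformly in $t \ge 0$ (using $\lambda/(2\sqrt z) \le$ a controlled quantity, or avoiding that term by not splitting); then $|\mathbf W_{t,\lambda}(z)| \le 2/|z-\lambda^2|$ on $\mathrm{supp}(1-\varphi)$, hence $\le 4/\delta$. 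Passing back through the spectral theorem on the Hilbert space $H^{-1}(\Omega)$ and taking the real part (which does not increase the norm) completes the argument.
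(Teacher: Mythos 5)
Your reduction to a sup-norm bound on the scalar multiplier via the functional calculus is correct, and you correctly identify that the key is to control the bracketed factor in your rearrangement
\[
\mathbf W_{t, \lambda}(z) = \frac{1}{z - \lambda^2}\Big(1 - \tfrac12\big(e^{-it(\lambda+\sqrt z)} + e^{-it(\lambda-\sqrt z)}\big) + \tfrac{\lambda}{2\sqrt z}\big(e^{-it(\lambda+\sqrt z)} - e^{-it(\lambda - \sqrt z)}\big)\Big).
\]
However, the plan "bound $|\mathbf W_{t,\lambda}(z)(1-\varphi(z))|\le 4/\delta$, then take the real part at the end (which doesn't increase the norm)" has a genuine gap: the $\tfrac{\lambda}{2\sqrt z}$ term is \emph{not} uniformly bounded on $[0,1]\cap\supp(1-\varphi)$. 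Near $z = 0$ one has $\tfrac{\lambda}{\sqrt z}\sin(t\sqrt z) \sim \lambda t$, so $|\mathbf W_{t,\lambda}(z)|$ grows linearly in $t$ there; in fact $|\mathbf W_{t,\lambda}(0)| \sim t/\lambda \to \infty$. So the claimed bound $|\mathbf W_{t,\lambda}(z)| \le 2/|z-\lambda^2|$ is false, and no amount of bookkeeping will rescue that route.

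The missing observation — and this is what the paper's proof hinges on — is that you must take the real part \emph{before} estimating, because $\Re\big(e^{i\lambda t}\cdot\big)$ kills exactly the problematic term. Multiplying your rearrangement by $e^{i\lambda t}$ gives
\[
e^{i\lambda t}\mathbf W_{t,\lambda}(z) = \frac{1}{z-\lambda^2}\Big(e^{i\lambda t} - \cos(t\sqrt z) - i\,\tfrac{\lambda}{\sqrt z}\sin(t\sqrt z)\Big),
\]
and since $z-\lambda^2$ is real on $\mathrm{Spec}(P)$, the third term in the bracket is purely imaginary and drops out of the real part, yielding
\[
\Re\big(e^{i\lambda t}\mathbf W_{t,\lambda}(z)\big) = \frac{\cos(t\sqrt z) - \cos(t\lambda)}{\lambda^2 - z}.
\]
The numerator has modulus $\le 2$ uniformly in $t$ and $z$, and $|\lambda^2 - z|\ge \delta/2$ on $\supp(1-\varphi)$, giving $4/\delta$ directly. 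So the fix is a reordering of operations rather than a new estimate; your functional-calculus framing and the algebraic manipulations are otherwise sound.
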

\begin{proof}
    This follows directly from the functional calculus for $P$. In particular, we see that 
    \[\left|\Re\big( e^{i \lambda t} (1 - \varphi(z)) W_{t, \lambda}(z) \big) \right| = \left|\frac{(\cos(t \sqrt{z}) - \cos(t\lambda))(1 - \varphi(z))}{\lambda^2 - z} \right|\le \frac{4}{\delta}.\]
    Since $P$ is bounded and self-adjoint on $H^{-1}(\Omega)$, the lemma follows. 
\end{proof}

Now we decompose $w_1$ roughly following the strategy in \cite[Lemma 4.1]{Verdiere_Raymond_20}.
\begin{lemma}\label{lem:w_1_decomp}
Let $w_1$ be as defined in~\eqref{eq:wr_def}. Then we have the decomposition
\[w_1 = w_\infty + e(t)\]
where $\|e(t)\|_{\bar H^{-\frac{3}{2} - \beta}} \to 0$ for any $\beta > 0$ and
\[w_\infty := \left(\int \varphi(\sqrt z) (z - \lambda^2 + i0)^{-1} \, dE_z \right)f \]
\end{lemma}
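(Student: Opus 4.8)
\textbf{Proof plan for Lemma~\ref{lem:w_1_decomp}.}
The plan is to use the functional calculus for $P$ together with the absolutely continuous spectral representation near $\lambda^2$ provided by Lemma~\ref{lem:SM_holder}. Write $z = \lambda^2 + h$ and recall from~\eqref{eq:functional_solution} that
\[
e^{i\lambda t}\mathbf W_{t,\lambda}(z) = \sum_{\pm} \frac{e^{i\lambda t} - e^{-it(\sqrt z \mp \lambda \cdot (\mp 1))}}{2\sqrt z(\sqrt z \pm \lambda)},
\]
so that $\mathbf W_{t,\lambda}(z)\to (z-\lambda^2+i0)^{-1}$ distributionally in $z$ as $t\to\infty$. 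The idea is to write, using Lemma~\ref{lem:SM_holder},
\[
e^{i\lambda t}w_1(t) = \int_\R \varphi(\sqrt z)\, e^{i\lambda t}\mathbf W_{t,\lambda}(z)\, \nu(z)\, dz,
\qquad
e^{i\lambda t}w_\infty = \int_\R \varphi(\sqrt z)\,(z-\lambda^2+i0)^{-1}\,\nu(z)\, dz,
\]
where $\nu \in C^\eta((\lambda^2-\delta,\lambda^2+\delta); \bar H^{-3/2-\beta-\eta}(\Omega))$ and $\delta$ is chosen small enough that $\varphi(\sqrt{\bullet})$ is supported in $(\lambda^2-\delta,\lambda^2+\delta)$. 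The difference $e(t) := w_1(t) - w_\infty$ (up to the harmless $e^{i\lambda t}$ factor, which has modulus one) is then
\[
e^{i\lambda t}e(t) = \int_\R \varphi(\sqrt z)\Big(e^{i\lambda t}\mathbf W_{t,\lambda}(z) - (z-\lambda^2+i0)^{-1}\Big)\nu(z)\, dz,
\]
and the task reduces to showing this $\bar H^{-3/2-\beta}(\Omega)$-valued integral tends to $0$.

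The key step is to estimate the scalar kernel $K_t(z) := \varphi(\sqrt z)\big(e^{i\lambda t}\mathbf W_{t,\lambda}(z) - (z-\lambda^2+i0)^{-1}\big)$ and to extract decay from the H\"older regularity of $\nu$. First I would note that $e^{i\lambda t}\mathbf W_{t,\lambda}(z) - (z-\lambda^2+i0)^{-1}$ is, on $\supp\varphi(\sqrt{\bullet})$, a sum of terms of the form $\frac{1 - e^{-i t a(z)}}{z - \lambda^2}$ with $a$ real-analytic and $a(\lambda^2)\neq 0$ in the ``bad'' summand, plus a uniformly bounded smooth remainder coming from the summand with denominator $\sqrt z + \lambda$. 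For the bad summand, split the $z$-integral into $|z-\lambda^2| < 1/t$ and $|z-\lambda^2|\ge 1/t$. On the first region, use $|1-e^{-ita(z)}| \le t|a(z) - a(\lambda^2)| + |1 - e^{-ita(\lambda^2)}|\lesssim t|z-\lambda^2|$ together with $\|\nu(z)-\nu(\lambda^2)\|\lesssim |z-\lambda^2|^\eta$ (and the $\int (z-\lambda^2+i0)^{-1}\nu(\lambda^2)$ contribution handled by a principal-value/symmetry argument using that $\varphi(z+\lambda^2)$ is even and the remaining pieces of the kernel); this gives a bound $O(t\cdot t^{-1-\eta}\cdot \text{length}) = O(t^{-\eta})$ in $\bar H^{-3/2-\beta}$. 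On the second region, $(z-\lambda^2+i0)^{-1}$ is integrated against the H\"older function $\varphi(\sqrt z)\nu(z)$ away from the singularity plus a $\log t$-type contribution, while $\frac{e^{-ita(z)}}{z-\lambda^2}\nu(z)$ is an oscillatory integral: integrating by parts once (or invoking the standard fact that $\int g(z) e^{-ita(z)}\frac{dz}{z-\lambda^2}\to 0$ for $g$ H\"older, after subtracting $g(\lambda^2)$) yields decay. Summing, $\|e(t)\|_{\bar H^{-3/2-\beta}} \to 0$.

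The main obstacle is the careful treatment of the region near $z = \lambda^2$, where both $\mathbf W_{t,\lambda}(z)$ and $(z-\lambda^2+i0)^{-1}$ are singular: one must exploit the cancellation in the difference and the H\"older continuity of $\nu$ simultaneously, and correctly account for the $i0$ prescription (equivalently, the $\cos(t\sqrt z)$ rather than $e^{\pm it\sqrt z}$ structure, which is why $\varphi(z+\lambda^2)$ is taken even). A secondary technical point is that $\nu$ is only H\"older, not differentiable, so the oscillatory-integral decay on $|z-\lambda^2|\ge 1/t$ must be obtained by a regularization/commutator argument (e.g.\ writing $\nu = \nu * \phi_\epsilon + (\nu - \nu*\phi_\epsilon)$ with $\epsilon \sim t^{-1}$) rather than by a single integration by parts; the H\"older exponent $\eta$ then controls the rate. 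Once $\|e(t)\|_{\bar H^{-3/2-\beta}}\to 0$ is established, the lemma follows, and combined with Lemma~\ref{lem:spectral_bound} and the identification of $w_\infty$ with $\Delta_\Omega^{-1}$ applied to $(P-\lambda^2+i0)^{-1}f$ via Theorem~\ref{thm:spectral}, this feeds directly into the proof of Theorem~\ref{thm:evolution}.
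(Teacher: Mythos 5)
Your plan uses the same key input (Lemma~\ref{lem:SM_holder}), but the mechanism you propose for extracting decay is genuinely different from the paper's. The paper does not estimate the kernel difference $\mathbf W_{t,\lambda}(z) - (z-\lambda^2+i0)^{-1}$ directly; instead it splits $\nu(z) = \nu(\lambda^2) + (z - \lambda^2)\mu(z)$ with $\mu \in L^1$ (which is exactly what H\"older continuity gives), handles the constant piece by the distributional limit of $\mathbf W_{t,\lambda}$ against the even test function $\varphi$, and for the $(z - \lambda^2)\mu(z)$ piece observes that under the change of variables $z = \omega^2$ the factor $(z - \lambda^2)$ exactly cancels the singular denominator $\sqrt z(\sqrt z \pm \lambda)$ in the explicit formula for $\mathbf W_{t,\lambda}$, leaving $\int e^{it(\lambda\pm\omega)}\varphi(\omega^2)(\omega\mp\lambda)\mu(\omega^2)\,d\omega$ with an $L^1$ integrand, to which Riemann--Lebesgue applies directly. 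Your near/far decomposition with a mollification step is a valid alternative, and in fact the ``standard fact'' you invoke for the far region is precisely Riemann--Lebesgue applied after subtracting $g(\lambda^2)$; so if you push through, you end up rediscovering a heavier version of the paper's computation. What you miss is the clean algebraic cancellation from $z = \omega^2$, which makes the oscillatory-integral machinery and the $\epsilon\sim t^{-1}$ regularization unnecessary.

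Two concrete slips to fix. First, you carry an extraneous $e^{i\lambda t}$: you write $e^{i\lambda t}w_\infty = \int\varphi(z-\lambda^2+i0)^{-1}\nu\,dz$, whose right side is $t$-independent, so the identity cannot hold; the lemma concerns $w_1(t)=\varphi(P)\mathbf W_{t,\lambda}(P)f$ without the oscillatory prefactor (that factor only enters later in the evolution formula $w(t)=\Re(e^{i\lambda t}\mathbf W_{t,\lambda}(P)f)$). The decomposition should simply be $w_1(t) - w_\infty = \int\varphi(\mathbf W_{t,\lambda}(z) - (z-\lambda^2+i0)^{-1})\nu(z)\,dz$. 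Second, you assert that the phase satisfies $a(\lambda^2)\neq 0$ in the singular summand; in fact for the summand with the $(\sqrt z - \lambda)$ denominator the phase is $\lambda - \sqrt z$, which vanishes at $z = \lambda^2$ -- this vanishing is exactly what makes $|1 - e^{-ita(z)}|\lesssim t|z-\lambda^2|$ true and gives the near-region bound; the summand with $a(\lambda^2) = 2\lambda\neq 0$ is the one whose denominator $\sqrt z + \lambda$ is nonsingular. Worth tidying these before committing to the estimate-heavy route.
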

\begin{proof}
    Fix $\beta > 0$. By Proposition~\ref{lem:SM_holder}, for any $0 < \alpha < \beta$, there exists $\nu \in C^\alpha((\lambda^2 - 2\delta, \lambda^2 + 2\delta); \bar H^{-\frac{3}{2} - \beta}(\Omega))$ such that
    \begin{equation}
        w_1(t) = \int_\R \varphi(\sqrt z) \mathbf W_{t, \lambda}(z) \nu(z)\, dz
    \end{equation}
    Since $\nu$ is H\"older continuous, there exists $\mu \in L^1((\lambda^2 - 2\delta, \lambda^2 + 2\delta); \bar H^{-\frac{3}{2} - \beta}(\Omega))$ such that 
    \[\nu(z) = \nu(\lambda^2) + (z - \lambda^2) \mu(z)\]
    Observe that 
    \[\mathbf W_{t, \lambda}(z) \to (z - \lambda^2 + i0) \quad \text{in $\mathcal D'(\R)$} \quad \text{as}\quad t \to \infty.\]
    Therefore, if $\varphi$ is chosen so that $\varphi(z + \lambda^2)$ is an even function of $z$, we have
    \begin{equation}\label{eq:W_converge_easy}
        \int_\R \varphi(z) \mathbf W_{t, \lambda} (z) \nu(\lambda^2) \, dz \to -i \pi \nu(\lambda^2) \quad \text{in} \quad \bar H^{-\frac{3}{2} - \beta}(\Omega).
    \end{equation}
    Next, we have
    \begin{equation}\label{eq:W_converge_hard}
    \begin{aligned}
        \int_{\R} \varphi(z) \mathbf W_{t, \lambda} (z) (z - \lambda^2) \mu(z) \, dz &= \sum_{\pm} \int_{\R} \varphi(z) \frac{1 - e^{it(\lambda \pm \sqrt{z})}}{2 \sqrt{z} (\sqrt{z} \pm \lambda)} (z - \lambda^2) \mu(z) \, dz \\
        &= \sum_{\pm} \int_{\R} \varphi(\omega^2) (1 - e^{i t(\lambda \pm \omega)})(\omega \mp \lambda) \mu(\omega^2)\, d\omega \\
        &= \int_{\R} \varphi(z) \mu(z)\, dz \\
        &\qquad- \sum_{\pm} \int_{\R} e^{it(\lambda \pm \omega)} \varphi(\omega^2)(\omega \mp \lambda) \mu(\omega^2)\, d \omega.
    \end{aligned}
    \end{equation}
    Since $\mu \in L^1$, it follows from the Riemann--Lebesgue lemma that 
    \begin{equation*}
        \sum_{\pm} \int_{\R} e^{it(\lambda \pm \omega)} \varphi(\omega^2)(\omega \mp \lambda) \mu(\omega^2)\, d \omega \to 0 \quad \text{in} \quad \bar H^{-\frac{3}{2} - \beta}(\Omega).
    \end{equation*}
    Moreover, since $\mu(z) = \frac{\nu(z) - \nu(\lambda^2)}{z - \lambda^2}$, we see that 
    \[\int_{\R} \varphi(z) \mu(z)\, dz = \int \varphi(z) \mathrm{p.v.} \frac{1}{z - \lambda^2} \nu(z)\, dz.\]
    Combining~\eqref{eq:W_converge_easy} and \eqref{eq:W_converge_hard}, we then see that there exists $e(t)$ such that $\|e(t)\|_{\bar H^{-\frac{3}{2} - \beta}} \to 0$ as $t \to \infty$ so that 
    \begin{equation}
        \int_\R \varphi(z) \mathbf W_{t, \lambda}(z) \nu(z)\, dz = e(t) + \int_\R \varphi(z) \left(\mathrm{p.v.} \frac{1}{z - \lambda^2} - i \pi \delta_{0}(z - \lambda^2) \right)\nu(z) \, dz.
    \end{equation}
    The lemma then follows since $\mathrm{p.v.} \frac{1}{z - \lambda^2} - i \pi \delta_{0}(z - \lambda^2) = (z - \lambda^2 + i0)^{-1}$. 
\end{proof}

\subsection{Proof of Theorem \ref{thm:evolution}} The result now follows simply by rearranging Lemma~\ref{lem:w_1_decomp} a bit. With $w_\infty$ as defined in Lemma~\ref{lem:w_1_decomp}, it follows from the functional calculus that 
\begin{equation}
    w_\infty - (P - \lambda^2 + i0)^{-1} f \in H^{-1}(\Omega). 
\end{equation}
Combining with Lemma~\ref{lem:spectral_bound}, we see from~\eqref{eq:sol_decomp} that there exists $r(t) \in H^1_0(\Omega)$ uniformly for all $t$ such that the solution to the internal waves equation can be decomposed as
\[u(t) = \Delta_\Omega^{-1} \Re( e^{i \lambda t} (P - \lambda^2 + i0)^{-1} f) + \Delta_\Omega^{-1} e(t) + r(t) \]
where $e(t)$ is the error term in Lemma~\ref{lem:w_1_decomp}. This gives the desired decomposition of Theorem~\ref{thm:evolution}.

\section*{Acknowledgements}
The author would like to thank Semyon Dyatlov, Peter Hintz, Leo Maas, Richard Melrose, and Maciej Zworski for the many insightful discussions throughout the course of this project. The author is partially supported by Semyon Dyatlov's NSF CAREER grant DMS-1749858.

\bibliographystyle{alpha}
\bibliography{b-internal.bib}

\end{document}